\colorlet{linkequation}{red}
\definecolor{dgreen}{rgb}{0,0.5,0}
\definecolor{violet}{rgb}{0.5,0,0.5}
\definecolor{dred}{rgb}{0.7,0,0}
\definecolor{ddred}{rgb}{0.5,0,0}
\definecolor{dblue}{rgb}{0,0,0.5}
\definecolor{ddblue}{rgb}{0,0,0.3}
\newtheorem{defn}{Definition}[section]
\newtheorem{lemma}[defn]{Lemma}
\newtheorem{proposition}[defn]{Proposition}
\newtheorem{theorem}[defn]{Theorem}
\newtheorem{cor}[defn]{Corollary}
\newtheorem{remark}[defn]{Remark}
\numberwithin{equation}{section}
\definecolor{dgreen}{rgb}{0,0.5,0}
\definecolor{violet}{rgb}{0.5,0,0.5}
\definecolor{dred}{rgb}{0.7,0,0}
\definecolor{ddred}{rgb}{0.5,0,0}
\definecolor{dblue}{rgb}{0,0,0.5}
\definecolor{ddblue}{rgb}{0,0,0.3}
\definecolor{llgray}{rgb}{0.9,0.9,0.9}
\definecolor{lgray}{rgb}{0.7,0.7,0.7}
\newcommand{\bq}{\begin{equation}}
\newcommand{\eq}{\end{equation}}
\newcommand{\R}{{ \mathbb{R}  }}
\newcommand{\bbr}{{ \mathbb{R}  }}
\newcommand{\calC}{{ \mathcal C  }}
\newcommand{\calP}{{ \mathcal P  }}
\newcommand{\calS}{{ \mathcal S  }}
\newcommand{\bke}[1]{\left( #1 \right)}
\newcommand{\norm}[1]{\left\Vert #1 \right\Vert}
\newcommand{\abs}[1]{\left| #1 \right|}
\DeclareMathOperator*{\essosc}{ess \ osc}
\DeclareMathOperator*{\esssup}{ess \ sup}
\DeclareMathOperator*{\essinf}{ess \ inf}
\DeclareMathOperator{\loc}{loc}
\DeclareMathOperator{\supp}{supp}
\newcommand{\qed}{\hfill\fbox{}\par\vspace{.2cm}}
\def\XXint#1#2#3{{\setbox0=\hbox{$#1{#2#3}{\int}$}
     \vcenter{\hbox{$#2#3$}}\kern-.5\wd0}}
\newenvironment{proof}{{\bf Proof.}}{\hfill\fbox{}\par\vspace{.2cm}}
\newenvironment{pfprop2.19}{{\par\noindent\bf
            Proof of Proposition \ref{thm-uniqueness} }}{\hfill\fbox{}\par\vspace{.2cm}}
\newenvironment{pfthm2.20}{{\par\noindent\bf
            Proof of Theorem \ref{Corollary : Uniqueness} }}{\hfill\fbox{}\par\vspace{.2cm}}
\newenvironment{pfthm2.21}{{\par\noindent\bf
            Proof of Theorem \ref{thm-jengchik50} }}{\hfill\fbox{}\par\vspace{.2cm}}
\newenvironment{pf-prop3.5}{{\par\noindent\bf
            Proof of Proposition  \ref{P:apriori} }}{\hfill\fbox{}\par\vspace{.2cm}}
\newenvironment{pf-prop3.7}{{\par\noindent\bf
            Proof of Proposition   \ref{P:Energy} }}{\hfill\fbox{}\par\vspace{.2cm}}
\newenvironment{pfthm-2}{{\par\noindent\bf
            Proof of Theorem  \ref{Theorem-weaksolution} }}{\hfill\fbox{}\par\vspace{.2cm}}
\begin{document}
\bibliographystyle{plain}

\title{Existence of weak solutions for Porous medium equation \\
   with a divergence type of drift term}

\author{
Sukjung Hwang
\thanks{Department of Mathematics Education, Chungbuk National University, Cheongju, Chungbuk 28644, Republic of Korea, \texttt{sukjungh@chungbuk.ac.kr}}
\and
Kyungkeun Kang
\thanks{Department of Mathematics, Yonsei University,  Seoul 03722, Republic of Korea, \texttt{kkang@yonsei.ac.kr}}
\and
Hwa Kil Kim
\thanks{Department of Mathematics Education, Hannam University, Daejeon 34430, Republic of Korea, \texttt{hwakil@hnu.kr}}
}

\date{}

\maketitle

\begin{abstract}
We consider degenerate porous medium equations with a divergence type of drift terms. We establish existence of nonnegative $L^{q}$-weak solutions (satisfying energy estimates or even further with moment and speed estimates in Wasserstein spaces), in case the drift term belongs to a sub-scaling (including scaling invariant) class depending on $q$ and $m$ caused by nonlinear structure of diffusion, which is a major difference compared to that of a linear case.
It is noticeable that the classes of drift terms become wider, if the drift term is divergence-free.
Similar conditions of gradients of drift terms are also provided to ensure the existence of such weak solutions.
Uniqueness results follow under an additional condition on the gradients of the drift terms with the aid of methods developed in Wasserstein spaces.
One of our main tools is so called the splitting method to construct a sequence of approximated solutions, which implies, by passing to the limit, the existence of weak solutions satisfying not only an energy inequality but also moment and speed estimates. One of crucial points in the construction is
uniform H\"{o}lder continuity up to initial time for homogeneous porous medium equations, which seems to be of independent interest.
As an application, we improve a regularity result for solutions of a repulsive Keller-Segel system of porous medium type.
\end{abstract}

{2020 AMS Subject Classification}\, :\,  35A01, 35K55, 35Q84, 92B05

{Keywords}\,:\, Porous medium equation, weak solution, Wasserstein space




\hypersetup{linkcolor=black}
\setcounter{tocdepth}{2}
\tableofcontents


\section{Introduction}

In this paper, we consider \emph{porous medium equations} (PME) with divergence form of drifts in the form of
\begin{equation}\label{E:Main}
  \left\{
  \begin{array}{ll}
  \partial_t \rho =   \nabla \cdot( \nabla \rho^m  - V\rho), & \text{ in } \ Q_{T}  \\
  \rho(\cdot,0)=\rho_0, & \text{ on } \ \mathbb{R}^d,
  \end{array}
  \right. \quad \text{ for }\ m > 1,
\end{equation}
on a cylindrical domain $Q_{T}:= \mathbb{R}^d \times (0, T]$, $d\geq 2$, $0 <T < \infty$,
where $\rho:Q_T\mapsto \bbr$ and a vector field $V: Q_{T} \to \mathbb{R}^d$.
We note that neither the maximum principle nor the gradient flow structure is, in general, available for the equation \eqref{E:Main} which has been rarely analyzed. 
Nevertheless, for given  nonnegative initial data, we establish mainly the existence of  nonnegative weak solutions of \eqref{E:Main} when the drift belongs to some sub-scaling classes (see Definition~\ref{D:Serrin}).

First, we point out that $\Delta \rho^m \sim \nabla \cdot \left(\rho^{m-1} \nabla \rho\right)$ which clearly shows the different tendencies depending on the constant $m$.
If $m=1$, then the diffusion is linear so \eqref{E:Main} becomes the heat equation with drifts. When $m>1$, the equation is called \emph{porous medium equations} (PME) in which we are interested in this paper. When $0<m<1$,it follows that $\rho^{m-1}$ tends to infinity as $\rho$ approaches to zero and we call such equations \emph{fast diffusion equations}. These are important nonlinear evolution equations describing a number of physical applications: fluid flow, heat radiations in plasma, mathematical biology, and other fields. For a systematic presentation of the mathematical theories, we refer to monographs \cites{Vaz06, Vaz07, DK07, DGV12} and the references therein.

The equations \eqref{E:Main} carry both nonlinear diffusion and drift that are useful in numerous places. For example, this type of PDEs represents a certain form of aggregation-diffusion (reaction-diffusion) equations that arises naturally in systems of interacting components and is used widely to describe pattern-formation phenomena and chemotaxis models in biological, chemical and physical systems.
For example, one can consider the porous medium equation with drift caused by fluid flow,
\begin{equation*}
\rho_t-\Delta \rho^m+u\cdot \nabla \rho=0,\quad \text{ in } \ Q_T :=\R^d\times (0, T],
\end{equation*}
where $u:Q_T\rightarrow \R^d$ is a divergence-free vector field.
Recently, there has been extensive research on so called enhanced dissipation effects for various PDEs with fluid flows for the linear case $m=1$ (see e.g. \cites{MR2434887, MR3621820, MR4242626, MR4152270}, etc). In particular, there are recent developments of suppression of blow-up for nonlinear equations such as Keller-Segel equations with external fluid flows (see e.g. \cites{MR3730537, MR4156602, MR3544323}, etc).
Concerning free boundary regularity of \eqref{E:Main}, there are recent developments of $\calC^{1,\alpha}$-regularity assuming enough smoothness on $V$ and $\rho_0 \in L^1(\bbr^d)\cap L^{\infty}(\bbr^d)$ in \cites{KZ18, KZ20-arXiv} and references therein for relevant works when $V=0$.
The large-time asymptotics and regularity of nonlinear diffusion equations with drifts are also analyzed as well: for example,  the rate of decay to equilibrium of self-similar solutions in \cites{Carr01}, viscosity solutions,  equivalent to weak solutions, and their asymptotic convergence of the free boundary to the equilibrium in \cite{KL10}, and H\"{o}lder continuous solutions for aggregation-diffusion equations with singular interaction kernels in \cite{Zhang20}. Compared to PME without drifts or linear diffusion-drift equations, fewer results are known for \eqref{E:Main} because of difficulties of handling not only nonlinear diffusion but also drift term.


The drift may affect the well-posedness of diffusive parabolic equations. There are works showing counterexamples of lack of regularities under certain conditions on the drift, for example, \cites{SVZ13, SSSZ, Z11} for (fractional) linear diffusion and \cites{KZ18, HZ21} for porous medium equations.
From the perspective of critical conditions on $V$ providing continuity of \eqref{E:Main} (for example, \cites{CHKK17, KZ18, HZ21}), we recall
\begin{equation}\label{linear-Serrin}
V \in L^{q_1, q_2}_{x,t} \quad \text{ where } \quad \frac{d}{q_1} + \frac{2}{q_2} \leq 1.
\end{equation}
The condition \eqref{linear-Serrin} is also observed from the scaling invariant classes in $L^{\infty}$-sense. Moreover, the same conditions are well known from heat (linear) diffusion-drift equations (see e.g. \cites{LSU, Lie96, SVZ13}) and there is a connection with so called Prodi-Serrin-Ladyzhenskaya conditions concerning the existence of smooth solutions of the Navier-Stokes equations (see \cites{MR0236541, MR0150444}).

One of motivations of this research has started from an observation that the nonlinear diffusion factor $m$ yields different scaling invariance depending on conservation of $L^q$-norm in spatial variables.
Naturally, it follows a question of searching minimal conditions on $V$ related to $m$, which allows us to construct weak solutions of \eqref{E:Main} for a given $L^q$-initial data.
From this perspective, we investigate \emph{the scaling invariant classes of $V$ and $\nabla V$ in $L^{q}$-sense for $q \geq 1$} which turns out proper conditions for constructing $L^q$-weak solutions (see Definition~\ref{D:weak-sol}).
Heuristically, we look for conditions of $V$ in terms of scaling invariant classes of solutions
for $L^q$-norm in spatial variables.
It turns out that the $L^q$-norm of $\rho$ is preserved, if the following scaling is taken: for $q \geq 1$ and $\kappa > 0$
\begin{equation*} 
  \rho_{\kappa}(x,t) = \kappa^{d/q} \rho(\kappa x, \kappa^{\beta}t)
  \quad \text{and} \quad
  V_{\kappa} (x,t) = \kappa^{\alpha} V(\kappa x, \kappa^{\beta}t),
\end{equation*}
where
\begin{equation*}
  \alpha = q_{m,d} + 1, \quad \beta= q_{m,d} + 2 \quad \text{for}\quad q_{m,d} := \frac{d(m-1)}{q}.
\end{equation*}
We then see that $V_{\kappa}$ is scaling invariant in a mixed norm $L^{q_1, q_2}_{x, t}$, i.e. for $1\le q_1, q_2\le \infty$, it holds $\left\| V_{\kappa} (x, t)\right\|_{L^{q_1, q_2}_{x, t}}
 = \left\| V (x,t)\right\|_{L^{q_1, q_2}_{x,t}}=\norm{\norm{V(\cdot,t)}_{L^{q_1}_x}}_{L^{q_2}_t}$
 provided that
$
  \frac{d}{q_1} + \frac{2+q_{m,d}}{q_2} = 1 + q_{m,d}.$
We also note that similar condition of the scaling invariance for $\nabla V$ is that $\nabla V\in L^{\tilde{q}_1, \tilde{q}_2}_{x,t}$, where  $\tilde{q}_1$ and $\tilde{q}_2$ satisfy
$
  \frac{d}{\tilde{q}_1} + \frac{2+q_{m,d}}{\tilde{q}_2} = 2 + q_{m,d}.
$

Now we define the \textit{scaling invariant} and  \textit{sub-scaling classes} of $V$ and $\nabla V$ in accordance with $q$ and $m$.

\begin{defn}\label{D:Serrin}
Let $m, q \geq 1$ and $q_{m,d} := \frac{d(m-1)}{q}$. For constants $q_1, q_2, \tilde{q}_1, \tilde{q}_2 > 0$, we define following spaces.
\begin{itemize}
\item[(i)] The \textit{scaling invariant classes} of $V$ and $\nabla V$ are defined as
\begin{equation}\label{Serrin}
 \mathcal{S}_{m,q}^{(q_1, q_2)}:= \left\{ V: \ \|V\|_{L^{q_1, q_2}_{x,t}} < \infty \ \text{ where } \ \frac{d}{q_1} + \frac{2+q_{m,d}}{q_2} = 1 + q_{m,d} \right\},
\end{equation}
\begin{equation}\label{Serrin-grad}
  \tilde{\mathcal{S}}_{m, q}^{(\tilde{q}_1, \tilde{q}_2)}:= \left\{V: \ \|\nabla V\|_{L^{\tilde{q}_1, \tilde{q}_2}_{x,t}} < \infty  \ \text{ where } \  \frac{d}{\tilde{q}_1} + \frac{2+q_{m,d}}{\tilde{q}_2} = 2 + q_{m,d}\right\}.
\end{equation}
Moreover, let us name $\|V\|_{\mathcal{S}_{m,q}^{(q_1, q_2)}}$ and $\|V\|_{\tilde{\mathcal{S}}_{m,q}^{(\tilde{q}_1, \tilde{q}_2)}}$, as the  \textit{scaling invariant norms} corresponding to each spaces.

\item[(ii)] The \textit{sub-scaling classes} are defined as
\begin{equation}\label{subSerrin}
 \mathfrak{S}_{m,q}^{(q_1, q_2)}:= \left\{ V: \ \|V\|_{L^{q_1, q_2}_{x,t}} < \infty \ \text{ where } \ \frac{d}{q_1} + \frac{2+q_{m,d}}{q_2} \leq 1 + q_{m,d} \right\},
\end{equation}
\begin{equation}\label{subSerrin-grad}
  \tilde{\mathfrak{S}}_{m, q}^{(\tilde{q}_1, \tilde{q}_2)}:= \left\{V: \ \|\nabla V\|_{L^{\tilde{q}_1, \tilde{q}_2}_{x,t}} < \infty  \ \text{ where } \  \frac{d}{\tilde{q}_1} + \frac{2+q_{m,d}}{\tilde{q}_2} \leq 2 + q_{m,d}\right\}.
\end{equation}
Let us name $\|V\|_{\mathfrak{S}_{m,q}^{(q_1, q_2)}}$ and $\|V\|_{\tilde{\mathfrak{S}}_{m,q}^{(\tilde{q}_1, \tilde{q}_2)}}$, as the  \textit{sub-scaling norms} corresponding to each spaces.

\end{itemize}
\end{defn}

We remark that the main existence results are given under certain assumptions on $V$ in sub-scaling classes. The critical case matters the most and the strict-subcritical case (the case of strict inequality in
\eqref{subSerrin} or \eqref{subSerrin-grad})
 is simpler, and therefore, proofs throughout the paper concern only the case of scaling invariant classes.

\begin{remark}\label{R:S-tildeS}
\begin{enumerate}
    \item[(i)] In Figure 1, we graph the lines of $(\frac{1}{q_1}, \frac{1}{q_2})$ satisfying $\mathcal{S}_{m,q}^{(q_1, q_2)}$ defined in \eqref{Serrin} for $m, q \geq 1$. We easily observe that $q_{m,d}$ becomes $0$ as $m \to 1$ or $q\to \infty$, which corresponds that $\frac{d}{q_1} + \frac{2}{q_2}=1$ as in \eqref{linear-Serrin}.
    The condition \eqref{linear-Serrin} appears in the context of understanding heat equations $(m=1)$  or $L^{\infty}$-sense scaling invariant classes of porous medium equations
    $(m>1, q=\infty)$ (see e.g. \cites{CHKK17, KZ18, HZ21}).  Therefore, our scaling invariant condition \eqref{Serrin} is more comprehensive including the linear case, i.e.  $m=1$  or the case of PME for existence of $L^{\infty}$-solutions.

    \item[(ii)] In Figure 2, we graph lines of $(\frac{1}{\tilde{q}_1}, \frac{1}{\tilde{q}_2})$ satisfying $\tilde{\mathcal{S}}_{m,q}^{(\tilde{q}_1, \tilde{q}_2)}$
    defined in \eqref{Serrin-grad} for $m, q \geq 1$. In fact, by using embedding property, one can supress the norm of $V$ by the norm of $\nabla V$;  that is,
    \begin{equation}\label{norm-V-gradV}
   \|V\|_{\mathcal{S}_{m,q}^{(q_1, q_2)}}  := \| V\|_{L^{q_1, q_2}_{x,t}} \lesssim \|V\|_{\tilde{\mathcal{S}}_{m,q}^{(\tilde{q}_1, \tilde{q}_2)}} := \|\nabla V\|_{L^{\tilde{q}_1, \tilde{q}_2}_{x,t}}
    \end{equation}
   provided
$  \tilde{q}_1 = \frac{dq_1}{d+q_1} \in (1,d)  \ (\text{equivalently } q_1 = \frac{d \tilde{q}_1}{d-\tilde{q}_1} \in ( \frac{d}{d-1}, \infty)),$ and
 $\tilde{q}_2 = q_2$. Hence, the range of $\tilde{q}_2$ is
    \begin{equation}\label{tilde-q2}
    \begin{cases}
    \frac{2+q_{m,d}}{1+{q_{m,d}}} < \tilde{q}_2 \leq \infty, & \text{ if } 1 < m < 1+ \frac{q (d-2)}{d}  \vspace{1 mm}\\
    \frac{2+q_{m,d}}{1+{q_{m,d}}} < \tilde{q}_2 < \frac{2+q_{m,d}}{2-d+q_{m,d}}, & \text{ otherwise}.
    \end{cases}
    \end{equation}
    In Figure 2, the intersection of lines and shaded region is where \eqref{tilde-q2} holds.

    \item[(iii)] Let $1 < q < q^{L}$ and let $ (q_1^L, q_2^L)$ be a pair satisfying $\mathcal{S}_{m,q^L}^{(q_1^L, q_2^L)}$. Concerning $\mathcal{S}_{m,q}^{(q_1^L, q_2)}$ and $\mathcal{S}_{m,q^L}^{(q_1^L, q_2^L)}$, we observe that $q_2^L > q_2$ illustrated in Figure 1. Moreover, the H\"{o}lder inequality in $t\in [0, T]$ yields
    \begin{equation}\label{q-qL}
    \|V\|_{\mathcal{S}_{m,q}^{(q_1^L, q_2)}} \leq T^{ \frac{1}{q_2} - \frac{1}{q_2^L}} \|V\|_{\mathcal{S}_{m,q^L}^{(q_1^L, q_2^L)}}.
    \end{equation}
\end{enumerate}
\end{remark}

\begin{center}
\begin{tikzpicture}[domain=0:16]

\draw (0, -1) node[right] { \scriptsize{Figure 1. $\mathcal{S}_{m,q}^{(q_1, q_2)}$ in Definition~\ref{D:Serrin}.} };

\draw[->] (0,0) node[left] {\scriptsize $0$} -- (6,0) node[right] {\scriptsize $\frac{1}{q_1}$};
\draw[->] (0,0) -- (0,5) node[left] { \scriptsize $\frac{1}{q_2}$};
\draw (0,4) node{\scriptsize $+$} node[left] {\scriptsize $1$} ;

\draw (0,2) node {\scriptsize $+$} node[left] {\scriptsize $\frac 12$} -- (1.2, 0) node {\scriptsize $\times$} node[below] {\scriptsize $\frac 1d$};
\draw (0.9, 0.2) node{\scriptsize $\mathcal{S}$};

\draw (0,3.5) node {\scriptsize $\times$} node[left] {\scriptsize $\frac{1}{{\lambda_1}}$} -- (4.5, 0) node {\scriptsize $\times$} node[below] {\scriptsize $\frac{1+d(m-1)}{d}$} ;
\draw (4, 1) node {\scriptsize $\mathcal{S}_{m,1}^{(q_1, q_2)}$} ;

\draw (0,2.9) node {\scriptsize $\times$} node[left] {\scriptsize $\frac{1+q_{m,d}}{2+q_{m,d}}$}  -- (2.8, 0) node {\scriptsize $\times$} node[below] {\scriptsize $\frac{1+q_{m,d}}{d}$};
\draw (2.4, 1) node{\scriptsize $\mathcal{S}_{m,q}^{(q_1, q_2)}$};

\draw[thick, dotted] (0,2.5) node {\scriptsize $\times$}   -- (2.1, 0) node {\scriptsize $\times$} ;

\draw[very thin] (1.6, 0) node{\scriptsize $*$} node[below]{\scriptsize $\frac{1}{q_1^L}$ } -- (1.6, 1.7);

\draw[very thin] (0, 0.6) node{\scriptsize $*$} node[left]{\scriptsize $\frac{1}{q_2^L}$} -- (1.6, 0.6) node{\scriptsize $*$};

\draw[very thin] (0, 1.25) node{\scriptsize $*$} node[left]{\scriptsize $\frac{1}{q_2}$} -- (1.6, 1.25) node{\scriptsize $*$};

\draw (7, 4) node[right]{\scriptsize $\mathcal{S} = \mathcal{S}_{1,q}^{(q_1,q_2)}$ or $\mathcal{S}_{m, \infty}^{(q_1, q_2)}$ };
\draw (7, 3.5) node[right]{\scriptsize $\mathcal{S}_{m,q^L}^{(q_1,q_2)}$ (dotted line) for $1 < q < q^L < \infty$ };

\end{tikzpicture}
\end{center}

\begin{center}
\begin{tikzpicture}[domain=0:16]

\draw (0, -1) node[right] {\scriptsize Figure 2.  $\tilde{\mathcal{S}}_{m,q}^{(q_1, q_2)}$ in Definition~\ref{D:Serrin}. };

\fill[fill= lgray]
(1.1, 0) -- (4,0) -- (4, 3.5) --(1.1, 3.5);

\draw[->] (0,0) -- (6,0) node[right] {\scriptsize $\frac{1}{\tilde{q}_1}$};
\draw[->] (0, 0) -- (0, 5) node[left] {\scriptsize $\frac{1}{\tilde{q}_2}$};

\draw (0,4) node{\scriptsize $+$} node[left] {\scriptsize $1$}
-- (2.2, 0) node{\scriptsize $\bullet$} node[below] {\scriptsize $\frac 2d$} ;
\draw (1.4, 1) node {\scriptsize $\tilde{\mathcal{S}}$};

\draw (0, 4) -- ( 5.5 , 0) node{\scriptsize $\times$} node[below] {\scriptsize $\frac{2+d(m-1)}{d}$};
\draw (5, 1) node {\scriptsize $\tilde{\mathcal{S}}_{m,1}^{(\tilde{q}_1, \tilde{q}_2)}$};

\draw (0,4) -- (4.5, 0) node{\scriptsize $\times$};
\draw (0, 4) -- (3, 0) node{\scriptsize $\bullet$} node[below] {\scriptsize $\frac{2+q_{m,d}}{d}$};
\draw (2.8, 1) node {\scriptsize $\tilde{\mathcal{S}}_{m,q}^{(\tilde{q}_1, \tilde{q}_2)}$};

\draw (7, 4) node[right] {\scriptsize $\tilde{\mathcal{S}} = \tilde{\mathcal{S}}_{1,q}^{(\tilde{q}_1, \tilde{q}_2)}$ or $\tilde{\mathcal{S}}_{m,\infty}^{(\tilde{q}_1, \tilde{q}_2)}$};

\draw[very thin, color=gray]
(1.1, 0) node{\scriptsize $*$} node[below] {\scriptsize $\frac 1d$}
-- (1.1, 3.7) node[above] { \scriptsize{$\tilde{q}_1 = d$}};

\draw[thick] (1.1, 2) circle(0.05) ;
\draw[thick] (1.1, 2.5) circle(0.05) ;
\draw[thick] (1.1, 3) circle(0.05) ;
\draw[thick] (1.1, 3.2) circle(0.05) ;

\draw[thin, dotted] (0, 2) node{\scriptsize $+$} node[left] {\scriptsize $\frac 12$} -- (1.1, 2);
\draw[thin, dotted] (0, 2.5) node{\scriptsize $*$}  -- (1.1, 2.5);

\draw (0, 2.6) node[left] {\scriptsize $\frac{1+q_{m,d}}{2+q_{m,d}}$};
\draw[thin, dotted] (0, 3.2) node{\scriptsize $*$} node[left] {\scriptsize $\frac{1+d(m-1)}{2+d(m-1)}$}  -- (1.1, 3.2);

\draw[very thin, color=gray] (4, 0) node{\scriptsize $*$} node[below] {\scriptsize $1$} -- (4, 3.7) node[above] {\scriptsize{$\tilde{q}_1 = 1$}};
\draw[thick] (4, 0.45) circle(0.05) ;
\draw[thick] (4, 1.1) circle(0.05) ;

\draw[thin, dotted] (0, 0.45) node{\scriptsize $*$} node[left] {\scriptsize $\frac{2-d+q_{m,d}}{2+q_{m,d}}$} -- (4, 0.45);
\draw[thin, dotted] (0, 1.1) node{\scriptsize $*$} node[left] {\scriptsize $\frac{2+d(m-2)}{2+d(m-1)}$} -- (4, 1.1);

\end{tikzpicture}
\end{center}

Our main objective is to establish the existence of $L^q$-weak solutions of \eqref{E:Main} subject to $\rho_0\in L^q(\R^d)$ for $1\le q<\infty$.
More specifically, we provide sufficient conditions of drift term $V$ or $\nabla V$ to ensure the existence of solutions for \eqref{E:Main} with  $L^q$-initial data (the collection of $V$ or $\nabla V$ is defined in \eqref{subSerrin} or \eqref{subSerrin-grad}).
An interesting feature is that such conditions of $V$ or $\nabla V$ depend on $m$ and $q$, which is one of major differences compared to linear equation with drift term, since \eqref{linear-Serrin} of the linear case is independent of $q$ (compare to \cite{KK-arxiv}).

As far as authors understand, weak solutions of PME with general drift terms have been barely studied, although weak solutions of homogeneous PME or coupled system with PME type with gradient flow structures have been extensively studied so far. Since the PME with drift terms under our considerations, in general, does  not have either
maximum principle or gradient flow structure, existence of weak solutions does not seem to be established by the existing methods of proofs based on such arguments.
In fact, our main tool in constructing weak solutions is, so called splitting method, which solves the PME with drift term by solving homogeneous PME and transport equations
via flow maps separately, which enables us to obtain aprroximated solutions and convergence to weak solutions. We are positive that there are probably other ways of proving existence of weak soltuions, but they must be  independent of maximum principle and gradient flow structure, and thus we think that our proof is a theoretical way of new approach. We expect that our method of the proof and consequential results could be applicable to broader classes of equations.

We make several comments, for simplicity, we deal with only the case $d\ge 3$ in the tables below (two dimensional case is also treated in similar ways but there are little differences such as range of parameters' values, compared to higher dimensions).

\begin{table}[hbt!]\label{Table1}
\begin{center}
\caption{\footnotesize Guide of existence results of $L^1$-weak solution in Wasserstein space, $q=1$ and $d\geq 3$.}
\smallskip

{\scriptsize
\begin{tabular}{| c  | c  || c | c |}\hline
\rule[-8pt]{0pt}{22pt}
 \textbf{Range of $m$} & \textbf{Intial data}   &  \textbf{Conditions on $V$ in critial class} & \textbf{References}   \\ \hline \hline

\rule[-8pt]{0pt}{22pt}
\multirow{4}{*}{$1<m\leq 2$}
& \multirow{3}{*}{$1<p\leq {\lambda_1} := 1+\frac{1}{d(m-1)+1}$}
&  $V\in \mathfrak{S}_{m,1}^{(q_1, q_2)}$ for $2\leq q_2 \leq \frac{m}{m-1}$
& Theorem~\ref{Theorem-1}, Figure 3 (dark) \\
\cline{3-4}

\rule[-8pt]{0pt}{22pt}
& \multirow{3}{*}{$\rho_0 \in \mathcal{P}_p (\mathbb{R}^d)$, $\int_{\mathbb{R}^d}\rho_0 \log \rho_0 \, dx < \infty$ }
&  $\nabla \cdot V = 0$ \& $V\in \mathfrak{S}_{m,1}^{(q_1, q_2)}$ for ${\lambda_1}\leq q_2 \leq \frac{{\lambda_1} m}{m-1}$
& Theorem~\ref{T:log-div-free}, Figure 3 (light) \\
\cline{3-4}

\rule[-8pt]{0pt}{22pt}
&
&  $V \in \tilde{\mathfrak{S}}_{m,1}^{(\tilde{q}_1, \tilde{q}_2)}$ for ${\lambda_1} < \tilde{q}_2 \leq \frac{m}{m-1}$
& Theorem~\ref{T:log-tilde},  Figure 9-$(i)$ (line $\overline{\textit{\textbf{DE}}}$\,) \\
\hline
\end{tabular}
}
\end{center}
\end{table}

Firstly, we distinguish the case $q=1$ and $q>1$. For the case $q=1$ (in Table~\ref{Table1}), in general, we need to restrict $m\in (1, 2]$, since a priori entropy estimate is not clear for the case $m>2$, unless $V$ is divergence-free or $\nabla V$ belongs sub-scaling classes. Compactness arguments also give the same restriction on $m$, even for the cases of $\nabla \cdot V = 0$ or $ V \in \tilde{\mathfrak{S}}_{m,1}^{(\tilde{q}_1, \tilde{q}_2)}$.

In addition, initial data requires more than just $L^1$, and more precisely it is necessary that $\int_{\R^d} \rho_0 \log \rho_0 dx<\infty$ due to entropy estimates. In this case, we suppose that $V\in \mathfrak{S}_{m,1}^{(q_1, q_2)}$ with $2\le q_2\le \frac{m}{m-1}$. Furthermore, if $\rho_0 \in \calP_p(\bbr^d)$ with $1<p\le {\lambda_1}:=1+\frac{1}{d(m-1)+1}$, we can obtain the $p$-th moment estimates and the \emph{speed}  (see Remark~\ref{R:speed}) estimates, which assure the existence of weak solutions in Wasserstein spaces
 (see Theorem~\ref{Theorem-1}).

Similar analysis can be performed for the case
$V \in \tilde{\mathfrak{S}}_{m,1}^{(\tilde{q}_1, \tilde{q}_2)}$  (refer Theorem~\ref{T:log-tilde}) or $V$ is divergence-free (see Theorem~\ref{T:log-div-free}).
In divergence-free case, we emphasize that the range of subscaling class becomes wider, i.e.  $V\in \mathfrak{S}_{m,1}^{(q_1, q_2)}$ with ${\lambda_1}\le q_2\le \frac{{\lambda_1} m}{m-1}$, mainly because  the drift term does not show up in the energy estimate via integration by parts. We refer Section~\ref{Exist-weak} for the details of proofs.

\begin{table}[hbt!] 
\begin{center}

\caption{\footnotesize Guide of existence results of $L^q$-weak solutions, $m>1$ and $d\geq 3$.}
\smallskip

{\scriptsize
\begin{tabular}{| c  | c || c | c |}\hline

\rule[-8pt]{0pt}{22pt}
\textbf{Range of $q$} &\textbf{Intial data} &  \textbf{Conditions on $V$ in critical class} & \textbf{References}   \\ \hline \hline

\rule[-8pt]{0pt}{22pt}
$q>1$ and $q\geq m-1$
& \multirow{5}{*}{$\rho_0 \in L^q (\bbr^d) \cap \calP(\bbr^d)$}
& $V\in \mathfrak{S}_{m,q}^{(q_1, q_2)}$ for $2\leq q_2 \leq \frac{q+m-1}{m-1}$
& Theorem~\ref{Theorem-2-a}, Figure 4, 5 \\

\cline{1-1}
\cline{3-4}

\rule[-8pt]{0pt}{22pt}
\multirow{3}{*}{ $q > \max\{1, \frac m2 \}$}
&
& $\nabla \cdot V = 0$ \&  $V\in \mathfrak{S}_{m,q}^{(q_1, q_2)}$ holding \eqref{T4:V-energy}
& Theorem~\ref{Theorem-4a}, Figure 6-$(i,ii)$\\
\cline{3-4}

\rule[-8pt]{0pt}{22pt}
& & $V \in \tilde{\mathfrak{S}}_{m,1}^{(\tilde{q}_1, \tilde{q}_2)}$ for $\frac{2+q_{m,d}}{1+q_{m,d}} < \tilde{q}_2 \leq \frac{q+m-1}{m-1}$  & Theorem~\ref{Theorem-5a},  Figure 9-$(i,ii)$ \\
\hline
\end{tabular}
}
\end{center}
\end{table}

On the other hand (see Table~2), in case $q>1$, when usual $L^q$-energy estimates are performed, it is necessary to assmue that  $\rho_0\in L^q(\R^d)$.
Here we need to consider $1<m\le 2$ and $m>2$ separately.
In case that $m>2$, an additional restriction, $q\ge m-1$ is necessary for the existence of weak solutions (see Theorem \ref{Theorem-2-a}).
Indeed, when $L^q$-energy estimates and compactness arguments are conducted, we should control the quantity, $\rho$ to the power $q-m+1$, which requres nonnegative sign that is equivalent to $q\ge m-1$.
This restriction might be technical but we do not know how to handle
the case of $1<q<m-1$, and thus we leave such case as an open question.

In case $m>2$, the restriction on $q\geq m-1$ can be relaxed to $q> \frac m2$ if either $V$ is divergence-free or $\nabla V$ is under consideration (refer Theorem~\ref{Theorem-4a} and \ref{Theorem-5a}). This is because the restriction $q > \frac m2$ for $m>2$ is required for compactness arguments while a priori $L^q$-energy estimates are obtained for all $m,q>1$.

\begin{table}[hbt!] 
\begin{center}
\caption{\footnotesize Guide of existence results of $L^q$-weak solutions in Wasserstein spaces, $m>1$ and $d\geq 3$.}
\smallskip

{\scriptsize
\begin{tabular}{| c  | c  || c | c |}\hline

\rule[-8pt]{0pt}{22pt}
\textbf{Range of $ q$} & \textbf{Intial data}   &  \textbf{Conditions on $V$ in critical class} & \textbf{References}   \\ \hline \hline

\rule[-8pt]{0pt}{22pt}
 $q>1$ and $q\geq m-1$
 &
 & $V\in \mathfrak{S}_{m,q}^{(q_1, q_2)}$ holding \eqref{T2:V}
 & Theorem~\ref{Theorem-2-b} $(i)$ , Figure 4-($i$-$iii$), 5-$(i,ii)$ \\
\cline{1-1}
\cline{3-4}

\rule[-8pt]{0pt}{22pt}
\multirow{3}{*}{ $q>1$ and $q\geq \frac m2$}
&  \multirow{2}{*}{$\rho_0 \in \mathcal{P}_p(\mathbb{R}^d)\cap L^{q}(\mathbb{R}^d)$}
& $\nabla \cdot V = 0$ \& $V\in \mathfrak{S}_{m,q}^{(q_1, q_2)}$ holding \eqref{T4:V-divfree}
& Theorem~\ref{Theorem-4} $(i)$, Figure 7-($i$-$iii$), 8-$(i,ii)$ \\
\cline{3-4}

\rule[-8pt]{0pt}{22pt}
& \multirow{2}{*}{where $1<p\leq {\lambda_q}$}
& $V \in \tilde{\mathfrak{S}}_{m,1}^{(\tilde{q}_1, \tilde{q}_2)}$ holding \eqref{T5:V-tilde}
& Theorem~\ref{Theorem-5} $(i)$, Figure 9-$(i,ii)$ \\
\cline{1-1}
\cline{3-4} 

\rule[-8pt]{0pt}{22pt}
\multirow{3}{*}{\textit{Embedding results}}
& \multirow{1}{*}{${\lambda_q} :=\min\{2, 1+\frac{d(q-1)+q}{d(m-1)+q}\}$}
& \multicolumn{2}{c|}{ Theorem~\ref{Theorem-2-b} $(ii)$ , Figure 4-$(e)$, 5-$(e)$  }\\
\rule[-8pt]{0pt}{10pt}
& & \multicolumn{2}{c|}{ Theorem~\ref{Theorem-4} $(ii)$, Figure 7-$(e)$, 8-$(e)$ }\\
\rule[-8pt]{0pt}{10pt}
& & \multicolumn{2}{c|}{Theorem~\ref{Theorem-5} $(ii)$, Figure 9-$(e)$ }\\
\hline

\end{tabular}
}
\end{center}
\end{table}

The same restrictions on $m$ and $q$ are required, when we look for weak solutions in Wasserstein space (see Table 3). In such case, the subscaling class of $V$ or $\nabla V$ becomes narrower because
the $p$-th moment and the speed estimates are shown to be valid for more restrictive values of parameters
(see Theorem \ref{Theorem-2-b} $(i)$, Theorem \ref{Theorem-4} $(i)$ and Theorem \ref{Theorem-5} $(i)$ for the details).

Therefore, it could happen that
there is a weak solution satisfying only energy estimates but we do not know if it belongs to Wasserstein spaces, because subscaling class of $V$ corresponding to energy space turns out to be wider than that of $V$ relevant to Wasserstein spaces.
Nevertheless, using temporal embedding, some cases guarantee that weak solutions with energy estimates belong to  Wasserstein spaces as well (see  Theorem \ref{Theorem-2-b}  $(ii)$, Theorem \ref{Theorem-4} $(ii)$ and Theorem \ref{Theorem-5} $(ii)$ for the details).

As a main tool, we adopt the splitting method to establish approximated solutions, which converges to a $L^q$-weak solution (see section \ref{splitting method} and section \ref{Exist-weak}). 
In the process of constructing the approximated solutions, 
we use that  H\"{o}lder continuity of PME in the absense of drift term is preserved uniformly up to initial time, which seems to be of independt interest (see Appendix~\ref{Appendix:Holder}).

On the other hand, uniqueness for solutions of \eqref{E:Main} does not seem to be obvious, since the maximum principle, in general, is not true due to nonlinear diffusion with drift term. Nevertheless, we obtain uniqueness result of $L^m$-weak solutions under additional conditions for gradient of drift and initial data, exploiting mass transportation theory (see Theorem \ref{Corollary : Uniqueness} and Corollary \ref{Corollary : Uniqueness-2}, and refer Section~\ref{PME-KS-eq} for proofs).

As an application, we consider a repulsive Keller-Segel system of porous medium type.
With the aid of existence results, if the power $m$ of degenerate diffusion is beyond a certain positive number, we show that
solutions become bounded, which turns out to be an improvement, compared to known results (see Theorem~\ref{thm-jengchik50} and refer Section~\ref{PME-KS-eq} for its proof). Existence results developed in this paper are expected to be applicable to other situations related to equations of PME type as well.

Our paper is organized as follows:
In Section~\ref{S: Main}, we state all main results and several remarks and figures are provided to help readers understand.
Some preliminaries are prepared in Section~\ref{S:Preliminaries}. Section~\ref{S:a priori} is devoted to giving a priori estimates for regular solutions (see Definition~\ref{D:regular-sol}).
In Section~\ref{splitting method}, the existence of regular solutions of PME is established by the splitting method.
Section~\ref{Exist-weak} is prepared for proofs of existence results stated in Section~\ref{S: Main}.
Finally, we provide the proof of uniqueness results and we treat a repulsive Keller-Segel system of porous medium type in Section~\ref{PME-KS-eq}.
We present the proof of uniform H\"{o}lder continuity of homogeneous PME up to initial time in Appendix~\ref{Appendix:Holder}.
In Appendix~\ref{Appendix:fig}, supplemetary figures are additionally added.



\section{Main results}\label{S: Main}

In this section, we describe our main results and make relevant remarks for each of them. First, existence of weak solutions for \eqref{E:Main} are categorized according to hypotheses of given initial data and drifts.
Uniqueness of weak solutions are discussed as well. Finally, as an application, we study a  repulsive type Keller-Segel equations to improve previously known results by taking advantage of develped main results.


Before stating main theorems, for convenience, we introduce some notations and make a few remarks.
\begin{itemize}
\item Let us denote by $\mathcal{P}(\mathbb{R}^d)$ the set of all Borel
probability measures on $\mathbb{R}^d$.
Furthermore, denote $\mathcal{P}_p(\mathbb{R}^d)$ as the space $\mathcal{P}(\mathbb{R}^d)$ with a finite $p$-th moment, that is $\int_{\mathbb{R}^d} \langle x \rangle^{p} \,d\mu < \infty$ if
$ \mu \in \mathcal{P}_p(\mathbb{R}^d)$.
We refer Section~\ref{SS:Wasserstein} for definitions and related properties of the Wasserstein distance denoted by $W_p$, the Wasserstein space, and AC (absolutely continuous) curves.

  \item The $p$-th power is given as
\begin{equation*}
\langle x \rangle^{p} = \left(1+ |x|^2\right)^{\frac p2}, \ \quad \text{ for } \ 1 \leq p < \infty \ \text{ and } \ x\in\bbr^d, \ d\geq 2.
\end{equation*}

\item  Let $A\subset \mathbb{R}$ be a measurable set. We denote by $\delta_A$ the indicator function of $A$. 

  \item For any $m>1$ and $q\geq 1$, let us define the following constant
\begin{equation}\label{lambda_q}
 \lambda_q := \min \left\{2, 1+\frac{d(q-1)+q}{d(m-1)+q} \right\},
\end{equation}
that can be rewritten as ${\lambda_q} = \left\{1+\frac{d(q-1)+q}{d(m-1)+q} \right\}\cdot \delta_{\{1 \leq q \leq m\}} + 2 \cdot \delta_{\{q > m\}}$.
The constant ${\lambda_q} \in (1, 2]$ in \eqref{lambda_q}  naturally comes from evaluating the $p$-th moment and the speed estimates which are essential to play in Wasserstein spaces (refer Section~\ref{SS:speed}). A straightforward observation gives that $ 1 + \frac{d(q-1)+q}{d(m-1)+q}=2$ if $q=m$.
Also, for any $q\geq 1$, note that ${\lambda_q} =2$ if $m=1$.

\item The letters $c$ and $C$ are used for generic constants.  Also, the letter $\theta$ is a generic constant  which varies with arguments of interpolation.
Throughout the paper, we omit the dependence on $m$, $q$, $d$, $T$, and $\|\rho_0\|_{L^{1}(\bbr^d)}$, because we regard $m>1$, $q \geq 1$, $d\geq 2$, $T>0$ are given constants and $\|\rho_0\|_{L^{1}(\bbr^d)} = 1$
in either $\mathcal{P}(\mathbb{R}^d)$ or $\mathcal{P}_p(\mathbb{R}^d)$.

\item In figures, the notation $\cal{R}(\textbf{Ab\ldots Yz})$ is used to indicate a polygon with vertices $\textbf{A}, \textbf{b}, \ldots, \textbf{Y}, \textbf{z}$. Also the notation $\overline{\textbf{Ab\ldots Yz}}$ is used for the piecewise line segments connecting $\textbf{A}$, $\textbf{b}$, $\ldots$, $\textbf{Y}$, $\textbf{z}$.
\end{itemize}

Here we introduce the notion of weak solutions of \eqref{E:Main}.
\begin{defn}\label{D:weak-sol}
Let $q\in [1, \infty)$ and $V$ be a measurable vector field.
We say that a nonnegative measurable function $\rho$ is
a \textbf{$L^q$-weak solution} of \eqref{E:Main} with $\rho_0 \in L^{1}(\mathbb{R}^d)\cap L^{q}(\mathbb{R}^d)$ if the followings are satisfied:
\begin{itemize}
\item[(i)] It holds that
\[
\rho \in L^{\infty}\left(0, T; \left(L^{1}\cap L^q\right)(\mathbb{R}^d)\right)\cap L^{m} (Q_T), \quad \nabla \rho^{\frac{m+q-1}{2}} \in L^{2}(Q_T), \quad   \text{and} \quad \rho V \in L^{1}(Q_T).
\]
\item[(ii)] For any function $\varphi \in {\mathcal{C}}_c^\infty (\bbr^d \times [0,T))$ it holds that
\begin{equation}\label{KK-May7-40}
\iint_{Q_{T}} \left\{ \rho \varphi_t + \rho^m \Delta\varphi + \rho V \cdot \nabla \varphi \right\} \,dx dt = -\int_{\mathbb{R}^d} \rho_{0} (\cdot) \varphi(\cdot, 0) \,dx.
\end{equation}
\end{itemize}
\end{defn}

We remind the property of \emph{mass conservation} for nonnegative solutions of \eqref{E:Main}, i.e.
$\|\rho(\cdot, t)\|_{L^{1} (\mathbb{R}^d)} = \|\rho_0\|_{L^{1}(\mathbb{R}^d)}$ for a.e. $t\in [0, T]$. This can be derived from the weak formulation (see e.g. \cite[Section~9.5.1]{Vaz06}).

\begin{remark}
Without loss of generality, we assume that $\rho_0 \in \mathcal{P} (\mathbb{R}^d)$, that is, $\|\rho_0\|_{L^1 (\bbr^d)} = 1$. We note that the mass conservation property in time implies that $\|\rho(\cdot, t)\|_{L^{1} (\mathbb{R}^d)} = 1$ a.e.  $t\in [0, T]$.

If $\|\rho_0\|_{L^1(\bbr^d)} = c > 0$, then we replace $\rho$ by $\tilde{\rho} = \rho/c$. Thus we have $\tilde{\rho}_0 \in \mathcal{P}(\bbr^d)$ and
\[
\partial_{t} \rho + \Delta \rho^m = \nabla \cdot (V \rho)  \quad \Longleftrightarrow \quad \partial_{t} \tilde{\rho} + c^{m-1}\Delta (\tilde{\rho})^m = \nabla \cdot (V \tilde{\rho}).
\]
In fact, the diffusive coefficient $c^{m-1}$ of the equation for $\tilde{\rho}$ does not play important role in our analysis, and thus, for simplicity, we assume that $c=1$.
\end{remark}

\subsection{Existence results}\label{SS:Existence}

In this subsection, we state all existence results and related remarks and figures are presented.

\subsubsection{Existence for case: $ \int_{\bbr^d} \rho_0 \log \rho_0 \,dx < \infty$.}\label{SS:m less 2}
In this subsection, we assume that the initial data satisfy
\begin{equation}\label{T1:rho0-log}
   \rho_0 \in \mathcal{P}_p(\bbr^d) \quad \text{and} \quad \int_{\bbr^d} \rho_0 \log \rho_0 \,dx < \infty,
  \end{equation}
  where $p\in (1, 2]$ will be specified later.
We remark that it is not enough to assume merely $\rho_0 \in L^1 (\mathbb{R}^d)$ for showing the existence of $L^1$-weak solutions. Instead, with slightly better initial data satisfying \eqref{T1:rho0-log}, we compute a priori estimates in Proposition~\ref{P:log-energy} and Proposition~\ref{P:log-energy-speed}, which enable us to obtain
existence of weak solutions via suitable approximations.
Here we consider separately three different cases depending on the condition of $V$.
Firstly, Theorem~\ref{Theorem-1} is when $V$ belongs to a certain subscaling classses.
Moreover, Theorem~\ref{T:log-div-free} deals with the case that $V$ is a divergence-free vector and Theorem~\ref{T:log-tilde} is for the case $\nabla V$ belongs a sub-scaling classes, respectively.
The restriction of $1<m\leq 2$ appears in general because of compactness arguments (see Remark~\ref{remark:theorem1} $(ii)$).

For the first case, our result reads as follows:
\begin{theorem}\label{Theorem-1}
 Let $1<m\leq 2$ and  $p>1 $. Suppose that
\eqref{T1:rho0-log} and
  \begin{equation}\label{T1:V-log}
V\in\mathfrak{S}_{m,1}^{(q_1, q_2)} \quad \text{for} \quad
\begin{cases}
 2 \leq q_2 \leq \frac{m}{m-1},  & \text{ if } d> 2, \vspace{1 mm}\\
 2 \leq q_2 < \frac{m}{m-1}, & \text{ if } d= 2.    \end{cases}
\end{equation}

\begin{itemize}
\item [(i)] For  $1 < p \leq  {\lambda_1} := 1+ \frac{1}{d(m-1)+1}$, then there exists
  a nonnegative $L^1$-weak solution of \eqref{E:Main} in Definition~\ref{D:weak-sol} such that  $\rho \in AC(0,T; \mathcal{P}_p(\mathbb{R}^d))$ with $\rho(\cdot, 0)=\rho_0$.
Furthermore, $\rho$ satisfies
\begin{equation}\label{T1:apriori-log}
\sup_{0\leq t \leq T} \int_{\mathbb{R}^d\times \{t\}} \rho\left(
\abs{\log \rho} + \langle x \rangle^p \right) \,dx
+ \iint_{Q_T} \{ \abs{\nabla \rho^{\frac m2}}^2 + \left(\abs{\frac{\nabla
\rho^m}{\rho}}^p+|V|^{p}\right ) \rho  \}\,dx\,dt
 \leq  C,
\end{equation}
and
\begin{equation}\label{KK-May7-60}
 W_p(\rho(t),\rho(s))\leq C (t-s)^{\frac{p-1}{p}},\qquad
\forall ~~0\leq s\leq t\leq T,
\end{equation}
where the constant $C= C ( \|V\|_{\mathfrak{S}_{m,1}^{(q_1,q_2)}},\,
\int_{\mathbb{R}^d} \left(\rho_0 \log \rho_0 + \rho_0 \langle x
\rangle^p \right) \,dx )$.

\item [(ii)] For  $ p > {\lambda_1} $, the same conclusions hold as in $(i)$ except that $p$ is replaced by ${\lambda_1}$.
That is, there exists
  a nonnegative $L^1$-weak solution of \eqref{E:Main} in Definition~\ref{D:weak-sol} such that  $\rho \in AC(0,T; \mathcal{P}_{{\lambda_1}}(\mathbb{R}^d))$ with $\rho(\cdot, 0)=\rho_0$.
Furthermore, $\rho$ satisfies
\begin{equation}\label{T1:apriori-log-1}
\sup_{0\leq t \leq T} \int_{\mathbb{R}^d\times \{t\}} \rho\left(
\abs{\log \rho} + \langle x \rangle^{{\lambda_1}} \right) \,dx
+ \iint_{Q_T} \{ \abs{\nabla \rho^{\frac m2}}^2 + \left(\abs{\frac{\nabla
\rho^m}{\rho}}^{{\lambda_1}}+|V|^{{\lambda_1}}\right ) \rho \} \,dx\,dt
 \leq  C,
\end{equation}
and
\begin{equation}\label{KK-May7-60-1}
 W_{{\lambda_1}}(\rho(t),\rho(s))\leq C (t-s)^{\frac{{\lambda_1}-1}{{\lambda_1}}},\qquad
\forall ~~0\leq s\leq t\leq T,
\end{equation}
where the constant $C= C ( \|V\|_{\mathfrak{S}_{m,1}^{(q_1,q_2)}},\,
\int_{\mathbb{R}^d} \left(\rho_0 \log \rho_0 + \rho_0 \langle x
\rangle^{p} \right) \,dx )$.
\end{itemize}
\end{theorem}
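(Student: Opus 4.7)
The plan is to construct $\rho$ by the splitting method outlined in the introduction: pick a time step $\tau > 0$, a smooth truncation $V^\varepsilon$ of $V$ preserving the sub-scaling norm up to a constant, and a regularized initial datum $\rho_0^\varepsilon$. On each sub-interval of length $\tau$ one alternately solves the homogeneous PME $\partial_t \rho = \Delta \rho^m$ and the transport $\partial_t \rho + \nabla \cdot (V^\varepsilon \rho) = 0$ via the flow map of $V^\varepsilon$, using the uniform H\"older continuity of the PME step up to $t = 0$ (Appendix~\ref{Appendix:Holder}) to chain the half-steps without losing regularity at the transition times. Sending $\tau \downarrow 0$ produces a regular nonnegative approximant $\rho^\varepsilon$ of \eqref{E:Main} with drift $V^\varepsilon$.

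To derive \eqref{T1:apriori-log} uniformly in $\varepsilon$, I would test the equation against $\log \rho + 1$ and $\langle x \rangle^p$ separately. The first gives the entropy identity
\[
\frac{d}{dt} \int_{\bbr^d} \rho \log \rho \, dx + \frac{4}{m} \int_{\bbr^d} |\nabla \rho^{m/2}|^2 \, dx = -\int_{\bbr^d} V \cdot \nabla \rho \, dx,
\]
and H\"older plus Young bound the right-hand side by a fraction of the dissipation plus $\int |V|^p \rho \, dx$; the balance of space--time exponents is precisely the sub-scaling identity of Definition~\ref{D:Serrin} with $q = 1$, which is what forces $2 \le q_2 \le m/(m-1)$. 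A Gagliardo--Nirenberg interpolation on $\rho^{m/2}$ converts the drift integral into $\|V\|_{\mathfrak{S}_{m,1}^{(q_1,q_2)}}$ times a finite power of the entropy and moment, and Gr\"onwall closes the loop for $p \le \lambda_1$. Finally, rewriting $\partial_t \rho = -\nabla \cdot(\rho v)$ with $v = V - \nabla \rho^m/\rho$ and invoking the Benamou--Brenier-type representation together with the $\int |v|^p \rho$-part of \eqref{T1:apriori-log} yields \eqref{KK-May7-60}.

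Compactness is via Aubin--Lions: the $L^2_{t,x}$-bound on $\nabla \rho^{m/2}$ combined with the $W_p$-equicontinuity of $\rho^\varepsilon$ gives strong convergence of $\rho^\varepsilon$ in $L^m_{\mathrm{loc}}(Q_T)$. The restriction $m \le 2$ enters here, since for $m \le 2$ one interpolates $\rho^m$ between $L^1$ and the $L^{m^\ast}$-bound coming from Sobolev embedding of $\rho^{m/2}$, while for $m > 2$ this interpolation degenerates without an additional $L^q$-bound. Passing to the limit in \eqref{KK-May7-40} then uses the uniform sub-scaling control on $V^\varepsilon$ to handle the product $V^\varepsilon \rho^\varepsilon$, and part~$(ii)$ is a straightforward adaptation with $p$ replaced by $\lambda_1$ throughout, the AC curve living in $\mathcal{P}_{\lambda_1}$.

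The main obstacle I expect is the apparent circularity in the entropy estimate: the quantity $\int |V|^p \rho$ that we wish to bound on the left reappears in the drift term on the right. The resolution is a H\"older split of $\int V \cdot \nabla \rho$ in the $(x,t)$-variables so that one factor is absorbed into the dissipation $\int |\nabla \rho^{m/2}|^2$ while the complementary factor is exactly $\|V\|_{\mathfrak{S}_{m,1}^{(q_1,q_2)}}$; the sub-scaling inequality $d/q_1 + (2 + d(m-1))/q_2 \le 1 + d(m-1)$ in \eqref{subSerrin} is the precise condition under which this split closes, so that the argument ultimately reduces to a Gr\"onwall loop with a constant depending only on the prescribed norms of $V$ and $\rho_0$.
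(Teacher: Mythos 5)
Your proposal is essentially the paper's own proof --- splitting method built on the initial-time H\"older continuity of homogeneous PME (Theorem~\ref{T:Boundary_Holder}), entropy and $p$-moment estimates closed under the sub-scaling condition, Aubin--Lions plus Wasserstein-space Arzel\`a--Ascoli compactness, and lower semicontinuity to transfer the a priori bounds to the limit --- so the overall architecture is the right one, and your identification of the key ingredients (in particular the uniform H\"older continuity up to the initial time needed to chain the split steps) matches the paper.

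There is one point to correct. You attribute the restriction $1<m\le 2$ entirely to the Aubin--Lions interpolation step, but for Theorem~\ref{Theorem-1} it already enters the a priori entropy estimate of Proposition~\ref{P:log-energy}$(i)$. There the drift term $\int V\cdot\nabla\rho\,dx$ is estimated by writing $\nabla\rho = \tfrac{2}{m}\rho^{1-m/2}\nabla\rho^{m/2}$ and H\"oldering so that one factor is absorbed into $\|\nabla\rho^{m/2}\|_{L^2_x}$ and the remaining $\rho^{1-m/2}$ factor is controlled by interpolation between $L^1_x$ and the Sobolev range of $\rho^{m/2}$; this requires $1-m/2\ge 0$, i.e.\ $m\le 2$, so that the power of $\rho$ is nonnegative and estimable by the mass and the dissipation. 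For $m>2$ that factor is singular near $\rho=0$ and the sub-scaling split does not close at the energy level. It is only in the divergence-free and $\nabla V$ variants (Theorems~\ref{T:log-div-free} and \ref{T:log-tilde}) that the entropy estimates hold for all $m>1$ and the $m\le 2$ cap comes purely from the compactness step (Proposition~\ref{Proposition : AL-1}), which is exactly the distinction made in Remark~\ref{remark:theorem1}$(ii)$; since you are proving Theorem~\ref{Theorem-1}, the drift is not assumed divergence-free and your entropy argument needs this additional structural constraint on $m$.
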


In the next theorem, $V$ is assumed to be divergence-free.
The scaling invariant class of $V$ is larger than that of Theorem~\ref{Theorem-1} (See Remark~\ref{remark:theorem1} $(iii)$ and Fig. 3 for a more definte comparison).

\begin{theorem}\label{T:log-div-free}
Let  $1<m\leq 2$ and $p>1$. Assume \eqref{T1:rho0-log} and the divergence-free $V$ (i.e., $\nabla \cdot V = 0$) satisfying
\begin{equation}\label{T1:V-log-divfree}
V\in \mathfrak{S}_{m,1}^{(q_1, q_2)} \quad \text{for} \quad
\begin{cases}
{\lambda_1} \leq q_2 \leq \frac{{\lambda_1} m}{m-1}, & \text{ if }  d>2, \vspace{1 mm}\\
{\lambda_1} \leq q_2 < \frac{{\lambda_1} m}{m-1}, & \text{ if }   d=2,
\end{cases}
\end{equation}
 where ${\lambda_1}$ is the number defined in Theorem \ref{Theorem-1}.
Then the same conclusions hold as in Theorem~\ref{Theorem-1}.
\end{theorem}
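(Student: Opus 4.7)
The plan is to closely mirror the proof of Theorem~\ref{Theorem-1}: build regular approximate solutions by the splitting method, derive uniform a priori bounds (entropy, $p$-th moment, speed), and pass to the limit by compactness. The essential new input is to exploit the condition $\nabla\cdot V=0$ at every place where the drift previously forced $\|V\|_{\mathfrak{S}_{m,1}^{(q_1,q_2)}}$ with $q_2\ge 2$.

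The central simplification appears in the entropy estimate. Testing the equation against $\log\rho+1$ (at the approximate level) yields, formally,
$$\frac{d}{dt}\int_{\mathbb{R}^d}\rho\log\rho\,dx + \frac{4}{m}\int_{\mathbb{R}^d}|\nabla\rho^{m/2}|^2\,dx = \int_{\mathbb{R}^d} V\cdot\nabla\rho\,dx = -\int_{\mathbb{R}^d}(\nabla\cdot V)\,\rho\,dx,$$
which vanishes under the divergence-free assumption. Consequently the entropy $\int\rho\log\rho\,dx$ and the dissipation $\|\nabla\rho^{m/2}\|_{L^2(Q_T)}^2$ are controlled purely in terms of $\int\rho_0\log\rho_0\,dx$, with \emph{no} hypothesis on $V$. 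This is exactly the reason the lower threshold $q_2\ge 2$ in \eqref{T1:V-log} can be relaxed to $q_2\ge{\lambda_1}$ in \eqref{T1:V-log-divfree}.

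For the remaining a priori estimates the drift is still present, and the analysis parallels that behind Theorem~\ref{Theorem-1}, but with a different interpolation budget. In the $p$-th moment inequality
$$\frac{d}{dt}\int_{\mathbb{R}^d}\rho\langle x\rangle^p\,dx \;\lesssim\; \int_{\mathbb{R}^d}\rho^m\langle x\rangle^{p-2}\,dx + \int_{\mathbb{R}^d}|V|\rho\langle x\rangle^{p-1}\,dx$$
and in the speed estimate for $|v|=|-\nabla\rho^m/\rho+V|$, terms of the form $\int|V|^p\rho$ and their weighted variants are estimated by H\"older's inequality against the mass/entropy bound $\|\rho\|_{L^\infty(0,T;L\log L)}$, which is a free gain from the divergence-free cancellation; the corresponding $L^q$-gain one would otherwise spend on the entropy step is now entirely available for the moment/speed steps. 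Carrying through the scaling-invariant computation then yields precisely the shifted admissible range
$$\frac{d}{q_1}+\frac{2+q_{m,d}}{q_2}=1+q_{m,d},\qquad {\lambda_1}\le q_2\le\frac{{\lambda_1}\,m}{m-1},$$
with the upper endpoint governed by the same scaling relation as in Theorem~\ref{Theorem-1} and the lower endpoint ${\lambda_1}$ emerging from saturating the $p$-th moment interpolation at $p={\lambda_1}$.

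The principal technical obstacle is preserving the divergence-free structure along the splitting approximation. In the splitting scheme, the drift step is solved by the flow of a frozen, mollified vector field $V^{\varepsilon}$, and one must verify both that $V^{\varepsilon}$ can be chosen to remain divergence-free (e.g.\ by mollifying with a radial kernel, which commutes with $\nabla\cdot$) and that the cancellation $\int V^{\varepsilon}\cdot\nabla\rho^{\varepsilon}\,dx=0$ survives each substep and the limit $\varepsilon\to 0$. Once this is in place, the remainder of the argument reproduces that of Theorem~\ref{Theorem-1}: strong $L^m(Q_T)$ compactness for $\rho^{\varepsilon}$ via Aubin--Lions applied to $\rho^{m/2}$, a.e.\ convergence, tightness and Wasserstein continuity from the moment and speed bounds, and passage to the limit in the weak formulation \eqref{KK-May7-40}. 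The restriction $1<m\le 2$ reappears at this compactness stage (required, as in Table~\ref{Table1}, to handle $\rho V^{\varepsilon}$ in $L^1(Q_T)$), while the H\"older estimate \eqref{KK-May7-60} and the absolute continuity $\rho\in AC(0,T;\mathcal{P}_p(\mathbb{R}^d))$ follow from the speed bound via the standard identification of AC curves in Wasserstein space.
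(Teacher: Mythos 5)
Your proposal matches the paper's proof in structure and in every essential ingredient: (i) run the splitting scheme with mollified data, observing that mollification preserves $\nabla\cdot V=0$ so the approximate vector fields $V_n$ remain divergence-free; (ii) exploit the cancellation $\int V\cdot\nabla\rho\,dx=\int\rho\,\nabla\cdot V\,dx=0$ in the entropy balance, which removes the $V$-dependence of the entropy/dissipation bound entirely (this is exactly Proposition~\ref{P:log-energy}~$(iii)$); (iii) retain the $p$-th moment and speed estimates of Lemma~\ref{L:p-moment-estimate} and Lemma~\ref{P:W-p}, which are the sole source of the condition $\lambda_1\le q_2\le\lambda_1 m/(m-1)$ on $V$ (Proposition~\ref{P:log-energy-speed}~$(iii)$); and (iv) pass to the limit via Aubin--Lions as in Proposition~\ref{Proposition : AL-1}.

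Two small inaccuracies in your explanation, neither of which affects the validity of the argument. First, the restriction $1<m\le 2$ at the compactness stage is not about integrability of $\rho V^\varepsilon$ in $L^1(Q_T)$; it is needed because the estimate of $\nabla\rho=\tfrac{2}{m}\rho^{(2-m)/2}\nabla\rho^{m/2}$ in $L^{(md+2)/(d+1)}_{x,t}$ inside Proposition~\ref{Proposition : AL-1} requires the exponent $(2-m)/2$ to be nonnegative. Second, in the speed bound $\iint|V|^p\rho$ the interpolation is carried out against $\|\rho\|_{L^{1,\infty}_{x,t}}$ (i.e.\ conservation of mass) and the dissipation $\|\nabla\rho^{m/2}\|_{L^2_{x,t}}$, not against an $L\log L$ norm; the entropy bound is used only afterwards, via the $p$-entropy inequality, to pass from $\int\rho\log\rho$ to $\int\rho|\log\rho|$.
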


The last theorem is concerned about the case that $\nabla V$ belongs to a sub-scaling class (see Remark~\ref{remark:theorem1} $(iv)$).
\begin{theorem}\label{T:log-tilde}
Let $1<m\leq 2$ and let $p>1$. Suppose that \eqref{T1:rho0-log} and
\begin{equation}\label{T:V-tilde-log-energy}
V \in \tilde{\mathfrak{S}}_{m,1}^{(\tilde{q}_1, \tilde{q}_2)} \
\text{ for } \
\begin{cases}
\frac{2+d(m-1)}{1+d(m-1)} < \tilde{q}_2 \leq \frac{m}{m-1}, & \text{ if } d>2,
\vspace{1 mm}\\
\frac{2+d(m-1)}{1+d(m-1)} < \tilde{q}_2 < \frac{m}{m-1}, & \text{ if
} d=2.
\end{cases}
\end{equation}
Then the same conclusions hold as in Theorem~\ref{Theorem-1}
with
$C= C ( \|V\|_{\tilde{\mathfrak{S}}_{m,1}^{(\tilde{q}_1, \tilde{q}_2)}},\,  \int_{\mathbb{R}^d} \left(\rho_0 \log \rho_0 + \rho_0 \langle x \rangle^p \right) \,dx )$.
\end{theorem}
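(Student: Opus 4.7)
The plan is to follow the splitting-method proof of Theorem~\ref{Theorem-1} but to modify the entropy step so that the hypothesis on $\nabla V$ (rather than $V$) is exploited via integration by parts on the drift term. A direct appeal to the embedding \eqref{norm-V-gradV} alone will not suffice, because the relevant range of $\tilde{q}_2$ in \eqref{T:V-tilde-log-energy} can extend below~$2$, whereas Theorem~\ref{Theorem-1} requires $q_2\ge 2$; so the entropy estimate really has to be redone from scratch.

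First I would build a sequence of regular approximations $\rho_\ep$ via the splitting scheme of Section~\ref{splitting method} (alternating homogeneous PME steps with pure-transport steps along the flow of $V$). The only substantially new ingredient is the entropy bound: starting from
\[
\frac{d}{dt}\int_{\bbr^d}\rho\log \rho\,dx + \frac{4}{m}\int_{\bbr^d}|\nabla \rho^{m/2}|^2\,dx = \int_{\bbr^d} V\cdot \nabla\rho\,dx,
\]
I integrate by parts on the right, rewriting it as $-\int_{\bbr^d} \rho\,\nabla\cdot V\,dx$, then apply H\"older in space against $\tilde{q}_1$ and in time against $\tilde{q}_2$. The residual norm $\|\rho(\cdot,t)\|_{L^{\tilde{q}_1'}}$ is then controlled by Gagliardo--Nirenberg interpolation between the conserved mass $\|\rho\|_{L^1}$ and the dissipation $\|\nabla\rho^{m/2}\|_{L^2}$. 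The scaling identity $\frac{d}{\tilde{q}_1}+\frac{2+d(m-1)}{\tilde{q}_2}=2+d(m-1)$ is tuned precisely so that the resulting power of $\|\nabla\rho^{m/2}\|_{L^2}^2$ equals~$1$, enabling a small-constant absorption into the dissipation provided $\tilde{q}_2>\frac{2+d(m-1)}{1+d(m-1)}$; the remainder is a mass/time factor that is closed by Gr\"onwall.

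For the $p$-moment and speed estimates, the drift is paired with $\nabla\langle x\rangle^p$, which is not integrable, so integration by parts is no longer an option. Here I would invoke the embedding \eqref{norm-V-gradV} to place $V\in \mathfrak{S}_{m,1}^{(q_1,\tilde{q}_2)}$ with $q_1=\frac{d\tilde{q}_1}{d-\tilde{q}_1}$; the strict lower bound on $\tilde{q}_2$ guarantees $\tilde{q}_1<d$, so the embedding is valid. With this in hand, the $p$-moment and speed bounds reduce to the same computations used for Theorem~\ref{Theorem-1} via Proposition~\ref{P:log-energy-speed}, and yield \eqref{T1:apriori-log}, \eqref{KK-May7-60} (or their $\lambda_1$-versions) with constants depending on $\|V\|_{\tilde{\mathfrak{S}}_{m,1}^{(\tilde{q}_1,\tilde{q}_2)}}$ instead of $\|V\|_{\mathfrak{S}_{m,1}^{(q_1,q_2)}}$. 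The compactness arguments and limit passage of Section~\ref{Exist-weak} then carry over verbatim, since strong $L^m$-convergence of $\rho_\ep$ combined with the uniform $L^1(Q_T)$ bound on $\rho_\ep V$ transfers convergence to every nonlinear term in the weak formulation \eqref{KK-May7-40}; the Wasserstein absolute continuity follows from the speed bound in the standard way.

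The main obstacle is the sharpness of the entropy absorption at the endpoints of the $\tilde{q}_2$-range. The degeneracy at $\tilde{q}_2=\frac{2+d(m-1)}{1+d(m-1)}$ (the endpoint corresponding to $\tilde{q}_1=d$) is exactly the point at which the Gagliardo--Nirenberg exponent saturates, forcing the strict inequality there; conversely, the boundary $\tilde{q}_2=\frac{m}{m-1}$ marks the threshold at which the temporal H\"older exponent reaches one, and it is only attainable when $d>2$ (which is why the two-dimensional case excludes it). Verifying these two marginal balances, and checking that the embedding-based $V$-class is still admissible for the moment and speed steps throughout the entire range of $\tilde{q}_2$, is the delicate part of the argument; once these are handled, the rest of the proof mirrors Theorem~\ref{Theorem-1} line by line.
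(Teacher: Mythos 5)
Your proposal is correct and follows essentially the same route as the paper: you redo the entropy estimate by integrating by parts against $\nabla\cdot V$ (this is Proposition~\ref{P:log-energy}(ii)), and you rightly observe that the embedding \eqref{norm-V-gradV} cannot be used there because $\tilde q_2$ may fall below~$2$, while it \emph{is} used for the moment and speed estimates, which is precisely what the paper does via Remark~\ref{R:S-tildeS}(ii) and Proposition~\ref{P:log-energy-speed}(ii). The construction of approximants, the Aubin--Lions compactness, and the limit passage are then identical to the proof of Theorem~\ref{Theorem-1}, modulo replacing $\|V_n-V\|_{\mathcal{S}_{m,1}^{(q_1,q_2)}}\lesssim\|V_n-V\|_{\tilde{\mathcal{S}}_{m,1}^{(\tilde q_1,\tilde q_2)}}$, exactly as the paper notes.
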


Let us provide some remarks regarding three theorems in this subsection.
\begin{remark}\label{remark:theorem1}
\begin{itemize}
\item[(i)] Results in Theorem~\ref{Theorem-1} are also true for linear case $(m=1)$ for $1\le p\le 2$ (see \cite{KK-arxiv}).  We do not know whether or not the moment estimate \eqref{T1:apriori-log} can be extended to $p>{\lambda_1}$, and thus we leave it as an open question.

\item[(ii)]
There is restriction on $m$ that $1 < m \leq 2$ in Theorem~\ref{T:log-div-free} and \ref{T:log-tilde} for cases $\nabla \cdot V =0$ or $V\in \tilde{\mathfrak{S}}_{m,1}^{(\tilde{q}_1, \tilde{q}_2)}$, though a priori estimates are obtained for $m>1$ in Proposition~\ref{P:log-energy} (ii), (iii). Such a restriction is necessary because the compactness argument in Proposition~\ref{Proposition : AL-1} only works for $1<m\leq 2$.
    Therefore, if one can resolve the compactness argument in cases $\nabla \cdot V =0$ or $V\in \tilde{\mathfrak{S}}_{m,1}^{(\tilde{q}_1, \tilde{q}_2)}$ for $m>2$, then the results in Theorem~\ref{T:log-div-free} and \ref{T:log-tilde} become valid for any $m>1$.

\item[(iii)] In the following figure, we illustrate the range of $(q_1, q_2)$ satisfying \eqref{T1:V-log} and \eqref{T1:V-log-divfree} in particular for scaling invariant class $\mathcal{S}_{m,1}^{(q_1, q_2)}$ as dark-shaded region $\mathcal{R}\left(\textbf{ABC}\right)$ and as lightly-shaded region $\mathcal{R}\left(\textbf{ABDE}\right)$, respectively. When $d=2$, the line $\overline{\textbf{BC}}$ and $\overline{\textbf{BD}}$ are excluded from $\mathcal{R}\left(\textbf{ABC}\right)$ and $\mathcal{R}\left(\textbf{ABDE}\right)$, respectively.

\begin{center}
\begin{tikzpicture}[domain=0:16]

\draw (0, -1) node[right] { \scriptsize Figure 3. Theorem~\ref{Theorem-1} \& \ref{T:log-tilde} for $1<m\leq 2$, $d>2$.};

\fill[fill= lgray]
(0, 2) -- (1.2,0) -- (3, 1.4) -- (0, 3.2) -- (0, 2);

\fill[fill= gray]
(0, 2) -- (1.2,0) -- (2, 2);

\draw[->] (0,0) node[left] {\scriptsize $0$} -- (6,0) node[right] {\scriptsize $\frac{1}{q_1}$};
\draw[->] (0,0) -- (0,4.5) node[left] { \scriptsize $\frac{1}{q_2}$};

\draw (0,4) node{\scriptsize $+$} node[left] {\scriptsize $1$} ;
\draw (4,0) node{\scriptsize $+$} node[below] {\scriptsize $1$} ;

\draw[very thin]
(0, 2) node{\scriptsize $\bullet$} node[left] {\scriptsize \textbf{A}} -- (2, 2) ;

\draw[very thin]
(2, 0) node{\scriptsize $+$} node[below]{\scriptsize $\frac 12$}   -- (2, 2);

\draw[very thin]
(0, 0) -- (2, 2) node{\scriptsize $\bullet$} node[above] {\scriptsize \textbf{C}};

\draw[very thin]
(1.2, 0) node{\scriptsize $\bullet$} -- (2, 2);

\draw[very thin]
(1.2, 0) -- (3, 1.4) node{\scriptsize $\bullet$} node[right] {\scriptsize \textbf{D}};

\draw (0,3.2) node{\scriptsize $\bullet$} node[left] {\scriptsize $\frac{1+d}{2+d}$} node[right] {\scriptsize \textbf{E}}
-- (5.4, 0) node {\scriptsize $\times$} node[below] {\scriptsize $\frac{1+d}{d}$} ;
\draw (4.8, 0.8) node {\scriptsize $\mathcal{S}_{2,1}^{(q_1, q_2)}$} ;

\draw (0,2)  -- (1.2, 0)  node[below] {\scriptsize \textbf{B}};


\draw (0,2.7) node {\scriptsize $\bullet$} node[left] {\scriptsize $\frac{1}{{\lambda_1}}$} node[right] {\scriptsize \textbf{f}}  -- (3.1, 0) node {\scriptsize $\times$} node[below] {\scriptsize $\frac{1+d(m-1)}{d}$};
\draw (3.3, 0.5) node{\scriptsize $\mathcal{S}_{m,1}^{(q_1, q_2)}$};

\draw[thin, dotted] (0.8, 0) node{\scriptsize$*$}--(0.8, 2) node{\scriptsize $\bullet$} node[right] {\scriptsize \textbf{g}};
\draw[thin, dotted] (1.7, 0) node{\scriptsize$*$} -- (1.7, 1.22) node{\scriptsize $\bullet$}node[right] {\scriptsize \textbf{h}} ;
\draw[thin, dotted] (0, 1.22) node{\scriptsize$*$} -- (1.7, 1.22);
\draw[thin, dotted] (2.2, 0) node{\scriptsize$*$}--(2.2, 0.8) node{\scriptsize $\bullet$} node[right] {\scriptsize \textbf{i}} ;
\draw[thin, dotted] (0, 0.8) node{\scriptsize$*$}--(2.2, 0.8);

\draw[thin, dotted] (0, 1.4) node{\scriptsize$*$}  -- (3, 1.4) ;
\draw[thin, dotted] (3, 0) node{\scriptsize$*$}  -- (3, 1.4) ;

\draw (7, 4) node[right] {\scriptsize $\overline{\textbf{AB}} = \mathcal{S}_{1,1}^{(q_1, q_2)}$, $\textbf{A} = (0, \frac 12)$, $\textbf{B} = (\frac 1d, 0)$};

\draw (7, 3.5) node[right] {\scriptsize $\mathcal{R}\left(\textbf{ABC}\right)$: scaling invarant case of \eqref{T1:V-log} };
\draw (7, 3) node[right] {\scriptsize $\mathcal{R}\left(\textbf{ABDE}\right)$: scaling invariant case of \eqref{T1:V-log-divfree} };

\draw (7, 2) node[right] {\scriptsize  $\textbf{C} = (\frac 12, \frac 12)$, $\textbf{D} = (\frac{1+d}{2d}, \frac{1+d}{2(2+d)})$, $\textbf{E} = (0, \frac{1+d}{2+q})$ };
\draw (7, 1.5) node[right] {\scriptsize $\textbf{f, g, h,i}$: on $\mathcal{S}_{m,1}^{(q_1,q_2)}$ for $1 < m < 2$};
\draw (7, 1) node[right] {\scriptsize $\textbf{g} = (\frac{m-1}{2}, \frac 12)$, $\textbf{h} = (\frac{(2-m)+d(m-1)}{md}, \frac{m-1}{m})$ };
\draw (7, 0.5) node[right] {\scriptsize $\textbf{f} = (0, \frac{1}{{\lambda_1}})$, $\textbf{i} = (\frac{1+d(m-1)}{md}, \frac{m-1}{{\lambda_1} m})$ };
\end{tikzpicture}
\end{center}

\item[(iv)] The range of $(\tilde{q}_1, \tilde{q}_2)$ for $V$ satisfying \eqref{T:V-tilde-log-energy} in $\tilde{\mathcal{S}}_{m,1}^{(\tilde{q}_1, \tilde{q}_2)}$ is illustrated as the line $\overline{\textbf{DE}}$ on Figure 9-$(i)$ in Section~\ref{SS:tilde Serrin} (including $\textbf{D}$ but excluding $\textbf{E}$ if $d>2$ and excluding both if $d=2$).
\end{itemize}
\end{remark}

\subsubsection{Existence for case: $\rho_0 \in L^{q}(\mathbb{R}^d), \ q>1$}\label{SS:m greater 2}

We first state results regarding the existence of a $L^q$-weak solution satisfying an energy inequality.
In case that $1<m\le 2$, there is no restriction on $q$ that $q>1$.
It is, however, not clear, for the case $m>2$ and, in fact, according to our analysis, it is requred that $q\ge m-1$. We also construct a $L^q$-weak solution satisfying moments and speed estimates as well as an energy inequality, and the restriction $q\ge m-1$ is necessary  as similarly, if $m>2$ (Theorem~\ref{Theorem-2-b}).

Now we start with $L^q$-weak solutions satisfying energy inequality.
\begin{theorem}\label{Theorem-2-a}
Let $m>1$. Also let $q>1$ and $q\geq m-1$.
Assume that $\rho_0 \in  \mathcal{P}(\bbr^d) \cap L^{q}(\bbr^d)$ and
\begin{equation}\label{T2a:V}
V \in \mathfrak{S}_{m,q}^{(q_1,q_2)} \ \text{ for } \
    \begin{cases}
         2 \leq q_2 \leq  \frac{q+m-1}{m-1},  & \text{ if } d > 2 \vspace{1 mm}\\
         2 \leq q_2 <  \frac{q+m-1}{m-1},  & \text{ if } d = 2.
    \end{cases}
\end{equation}
Then, there exists a nonnegative $L^q$-weak solution of \eqref{E:Main} in Definition~\ref{D:weak-sol}
that holds
 \begin{equation}\label{T:Energy-Lq}
 \esssup_{0 \leq t \leq T} \int_{\mathbb{R}^d} \rho^q (\cdot, t) \,dx
 + \iint_{Q_T} \abs{\nabla \rho^{\frac{q+m-1}{2}}}^2 \,dx\,dt
\leq C,
\end{equation}
with $C = C ( \|V\|_{\mathfrak{S}_{m,q}^{(q_1, q_2)}}, \|\rho_{0}\|_{L^{q} (\bbr^d)})$.
\end{theorem}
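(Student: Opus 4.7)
The plan is to construct weak solutions via the splitting method introduced in Section~\ref{splitting method}. Fix a small step $h>0$ and, on each interval $[kh,(k+1)h]$, alternately solve the homogeneous equation $\partial_t\rho=\Delta\rho^m$ starting from the latest profile, then the pure transport equation $\partial_t\rho+\nabla\cdot(V\rho)=0$ via the flow map associated with a mollified drift $V_\varepsilon$. Concatenating these pieces yields a family of regular approximate solutions $\rho^{(h)}$; their equicontinuity (including up to $t=0$) is secured by the uniform H\"older regularity for homogeneous PME established in Appendix~\ref{Appendix:Holder}.

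The heart of the argument is a uniform $L^q$-energy estimate for $\rho^{(h)}$. Formally multiplying \eqref{E:Main} by $\rho^{q-1}$ and integrating by parts produces
\begin{equation*}
\frac{1}{q}\frac{d}{dt}\int_{\mathbb{R}^d}\rho^q\,dx + \frac{4m(q-1)}{(q+m-1)^2}\int_{\mathbb{R}^d}\bigl|\nabla\rho^{\frac{q+m-1}{2}}\bigr|^2\,dx = \frac{q-1}{q}\int_{\mathbb{R}^d} V\cdot\nabla(\rho^q)\,dx,
\end{equation*}
and the right-hand side is controlled via H\"older and Gagliardo--Nirenberg by
\begin{equation*}
C\,\|V(\cdot,t)\|_{L^{q_1}_x}\,\bigl\|\nabla\rho^{\frac{q+m-1}{2}}\bigr\|_{L^2_x}^{\theta_2}\,\|\rho(\cdot,t)\|_{L^q_x}^{\theta_1}.
\end{equation*}
The scaling relation $\frac{d}{q_1}+\frac{2+q_{m,d}}{q_2}=1+q_{m,d}$ in \eqref{subSerrin} is precisely what makes the resulting time exponent on $\|V\|_{L^{q_1}_x}$ equal $q_2$, so Young's inequality absorbs the drift term into the dissipation and Gr\"onwall yields \eqref{T:Energy-Lq} uniformly in $h$ and $\varepsilon$. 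The assumption $q\geq m-1$ is what allows this absorption to close cleanly: it ensures $q+m-1\leq 2q$, which keeps $\theta_2<2$ and the Gagliardo--Nirenberg interpolation well posed for the quantity $\rho^{(q+m-1)/2}$ controlled by the diffusion.

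With this bound in hand, one applies Aubin--Lions: the spatial regularity $\nabla(\rho^{(h)})^{(q+m-1)/2}\in L^2(Q_T)$, combined with a bound on $\partial_t\rho^{(h)}$ in a negative-order space (extracted from \eqref{E:Main} using $\rho V\in L^1$, which follows from the drift bound and the $L^q$-energy), gives strong $L^m_{\mathrm{loc}}$-convergence of a subsequence $\rho^{(h)}\to\rho$. Strong convergence suffices to pass to the limit in $(\rho^{(h)})^m$ and in $\rho^{(h)}V_\varepsilon$ (together with $V_\varepsilon\to V$ in $L^{q_1,q_2}_{x,t}$), while weak convergence handles the linear terms, yielding \eqref{KK-May7-40}. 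The main obstacle I anticipate is closing the energy estimate at the critical endpoint $q_2=(q+m-1)/(m-1)$, where the H\"older gain in $t$ disappears; the standard remedy is to split $[0,T]$ into subintervals on which $\|V\|_{\mathfrak{S}_{m,q}^{(q_1,q_2)}}$ is small (possible by absolute continuity of the $L^{q_2}_t$-norm), close the estimate on each, and iterate. A secondary technical point is ensuring that the splitting preserves nonnegativity and uniform $L^1\cap L^q$ mass through every substep, which relies on the transport substep being a measure-preserving push-forward under the regularized flow of $V_\varepsilon$.
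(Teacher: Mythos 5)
Your proposal follows the same overall route as the paper: splitting-method approximations, a uniform $L^q$-energy estimate built from the identity $V\rho\nabla\rho^{q-1} = c\, V\rho^{(q-m+1)/2}\nabla\rho^{(q+m-1)/2}$ (hence the threshold $q\geq m-1$), and Aubin--Lions compactness to pass to the limit in the nonlinear and drift terms. Two points deserve comment.

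First, the reason $q\geq m-1$ is needed is slightly different from what you describe. It is not that some interpolation exponent would exceed $2$ (in fact, the exponent on $\|\nabla\rho^{(q+m-1)/2}\|_{L^2_x}$ after H\"older and Lemma~\ref{P:L_r1r2} is $1+\frac{(1-\theta)(q-m+1)}{q+m-1}$, which is automatically below $2$ for any sign of $q-m+1$). The issue is algebraic: you must represent $\rho\,\nabla\rho^{q-1}$ via $\rho^{(q-m+1)/2}\nabla\rho^{(q+m-1)/2}$, and the exponent $\tfrac{q-m+1}{2}$ must be nonnegative because the approximate solutions are compactly supported and vanish (see the discussion in the introduction and~\eqref{energy-V-1}).

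Second, and more substantively, the obstacle you anticipate at the critical endpoint $q_2=\frac{q+m-1}{m-1}$ does not arise in the paper's argument. After Young's inequality the differential inequality reads $\frac{d}{dt}\int\rho^q \leq c\,\|V\|_{L^{q_1}_x}^{q_2}\bigl(\int\rho^q\bigr)^{\sigma}$ with $\sigma=\frac{\theta q_2(q-m+1)}{2q}<1$; one then raises both sides to the power $1-\sigma$ to get a \emph{linear} ODE for $\bigl(\int\rho^q\bigr)^{1-\sigma}$, which Gr\"onwall closes directly (see the argument leading to~\eqref{energy04}--\eqref{energy05} in Proposition~\ref{P:Lq-energy}). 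No splitting of $[0,T]$ into pieces on which the drift norm is small, nor any spare H\"older gain in time, is needed. Your suggested remedy would also work but is a detour; the key point you may have overlooked is the sublinear power $\sigma<1$ that the critical scaling produces.
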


We make a few remarks regarding Theorem~\ref{Theorem-2-a}.
\begin{remark}\label{R:Theorem-2-a}
\begin{enumerate}
\item[(i)] Figure 4 (for cases: $1<m\leq 2$, $q>1$) is to visualize the pairs of $(q_1, q_2)$ satisfying \eqref{T2a:V} in $\mathcal{S}_{m,q}^{(q_1, q_2)}$ for $d>2$ as the shaded region $\mathcal{R}(\textbf{ABCD})$. If $d=2$, then the line $\overline{\textbf{BC}}$ does not included to the shaded region. The points $\textbf{e}\to \textbf{A}$ and $\textbf{f} \to \textbf{B}$ in both cases either $m\to 1$ or $q\to \infty$. Moreover, we observe that $\textbf{C}\to \textbf{B}$ and $\textbf{D}\to \textbf{A}$ as $m\to 1$. Also, when $m=2$, then $\textbf{C}=\textbf{D}=(1/2, 1/2)$.
\item[(ii)] Figure 5 (for cases: $m>2$, $q \geq m-1$) is illustrating the pairs of $(q_1, q_2)$ satisfying \eqref{T2a:V} in $\mathcal{S}_{m,q}^{(q_1, q_2)}$ as the shaded region $\mathcal{R}(\textbf{ABC})$ for $d>2$. If $d=2$, then the line $\overline{\textbf{BC}}$ does not included to the shaded region. The points $\textbf{e}\to \textbf{A}$ and $\textbf{f} \to \textbf{B}$ as $q\to \infty$.
\end{enumerate}

\begin{center}
\begin{tikzpicture}[domain=0:16]


\draw (0, -1) node[right] { \scriptsize Figure 4: Theorem~\ref{Theorem-2-a} for $1<m\leq 2$, $q>1$. };

\fill[fill= lgray]
(0,2) -- (1.2, 0) -- (2.35, 1.1) -- (1.45, 2);


\draw[->] (0,0) node[left] {\scriptsize $0$}
-- (5,0) node[right] {\scriptsize $\frac{1}{q_1}$};
\draw[->] (0,0) -- (0,4.5) node[left] { \scriptsize $\frac{1}{q_2}$};

\draw (0,4) node{\scriptsize $+$} node[left]{\scriptsize $1$} ;
\draw (4,0) node{\scriptsize $+$} ;

\draw[very thin]
(0, 2) -- (1.45, 2) ;
\draw[thick](1.45, 2) circle(0.05) node[right] {\scriptsize \textbf{D}};

\draw (0, 2) -- (1.45, 2);

\draw (1.2, 0) -- (2.35, 1.1);

\draw[thick] (2.35, 1.1) circle(0.05) node[right] {\scriptsize \textbf{C}};

\draw[thick] (0,2) circle(0.05) node[left] {\scriptsize \textbf{A}};
\draw[thick] (1.2,0) circle(0.05) node[below] {\scriptsize \textbf{B}};
\draw[thick, dotted] (0,2) -- (1.2, 0);
\draw (0.7, 0.5) node{\scriptsize $\mathcal{S}_{m,\infty}^{(q_1, q_2)}$};


\draw[thick, dotted]
(0,3.4) node {\scriptsize $\times$} node[left] {\scriptsize $\frac{1}{{\lambda_1}}$}
-- (3.5, 0) node {\scriptsize $\times$} node[below] {\scriptsize $\frac{1+d(m-1)}{d}$};
\draw (3.4, 0.8) node{\scriptsize $\mathcal{S}_{m,1}^{(q_1, q_2)}$};

\draw (0,2.5) node {\scriptsize $\times$} node[left] {\scriptsize $\frac{1+q_{m,d}}{2+q_{m,d}}$}
    -- (2.3, 0) node {\scriptsize $\times$} node[below] {\scriptsize $\frac{1+q_{m,d}}{d}$};
\draw (1.4, 1.5) node{\scriptsize $\mathcal{S}_{m,q}^{(q_1, q_2)}$};

\draw (1.79,0.56) node{\scriptsize $\bullet$}node[above]{\scriptsize \textbf{f}};

\draw (0.47,2) node{\scriptsize $\bullet$} node[above]{\scriptsize \textbf{e}};

\draw (7,4) node[right]{\scriptsize $\overline{\textbf{AB}} = \mathcal{S}_{m,\infty}^{(q_1, q_2)}, \, \textbf{A} = (0, \frac 12), \, \textbf{B} = (\frac 1d, 0)$} ;
\draw (7, 3.5)node[right]{\scriptsize $\mathcal{R}(\textbf{ABCD})$ : scaling invariant class of \eqref{T2a:V} } ;
\draw (7,3) node[right]{\scriptsize $\textbf{C} = (\frac{(2-m)+d(m-1)}{md}, \frac{m-1}{m})$, $\textbf{D} = (\frac{m-1}{2}, \frac 12)$} ;
\draw (7,2.5) node[right]{\scriptsize $\textbf{e, f}$: on $\mathcal{S}_{m,q}^{(q_1, q_2)}$ for $1< q < \infty$ } ;
\draw (7,2) node[right]{\scriptsize  coordinates are given in Fig. 5 } ;

\end{tikzpicture}
\end{center}

\begin{center}
\begin{tikzpicture}[domain=0:16]

\draw (0, -1) node[right] { \scriptsize Figure 5: Theorem~\ref{Theorem-2-a} for $m> 2$, $q\geq m-1$. };

\fill[fill= lgray]
(0, 2) -- (1.2,0) -- (2,2);

\draw[->] (0,0) node[left] {\scriptsize $0$}
-- (5.5,0) node[right] {\scriptsize $\frac{1}{q_1}$};
\draw[->] (0,0) -- (0,4.5) node[left] { \scriptsize $\frac{1}{q_2}$};

\draw(0,4) node{\scriptsize $+$} node[left] {\scriptsize $1$} ;
\draw (4,0) node{\scriptsize $+$}  ;

\draw[thick] (0,2) circle(0.05) node[left] {\scriptsize \textbf{A}};
\draw[thick] (1.2,0) circle(0.05) node[below] {\scriptsize \textbf{B}};
\draw[thick, dotted] (0,2) -- (1.2, 0);

\draw (0, 2) -- (2, 2) ;

\draw(2, 2) node{\scriptsize $\bullet$} node[above] {\scriptsize \textbf{C}};

\draw (1.2, 0)  -- (2, 2);
\draw (0.7, 0.5) node{\scriptsize $\mathcal{S}_{m,\infty}^{(q_1, q_2)}$};


\draw (0,3.5) node {\scriptsize $\times$} node[left] {\scriptsize $\frac{1+d}{2+d}$}
-- (4.6, 0) node {\scriptsize $\times$} node[below] {\scriptsize $\frac{1+d}{d}$} ;

\draw (4.3, 0.8) node {\scriptsize $\mathcal{S}_{m,m-1}^{(q_1, q_2)}$} ;

\draw (0,2.5) node {\scriptsize $\times$} node[left] {\scriptsize $\frac{1+q_{m,d}}{2+q_{m,d}}$}
    -- (3, 0) node {\scriptsize $\times$} node[below] {\scriptsize $\frac{1+q_{m,d}}{d}$};
\draw (2.7, 0.8) node{\scriptsize $\mathcal{S}_{m,q}^{(q_1, q_2)}$};

\draw (1.64,1.15) node{\scriptsize $\bullet$}node[right]{\scriptsize \textbf{f}};

\draw (0.6,2) node{\scriptsize $\bullet$} node[above]{\scriptsize \textbf{e}};

\draw (7,4) node[right]{\scriptsize $\overline{\textbf{AB}} = \mathcal{S}_{m,\infty}^{(q_1, q_2)}$, $\textbf{C} = (\frac 12, \frac 12)$, $\textbf{A,B}$: in Fig. 4} ;
\draw (7,3.5) node[right]{\scriptsize $\mathcal{R}(\textbf{ABC})$: scaling invariant class of \eqref{T2a:V}} ;

\draw (7,3) node[right]{\scriptsize $\textbf{e, f}$: on $S_{m,q}^{(q_1, q_2)}$ for $m-1 < q < \infty$} ;
\draw (7,2.5) node[right]{\scriptsize $\textbf{e} = (\frac{m-1}{2q}, \frac 12)$, $\textbf{f} = (\frac{(q-m+1)+d(m-1)}{d(q+m-1)}, \frac{m-1}{q+m-1})$} ;

\end{tikzpicture}
\end{center}
\end{remark}

In the next theorem, we express the existence of a $L^q$-weak solution as an absolutely continuous curve in Wasserstein space for $\rho_0 \in \mathcal{P}_p(\bbr^d)$. More precisely, $p$-th moment and speed estimates are additionally added, compared to weak solutions in Theorem~\ref{Theorem-2-a}.
Due to constraint of handling speed and $p$-th moment estimates for $p\le 2$, the condition on $V$ is subject to restrain when $q>m$ that \eqref{T2:V} gives smaller region compared to \eqref{T2a:V} (cf. Figure 4 Vs. 4-$(i)$, Figure 5 Vs. 5-$(i)$). We refer Section~\ref{SS:Combination} for more details.
Meanwhile, because the temporal domain is bounded, we are able to extend conditions on $V$ (meaningful when $q>m$) by applying embedding arguments in Proposition~\ref{P:Energy-embedding}.

\begin{theorem}\label{Theorem-2-b}
Let $m>1$ and $p>1$. Also let $q>1$ and $q\geq m-1$.
Suppose that $\rho_0 \in
\mathcal{P}_p(\bbr^d) \cap L^{q} (\bbr^d)$.
\begin{itemize}
  \item [(i)]   Assume that
\begin{equation}\label{T2:V}
V \in \mathfrak{S}_{m,q}^{(q_1,q_2)} \ \text{ for } \
    \begin{cases}
        q_1 \leq \frac{2m}{m-1}, \  2 \leq q_2 \leq  \frac{q+m-1}{m-1} \,\delta_{\{1 < q \leq m\}} + \frac{2m-1}{m-1}\,\delta_{ \{q > m \} },  & \text{ if } d > 2, \vspace{1 mm}\\
        q_1 \leq \frac{2m}{m-1}, \ 2 \leq q_2 <  \frac{q+m-1}{m-1}\,\delta_{\{1 < q \leq m\}} + \frac{2m-1}{m-1} \,\delta_{ \{q > m \} },  & \text{ if } d = 2.
    \end{cases}
\end{equation}

\begin{itemize}
\item[(a)] Then, for $1 < p \leq {\lambda_q} := \min \left\{ 2, 1+\frac{d(q-1)+q}{d(m-1)+q} \right\}$, there exists a nonnegative $L^q$-weak solution of \eqref{E:Main} in Definition~\ref{D:weak-sol} such that  $\rho \in AC(0,T; \mathcal{P}_p(\mathbb{R}^d))$ with $\rho(\cdot, 0)=\rho_0$.
In addition, $\rho$ satisfies \eqref{KK-May7-60} and
\begin{equation}\label{T:Energy-Speed-Lq}
 \sup_{0 \leq t \leq T} \int_{\mathbb{R}^d} \left(  \rho^q +\rho  \langle x \rangle^p \right) \,dx
 + \iint_{Q_T} \{ \left| \nabla \rho^{\frac{q+m-1}{2}}\right |^2 + \left(\abs{\frac{\nabla \rho^m}{\rho}}^p+|V|^{p}\right ) \rho \} \,dx\,dt \leq C,
\end{equation}
where $C = C (\|V\|_{\mathfrak{S}_{m,q}^{(q_1, q_2)}},
\int_{\mathbb{R}^d} \left(\rho_0^q + \rho_0 \langle x
\rangle^p\right)\,dx )$.

 \item[(b)] For $p>{\lambda_q}$, the same conclusions hold as in (a) except that $p$ is replaced by ${\lambda_q}$.
\end{itemize}

\item[(ii)] (Embedding) Let $q>m$. Assume that
\begin{equation}\label{T2:V-embedding}
V \in \mathfrak{S}_{m,q}^{(q_1,q_2)} \ \text{ for } \
    \begin{cases}
        q_1 \leq \frac{2m}{m-1}, \ \frac{2m-1}{m-1} < q_2 \leq  \frac{q+m-1}{m-1},  & \text{ if } d > 2, \vspace{1 mm}\\
        q_1 \leq \frac{2m}{m-1}, \ \frac{2m-1}{m-1} \leq q_2 <  \frac{q+m-1}{m-1},  & \text{ if } d = 2.
    \end{cases}
\end{equation}
For $(q_1, q_2)$ in \eqref{T2:V-embedding}, there exists a constant
$q_{2}^{\ast} = q_{2}^{\ast}(q_1) \in [2, q_2)$ such that $ V \in
\mathfrak{S}_{m, q^\ast}^{(q_1, q_2^{\ast})}$ satisfying
\eqref{T2:V}.
Furthermore,  the same conclusions hold as in $(i)$ except that ${\lambda_q}$ is replaced by ${\lambda_{q^\ast}}$.
\end{itemize}
\end{theorem}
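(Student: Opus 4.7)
The plan is to apply the splitting-method machinery from Section~\ref{splitting method}: construct an approximate sequence $\rho_\ep$ by alternating a homogeneous PME step with a transport step along a regularized drift, derive uniform a priori bounds via Proposition~\ref{P:apriori} and Proposition~\ref{P:Energy}, and pass to the limit by an Aubin--Lions type compactness argument. The three a priori estimates that drive the proof are the $L^q$-energy estimate (obtained by testing \eqref{E:Main} against $\rho^{q-1}$, producing the dissipation $\iint \abs{\na \rho^{\frac{q+m-1}{2}}}^{2}\,dx\,dt$), the $p$-th moment bound (from testing against $\wx^{p}$), and the Wasserstein speed estimate \eqref{KK-May7-60}, which follows from the AC-curve characterization in Section~\ref{SS:Wasserstein} once the effective velocity $V - \na \rho^m/\rho$ is controlled in an $L^p(\rho\,dx\,dt)$ norm.

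For part (i)(a) the drift is absorbed by \eqref{T2:V}. In the energy identity the cross term $\iint \rho V \cdot \na \rho^{q-1}\,dx\,dt$ is treated by H\"{o}lder and Gagliardo--Nirenberg interpolation between $\|\rho\|_{L^\infty_t L^q_x}$ and $\|\na \rho^{\frac{q+m-1}{2}}\|_{L^2_{x,t}}$; the balance of exponents closes precisely on the scaling line $\frac{d}{q_1}+\frac{2+q_{m,d}}{q_2}=1+q_{m,d}$ provided $q_2 \leq \frac{q+m-1}{m-1}$. The moment calculation generates the new term $\iint |V|^{p}\rho\,dx\,dt$ through Young's inequality, which forces $p \leq \la_q$; when $q>m$ the sharper bounds $q_2 \leq \frac{2m-1}{m-1}$ and $q_1 \leq \frac{2m}{m-1}$ arise from interpolating this drift term against the available $L^q$-regularity of $\rho$. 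The passage to the limit uses the restriction $q \geq m-1$ so that the exponent $q-m+1\geq 0$ is admissible; this combined with a negative-order Sobolev bound on $\pa_t \rho_\ep$ yields strong $L^m_{\loc}$ convergence of $\rho_\ep$, which is enough to take the limit in $\rho^m$ and the product $\rho V$. Part (i)(b) is immediate from (a): since $\rho_0 \in \calP_p \subset \calP_{\la_q}$ whenever $p > \la_q$, one applies (a) with $\la_q$ in place of $p$.

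For part (ii) the argument is an embedding in the spirit of \eqref{q-qL}. Given a pair $(q_1,q_2)$ in the range \eqref{T2:V-embedding} (so $q_2 > \frac{2m-1}{m-1}$), I would solve the scaling relation $\frac{d}{q_1}+\frac{2+d(m-1)/q^*}{q_2^*}=1+d(m-1)/q^*$ for a pair $(q^*,q_2^*)$ with $q^* \in (1,q)$ and $q_2^* \in [2, q_2)$ such that $(q_1,q_2^*)$ meets \eqref{T2:V} at the reduced level $q^*$; H\"{o}lder's inequality in time on $[0,T]$ then gives $V \in \mathfrak{S}^{(q_1,q_2^*)}_{m,q^*}$. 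Since $\rho_0 \in L^1\cap L^q \subset L^{q^*}$ by interpolation, applying part (i) at level $q^*$ yields the weak solution with Wasserstein exponent $\la_{q^*}$; the full $L^q$-bound is retained because \eqref{T2:V-embedding} still permits the $L^q$-energy estimate at level $q$ as in Theorem~\ref{Theorem-2-a}. The main obstacle I foresee is the compactness step: the drift has only enough integrability to close the a priori estimates, so obtaining strong convergence of $\rho_\ep^m$ and justifying the limit in $\rho_\ep V$ requires a careful time-regularity estimate on $\pa_t \rho_\ep$ in a negative Sobolev norm uniformly in $\ep$, and the conditions $q \geq m-1$ together with the $q>m$ restriction $q_2 \leq \frac{2m-1}{m-1}$ are precisely what allow this step to close.
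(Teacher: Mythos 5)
Your proposal follows essentially the same route as the paper: approximate by regular solutions via the splitting method of Section~\ref{splitting method}, use the $L^q$-energy estimate (Proposition~\ref{P:Lq-energy}), the $p$-th moment estimate, and the speed estimate (Proposition~\ref{P:Energy-speed}) as uniform a priori bounds, obtain equicontinuity in $\mathcal{P}_p$ from the AC-curve characterization, and close the argument by Aubin--Lions compactness using the negative-order time-derivative bound from Proposition~\ref{Proposition : AL-2}(i), with the restriction $q\geq m-1$ entering exactly so that the exponent $q-m+1$ is nonnegative in the identity $\nabla\rho^q = \frac{2q}{q+m-1}\rho^{(q-m+1)/2}\nabla\rho^{(q+m-1)/2}$. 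Part (i)(b) and the embedding in part (ii) are handled the same way.

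One point to sharpen: you attribute the constraints $q_1\leq\frac{2m}{m-1}$, $q_2\leq\frac{2m-1}{m-1}$ for $q>m$ to ``interpolating this drift term against the available $L^q$-regularity of $\rho$.'' This is not where they come from. The speed estimate of Lemma~\ref{P:W-p} only applies for $1\leq q\leq m$; when $q>m$, since $\rho_0\in L^1\cap L^q\subset L^m$, the paper runs the speed and moment estimates at the fixed level $q=m$ (so $\lambda_m=2$) and transfers back using \eqref{q-qL}. The listed constraints are exactly the $q=m$ version of \eqref{V-p-moment} with $\lambda_m=2$, while the $L^q$-energy estimate alone would only impose the weaker bound $q_2\leq\frac{q+m-1}{m-1}$. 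Your remark that the full $L^q$-energy bound survives in part (ii) because \eqref{T2:V-embedding} still sits inside the hypotheses of Theorem~\ref{Theorem-2-a} is a correct and useful addition, consistent with how the theorem is stated even though Proposition~\ref{P:Energy-embedding} only records the estimate at level $q^{\ast}$.
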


Here are a few remarks about Theorem~\ref{Theorem-2-b}.
\begin{remark}\label{R:Theorem2-b}
\begin{itemize}
\item[(i)] Results in Theorem \ref{Theorem-2-b} are also true for the heat equation ($m=1$) with drift term for $1 < p\le 2$ (see \cite{KK-arxiv}). We do not know whether or not the moment estimate \eqref{T:Energy-Speed-Lq} can be extended to $p>{\lambda_q}$, and thus we leave those as open questions.

\item[(ii)] When $1 < q \leq m$, the condition \eqref{T2:V} is the same as \eqref{T2a:V}. When $q > m$, we use speed and $p$-th moment estimates for $q=m$, therefore the condition \eqref{V-p-moment} for $q=m$ is the same as \eqref{T2:V} for $q>m$ (refer Proposition~\ref{P:Energy-speed} $(i)$).

\item[(iii)] Let $1<m\leq 2$ and $q>1$.  The following figure shows the pairs $(q_1, q_2)$ satisfying \eqref{T2:V} and \eqref{T2:V-embedding} in $\mathcal{S}_{m,q}^{(q_1, q_2)}$. There are Figure 4-(ii) and 4-(iii) in Appendix~\ref{Appendix:fig} for cases $(ii)$ $\max\{2, \frac{2m}{(2m-1)(m-1)}\} < d \leq \frac{2m}{m-1}$ and $(iii)$ $d > \frac{2m}{m-1}$. When $d=2$, $\overline{\textbf{BCD}}$ is excluded from $\mathcal{R}(\textbf{ABCDEF})$.
\end{itemize}

\begin{center}
\begin{tikzpicture}[domain=0:16]


\draw (0, -1) node[right] { \scriptsize Figure 4-(i). Theorem~\ref{Theorem-2-b} for $1<m\leq 2$, $q>1$, $2 < d \leq \max\{2, \frac{2m}{(2m-1)(m-1)}\}$.};

\fill[fill= lgray]
(0.8, 0.63) -- (1.2,0) -- (1.85, 0.63);

\fill[fill= gray]
(0.4, 1.3)--(0.8, 0.63) -- (1.85,0.63) -- (2.35, 1.1) -- (1.45, 2)--(0.4, 2);

\draw[->] (0,0) node[left] {\scriptsize $0$}
-- (5,0) node[right] {\scriptsize $\frac{1}{q_1}$};
\draw[->] (0,0) -- (0,4.5) node[left] { \scriptsize $\frac{1}{q_2}$};

\draw (0,4) node{\scriptsize $+$} node[left]{\scriptsize $1$} ;
\draw (4,0) node{\scriptsize $+$} ;

\draw[very thin]
(0, 2) -- (1.45, 2) ;
\draw[thick](1.45, 2) circle(0.05) node[right] {\scriptsize \textbf{E}};

\draw (0, 2) -- (1.45, 2);

\draw (1.2, 0) -- (2.35, 1.1);

\draw[thick] (2.35, 1.1) circle(0.05) node[right] {\scriptsize \textbf{D}};

\draw[thick] (0.8, 0.63) circle(0.05) node[left] {\scriptsize \textbf{B}};
\draw[very thin]
 (0.8, 0.63)
-- (1.85, 0.63) node{\scriptsize $\bullet$} node[right]{\scriptsize \textbf{C}} ;

\draw[thick] (0.4,1.3) circle(0.05) node[below] {\scriptsize \textbf{A}};
\draw[very thin]
(0.4, 1.3)
 --(0.4, 2) node{\scriptsize $\bullet$} node[above] {\scriptsize \textbf{F}};

\draw[thick] (0,2) node{\scriptsize $\times$} node[left] {\scriptsize \textbf{a}};
\draw[thick] (1.2,0) circle(0.05) node[below] {\scriptsize \textbf{b}};
\draw[thick, dotted] (0,2) -- (1.2, 0);
\draw[thick, dotted]
(0,3.4) node {\scriptsize $\times$} node[left] {\scriptsize $\frac{1}{{\lambda_1}}$}
-- (3.5, 0) node {\scriptsize $\times$} node[below] {\scriptsize $\frac{1+d(m-1)}{d}$};
\draw (3.9, 0.3) node{\scriptsize $\mathcal{S}_{m,1}^{(q_1, q_2)}$};

\draw (0,2.4) node {\scriptsize $\times$} node[left] {\scriptsize $\frac{m+d(m-1)}{2m+d(m-1)}$}
    -- (2.5, 0) node {\scriptsize $\times$} node[below] {\scriptsize \textbf{c}};
\draw (2.7, 0.3) node{\scriptsize $\mathcal{S}_{m,m}^{(q_1, q_2)}$};

\draw[thin, dotted] (0.4, 0) node{\scriptsize $*$} node[below] {\scriptsize $\frac{m-1}{2m}$} --(0.4, 2);

\draw[thin, dotted] (0, 1.3) node{\scriptsize $*$}  -- (0.4, 1.3);
\draw[thin, dotted] (0, 0.63) node{\scriptsize $*$} node[left]{\scriptsize $\frac{m-1}{2m-1}$}  -- (0.8, 0.63);
\draw[thin, dotted] (0.8, 0) node{\scriptsize $*$}  -- (0.8, 0.63);

\draw[thin, dotted] (1.85, 0) node{\scriptsize $*$}  -- (1.85, 0.63);

\draw[thin, dotted] (2.35, 0) node{\scriptsize $*$}  -- (2.35, 1.1);
\draw[thin, dotted] (0, 1.1) node{\scriptsize $*$}  -- (2.35, 1.1);

\draw[thin, dotted] (1.45, 0) node{\scriptsize $*$}  -- (1.45, 2);

\draw (7, 4) node[right] {\scriptsize $\overline{\textbf{ab}} = \mathcal{S}_{m, \infty}^{(q_1, q_2)}$, $\textbf{a} = (0, \frac 12)$, $\textbf{b} = (\frac 1d, 0)$ };

\draw (7, 3.5) node[right] {\scriptsize $\mathcal{R} (\textbf{ABCDEF})$: scaling invariant class of \eqref{T2:V}. };
\draw (7, 3) node[right] {\scriptsize $\mathcal{R} (\textbf{bCB})$: scaling invariant class of \eqref{T2:V-embedding}. };

\draw (7, 2.5) node[right] {\scriptsize $\textbf{A} = (\frac{m-1}{2m}, \frac{d+m(2-d)}{4m})$, $\textbf{B} = (\frac{1}{d(2m-1)}, \frac{m-1}{2m-1})$ };
\draw (7, 2) node[right] {\scriptsize $\textbf{C} = (\frac{1+d(m-1)}{d(2m-1)}, \frac{m-1}{2m-1})$, $\textbf{c}= (\frac{m+d(m-1)}{md},0)$ };
\draw (7, 1.5) node[right] {\scriptsize $\textbf{D} = (\frac{(2-m)+d(m-1)}{md}, \frac{m-1}{m})$};
\draw (7, 1) node[right] {\scriptsize $\textbf{E} = (\frac{m-1}{2}, \frac 12)$, $\textbf{F} = (\frac{m-1}{2m}, \frac 12)$};

\end{tikzpicture}
\end{center}
\begin{itemize}
\item[(iv)] Figure 5-(i) is illustrating the pairs $(q_1, q_2)$ satisfying \eqref{T2:V} and \eqref{T2:V-embedding} in $\mathcal{S}_{m,q}^{(q_1, q_2)}$ when $m>2$ and $q\geq m-1$. There is Figure 5-(ii) in Appendix~\ref{Appendix:fig} for case $d > \frac{2m}{m-1}$. When $d=2$, $\overline{\textbf{GCD}}$ is excluded from $\mathcal{R}(\textbf{GCDE})$.
\end{itemize}

\begin{center}
\begin{tikzpicture}[domain=0:16]

\draw (0, -1) node[right] { \scriptsize Figure 5-(i). Theorem~\ref{Theorem-2-b} for $m>2$, $q\geq m-1$, $ 2 < d \leq \frac{2m}{m-1}$.};

\fill[fill= lgray]
(0.7, 1.15)-- (0.7, 0.6) -- (1, 0) -- (1.58,1.15);

\fill[fill= gray]
(0.7, 2) -- (0.7,1.15) -- (1.58, 1.15) --(2,2);

\draw[->] (0,0) node[left] {\scriptsize $0$} -- (5,0) node[right] {\scriptsize $\frac{1}{q_1}$};
\draw[->] (0,0) -- (0,4.5) node[left] { \scriptsize $\frac{1}{q_2}$};

\draw (0,4) node{\scriptsize $+$} node[left] {\scriptsize $1$} ;
\draw (4,0) node{\scriptsize $+$};

\draw
(0, 2) node[left] {\scriptsize \textbf{a}} -- (2, 2) ;

\draw[thick, dotted] (0,2) node{\scriptsize $\times$} node[left] {\scriptsize $\textbf{a}$}
 -- (1, 0)  node[below] {\scriptsize $\textbf{b}$};
\draw[thick] (1,0) circle(0.05);

\draw(2, 2) node{\scriptsize $\bullet$} node[above] {\scriptsize \textbf{D}};

\draw(0.7,2) node{\scriptsize $\bullet$} node[above]{\scriptsize \textbf{E}}
--(0.7, 0.6) node[left]{\scriptsize \textbf{A}} ;
\draw[thick] (0.7,0.6) circle(0.05);

\draw (0.43, 1.15) node[left]{\scriptsize \textbf{B}}
--(1.58, 1.15)node{\scriptsize $\bullet$}node[right]{\scriptsize \textbf{C}};
\draw[thick] (0.43, 1.15) circle(0.05);

\draw(0.7, 1.15) node{\scriptsize $\bullet$} node[above]{\scriptsize \textbf{G}};

\draw (1, 0)  -- (2, 2);


\draw (0,3.5) node {\scriptsize $\times$} node[left] {\scriptsize $\frac{1+d}{2+d}$}
-- (4.6, 0) node {\scriptsize $\times$} node[below] {\scriptsize $\frac{1+d}{d}$} ;
\draw (4.5, 1) node {\scriptsize $\mathcal{S}_{m,m-1}^{(q_1, q_2)}$} ;

\draw (0,2.7) node {\scriptsize $\times$} node[left] {\scriptsize $\frac{m+d(m-1)}{md}$}
 -- (2.8, 0) node {\scriptsize $\times$} node[below] {\scriptsize $\frac{m+d(m-1)}{2m+d(m-1)}$};
\draw (3.2, 0.5) node{\scriptsize $\mathcal{S}_{m,m}^{(q_1, q_2)}$};

\draw (7, 4) node[right] {\scriptsize $\overline{\textbf{ab}} = \mathcal{S}_{m, \infty}^{(q_1, q_2)}$, $\textbf{a} = (0, \frac 12)$, $\textbf{b} = (\frac 1d, 0)$ };
\draw (7, 3.5) node[right] {\scriptsize $\mathcal{R}(\textbf{GCDE})$: scaling invariant class of \eqref{T2:V} };
\draw (7, 3) node[right] {\scriptsize $\mathcal{R}(\textbf{GAbC}))$: scaling invariant class of \eqref{T2:V-embedding} };

\draw (7, 2.5) node[right] {\scriptsize $\textbf{A} = (\frac{m-1}{2m}, \frac{d+m(2-d)}{4m})$, $\textbf{B} = (\frac{1}{d(2m-1)}, \frac{m-1}{2m-1})$};
\draw (7, 2) node[right] {\scriptsize $\textbf{C} = (\frac{1+d(m-1)}{d(2m-1)}, \frac{m-1}{2m-1})$, $\textbf{D} = (\frac 12, \frac 12)$ };
\draw (7, 1.5) node[right] {\scriptsize  $\textbf{E} = (\frac{m-1}{2m}, \frac 12)$, $\textbf{G}= (\frac{m-1}{2m}, \frac{m-1}{2m-1}) $};
\end{tikzpicture}
\end{center}
\end{remark}


\subsubsection{Existence for case: $\nabla\cdot V=0$ and $\rho_0 \in L^{q}(\mathbb{R}^d), \ q>1$.}\label{SS:divergence-free}

Here we assume that $V$ is a divergence-free vector field. Compared to Section~\ref{SS:m greater 2}, theorems in this subsection  work for less restricted range for $q$ and less restrictions on $V$ in sub-scaling class.

Let us define
 \[
 q_2^M := \left\{
 \begin{array}{cl}
 \left(\frac{d(q+m-1)+q}{d(q+m-1)+2q} - \frac{q}{q+m-1}\right)^{-1}, & \text{if } 1 < q < q^\ast, \vspace{2 mm} \\
 \infty, & \text{if } q \geq q^{\ast},
 \end{array}\right.
 \]
 where
 \begin{equation}\label{q_ast}
 q^\ast := \frac{2d(m-1)}{\sqrt{d^2 + 6d +1} - (d+1)}.
 \end{equation}
In this case, the existence result is stated for $L^q$-weak solutions with an energy inequality as follows:

\begin{theorem}\label{Theorem-4a}
Let $m>1$. Also let $q > \max\{ 1, \frac m2 \}$. Suppose that $\rho_0 \in  \mathcal{P} (\bbr^d) \cap L^{q} (\bbr^d)$.
Furthermore, assume that $V$ is divergence-free (that is, $\nabla \cdot V =0$) and
 \begin{equation}\label{T4:V-energy}
V\in \mathfrak{S}_{m,q}^{(q_1,q_2)} \quad \text{for} \quad
\begin{cases}
 \frac{2+q_{m,d}}{1+q_{m,d}}\leq q_2 \leq q_2^M,
 & \text{if } d>2, \vspace{2 mm} \\
\frac{2+q_{m,d}}{1+q_{m,d}}\leq q_2 < q_2^M \cdot \delta_{\{1 < q \leq q^\ast\}}, & \text{if } d=2, \vspace{2 mm} \\
\frac{2+q_{m,d}}{1+q_{m,d}}\leq q_2 \leq  q_2^M \cdot \delta_{\{ q > q^\ast\}}, & \text{if } d=2.
\end{cases}
\end{equation}
Then, there exists
  a nonnegative $L^q$-weak solution of \eqref{E:Main} in Definition~\ref{D:weak-sol} that holds
\begin{equation}\label{T:Energy-divfree}
\begin{aligned}
 \esssup_{0 \leq t \leq T} &\int_{\mathbb{R}^d} \rho^{q} (\cdot, t) dx
 +\iint_{Q_T}  \abs{\nabla \rho^{\frac{q+m-1}{2}}}^2 \,dx\,dt\leq C,
\end{aligned}\end{equation}
where $C = C(\|\rho_0\|_{L^{q}(\bbr^d)})$.
\end{theorem}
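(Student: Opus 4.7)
The strategy is to follow the splitting method framework used elsewhere in the paper (Section~\ref{splitting method}). I would construct approximate solutions $\{\rho_n\}$ by alternately solving, on short time intervals of length $T/n$, the homogeneous porous medium equation $\partial_t u = \Delta u^m$ and the transport equation $\partial_t v + \nabla\cdot(vV) = 0$; the PME step enjoys uniform H\"older continuity up to the initial time from Appendix~\ref{Appendix:Holder}, while the transport step preserves nonnegativity, mass, and every $L^q$-norm exactly because $\nabla\cdot V=0$. Gluing the two substeps yields nonnegative regular approximate solutions on $Q_T$ whose $L^q$-norms are controlled by $\|\rho_0\|_{L^q}$ independently of $n$.

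The central analytic point is that, in the divergence-free case, the $L^q$-energy estimate is uniform in $V$. Testing the equation with $q\rho_n^{q-1}$ and integrating by parts, the drift contribution reads
$$\int_{\bbr^d} q\rho_n^{q-1}\,\nabla\cdot(V\rho_n)\,dx = -(q-1)\int_{\bbr^d} V\cdot\nabla\rho_n^q\,dx = (q-1)\int_{\bbr^d}(\nabla\cdot V)\rho_n^q\,dx = 0,$$
so it drops out entirely, leaving
$$\sup_{0\le t\le T}\int_{\bbr^d}\rho_n^q\,dx + \frac{4qm(q-1)}{(q+m-1)^2}\iint_{Q_T}\bigl|\nabla \rho_n^{(q+m-1)/2}\bigr|^2\,dx\,dt \le \int_{\bbr^d}\rho_0^q\,dx.$$
This uniform bound is precisely why the admissible range for $V$ here is broader than in Theorem~\ref{Theorem-2-a}: the sub-scaling hypothesis on $V$ is not needed for the energy estimate and is invoked only to close compactness and to identify the limit of $\rho_n V$.

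For compactness I would combine Gagliardo--Nirenberg applied to $\rho_n^{(q+m-1)/2}$ with the uniform $L^\infty_t L^q_x$ bound and $\nabla\rho_n^{(q+m-1)/2}\in L^2_{x,t}$ to place $\rho_n$ in a mixed Lebesgue space. Equicontinuity in time is read off the equation $\partial_t\rho_n = \Delta\rho_n^m - \nabla\cdot(V\rho_n)$ after writing $\nabla\rho_n^m = \frac{2m}{q+m-1}\rho_n^{(m-q+1)/2}\nabla\rho_n^{(q+m-1)/2}$. The restriction $q>m/2$ is precisely what is needed to keep the exponent $(m-q+1)/2$ interpolable between the $L^1$ (mass) and $L^q$ bounds, while the upper bound $q_2\le q_2^M$ ensures that $\rho_nV$ lands in the dual of a good test class; the threshold $q^\ast$ in \eqref{q_ast} marks the regime in which this interpolation constraint ceases to be binding. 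Aubin--Lions then delivers strong convergence $\rho_n\to\rho$ in $L^r_{\loc}(Q_T)$ along a subsequence.

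Passing to the limit in the weak formulation~\eqref{KK-May7-40} is straightforward for the diffusion term once strong $L^1_{\loc}$ convergence of $\rho_n^m$ is secured. The delicate step is the drift term: H\"older pairing with $V\in L^{q_1,q_2}_{x,t}$ requires strong convergence of $\rho_n$ in $L^{q_1',q_2'}_{x,t}$, and it is exactly the scaling identity $d/q_1+(2+q_{m,d})/q_2 = 1+q_{m,d}$ together with the Gagliardo--Nirenberg interpolation from the uniform energy bounds that places $\rho_n$ in this space. I expect the main technical obstacle to lie here: matching the exponents produced by the uniform a priori estimates against the sub-scaling line for $V$, and verifying that the compactness constraints produce precisely the sharp endpoint $q_2^M$ appearing in \eqref{T4:V-energy}.
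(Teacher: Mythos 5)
Your overall plan tracks the paper's route: approximate $V$ and $\rho_0$, build regular solutions by the splitting method, exploit $\nabla\cdot V=0$ to get a drift-free $L^q$-energy bound, pass to the limit via Aubin--Lions, and use the sub-scaling hypothesis only to identify $\lim \rho_n V$. The drift-free energy computation is correct and is exactly Proposition~\ref{P:Lq-energy}~(iii). However, the compactness argument as written has a genuine gap.

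\textbf{Mismatched Aubin--Lions set-up.} You propose to get spatial information from $\nabla\rho_n^{(q+m-1)/2}\in L^2_{x,t}$ (hence $\rho_n$ in a mixed Lebesgue space) and temporal information from $\partial_t\rho_n$. But Lemma~\ref{AL} requires one fixed function to live in $L^p(0,T;X_0)$ with $X_0\hookrightarrow X$ \emph{compact} and to have its time derivative in $L^{q'}(0,T;X_1)$; a mixed Lebesgue bound on $\rho_n$ alone is not a compact spatial embedding, and the Sobolev regularity you have is for the different power $\rho_n^{(q+m-1)/2}$ while your time-derivative estimate is for $\rho_n$ itself. The paper resolves this mismatch in Proposition~\ref{Proposition : AL-2} by tracking a single object, $\rho_n^q$, in both slots: it bounds $\nabla\rho_n^q$ in a space-time Lebesgue space (which combined with Lemma~\ref{T:Sobolev} gives a compact spatial embedding) and $\partial_t\rho_n^q$ in a negative Sobolev space, writing the PDE in the form \eqref{prop414-20} so that the divergence-free term drops out.

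\textbf{Incorrect origin of the restriction $q>m/2$.} You attribute $q>m/2$ to keeping the exponent $(m-q+1)/2$ in $\nabla\rho_n^m=\frac{2m}{q+m-1}\rho_n^{(m-q+1)/2}\nabla\rho_n^{(q+m-1)/2}$ interpolable between $L^1$ and $L^q$. That only requires $(m-q+1)/2\le q$, i.e.\ $q\ge(m+1)/3$, and $(m-q+1)/2\ge0$, i.e.\ $q\le m+1$; neither reproduces $q>m/2$. The actual source in the paper is the factorization of $\nabla\rho^q$ against the lower-level energy: $\nabla\rho_n^q=\frac{2q}{m+r-1}\rho_n^{\,q-\frac{m+r-1}{2}}\nabla\rho_n^{\frac{m+r-1}{2}}$ for an auxiliary $r\in(1,q]$, where $\nabla\rho_n^{(m+r-1)/2}\in L^2_{x,t}$ is supplied by Proposition~\ref{P:Lq-energy-r}~(iii). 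The nonnegativity constraint $q\ge\frac{m+r-1}{2}$ for some $r>1$ forces precisely $q>\frac m2$. Without this auxiliary exponent $r$ (your argument fixes $r=q$ implicitly through $\nabla\rho_n^m$), the case $\frac m2<q<m-1$ is unreachable. Similarly, the sharp endpoints $q_2^M$ and $q^\ast$ in \eqref{T4:V-energy} are not produced by your interpolation; they come from Lemma~\ref{L-compact}, which computes exactly when $V\rho^q$ lands in $L^r_{x,t}$ with $r=\frac{d(m+q-1)+2q}{d(m+q-1)+q}$, the space needed so that $\partial_t\rho_n^q$ is controlled.
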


We make a few remarks regarding the above theorem.
\begin{remark}\label{R:Theorem-4a}
\begin{itemize}
\item[(i)] The restriction on \eqref{T4:V-energy} is directly from Lemma~\ref{L-compact} and Proposition~\ref{Proposition : AL-2} $(c)$ for carrying compactness arguments which is essential to show the convergence of approximated solutions, although a priori estimate is calculated for any $m, q>1$ in Proposition~\ref{P:Lq-energy} (iii). Moreover, we note that \eqref{T4:V-energy} is weaker than \eqref{T2a:V} or \eqref{T4:V-divfree}.

\item[(ii)] The range of $q > \max\{ 1, \frac m2 \}$ is sorted into two cases: either $1<m\leq 2$ and $q>1$, or $m>2$ and $q > \frac{m}{2}$.
 However, the compactness arguments in Proposition~\ref{Proposition : AL-2} $(ii)$ works for $q > \frac m2$ (which is strictly less than $m-1$ for $m>2$). In fact, when $m>2$, the range $q \geq m-1$ in Proposition~\ref{Proposition : AL-2} $(i)$ is weakened to $q > \frac m2$ thanks to $\nabla \cdot V =0$.

\item[(iii)] Here we plot the region of pairs $(q_1, q_2)$ satisfying \eqref{T4:V-energy} in $\mathcal{S}_{m,q}^{(q_1, q_2)}$ for $1<m \leq 2$, $q > 1$, $d> 2$.
Note that $\textbf{E}, \frac{1+q_{m,d}}{2+q_{m,d}} ,\textbf{f} \to \textbf{A}$ and $\textbf{C}, \frac{1+q_{m,d}}{d} \to \textbf{B}$ as either $m\to 1$ or $q\to \infty$. When $d=2$, the line $\overline{\textbf{CD}}$ is excluded from the region $\mathcal{R}(\textbf{ABCDE})$.
There is Figure 6-$(ii)$ for $m>2$ and $q>\frac{m}{2}$ in Appendix~\ref{Appendix:fig}.
\end{itemize}

\begin{center}
\begin{tikzpicture}[domain=0:16]

\draw (-0.5, -1) node[right] { \scriptsize Figure 6-(i). Theorem~\ref{Theorem-4a} for $ 1 < m \leq 2$, $q>1$. };

\fill[fill= lgray]
(0, 2) -- (1.1,0) -- (2.5, 0) -- (3.52, 0.95) -- (0, 3.6) -- (0, 2);

\draw[->] (0,0) node[left] {\scriptsize $0$}
-- (5.5,0) node[right] {\scriptsize $\frac{1}{q_1}$};
\draw[->] (0,0) -- (0,4.5) node[left] { \scriptsize $\frac{1}{q_2}$};

\draw (0,4) node{\scriptsize $+$} node[left]{\scriptsize $1$} ;

\draw[thick] (0, 2) circle(0.05) node[left] {\scriptsize \textbf{A}};
\draw (0.9, 0.5) node{\scriptsize $\mathcal{S}_{m,\infty}^{(q_1, q_2)}$};

\draw[thick, dotted] (0,2) -- (1.1, 0)  node[below] {\scriptsize $\textbf{B}$};
\draw[thick] (1.1, 0) circle(0.05) ;

\draw [thick, dotted]
(0,3.6)  node[left] {\scriptsize $\frac{1}{{\lambda_1}}$} node[right] {\scriptsize \textbf{E}}
-- (4.8, 0) node{\scriptsize $\times$} node[below]{\scriptsize $\frac{1+d(m-1)}{d}$} ;
\draw[thick] (0, 3.6) circle(0.05) ;
\draw (1.5, 3) node{\scriptsize $\mathcal{S}_{m,1}^{(q_1, q_2)}$};

\draw (0,2.6) node {\scriptsize $\bullet$}  node[left]{\scriptsize \textbf{f}}
    -- (2.5, 0) node {\scriptsize $\bullet$}  node[below] {\scriptsize \textbf{C}};
\draw (2.2, 0.8) node{\scriptsize $\mathcal{S}_{m,q^\ast}^{(q_1, q_2)}$};

\draw (0,3) node {\scriptsize $\bullet$} node[left] {\scriptsize $\frac{1+q_{m,d}}{2 + q_{m,d}}$ }
    -- (3.6, 0) node{\scriptsize $\times$} node[below] {\scriptsize $\frac{1+q_{m,d}}{d}$} ;
\draw (2, 1.8) node{\scriptsize $\mathcal{S}_{m,q}^{(q_1, q_2)}$};
\draw (3,0.47) node{\scriptsize $\bullet$} node[right]{\scriptsize \textbf{g}};


\draw[thick] (3.52, 0.95) circle(0.05) node[right]{\scriptsize \textbf{D}} ;
\draw (2.5, 0) -- (3.52,0.95);

\draw (7, 4) node[right] {\scriptsize $\overline{\textbf{AB}} = \mathcal{S}_{m, \infty}^{(q_1, q_2)}$, $\textbf{A} = (0, \frac 12)$, $\textbf{B} = (\frac 1d, 0)$ };

\draw (7, 3.5) node[right] {\scriptsize $\mathcal{R} (\textbf{ABCDE})$: scaling invariant class of \eqref{T4:V-energy}};

\draw (7, 3) node[right] {\scriptsize $\overline{\textbf{fC}} = \mathcal{S}_{m, q^\ast}^{(q_1, q_2)}$, $\textbf{f} = (0, \frac{1+q^\ast_{m,d}}{2+q^\ast_{m,d}})$, $\textbf{C} = (\frac{1+q^\ast_{m,d}}{d}, 0)$ };
\draw (7, 2.5) node[right] {\scriptsize $q^\ast$ is defined in \eqref{q_ast} and $q^\ast_{m,d}=\frac{d(m-1)}{q^\ast}$};
\draw (7, 2) node[right] {\scriptsize $\textbf{g} = (\frac{d(q+m-1)+q}{d(q+m-1)+2q} - \frac{q(d-2)}{d(q+m-1)}, \frac{d(q+m-1)+q}{d(q+m-1)+2q} - \frac{q}{q+m-1})$};
\draw (7, 1.5) node[right]{\scriptsize $\textbf{g}$ is on $\mathcal{S}_{m, q}^{(q_1, q_2)}$ for $1 <q < q^\ast$};
\draw (7, 1) node[right] {\scriptsize $\textbf{D}= (\frac{md+1}{md+2} - \frac{d-2}{md}, \frac{md+1}{md+2} - \frac{1}{m})$ on $\mathcal{S}_{m, 1}^{(q_1, q_2)}$};
\end{tikzpicture}
\end{center}
\end{remark}

Next, we show the existence of a $L^q$-weak solution in Wasserstein space in which speed and moment estimates play importantly. The second part of the following theorem is obtained by embedding in time variable. Unlike result in Theorem~\ref{Theorem-2-b} $(ii)$, one can make observation that the embedding argument is meaningful for all $q>1$.

  \begin{theorem}\label{Theorem-4}
Let $m>1$ and $p>1$. Also let $q>1$ and $q \geq \frac m2$. Suppose that $\rho_0 \in  \mathcal{P}_p(\bbr^d) \cap L^{q} (\bbr^d)$ and  $\nabla \cdot V =0$.
 \begin{itemize}
 \item[(i)] Suppose that
\begin{equation}\label{T4:V-divfree}
V\in \mathfrak{S}_{m,q}^{(q_1, q_2)}  \ \text{ for } \
\begin{cases}
{\lambda_q} \leq q_2 \leq \frac{{\lambda_q} (q+m-1)}{q+m-2}, & \text{ if }  d>2, \ 1<q\leq m, \vspace{1 mm}\\
q_1 \leq \frac{2m}{m-1},\  2 \leq q_2 \leq \frac{2m-1}{m-1}, & \text{ if }  d>2, \ q> m, \vspace{1 mm}\\
{\lambda_q} \leq q_2 < \frac{{\lambda_q} (q+m-1)}{q+m-2}, & \text{ if }  d=2, \ 1<q\leq m, \vspace{1 mm}\\
q_1 \leq \frac{2m}{m-1},\  2 \leq q_2 < \frac{2m-1}{m-1}, & \text{
if }  d=2, \ q> m.
\end{cases}
\end{equation}
 \begin{itemize}
 \item[(a)] Then, for $1 < p \leq {\lambda_q} := \min\left\{ 2, 1 + \frac{d(q-1)+q}{d(m-1)+q}\right\}$, there exists a nonnegative $L^q$-weak solution of \eqref{E:Main} in Definition~\ref{D:weak-sol} such that
 $\rho \in AC(0,T; \mathcal{P}_p(\mathbb{R}^d))$ with $\rho(\cdot, 0)=\rho_0$.
Furthermore, $\rho$
satisfies \eqref{T:Energy-divfree}, \eqref{T:Energy-Speed-Lq}, and \eqref{KK-May7-60} with
$C=C(\left\| V\right\|_{\mathfrak{S}_{m,q}^{(q_1, q_2)}},
\int_{\mathbb{R}^d} \left(\rho_0^q + \rho_0 \langle x
\rangle^p\right)\,dx )$.

\item[(b)] For $ p > {\lambda_q} $, the same conclusions hold as in (a) except that $p$ is replace by ${\lambda_q}$.
\end{itemize}
\item[(ii)] (Embedding) Suppose that
\begin{equation}\label{T4:V-divfree-embedding}
V\in \mathfrak{S}_{m,q}^{(q_1, q_2)} \ \text{ for } \
 \begin{cases}
\frac{2+q_{m,d}}{1+q_{m,d}} < q_2 < \frac{{\lambda_1} m}{m-1}, & \text{ if } 1<q \leq \frac{md}{d-1}, \vspace{1 mm}\\
\frac{2+q_{m,d}}{1+q_{m,d}} < q_2 \leq \infty, & \text{ if } q >
\frac{md}{d-1}.
\end{cases}
\end{equation}
For given $(q_1, q_2)$ in \eqref{T4:V-divfree-embedding}, there
exists constants $q_{2}^{\ast} = q_{2}^{\ast}(q_1) \in [2, q_2]$
such that $ V \in \mathfrak{S}_{m, q^\ast}^{(q_1, q_2^{\ast})}$
satisfying \eqref{T4:V-divfree}.
 Furthermore, the same conclusion
holds as in $(i)$ except that ${\lambda_q}$ is replaced by ${\lambda_{q^\ast}}$.
\end{itemize}
\end{theorem}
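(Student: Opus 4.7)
The argument mirrors that of Theorem~\ref{Theorem-2-b}, with the key simplification that the divergence-free structure removes the drift contribution from the $L^q$-energy estimate via integration by parts. The plan is to construct regular approximations $\rho_{\epsilon}$ by the splitting method of Section~\ref{splitting method}, derive uniform a priori estimates, and pass to the limit using compactness arguments from Section~\ref{Exist-weak}. For $p > \lambda_q$ one reduces to $p = \lambda_q$ since $\rho_0 \in \mathcal{P}_p(\bbr^d)\subset \mathcal{P}_{\lambda_q}(\bbr^d)$, so it suffices to treat part~$(a)$.

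For part $(i)$, the uniform $L^q$-energy bound \eqref{T:Energy-divfree} on $\rho_{\epsilon}$ is furnished by (the divergence-free case of) Proposition~\ref{P:Lq-energy}; because $\nabla\cdot V = 0$, the drift is eliminated after integration by parts, which is why the admissible range for $q_2$ in \eqref{T4:V-divfree} is strictly larger than the one in \eqref{T2:V}. The moment and speed estimates \eqref{T:Energy-Speed-Lq} then come from Proposition~\ref{P:Energy-speed}; this is where the dichotomy $1 < q \leq m$ versus $q > m$ enters, since for $q > m$ one runs the speed estimate at the level $q = m$, which accounts for the $q$-independent block $\{q_1 \leq 2m/(m-1),\, 2\leq q_2 \leq (2m-1)/(m-1)\}$ in \eqref{T4:V-divfree}. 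The range $q > m/2$ (rather than the stronger $q \geq m-1$ of Theorem~\ref{Theorem-2-a}) is precisely what the divergence-free version of the compactness statement Proposition~\ref{Proposition : AL-2}$(ii)$ allows.

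The main technical step is the passage to the limit. The uniform energy bound controls $\{\rho_{\epsilon}^{(q+m-1)/2}\}$ in $L^{2}(0,T;H^{1}(\bbr^d))$ and, via the equation, a suitable negative-order norm of $\partial_t \rho_{\epsilon}$, so that Lemma~\ref{L-compact} together with Proposition~\ref{Proposition : AL-2}$(ii)$ yields strong convergence of $\rho_{\epsilon}$ in $L^{m}_{\mathrm{loc}}(Q_T)$. This is strong enough to pass to the limit both in $\Delta \rho^m$ and in the drift $\nabla\!\cdot(V\rho)$ in the weak formulation \eqref{KK-May7-40}, and is the principal obstacle of the whole argument---without $\nabla\cdot V = 0$, the range $\frac{m}{2} < q < m-1$ would be unreachable. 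The energy, moment, and speed bounds transfer to the limit by lower semicontinuity, and the $W_p$-Hölder estimate \eqref{KK-May7-60} follows from the finiteness of $\iint_{Q_T}|\nabla\rho^m/\rho|^p \rho \, dxdt$ via the characterization of absolutely continuous curves in $\mathcal{P}_p$ recalled in Section~\ref{SS:speed}.

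For part $(ii)$, I would produce, for each $(q_1, q_2)$ admissible in \eqref{T4:V-divfree-embedding}, an exponent $q^{\ast} \in (1, q]$ and $q_2^{\ast} \in [2, q_2]$ such that the pair $(q_1, q_2^{\ast})$ lies on the scaling line $d/q_1 + (2+q^{\ast}_{m,d})/q_2^{\ast} = 1 + q^{\ast}_{m,d}$ and obeys \eqref{T4:V-divfree} with $q$ replaced by $q^{\ast}$. The threshold $q = md/(d-1)$ in \eqref{T4:V-divfree-embedding} marks exactly the range in which such a $q^{\ast}$ can be chosen strictly smaller than $q$ while keeping $q_2^{\ast} \leq q_2$. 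Since $T < \infty$, Hölder's inequality in time (essentially \eqref{q-qL}) gives
\begin{equation*}
\|V\|_{\mathfrak{S}_{m,q^{\ast}}^{(q_1,q_2^{\ast})}} \ \leq\ T^{\frac{1}{q_2^{\ast}} - \frac{1}{q_2}} \|V\|_{\mathfrak{S}_{m,q}^{(q_1,q_2)}} \ <\ \infty,
\end{equation*}
so part~$(i)$ applies to $V$ as an element of $\mathfrak{S}_{m,q^{\ast}}^{(q_1,q_2^{\ast})}$ and delivers a solution that is AC in $\mathcal{P}_{\lambda_{q^{\ast}}}(\bbr^d)$. The only subtle point here is the case analysis determining $q^{\ast}(q_1)$, which is a finite-dimensional optimization on the scaling diagram (Figures~7--9) and not a source of analytical difficulty.
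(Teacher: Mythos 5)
Your proposal captures the essential architecture of the paper's proof: split off the divergence-free simplification in the energy estimate, run the speed and $p$-th moment estimates, pass to the limit via Aubin--Lions, and then use the temporal H\"older trick for part~$(ii)$. The paper proceeds exactly by citing the divergence-free cases of the a priori estimates (Proposition~\ref{P:Lq-energy}~$(iii)$ and Proposition~\ref{P:Energy-speed}~$(iii)$) and then re-running the compactness and limit-passage machinery of Theorem~\ref{Theorem-2-b}~$(i)$, so you are on the same track.

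One minor inaccuracy worth noting: you attribute the strong convergence of the approximating sequence entirely to Proposition~\ref{Proposition : AL-2}~$(ii)$, but the paper (following the proof of Theorem~\ref{Theorem-4a}) splits the range: for $q \geq m-1$ one uses Proposition~\ref{Proposition : AL-2}~$(i)$ (giving the estimate~\eqref{rho_t_2}), and Proposition~\ref{Proposition : AL-2}~$(ii)$ together with Remark~\ref{R:AL-3}~$(ii)$ is invoked only for the extra window $\frac{m}{2} \leq q < m-1$ that the divergence-free hypothesis unlocks. The dichotomy matters because part~$(ii)$ of that proposition furnishes a different $\nabla\rho^q$ estimate (namely~\eqref{rho_t_3}, depending on an auxiliary exponent $r\in(1,q]$) and, in particular, Remark~\ref{R:AL-3}~$(ii)$ is needed to include the endpoint $q = \frac{m}{2}$ using the additional $\mathcal{P}_p$-moment hypothesis. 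Also, Lemma~\ref{L-compact} is not a compactness lemma on its own; it supplies the $L^{r}_{x,t}$ bound on $V\rho^q$ that feeds into Proposition~\ref{Proposition : AL-2}, which is where the Aubin--Lions argument actually lives. These are presentation points rather than gaps, and your proposal would close up once they are spelled out.
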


Now we explain more on the condition \eqref{T4:V-divfree} in Theorem~\ref{Theorem-4}.
\begin{remark}\label{R:Theorem-4}
\begin{itemize}
\item[(i)] The condition \eqref{T4:V-divfree} is distinguished for $1<q \leq m$ and $q>m$ because the structure of ${\lambda_q}$ in \eqref{lambda_q} is fixed as $2$ if $q>m$ (it is hard to obtain speed and moment estimates for $p>2$).
Details of getting a priori estimate under \eqref{T4:V-divfree} are in Proposition~\ref{P:Energy-speed} $(iii)$.
The limiting case $q=\frac m2$ is allowed because $\rho_0 \in \mathcal{P}_{p}(\bbr^d)$ and $V$ satisfying \eqref{T4:V-divfree} by Remark~\ref{R:AL-3} $(ii)$.

\item[(ii)] In the following figure, we plot pairs $(q_1, q_2)$ satisfying \eqref{T4:V-divfree} and \eqref{T4:V-divfree-embedding} in $\mathcal{S}_{m,q}^{(q_1,q_2)}$ for $1<m\leq 2$ and $q>1$. The irregular shape is mainly because of \eqref{V-p-moment} for estimating speed and $p$-th moment estimates which separates two cases either $1 < q \leq m$ or $q>m$. Moreover, Figure 7-(i) is when $2<d \leq \max\{2, \frac{2m}{(2m-1)(m-1)}\}$. For cases $\max\{2, \frac{2m}{(2m-1)(m-1)}\} < d \leq \frac{2m}{m-1}$ and $d > \frac{2m}{m-1}$, there are Figure 7-$(ii)$, $(iii)$ in Appendix~\ref{Appendix:fig}. Additionally, Figure 8-$(i)$, $(ii)$ in Appendix~\ref{Appendix:fig} are when $m>2$ and $q\geq \frac m2$. When $d=2$, $\overline{\textbf{BCD}}$ is excluded from $\mathcal{R}(\textbf{ABCDEF})$.
\end{itemize}

\begin{center}
\begin{tikzpicture}[domain=0:16]

\draw (0, -1) node[right] { \scriptsize Figure 7-(i). Theorem~\ref{Theorem-4} for $1<m\leq 2$, $q>1$, $2 < d \leq \max\{2, \frac{2m}{(2m-1)(m-1)}\}$.};


\fill[fill= lgray]
(0, 2) -- (1.1,0) -- (2.3, 0) -- (2.3, 1.35) --(0,3.6)--(0, 2);

\fill[fill= gray]
(0.5, 1.25) -- (0.6,0.85) -- (1.75, 0.85) -- (2.3, 1.35) -- (0, 3.6)--(0.5,2);


\draw[->] (0,0) node[left] {\scriptsize $0$}
-- (5,0) node[right] {\scriptsize $\frac{1}{q_1}$};
\draw[->] (0,0) -- (0,4.5) node[left] { \scriptsize $\frac{1}{q_2}$};

\draw (0,4) node{\scriptsize $+$} node[left]{\scriptsize $1$} ;
\draw (4,0) node{\scriptsize $+$} ;

\draw[dotted](0, 2)  node[left] {\scriptsize \textbf{a}}--(0.4, 2);
\draw[thick] (0,2) circle(0.05) ;


\draw (1.75, 0.85) -- (2.3, 1.35) node[right] {\scriptsize \textbf{D}};
\draw[thick] (2.3, 1.35) circle(0.05) ;

\draw
(0.5, 2) -- (0, 3.6) ;


\draw (0.6,0.85)  node[left]{ \scriptsize $\textbf{B}$}
--(1.75, 0.85)node{\scriptsize $\bullet$}node[right]{ \scriptsize $\textbf{C}$};
\draw[thick] (0.6, 0.85) circle(0.05);

\draw[thick, dotted] (0,2) -- (1.1, 0)  node[below] {\scriptsize $\textbf{b}$};
\draw[thick] (1.1, 0) circle(0.05) ;

\draw [thick, dotted]
(0,3.6)  node[left] {\scriptsize $\frac{1}{{\lambda_1}}$} node[right] {\scriptsize \textbf{E}}
-- (3.7, 0)  node[below]{\scriptsize \textbf{g}};
\draw[thick] (0, 3.6) circle(0.05) ;
\draw[thick] (3.7, 0) circle(0.05) ;
\draw (4, 0.5) node{\scriptsize $\mathcal{S}_{m,1}^{(q_1, q_2)}$};

\draw (0,2.45) node {\scriptsize $\times$} node[left] {\scriptsize $\frac{m+d(m-1)}{2m+d(m-1)}$}
    -- (2.7, 0) node {\scriptsize $\times$} node[below] {\scriptsize \textbf{i}};
\draw (2.3, 0.5) node{\scriptsize $\mathcal{S}_{m,m}^{(q_1, q_2)}$};

\draw (0.5, 1.1) node[left]{\scriptsize \textbf{A}} -- (0.5, 2);
\draw[thick] (0.5, 1.1) circle(0.05);

\draw[thin, dotted] (0.5, 0) node{\scriptsize $*$}
--(0.5, 2) node{\scriptsize $\bullet$} node[right] {\scriptsize \textbf{F}};

\draw[thin, dotted] (0.5, 0) node{\scriptsize $*$}  -- (0.5, 1.1);
\draw[thin, dotted] (0, 1.1) node{\scriptsize $*$}  -- (0.5, 1.1);

\draw[thin, dotted] (0, 0.85) node{\scriptsize $*$}  -- (1.75, 0.85);
\draw[thin, dotted] (0.6, 0) node{\scriptsize $*$}  -- (0.6, 0.85);

\draw[thin, dotted] (1.75, 0) node{\scriptsize $*$}--(1.75, 0.85);

\draw[thin, dotted] (2.3, 0)  node[below]{\scriptsize \textbf{h}}  -- (2.3, 1.35) ;
\draw[thick] (2.3, 0) circle(0.05) ;

\draw[thin, dotted] (0, 1.35) node{\scriptsize $*$}  -- (2.3, 1.35) ;

\draw (7, 4) node[right] {\scriptsize $\overline{\textbf{ab}} = \mathcal{S}_{m, \infty}^{(q_1, q_2)}$, $\textbf{a} = (0, \frac 12)$, $\textbf{b} = (\frac 1d, 0)$ };

\draw (7, 3.5) node[right] {\scriptsize $\mathcal{R} (\textbf{ABCDEF})$: scaling invariant class of \eqref{T4:V-divfree}. };
\draw (7, 3) node[right] {\scriptsize $\mathcal{R} (\textbf{abhDE})$: scaling invariant class of \eqref{T4:V-divfree-embedding}. };

\draw (7, 2.5) node[right] {\scriptsize $\textbf{A} = (\frac{m-1}{2m}, \frac{d+m(2-d)}{4m})$, $\textbf{B} = (\frac{1}{d(2m-1)}, \frac{m-1}{2m-1})$ };
\draw (7, 2) node[right] {\scriptsize $\textbf{C} = (\frac{1+d(m-1)}{d(2m-1)}, \frac{m-1}{2m-1})$, $\textbf{D} = (\frac{1+d(m-1)}{md}, \frac{m-1}{{\lambda_1} m})$   };
\draw (7, 1.5) node[right] {\scriptsize  $\textbf{E} = (0, \frac{1}{{\lambda_1}})$, $\textbf{F} = (\frac{m-1}{2m}, \frac 12)$ };
\draw (7, 1) node[right] {\scriptsize $\textbf{g} = (\frac{1+d(m-1)}{d}, 0)$};
\draw (7, 0.5) node[right] {\scriptsize  $\textbf{h} = (\frac{1+d(m-1)}{md}, 0)$, $\textbf{i} = (\frac{m + d(m-1)}{md}, 0)$  };

\end{tikzpicture}
\end{center}

\end{remark}

\subsubsection{Existence for case: $\nabla V$ in sub-scaling classes }\label{SS:tilde Serrin}

Now we assume that $\nabla V$ is satisfying sub-scaling classes, that is, $V \in \tilde{\mathfrak{S}}_{m,q}^{(\tilde{q}_1, \tilde{q}_2)}$.
We then see that there are corresponding existence results and collect them in this subsection when $\rho_0 \in L^{q}(\mathbb{R}^d)$.

The first result is concerned about $L^q$-weak solutions with an energy inequality.
\begin{theorem}\label{Theorem-5a}
Let $m>1$. Also let $q > \max\{ 1,\frac m2 \}$. Suppose that $\rho_0 \in  \mathcal{P} (\bbr^d) \cap L^{q} (\bbr^d)$ and
 \begin{equation}\label{T5a:V-tilde}
V\in \tilde{\mathfrak{S}}_{m,q}^{(\tilde{q}_1, \tilde{q}_2)}  \ \text{ for } \
\begin{cases}
 \frac{2+q_{m,d}}{1+q_{m,d}} < \tilde{q}_2 \leq \frac{q+m-1}{m-1},   & \text{ if } d> 2, \vspace{1 mm}\\\
 \frac{2+q_{m,2}}{1+q_{m,2}} < \tilde{q}_2 < \frac{q+m-1}{m-1},   & \text{ if } d= 2.
 \end{cases}
\end{equation}
Then, there exists a nonnegative $L^q$-weak solution of \eqref{E:Main} in Definition~\ref{D:weak-sol}
satisfying \eqref{T:Energy-Lq} with
$C = C(\left\| V\right\|_{\tilde{\mathfrak{S}}_{m,q}^{(\tilde{q}_1, \tilde{q}_2)}}, \|\rho_0\|_{L^{q}(\mathbb{R}^d)} )$.
\end{theorem}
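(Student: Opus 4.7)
The plan is to follow the same splitting-based scheme that drives the other existence theorems of this paper (Sections~\ref{splitting method}--\ref{Exist-weak}), but with the $L^q$-energy a priori estimate reorganized so that one controls the drift term by $\nabla V$ through an integration by parts. I would first mollify both the initial datum and the drift to obtain $\rho_0^{(\ep)}$ smooth, strictly positive and compactly supported and $V^{(\ep)}$ smooth with uniform bound in $\tilde{\mathfrak{S}}_{m,q}^{(\tilde{q}_1,\tilde{q}_2)}$; then build regular approximate solutions $\rho^{(\ep)}$ in the sense of Definition~\ref{D:regular-sol} by the splitting construction of Section~\ref{splitting method}, whose convergence on each substep relies on the uniform H\"older continuity up to $t=0$ of homogeneous PME (Appendix~\ref{Appendix:Holder}).

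The core analytic step is a uniform $L^q$-energy inequality. Testing \eqref{E:Main} against $\rho^{q-1}$ and integrating by parts formally yields
\begin{equation*}
\frac{1}{q}\frac{d}{dt}\int_{\R^d}\rho^{q}\,dx+\frac{4m(q-1)}{(q+m-1)^2}\int_{\R^d}\bigl|\nabla\rho^{\frac{q+m-1}{2}}\bigr|^{2}\,dx
\;=\;-\frac{q-1}{q}\int_{\R^d}\rho^{q}\,\nabla\cdot V\,dx.
\end{equation*}
The right-hand side is dominated by $\|\nabla V(\cdot,t)\|_{L^{\tilde q_1}_x}\|\rho(\cdot,t)\|_{L^{q\tilde q_1/(\tilde q_1-1)}_x}^{q}$ via H\"older; I would interpolate the $L^{q\tilde q_1/(\tilde q_1-1)}$-norm between $\|\rho\|_{L^q}$ and $\|\rho^{(q+m-1)/2}\|_{L^{2d/(d-2)}}$ through Gagliardo--Nirenberg combined with Sobolev embedding, then absorb the resulting Dirichlet factor into the dissipation on the left and close by a Gronwall argument in time. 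The sub-scaling relation $\frac{d}{\tilde q_1}+\frac{2+q_{m,d}}{\tilde q_2}\le 2+q_{m,d}$ is exactly the scaling-homogeneity that makes this absorption consistent; the upper endpoint $\tilde{q}_2\le(q+m-1)/(m-1)$ is what ensures $\|\nabla V\|_{L^{\tilde q_1}_x}$ enters the Gronwall step with a finite time-integrable weight, while the lower bound $\tilde{q}_2>(2+q_{m,d})/(1+q_{m,d})$ keeps the Gagliardo--Nirenberg exponents admissible; the strict inequality at the upper endpoint when $d=2$ reflects the failure of the critical Sobolev embedding in two dimensions. The strict-subcritical case reduces to the critical one via the H\"older-in-time inclusion already recorded in \eqref{q-qL}.

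For the passage to the limit, the uniform energy bound together with a negative-order time-derivative bound on $\rho^{(\ep)}$ (extracted from the equation, using $\rho^{(\ep)} V^{(\ep)}\in L^1(Q_T)$ via the Sobolev embedding \eqref{norm-V-gradV}) feed into the compactness statement of Proposition~\ref{Proposition : AL-2} to yield strong $L^m_{\loc}$ convergence of a subsequence; weak convergence of $\nabla(\rho^{(\ep)})^{(q+m-1)/2}$ in $L^2(Q_T)$ then allows me to pass to the limit in each term of \eqref{KK-May7-40}. The main obstacle is the compactness step itself, because it is what forces $q>\max\{1,m/2\}$ rather than the stronger $q\ge m-1$ appearing in Theorem~\ref{Theorem-2-a}: the relaxation is possible here precisely because $\nabla V$, not just $V$, is integrable, so $\rho V$ inherits an extra derivative estimate via the product rule that compensates for the missing $L^q$-control on $\rho$ when $m/2 < q < m-1$. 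A secondary delicate point is carefully tracking the interpolation exponents so that the critical case of \eqref{T5a:V-tilde} is reached rather than the scaling inequality being used with slack, since the other theorems (e.g.\ Theorem~\ref{Theorem-5}) will build on the present proof.
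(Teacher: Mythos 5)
Your proposal follows the same route as the paper's proof: mollify $V$ and $\rho_0$, build regular solutions by the splitting method of Section~\ref{splitting method}, establish the uniform $L^q$-energy bound by testing against $\rho^{q-1}$ and controlling the drift term through integration by parts against $\nabla V$ (this is Proposition~\ref{P:Lq-energy}~$(ii)$), obtain compactness via Proposition~\ref{Proposition : AL-2} and Aubin--Lions, and pass to the limit after bounding $\|V_n-V\|_{\mathcal{S}_{m,q}^{(q_1,q_2)}}\lesssim\|V_n-V\|_{\tilde{\mathcal{S}}_{m,q}^{(\tilde q_1,\tilde q_2)}}$ as in Remark~\ref{R:S-tildeS}. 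One imprecision worth correcting: the relaxation to $q>\max\{1,m/2\}$ is not because ``$\rho V$ inherits an extra derivative via the product rule,'' but because when the drift is controlled through $\nabla\cdot V$ the $L^r$-energy estimates of Proposition~\ref{P:Lq-energy-r}~$(ii)$ hold for every $r\in(1,q]$ (no lower restriction $r\ge m-1$), so in Proposition~\ref{Proposition : AL-2}~$(ii)$ one may bound $\nabla\rho^q=\frac{2q}{m+r-1}\rho^{\,q-\frac{m+r-1}{2}}\nabla\rho^{\frac{m+r-1}{2}}$ with $r$ close to $1$, which only requires $q\ge\frac{m+r-1}{2}>\frac m2$.
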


The second result is the existence of a $L^q$-weak solution as an absolutely continuous curve in Wasserstein space.
\begin{theorem}\label{Theorem-5}
Let $m>1$ and $p>1$. Also let $q>1$ and $q \geq \frac m2$. Suppose that $\rho_0 \in  \mathcal{P}_p(\bbr^d)\cap L^{q} (\bbr^d)$.
  \begin{itemize}
 \item[(i)] Assume that
 \begin{equation}\label{T5:V-tilde}
V\in \tilde{\mathfrak{S}}_{m,q}^{(\tilde{q}_1, \tilde{q}_2)}  \
\text{ for } \
\begin{cases}
 \frac{2+q_{m,d}}{1+q_{m,d}} < \tilde{q}_2 \leq \frac{q+m-1}{m-1} \,\delta_{\{1 < q \leq m\}} + \frac{2m-1}{m-1}\,\delta_{ \{q > m \} },   & \text{ if } d> 2, \vspace{1 mm}\\
 \frac{2+q_{m,2}}{1+q_{m,2}} < \tilde{q}_2 < \frac{q+m-1}{m-1} \,\delta_{\{1 < q \leq m\}} + \frac{2m-1}{m-1}\,\delta_{ \{q > m \} },   & \text{ if } d= 2.
 \end{cases}
\end{equation}

\begin{itemize}
\item[(a)] Then, for $1 < p \leq {\lambda_q} := \min\left\{ 2, 1 + \frac{d(q-1)+q}{d(m-1)+q}\right\}$,
there exists a nonnegative $L^q$-weak solution of \eqref{E:Main} in Definition~\ref{D:weak-sol} such that  $\rho \in AC(0,T; \mathcal{P}_p(\mathbb{R}^d))$ with $\rho(\cdot, 0)=\rho_0$.
Furthermore, $\rho$ satisfies \eqref{T:Energy-Lq} with $C = C (\left\|
V\right\|_{\tilde{\mathfrak{S}}_{m,q}^{(\tilde{q}_1, \tilde{q}_2)}},
\|\rho_0\|_{L^q(\bbr^d)} )$. Also,
\eqref{T:Energy-Speed-Lq} and \eqref{KK-May7-60} hold with $C=C
(\left\| V\right\|_{\tilde{\mathfrak{S}}_{m,q}^{(\tilde{q}_1,
\tilde{q}_2)}}, \int_{\mathbb{R}^d} \left(\rho_0^q + \rho_0 \langle
x \rangle^p\right)\,dx )$.

\item[(b)] For $p> {\lambda_q}$, the same conclusions hold as in (a) except that $p$ is replaced by ${\lambda_q}$.
\end{itemize}

\item[(ii)] (Embedding) Let $q > m$.
Assume that
\begin{equation}\label{T5:V-tilde-embedding}
V\in \tilde{\mathfrak{S}}_{m,q}^{(\tilde{q}_1, \tilde{q}_2)}  \
\text{ for } \
\begin{cases}
\frac{2m-1}{m-1} < \tilde{q}_2 \leq \frac{q+m-1}{m-1},   & \text{ if } d> 2, \vspace{1 mm}\\
\frac{2m-1}{m-1} \leq \tilde{q}_2 < \frac{q+m-1}{m-1},  & \text{ if
} d= 2 .
\end{cases}
\end{equation}
For given $(\tilde{q}_1, \tilde{q}_2)$ in
\eqref{T5:V-tilde-embedding}, there exists a constant
$\tilde{q}_{2}^{\ast} = \tilde{q}_{2}^{\ast}(\tilde{q}_1) \in
(\frac{2+q^{\ast}_{m,d}}{1+q^{\ast}_{m,d}}, \tilde{q}_2]$ such that
$ V \in \tilde{\mathcal{S}}_{m, q^\ast}^{(\tilde{q}_1,
\tilde{q}_2^{\ast})}$ satisfying \eqref{T5:V-tilde}.
Furthermore,
the same conclusion holds as in Theorem~\ref{Theorem-5} $(i)$ except
that ${\lambda_q}$ is replaced by $ {\lambda_{q^\ast}}$.
\end{itemize}
\end{theorem}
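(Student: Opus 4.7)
The plan is to argue by approximation, following the splitting-method scheme developed in Section~\ref{splitting method} and exploiting the a priori bounds of Section~\ref{S:a priori}. The template is parallel to the divergence-free case (Theorem~\ref{Theorem-4}): for part (i) I would produce regular approximate solutions $\rho_{\ep}$ associated with mollified data $(V_{\ep},\rho_{0,\ep})$, derive uniform energy, $p$-th moment, and speed bounds, and then pass to the limit via the Aubin--Lions type compactness results. Part (ii) is then reduced to part (i) by trading temporal integrability for a decrease in the exponent $q$.

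First, I would mollify $V$ and $\rho_0$ to obtain smooth approximations, and construct regular solutions $\rho_{\ep}$ by alternating on small time steps the homogeneous porous medium equation $\partial_t u=\Delta u^m$ and the transport $\partial_t u+\nabla\cdot(Vu)=0$ along the flow of $V_{\ep}$. Uniform H\"older continuity up to the initial time for the homogeneous PME (Appendix~\ref{Appendix:Holder}) is what guarantees that the splitting scheme converges at $t=0$. Via the embedding \eqref{norm-V-gradV}, the hypothesis $V\in\tilde{\mathfrak{S}}_{m,q}^{(\tilde{q}_1,\tilde{q}_2)}$ in \eqref{T5:V-tilde} also places $V$ in some sub-scaling class $\mathfrak{S}_{m,q}^{(q_1,q_2)}$ whose exponents $(q_1,q_2)$ fall inside the admissible region required by the energy and moment/speed propositions. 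In particular, the upper bound
\[
\tilde{q}_2 \leq \tfrac{q+m-1}{m-1}\,\delta_{\{1<q\leq m\}} + \tfrac{2m-1}{m-1}\,\delta_{\{q>m\}}
\]
is precisely the one that produces, through the scaling relation \eqref{Serrin-grad}, an exponent $q_2$ compatible with the moment-and-speed range needed for $p\leq \lambda_q$ (compare with \eqref{T4:V-divfree} in the divergence-free case). Applying the corresponding a priori proposition then furnishes uniform $L^q$-energy, $p$-th moment, and speed bounds for $\rho_{\ep}$ with constants depending only on $\|V\|_{\tilde{\mathfrak{S}}_{m,q}^{(\tilde{q}_1,\tilde{q}_2)}}$ and $\int_{\bbr^d}(\rho_0^q+\rho_0\langle x\rangle^p)\,dx$.

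Next, I would invoke the Aubin--Lions type compactness (Proposition~\ref{Proposition : AL-2}), whose hypothesis $q>\max\{1,m/2\}$ is met by the standing assumption $q\geq m/2$, to extract a subsequence converging strongly enough to pass to the limit in the weak formulation \eqref{KK-May7-40}, including in the nonlinear diffusion $\rho^m$ and in the transport term $\rho V$. The limit $\rho$ is nonnegative, conserves mass, and inherits the uniform estimates by weak lower semicontinuity, giving \eqref{T:Energy-Lq}, \eqref{T:Energy-Speed-Lq}, and \eqref{KK-May7-60}. The speed bound \eqref{KK-May7-60} combined with the standard characterization of absolutely continuous curves in Wasserstein space (Section~\ref{SS:Wasserstein}) then yields $\rho\in AC(0,T;\mathcal{P}_p(\bbr^d))$ with $\rho(\cdot,0)=\rho_0$. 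Case (b), for $p>\lambda_q$, is immediate since $\rho_0\in\mathcal{P}_p\subset\mathcal{P}_{\lambda_q}$.

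For part (ii), given $(\tilde q_1,\tilde q_2)$ in \eqref{T5:V-tilde-embedding} with $q>m$ but $\tilde q_2>\tfrac{2m-1}{m-1}$, I would use H\"older in $t\in[0,T]$ (as in \eqref{q-qL}) to choose $\tilde q_2^{\ast}\leq \tilde q_2$ paired with $q^{\ast}\in(m/2,q]$ so that $(\tilde q_1,\tilde q_2^{\ast})$ satisfies the scaling identity in Definition~\ref{D:Serrin} for $\tilde{\mathcal{S}}_{m,q^{\ast}}^{(\tilde q_1,\tilde q_2^{\ast})}$ with $\tilde q_2^{\ast}\leq \tfrac{2m-1}{m-1}$, thereby landing in the admissible range \eqref{T5:V-tilde} at exponent $q^{\ast}$. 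Since $\rho_0\in L^1\cap L^q\subset L^{q^{\ast}}$ by interpolation, part (i) applies at $q^{\ast}$ and gives the claimed solution in $AC(0,T;\mathcal{P}_p)$ for $p\leq \lambda_{q^{\ast}}$, while the $L^q$-energy bound is retained via the direct a priori estimate of Proposition~\ref{P:Lq-energy} in which the restriction $q^{\ast}\leq q$ plays no role. The hard part is the compactness step in (i): because $V$ is not divergence-free, the transport term cannot be neutralized by integration by parts, so the upper bound on $\tilde q_2$ must be tight enough to close the moment and speed inequalities at the admissible $p$, and the lower bound $\tilde q_2 > \tfrac{2+q_{m,d}}{1+q_{m,d}}$ is the minimal integrability making the nonlinear terms pass to the limit through Proposition~\ref{Proposition : AL-2}.
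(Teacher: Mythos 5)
Your proposal follows essentially the same route as the paper: mollify $(V,\rho_0)$, construct regular solutions via the splitting scheme of Section~\ref{splitting method}, obtain uniform bounds from the a priori propositions, pass to the limit with Aubin--Lions compactness, and handle part (ii) by trading temporal integrability for a smaller exponent $q^\ast$. The mapping between $\tilde{\mathcal{S}}_{m,q}^{(\tilde q_1,\tilde q_2)}$ and $\mathcal{S}_{m,q}^{(q_1,q_2)}$ via \eqref{norm-V-gradV}, the use of the Wasserstein characterization of AC curves, and the embedding argument in (ii) are all the paper's tools.

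One imprecision worth fixing: you say the embedding \eqref{norm-V-gradV} places $V$ in some $\mathfrak{S}_{m,q}^{(q_1,q_2)}$ ``inside the admissible region required by the energy and moment/speed propositions.'' For the moment/speed bounds this is exactly what the paper does (Lemma~\ref{P:W-p} and Lemma~\ref{L:p-moment-estimate} are stated for $V\in\mathcal{S}_{m,q}$). For the $L^q$-energy estimate, however, the paper does \emph{not} route through the embedding: it invokes Proposition~\ref{P:Lq-energy}$(ii)$, which integrates by parts so that the drift contributes $-(q-1)\int\nabla V\,\rho^q$ and is controlled directly by $\|\nabla V\|_{L^{\tilde q_1,\tilde q_2}_{x,t}}$ for \emph{all} $m,q>1$. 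If you were to obtain the energy estimate via the embedding and Proposition~\ref{P:Lq-energy}$(i)$, you would need $q\geq m-1$, and the range $m>2$, $\tfrac{m}{2}\leq q<m-1$ would be lost. Since your standing assumption is only $q\geq\tfrac{m}{2}$, the energy bound must be taken from the $\nabla V$-based Proposition~\ref{P:Lq-energy}$(ii)$; only the moment/speed bounds pass through \eqref{norm-V-gradV}. Relatedly, the strict lower bound $\tilde q_2>\tfrac{2+q_{m,d}}{1+q_{m,d}}$ is not primarily a compactness requirement: it is what forces $\tilde q_1<d$ so that the Sobolev embedding \eqref{norm-V-gradV} in Remark~\ref{R:S-tildeS}$(ii)$ is available for the moment/speed estimates. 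The compactness step uses Proposition~\ref{Proposition : AL-2}$(i)$ when $q\geq m-1$ and Proposition~\ref{Proposition : AL-2}$(ii)$ together with Remark~\ref{R:AL-3}$(ii)$ when $\tfrac{m}{2}\leq q<m-1$, as you implicitly rely on. With these attributions clarified, the proposal matches the paper's proof.
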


Here we make a few remarks of Theorem~\ref{Theorem-5a} and Theorem~\ref{Theorem-5}.
\begin{remark}\label{R:Theoerm-5}
\begin{itemize}
\item[(i)]  When $V\in \tilde{\mathcal{S}}_{m,q}^{(\tilde{q}_1, \tilde{q}_2)}$, a priori estimate is calculated for any $m, q>1$ in Proposition~\ref{P:Lq-energy} (ii), but the restriction $q > \frac m2$ is required because of the compactness arguments in Proposition~\ref{Proposition : AL-2} $(ii)$. In Theorem~\ref{Theorem-5a}, we allow the limiting case $q = \frac m2$ if $\rho_0 \in \mathcal{P}_{p}(\bbr^d)$ with $C$ depending also on $\int_{\bbr^d} \rho_0 \langle x \rangle^p \,dx$ under the same condition on $V$ satisfying \eqref{T5a:V-tilde}.

\item[(ii)] In Figure 9-$(i)$, we plot the pair $(\tilde{q}_1, \tilde{q}_2)$ satisfying $\tilde{S}_{m,q}^{(\tilde{q}_1, \tilde{q}_2)}$ for $1<m\leq 2$, $q > 1$. In particular, the region $\mathcal{R}\left( \textbf{ABCDE}\right)$ (dark shaded) to visualize \eqref{T5:V-tilde}, and the region $\mathcal{R}\left( \textbf{bCB}\right)$ (lightly shaded) is for \eqref{T5:V-tilde-embedding}. The restriction $\tilde{q}_1 \in (1,d)$ is because of the relation of $\mathcal{S}_{m,q}^{(q_1,q_2)}$ and $\tilde{\mathcal{S}}_{m,q}^{(\tilde{q}_1,\tilde{q}_2)}$ in Remark~\ref{R:S-tildeS} (ii).
There is Figure 9-$(ii)$ in Appendix~\ref{Appendix:fig} for $m>2$ and $q\geq \frac m2$. When $d=2$, the line $\overline{\textbf{BCD}}$ is excluded from $\mathcal{R}(\textbf{ABCDE})$.
\end{itemize}

\begin{center}
\begin{tikzpicture}[domain=0:16]

\draw (0, -1) node[right] {\scriptsize Firgure 9-(i). Theorem~\ref{Theorem-5a} and \ref{Theorem-5} for $1<m\leq 2$, $q>1$.};

\fill[fill= lgray]
(1.27, 0.8) -- (1.6,0) -- (2.4, 0.8) ;

\fill[fill= gray]
(0.8, 2) -- (1.27, 0.8)-- (2.4,0.8) -- (3.2, 1.68) --(0.8, 3.43);

\draw[->] (0,0) -- (6,0) node[right] {\scriptsize $\frac{1}{\tilde{q}_1}$};
\draw[->] (0, 0) -- (0, 5) node[left] {\scriptsize $\frac{1}{\tilde{q}_2}$};

\draw[thick, dotted] (0, 4)node{\scriptsize $\times$} node[left]{\scriptsize $\textbf{a}$} -- ( 5.5 , 0) node{\scriptsize $\times$} node[below] {\scriptsize $\frac{2+d(m-1)}{d}$};

\draw (5, 1) node {\scriptsize $\tilde{\mathcal{S}}_{m,1}^{(\tilde{q}_1, \tilde{q}_2)}$};

\draw (0, 4) -- (3, 0) node{\scriptsize $\times$} node[below] {\scriptsize $\frac{2m+ d(m-1)}{md}$};
\draw (3.3, 0.5) node {\scriptsize $\tilde{\mathcal{S}}_{m,m}^{(\tilde{q}_1, \tilde{q}_2)}$};

\draw[thick, dotted] (0,4)
-- (1.6, 0)  node[below] {\scriptsize $\textbf{b}$} ;
\draw[thick] (1.6, 0) circle(0.05);

\draw[very thin]
(0.8, 0) node{\scriptsize$*$} node[below] {\scriptsize $\frac 1d$}
-- (0.8, 4) node[above]{\scriptsize $\tilde{q}_1 = d$} ;
\draw[thick] (0.8, 2) circle(0.05) node[left]{\scriptsize \textbf{A}};
\draw[thick] (0.8, 2.94) circle(0.05) node[left]{\scriptsize \textbf{f}} ;
\draw[thick] (0.8, 3.43) circle(0.05) node[right]{\scriptsize \textbf{E}} ;

\draw[thin, dotted] (0,0) -- (2.3,2.3);
\draw (1.6, 0) -- (3.2,1.68) ;
\draw[thick] (3.2, 1.68) circle(0.05)  node[right]{\scriptsize \textbf{D}};

\draw (1.27, 0.8) node[left] {\scriptsize \textbf{B}} -- (2.4,0.8) node{\scriptsize $\bullet$} node[right]{\scriptsize \textbf{C}};
\draw[thick] (1.27, 0.8) circle(0.05);

\draw[thin, dotted] (0, 2) node{\scriptsize $+$} node[left] {\scriptsize $\frac 12$}
-- (0.8, 2);
\draw[thin, dotted] (0, 2.94) node{\scriptsize$*$} -- (0.8, 2.94);
\draw[thin, dotted] (2.4, 0) node{\scriptsize$*$} -- (2.4, 0.8);

\draw[thin, dotted] (0, 3.43) node{\scriptsize$*$} node[left] {\scriptsize $\frac{1+d(m-1)}{2+d(m-1)}$}  -- (0.8, 3.43);

\draw[very thin] (4, 0) node{\scriptsize$+$} node[below] {\scriptsize $1$}
-- (4, 2) node[above] {\scriptsize{$\tilde{q}_1 = 1$}};
\draw (4, 1.1) node{\scriptsize $*$} node[right] {\scriptsize \textbf{g}} ;

\draw[thin, dotted] (0, 0.8) node{\scriptsize$*$}  -- (2.4, 0.8);
\draw[thin, dotted] (0, 1.68) node{\scriptsize$*$}  -- (3.2, 1.68);
\draw[thin, dotted] (3.2, 0) node{\scriptsize$*$}  -- (3.2, 1.68);

\draw[thin, dotted] (0, 1.1) node{\scriptsize$*$} node[left] {\scriptsize $\frac{2+d(m-2)}{2+d(m-1)}$} -- (4, 1.1);

\draw (7, 4.5) node[right] {\scriptsize $\overline{\textbf{ab}} = \tilde{\mathcal{S}}_{m, \infty}^{(\tilde{q}_1, \tilde{q}_2)}$, $\textbf{a} = (0, 1)$, $\textbf{b} = (\frac 2d, 0)$};
\draw (7, 4) node[right] {\scriptsize $\mathcal{R} (\textbf{AbDE})$: scaling invariant class of \eqref{T5a:V-tilde}. };
\draw (7, 3.5) node[right] {\scriptsize $\mathcal{R} (\textbf{ABCDE})$: scaling invariant class of \eqref{T5:V-tilde}. };
\draw(7,3) node[right] {\scriptsize $\mathcal{R} (\textbf{bCB})$:  scaling invariant class of \eqref{T5:V-tilde-embedding}. };

\draw (7, 2.5) node[right] {\scriptsize $\textbf{A} = (\frac 1d, \frac 12)$, $\textbf{B} = (\frac{2m}{d(2m-1)}, \frac{m-1}{2m-1})$  };
\draw (7, 2) node[right] {\scriptsize $\textbf{C} = (\frac{2m+d(m-1)}{d(2m-1)}, \frac{m-1}{2m-1})$  };
\draw (7, 1.5) node[right] {\scriptsize $\textbf{D} = (\frac{2+d(m-1)}{md}, \frac{m-1}{m})$, $\textbf{E} = (\frac 1d, \frac{1}{{\lambda_1}})$   };
\draw (7, 1) node[right] {\scriptsize $\textbf{f} = (\frac 1d, \frac{m+d(m-1)}{2m+d(m-1)})$, $\textbf{g}= (1, \frac{2+d(m-2)}{2+d(m-1)})$  };

\end{tikzpicture}
\end{center}

\begin{enumerate}
\item[(iii)] By camparing Figures 4-(i) and 9-(i), we may capture similiarities of $\mathcal{S}_{m,q}^{(q_1, q_2)}$ and $\tilde{\mathcal{S}}_{m,q}^{(\tilde{q}_1, \tilde{q}_2)}$. The points \textbf{E} and \textbf{D} in Figure 9-(i) correspond to $(0, \frac{1}{{\lambda_1}})$ and  \textbf{C} in Figure 4-(i), respectively. Moreover, \textbf{f}, \textbf{C}, and \textbf{A} in Figure 9-(i) match to  $(0, \frac{m+d(m-1)}{2m+d(m-1)})$, \textbf{C}, and \textbf{a} in Fig. 4-(i), respectively.
\end{enumerate}

\end{remark}

\subsection{Uniqueness}\label{sub-unique}

The uniqueness of $L^q$-weak solution of \eqref{E:Main} is not obvious because, in general, the maximal principle fails for nonlinear diffusive equations with drifts. However, with additional assumptions on $\nabla V$ and restriction on $q$, we are able to obtain uniqueness results.
Here, we state the uniqueness of $L^m$-weak solution of \eqref{E:Main}.
\begin{theorem}\label{Corollary : Uniqueness}
Let $\rho_0 \in L^m(\bbr^d) \cap \mathcal{P}_2(\bbr^d)$. Assume $V$ satisfy
\begin{equation}\label{V-p-moment : uniqueness}
V\in \mathcal{S}_{m,m}^{(q_1, q_2)} \ \text{ for } \
\begin{cases}
 2 \leq q_2 \leq \frac{2m-1}{m-1}, & \text{ if } \ d>2,  \vspace{1 mm}\\
 2 \leq q_2 < \frac{2m-1}{m-1}, & \text{ if } \ d=2.
 \end{cases}
\end{equation}
Assume further $\nabla V \in L_{x,t}^{\infty, 1} (Q_T)$. Suppose
that $\rho$ is a $L^m$-weak solution of \eqref{E:Main} such that
$\rho :[0,T]\mapsto \mathcal{P}(\bbr^d)$ is narrowly
continuous. Then it is unique.
\end{theorem}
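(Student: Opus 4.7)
Let $\rho_1,\rho_2$ be two $L^m$-weak solutions of \eqref{E:Main} sharing the initial datum $\rho_0 \in L^m(\bbr^d)\cap \mathcal{P}_2(\bbr^d)$, each narrowly continuous into $\mathcal{P}(\bbr^d)$. The plan is to follow the Wasserstein contraction strategy of Ambrosio--Gigli--Savar\'e: compare $\rho_1$ and $\rho_2$ through the squared $W_2$ distance, exploiting the displacement convexity of the internal energy and the Lipschitz-in-$x$ character of $V$ encoded in $\nabla V \in L^{\infty,1}_{x,t}$.

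First, I would verify that each $\rho_i$ belongs to $AC(0,T;\mathcal{P}_2(\bbr^d))$. The hypothesis \eqref{V-p-moment : uniqueness} is precisely the $q=m$ instance of the subscaling condition of Theorem~\ref{Theorem-2-b}, so the a priori estimates encoded in Proposition~\ref{P:Energy-speed} give
\[
\iint_{Q_T} \rho_i\,\Bigl(\Bigl|\tfrac{\nabla \rho_i^m}{\rho_i}\Bigr|^2 + |V|^2\Bigr)\,dx\,dt <\infty.
\]
Consequently \eqref{E:Main} rewrites as a continuity equation $\partial_t \rho_i + \nabla\cdot(\rho_i v_i)=0$ with velocity field $v_i(x,t) = -\tfrac{m}{m-1}\nabla \rho_i^{m-1}(x,t) + V(x,t) \in L^2(\rho_i\,dx\,dt)$, and the finite second moment propagates from $\rho_0$ via \eqref{KK-May7-60}.

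Next, I would apply the first-variation formula for the squared Wasserstein distance along two AC curves: for a.e.\ $t\in(0,T)$,
\[
\frac{d}{dt}\frac{1}{2}W_2^2(\rho_1(t),\rho_2(t)) = \int_{\bbr^d\times\bbr^d}(x-y)\cdot\bigl(v_1(x,t)-v_2(y,t)\bigr)\,d\pi_t(x,y),
\]
with $\pi_t$ an optimal coupling of $(\rho_1(t),\rho_2(t))$. The diffusion contribution
\[
-\tfrac{m}{m-1}\int_{\bbr^d\times\bbr^d}(x-y)\cdot\bigl(\nabla\rho_1^{m-1}(x)-\nabla\rho_2^{m-1}(y)\bigr)\,d\pi_t
\]
is nonpositive, being the Wasserstein-subdifferential monotonicity of the displacement convex functional $\calE(\rho):=\int_{\bbr^d}\rho^m/(m-1)\,dx$, whose geodesic convexity on $\mathcal{P}_2(\bbr^d)$ (McCann) holds for $m>1$. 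The drift contribution is estimated by the mean-value bound
\[
\int(x-y)\cdot\bigl(V(x,t)-V(y,t)\bigr)\,d\pi_t \leq \|\nabla V(\cdot,t)\|_{L^\infty}\int|x-y|^2\,d\pi_t = \|\nabla V(\cdot,t)\|_{L^\infty}\,W_2^2(\rho_1(t),\rho_2(t)).
\]
Combining these gives
\[
\frac{d}{dt}W_2^2(\rho_1(t),\rho_2(t)) \leq 2\,\|\nabla V(\cdot,t)\|_{L^\infty}\,W_2^2(\rho_1(t),\rho_2(t)),
\]
and since $\nabla V\in L^{\infty,1}_{x,t}(Q_T)$ and $W_2(\rho_1(0),\rho_2(0))=0$, Gronwall's inequality forces $W_2(\rho_1(t),\rho_2(t))\equiv 0$, hence $\rho_1=\rho_2$.

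The main obstacle is to rigorously justify the first-variation formula and the displacement-convexity monotonicity directly at the level of mere $L^m$-weak solutions, which are not a priori smooth. I would address this by regularization: either apply the identity to the splitting-method approximants constructed in Section~\ref{splitting method}, for which $v_i$ and the optimal coupling are sufficiently regular, and pass to the limit using the stability of optimal plans in $W_2$ together with the uniform energy/moment bounds; or invoke the general machinery for AC curves in $(\mathcal{P}_2,W_2)$, which requires identifying $v_i$ with the minimal-norm tangent vector of the curve $\rho_i$, a consequence of the $L^2(\rho_i\,dx\,dt)$-integrability established above combined with the gradient structure of $-\tfrac{m}{m-1}\nabla\rho_i^{m-1}$ and the measurability of $V$.
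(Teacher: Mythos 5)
Your plan is essentially the paper's proof: both interpret $\rho_1,\rho_2$ as curves in $\mathcal{P}_2(\bbr^d)$, differentiate $\frac12 W_2^2(\rho_1(t),\rho_2(t))$ via a first-variation inequality (the paper's Lemma~\ref{Lemma : Uniqueness}), kill the diffusion contribution by the geodesic convexity of the internal energy and the fact that $\frac{\nabla\rho^m}{\rho}$ belongs to its Wasserstein subdifferential (the paper's \eqref{eq 1 : uniqueness}--\eqref{eq 3 : uniqueness}), estimate the drift contribution by $\|\nabla V(\cdot,t)\|_{L^\infty_x}W_2^2$, and conclude by Gr\"onwall. This is exactly Proposition~\ref{thm-uniqueness} applied after verifying $\rho_i\in AC(0,T;\mathcal{P}_2)$.

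One step in your preliminary verification is not quite available as stated, though. You invoke Proposition~\ref{P:Energy-speed} to get $\iint_{Q_T}\rho_i(|\nabla\rho_i^m/\rho_i|^2+|V|^2)\,dx\,dt<\infty$, but that proposition is proved only for \emph{regular} solutions in the sense of Definition~\ref{D:regular-sol}, and here $\rho_1,\rho_2$ are arbitrary narrowly continuous $L^m$-weak solutions, not solutions you constructed. The paper's route to the same conclusion is slightly different and worth adopting: the energy bound $\|\rho_i\|_{L^\infty(0,T;L^m_x)}+\|\nabla\rho_i^{(2m-1)/2}\|_{L^2(Q_T)}<\infty$ is already part of Definition~\ref{D:weak-sol}(i) for an $L^m$-weak solution, the speed estimate then follows from Lemma~\ref{P:W-p}, which is a purely function-space interpolation statement and does not require regularity, and the $2$nd-moment bound follows from Remark~\ref{Remark : moment estimate}, the cutoff version of Lemma~\ref{L:p-moment-estimate} valid for narrowly continuous weak solutions. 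Your second worry, about identifying $v_i$ with the minimal tangent field, is actually unnecessary: Lemma~\ref{Lemma : Uniqueness} yields a one-sided bound $f'(t)\le\int\langle w_1-w_2,x-y\rangle\,d\gamma_t$ for any $L^2(\rho_i)$ velocity closing the continuity equation, and the Gr\"onwall argument only needs that inequality, not the sharp metric-derivative identity.
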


Next corollary is an immediate consequence of Theorem \ref{Corollary : Uniqueness}, which reads as follows:
\begin{cor}\label{Corollary : Uniqueness-2}
Assume $\rho_0 \in L^q(\bbr^d) \cap \mathcal{P}_p(\bbr^d)$ for $p \geq 2$ and $q \geq m$.
Let $V$ satisfy the hypothesis in one of Theorems~\ref{Theorem-2-b}, \ref{Theorem-4} and \ref{Theorem-5}, and let $\nabla V \in L_{x,t}^{\infty, 1} (Q_T)$.
Suppose that $\rho$ is narrowly continuous $L^m$-weak solution of \eqref{E:Main}. Then it is unique.
\end{cor}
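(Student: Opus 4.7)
The plan is to reduce the corollary directly to Theorem~\ref{Corollary : Uniqueness} by verifying each of its hypotheses in turn. Since the conditions $\nabla V \in L_{x,t}^{\infty,1}(Q_T)$ and narrow continuity of $\rho$ are carried over unchanged, and the weak solution $\rho$ is assumed to be an $L^m$-weak solution, what remains is to (a) upgrade $\rho_0$ from $L^q \cap \calP_p$ to $L^m \cap \calP_2$, and (b) upgrade the hypothesis on $V$ in the existence theorems to the hypothesis $V \in \mathcal{S}_{m,m}^{(q_1,q_2)}$ with $q_2$ in the range prescribed in \eqref{V-p-moment : uniqueness}.

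Step (a) is immediate: $p \geq 2$ gives $\calP_p(\bbr^d) \subset \calP_2(\bbr^d)$, and since $\rho_0$ is a probability density we have $\rho_0 \in L^1 \cap L^q$ with $1 \le m \le q$, so interpolation (or H\"older) yields $\rho_0 \in L^m(\bbr^d)$.

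Step (b) is the main work, though it is not hard: I will split into the three cases corresponding to Theorem~\ref{Theorem-2-b}, Theorem~\ref{Theorem-4}, and Theorem~\ref{Theorem-5}. In the first case, the hypothesis \eqref{T2:V} or the embedding \eqref{T2:V-embedding} gives $V\in \mathfrak{S}_{m,q}^{(q_1,q_2)}$ with $q_2 \le \frac{q+m-1}{m-1}\delta_{\{1<q\le m\}} + \frac{2m-1}{m-1}\delta_{\{q>m\}}$. Since $q \ge m$, either $q=m$ (in which case $q_2 \le \frac{2m-1}{m-1}$ already) or $q>m$ (same bound). Then, using the time-H\"older inequality of Remark~\ref{R:S-tildeS}(iii) (cf.\ \eqref{q-qL}), I downgrade the integrability exponent in $t$ to obtain some $q_2^\ast \le q_2$ with $(q_1,q_2^\ast)$ lying on the scaling line of $\mathcal{S}_{m,m}^{(q_1,q_2^\ast)}$; the upper bound $q_2^\ast \le \frac{2m-1}{m-1}$ is preserved since $q_2^\ast \le q_2$. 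The divergence-free case (Theorem~\ref{Theorem-4}) is identical, noting that $\nabla\cdot V=0$ is not required in the uniqueness statement but the $L^{q_1,q_2}$ bound on $V$ is what we use. For Theorem~\ref{Theorem-5}, where $\nabla V \in \tilde{\mathfrak{S}}_{m,q}^{(\tilde q_1,\tilde q_2)}$, I first invoke the Sobolev embedding \eqref{norm-V-gradV} in Remark~\ref{R:S-tildeS}(ii) to convert the bound on $\nabla V$ to a bound on $V$ in some $L^{q_1,q_2}_{x,t}$, and then repeat the H\"older-in-$t$ downgrade.

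With steps (a) and (b) in hand, all hypotheses of Theorem~\ref{Corollary : Uniqueness} are met, and uniqueness of the narrowly continuous $L^m$-weak solution follows. The only mild obstacle is bookkeeping across the three existence frameworks to make sure the endpoint constraints (especially the strict vs.\ non-strict inequalities in dimension $d=2$) survive the embedding; this is purely verification and does not require any new ideas beyond Remarks~\ref{R:S-tildeS}, \ref{R:Theorem-2-a}, and \ref{R:Theoerm-5}.
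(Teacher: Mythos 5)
You correctly reduce to Theorem~\ref{Corollary : Uniqueness} and step (a) is the same as the paper. For step (b), however, your mechanism differs from the paper's. The paper's key observation is that $q\geq m$ implies the monotonicity of the sub-scaling classes, $\mathfrak{S}_{m,q}^{(q_1,q_2)} \subset \mathfrak{S}_{m,m}^{(q_1,q_2)}$ and $\tilde{\mathfrak{S}}_{m,q}^{(\tilde q_1,\tilde q_2)} \subset \tilde{\mathfrak{S}}_{m,m}^{(\tilde q_1,\tilde q_2)}$, crucially \emph{with the same pair} $(q_1,q_2)$, so the constraint $2\leq q_2 \leq \frac{2m-1}{m-1}$ is preserved verbatim; the remaining verification is then delegated to the case analysis already worked out in Proposition~\ref{P:Energy-speed}. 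You instead invoke the time-H\"older inequality \eqref{q-qL} to decrease $q_2$ until $(q_1,q_2^\ast)$ sits on $\mathcal{S}_{m,m}$. That is a valid route, but it \emph{moves} the $q_2$-endpoint, and you only check the upper bound $q_2^\ast \leq \frac{2m-1}{m-1}$; you say nothing about the lower bound $q_2^\ast \geq 2$, which is the other half of \eqref{V-p-moment : uniqueness} and is not automatic when $q_2$ decreases. The omission is closable: on the scaling-invariant line $\mathcal{S}_{m,m}$ one has $q_2^\ast \geq 2$ precisely when $q_1 \leq \frac{2m}{m-1}$, and this is exactly the $q_1$-constraint already present in \eqref{T2:V} (and in \eqref{T4:V-divfree} for $q>m$; for Theorem~\ref{Theorem-5}, after the Sobolev conversion of Remark~\ref{R:S-tildeS}(ii) the critical pair with $q_2\geq 2$ again forces $q_1\leq \frac{2m}{m-1}$). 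So your argument can be completed, but it requires a bit more endpoint bookkeeping than the paper's inclusion argument, which sidesteps the lower-bound issue entirely by never changing $(q_1,q_2)$. You also did not mention the paper's explicit remark that ${\lambda_q}=2$ once $q\geq m$, which is what collapses the $p$-th moment machinery to the $W_2$-level needed by Theorem~\ref{Corollary : Uniqueness}; your step (a) quietly absorbs this, but it is worth flagging as the structural reason the hypothesis $q\geq m$ appears.
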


\begin{remark}\label{Remark : uniqueness}
\begin{itemize}
\item[(i)]
The notion of \textit{narrowly continuous} is the continuity in the weak* topology of $\left ( \mathcal{C}_b(\bbr^d)\right)'$ (refer \eqref{D:narrowly convergent} in Section~\ref{SS:Wasserstein}).

\item[(ii)] Assume $\rho_0 \in L^q(\bbr^d) \cap \mathcal{P}_p(\bbr^d)$ for $p \geq 2$ and $q \geq m$.
In Theorems~\ref{Theorem-2-b}, \ref{Theorem-4} and \ref{Theorem-5}, for each $1< \tilde{p} \leq p$, we show the existence of $L^q$-weak solution $\rho_{\tilde{p}}$ satisfying
$\rho_{\tilde{p}} \in AC(0,T ;\mathcal{P}_{\tilde{p}}(\bbr^d))$. Especially, this implies that $\rho_{\tilde{p}} : [0,T] \mapsto \mathcal{P}(\bbr^d)$ is narrowly continuous.
We note that $\rho_{\tilde{p}}$ is also $L^m$-weak solution.
Hence, if we further assume
$\nabla V \in L_{x,t}^{\infty, 1} (Q_T)$, then all these solutions $\rho_{\tilde{p}}$ are identical.

\item[(iii)] We do not know the uniqueness of $L^m$-weak solutions in the absence of the hypothesis $\nabla V \in L_{x,t}^{\infty, 1} (Q_T)$. Furthermore, even with $\nabla V \in L_{x,t}^{\infty, 1} (Q_T)$,
uniqueness is not clear if $q<m$ or $p<2$. Therefore, those questions remain open.
\end{itemize}
\end{remark}


\subsection{An application}

In this subsection, we provide an application to which our results can be properly referred.
We consider an repulsion type of Keller-Segel equations, which is
of the form
\begin{equation}\label{milchim-10}
\rho_t-\Delta \rho^m=\nabla \cdot \bke{\rho \nabla c},\qquad c_t-\Delta c=\rho,\quad \text{in } \ Q_T :=\bbr^d\times (0, T],
\end{equation}
where $d\ge 2$ and $0<T<\infty$.
If \eqref{milchim-10} is considered in a bounded domain $\Omega \subset \bbr^d$,
no flux boundary condition is typically  assgined, that is
\begin{equation}\label{milchim-11}
\frac{\partial\rho}{\partial\nu }=\frac{\partial c}{\partial\nu}=0\qquad \mbox{ on }\,\,\partial\Omega.
\end{equation}
One can compare to the porous medium type of  classical Keller-Segel equations, which is a coupled system of the form
\begin{equation}\label{milchim-15}
\rho_t-\Delta \rho^m=-\nabla \cdot \bke{\rho \nabla c},\qquad c_t-\Delta c=\rho,\quad \text{in } \ Q_T.
\end{equation}

Contrast to \eqref{milchim-10}, it shows aggregation effect due to the opposite sign of reaction-diffusion term for the equation $\rho$, and it turns out that solutions globally exist if
$m > \frac{2(d-1)}{d}$ or blow-ups can occur in a finite time if $1<m \leq \frac{2(d-1)}{d}$ (see e.g. \cite{HashiraSachikoYokota}, \cite{Sug} and reference therein). We remark that, in general, the linear case ($m=1$) with large data yields blow-up in a finite time except for one dimensional case (see e.g. \cites{JL92, Herrero-Velazquez, HorWin, Win, Win12, Win13}  and reference therein). 

 In the case that $m=\frac{2(d-1)}{d}$ and  equation of $c$ in \eqref{milchim-15} is of elliptic type, asymptotic behaviors of solutions were classified in \cite{BlaCarLau} by using the gradient flow structure of the equation for each case of subcritical, critical or supercritical mass.
The free energy functional was analyzed in \cite{CarHitVolYao} to show that global minimizers are radial and compactly supported, and, in particular, the minimizer is unique in two dimensions and solutions of PME Keller-Segel equations asymptotically converges to the minimizer (see also the survey paper \cite{CarCraYao}).
Meanwhile, the existence of global $L^q$-weak solutions, $q>1$, was established in \cite{BiaLiu}, provided that $L^p$-norm, $p=\frac{d(2-m)}{2}$, of initial data is sufficiently small.

Since the equation \eqref{E:Main} 
is quite general,  our results do not seem to be directly applicable to those mentioned above for specific system \eqref{milchim-15}.
Our analysis, however, provide a heuristic evidence why $p=\frac{d(2-m)}{2}$ appears in \cite{BiaLiu}.
Suppose that a priori $L^q$-energy estimate is available. Then, $\rho\in L^{\frac{d(m+q-1)}{d-2}, m+q-1}_{x,t}$, so is $\Delta c$ due to $-\Delta c=\rho$. Applying the subscaling invariant condition for $\nabla V$ (in this case $\nabla V=\Delta c$), we can conclude that $L^q$-weak solutions exist, provided that $q$ satisfies
\[
\frac{d+q_{m,d}}{m+q-1}\le 2+q_{m,d}\quad\Longrightarrow\quad q\ge \frac{d(2-m)}{2}.
\]
On the other hand, 
it was also shown in \cite{BiaLiu} that,
if no smallness restriction is assumed, blow-up in a finite time may occur. More precisely, $ \lim_{t \to T^\ast}\norm{\nabla c(t)}_{L^{2}_{x}} = \lim_{t\to T^\ast} \norm{\rho(t)}_{L^{r}_{x}} = \infty$ for any $r>p$,  where $T^\ast$ is a finite blow-up time. Our results can be also used to see that blow-up properties are relevant to each other. Indeed, suppose  $\norm{\nabla c(t)}_{L^{2}_{x}}$ is finite until the blow-up time. Then, via the subscaling invariant condition, we can have global $L^q$-weak solution wtih $q$ satisfying
$q\le \frac{2d(m-1)}{d-2}$.
If $m\ge \frac{2d}{d+2}$, then there, however, exists $q\in (\frac{d(2-m)}{2}, \frac{2d(m-1)}{d-2}]$ so that $L^q$-weak solution exists globally in time, which is contrary to the fact that $L^r$-norm of $\rho$ for $r>p$ becomes infinite at time $T^\ast$. Therefore, $\norm{\nabla c(t)}_{L^{2}_{x}}$ must be unbounded if $\lim_{t\to T^\ast} \|\rho(t)\|_{L^{r}_{x}} = \infty$ for $r>p$.
Our results do not give any proof of the results in \cite{BiaLiu}, but it could be used to conduct heuristic observations.

Now we return to the system \eqref{milchim-10} of repulsive type.
Due to repusive effect of reaction diffusion term in \eqref{milchim-10}, it is, however, unknown whether or not blow-up can occur in a finite time.
More precisely, the repulsive effect deduces via cancelation property the following estimate:
\begin{equation}\label{KS-moment5}
\frac{d}{dt}\bke{\int_{\bbr^d} \rho\log\rho+\frac{1}{2}\int_{\bbr^d}\abs{\nabla c}^2}+\frac{4}{m^2}\int_{\bbr^d}\abs{\nabla\rho^{\frac{m}{2}}}^2+\int_{\bbr^d}\abs{\Delta c}^2 =0,
\end{equation}
which is not available to the case of  classical Keller-Segel equations \eqref{milchim-15}.

It is not difficult to see that the a priori estimate \eqref{KS-moment5} yields existence of entropy weak solutions. However, it is not simple to show whether or not such weak solutions are regular.
Similar phenomena can be found in the Nernst-Planck-Navier-Stokes system, which has a type of repulsive structure as in \eqref{milchim-10}. We remark that \cite{ConIgn} established global well-posedness of regular solutions in two
dimensions and \cite{LiuWan} proved the existence of weak solutions for
two-ionic species by using free energy dissipation via gradient
flow structure in three dimensions.
In this directions, lots of improvements have recently been accomplished and we do not make a list of all relavent results here.

Our aim in this subsection is to improve the result of \cite{Freitag} with the aid of our main results.
As far as authors know, the best known results in regularity theory for \eqref{milchim-10}-\eqref{milchim-11} is that  if $m>1+\frac{(d-1)(d-2)}{d^2}$, $d\ge 2$, then solutions become bounded globally in time (see \cite{Freitag}). We remark that note that $1+\frac{(d-1)(d-2)}{d^2}<\frac{2(d-1)}{d}$.
Our main objective is to show that solutions for \eqref{milchim-10} become globally bounded for the case $ m> \max \{ \frac{2d-3}{d}, \, \frac{2d}{d+2} \}$.
We note, due to previous result in \cite{Freitag}, that it suffices to consider the case $ \max \{ \frac{2d-3}{d}, \, \frac{2d}{d+2} \} < m\le 1+\frac{(d-1)(d-2)}{d^2}$ for $d>2$.
With the aid of our main results, we obtain the following.
The proof will be given in Section~\ref{PME-KS-eq}.

\begin{theorem}\label{thm-jengchik50}
Let $m> \max \{ \frac{2d-3}{d}, \, \frac{2d}{d+2} \}$, $d>2$, and $p\in (1, \frac{md-d+2}{d-1} ]$. Suppose that $\rho_0\in {\mathcal P}_p(\R^d)\cap L^{\infty}(\R^d)$ and $c_0\in L^1 (\bbr^d)\cap W^{2,\infty}(\R^d)$. Then, a pair of solution $(\rho, c)$ for the equations \eqref{milchim-10} becomes bounded globally in time.
\end{theorem}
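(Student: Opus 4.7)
My plan is to combine the free-energy dissipation identity \eqref{KS-moment5} with the $L^q$-energy estimates established in this paper and to iterate via the equation $c_t-\Delta c=\rho$ until $V:=-\nabla c$ falls into the classical Serrin class, at which point the $L^\infty$-theory for PME with Serrin-class drift (see \cites{CHKK17, KZ18, HZ21}) supplies $\rho \in L^\infty(Q_T)$.

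First I would integrate \eqref{KS-moment5} on $[0,T]$, using $\rho_0 \in L^1\cap L^\infty$ and $c_0 \in W^{2,\infty}$, to obtain uniform-in-time bounds on $\int \rho|\log\rho|\,dx$ and $\|\nabla c\|_{L^2}$ together with $\nabla\rho^{m/2},\,\Delta c \in L^2(Q_T)$. In particular $V\in L^{q_1, q_2}_{x,t}$ for $(q_1,q_2) = (2,\infty)$. Inserted into Definition \ref{D:Serrin}, this pair satisfies the sub-scaling identity for $\mathfrak{S}_{m,q}^{(q_1,q_2)}$ precisely when $q \leq q^\star:=2d(m-1)/(d-2)$. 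A direct calculation shows $q^\star > \max\{1,m/2\}$ under our hypotheses: the inequality $q^\star > m/2$ is equivalent to $m>2d/(d+2)$, and $(d-2)^2 > 0$ together with $m>2d/(d+2)$ forces $q^\star>1$. Moreover $m>(2d-3)/d$ is exactly what makes the interval $(1,(md-d+2)/(d-1)]$ non-empty, so the moment hypothesis $\rho_0\in\mathcal{P}_p(\R^d)$ for some $p>1$ is genuinely usable. Invoking Theorem \ref{Theorem-2-a} (together with its Wasserstein counterpart Theorem \ref{Theorem-2-b}) on the admissible range then yields the a priori estimate
\[
\esssup_{0\leq t\leq T}\|\rho(t)\|_{L^q_x}^q + \iint_{Q_T} |\nabla \rho^{(q+m-1)/2}|^2\,dxdt \leq C,\qquad q\in (\max\{1,m/2\},\,q^\star].
\]
Applying Sobolev embedding to $\rho^{(q+m-1)/2}$ then gives $\rho \in L^{q+m-1}_t L^{(q+m-1)d/(d-2)}_x$, a strict improvement over the $L^{m+2/d}(Q_T)$-bound furnished by the entropy estimate alone.

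Next I would feed this improved integrability of $\rho$ into $c_t-\Delta c=\rho$ and use parabolic Calder\'on--Zygmund to upgrade $\nabla^2 c$ to the same space, then spatial Sobolev to place $V=-\nabla c$ into a strictly narrower $L^{q_1', q_2'}_{x,t}$ class. Re-entering Theorem \ref{Theorem-2-a} or \ref{Theorem-5a} with this improved drift enlarges the admissible $q$, and finitely many iterations lift $V$ into the classical Serrin class $d/q_1+2/q_2 \leq 1$ (the limit $q=\infty$ of Definition \ref{D:Serrin}). The $L^\infty$-regularity theory of \cites{CHKK17, KZ18, HZ21} then yields $\rho\in L^\infty(Q_T)$, and boundedness of $c$ follows from $c_0\in L^\infty$ together with standard parabolic regularity applied to $c_t-\Delta c=\rho\in L^\infty$.

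The principal difficulty is this last bootstrap: showing that the iteration \emph{strictly} enlarges the admissible $q$ and terminates in finitely many steps, uniformly over the range $\max\{(2d-3)/d,\,2d/(d+2)\} < m \leq 1+(d-1)(d-2)/d^2$ not covered by \cite{Freitag}. Near the lower endpoint of $m$ the per-step gain shrinks, and closing the iteration requires a careful interpolation alternating between the $\mathfrak{S}$- and $\tilde{\mathfrak{S}}$-versions of our theorems so as to exploit both $V$- and $\nabla V$-information simultaneously.
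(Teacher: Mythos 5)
Your outline starts from the entropy dissipation identity \eqref{KS-moment5}, as does the paper, but the first step of your bootstrap is already dead. The $L^\infty_t L^2_x$ bound on $V=-\nabla c$ cannot initiate Theorem~\ref{Theorem-2-a}: that theorem requires a \emph{finite} temporal exponent $q_2 \le \frac{q+m-1}{m-1}$. With $q_1=2$ the sub-scaling condition becomes
\[
q \le \frac{2d(m-1)(q_2-1)}{(d-2)q_2+4},
\]
and imposing $q_2 \le \frac{q+m-1}{m-1}$ forces the interval of admissible $q$ to collapse to $q \le m-1<1$ (the two constraints are compatible only at $q_2=2$, $q=m-1$). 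So no $L^q$-estimate with $q>1$ is obtainable from $V\in L^\infty_tL^2_x$, and the iteration cannot start. Your fallback — Calder\'on--Zygmund on $\Delta c\in L^2(Q_T)$ to get $\nabla V\in L^{2,2}_{x,t}$ and then Theorem~\ref{Theorem-5a} — gives at best $q \le \frac{d(m-1)}{d-2}$, and this is $<1$ precisely when $m< 2-\frac2d$, which includes part of the range $m>\frac{2d}{d+2}$ (since $2-\frac2d>\frac{2d}{d+2}$ for $d>2$). So near the lower endpoint of $m$ neither version of the existence theorem can be triggered from the $L^2$ data.

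The paper's proof does not start from $\|\nabla c\|_{L^2}$ at all. It uses the entropy bound together with $\nabla\rho^{m/2}\in L^2(Q_T)$ and Lemma~\ref{T:pSobolev} to place $\rho\in L^{\frac{md}{d-2},m}_{x,t}$, and then expresses $\nabla c$ through the heat potential $\nabla c=\int_0^t (\nabla\Gamma * \rho)(\tau)\,d\tau$. Young's convolution inequality then gives the mixed-norm bound $\nabla c\in L^{q_1^{(1)}}_xL^{\frac{m}{m-1}}_t$ with $q_1^{(1)}=\frac{md}{d+2-3m}$, which sits exactly on the scaling-invariant line $\mathcal{S}_{m,q^{(1)}}$ for $q^{(1)}=\frac{d(m-1)}{d-2m}$. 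This is strictly larger than anything Calder\'on--Zygmund plus spatial Sobolev can give, and it exceeds $1$ precisely when $m>\frac{2d}{d+2}$, which is the hypothesis. The same potential estimate is then repeated at each level to produce the recursion $q^{(k+1)}=\frac{d(m-1)q^{(k)}}{d-2m+2-2q^{(k)}}$, which is shown to be strictly increasing and to terminate in finitely many steps once $q^{(k)}\ge d-3m+3$. You flag the termination question as a difficulty but leave it open; the paper resolves it explicitly. Your arithmetic aside, ``$q^\star>m/2$ is equivalent to $m>2d/(d+2)$'' is also not quite right (it is equivalent to $m>\frac{4d}{3d+2}$, a weaker condition), though the implication you need does hold. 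The essential missing ingredient, however, is the potential-theoretic upgrade of $\nabla c$: without it the iteration never gets off the ground.
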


\begin{remark}
\begin{itemize}
\item[(i)] We remark that $m > \frac{2d-3}{d}$ is equivalent to $\frac{md-d+2}{d-1} > 1$. Moreover, we note that the condition $\max \{ \frac{2d-3}{d}, \, \frac{2d}{d+2} \}$ can be rewritten as $\frac{2d}{d+2} \cdot \delta_{\{d \leq 6\}} + \frac{2d-3}{d}\cdot \delta_{\{ d >6\}}$.

\item[(ii)] If $\rho_0\in {\mathcal P}_p(\R^d)\cap L^{\infty}(\R^d)$ is assumed to be H\"{o}lder continuous, then  so is $\rho$ for $t \in  (0, T]$. Since its proof is rather straightforward once solutions become bounded (see e.g. \cite{CHKK17}), we omit the details. The uniqueness of solutions, however, remains open, since the results in Section~\ref{sub-unique} do not seem to be applicable to the system \eqref{milchim-10}.
\end{itemize}
\end{remark}


\section{Preliminaries}\label{S:Preliminaries}

In this section, we introduce preliminaries that are used throughout the paper.

For a function $f:\Omega \times [0,T] \to \mathbb{R}$, $\Omega \subset \mathbb{R}^d, d \geq 2$ and constants $q_1, q_2 > 1$, we define
\[
\|f\|_{L^{q_1, q_2}_{x, t}} : = \left(\int_{0}^{T}\left[\int_{\Omega} \abs{f(x,t)}^{q_1} \,dx\right]^{q_2 / q_1} \,dt\right)^{\frac{1}{q_2}}.
\]
For simplicity, let $\|f\|_{L^{q}_{x,t}} = \|f\|_{L^{q, q}_{x,t}}$ for some $q>1$. Also we denote for $\Omega \subseteq \bbr^d$ and $\Omega_T \subseteq Q_T$ that
\begin{equation*}
\begin{gathered}
\|f(\cdot,t)\|_{\calC^{\alpha}(\Omega)} := \sup_{x,y \in \Omega, \, x \neq y} \frac{|f(x, t) - f(y,t)|}{|x-y|^\alpha}, \\
\|f\|_{\calC^{\alpha}(\Omega_T)} := \sup_{(x, t), (y,s) \in \Omega_T, \, (x,t) \neq (y,s)} \frac{|f(x, t) - f(y,s)|}{|x-y|^\alpha + |t-s|^{\alpha/2}}.
\end{gathered}
\end{equation*}

\subsection{Technical lemmas}
We introduce well known Sobolev type inequalities. Throughout this subsection, let $\Omega \subseteq \bbr^d$.
\begin{lemma}\label{T:Sobolev} \cite[Theorem~7.10]{GT}
Let $\rho \in W_{o}^{1,p}(\Omega)$ and assume $ 1 \leq p < d$. There exists a constant $c=c(d,p)$ such that
\[
\|\rho\|_{L_{x}^{\frac{dp}{d-p}}(\Omega)} \leq c \|\nabla \rho\|_{L^{p}_{x}(\Omega)}.
\]
\end{lemma}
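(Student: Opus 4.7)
The plan is to establish the Gagliardo--Nirenberg--Sobolev inequality in the classical two-step fashion: first treat the borderline case $p=1$ by a one-dimensional integration argument, then bootstrap to $1 < p < d$ via a power substitution and H\"older's inequality, and finally extend from test functions to all of $W_o^{1,p}(\Omega)$ by density.

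First, by a density argument, it suffices to prove the estimate for $\rho \in \calC_c^\infty(\bbr^d)$ (the zero-extension from $\Omega$ to $\bbr^d$ preserves the $W^{1,p}$-norm because $\rho \in W_o^{1,p}(\Omega)$). For the base case $p=1$, I would write, for each coordinate direction $i$,
\[
|\rho(x)| \le \int_{-\infty}^{\infty} |\pa_i \rho(x_1,\ldots,x_{i-1},y_i,x_{i+1},\ldots,x_d)|\,dy_i,
\]
so that $|\rho(x)|^{d/(d-1)} \le \prod_{i=1}^d \bke{\int_{-\infty}^{\infty} |\pa_i \rho|\,dy_i}^{1/(d-1)}$. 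Integrating this inequality successively in $x_1,\ldots,x_d$ and applying the generalized H\"older inequality at each stage (the Loomis--Whitney inequality) yields
\[
\|\rho\|_{L^{d/(d-1)}(\bbr^d)} \le \prod_{i=1}^d \|\pa_i \rho\|_{L^1(\bbr^d)}^{1/d} \le \|\nabla\rho\|_{L^1(\bbr^d)},
\]
which is the $p=1$ case.

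Second, for $1 < p < d$, I would apply the $p=1$ estimate to $|\rho|^\gamma$ with $\gamma := \frac{p(d-1)}{d-p} > 1$, noting $\nabla |\rho|^\gamma = \gamma |\rho|^{\gamma-1}\operatorname{sgn}(\rho)\nabla\rho$. This produces
\[
\bke{\int_{\bbr^d}|\rho|^{\gamma d/(d-1)}\,dx}^{(d-1)/d} \le \gamma \int_{\bbr^d} |\rho|^{\gamma-1}|\nabla \rho|\,dx.
\]
The exponent is chosen so that $\gamma d/(d-1) = dp/(d-p)$. Applying H\"older's inequality with conjugate exponents $p$ and $p/(p-1)$ to the right-hand side and using $(\gamma-1)\cdot \frac{p}{p-1} = \frac{dp}{d-p}$, I can absorb one factor of $\|\rho\|_{L^{dp/(d-p)}}$ from the right to the left to obtain
\[
\|\rho\|_{L^{dp/(d-p)}(\bbr^d)} \le \gamma \|\nabla\rho\|_{L^p(\bbr^d)},
\]
which is the desired inequality with $c = \gamma = p(d-1)/(d-p)$.

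Finally, the main (and only real) obstacle is the justification that the chain rule applied to $|\rho|^\gamma$ and the ensuing H\"older manipulations remain valid when $\rho$ is only in $W_o^{1,p}(\Omega)$ rather than smooth. This is handled by approximating $\rho$ in the $W^{1,p}$-norm by a sequence $\rho_n \in \calC_c^\infty(\Omega)$, applying the inequality just proved to each $\rho_n$, and passing to the limit using lower semicontinuity of the $L^{dp/(d-p)}$-norm under a.e. convergence (extracting a subsequence converging a.e.) together with continuity of $\|\nabla\rho_n\|_{L^p}$. This concludes the proof with the explicit constant $c = c(d,p) = p(d-1)/(d-p)$.
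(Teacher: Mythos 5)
Your proof is correct and is essentially the standard Gagliardo--Nirenberg--Sobolev argument given in the cited reference (Gilbarg--Trudinger, Theorem 7.10): the $p=1$ case via iterated one-dimensional integration and generalized H\"older, followed by the substitution $|\rho|^{\gamma}$ with $\gamma=\frac{p(d-1)}{d-p}$ and a density argument. The paper itself quotes this lemma without proof, so there is nothing further to compare.
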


\begin{lemma}\label{T:pSobolev} \cite[Proposition~I.3.1]{DB93}
  Let $\rho \in L^{\infty}(0, T; L^{q}(\Omega)) \cap L^{p}(0, T; W_{o}^{1,p}(\Omega))$ for some $1 \leq p < d$, and $0 < q < \frac{dp}{d-p}$. Then there exists a constant $c=c(d,p,q)$ such that
  \[
  \iint_{\Omega_{T}} |\rho(x,t)|^{\frac{p(d+q)}{d}}\,dx\,dt \leq c \left(\sup_{0\leq t \leq T}\int_{\Omega} |\rho (x,t)|^{q}\,dx\right)^{\frac{p}{d}} \iint_{\Omega_{T}} |\nabla \rho(x,t)|^{p}\,dx\,dt.
  \]
\end{lemma}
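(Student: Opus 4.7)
The plan is to prove this via a pointwise-in-time Hölder interpolation followed by the Sobolev embedding from Lemma \ref{T:Sobolev}, and then to integrate in time. The key algebraic observation is that the target exponent decomposes as
\[
\frac{p(d+q)}{d} \;=\; \frac{pq}{d} + p,
\]
which suggests splitting $|\rho|^{p(d+q)/d} = |\rho|^{pq/d}\cdot|\rho|^{p}$ and applying Hölder with the conjugate exponents $s = d/p$ and $s' = d/(d-p)$. Both exponents exceed $1$ precisely because $1 \leq p < d$, which is the standing assumption.

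Carrying this out, at each fixed time $t \in (0,T]$, Hölder's inequality yields
\[
\int_{\Omega} |\rho(x,t)|^{\frac{p(d+q)}{d}}\,dx
\;\leq\; \left(\int_{\Omega} |\rho(x,t)|^{q}\,dx\right)^{\frac{p}{d}}
\left(\int_{\Omega} |\rho(x,t)|^{\frac{dp}{d-p}}\,dx\right)^{\frac{d-p}{d}}.
\]
For the second factor, I invoke Lemma \ref{T:Sobolev} (the Sobolev embedding $W^{1,p}_{o}(\Omega) \hookrightarrow L^{dp/(d-p)}(\Omega)$): raising both sides of $\|\rho(\cdot,t)\|_{L^{dp/(d-p)}_{x}} \leq c\,\|\nabla\rho(\cdot,t)\|_{L^{p}_{x}}$ to the $p$-th power gives exactly
\[
\left(\int_{\Omega} |\rho(x,t)|^{\frac{dp}{d-p}}\,dx\right)^{\frac{d-p}{d}}
\;\leq\; c\,\int_{\Omega} |\nabla\rho(x,t)|^{p}\,dx,
\]
with a constant depending only on $d$ and $p$.

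Combining the two displays and then integrating in $t \in (0,T)$, I bound the $L^{q}$ factor by its essential supremum in time and pull it outside the time integral, which produces
\[
\iint_{\Omega_{T}} |\rho(x,t)|^{\frac{p(d+q)}{d}}\,dx\,dt
\;\leq\; c\,\Bigl(\sup_{0\leq t\leq T}\int_{\Omega} |\rho(x,t)|^{q}\,dx\Bigr)^{\frac{p}{d}}
\iint_{\Omega_{T}} |\nabla\rho(x,t)|^{p}\,dx\,dt,
\]
which is the claim. There is no serious obstacle here; the argument is a textbook parabolic interpolation and the only items to verify are the exponent bookkeeping (that $\frac{pq}{d}\cdot\frac{d}{p}=q$ and $p\cdot\frac{d}{d-p}=\frac{dp}{d-p}$) and the admissibility of the Sobolev inequality, which is guaranteed by the hypothesis $1\leq p<d$. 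The restriction $0<q<dp/(d-p)$ is what makes the interpolation genuine (placing the target exponent strictly between $L^{q}$ and $L^{dp/(d-p)}$), but it does not enter the computation as a separate constraint beyond ensuring measurability of the quantities involved.
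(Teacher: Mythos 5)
Your proof is correct. The paper does not prove this lemma itself --- it quotes it from \cite[Proposition~I.3.1]{DB93} --- and your argument (slice-wise H\"older interpolation with exponents $d/p$ and $d/(d-p)$, followed by the Sobolev embedding of Lemma \ref{T:Sobolev} and pulling the $L^q$ factor out of the time integral as a supremum) is exactly the standard proof of that cited result; the exponent bookkeeping checks out, and as you note the hypothesis $0<q<\frac{dp}{d-p}$ plays no role in the computation.
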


The following lemma is the Garilardo-Niremberg multiplicative embedding inequality.
\begin{lemma}\label{T:GN} \cite[Theorem~I.2.1]{DB93}
Let $\rho \in W_{o}^{1,p}(\Omega)$ for $p \geq 1$. For every fixed number $s\geq 1$, there exists a constant $c=c(d,p,s)$ such that
\[
\|\rho\|_{L_{x}^{q}(\Omega)} \leq c \|\nabla \rho\|_{L^{p}_{x}(\Omega)}^{\alpha} \|\rho\|_{L^{s}_{x}(\Omega)}^{1-\alpha},
\]
where $\alpha \in [0,1]$, $p, q \geq 1$, are linked by
$\alpha = \left(\frac 1s - \frac 1q \right) \left( \frac 1d - \frac 1p + \frac 1s\right)^{-1}$,
and their admissible range is:
\begin{equation*}
\begin{cases}
s \leq q \leq \infty, \ 0 \leq \alpha \leq \frac{p}{p+s(p-1)}, & \text{ if } \ d=1, \\
s \leq q \leq \frac{dp}{d-p}, \ 0 \leq \alpha \leq 1, &\text{ if } \ 1\leq p < d, \ s \leq \frac{dp}{d-p}, \\
\frac{dp}{d-p} \leq q \leq s, \ 0 \leq \alpha \leq 1, &\text{ if } \ 1 \leq p < d, \ s \geq \frac{dp}{d-p}, \\
s \leq q < \infty, \ 0\leq \alpha < \frac{dp}{dp + s(p-d)}, &\text{ if } \ p \geq d > 1.
\end{cases}
\end{equation*}

\end{lemma}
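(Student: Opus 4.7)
The plan is to reduce the inequality to two familiar ingredients: the classical Sobolev embedding and Hölder's interpolation inequality between $L^p$-spaces, with the exponent $\alpha$ dictated by scaling.

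First I would fix $\alpha$ by dimensional analysis. Substituting $\rho_\lambda(x) := \rho(\lambda x)$ into the claimed inequality and using $\|\rho_\lambda\|_{L^r} = \lambda^{-d/r}\|\rho\|_{L^r}$ and $\|\nabla \rho_\lambda\|_{L^p} = \lambda^{1-d/p}\|\nabla \rho\|_{L^p}$, the two sides agree in $\lambda$ if and only if $-d/q = \alpha(1-d/p) - (1-\alpha)d/s$, which rearranges to the stated formula $\alpha = (1/s-1/q)(1/d-1/p+1/s)^{-1}$. This fixes $\alpha$ uniquely and forces any proof to have this exponent.

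Second, for the principal regime $1 \leq p < d$, I would invoke the Sobolev inequality (Lemma~\ref{T:Sobolev}), $\|\rho\|_{L^{p^\ast}(\Omega)} \leq c\|\nabla \rho\|_{L^p(\Omega)}$, where $p^\ast = dp/(d-p)$, and combine it with the standard three-exponent Hölder interpolation
\[
\|\rho\|_{L^q(\Omega)} \leq \|\rho\|_{L^{p^\ast}(\Omega)}^{\theta}\,\|\rho\|_{L^s(\Omega)}^{1-\theta}, \qquad \frac{1}{q} = \frac{\theta}{p^\ast} + \frac{1-\theta}{s},
\]
valid whenever $q$ lies between $s$ and $p^\ast$. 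A direct computation of $\theta$ from its defining relation shows $\theta = \alpha$, which handles both of the subcases $s \leq q \leq p^\ast$ and $p^\ast \leq q \leq s$ listed in the statement. When $d = 1$, the embedding $W^{1,p}_o \hookrightarrow L^\infty$ replaces the role of $p^\ast$, and the admissible range of $\alpha$ is recovered by the same interpolation, now between $L^\infty$ and $L^s$.

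Third, for the borderline case $p \geq d > 1$, I would apply the inequality already established for a subcritical exponent $\tilde p \in [1,d)$ to the composite function $|\rho|^\gamma$, using the chain rule $\nabla(|\rho|^\gamma) = \gamma|\rho|^{\gamma-1}\mathrm{sgn}(\rho)\nabla\rho$, and Hölder's inequality to estimate $\|\nabla(|\rho|^\gamma)\|_{L^{\tilde p}} \leq \gamma\|\nabla\rho\|_{L^p}\,\||\rho|^{\gamma-1}\|_{L^r}$ for a compatible $r$. Choosing $\gamma$ and $\tilde p$ so that the powers of $\rho$ produced on the right-hand side reassemble as $\|\rho\|_{L^s}^{1-\alpha}$ yields the claim. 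The scale-invariance constraint computed in step one then determines admissible choices of $(\gamma, \tilde p, r)$ uniquely up to the strict bound $\alpha < dp/(dp + s(p-d))$.

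The main obstacle is the case $p \geq d$: one cannot apply a single Sobolev embedding, and the strict (open-interval) upper bound on $\alpha$ reflects the loss caused by the intermediate Hölder step applied to $|\rho|^{\gamma-1}$. Tracking this carefully, so that the auxiliary exponents remain simultaneously admissible for the Hölder step and for the subcritical Sobolev embedding, is where the only real computation lies; once the endpoints of the admissible range are read off, the remainder of the estimate is automatic.
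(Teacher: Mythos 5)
The paper does not prove this lemma at all: it is quoted verbatim from DiBenedetto's monograph (Theorem I.2.1 of \cite{DB93}) and used as a black box, so there is no "paper proof" to compare against. Judged on its own, your outline is the standard textbook argument and is essentially correct. The scaling computation fixing $\alpha$, and the reduction of the regime $1\leq p<d$ to the Sobolev embedding $\|\rho\|_{L^{p^\ast}}\leq c\|\nabla\rho\|_{L^p}$ followed by the three-exponent H\"older interpolation with $\frac1q=\frac{\theta}{p^\ast}+\frac{1-\theta}{s}$ (which indeed yields $\theta=\alpha$, and covers both orderings of $q$ relative to $s$ and $p^\ast$) is exactly how this is done. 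The power-composition trick $\rho\mapsto|\rho|^\gamma$ for $p\geq d$ is also the right mechanism and explains the strict upper bound on $\alpha$.

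Two places deserve more care than your sketch gives them. In the case $d=1$ you cannot simply use the additive embedding $W^{1,p}_o\hookrightarrow L^\infty$: interpolating $\|\rho\|_{L^q}\leq\|\rho\|_{L^\infty}^{\theta}\|\rho\|_{L^s}^{1-\theta}$ with $\frac1q=\frac{1-\theta}{s}$ gives $\theta=s(\frac1s-\frac1q)$, which is not $\alpha$; you need the multiplicative endpoint bound $\|\rho\|_{L^\infty}\leq c\,\|\rho'\|_{L^p}^{a}\|\rho\|_{L^s}^{1-a}$ with $a=\frac{p}{p+s(p-1)}$ (obtained from $|\rho(x)|^{1+s(1-1/p)}\lesssim\int|\rho|^{s(1-1/p)}|\rho'|$ and H\"older), after which the interpolation does produce the claimed exponent and range. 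Similarly, in the case $p\geq d$ the H\"older step leaves a norm $\||\rho|^{\gamma-1}\|_{L^r}=\|\rho\|_{L^{r(\gamma-1)}}^{\gamma-1}$ that is not yet $\|\rho\|_{L^s}^{1-\alpha}$ in general; one must either choose $(\gamma,\tilde p,r)$ so that $r(\gamma-1)$ lands between $s$ and $q$ and absorb the extra factor by a further interpolation and Young-type rearrangement, or run an iteration. Neither issue is a wrong turn, but as written these are the two steps where the proof is not yet complete.
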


Now we derive the following lemma which is useful to obtain a priori estimates.
 \begin{lemma}\label{P:L_r1r2}
 Let $m >1$ and $q \geq 1$. Suppose that
 \begin{equation}\label{rho-space}
 \rho \in L^{\infty}(0, T; L^{q}(\Omega)) \quad \text{and} \quad
 \rho^{\frac{q+m-1}{2}} \in L^{2}(0, T; W^{1,2}_0(\Omega)).
 \end{equation}
  Then $\rho\in L^{r_1, r_2}_{x, t} (\Omega \times [0, T])$ such that
 \begin{equation}\label{q-r1r2}
 \begin{gathered}
\frac{d}{r_1} + \frac{2+q_{m,d}}{r_2} =
\frac{d}{q}, \quad \text{for} \quad q_{m,d} : = \frac{d(m-1)}{q}, \\
 \text{for} \ \
 \begin{cases}
  q \leq r_1 \leq \frac{d(q+m-1)}{d-2},  \quad q+m-1 \leq r_2 \leq \infty, & \text{ if } \ d > 2, \\
  q \leq r_1 < \infty, \quad q+m-1 < r_2 \leq \infty, &\text{ if }\ d = 2, \\
   q \leq r_1 \leq \infty, \quad q+m-1 \leq r_2 \leq \infty, &\text{ if } \ d= 1.
  \end{cases}
\end{gathered}
\end{equation}
Moreover, there exists a constant $c=c(d)$ such that, for $(d-2)_{+} = \max \{0, d-2\}$,
 \begin{equation}\label{norm-q-r1r2}
\|\rho\|_{L^{r_1, r_2}_{x, t}} \leq c\left(\sup_{0 \leq t \leq T} \int_{\mathbb{R}^d} \rho^{q}(\cdot, t) \,dx\right)^{\frac{1}{r_1} \left[1-\frac{(r_1 - q) (d-2)_{+}}{d(m-1)+2q}\right]} \,
\left\|\nabla \rho^{\frac{q+m-1}{2}} \right\|_{L^{2}_{x,t}}^{\frac{2}{r_2}}.
\end{equation}
 \end{lemma}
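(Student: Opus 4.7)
The natural strategy is to recast the hypotheses on $\rho$ as hypotheses on $u := \rho^{(q+m-1)/2}$, apply the Gagliardo--Nirenberg inequality (Lemma~\ref{T:GN}) at each time slice, and then integrate in time after choosing the interpolation parameter so that the spatial exponent from $\nabla u$ integrates exactly to a power $2$ (which matches $\nabla\rho^{(q+m-1)/2}\in L^{2}_{x,t}$). The scaling relation in \eqref{q-r1r2} will emerge as the condition that makes this time integration consistent.

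Concretely, first observe $\|u\|_{L^{s}_{x}}^{s}=\|\rho\|_{L^{q}_x}^{q}$ for $s=2q/(q+m-1)$ and $\|u\|_{L^{\tilde q}_{x}}=\|\rho\|_{L^{r_1}_{x}}^{(q+m-1)/2}$ for $\tilde q=2r_1/(q+m-1)$. Applying Lemma~\ref{T:GN} with $p=2$ to $u(\cdot,t)\in W^{1,2}_{0}(\Omega)$ gives
\begin{equation*}
\|u\|_{L^{\tilde q}_{x}}\leq c\,\|\nabla u\|_{L^{2}_{x}}^{\alpha}\,\|u\|_{L^{s}_{x}}^{1-\alpha},
\qquad
\alpha=\frac{d(q+m-1)(r_1-q)}{r_1\bigl(2q+d(m-1)\bigr)},
\end{equation*}
after a short computation of $\alpha=(1/s-1/\tilde q)/(1/d-1/2+1/s)$. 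Rewriting in terms of $\rho$ and raising to the $2/(q+m-1)$ power yields $\|\rho\|_{L^{r_1}_{x}}\leq c\,\|\nabla u\|_{L^{2}_{x}}^{2\alpha/(q+m-1)}\|\rho\|_{L^{q}_{x}}^{1-\alpha}$.

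The key choice is to set $\alpha=(q+m-1)/r_2$, so that the spatial gradient factor appears with exponent exactly $2$ after raising to the power $r_2$. A direct substitution shows that this choice is equivalent to the scaling identity $d/r_1+(2+q_{m,d})/r_2=d/q$ in \eqref{q-r1r2}. Then
\begin{equation*}
\int_{0}^{T}\|\rho\|_{L^{r_1}_{x}}^{r_2}\,dt\leq c\,\Bigl(\sup_{t\in[0,T]}\|\rho\|_{L^{q}_x}\Bigr)^{r_{2}(1-\alpha)}\int_{0}^{T}\|\nabla u\|_{L^{2}_{x}}^{2}\,dt,
\end{equation*}
taking the $r_2$-th root gives \eqref{norm-q-r1r2}, since one checks algebraically that
\begin{equation*}
\frac{1-\alpha}{q}=\frac{1}{r_1}\Bigl[1-\frac{(r_1-q)(d-2)_{+}}{d(m-1)+2q}\Bigr]
\end{equation*}
for $d\geq 2$ (the $(d-2)_{+}$ appears naturally when one clears denominators, and reduces to $1/r_1$ when $d=2$).

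The only remaining point is to match the range constraints on $(r_1,r_2)$ in \eqref{q-r1r2} with the admissibility ranges in Lemma~\ref{T:GN}. The lower bound $q\leq r_1$ is $s\leq\tilde q$, the upper bound $r_1\leq d(q+m-1)/(d-2)$ for $d>2$ is $\tilde q\leq dp/(d-p)$, and the bound $r_2\geq q+m-1$ is $\alpha\leq 1$; the $d=2$ case with strict inequality $r_2>q+m-1$ corresponds to the strict bound $\alpha<1$ coming from the case $p\geq d>1$ of Lemma~\ref{T:GN}. The main obstacle is bookkeeping: verifying that each admissibility case of Gagliardo--Nirenberg (the split into $d=1$, $1\leq p<d$, and $p\geq d$) lines up with the three cases in \eqref{q-r1r2}, and carrying the algebra through to obtain precisely the exponent displayed in \eqref{norm-q-r1r2}. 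Once the parameter dictionary is set up, the rest is mechanical.
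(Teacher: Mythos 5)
Your proposal is correct, and it follows a cleaner, more unified route than the paper's own proof. The paper splits into two cases: for $d>2$ it applies the Sobolev inequality (Lemma~\ref{T:Sobolev}) to get $W^{1,2}_0\hookrightarrow L^{2d/(d-2)}$ and then interpolates $\|\rho\|_{L^{r_1}_x}\leq\|\rho\|_{L^q_x}^\theta\|\rho\|_{L^{d(q+m-1)/(d-2)}_x}^{1-\theta}$; for $d\leq 2$ it invokes Lemma~\ref{T:GN} for $u=\rho^{(q+m-1)/2}$ with a free parameter $\gamma$ and then performs a second H\"older interpolation in $\rho$ to land on $r_1$. You instead feed the target exponent $\tilde q=2r_1/(q+m-1)$ directly into Gagliardo--Nirenberg once, for every $d$, and read off $\alpha$; a single algebraic check (which you set up correctly: $1-\alpha$ equals the paper's $\theta$, and $\alpha=(q+m-1)/r_2$ is equivalent to $d/r_1+(2+q_{m,d})/r_2=d/q$) then produces both the scaling relation and the exponent in \eqref{norm-q-r1r2}. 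The two arguments are equivalent in substance—Gagliardo--Nirenberg at $p=2<d$ is exactly Sobolev-plus-interpolation—but yours avoids the case split and the intermediate parameter $\gamma$, and makes the origin of the strict inequality $r_2>q+m-1$ at $d=2$ transparent (the $p=d$ case of Lemma~\ref{T:GN} forces $\alpha<1$). One point worth flagging when you fill in the bookkeeping: for $d=1$ the admissibility cap in Lemma~\ref{T:GN} is $\alpha\leq p/(p+s(p-1))=(q+m-1)/(2q+m-1)$, which is strictly smaller than $1$, so the $d=1$ range $q+m-1\leq r_2\leq\infty$ stated in \eqref{q-r1r2} does not follow from GN alone in the way your sketch suggests; the paper's displayed formula $\alpha=(\gamma-s)/\gamma$ in its $d\leq2$ branch also only reproduces the GN exponent when $d=2$, so this edge case appears to be loose in the source as well.
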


\begin{proof}
First, let $d>2$. By applying Lemma~\ref{T:Sobolev} with \eqref{rho-space}, we let
\begin{equation}\label{pme-interpolate-20}
q \leq r_1 \leq \frac{d(q+m-1)}{d-2}, \quad  \frac{1}{r_1}=\frac{\theta}{q}+\frac{(1-\theta)(d-2)}{(q+m-1)d} \quad \Longrightarrow \quad \theta=\frac{q}{r_1}\bke{1-\frac{(r_1-q)(d-2)}{2q+ d(m-1)}}.
\end{equation}
Then we carry the following computations
\begin{equation}\label{pme-interpolate-10}
\norm{\rho}_{L^{r_1}_x}\le \norm{\rho}^{\theta}_{L^q_x}\norm{\rho}^{1-\theta}_{L^{\frac{(q+m-1)d}{d-2}}_x}\le \norm{\rho}^{\theta}_{L^q_x}\norm{\rho^{\frac{q+m-1}{2}}}^{\frac{2(1-\theta)}{q+m-1}}_{L^{\frac{2d}{d-2}}_x}\leq c \norm{\rho}^{\theta}_{L^q_x}\norm{\nabla \rho^{\frac{q+m-1}{2}}}^{\frac{2(1-\theta)}{q+m-1}}_{L^{2}_x}.
\end{equation}
By taking $L^{r_2}_{t}$, we obtain
\[
\|\rho\|_{L^{r_1,r_2}_{x,t}} \leq c \norm{\rho}^{\theta}_{L^{q, \infty}_{x, t}}\norm{\nabla \rho^{\frac{q+m-1}{2}}}^{\frac{2}{r_2}}_{L^{2}_{x,t}},
\]
provided
\begin{equation*}
\frac{(1-\theta) r_2}{q+m-1} = 1 \quad \iff \quad \frac{d}{r_1} + \frac{2+q_{m,d}}{r_2} = \frac{d}{q}.
\end{equation*}

Now let $1 \leq d \leq 2$, then Lemma~\ref{T:GN} with $v=\rho^{\frac{q+m-1}{2}}$ and $s=\frac{2q}{q+m-1}$ gives that
\[
\left(\int_{\mathbb{R}^d} \rho^{\gamma \frac{q+m-1}{2}} \,dx\right)^{\frac{1}{\gamma}}
\leq c \left(\int_{\mathbb{R}^d} \left| \nabla \rho^{\frac{q+m-1}{2}} \right|^2  \,dx\right)^{\frac{\alpha}{2}} \left(\int_{\mathbb{R}^d} \rho^{q}  \,dx\right)^{\frac{1-\alpha}{s}},
\]
for $\alpha = \frac{\gamma - s}{\gamma}$ and $\frac{2q}{q+m-1} \leq \gamma < \infty$. Then we set
\[
r_1 = q \theta + \gamma \frac{q+m-1}{2} (1-\theta), \quad \text{for} \quad \theta = 1-\frac{2(r_1 - q)}{\gamma (q+m-1) - q} \in [0, 1].
\]
Therefore, the combination of H\"{o}lder inequality for $\theta + (1-\theta) =1$ yield
\[
\int_{\mathbb{R}^d} \rho^{r_1}\,dx
\leq c \left(\int_{\mathbb{R}^d} \rho^q \,dx\right)^{\theta+ \frac{\theta \gamma (1-\alpha)}{s}}
\left(\int_{\mathbb{R}^d} \abs{\nabla \rho^{\frac{q+m-1}{2}}}^2\,dx\right)^{(1-\theta)\frac{\alpha \gamma}{2}}.
\]
Then by taking power by $\frac{r_2}{r_1}$ for $r_2 > 1$, we set $(1-\alpha)\frac{\alpha \gamma r_2}{2 r_1} = 1 $ which gives \eqref{q-r1r2} and complete the proof.

\end{proof}

Now we introduce the Aubin-Lions lemma.
\begin{lemma}\label{AL}\cite[Proposition~III.1.3]{Show97}, \cite{Sim87}
Let $X_0$, $X$ and $X_1$ be Banach spaces with $X_0 \subset X \subset X_1$. Suppose that $X_0$ and $X_1$ are reflexive, and $X_0 \hookrightarrow X $ is compact, and $X \hookrightarrow X_1 $ is continuous. For $1 \leq p, q \leq \infty$, let us define
$W = \left\{ u \in L^{p}\left(0, T; X_0\right),\ u_{t} \in L^{q}\left(0, T; X_1\right) \right\}$.
If $p < \infty$, then the inclusion $W \hookrightarrow L^{p}(0,T; X)$ is compact. If $p=\infty$ and $q>1$, then the embedding of $W$ into $\calC(0,T;X)$ is compact.
\end{lemma}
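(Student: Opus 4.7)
The plan is to follow the classical compactness argument of Aubin--Lions--Simon, whose two ingredients are (a) an interpolation lemma of Ehrling/Lions type that converts the compact embedding $X_0 \hookrightarrow X$ into a quantitative inequality, and (b) a vector-valued Fr\'echet--Kolmogorov criterion which reduces compactness of a subset of $L^p(0,T;X)$ to boundedness plus uniform equicontinuity of time translations. Throughout, let $\{u_n\}$ denote an arbitrary bounded sequence in $W$, so that $\|u_n\|_{L^p(0,T;X_0)} + \|(u_n)_t\|_{L^q(0,T;X_1)} \le M$ uniformly in $n$. We must extract a subsequence converging in $L^p(0,T;X)$ (respectively, in $\mathcal{C}(0,T;X)$ when $p=\infty$ and $q>1$).

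The first step is to establish the Ehrling/Lions lemma: given $X_0 \hookrightarrow\hookrightarrow X \hookrightarrow X_1$, for every $\epsilon>0$ there exists $C_\epsilon>0$ such that
\[
\|v\|_X \le \epsilon \|v\|_{X_0} + C_\epsilon \|v\|_{X_1}\qquad \forall v\in X_0.
\]
I would prove this by contradiction: if it failed, one could extract a sequence $\{v_k\}\subset X_0$ with $\|v_k\|_{X_0}\le 1$ and $\|v_k\|_X \ge \epsilon + k\|v_k\|_{X_1}$, so $v_k\to 0$ in $X_1$, while compactness of $X_0\hookrightarrow X$ yields a subsequence converging in $X$, contradicting uniqueness of limits in $X_1$ via continuity of $X\hookrightarrow X_1$.

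Next I would apply this inequality to the difference $u_n(t+h)-u_n(t)$ (extended by zero outside $[0,T]$) to get, after integrating in $t$ and applying H\"older,
\[
\|u_n(\cdot + h) - u_n\|_{L^p(0,T-h;X)}
\le \epsilon\,\|u_n(\cdot+h)-u_n\|_{L^p(0,T;X_0)} + C_\epsilon\,\|u_n(\cdot+h)-u_n\|_{L^p(0,T-h;X_1)}.
\]
The first term is bounded by $2\epsilon M$ uniformly in $h$ and $n$. For the second term, write $u_n(t+h)-u_n(t) = \int_t^{t+h}(u_n)_s(s)\,ds$ and use H\"older in time with the $L^q(0,T;X_1)$ bound on $(u_n)_t$ to obtain a factor $h^{1-1/q}$ (with the convention $1-1/q=1$ when $q=\infty$). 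Choosing $\epsilon$ small then $h$ small uniformizes the $X$-translation modulus, giving uniform equicontinuity in $L^p(0,T;X)$. Boundedness of $\{u_n(t)\}$ in $L^p(0,T;X)$ follows from $X_0\hookrightarrow X$. Combined with the relatively compact values (use Ehrling again on time averages), the Riesz--Fr\'echet--Kolmogorov criterion adapted to Banach-space-valued $L^p$ functions (e.g.\ Simon's theorem) yields a subsequence converging in $L^p(0,T;X)$.

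For the limiting case $p=\infty$, $q>1$, the time-translation estimate $\|u_n(t+h)-u_n(t)\|_{X_1}\le M h^{1-1/q}$ gives uniform H\"older continuity $[0,T]\to X_1$, hence equicontinuity $[0,T]\to X$ via Ehrling. Together with $u_n(t)$ lying in a bounded subset of $X_0$ (a.e., then everywhere by continuity), which is relatively compact in $X$, the classical Arzel\`a--Ascoli theorem for $\mathcal{C}(0,T;X)$ produces a uniformly convergent subsequence. The main obstacle is handling the $X$-valued time-translation estimate carefully: the Ehrling step is where the compactness hypothesis enters, and one must check that the quantitative form of the interpolation inequality transfers correctly through the $L^p$-in-time integration without losing the smallness of $\epsilon$ in front of the $X_0$-term.
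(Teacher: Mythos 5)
The paper offers no proof of this lemma --- it is quoted from Showalter and Simon --- so there is nothing internal to compare against; your argument is the standard Ehrling--Lions interpolation plus translation-estimate plus Fr\'echet--Kolmogorov/Arzel\`a--Ascoli proof from exactly those references, and for $q>1$ it is correct as written (the Ehrling step, the $h^{1-1/q}$ translation bound, and the Arzel\`a--Ascoli argument for $p=\infty$ all go through; the only cosmetic omission is that for $p=\infty$ one needs the reflexivity of $X_0$, which is in the hypotheses, to upgrade the a.e.\ bound $\|u_n(t)\|_{X_0}\le M$ to an everywhere bound for the $X_1$-continuous representative before invoking pointwise relative compactness in $X$).

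The genuine gap is the case $q=1$ with $1<p<\infty$, which the statement allows and which the paper actually uses (Proposition~\ref{Proposition : AL-2} controls $\partial_t\rho^q$ only in $W^{-1,\cdot}_xL^1_t$, and Lemma~\ref{AL} is then invoked in the proofs of Theorems~\ref{Theorem-2-a}, \ref{Theorem-4a}, etc.). There your H\"older step produces the factor $h^{1-1/q}=h^0=1$, so the second term $C_\epsilon\|u_n(\cdot+h)-u_n\|_{L^p(0,T-h;X_1)}$ is merely bounded, not small, and the uniform equicontinuity of translations in $L^p(0,T;X)$ is not established. The repair (this is Simon's argument) is to use both available pointwise bounds on $g(t):=\|u_n(t+h)-u_n(t)\|_{X_1}$: Fubini gives $\|g\|_{L^1(0,T-h)}\le h\,\|(u_n)_t\|_{L^1(0,T;X_1)}\le hM$, while the embedding $X_0\hookrightarrow X_1$ gives $g(t)\le c\bigl(\|u_n(t+h)\|_{X_0}+\|u_n(t)\|_{X_0}\bigr)$; writing $\int g^p=\int g\cdot g^{p-1}$ and applying H\"older with exponents $p$ and $p/(p-1)$ together with $\|g\|_{L^p}\le \|g\|_{L^\infty}^{1-1/p}\|g\|_{L^1}^{1/p}$ on the first factor yields $\|g\|_{L^p(0,T-h)}\lesssim M\,h^{1/p^2}$, which restores the required uniform smallness. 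Without this (or an equivalent interpolation through relative compactness in $L^1(0,T;X)$), your proof does not cover the full parameter range claimed in the lemma.
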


%




\subsection{Wasserstein space}\label{SS:Wasserstein}
In this subsection, we introduce the Wasserstein space and its properties. For more detail, we refer \cites{ags:book, V}.

\begin{defn}
Let $\mu$ be a probability measure on $\mathbb{R}^d$. Suppose there is a measurable map $\mathcal{T}: \mathbb{R}^d\mapsto \mathbb{R}^d$,
then $\mathcal{T}$ induces a probability measure $ \nu$ on $\mathbb{R}^d$
which is defined as
$$\int_{\mathbb{R}^d} \varphi(y)\,d\nu(y) = \int_{\mathbb{R}^d} \varphi(\mathcal{T}(x))\,d\mu(x), \qquad \forall ~ \varphi\in C(\mathbb{R}^d).$$
We denote $\nu:=\mathcal{T}_\#\mu$ and say that $\nu$ is the push-forward of $\mu$ by $\mathcal{T}$.
\end{defn}

Let us denote by $\mathcal{P}_p(\mathbb{R}^d)$ the set of all Borel
probability measures on $\mathbb{R}^d$ with a finite $p$-th moment. For $\mu,\nu\in\mathcal{P}_p(\mathbb{R}^d)$, we consider
\begin{equation}\label{Wasserstein dist}
W_p(\mu,\nu):=\left(\inf_{\gamma\in\Gamma(\mu,\nu)}\int_{\mathbb{R}^d\times
\mathbb{R}^d}|x-y|^p\, d\gamma(x,y)\right)^{\frac{1}{p}},
\end{equation}
where $\Gamma(\mu,\nu)$ denotes the set of all Borel probability
measures on $\mathbb{R}^d\times \mathbb{R}^d$ which has $\mu$ and
$\nu$ as marginals;
$$\gamma(A\times \mathbb{R}^d)= \mu(A) \quad \text{and} \quad \gamma(\mathbb{R}^d\times A)= \nu(A) $$
for every Borel set $A\subset \mathbb{R}^d.$
Equation (\ref{Wasserstein dist}) defines a distance on
$\mathcal{P}_p(\mathbb{R}^d)$ which is called the {\it Wasserstein distance}.
 Equipped with the Wasserstein distance,  $\mathcal{P}_p(\mathbb{R}^d)$ is called the {\it Wasserstein space}.
 It is known that the infimum in the right hand side of Equation
(\ref{Wasserstein dist}) always achieved.

We will denote by $\Gamma_o(\mu,\nu)$ the set of all $\gamma$ which minimize the expression.
If $\mu$ is
absolutely continuous with respect to the Lebesgue measure then there exists a map $\mathcal{T} : \bbr^d \rightarrow \bbr^d$ such that
$\gamma:=(Id\times \mathcal{T})_\# \mu$ is the unique element of $\Gamma_o(\mu,\nu)$, that is, $\Gamma_o(\mu,\nu)=\{(Id\times  \mathcal{T})_\# \mu \}$.

We say that a sequence $\{\mu_n\} \subset \mathcal{P}(\mathbb{R}^d)$ is narrowly convergent to $ \mu \in \mathcal{P}(\mathbb{R}^d)$ as
$n \rightarrow \infty$ if
\begin{equation}\label{D:narrowly convergent}
\lim_{n\rightarrow \infty} \int_{\bbr^d} \varphi(x) \,d\mu_n(x) =\int_{\bbr^d} \varphi(x) \,d\mu(x)
\end{equation}
for every function $ \varphi \in \calC_b (\bbr^d)$, the space of continuous and bounded real functions defined on $\bbr^d$. Then we recall
 \begin{equation*}
\lim_{n \rightarrow \infty} W_p(\mu_n, \mu)=0 \quad \Longleftrightarrow \quad
\begin{cases}
\mu_n ~\mbox{narrowly converges to } \mu ~\mbox{ in }  \mathcal{P}(\mathbb{R}^d)\\
\lim_{n\rightarrow \infty} \int_{\bbr^d} |x|^p\, d\mu_n(x) = \int_{\bbr^d} |x|^p\,d \mu(x).
\end{cases}
\end{equation*}


%
%
%
Following \cites{AG, ags:book}, we give a notion of a differential and a definition of convex functions on $\mathcal{P}_p(\bbr^d)$.
\begin{defn}\label{Convexity}
Let $\phi:\mathcal{P}_p(\mathbb{R}^d)\mapsto (-\infty,\infty]$.
We say that $\phi$ is geodesically convex in $\mathcal{P}_p(\mathbb{R}^d)$ if for every
couple $\mu_1,\mu_2\in \mathcal{P}_p(\mathbb{R}^d)$ there exists an
optimal plan $\gamma\in\Gamma_o(\mu_1,\mu_2)$ such that
$$\phi(\mu_t^{1\rightarrow 2})\leq (1-t)\phi(\mu_1)+t\phi(\mu_2),\qquad \forall ~ t\in[0,1], $$
where $\mu_t^{1\rightarrow 2}$ is a constant speed geodesic
between $\mu^1$ and $\mu^2$ defined as
$$\mu_t^{1\rightarrow 2} := ((1-t)\pi^1 + t\pi^2)_\#\gamma .$$
Here, $\pi^1: \mathbb{R}^d\times \mathbb{R}^d \mapsto
\mathbb{R}^d$ and $\pi^2: \mathbb{R}^d\times \mathbb{R}^d \mapsto
\mathbb{R}^d$ are the first and second projections of $
\mathbb{R}^d\times \mathbb{R}^d$ onto $ \mathbb{R}^d$ defined by
$$\pi^1(x,y)=x,\quad  \pi^2(x,y)=y,\qquad \forall ~ x,y \in \mathbb{R}^d.$$
\end{defn}

\begin{defn} \label{def:Fdifferentiable}
Let $\phi: \mathcal{P}_p(\bbr^d) \rightarrow (-\infty,\infty]$ be a proper function on $\mathcal{P}_p(\bbr^d)$, {\it i.e.} the effective domain of $\phi$ defined by
$D(\phi):=\{\mu \in \mathcal{P}_p(\bbr^d) : \phi(\mu) < \infty \}$ is not empty. We say that
$\xi \in L^{\frac{p}{p-1}}(\mu)$ belongs to the \textit{subdifferential} $\partial_{-} \phi(\mu)$ if
$$ \phi(\nu) \geq \phi(\mu) + \sup_{\gamma \in \Gamma_o(\mu,\nu)}\int_{\bbr^d\times \bbr^d} \langle\xi(x),y-x\rangle d\gamma(x,y) + o(W_p(\mu,\nu)) ,$$
as $\nu \rightarrow \mu  $ in $\mathcal{P}_p(\bbr^d)$. 
 \end{defn}

 \begin{lemma} \cite[Proposition~4.2]{AG}
Let $\phi: \mathcal{P}_p(\bbr^d) \rightarrow (-\infty,\infty]$ be lower semicontinuous and geodesically convex and let $\xi \in \partial_{-} \phi(\mu)$. Then
we have
$$ \phi(\nu) \geq \phi(\mu) + \sup_{\gamma \in \Gamma_o(\mu,\nu)}\int_{\bbr^d\times \bbr^d} \langle\xi(x),y-x\rangle d\gamma(x,y),  \qquad \forall ~ \nu \in \mathcal{P}_p(\bbr^d).$$

 \end{lemma}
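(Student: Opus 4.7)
The plan is to extend the asymptotic (local) inequality defining $\xi \in \partial_-\phi(\mu)$ to a global inequality by interpolating along constant-speed geodesics between $\mu$ and $\nu$ and exploiting geodesic convexity of $\phi$ to bound the resulting difference quotient from above.

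Fix $\nu \in \mathcal{P}_p(\bbr^d)$ and $\gamma \in \Gamma_o(\mu,\nu)$. For $t \in [0,1]$, set
\[
\mu_t := ((1-t)\pi^1 + t\pi^2)_\# \gamma, \qquad \sigma_t := ((\pi^1,\, (1-t)\pi^1 + t\pi^2))_\# \gamma.
\]
A classical fact from optimal transport theory (see \cite{ags:book}) asserts that $\sigma_t \in \Gamma_o(\mu,\mu_t)$ and $W_p(\mu,\mu_t) = t\, W_p(\mu,\nu)$, so in particular $\mu_t \to \mu$ in $\mathcal{P}_p(\bbr^d)$ as $t \to 0^+$. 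Using $\sigma_t$ as an admissible optimal plan in Definition~\ref{def:Fdifferentiable} at $\mu$,
\[
\phi(\mu_t) \geq \phi(\mu) + \int \langle \xi(x), y-x\rangle \, d\sigma_t(x,y) + o\bigl(W_p(\mu,\mu_t)\bigr) = \phi(\mu) + t \int \langle \xi(x), y-x\rangle \, d\gamma(x,y) + o(t),
\]
where the equality follows from a direct change of variables $(x,y)\mapsto (x,(1-t)x+ty)$. On the other hand, geodesic convexity (Definition~\ref{Convexity}) supplies the complementary upper bound $\phi(\mu_t) \leq (1-t)\phi(\mu) + t\phi(\nu)$. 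Subtracting $\phi(\mu)$, dividing by $t>0$, and sending $t \to 0^+$ yields
\[
\phi(\nu) - \phi(\mu) \geq \int \langle \xi(x), y-x\rangle \, d\gamma(x,y).
\]
Since this argument applies to every $\gamma \in \Gamma_o(\mu,\nu)$ along which geodesic convexity holds, taking the supremum over such $\gamma$ produces the claimed inequality.

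The main technical obstacle is justifying the optimality of the intermediate coupling $\sigma_t$, namely that displacement interpolations remain $p$-optimal at every intermediate time; this rests on the cyclical-monotonicity characterization of optimal plans and is invoked directly from \cite{ags:book}. Lower semicontinuity of $\phi$ plays the auxiliary role of ensuring that the subdifferential definition is well-posed on the effective domain $D(\phi)$ and that the asymptotic $o(\cdot)$ term is meaningful in the passage $t\to 0^+$; the genuine engine of the proof is the two-sided trapping of $\phi(\mu_t)$ between the one-sided local bound at $\mu$ furnished by $\xi \in \partial_-\phi(\mu)$ and the global affine bound given by geodesic convexity along the geodesic determined by $\gamma$.
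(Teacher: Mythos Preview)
The paper does not supply its own proof of this lemma; it is stated with a citation to \cite[Proposition~4.2]{AG} and used as a black box. Your argument is the standard one underlying that result: interpolate along the displacement geodesic determined by $\gamma$, trap $\phi(\mu_t)$ between the local subdifferential inequality and the chord bound from geodesic convexity, divide by $t$, and let $t\to 0^+$. The computations are correct, and invoking \cite{ags:book} for the optimality of $\sigma_t$ and the identity $W_p(\mu,\mu_t)=t\,W_p(\mu,\nu)$ is appropriate.

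One point deserves care. Definition~\ref{Convexity} in this paper is the \emph{existential} form of geodesic convexity: for every pair $\mu_1,\mu_2$ there \emph{exists} an optimal plan along whose interpolation the chord inequality holds. Your final sentence correctly restricts to ``every $\gamma\in\Gamma_o(\mu,\nu)$ along which geodesic convexity holds,'' but the lemma as stated takes the supremum over \emph{all} of $\Gamma_o(\mu,\nu)$. With the existential definition alone, your argument a priori yields the inequality only for the distinguished $\gamma$ guaranteed by Definition~\ref{Convexity}, not necessarily the full supremum. In the applications in this paper (and in \cite{AG}) this gap is harmless because the relevant $\mu$ is absolutely continuous, so $\Gamma_o(\mu,\nu)$ is a singleton; but if you want a self-contained proof matching the lemma verbatim under Definition~\ref{Convexity}, you should either note this reduction explicitly or strengthen the convexity hypothesis to hold along every optimal plan.
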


Now, we introduce the notion of absolutely continuous curve and its relation with the continuity equation.
\begin{defn}
Let $\sigma:[0,T]\mapsto \mathcal{P}_p(\mathbb{R}^d)$ be a curve.
We say that $\sigma$ is absolutely continuous and denote it by $\sigma \in AC(0, T;\mathcal{P}_p(\mathbb{R}^d))$, if there exists $l\in
L^1([0,T])$ such that
\begin{equation}\label{AC-curve}
W_p(\sigma(s),\sigma(t))\leq \int_s^t l(r)dr,\qquad \forall ~ 0\leq s\leq t\leq T.
\end{equation}
If $\sigma \in AC(0,T;\mathcal{P}_p(\mathbb{R}^d))$, then the limit
$$|\sigma'|(t):=\lim_{s\rightarrow t}\frac{W_p(\sigma(s),\sigma(t))}{|s-t|} ,$$
exists for $L^1$-a.e $t\in[0,T]$. Moreover, the function $|\sigma'|$ belongs to $L^1(0,T)$ and satisfies
\begin{equation*}
|\sigma'|(t)\leq l(t) \qquad \mbox{for} ~L^1\mbox{-a.e.}~t\in [0,T],
\end{equation*}
for any $l$ satisfying \eqref{AC-curve}.
We call $|\sigma'|$ by the metric derivative of $\sigma$.
\end{defn}

\begin{lemma}\label{representation of AC curves} \cite[Theorem~8.3.1]{ags:book}
If $\sigma\in AC(0,T;\mathcal{P}_p(\mathbb{R}^d))$ then there exists a Borel vector field $ w: \bbr^d\times(0,T)\mapsto \bbr^d$ such that
$$w(t) \in L^p(\sigma(t)),\quad  \|w(t) \|_{L^p(\sigma(t))} \leq |\sigma '|(t), \quad \mbox{   for} ~~L^1\text{-a.e}  ~~ t\in [0,T]  , $$
and the continuity equation
$$\partial_t\sigma +\nabla\cdot( w \, \sigma)=0,  $$
holds in the sense of distribution.
Conversely, if a narrowly continuous curve $\sigma : [0,T] \mapsto \mathcal{P}_p(\mathbb{R}^d)$ satisfies the continuity equation
for some Borel vector field $w$ with $\| w(t)\|_{L^p(\sigma(t))}\in L^1(0,T)$, then $\sigma: [0,T]\mapsto \mathcal{P}_p(\mathbb{R}^d)$
is absolutely continuous and
$|\sigma'|(t)\leq \| w(t)\|_{L^p(\sigma(t))}$ for $L^1$-a.e $t\in [0,T]$.
\end{lemma}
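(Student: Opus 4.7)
The plan is to prove both implications separately, with the forward direction being the substantive one. For $\sigma \in AC(0, T; \mathcal{P}_p(\mathbb{R}^d))$, I would construct $w$ via a time-discretization scheme. Fix a mesh $t_i^n := iT/n$, and for each $i$ pick $\gamma_i^n \in \Gamma_o(\sigma(t_i^n), \sigma(t_{i+1}^n))$. Out of these optimal plans I would build a piecewise-in-time ``discrete velocity'' vector-measure $v^n \sigma^n$, where on $[t_i^n, t_{i+1}^n]$ the measure $\sigma^n(t)$ is the McCann displacement interpolant generated by $\gamma_i^n$ and $v^n$ is the corresponding constant Lagrangian velocity $\frac{n}{T}(y - x)$. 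By construction one obtains the discrete continuity equation $\partial_t \sigma^n + \nabla\cdot(v^n \sigma^n) = 0$, together with the $L^p$ energy identity $\int_{t_i^n}^{t_{i+1}^n}\|v^n(t)\|_{L^p(\sigma^n(t))}^p\,dt = \frac{n}{T}\, W_p^p(\sigma(t_i^n), \sigma(t_{i+1}^n))$, whose sum in $i$ is controlled by $\int_0^T |\sigma'|^p(t)\,dt$ via the definition of the metric derivative.

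Next comes a compactness argument: view $v^n \sigma^n$ as vector-valued Radon measures on $\mathbb{R}^d \times (0, T)$ with uniformly bounded $L^p$-energy against $\sigma^n$, and extract a subsequential weak-$\ast$ limit. I would then invoke the lower semicontinuity of the convex functional $(\mu, \mathbf{E}) \mapsto \int |d\mathbf{E}/d\mu|^p\,d\mu$ to identify this limit as $w\,\sigma$ for a Borel vector field $w$ with $\|w(t)\|_{L^p(\sigma(t))} \le |\sigma'|(t)$ for a.e.\ $t$. The limit continuity equation $\partial_t \sigma + \nabla\cdot(w\sigma) = 0$ is then obtained by testing against $\varphi \in C_c^\infty(\mathbb{R}^d \times (0,T))$ and using narrow continuity of $\sigma$. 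The primary obstacle in this direction is the measurable selection of $\gamma_i^n$ ensuring that $v^n$ is a genuine Borel field, and the passage to the limit has to be carried out in the space of vector-valued measures rather than in $L^p$ against a fixed reference.

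For the converse, assume $\sigma$ narrowly continuous and the continuity equation with $\|w(t)\|_{L^p(\sigma(t))} \in L^1(0,T)$. After a spatial mollification $w^\varepsilon$, I would solve the ODE $\partial_t X_s^t(x) = w^\varepsilon(t, X_s^t(x))$ with $X_s^s = \mathrm{Id}$, so that $\sigma^\varepsilon(t) = (X_s^t)_\# \sigma^\varepsilon(s)$ by uniqueness of distributional solutions to the smoothed continuity equation. The plan $(\mathrm{Id}, X_s^t)_\# \sigma^\varepsilon(s)$ is then admissible in the Kantorovich problem, so together with H\"older's inequality,
\[
W_p^p(\sigma^\varepsilon(s), \sigma^\varepsilon(t)) \le \int \Bigl|\int_s^t w^\varepsilon(\tau, X_s^\tau(x))\,d\tau \Bigr|^p d\sigma^\varepsilon(s)(x) \le (t-s)^{p-1}\!\int_s^t \|w^\varepsilon(\tau)\|_{L^p(\sigma^\varepsilon(\tau))}^p\, d\tau.
\]
Removing the regularization and using lower semicontinuity of $W_p$ under narrow convergence yields $W_p(\sigma(s), \sigma(t)) \le (t-s)^{(p-1)/p} \bigl(\int_s^t \|w(\tau)\|_{L^p(\sigma(\tau))}^p d\tau\bigr)^{1/p}$, hence $\sigma \in AC(0,T; \mathcal{P}_p(\mathbb{R}^d))$ with the metric derivative bound.

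The hardest step is the forward direction: identifying the weak-$\ast$ limit of $v^n\sigma^n$ as a measure absolutely continuous with respect to $\sigma$ (so that $w$ really exists as a Borel field) and promoting the time-integrated bound $\int \|w\|_{L^p(\sigma)}^p \le \int |\sigma'|^p$ to the pointwise bound $\|w(t)\|_{L^p(\sigma(t))} \le |\sigma'|(t)$ a.e., which requires a tight application of Fatou's lemma along with a refinement argument on subintervals.
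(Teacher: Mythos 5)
Your argument takes a genuinely different route from the one the paper cites. The proof of \cite[Theorem~8.3.1]{ags:book} is a duality argument carried out pointwise in time: for a.e.\ $t$ one shows that $\nabla\varphi\mapsto\frac{d}{dt}\int\varphi\,d\sigma_t$ is a bounded linear functional on the $L^{p'}(\sigma_t)$-closure of gradients of cylindrical test functions, with operator norm at most $|\sigma'|(t)$, and then produces $w_t$ by Riesz representation; since the estimate is pointwise in $t$ it applies to every $AC$ curve, i.e.\ $|\sigma'|\in L^1$. Your route---displacement interpolation along a time mesh, a piecewise-constant Lagrangian velocity, weak-$\ast$ compactness of the momenta $v^n\sigma^n$, lower semicontinuity of the Benamou--Brenier functional, and a Lebesgue-point refinement to localize the energy inequality---is the discretization/compactness proof in the style of Lisini and Santambrogio. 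It is more constructive, but as written it only treats $AC^p$ curves.

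Two specific issues. First, the discrete energy identity should read
\[
\int_{t_i^n}^{t_{i+1}^n}\|v^n(t)\|_{L^p(\sigma^n(t))}^p\,dt
=\Bigl(\tfrac{n}{T}\Bigr)^{p-1}W_p^p\bigl(\sigma(t_i^n),\sigma(t_{i+1}^n)\bigr);
\]
the factor $n/T$ you wrote is correct only for $p=2$. With the correct exponent, the telescoping bound $W_p^p\le (T/n)^{p-1}\int_{t_i^n}^{t_{i+1}^n}|\sigma'|^p$ closes the estimate---but it uses $|\sigma'|\in L^p(0,T)$, not merely $L^1$. To recover the statement as written you should first reparametrize by $s(t)=\int_0^t(|\sigma'|+\varepsilon)$, making the curve Lipschitz, run the discretization there, and transform the resulting velocity field back; that step should be explicit. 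Second, in the converse direction ``spatial mollification of $w$'' is not well defined, since $w_t$ is only specified $\sigma_t$-a.e.\ and need not extend to a function on $\mathbb{R}^d$. The standard fix is to smooth both pieces, $\sigma_t^\varepsilon:=\sigma_t*\eta_\varepsilon$ and $E_t^\varepsilon:=(w_t\sigma_t)*\eta_\varepsilon$, set $w_t^\varepsilon:=E_t^\varepsilon/\sigma_t^\varepsilon$, and observe that the continuity equation is preserved and, by Jensen, $\|w_t^\varepsilon\|_{L^p(\sigma_t^\varepsilon)}\le\|w_t\|_{L^p(\sigma_t)}$. Even so, $\sigma_t^\varepsilon$ is not bounded below on all of $\mathbb{R}^d$, so well-posedness of the characteristic flow of $w_t^\varepsilon$ needs a further truncation; this is routine but should not be elided.
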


\begin{remark}\label{R:speed}
We call $\| w(t)\|_{L^p(\sigma(t))}$ an auxiliary speed of curve $\sigma(t)$ in Wasserstein space $\mathcal{P}_p (\mathbb{R}^d)$. We use `speed' instead of `auxiliary speed' unless any confusion is to be expected.
\end{remark}

\begin{lemma}\label{Lemma : Arzela-Ascoli} \cite[Proposition 3.3.1]{ags:book}
Let $K  \subset \mathcal{P}_p(\bbr^d)$ be a sequentially compact set w.r.t the narrow topology.
Let $\sigma_n : [0, T] \rightarrow \mathcal{P}_p(\mathbb{R}^d)$ be curves such that
\begin{equation*} 
\begin{aligned}
\sigma_n(t) \in K, \quad \forall ~ n &\in \mathbb{N}, ~ t \in [0,T],\\
W_p(\sigma_n(s),\sigma_n(t))&\leq \omega(s,t), \qquad \forall ~
0\leq s\leq t\leq T, ~ n \in \mathbb{N},
\end{aligned}
\end{equation*}
for a continuous function $\omega : [0, T] \times [0, T] \rightarrow [0, \infty)$ such that
$$\omega(t,t)=0, ~~ \forall ~ t \in [0,T].$$
Then there exists a subsequnece $\sigma_{n_k}$ and a limit curve $\sigma : [0,T] \rightarrow {P}_p(\mathbb{R}^d)$ such that
\begin{equation*}\label{eq1 : Lemma : Arzela-Ascoli}
\sigma_{n_k}(t) \mbox{ narrowly converges to} ~~\sigma(t), \qquad
\text{for all} ~~t\in[0, T].
\end{equation*}
\end{lemma}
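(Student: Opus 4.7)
The plan is to run the standard Arzel\`{a}--Ascoli argument adapted to the narrow topology on $\mathcal{P}_p(\bbr^d)$, using the equicontinuity modulus $\omega$ to glue a diagonal extraction on a dense set into pointwise convergence everywhere.

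First, I would fix a countable dense subset $D \subset [0,T]$ (say $D = \mathbb{Q} \cap [0,T]$ together with the endpoints). Enumerate $D = \{t_1, t_2, \ldots\}$. By the sequential compactness of $K$ in the narrow topology, the sequence $\{\sigma_n(t_1)\}$ has a narrowly convergent subsequence; from that subsequence, extract a further one on which $\sigma_n(t_2)$ narrowly converges; iterate and take the Cantor diagonal. This yields a subsequence, still denoted $\sigma_{n_k}$, and a map $\tilde\sigma : D \to K$ with $\sigma_{n_k}(t) \to \tilde\sigma(t)$ narrowly for every $t \in D$.

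Second, I would upgrade the equicontinuity to the limit. Since $W_p$ is lower semicontinuous with respect to narrow convergence on sets with uniformly controlled $p$-th moments (and $K \subset \mathcal{P}_p(\bbr^d)$ is sequentially compact, hence tight with uniformly bounded $p$-th moments), for $s,t \in D$
\begin{equation*}
W_p(\tilde\sigma(s),\tilde\sigma(t)) \leq \liminf_{k\to\infty} W_p(\sigma_{n_k}(s),\sigma_{n_k}(t)) \leq \omega(s,t).
\end{equation*}
Because $\omega$ is continuous with $\omega(t,t)=0$, the map $\tilde\sigma$ is uniformly continuous from $D$ into the complete metric space $(\mathcal{P}_p(\bbr^d),W_p)$, and therefore extends uniquely to a continuous map $\sigma : [0,T] \to \mathcal{P}_p(\bbr^d)$. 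By closedness of $K$ under narrow convergence, $\sigma(t) \in K$ for every $t$.

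Finally, I would show $\sigma_{n_k}(t) \to \sigma(t)$ narrowly at every $t \in [0,T]$. Given $\varphi \in \mathcal{C}_b(\bbr^d)$ and $\varepsilon > 0$, pick $s \in D$ with $\omega(s,t)$ sufficiently small. Using the triangle inequality
\begin{equation*}
\Big|\!\int \varphi\, d\sigma_{n_k}(t) - \!\int \varphi\, d\sigma(t)\Big| \le \Big|\!\int \varphi\, d\sigma_{n_k}(t) - \!\int \varphi\, d\sigma_{n_k}(s)\Big| + \Big|\!\int \varphi\, d\sigma_{n_k}(s) - \!\int \varphi\, d\sigma(s)\Big| + \Big|\!\int \varphi\, d\sigma(s) - \!\int \varphi\, d\sigma(t)\Big|,
\end{equation*}
the middle term tends to $0$ as $k\to\infty$, while the outer two are controlled by the equicontinuity bound $\omega(s,t)$ through the metrization of narrow convergence on tight sets (or, equivalently, by approximating $\varphi$ by bounded Lipschitz functions and using the Kantorovich--Rubinstein-type domination $|\!\int\varphi\, d\mu - \!\int\varphi\, d\nu| \lesssim \|\varphi\|_{\mathrm{Lip}} W_1(\mu,\nu) \le \|\varphi\|_{\mathrm{Lip}} W_p(\mu,\nu)$). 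The main obstacle is precisely this last passage from convergence on the dense set $D$ to convergence at arbitrary $t$: one must exploit that the narrow topology, restricted to the tight set $K$, is metrizable and dominated by $W_p$, so that the $\omega$-modulus controls both $\sigma_{n_k}$ and the limit $\sigma$ uniformly in $k$.
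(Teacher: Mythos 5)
The paper does not give its own proof of this lemma; it cites it directly from Ambrosio--Gigli--Savar\'e, Proposition 3.3.1. Your proposal is a correct reconstruction of the standard Arzel\`a--Ascoli argument underlying that result, and it is essentially the same route taken there: diagonal extraction on a countable dense subset $D\subset[0,T]$, lower semicontinuity of $W_p$ under narrow convergence to transfer the equicontinuity bound $\omega$ to the extracted limit on $D$, extension by uniform continuity to all of $[0,T]$ via completeness of $(\mathcal{P}_p,W_p)$, and a three-term splitting through an intermediate $s\in D$ to obtain narrow convergence at every $t$.

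Two small remarks, neither of which affects validity. First, your appeal to ``uniformly bounded $p$-th moments'' of $K$ is unnecessary and not guaranteed by narrow sequential compactness; the lower semicontinuity of $W_p$ with respect to narrow convergence holds without such a hypothesis (one may get $+\infty$, but here it is dominated by $\omega(s,t)<\infty$), so you can drop that clause. Second, in the final step you do not need to ``approximate $\varphi\in\mathcal{C}_b$ by bounded Lipschitz functions'': by the portmanteau theorem, testing against bounded Lipschitz functions already characterizes narrow convergence, so you may simply take $\varphi$ Lipschitz from the outset and use $|\int\varphi\,d\mu - \int\varphi\,d\nu|\le \mathrm{Lip}(\varphi)\, W_1(\mu,\nu)\le \mathrm{Lip}(\varphi)\, W_p(\mu,\nu)$ on the two outer terms. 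With those cosmetic cleanups the argument is exactly the one in the cited reference.
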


 \subsection{Flows on $\mathcal{P}_2(\bbr^d)$ generated by vector fields}
For a given $T>0$, let $V\in L^1(0,T; W^{1,\infty}(\bbr^d;\bbr^d))$. For any $s, ~t\in [0,T]$, let $\psi:[0,T]\times [0,T]\times\mathbb{R}^d\mapsto
\mathbb{R}^d$ be the flow map of the vector field $V$.
 More precisely, $\psi$ solves the following ODE
 \begin{equation}\label{ODE}
\begin{cases}
\frac{d}{dt}\psi(t;s,x)= V(\psi(t;s,x),t),  &\text{for } s, t\in[0,T] \vspace{1 mm}\\
\psi(s;s,x)=x,  &\text{for }  x\in\bbr^d.
\end{cases}
\end{equation}
Using the flow map $\psi$, we define a flow $\Psi: [0,T]\times [0,T]\times \mathcal{P}_2(\bbr^d)\mapsto \mathcal{P}_2(\bbr^d)$ through the push forward operation as follows
\begin{equation}\label{Flow on Wasserstein}
\Psi(t;s,\mu):={\psi}(t;s,\cdot)_\# \mu, \qquad \forall ~ \mu \in \mathcal{P}_2(\bbr^d).
\end{equation}


%

In this subsection, we remind two basic results on the flow map $\psi$.

\begin{lemma}\label{Lemma : estimation 1: ODE} \cite[Lemma~3.1]{KK-SIMA}
Let $s\in [0,T]$ and $\psi$ be defined as in \eqref{ODE}. Then, for any $t\in [s,T]$, the map $\psi(t;s,\cdot):\mathbb{R}^d\mapsto \mathbb{R}^d$ satisfies
\begin{equation*} 
e^{-\int_s^t \mbox{Lip}(V(\tau)) \,d\tau}|x-y|\leq |\psi(t;s,x)-\psi(t;s,y)|\leq e^{\int_s^t \mbox{Lip}(V(\tau)) \,d\tau}|x-y| ,
\qquad \forall ~ x,y \in\mathbb{R}^d,
\end{equation*}
where
\[\mbox{Lip}(V(\tau)) := \sup_{x,y \in \bbr^d , \, x \neq y} \frac{|V(x, \tau) - V(y,\tau)|}{|x-y|}.\]
\end{lemma}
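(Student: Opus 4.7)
The plan is to apply Grönwall's inequality to the squared distance between two flow trajectories starting from $x$ and $y$.

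First I would fix $s \in [0,T]$ and $x, y \in \mathbb{R}^d$, and set $\phi(t) := |\psi(t;s,x) - \psi(t;s,y)|^2$ for $t \in [s,T]$. Using the ODE \eqref{ODE} and differentiating under the absolute value (valid because $\psi(\cdot;s,x)$ is absolutely continuous in $t$ whenever $V \in L^1(0,T;W^{1,\infty})$), I compute
\begin{equation*}
\phi'(t) = 2\bigl\langle \psi(t;s,x)-\psi(t;s,y),\ V(\psi(t;s,x),t) - V(\psi(t;s,y),t)\bigr\rangle,
\end{equation*}
and then by Cauchy--Schwarz and the definition of $\mathrm{Lip}(V(t))$,
\begin{equation*}
|\phi'(t)| \leq 2\,\mathrm{Lip}(V(t))\,|\psi(t;s,x)-\psi(t;s,y)|^2 = 2\,\mathrm{Lip}(V(t))\,\phi(t).
\end{equation*}

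Next I would apply Grönwall's inequality in both directions. From $\phi'(t) \leq 2\,\mathrm{Lip}(V(t))\,\phi(t)$ and the initial condition $\phi(s) = |x-y|^2$, the standard Grönwall argument gives
\begin{equation*}
\phi(t) \leq |x-y|^2 \exp\!\Bigl(2\!\int_s^t \mathrm{Lip}(V(\tau))\,d\tau\Bigr),
\end{equation*}
and taking square roots yields the upper bound in the statement. For the lower bound I would use $\phi'(t) \geq -2\,\mathrm{Lip}(V(t))\,\phi(t)$, which gives
\begin{equation*}
\phi(t) \geq |x-y|^2 \exp\!\Bigl(-2\!\int_s^t \mathrm{Lip}(V(\tau))\,d\tau\Bigr),
\end{equation*}
and again take square roots. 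Finally, the case $x = y$ is trivial since uniqueness of solutions to \eqref{ODE} forces $\psi(t;s,x) = \psi(t;s,y)$, so both inequalities hold as $0 = 0$.

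There is no serious obstacle here: the only mild technical point is justifying the differentiation of $\phi$ when $V$ is merely $L^1$ in time, which is handled by writing $\psi(t;s,x) - \psi(t;s,y) = (x-y) + \int_s^t [V(\psi(\tau;s,x),\tau) - V(\psi(\tau;s,y),\tau)]\,d\tau$ and deducing the integral form of Grönwall directly without differentiating, if preferred. The case $t \leq s$ follows by symmetry (reversing time in \eqref{ODE}) or by reading the same estimate from $t$ to $s$.
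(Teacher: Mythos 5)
Your proof is correct and is the standard argument; note that the paper itself does not prove this lemma but simply cites it as Lemma~3.1 of \cite{KK-SIMA}, so there is no in-paper proof to compare against. Your Gr\"{o}nwall estimate on $\phi(t)=|\psi(t;s,x)-\psi(t;s,y)|^2$, together with the integral-form variant you mention for the case $V\in L^1(0,T;W^{1,\infty})$, is exactly the kind of derivation the cited reference uses, and the observation that $\phi(t)e^{2\int_s^t\mathrm{Lip}(V(\tau))\,d\tau}$ is nondecreasing correctly handles the lower bound without dividing by a quantity that could vanish.
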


\begin{lemma}\label{Lemma : Lipschitz of Jacobian}
Let $s\in [0,T]$ and $\psi$ be defined as in \eqref{ODE}. For any $t\in [s,T]$, let $J_{s,t}$ be the Jacobian corresponding to the map
 $\psi(t;s,\cdot):\mathbb{R}^d\mapsto \mathbb{R}^d$ . That is,
\begin{equation}\label{Jacobian}
\int_{\mathbb{R}^d} \zeta(y) dy:= \int_{\mathbb{R}^d} \zeta(\psi(t;s,x))
J_{s,t}(x)\,dx,\qquad \forall ~\zeta\in C(\mathbb{R}^d) .
\end{equation}
Then, the Jacobian  $ J_{s,t}$ is given as
\begin{equation}\label{Jacobian - formular}
J_{s,t}(x)=e^{\int_s^t \nabla \cdot V (\psi(\tau;s,x),\tau) \,d\tau}, \qquad \forall ~ x\in \bbr^d.
\end{equation}
\end{lemma}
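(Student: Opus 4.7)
The plan is to identify $J_{s,t}(x)$ with the determinant of the spatial Jacobian matrix of the flow map, derive a scalar ODE for this determinant via Jacobi's (Liouville's) formula, and then integrate explicitly.

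First I would set $M_{s,t}(x) := D_x \psi(t;s,x)$, the $d\times d$ matrix of partial derivatives of $\psi(t;s,\cdot)$ at $x$. Differentiating the ODE \eqref{ODE} in $x$, and using the regularity $V \in L^1(0,T;W^{1,\infty}(\R^d;\R^d))$ to justify the interchange of derivatives, yields the linear matrix ODE
\begin{equation*}
\partial_t M_{s,t}(x) = DV(\psi(t;s,x),t)\, M_{s,t}(x), \qquad M_{s,s}(x) = I,
\end{equation*}
where $DV$ denotes the Jacobian matrix of $V$ in the spatial variable. By the change-of-variable formula and \eqref{Jacobian}, together with $\det M_{s,s}(x) = 1 > 0$ and continuity in $t$, we get $J_{s,t}(x) = \det M_{s,t}(x)$ (no absolute value is needed since the determinant stays positive).

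Next I would invoke Jacobi's formula: for a smooth matrix-valued curve $t \mapsto A(t)$ with $A(t)$ invertible, $\frac{d}{dt}\det A(t) = \det A(t)\, \mathrm{tr}\bigl(A(t)^{-1}\dot A(t)\bigr)$. Applied to $A(t) = M_{s,t}(x)$ with $\dot A(t) = DV(\psi(t;s,x),t)\, A(t)$, this gives
\begin{equation*}
\frac{d}{dt} J_{s,t}(x) = J_{s,t}(x)\, \mathrm{tr}\bigl(DV(\psi(t;s,x),t)\bigr) = J_{s,t}(x)\, (\nabla\cdot V)(\psi(t;s,x),t),
\end{equation*}
with $J_{s,s}(x)=1$. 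Integrating this scalar linear ODE in $t$ yields exactly \eqref{Jacobian - formular}.

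The only subtle point, and the one I would be most careful about, is the limited time regularity $V \in L^1(0,T;W^{1,\infty})$: the equations above hold for a.e.\ $t$ rather than pointwise, so the computation must be read in the sense of absolutely continuous functions of $t$ (and Carathéodory solutions of the ODE). This is, however, sufficient to apply Jacobi's formula and the fundamental theorem of calculus, so no further difficulty arises.
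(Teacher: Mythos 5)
Your proof is correct and follows essentially the same route as the paper: the paper simply asserts the scalar ODE $\frac{d}{dt}J_{s,t}(x) = (\nabla\cdot V)(\psi(t;s,x),t)\,J_{s,t}(x)$ and integrates, while you additionally derive that ODE from the variational equation for $D_x\psi$ together with Jacobi's formula. Your remark on Carath\'eodory (a.e.-in-$t$) solvability under $V\in L^1(0,T;W^{1,\infty})$ is a useful point that the paper leaves implicit, but it does not change the argument.
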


\begin{proof}
Let $x\in \mathbb{R}^d$ be given. Then, we have
\begin{equation*} 
\begin{aligned}
\frac{d}{dt} J_{s,t}(x)\big |_{t=\tau}& =\nabla \cdot V(\psi(\tau;s,x), \tau)J_{s,\tau}(x).
\end{aligned}
\end{equation*}
Since $J_{s,s}\equiv 1$, we have
\begin{equation}\label{eq7 : Lipschitz of Jacobian}
\log J_{s,t}(x) = \int_s^t \nabla \cdot V( \psi(\tau;s,x),\tau)
\,d\tau.
\end{equation}
This completes the proof.
\end{proof}

\begin{remark}
We note that $\psi(t;s,\cdot) \circ \psi(s;t,\cdot) = Id $, that is,
$$ \psi(t;s, \psi(s; t, x)) = x,  \quad \forall ~ x\in \bbr^d. $$
Exploiting this, we have
\begin{equation*}
J_{s,t}(\psi(s;t,x )) = \frac{1}{J_{t,s}(x)}.
\end{equation*}
\end{remark}

 \begin{lemma}\label{Corollary-4:Lipschitz}\cite[Lemma~3.3]{KK-SIMA}
Let $s\in [0,T]$ and $\Psi$ be  in \eqref{Flow on Wasserstein}. For any $t\in [s,T]$ and $\mu,\nu\in \mathcal{P}_p(\mathbb{R}^d)$,
we have
\begin{equation}\label{eq1:Corollary-4:Lipschitz}
e^{-\int_s^t \mbox{Lip}(V(\tau)) \,d\tau}W_p(\mu,\nu)\leq W_p(\Psi(t;s,\mu),\Psi(t;s,\nu))\leq e^{\int_s^t \mbox{Lip}(V(\tau)) \,d\tau}W_p(\mu,\nu).
\end{equation}
\end{lemma}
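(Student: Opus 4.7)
The plan is to prove the two inequalities separately by transporting optimal plans back and forth through the flow map $\psi$, and then invoking the bi-Lipschitz estimate from Lemma \ref{Lemma : estimation 1: ODE}.

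For the upper bound, I would take an optimal plan $\gamma \in \Gamma_o(\mu,\nu)$ and consider its push-forward
\[
\tilde\gamma := \bigl(\psi(t;s,\cdot)\times\psi(t;s,\cdot)\bigr)_{\#}\gamma.
\]
A direct check of marginals using \eqref{Flow on Wasserstein} shows $\tilde\gamma\in\Gamma(\Psi(t;s,\mu),\Psi(t;s,\nu))$, so it is an admissible (not necessarily optimal) coupling. Hence
\[
W_p^p(\Psi(t;s,\mu),\Psi(t;s,\nu))\le \int_{\bbr^d\times\bbr^d}|x'-y'|^p\,d\tilde\gamma(x',y')=\int_{\bbr^d\times\bbr^d}|\psi(t;s,x)-\psi(t;s,y)|^p\,d\gamma(x,y).
\]
Applying the right-hand estimate in Lemma \ref{Lemma : estimation 1: ODE} pointwise in $(x,y)$ and pulling the exponential factor outside yields the upper bound in \eqref{eq1:Corollary-4:Lipschitz} after taking $p$-th roots.

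For the lower bound, I would run the same argument backwards through the inverse flow. Since $V\in L^1(0,T;W^{1,\infty})$, the ODE \eqref{ODE} is globally well-posed, so $\psi(s;t,\cdot)$ is the inverse diffeomorphism of $\psi(t;s,\cdot)$, and the Lipschitz bound of Lemma \ref{Lemma : estimation 1: ODE} applies symmetrically. Taking now an optimal $\gamma'\in\Gamma_o(\Psi(t;s,\mu),\Psi(t;s,\nu))$ and pushing forward by $(\psi(s;t,\cdot)\times\psi(s;t,\cdot))$ produces an admissible plan in $\Gamma(\mu,\nu)$, which yields
\[
W_p^p(\mu,\nu)\le \int|\psi(s;t,x')-\psi(s;t,y')|^p\,d\gamma'(x',y')\le e^{p\int_s^t \mathrm{Lip}(V(\tau))\,d\tau}\,W_p^p(\Psi(t;s,\mu),\Psi(t;s,\nu)),
\]
which is equivalent to the left-hand inequality in \eqref{eq1:Corollary-4:Lipschitz}.

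There is essentially no obstacle: the argument is the standard ``push an optimal plan through a Lipschitz map'' technique, and all analytic ingredients (well-posedness of the flow, the bi-Lipschitz estimate, invertibility via $\psi(s;t,\cdot)$) are already in place. The only point requiring mild care is verifying that the pushed-forward plan has the correct marginals, which follows immediately from the definition of the push-forward and \eqref{Flow on Wasserstein}. Because both inequalities are proved by the same symmetric device, and the paper already cites \cite{KK-SIMA} for Lemma \ref{Lemma : estimation 1: ODE}, the argument is essentially a two-line transcription.
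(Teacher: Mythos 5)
The paper itself gives no proof of this lemma: it is imported verbatim from \cite[Lemma~3.3]{KK-SIMA}, so there is nothing in the present manuscript to compare against line by line. Your argument is the standard one for transporting Wasserstein estimates through a bi-Lipschitz map, and it is correct: push an optimal plan through $\psi(t;s,\cdot)\times\psi(t;s,\cdot)$, observe via \eqref{Jacobian}/\eqref{Flow on Wasserstein} that the marginals are $\Psi(t;s,\mu)$ and $\Psi(t;s,\nu)$, apply Lemma \ref{Lemma : estimation 1: ODE} pointwise, and repeat in the other direction for the lower bound.

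The only place where your write-up is slightly glib is the phrase "the Lipschitz bound of Lemma \ref{Lemma : estimation 1: ODE} applies symmetrically." As stated, that lemma controls $\psi(t;s,\cdot)$ for $t\ge s$, not $\psi(s;t,\cdot)$. What you actually use is that $\psi(s;t,\cdot)$ is the inverse of $\psi(t;s,\cdot)$, so the \emph{left} inequality of \eqref{eq1:Corollary-4:Lipschitz}'s precursor — namely $e^{-\int_s^t \mathrm{Lip}(V)}|x-y|\le|\psi(t;s,x)-\psi(t;s,y)|$ — rearranges into the upper Lipschitz bound $|\psi(s;t,x')-\psi(s;t,y')|\le e^{\int_s^t \mathrm{Lip}(V)}|x'-y'|$ for the backward map. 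Spelling out that one-line rearrangement would make the lower-bound half airtight; otherwise the proof is complete and there is no gap.
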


%

\begin{lemma}\label{Lemma : density relation on the flow}\cite[Lemma~3.4]{KK-SIMA}
Let $\psi$ and $\Psi$ be defined as in \eqref{ODE} and \eqref{Flow on Wasserstein}, respectively. If $\mu \in \mathcal{P}_p(\bbr^d)$ then
$\Psi(t;s,\mu)\in \mathcal{P}_p(\bbr^d)$. Furthermore, suppose $\mu=\varrho \,dx$ and $\Psi(t;s,\mu)=\rho \,dx$ then
\begin{equation}\label{Density relation}
\rho(\psi(t;s,x))J_{s,t}(x)=\varrho(x), \qquad a.e \quad x\in \bbr^d,
\end{equation}
where $J_{s,t}$ is the Jacobian of the map $\psi(t;s,\cdot)$ as in \eqref{Jacobian}.
We also have
\begin{equation}\label{Entropy relation} \int_{\bbr^d} \rho \log
\rho \,dx = \int_{\bbr^d}\varrho \log \varrho \,dx -
\int_{\bbr^d}\varrho \log J_{s,t}  \,dx.
\end{equation}
Moreover, if $\varrho \in L^{q}(\bbr^d)$ for $q\in[1,\infty]$, then
${\rho}\in L^{q}(\bbr^d)$ and we have
\begin{equation}\label{L^p relation}
\|{\rho}\|_{L^{q}(\bbr^d)} \leq
\|\varrho\|_{L^{q}(\bbr^d)}e^{\frac{q-1}{q}\int_s^t\|\nabla\cdot
V\|_{L^\infty_x} \,d\tau},
\end{equation}
where $\frac{q-1}{q}=1$ if $q=\infty$.
\end{lemma}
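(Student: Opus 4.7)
The proof decomposes naturally into four steps, each of which is essentially a routine consequence of the change-of-variables formula and the explicit Jacobian expression \eqref{Jacobian - formular} from Lemma \ref{Lemma : Lipschitz of Jacobian}.

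\medskip

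\noindent\textbf{Step 1 (finite $p$-th moment).} By Lemma \ref{Lemma : estimation 1: ODE}, the map $x \mapsto \psi(t;s,x)$ is Lipschitz, so there exist constants $A,B$ (depending on $s,t$ and $V$) with $|\psi(t;s,x)| \le A + B|x|$. Since $\Psi(t;s,\mu) = \psi(t;s,\cdot)_\#\mu$, the change-of-variables formula gives
\[
\int_{\bbr^d} |y|^p \, d\Psi(t;s,\mu)(y) = \int_{\bbr^d} |\psi(t;s,x)|^p \, d\mu(x) \le C\left(1 + \int_{\bbr^d} |x|^p \, d\mu(x)\right) < \infty,
\]
proving $\Psi(t;s,\mu) \in \mathcal{P}_p(\bbr^d)$.

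\medskip

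\noindent\textbf{Step 2 (density relation).} For any $\zeta \in \mathcal{C}_b(\bbr^d)$, the definition of push-forward yields
\[
\int_{\bbr^d} \zeta(y)\, \rho(y) \,dy = \int_{\bbr^d} \zeta(\psi(t;s,x))\, \varrho(x) \,dx.
\]
On the other hand, applying \eqref{Jacobian} to the function $\zeta \cdot \rho$ gives
\[
\int_{\bbr^d} \zeta(y)\, \rho(y) \,dy = \int_{\bbr^d} \zeta(\psi(t;s,x))\, \rho(\psi(t;s,x))\, J_{s,t}(x) \,dx.
\]
Equating these for all such $\zeta$ yields \eqref{Density relation}.

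\medskip

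\noindent\textbf{Step 3 (entropy relation).} Using \eqref{Jacobian} with $\zeta(y) = \rho(y) \log \rho(y)$, and then substituting $\rho(\psi(t;s,x)) = \varrho(x)/J_{s,t}(x)$ from \eqref{Density relation},
\[
\int_{\bbr^d} \rho \log \rho \,dy = \int_{\bbr^d} \rho(\psi(t;s,x)) \log\rho(\psi(t;s,x))\, J_{s,t}(x)\,dx = \int_{\bbr^d} \varrho(x) \log\!\bkpp{\frac{\varrho(x)}{J_{s,t}(x)}}\,dx,
\]
which splits into $\int \varrho \log \varrho \,dx - \int \varrho \log J_{s,t} \,dx$ as claimed in \eqref{Entropy relation}.

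\medskip

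\noindent\textbf{Step 4 ($L^q$ estimate).} For $1 \le q < \infty$, again applying \eqref{Jacobian} with $\zeta = \rho^q$ and \eqref{Density relation} gives
\[
\int_{\bbr^d} \rho^q(y) \,dy = \int_{\bbr^d} \bkpp{\frac{\varrho(x)}{J_{s,t}(x)}}^{q}\! J_{s,t}(x)\,dx = \int_{\bbr^d} \varrho^q(x)\, J_{s,t}(x)^{1-q}\,dx.
\]
From the explicit formula \eqref{Jacobian - formular},
\[
J_{s,t}(x)^{1-q} = \exp\!\bkpp{(1-q)\int_s^t \nabla \cdot V(\psi(\tau;s,x),\tau)\,d\tau} \le \exp\!\bkpp{(q-1)\int_s^t \|\nabla \cdot V(\cdot,\tau)\|_{L^\infty_x}\,d\tau}
\]
since $q \ge 1$. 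Plugging in and taking the $q$-th root gives \eqref{L^p relation}. The case $q=1$ is trivial (both sides equal $\|\varrho\|_{L^1}$ by mass conservation of push-forward), and the case $q=\infty$ follows from $\|\rho\|_{L^\infty} \le \|\varrho\|_{L^\infty} \cdot \|J_{s,t}^{-1}\|_{L^\infty}$, with $J_{s,t}^{-1} \le \exp(\int_s^t \|\nabla \cdot V\|_{L^\infty}\,d\tau)$ again from \eqref{Jacobian - formular}.

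\medskip

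\noindent\textbf{Anticipated difficulty.} There is no real obstacle, as every step is a direct computation built on already-established identities. The only mild subtlety is handling the endpoint cases $q=1$ and $q=\infty$ separately in Step 4; for intermediate $q$ the single exponential bound on $J_{s,t}^{1-q}$ does all the work.
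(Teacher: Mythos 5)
Your proof is correct. Note that the paper itself does not prove this lemma: it is imported verbatim from \cite[Lemma~3.4]{KK-SIMA}, so there is no in-text proof to compare against. That said, your four steps are the standard and expected argument: Step 1 correctly extracts linear growth of $\psi(t;s,\cdot)$ from the bi-Lipschitz bound in Lemma~\ref{Lemma : estimation 1: ODE} (together with $|\psi(t;s,0)|\le\int_s^t\|V(\tau)\|_{L^\infty_x}\,d\tau$) to control the $p$-th moment; Steps 2--3 correctly combine the push-forward identity with the change-of-variables formula \eqref{Jacobian}; and Step 4 correctly uses $\rho^q J_{s,t}^{1-q}$ together with the explicit exponential form \eqref{Jacobian - formular} and the sign $1-q\le 0$ to get the $L^q$ bound, handling the endpoints $q=1$ (mass conservation) and $q=\infty$ (bijectivity of the flow plus $\|J_{s,t}^{-1}\|_{L^\infty}\le e^{\int_s^t\|\nabla\cdot V\|_{L^\infty_x}\,d\tau}$) separately, matching the convention $\frac{q-1}{q}=1$ for $q=\infty$. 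No gaps.
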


%

\begin{lemma}\label{Lemma : Holder regularity on the flow}
Let
$\psi$ and $\Psi$ be defined as in \eqref{ODE} and \eqref{Flow on
Wasserstein}, respectively. Suppose that $V \in  L^1(0,T;W^{1,\infty}(\bbr^d))$.
\begin{itemize}
\item[(i)] If $V \in L^1(0,T;  \calC^{1, \alpha}(\bbr^d)) $ and $\varrho\in
\mathcal{P}_2(\bbr^d) \cap \calC^\alpha (\bbr^d)$ for some $\alpha \in (0,1)$,
then $\rho:=\Psi(t;s,\varrho)$ is also H\"{o}lder continuous. More
precisely, we have $\| \rho\|_{\calC^\alpha (\bbr^d)} < C $ where $C=C(\| \varrho\|_{\calC^\alpha (\bbr^d)},\, \int_s^t \|\nabla V \|_{\calC^\alpha (\bbr^d)} d\tau )$.
\item[(ii)] If $V \in L^1(0,T;  W^{2, \infty}(\bbr^d)) $, then, for any  $a>0,~ q\geq 1$, we have
\begin{equation}\label{eq5 : Sobolev}
\begin{aligned}
 \|\nabla \rho^a\|_{L^q (\bbr^d)}&\leq e^{(a+2)\int_s^t \|\nabla V\|_{L^\infty_x}\, d\tau}\left \{ \|\nabla \varrho^a\|_{L^q (\bbr^d)} + \| \varrho^{a}\|_{L^p_x}
 \times \left ( a\int_s^t \| \nabla^2 V\|_{L^\infty_x} \,d\tau\right ) \right \}.
\end{aligned}
\end{equation}
\end{itemize}
\end{lemma}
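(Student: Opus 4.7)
The strategy is to use the density transport identity \eqref{Density relation} in the form
\[
\rho(y) \;=\; \varrho(\psi(s;t,y))\, J_{t,s}(y), \qquad J_{t,s}(y) \;=\; \exp\!\Bigl(-\!\int_s^t \nabla\!\cdot\! V(\psi(\tau;t,y),\tau)\,d\tau\Bigr),
\]
which follows from \eqref{Density relation} by letting $y=\psi(t;s,x)$ and using the identity $J_{s,t}(\psi(s;t,y))=1/J_{t,s}(y)$ stated in the remark following Lemma \ref{Lemma : Lipschitz of Jacobian}. All quantitative estimates below then reduce to controlling $\psi(s;t,\cdot)$ and $J_{t,s}$.

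For part $(i)$, I would split
\[
\rho(y_1)-\rho(y_2) \;=\; \bigl[\varrho(\psi(s;t,y_1))-\varrho(\psi(s;t,y_2))\bigr] J_{t,s}(y_1) + \varrho(\psi(s;t,y_2))\bigl[J_{t,s}(y_1)-J_{t,s}(y_2)\bigr].
\]
The first bracket is bounded by $\|\varrho\|_{\calC^\alpha}|\psi(s;t,y_1)-\psi(s;t,y_2)|^\alpha$, and Lemma \ref{Lemma : estimation 1: ODE} (applied to the backward flow) gives $|\psi(s;t,y_1)-\psi(s;t,y_2)|\leq e^{\int_s^t \mathrm{Lip}(V(\tau))d\tau}|y_1-y_2|$, so this piece is $\calC^\alpha$ with the claimed dependence. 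For the second bracket, since $|e^A-e^B|\leq (e^{|A|}+e^{|B|})|A-B|$, one needs to estimate
\[
\Bigl|\int_s^t \!\bigl[\nabla\!\cdot\! V(\psi(\tau;t,y_1),\tau)-\nabla\!\cdot\! V(\psi(\tau;t,y_2),\tau)\bigr]d\tau\Bigr| \;\leq\; \int_s^t \|\nabla V(\cdot,\tau)\|_{\calC^\alpha}|\psi(\tau;t,y_1)-\psi(\tau;t,y_2)|^\alpha d\tau,
\]
and again Lemma \ref{Lemma : estimation 1: ODE} converts the flow-difference into $|y_1-y_2|^\alpha$ times an exponential factor. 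Combined with $\|J_{t,s}\|_{L^\infty}\leq e^{\int_s^t\|\nabla V\|_{L^\infty}d\tau}$ and the pointwise bound on $\varrho$ inherent in $\|\varrho\|_{\calC^\alpha}$, this yields the Hölder norm estimate.

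For part $(ii)$, the plan is to differentiate $\rho^a(y)=\varrho(\psi(s;t,y))^a\, J_{t,s}(y)^a$ directly:
\[
\nabla\rho^a(y) \;=\; J_{t,s}(y)^a\,[D_y\psi(s;t,y)]^{T}\,(\nabla\varrho^a)(\psi(s;t,y)) \;+\; a\,\varrho^a(\psi(s;t,y))\, J_{t,s}(y)^{a-1}\,\nabla J_{t,s}(y).
\]
The two terms will be raised to the $q$-th power and integrated, then the change of variables $x=\psi(s;t,y)$ (with $dy=J_{s,t}(x)\,dx$ by \eqref{Jacobian}) pulls the integrals back to ones in $\varrho$. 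To control the gradient of the Jacobian I would use $\nabla J_{t,s}(y)=-J_{t,s}(y)\int_s^t \nabla_y[\nabla\!\cdot\! V(\psi(\tau;t,y),\tau)]d\tau$, bound $|\nabla_y[\nabla\!\cdot\! V(\psi(\tau;t,y),\tau)]|\leq \|\nabla^2 V(\cdot,\tau)\|_{L^\infty}|D_y\psi(\tau;t,y)|$, and invoke Lemma \ref{Lemma : estimation 1: ODE} once more. Collecting the Jacobian factors $\|J_{t,s}\|_{L^\infty}^a,\|J_{s,t}\|_{L^\infty}^{1/q},\|D_y\psi\|_{L^\infty}$, each bounded by $e^{\int_s^t\|\nabla V\|_{L^\infty}d\tau}$, the exponent $a+1+1/q\leq a+2$ appears naturally and produces the asserted inequality.

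The main technical obstacle I anticipate is in $(ii)$, namely the bookkeeping of the various exponential constants arising from (a) the Jacobian $J_{t,s}^{a}$ of the push-forward, (b) the change of variables $dy\leftrightarrow J_{s,t}dx$, (c) the Jacobian of the backward flow $D_y\psi(s;t,\cdot)$, and (d) the gradient of the backward flow appearing inside $\nabla J_{t,s}$; making sure these combine to give the prefactor $e^{(a+2)\int_s^t\|\nabla V\|_{L^\infty}d\tau}$ rather than a worse exponent requires care. Part $(i)$ is conceptually routine once the representation formula and the Lipschitz estimate on $\psi(s;t,\cdot)$ are in hand.
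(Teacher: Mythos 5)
Your proposal is correct and follows essentially the same route as the paper's proof: both parts hinge on the transport identity $\rho(y)=\varrho(\psi(s;t,y))\,J_{t,s}(y)$, the explicit exponential formula for the Jacobian from Lemma~\ref{Lemma : Lipschitz of Jacobian}, and the Lipschitz estimate on the flow from Lemma~\ref{Lemma : estimation 1: ODE}, with the Hölder estimate in $(i)$ obtained from the same product decomposition and the $L^q$ estimate in $(ii)$ from the same chain-rule differentiation followed by a change of variables. The bookkeeping you flag as the main obstacle — collecting $J_{t,s}^a$, $D_y\psi$, and the change-of-variables Jacobian into the prefactor $e^{(a+2)\int_s^t\|\nabla V\|_{L^\infty}}$ via $a+1+1/q\leq a+2$ — is exactly how the paper closes the argument.
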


\begin{proof}
$(i)$ From \eqref{Density relation}, we have
\begin{equation}\label{eq 2 : Holder}
\rho(x) = \frac{{\varrho}(\psi(s;t,x))}{J_{s,t}(\psi(s;t, x))}.
\end{equation}
Let $f := \varrho(\psi(s;t,x))$ and $ g:=\frac{1}{J_{s,t}(\psi(s;t,
x))}$, then
$\| \rho\|_{\calC^\alpha(\bbr^d)} \leq \|f \|_{L^\infty_x} \| g \|_{\mathcal{C}^\alpha(\bbr^d)}
+ \| f \|_{\calC^\alpha (\bbr^d)} \| g\|_{L^\infty_x}.$
We note that
$\| f\|_{L^\infty_x} =   \| \varrho\|_{L^\infty_x}$
and from \eqref{Jacobian - formular}
\begin{equation*}
\| g\|_{L^\infty_x} \leq e^{\int_s^t \|\nabla \cdot V \|_{L^\infty_x}
\,d\tau}.
\end{equation*}
Next, we estimate $\|f\|_{\calC^\alpha (\bbr^d)}$ that
\begin{equation*}
|\varrho(\psi(s;t,x)) - \varrho(\psi(s;t,y))| \leq \|\varrho\|_{\calC^\alpha} |\psi(s; t,x) -\psi(s;t,y) |^\alpha
\leq \|\varrho\|_{\calC^\alpha} e^{\alpha \int_s^t \|\nabla \cdot V
\|_{L^\infty_x} \,d\tau} |x-y|^\alpha.
\end{equation*}
Hence, it gives
\begin{equation*}
\|f\|_{\calC^\alpha} \leq \|\varrho\|_{\calC^\alpha} e^{\alpha \int_s^t \|\nabla
\cdot V \|_{L^\infty_x} \,d\tau}.
\end{equation*}
Next, we note that
\begin{equation*}
\begin{aligned}
g(x)= e^{-\int_s^t \nabla \cdot V(\psi(\tau;s,\psi(s; t,x)
),\tau)\,d\tau} =e^{-\int_s^t \nabla \cdot V(\psi(\tau;
t,x),\tau)\,d\tau},
\end{aligned}
\end{equation*}
which gives us
\begin{equation}\label{eq 8 : Holder}
\begin{aligned}
|g(x)-g(y)|&= \left |e^{-\int_s^t \nabla \cdot V( \psi(\tau; t,x),\tau)\,d\tau}-e^{-\int_s^t \nabla \cdot V(\psi(\tau;t,y),\tau)\,d\tau}\right |\\
&\leq e^{\int_s^t \|\nabla \cdot V\|_{L^\infty_x} \,d\tau}\int_s^t |\nabla \cdot V( \psi(\tau; t,x),\tau) - \nabla \cdot V(\psi(\tau;t,y),\tau) |\,d\tau\\
&\leq e^{\int_s^t \|\nabla \cdot V\|_{L^\infty_x} \,d\tau}\int_s^t \|\nabla \cdot V(\tau)\|_{\calC^\alpha}
|\psi(\tau; t,x) - \psi(\tau;t,y) |^\alpha \,d\tau \\
&\leq e^{\int_s^t \|\nabla \cdot V\|_{L^\infty_x} \,d\tau}\int_s^t
\|\nabla \cdot V(\tau)\|_{\calC^\alpha} \,d\tau ~ e^{\alpha \int_s^t
Lip(V(\theta)) d\theta} |x-y|^\alpha.
\end{aligned}
\end{equation}
We combine \eqref{eq 2 : Holder}-\eqref{eq 8 : Holder} to get, for
any $x,~y\in \bbr^d$,
\begin{equation*}
\begin{aligned}
|\rho(x)-\rho(y)| &\leq e^{(\alpha+1)\int_s^t \|\nabla V
\|_{L^\infty_x} \,d\tau} \left( \|\varrho\|_{L^\infty_x} \int_s^t \|\nabla
\cdot V(\tau)\|_{\calC^\alpha} \,d\tau
 + \|\varrho\|_{\calC^\alpha} \right) |x-y|^\alpha.
\end{aligned}
\end{equation*}
$(ii)$  First of all, we exploit
$\psi(\tau;s,\psi(s;t,x))=\psi(\tau;t,x)$ in \eqref{eq 2 : Holder}
and get
\begin{equation*}
\begin{aligned}
\rho(x) =\varrho(\psi(s;t,x))~e^{-\int_s^t \nabla \cdot
V(\psi(\tau;t,x),\tau)\,d\tau}.
\end{aligned}
\end{equation*}

Hence, for $a>1$, we have
\begin{equation*}
\begin{aligned}
\rho^a(x) =\varrho^a(\psi(s;t,x))~e^{-a\int_s^t \nabla \cdot
V(\psi(\tau;t,x),\tau)\,d\tau},
\end{aligned}
\end{equation*}
and
\begin{equation}\label{eq1 : Sobolev}
\begin{aligned}
\nabla \rho^a(x)&=\nabla \varrho^a(\psi(s;t,x))~\nabla_x \psi(s;t,x)~e^{-a\int_s^t \nabla \cdot V(\psi(\tau;t,x),\tau)\,d\tau}\\
&\quad + \varrho^a(\psi(s;t,x))~e^{-a\int_s^t\nabla\cdot
V(\psi(\tau;t,x)\,d\tau)}\left(-a \int_s^t \nabla^2
V(\psi(\tau;t,x)) \nabla_x\psi(\tau;t,x)\,d\tau \right).
\end{aligned}
\end{equation}
From Lemma \ref{Lemma : estimation 1: ODE}, we note
\begin{equation}\label{eq2 : Sobolev}
\left |\nabla_x \psi(s;t,x) \right | \leq e^{\int_s^t\|\nabla V\|_{L^\infty_x} \, d\tau}.
\end{equation}
We combine \eqref{eq1 : Sobolev} and \eqref{eq2 : Sobolev} to get
\begin{equation}\label{eq3 : Sobolev}
\begin{aligned}
\left |\nabla \rho^a(x) \right |&\leq \left \{ \left |\nabla
\varrho^a(\psi(s;t,x)) \right | +\varrho^a(\psi(s;t,x))\times \left
( a\int_s^t \| \nabla^2 V\|_{L^\infty_x} \,d\tau\right ) \right
\}e^{(a+1)\int_s^t \|\nabla V\|_{L^\infty_x} \, d\tau}.
\end{aligned}
\end{equation}
Next, we note that for any nonnegative function $f$
\begin{equation}\label{eq4 : Sobolev}
\int_{\bbr^d} f(\psi(s;t,x)) dx = \int_{\bbr^d} f(y) J_{t,s}(y) \,dy
 \leq e^{\int_s^t\|\nabla V\|_{L^\infty_x} \,d\tau} \int_{\bbr^d} f(y)
\,dy.
\end{equation}
Combining \eqref{eq3 : Sobolev} and \eqref{eq4 : Sobolev}, we have \eqref{eq5 : Sobolev}.
\end{proof}

\section{A priori estimates}\label{S:a priori}

Here we provide a priori estimates of a regular solution of \eqref{E:Main} given as following definition.
\begin{defn}\label{D:regular-sol}
Let $q\in [1, \infty)$ and $V$ be a measurable vector field.
We say that a nonnegative measurable function $\rho$ is
a \textbf{regular solution} of \eqref{E:Main} with $\rho_0 \in L^{1}(\mathbb{R}^d)\cap \calC^{\alpha}(\mathbb{R}^d)$ for some $\alpha \in (0,1)$ if the followings are satisfied:
\begin{itemize}
\item[(i)] It holds that
\[
\rho \in L^{1}(Q_{T}) \cap L^{\infty}(0,T;\mathcal{C}^{\alpha}(\mathbb{R}^d)), \quad \nabla \rho^{\frac{m+q-1}{2}} \in L^{2} (Q_T), \quad  \text{and} \quad  V \in L^{1}_{\loc}(Q_{T}).
\]
\item[(ii)] For any $\varphi \in {\mathcal{C}}_c^\infty (\bbr^d \times [0,T))$, it holds that
\[
\iint_{Q_{T}} \left\{  \rho \varphi_t - \nabla\rho^m \cdot \nabla \varphi + \rho V\cdot \nabla \varphi \right\} \,dx\,dt = \int_{\mathbb{R}^d} \rho_{0} \varphi(\cdot, 0) \,dx.
\]
\end{itemize}
\end{defn}

By following similar arguments as in \cite[Proposition~9.15]{Vaz07}, for $\rho_0 \in \mathcal{P} (\mathbb{R}^d)$, we have that a regular solution $\rho$ of \eqref{E:Main} carries the mass conservation property in time; that is, $\|\rho(\cdot, t)\|_{L^{1} (\mathbb{R}^d)} = 1$ a.e.  $t\in [0, T]$.
Throughout this section, positive constants $m$, $d$, $p$, $q$, and $T$ are given, hence we omit the dependence of $c$ and $C$, unless we need to specify the dependencies clearly.

\subsection{ Supplementary estimates of speed and $p$-th moment}\label{SS:speed}

Note that \eqref{E:Main} can be written as
\begin{equation}\label{E:Main:w}
\partial_t \rho + \nabla \cdot \left( w \rho\right) = 0, \quad \text{where} \quad w:= - \frac{\nabla \rho^m}{\rho} + V .
\end{equation}
Due to Lemma~\ref{representation of AC curves}, \eqref{E:Main:w} can be seen as a curve in $\calP_{p}(\bbr^d)$ whose speed at time $t$ is limited by $\|w(\cdot, t)\|_{L^p (\rho(t))}$. In the following lemma, we deliver the estimate of  $\|w\|_{L^p(0,T;L^p(\rho(t)))}$.

\begin{lemma}\label{P:W-p}
Let $m>1$, $1 \leq q \leq m$, and $1<p\le {\lambda_q} := 1 + \frac{d(q-1) + q}{d(m-1)+q}$. Suppose that $\rho: Q_T \mapsto \bbr$ is a nonnegative measurable function satisfying
 \begin{equation*}
 \rho \in L^{\infty}(0, T; L^{q}(\mathbb{R}^d)) \quad \text{and} \quad
 \rho^{\frac{q+m-1}{2}} \in L^{2}(0, T; W^{1,2}(\mathbb{R}^d)).
 \end{equation*}
 Furthermore, assume that
\begin{equation}\label{V-p-moment}
V\in \mathcal{S}_{m,q}^{(q_1, q_2)} \ \text{ for } \
\begin{cases}
 {\lambda_q} \leq q_2 \leq \frac{{\lambda_q} (q+m-1)}{q+m-2}, & \text{ if } \ d>2,  \vspace{1 mm}\\
 {\lambda_q} \leq q_2 < \frac{{\lambda_q} (q+m-1)}{q+m-2}, & \text{ if } \ d=2.
 \end{cases}
\end{equation}
 Then, for any $\varepsilon >0$, there exists constant $c=c(\varepsilon)$ such that
\begin{equation}\label{W-p}
\iint_{Q_T} \abs{\frac{\nabla \rho^m}{\rho}}^p \rho \,dx\,dt
\leq \varepsilon \iint_{Q_T} \abs{ \nabla \rho^{\frac{q+m-1}{2}}}^2 \,dx\,dt
  +cT\left( \sup_{0\leq t \leq T}\int_{\mathbb{R}^d} \rho^q \,dx \right)^{\frac{2\theta}{d(1-\theta)}},
\end{equation}
\begin{equation}\label{V-p}
\iint_{Q_T} |V|^{p}\rho \,dx\,dt
\leq \varepsilon \iint_{Q_T} \abs{ \nabla \rho^{\frac{q+m-1}{2}}}^2 \,dx\,dt
 + c T^{\beta_1 \frac{{\lambda_q} - p}{{\lambda_q}}} \|\rho\|_{L^{q,\infty}_{x,t}}^{\beta\beta_1 \frac{q p (q_1 - {\lambda_q})}{q_1 {\lambda_q}}} \|V\|_{\mathcal{S}_{m,q}^{(q_1,q_2)}}^{p \beta_1},
\end{equation}
where $\theta=\frac{dp(m-q)}{(2-p)[(d+2)q+(m-2)d]}$, $\beta = 1 - \frac{(d-2)(q_1(1-q)+q {\lambda_q} )}{(q_1 - {\lambda_q})(d(m-1)+2q)}$, and $ \beta_1 = \frac{q_2 {\lambda_q} }{q_2 ({\lambda_q} - p) + p {\lambda_q}}$.
\end{lemma}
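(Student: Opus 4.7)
The strategy is to convert both integrands into products of an energy-type factor $|\nabla \rho^{(q+m-1)/2}|$ and a weighted power of $\rho$, then isolate the energy factor via H\"{o}lder's inequality and absorb it into the $\varepsilon$-piece via Young's inequality. The remaining pure $\rho$-integrals are then estimated by Lemma~\ref{P:L_r1r2}, which is sharp precisely at the endpoint $p={\lambda_q}$.

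For \eqref{W-p}, the starting point is the algebraic identity
\[
\frac{\nabla \rho^m}{\rho} = \frac{2m}{q+m-1}\,\rho^{\frac{m-q-1}{2}}\,\nabla \rho^{\frac{q+m-1}{2}},
\]
which yields
\[
\iint_{Q_T}\left|\frac{\nabla\rho^m}{\rho}\right|^{p}\rho\,dx\,dt = c\iint_{Q_T}\rho^{\frac{p(m-q-1)+2}{2}}\left|\nabla \rho^{\frac{q+m-1}{2}}\right|^{p}\,dx\,dt.
\]
Applying H\"{o}lder's inequality on $Q_T$ with conjugate pair $(2/p,2/(2-p))$ and then Young's inequality absorbs the energy factor into $\varepsilon\iint|\nabla\rho^{(q+m-1)/2}|^{2}$, reducing matters to a bound on $\iint_{Q_T}\rho^{r}\,dx\,dt$ with $r := \frac{p(m-q-1)+2}{2-p}$. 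A direct computation shows that $p={\lambda_q}$ corresponds exactly to $r = r^{\ast}:=\frac{d(q+m-1)+2q}{d}$, the diagonal point $r_1=r_2$ on the scaling curve of Lemma~\ref{P:L_r1r2}. For $p<{\lambda_q}$ one has $r<r^{\ast}$; choosing $r_1=r$ together with the matching $r_2=(2+q_{m,d})qr/[d(r-q)]$ on the scaling curve and combining Lemma~\ref{P:L_r1r2} with H\"{o}lder in time produces a bound of the form $T^{\alpha}(\sup\int\rho^{q})^{\gamma}\|\nabla\rho^{(q+m-1)/2}\|_{L^{2}}^{\delta}$ with $\delta<2$. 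A second application of Young's inequality then transfers the residual energy factor into the $\varepsilon$-piece; the exponent $2\theta/(d(1-\theta))$ in \eqref{W-p} records the resulting balance between $T$ and $\sup\int\rho^{q}$.

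For \eqref{V-p}, I would instead apply H\"{o}lder's inequality directly to separate $|V|^{p}$ from $\rho$:
\[
\iint_{Q_T}|V|^{p}\rho\,dx\,dt \le \|V\|_{L^{q_1,q_2}_{x,t}}^{p}\,\|\rho\|_{L^{s_1,s_2}_{x,t}},\qquad s_i := \frac{q_i}{q_i-p}\ (i=1,2).
\]
The hypothesis $V\in\mathcal{S}_{m,q}^{(q_1,q_2)}$ gives $\frac{d}{q_1}+\frac{2+q_{m,d}}{q_2}=1+q_{m,d}$; substituting $s_i=q_i/(q_i-p)$ one checks that at $p={\lambda_q}$ the pair $(s_1,s_2)$ falls exactly on the scaling curve of Lemma~\ref{P:L_r1r2}, while for $p<{\lambda_q}$ a preliminary H\"{o}lder in time with exponent $\beta_1 = \frac{q_2{\lambda_q}}{q_2({\lambda_q}-p)+p{\lambda_q}}$ generates the factor $T^{\beta_1({\lambda_q}-p)/{\lambda_q}}$ and brings the pair onto the curve. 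Inserting the estimate \eqref{norm-q-r1r2} and applying Young's inequality to absorb the $\|\nabla \rho^{(q+m-1)/2}\|_{L^{2}_{x,t}}$-factor into the $\varepsilon$-piece yields \eqref{V-p}; the exponent $\beta$ tracks the weight correction $1-(s_1-q)(d-2)_{+}/[d(m-1)+2q]$ appearing in \eqref{norm-q-r1r2}.

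The main technical obstacle is bookkeeping rather than conceptual: one has to verify at each step that the exponents fall in the admissible ranges of Lemma~\ref{P:L_r1r2}, particularly at the endpoints where either $p={\lambda_q}$ or $q=m$ (at which ${\lambda_q}=2$ and the Young exponent $2/(2-p)$ degenerates) or $d=2$ (where the upper Sobolev constraint $r_1\le\frac{d(q+m-1)}{d-2}$ disappears and forces strict inequality on $q_2$ in \eqref{V-p-moment}). The restriction $q\le m$ ensures $r\ge q$ so that the interpolation between $L^{\infty}_{t}L^{q}_{x}$ and $L^{2}_{t,x}$ of $\nabla \rho^{(q+m-1)/2}$ supplied by Lemma~\ref{P:L_r1r2} is actually available. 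Beyond this case-check, the remaining work is routine algebra identifying $\theta$, $\beta$, and $\beta_1$ with the displayed expressions.
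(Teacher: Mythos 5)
Your high-level plan for \eqref{W-p} (the algebraic identity, Young's inequality with exponents $2/p$ and $2/(2-p)$ to isolate the energy factor, then a bound on the remaining $\iint_{Q_T}\rho^{r}\,dx\,dt$ with $r=\frac{p(m-q-1)+2}{2-p}$) agrees with the paper up to that point. The paper then writes $r=(1-\theta)\cdot 1+\theta\cdot r^{\ast}$ with $r^{\ast}:=q+m-1+\frac{2q}{d}$, applies H\"{o}lder's inequality with exponents $\frac{1}{1-\theta},\frac{1}{\theta}$ to $\rho^{(1-\theta)}\rho^{\theta r^{\ast}}$, bounds $\iint\rho\le T$ by mass conservation and $\iint\rho^{r^{\ast}}$ by the parabolic Sobolev estimate, and finishes with Young. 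This interpolation works for every $r\in(1,r^{\ast})$, i.e.\ for every $p\in(1,\lambda_q)$.

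Your route instead bounds $\iint\rho^{r}$ through Lemma~\ref{P:L_r1r2} with $r_1=r$ and the matching $r_2=\frac{(2+q_{m,d})qr}{d(r-q)}$ on the scaling curve, followed by H\"{o}lder in time. This requires $r>q$ (both for the admissibility $r_1\ge q$ in Lemma~\ref{P:L_r1r2} and for your $r_2$ to be positive), and your final remark claims ``the restriction $q\le m$ ensures $r\ge q$''. That claim is false. One checks directly that $r\ge q$ is equivalent to $p\ge\frac{2(q-1)}{m-1}$, and this threshold exceeds $1$ precisely when $q>\frac{m+1}{2}$. For instance with $d=3$, $m=3$, $q=5/2$, $p=6/5$ one has $\lambda_q=1+\frac{7}{8.5}\approx 1.82>p$ but $r=\frac{1.4}{0.8}=1.75<q$, so $r_1=r$ is not admissible and $r_2<0$. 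What $q\le m$ actually guarantees is $1<r\le r^{\ast}$ (and hence $\theta\in(0,1]$), which is exactly what the paper's $L^{1}$--$L^{r^{\ast}}$ interpolation needs. To repair your argument in the range $\frac{m+1}{2}<q<m$, $1<p<\frac{2(q-1)}{m-1}$, you would have to interpolate $\iint\rho^{r}$ through the $L^{1}$ (mass) endpoint rather than through Lemma~\ref{P:L_r1r2}, which is precisely what the paper does; as written the proposal is incomplete on that range.

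For \eqref{V-p}, the direct two-variable H\"{o}lder $\iint|V|^{p}\rho\le\|V\|^{p}_{L^{q_1,q_2}_{x,t}}\|\rho\|_{L^{s_1,s_2}_{x,t}}$ with $s_i=\frac{q_i}{q_i-p}$, followed by a temporal H\"{o}lder to land on the scaling curve, is a slightly different organization from the paper, which first estimates $\iint|V|^{\lambda_q}\rho$ (spatial H\"{o}lder, then Lemma~\ref{P:L_r1r2}, then Young) and only afterwards applies H\"{o}lder in time to go from $\lambda_q$ down to $p$. Your reorganization is sound conceptually, but note that the displayed constants $\beta$ and $\beta_1$ in the statement are calibrated to the paper's order of operations, where the spatial interpolation exponent is $r_1=\frac{q_1}{q_1-\lambda_q}$ rather than your $s_1=\frac{q_1}{q_1-p}$; you would need to verify that your bookkeeping reproduces the same exponents, or accept an equivalent but differently-parametrized bound.
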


\begin{proof}
For simplicity, let $\lambda := \lambda_q$.
First we prove \eqref{W-p}.  If $1 \leq q < m$, then $1 < p \leq {\lambda} < 2$ and we observe that
\[ 
\abs{\frac{\nabla \rho^m}{\rho}}^p \rho = \left(\frac{2m}{q+m-1}\right)^p \rho^{\frac{2-p + p(m-q)}{2}} \abs{\nabla \rho^{\frac{q+m-1}{2}}}^p.
\]
By applying Young's inequality with $\frac{p}{2} + \frac{2-p}{2} = 1$, we have the following, for any $\varepsilon > 0$,
\begin{equation}\label{W-p-01}
\iint_{Q_T}\abs{\frac{\nabla \rho^m}{\rho}}^p \rho \,dx\,dt
\leq
\frac{\varepsilon}{2}\iint_{Q_T} \abs{\nabla \rho^{\frac{q+m-1}{2}}}^2 \,dx\,dt
+ c (\varepsilon) \iint_{Q_T}  \rho^{\frac{2-p + p(m-q)}{2-p}} \,dx\,dt.
\end{equation}

By Lemma~\ref{T:pSobolev} for $d>2$ (or Lemma~\ref{T:GN}, for $d=2$), it holds that $\rho \in L_{x,t}^{(q+m-1)+\frac{2q}{d}}$.
Therefore, we set
\[
1\leq \frac{2-p+p(m-q)}{2-p} \leq q+m-1+\frac{2q}{d} \ \iff \ 1 < p \leq 1 + \frac{d(q-1)+q}{d(m-1) + q}.
\]
To handle the second term on the RHS of \eqref{W-p-01}, let us set
\[
\frac{2-p + p(m-q)}{2-p} = (1-\theta) + \left(q+m-1+\frac{2q}{d}\right)\theta, \quad \text{for} \quad  \theta \in (0,1).
\]
Then the H\"{o}lder inequality and Lemma~\ref{T:pSobolev} yield
\begin{equation}\label{W-p-02}
\begin{aligned}
\iint_{Q_T}  \rho^{\frac{2-p + p(m-q)}{2-p}} \,dx\,dt
&\leq \|\rho\|_{L_{x,t}^{1}}^{1-\theta} \|\rho\|_{L^{q,\infty}_{x,t}}^{\frac{2\theta q}{d}}
\left\|\nabla \rho^{\frac{q+m-1}{2}}\right\|_{L^{2}_{x,t}}^{2\theta} \\
&\leq \frac{\varepsilon}{2}\left\|\nabla \rho^{\frac{q+m-1}{2}}\right\|_{L^{2}_{x,t}}^{2} + cT\|\rho_0\|_{L^{1}(\mathbb{R}^d)} \|\rho\|_{L^{q,\infty}_{x,t}}^{\frac{2\theta q}{d(1-\theta)}},
\end{aligned}
\end{equation}
by taking the Young's inequality with $(1-\theta)+\theta =1$ and applying the mass conservation property. Hence the combination of \eqref{W-p-01} and \eqref{W-p-02} yield \eqref{W-p} for $1<q<m$.

If $q = m$, then we observe that ${\lambda}_q=2$ and
\[
\abs{\frac{\nabla \rho^m}{\rho}}^2 \rho = \left(\frac{2m}{2m-1}\right)^2 \abs{\nabla \rho^{\frac{2m-1}{2}}}^2.
\]
For any $1 < p <2$, Young's inequality with $\frac{p}{2} + \frac{2-p}{2}=1$ gives
\[
\abs{\frac{\nabla \rho^m}{\rho}}^p \rho \leq \varepsilon \abs{\frac{\nabla \rho^m}{\rho}}^2 \rho + c(\varepsilon) \rho,
\]
that completes the proof of \eqref{W-p} by taking integration on $Q_T$.

Now we prove \eqref{V-p}. First, by taking the H\"{o}lder inequality with $\frac{{\lambda}}{q_1}+\frac{q_1 - {\lambda}}{q_1} =1$ for $q_1 > {\lambda}$, we have
\begin{equation}\label{V-p_q}
 \int_{\mathbb{R}^d} |V|^{{\lambda}}\rho \,dx \leq \|V\|_{L^{q_1}_{x}}^{{\lambda}}\|\rho\|_{L^{r_1}_{x}}
 \quad \text{where} \quad r_1 := \frac{q_1}{q_1 - {\lambda}}.
\end{equation}
Let us set
\begin{equation}\label{speed00}
q \leq r_1 \leq \frac{d(q+m-1)}{d-2} \quad \text{ and} \quad \theta:= \frac{q}{r_1} \left(1-\frac{(r_1 - q)(d-2)}{2q + d(m-1)}\right).
\end{equation}
Then by  \eqref{pme-interpolate-20} and \eqref{pme-interpolate-10}  in Lemma~\ref{P:L_r1r2}, we yield, from \eqref{V-p_q}, that
\begin{equation}\label{speed00.5}
\begin{aligned}
 \int_{\mathbb{R}^d} |V|^{{\lambda}}\rho \,dx
 &\leq \|V\|_{L^{q_1}_{x}}^{{\lambda}}\|\rho\|_{L^{q}_{x}}^{\theta} \left\|\nabla \rho^{\frac{q+m-1}{2}}\right\|_{L^{2}_{x}}^{\frac{2(1-\theta)}{q+m-1}}\\
 &\leq \varepsilon \left\|\nabla \rho^{\frac{q+m-1}{2}}\right\|_{L^{2}_{x}}^{2}
 + c(\varepsilon) \|\rho\|_{L^{q}_{x}}^{\frac{\theta(q+m-1)}{q+m-1-(1-\theta)}} \|V\|_{L^{q_1}_{x}}^{\frac{{\lambda} (q+m-1)}{q+m-1 - (1-\theta)}},
\end{aligned}
\end{equation}
by taking Young's inequality with $\frac{1-\theta}{q+m-1} + \frac{q+m-1-(1-\theta)}{q+m-1} = 1$. Then now let us set
\begin{equation}\label{speed01}
q_2 :=  \frac{{\lambda} (q+m-1)}{q+m-1 - (1-\theta)} = \frac{q_1 {\lambda} (2q + d(m-1))}{q_1 (2q + d(m+q-2)) - d q {\lambda}}.
\end{equation}
Then the direct computation gives that the pairs $(q_1, q_2)$ satisfy $\mathcal{S}_{m,q}^{(q_1,q_2)}$. Moreover, from \eqref{speed00} and \eqref{speed01}, we deduce that
\begin{equation*}
\frac{d {\lambda} (q+m-1)}{2+d(q+m-2)} \leq q_1 \leq \frac{q {\lambda}}{q-1}     \quad \Longrightarrow \quad
{\lambda} \leq q_2 \leq \frac{{\lambda} (q+m-1)}{q+m-2}.
\end{equation*}
Moreover, by taking integration in terms of $t$ to \eqref{speed00.5}, it gives
\begin{equation}\label{speed02}
\iint_{Q_T} |V|^{{\lambda}}\rho \,dx\,dt \leq  \varepsilon \left\|\nabla \rho^{\frac{q+m-1}{2}}\right\|_{L^{2}_{x,t}}^{2}
 + c(\varepsilon) \|\rho\|_{L^{q,\infty}_{x,t}}^{\frac{\theta q_2}{{\lambda}}} \|V\|_{\mathcal{S}_{m,q}^{(q_1,q_2)}}.
\end{equation}

For any $1 < p < {\lambda}$, we take the H\"{o}lder inequality with $\frac{p}{{\lambda}} + \frac{{\lambda} - p}{{\lambda}} = 1$ and apply the mass conservation property
\begin{equation}\label{speed03}
\iint_{Q_T} |V|^{p}\rho \,dx\,dt
\leq
\left(\iint_{Q_T} |V|^{{\lambda}}\rho \,dx\,dt\right)^{\frac{p}{{\lambda}}}
\left(\iint_{Q_T} \rho \,dx\,dt\right)^{\frac{{\lambda} - p}{{\lambda}}}
\leq T^{\frac{{\lambda} - p}{{\lambda}}} \left(\iint_{Q_T} |V|^{{\lambda}}\rho \,dx\,dt\right)^{\frac{p}{{\lambda}}}.
\end{equation}
Then by taking Young's inequality to the combined inequality of \eqref{speed03} and \eqref{speed02}, we obtain \eqref{V-p}.
\end{proof}

Recalling $p$-th power $\langle x \rangle^{p} = \left(1+ |x|^2\right)^{\frac p2}$ for $1< p\leq 2$, we note the following properties:
\begin{equation}\label{p-moment-grad}
  \nabla \langle x \rangle^{p} = p \, x \left(1+ |x|^2\right)^{\frac{p-2}{2}}
  \quad \text{so} \quad
  \abs{\nabla \langle x \rangle^{p}} <  p \langle x \rangle^{p-1},
\end{equation}
and
\begin{equation}\label{p-moment-2nd-grad}
\Delta \langle x \rangle^{p}
= p \frac{1 + (p-1)|x|^2}{\left(1+ |x|^2\right)^{2 - p/2}}
< p \frac{1 + |x|^2}{\left(1+ |x|^2\right)^{2 - p/2}}= p \left(1+ |x|^2\right)^{\frac{p-2}{2}} \leq p.
\end{equation}
In the following lemma, we compute $p$-th moment estimate assuming that $V$ satisfies \eqref{V-p-moment}.

\begin{lemma}\label{L:p-moment-estimate} ($p$-th moment estimate)
Let $m>1$, $1 \leq q \leq m$, and $1<p\le {\lambda_q} := 1 + \frac{d(q-1) + q}{d(m-1)+q}$. Suppose that  $\rho : [0,T] \mapsto \mathcal{P}(\bbr^d)$ is narrowly continuous, and is a regular solution with sufficiently fast decay at infinity in spatial variables of \eqref{E:Main}. If $V \in \calC^{\infty} (\bar{Q}_{T})$ satisfies \eqref{V-p-moment}, then it holds
\begin{equation}\label{p-moment}
\begin{aligned}
\sup_{0\leq t \leq T}&\int_{\mathbb{R}^d} \, \rho (\cdot, t) \langle x \rangle^{p} \,dx
\leq \int_{\mathbb{R}^d} \, \rho_0 \langle x \rangle^{p} \,dx+\iint_{Q_T} \rho\langle x \rangle^{p} \,dx\,dt \\
& + \varepsilon \iint_{Q_T} \abs{\nabla \rho^{\frac{q+m-1}{2}}}^2 \,dx\,dt
+ c_1 T
+ c_2 T^{\beta_1 \frac{{\lambda_q} - p}{{\lambda_q}}} \|\rho\|_{L^{q,\infty}_{x,t}}^{\beta \beta_1 \frac{q p(q_1 - {\lambda_q})}{q_1 {\lambda_q}}} \|V\|_{\mathcal{S}_{m,q}}^{p \beta_1},
\end{aligned}
\end{equation}
where $c_1 = c_1 (\varepsilon, p)$, $c_2 = c_2 (\varepsilon, m, d, p, q)$, $\beta = 1 - \frac{(d-2)(q_1(1-q)+q {\lambda_q} )}{(q_1 - {\lambda_q})(d(m-1)+2q)}$, and $\beta_1 = \frac{q_2 {\lambda_q}}{q_2 ({\lambda_q} - p) + p {\lambda_q}}$.
\end{lemma}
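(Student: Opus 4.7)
The plan is to test the PDE against the weight $\langle x\rangle^p$ and close the identity through integration by parts. Multiplying \eqref{E:Main} by $\langle x\rangle^p$, integrating over $\mathbb{R}^d$, and exploiting the prescribed fast decay of $\rho$ and the smoothness of $V$ to justify every vanishing boundary term (all done at the level of a regular solution), I would obtain
\begin{equation*}
\frac{d}{dt}\int_{\mathbb{R}^d}\rho\,\langle x\rangle^p\,dx
=\int_{\mathbb{R}^d}\rho^m\,\Delta\langle x\rangle^p\,dx
+\int_{\mathbb{R}^d}\rho\,V\cdot\nabla\langle x\rangle^p\,dx.
\end{equation*}
The pointwise bounds \eqref{p-moment-grad} and \eqref{p-moment-2nd-grad} give $\Delta\langle x\rangle^p\le p$ and $|\nabla\langle x\rangle^p|\le p\,\langle x\rangle^{p-1}$, so the first term on the right is controlled by $p\int\rho^m$, and the second by $p\int|V|\rho\,\langle x\rangle^{p-1}$.

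The drift term would be split via Young's inequality applied to $|V|\rho\,\langle x\rangle^{p-1}=(|V|\rho^{1/p})\cdot(\rho^{(p-1)/p}\langle x\rangle^{p-1})$ with conjugate exponents $p$ and $p/(p-1)$, giving
\begin{equation*}
\int_{\mathbb{R}^d}\rho\,V\cdot\nabla\langle x\rangle^p\,dx\le \int_{\mathbb{R}^d}|V|^p\rho\,dx+(p-1)\int_{\mathbb{R}^d}\rho\,\langle x\rangle^p\,dx.
\end{equation*}
Integrating in time over $[0,T']$ for each $T'\le T$ and then taking the supremum in $T'$ produces
\begin{equation*}
\sup_{0\le t\le T}\int_{\mathbb{R}^d}\rho\,\langle x\rangle^p\,dx
\le \int_{\mathbb{R}^d}\rho_0\,\langle x\rangle^p\,dx
+p\iint_{Q_T}\rho^m\,dx\,dt
+\iint_{Q_T}|V|^p\rho\,dx\,dt
+(p-1)\iint_{Q_T}\rho\,\langle x\rangle^p\,dx\,dt,
\end{equation*}
which already exhibits the structure of \eqref{p-moment} up to reshaping the two bulk integrals. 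The linear term in $\rho\langle x\rangle^p$ is simply kept on the right-hand side, in anticipation of a Grönwall step applied elsewhere.

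It remains to absorb the two remaining space-time integrals into the form displayed in \eqref{p-moment}. The integral $\iint_{Q_T}|V|^p\rho$ is controlled directly by the bound \eqref{V-p} of Lemma~\ref{P:W-p}, whose hypotheses are precisely the ones assumed here. For $\iint_{Q_T}\rho^m$ I would interpolate between the mass-conservation bound $\iint\rho\le T$ and the parabolic Sobolev embedding $\rho\in L^{s_0}_{x,t}$ with $s_0=(d(q+m-1)+2q)/d$ supplied by Lemma~\ref{T:pSobolev} (equivalently the $r_1=r_2$ endpoint of Lemma~\ref{P:L_r1r2}); since $1\le m\le s_0$ whenever $q\ge 1$, one gets $\iint\rho^m\le T^{\beta}\,K^{1-\beta}\,\|\nabla\rho^{(q+m-1)/2}\|_{L^{2}_{x,t}}^{2(1-\beta)}$ with $\beta=(s_0-m)/(s_0-1)$ and $K$ a polynomial expression in $\sup_t\|\rho\|_{L^q}^q$. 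A final Young inequality with conjugates $1/(1-\beta)$ and $1/\beta$ converts this product into $\varepsilon\iint|\nabla\rho^{(q+m-1)/2}|^2+c_1(\varepsilon)\,T$, which combines with the bound from \eqref{V-p} to deliver exactly \eqref{p-moment}. The delicate point in the whole proposal is the interpolation for $\iint\rho^m$: the natural embedding does not land in $L^{m}_{x,t}$ directly, and one must thread the $\sup_t\|\rho\|_{L^q}$ factor through the $\varepsilon$-Young step without damaging the linear-in-$T$ scaling of $c_1 T$ displayed in the statement; matching the exponents $\beta\beta_1$ and $\beta_1(\lambda_q-p)/\lambda_q$ from Lemma~\ref{P:W-p} is a bookkeeping issue but essential to arriving at the precise form of the last term.
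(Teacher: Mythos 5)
The overall strategy is the same as the paper's: test the equation against $\langle x\rangle^p$, use \eqref{p-moment-grad}--\eqref{p-moment-2nd-grad}, split the drift integral via Young's inequality, and absorb $\iint|V|^p\rho$ and $\iint\rho^m$ into the stated form. Your treatment of $J_2=\iint\rho V\cdot\nabla\langle x\rangle^p$ is correct and amounts to a valid shortcut: the paper re-derives what is effectively \eqref{V-p} inside the lemma's proof (through \eqref{speed02}, a second H\"older and a Young step), whereas you simply invoke \eqref{V-p}; both yield the last term of \eqref{p-moment} exactly.

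The genuine gap is in $J_1=\iint\rho^m\Delta\langle x\rangle^p$. You interpolate $\rho\in L^m_{x,t}$ between $L^1_{x,t}$ (mass conservation) and $L^{s_0}_{x,t}$ with $s_0=(d(q+m-1)+2q)/d$, with the $L^{s_0}_{x,t}$ endpoint supplied by the parabolic Sobolev embedding, Lemma~\ref{T:pSobolev}. But that lemma's right-hand side carries the extra factor $\bigl(\sup_t\int\rho^q\bigr)^{2/d}$, and this factor cannot be eliminated: Young's inequality with conjugates $1/(1-\beta)$, $1/\beta$ turns $T^{\beta}K^{1-\beta}\|\nabla\rho^{(q+m-1)/2}\|_{L^2_{x,t}}^{2(1-\beta)}$ into either $\varepsilon K\iint|\nabla\rho^{(q+m-1)/2}|^2 + c(\varepsilon)T$ or $\varepsilon\iint|\nabla\rho^{(q+m-1)/2}|^2 + c(\varepsilon)K^{(1-\beta)/\beta}T$; in both cases a positive power of $K\sim\sup_t\|\rho(\cdot,t)\|_{L^q_x}^q$ survives, contradicting $c_1=c_1(\varepsilon,p)$ in the statement whenever $q>1$. (For $q=1$ the factor is trivial since $\|\rho(\cdot,t)\|_{L^1_x}=1$, but the lemma is stated for all $1\le q\le m$.) The paper avoids this by interpolating purely in $x$: for each fixed $t$, it applies H\"older between $L^1_x$ and $L^{d(q+m-1)/(d-2)}_x$ (with weight $\theta=\frac{(m-1)(d-2)}{d(q+m-1)-(d-2)}$), then bounds the $L^{d(q+m-1)/(d-2)}_x$ norm of $\rho$ directly by $\|\nabla\rho^{(q+m-1)/2}\|_{L^2_x}$ via Lemma~\ref{T:Sobolev}, applies Young pointwise in $t$, and only then integrates. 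The $L^1_x$ pivot equals one by mass conservation, so no $\sup_t\|\rho\|_{L^q_x}$ factor ever enters and $c_1$ depends only on $\varepsilon$ and $p$. You flagged this threading of the $L^q$ factor as ``delicate'' but offered no resolution; the resolution is to replace Lemma~\ref{T:pSobolev} by the spatial-only Sobolev interpolation, precisely as the paper does.
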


\begin{proof} For simplicity, let $\lambda := \lambda_q$.
By testing $\langle x \rangle^{p}$ for $p > 1$ to \eqref{E:Main} and taking the integration by parts, we obtain
\begin{equation*}
\frac{d}{dt} \int_{\mathbb{R}^d} \rho \langle x\rangle^p \, dx
= \int_{\mathbb{R}^d} \left[\Delta \rho^{m} - \nabla \left(\rho V\right)\right] \langle x\rangle^p \,dx
= \int_{\mathbb{R}^d} \left[\rho^{m}\Delta \langle x\rangle^p + \rho V \cdot \nabla \langle x\rangle^p\right] \,dx.
\end{equation*}
By taking the integration in $ t \in [0, T]$, we have
\begin{equation}\label{J-0}
\sup_{0\leq t \leq T}\int_{\mathbb{R}^d}\rho \langle x \rangle^{p} \,dx
\leq \int_{\mathbb{R}^d} \rho_0 \langle x\rangle^p\,dx
+ \underbrace{\iint_{Q_T} \left |\rho^{m}\Delta \langle x\rangle^p \right |\,dx\,dt}_{:=J_1}
+\underbrace{\iint_{Q_T}\left |\rho V \cdot \nabla \langle x\rangle^p \right | \,dx\,dt }_{:= J_2}.
\end{equation}

Let us set
\[
m = (1-\theta) + \frac{d(q+m-1)}{d-2}\theta \quad \iff \quad \theta = \frac{(m-1)(d-2)}{d(q+m-1)-(d-2)} \in (0,1).
\]
Then by \eqref{p-moment-2nd-grad}, H\"{o}lder inequality, and Lemma~\ref{T:Sobolev}, we have
\begin{equation}\label{J-1}
\begin{aligned}
J_1 &\leq p \iint_{Q_T} \rho^m \,dx\,dt
\leq p\int_{0}^{T} \left(\int_{\mathbb{R}^d} \rho \,dx \right)^{1-\theta} \left(\int_{\mathbb{R}^d} \rho^{\frac{d(q+m-1)}{d-2}}\,dx\right)^{\theta} \,dt \\
&\leq p\int_{0}^{T} \left(\int_{\mathbb{R}^d} \rho \,dx \right)^{1-\theta} \left(\int_{\mathbb{R}^d} \abs{ \nabla \rho^{\frac{q+m-1}{2}}}^2\,dx\right)^{\frac{\theta d}{d-2}} \,dt \\
&\leq \frac{\varepsilon}{2} \iint_{Q_T} \abs{ \nabla \rho^{\frac{q+m-1}{2}}}^2 \,dx\,dt + c(p, \varepsilon) T \|\rho_0\|_{L^{1}(\mathbb{R}^d)}^{\frac{d(q-1)+2m}{d(q-1)+2}},
\end{aligned}
\end{equation}
by mass conservation property and Young's inequality for $\frac{\theta d}{d-2} + \frac{d(1-\theta)-2}{d-2}=1$.

Now let us handle $J_2$. First, consider the case $p= {\lambda}$. Then by \eqref{p-moment-grad}, we take H\"{o}lder inequality to get
\begin{equation*}
    \iint_{Q_T} \rho |V| \langle x\rangle^{{\lambda} -1} \,dx\,dt
    \leq \iint_{Q_T} \rho \langle x \rangle^{{\lambda}}\,dx\,dt+
     \iint_{Q_T} \rho |V|^{{\lambda}} \,dx\,dt.
\end{equation*}
Then we apply \eqref{speed02} to handle the last term on RHS which gives, for $\beta = 1-\frac{(r_1-q)(d-2)}{d(m-1)+2q}$,
\begin{equation*}
\iint_{Q_T} \rho |V| \langle x\rangle^{{\lambda} -1} \,dx\,dt
\leq  \iint_{Q_T} \rho \langle x \rangle^{{\lambda}}\,dx\,dt  + \|V\|_{\mathcal{S}_{m,q}^{(q_1,q_2)}}^{{\lambda}}  \|\rho\|_{L^{q,\infty}_{x,t}}^{\frac{q\beta}{r_1}} \,
\left\|\nabla \rho^{\frac{q+m-1}{2}} \right\|_{L^{2}_{x,t}}^{\frac{2}{r_2}}.
\end{equation*}

On the other hand, we consider the case $1< p < {\lambda}$ in $J_2$ from \eqref{J-0}. Then we apply \eqref{p-moment-grad} and Young's inequality with $\frac{p-1}{p} + \frac{1}{p}=1$ to obtain
\begin{equation*}
\begin{aligned}
J_2 &\leq p \iint_{Q_T} \rho |V| \langle x\rangle^{p -1} \,dx\,dt
\leq  \iint_{Q_T} \rho \langle x \rangle^{p}\,dx\,dt+
    c(p) \iint_{Q_T} \rho |V|^{p} \,dx\,dt \\
 &\leq \iint_{Q_T} \rho \langle x \rangle^{p}\,dx\,dt+
    c(p) \left(\iint_{Q_T} \rho |V|^{{\lambda}} \,dx\,dt\right)^{\frac{p}{{\lambda}}}
    \left(\iint_{Q_T} \rho \,dx\,dt\right)^{\frac{{\lambda} - p}{{\lambda}}}\\
    &\leq \iint_{Q_T} \rho \langle x \rangle^{p}\,dx\,dt
    + c(p)T^{\frac{{\lambda} -p}{{\lambda}}}\|\rho_0\|_{L^{1}(\mathbb{R}^d)}^{\frac{{\lambda} - p}{{\lambda}}} \left(\iint_{Q_T} \rho |V|^{{\lambda}} \,dx\,dt\right)^{\frac{p}{{\lambda}}},
\end{aligned}
\end{equation*}
by taking another H\"{o}lder inequality with $\frac{p}{{\lambda}} + \frac{{\lambda} - p}{{\lambda}} =1$, and applying the mass conservation property.
By \eqref{speed02} and Young's inequality with $\frac{p}{{\lambda} r_2} + \frac{{\lambda} r_2 - p}{{\lambda} r_2}=1$ , we have the following, for any $\varepsilon >0$,
\begin{equation}\label{J-2}
J_2 \leq \iint_{Q_T} \rho \langle x \rangle^{p}\,dx\,dt
          + \frac{\varepsilon}{2} \left\|\nabla \rho^{\frac{q+m-1}{2}} \right\|_{L^{2}_{x,t}}^{2}
      +c(\varepsilon, p) T^{\frac{r_2({\lambda} -p)}{{\lambda} r_2 - p}} \|\rho\|_{L^{q,\infty}_{x,t}}^{\frac{ q p \beta r_2}{r_1({\lambda} r_2 - p)}}
    \|V\|_{\mathcal{S}_{m,q}^{(q_1,q_2)}}^{\frac{p{\lambda} r_2}{{\lambda} r_2 - p}}.
\end{equation}
The combination of the above with \eqref{J-0} and \eqref{J-1} completes the proof.
\end{proof}

\begin{remark}\label{Remark : moment estimate}
A similar estimate as in above lemma holds for $L^q$-weak solutions that are narrowly continuous in time variable. For the sketch of proof, let $\eta : \bbr^d \rightarrow [0, 1]$ be a smooth cut-off function satisfying
\begin{equation*} 
\eta(x) = \
\begin{cases}
1, & \text{ if } \ |x| \leq 1, \\
 0, & \text{ if } \ |x| \geq 2.
 \end{cases}
\end{equation*}
Let $\rho$ be a $L^q$-weak solution of \eqref{E:Main} such that $\rho:[0, T]\mapsto \calP(\bbr^d)$ is narrowly continuous. For $\delta>0$, we define $\eta_\delta(x):=\eta(\delta x)$. As in Lemma \ref{L:p-moment-estimate}, by testing $\langle x \rangle^{p} \eta_\delta$  to \eqref{E:Main}, we obtain
\begin{equation}\label{J-00}
\begin{aligned}
\sup_{0\leq t \leq T}\int_{\mathbb{R}^d}\rho \left( \langle x \rangle^{p}\eta_\delta(x)\right) \,dx
&\leq \int_{\mathbb{R}^d} \rho_0 \left( \langle x \rangle^{p}\eta_\delta(x)\right)\,dx \\
& \quad + \underbrace{\iint_{Q_T} \left |\rho^{m}\Delta \left( \langle x \rangle^{p}\eta_\delta(x)\right) \right |\,dx\,dt}_{:=J_3}
+\underbrace{\iint_{Q_T} \left |\rho V \cdot \nabla \left( \langle x \rangle^{p}\eta_\delta(x)\right)\right | \,dx\,dt .}_{:= J_4}
\end{aligned}
\end{equation}
We note that $\| \nabla \langle x \rangle^{p} \nabla\eta_\delta \|_{L_x^\infty}, ~\| \langle x \rangle^{p} \Delta \eta_\delta \|_{L_x^\infty} \lesssim  \delta^{2-p}$. Hence
\begin{equation}\label{J-3}
\begin{aligned}
J_3 &\lesssim \iint_{Q_T} \left| \rho^{m}\Delta \langle x \rangle^{p}\right |\eta_\delta(x) \,dx\,dt +  \delta^{2-p}\iint_{\{\frac{1}{\delta} \leq |x| \leq \frac{2}{\delta} \}\times [0,T]} \rho^{m} \,dx\,dt\\
& \rightarrow \iint_{Q_T}  \left |\rho^{m}\Delta \langle x \rangle^{p} \right | \,dx\,dt \quad \mbox{as} ~~ \delta \rightarrow 0.
\end{aligned}
\end{equation}
Similarly, we note $|  \langle x \rangle^{p} \nabla\eta_\delta | \lesssim  |\langle x \rangle^{p-1}|$ and have
\begin{equation}\label{J-4}
\begin{aligned}
J_4 &\lesssim \iint_{Q_T}\left| \rho V \cdot \nabla  \langle x
\rangle^{p} \right | \eta_\delta(x) \,dx\,dt
+ \iint_{Q_T}\rho |V|  \langle x \rangle^{p-1} \,dx\,dt\\
&\rightarrow \iint_{Q_T}\left| \rho V \cdot \nabla  \langle x
\rangle^{p} \right |  \,dx\,dt  + \iint_{Q_T}\rho |V|  \langle x
\rangle^{p-1} \,dx\,dt, \quad \mbox{as} ~~ \delta \rightarrow 0.
\end{aligned}
\end{equation}
Finally, we take $\delta \rightarrow 0$ in \eqref{J-00}. Then, we combine \eqref{J-1}--\eqref{J-2}, \eqref{J-3} and \eqref{J-4} to have
\begin{equation}\label{p-moment : q-weak}
\begin{aligned}
\sup_{0\leq t \leq T}&\int_{\mathbb{R}^d} \, \rho (\cdot, t) \langle
x \rangle^{p} \,dx
\leq \int_{\mathbb{R}^d} \, \rho_0 \langle x \rangle^{p} \,dx+ 2\iint_{Q_T} \rho\langle x \rangle^{p} \,dx\,dt \\
& + \varepsilon \iint_{Q_T} \abs{\nabla \rho^{\frac{q+m-1}{2}}}^2
\,dx\,dt + c_1 T + c_2 T^{\beta_1 \frac{{\lambda_q} - p}{{\lambda_q}}}
\|\rho\|_{L^{q,\infty}_{x,t}}^{\beta \beta_1 \frac{q p(q_1 -
{\lambda_q})}{q_1 {\lambda_q}}} \|V\|_{\mathcal{S}_{m,q}}^{p \beta_1},
\end{aligned}
\end{equation}
where $c_1, \, c_2, \, \beta$ and $\beta_1$ are same as in Lemma \ref{L:p-moment-estimate}.
\end{remark}

\subsection{Energy estimates}
This subsection is composed with energy estimates for two different cases: first when $\int_{\mathbb{R}^d}\rho_0 \log \rho_0 \, dx < \infty$ and second when  $\|\rho_0\|_{L^q (\bbr^d)} < \infty$ for $q>1$.

\subsubsection{Energy estimates for case: $\int_{\mathbb{R}^d}\rho_0 \log \rho_0 \, dx < \infty$. }\label{SS:energy-log}

Under additional assumption $\rho_0 \log \rho_0 \in L^{1} (\mathbb{R}^d)$, not merely $\rho_0 \in L^{1} (\mathbb{R}^d)$, we obtain a priori estimate categorized three cases depending on the range of $m$ and the conditions on $V$.

\begin{proposition}\label{P:log-energy}
Let $m>1$, $1< p \leq {\lambda_1}:= 1+ \frac{1}{d(m-1) + 1}$. Assume that $\rho_{0}\in \mathcal{P}_p (\mathbb{R}^d) \cap \calC^{\alpha}(\mathbb{R}^d)$ holds
\begin{equation}\label{log-initial}
\int_{\mathbb{R}^d} \left(\rho_0\log \rho_0 + \rho_0 \langle x \rangle^p  \right) \,dx < \infty.
\end{equation}
Suppose that $\rho$ is a regular solution of \eqref{E:Main} with sufficiently fast decay at infinity in spatial variables.
\begin{itemize}
\item[(i)] Let $1< m \leq 2$. Assume that
  \begin{equation}\label{V-log-energy}
V\in\mathcal{S}_{m,1}^{(q_1, q_2)} \cap \calC^{\infty} (\bar{Q}_{T}) \quad \text{for}\quad
\begin{cases}
2 \leq q_2 \leq \frac{m}{m-1},  & \text{ if } d> 2, \vspace{1 mm}\\
2\leq q_2 < \frac{m}{m-1}, & \text{ if } d= 2.
 \end{cases}
\end{equation}
Then the following estimate holds
\begin{equation}\label{log-energy}
\sup_{0\leq t \leq T} \int_{\mathbb{R}^d\times \{t\}} \rho \left( \abs{\log \rho} + \langle x \rangle^p \right) \,dx
 + \frac{2}{m}\iint_{Q_T} \left| \nabla \rho^{\frac{m}{2}} \right|^2 \,dx\,dt \leq C,
\end{equation}
where $C=C (\|V\|_{\mathcal{S}_{m,1}^{(q_1,q_2)}}, \, \int_{\mathbb{R}^d} \left(\rho_0\log \rho_0 + \rho_0 \langle x \rangle^p  \right) \,dx )$.

\item[(ii)]  Assume that
\begin{equation}\label{V-tilde-log-energy}
V \in \tilde{\mathcal{S}}_{m,1}^{(\tilde{q}_1, \tilde{q}_2)} \cap \calC^{\infty} (\bar{Q}_{T}) \quad \text{for}\quad
\begin{cases}
\frac{2+d(m-1)}{1+d(m-1)} < \tilde{q}_2 \leq \frac{m}{m-1}, & \text{ if } d>2, \vspace{1 mm}\\
\frac{2+d(m-1)}{1+d(m-1)} < \tilde{q}_2 < \frac{m}{m-1}, & \text{ if } d=2.
\end{cases}
\end{equation}
Then the estimate \eqref{log-energy} holds where
$C= C (\|V\|_{\tilde{\mathcal{S}}_{m,1}^{(\tilde{q}_1,\tilde{q}_2)}}, \, \int_{\mathbb{R}^d} \left(\rho_0\log \rho_0 + \rho_0 \langle x \rangle^p  \right) \,dx ).$

\item[(iii)] Let $\nabla\cdot V=0$. Assume that
\begin{equation}\label{V-log-divfree}
V\in \mathcal{S}_{m,1}^{(q_1, q_2)}\cap \calC^{\infty} (\bar{Q}_{T}) \quad  \text{for} \quad
\begin{cases}
{\lambda_1} \leq q_2 \leq \frac{{\lambda_1} m}{m-1}, & \text{ if }  d>2, \vspace{1 mm}\\
{\lambda_1} \leq q_2 < \frac{{\lambda_1} m}{m-1}, & \text{ if }  d=2.
\end{cases}
\end{equation}
Then the estimate \eqref{log-energy} holds where
$C= C (\|V\|_{\mathcal{S}_{m,1}^{(q_1,q_2)}}, \, \int_{\mathbb{R}^d} \left(\rho_0\log \rho_0 + \rho_0 \langle x \rangle^p  \right) \,dx ).$

\end{itemize}
\end{proposition}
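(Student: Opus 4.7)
The plan is to obtain the three ingredients of \eqref{log-energy}, namely the entropy bound, the dissipation bound on $\nabla\rho^{m/2}$, and the $p$-th moment bound, from a single differential inequality that comes from testing \eqref{E:Main} against $\log\rho$. Since $\rho$ is a regular solution with fast decay, differentiating $\int\rho\log\rho\,dx$ and integrating by parts is justified, and using $\rho^{m-2}|\nabla\rho|^2=\tfrac{4}{m^2}|\nabla\rho^{m/2}|^2$ together with mass conservation yields
\begin{equation*}
\frac{d}{dt}\int_{\mathbb{R}^d}\rho\log\rho\,dx+\frac{4}{m}\int_{\mathbb{R}^d}\bigl|\nabla\rho^{m/2}\bigr|^2\,dx=\int_{\mathbb{R}^d}V\cdot\nabla\rho\,dx.
\end{equation*}
All three cases reduce to estimating the right-hand side so as to absorb a fraction of the $|\nabla\rho^{m/2}|^2$ term into the left-hand side and leave a remainder that is controlled by the prescribed norms of $V$, $\rho_0$, and (by bootstrap) $\rho$ itself.

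For case (i), I would write $V\cdot\nabla\rho=\tfrac{2}{m}\rho^{1-m/2}\,V\cdot\nabla\rho^{m/2}$ (this is where $m\le 2$ is used, so that $\rho^{2-m}$ is a non-negative power) and apply Young's inequality to obtain $\int V\cdot\nabla\rho\le\varepsilon\int|\nabla\rho^{m/2}|^2+c(\varepsilon)\int|V|^2\rho^{2-m}$. Hölder's inequality in $x$ with exponents $q_1/2$ and $q_1/(q_1-2)$ and then in $t$ with exponents $q_2/2$ and $q_2/(q_2-2-(m-1)q_2)$ gives a bound of the form $\|V\|_{\mathcal{S}_{m,1}^{(q_1,q_2)}}^2\|\rho\|_{L^{r_1,r_2}_{x,t}}^{2-m}$, where the scaling identity in \eqref{Serrin} forces $(r_1,r_2)$ to lie on the admissible curve of Lemma~\ref{P:L_r1r2}; the mixed norm $\|\rho\|_{L^{r_1,r_2}_{x,t}}^{2-m}$ is then handled by \eqref{norm-q-r1r2} at the cost of another $\varepsilon\|\nabla\rho^{(q+m-1)/2}\|_{L^2_{x,t}}^2$. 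For case (ii) I would integrate by parts once more, $\int V\cdot\nabla\rho=-\int\rho\,\nabla\cdot V$, apply Hölder with $\nabla V\in L^{\tilde q_1,\tilde q_2}_{x,t}$ and interpolate the resulting $L^{\tilde q_1',\tilde q_2'}$-norm of $\rho$ by Lemma~\ref{P:L_r1r2}; the endpoint $\tilde q_2=(2+d(m-1))/(1+d(m-1))=\lambda_1$ is precisely where this interpolation saturates, which explains the strict inequality from below. For case (iii), the divergence-free assumption yields $\int V\cdot\nabla\rho=-\int\rho\,\nabla\cdot V=0$ immediately, so no smallness condition on $V$ is needed for the entropy inequality itself.

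After integrating in time and absorbing, the bound on $\int\rho\log\rho(\cdot,t)$ and on $\iint|\nabla\rho^{m/2}|^2$ follows (with a standard Grönwall step if interpolation leaves a $\|\rho\|_{L^{q,\infty}_{x,t}}$-factor, which, since $q=1$, is controlled by mass conservation). To finish the energy part of \eqref{log-energy} I would replace $\rho\log\rho$ by $\rho|\log\rho|$ via the elementary inequality $-\rho\log\rho\le e^{-\langle x\rangle^p}+\rho\langle x\rangle^p-\rho$, which buys $\int\rho|\log\rho|\le\int\rho\log\rho+C+\int\rho\langle x\rangle^p$. The $p$-th moment $\sup_t\int\rho\langle x\rangle^p\,dx$ is then obtained from Lemma~\ref{L:p-moment-estimate} applied with $q=1$; the hypothesis \eqref{V-p-moment} there, namely $\lambda_1\le q_2\le \lambda_1 m/(m-1)$, is compatible with all three cases: for (i) we have $[2,m/(m-1)]\subset[\lambda_1,\lambda_1 m/(m-1)]$ since $\lambda_1<2<m/(m-1)<\lambda_1 m/(m-1)$, for (iii) it is exactly the condition imposed, and for (ii) it follows from the embedding \eqref{norm-V-gradV}. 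The dissipation term produced by the moment estimate is absorbed by a second application of Young's inequality against the $|\nabla\rho^{m/2}|^2$ already accumulated on the left.

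The main obstacle, and the reason the three cases need to be separated, is the simultaneous calibration of Hölder exponents: the energy inequality demands a pair $(q_1,q_2)$ (or $(\tilde q_1,\tilde q_2)$) on the line of the scaling-invariant class for which the interpolated $\rho$-norm lands on the Sobolev curve of Lemma~\ref{P:L_r1r2}, and the $p$-moment inequality requires the \emph{same} $V$ to belong to the (in general different) $p$-moment class of Lemma~\ref{L:p-moment-estimate}. Verifying that the intervals of $q_2$ in \eqref{V-log-energy}, \eqref{V-tilde-log-energy}, \eqref{V-log-divfree} are precisely those for which both endpoints of the interpolation are admissible (and in particular, recognising that in case (iii) the divergence-free cancellation widens the admissible range from $[2,m/(m-1)]$ to $[\lambda_1,\lambda_1 m/(m-1)]$) is the technical heart of the proof.
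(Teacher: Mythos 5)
Your plan is correct and follows essentially the same route as the paper: test with $\log\rho$ to obtain the entropy-dissipation identity $\frac{d}{dt}\int\rho\log\rho + \frac{4}{m}\int|\nabla\rho^{m/2}|^2 = \int V\cdot\nabla\rho$, estimate the drift term via three-way H\"{o}lder in $x$, interpolate the resulting $\rho$-norm via Lemma~\ref{P:L_r1r2}, absorb with Young, add the $p$-moment bound from Lemma~\ref{L:p-moment-estimate} (after checking that the stated $q_2$-ranges lie inside \eqref{V-p-moment}), convert $\rho\log\rho$ to $\rho|\log\rho|$ via the $p$-entropy inequality, and close by Gr\"{o}nwall. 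The only difference from the paper's write-up in case (i) is the order of operations: you apply Young's inequality first ($\int V\cdot\nabla\rho \le \varepsilon\int|\nabla\rho^{m/2}|^2 + c(\varepsilon)\int|V|^2\rho^{2-m}$) and then H\"{o}lder and interpolation to $\int|V|^2\rho^{2-m}$, whereas the paper applies the three-way H\"{o}lder and interpolation first (obtaining $\|\nabla\rho^{m/2}\|_{L^2_x}^{1+(1-\theta)(2-m)/m}$ with exponent strictly below $2$) and then a single Young's step. These are equivalent. One small slip: the time-exponent in your H\"{o}lder in $t$ is mis-stated; for the H\"{o}lder splitting $\iint|V|^2\rho^{2-m}\le\|V\|^2_{L^{q_1,q_2}_{x,t}}\|\rho\|^{2-m}_{L^{r_1,r_2}_{x,t}}$ the conjugate exponent should give $r_2 = (2-m)q_2/(q_2-2)$ (which, together with $r_1=(2-m)q_1/(q_1-2)$, does satisfy \eqref{q-r1r2} at $q=1$), not $q_2/(q_2-2-(m-1)q_2)$; this is an arithmetic typo that does not affect the argument.
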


\begin{proof}
First, we recall the $p$-entropy inequality (for example, see \cite{DL89, CKL14})
\begin{equation}\label{entropy-log}
\int_{\mathbb{R}^d} \rho \abs{\log \rho} \,dx
\leq \int_{\mathbb{R}^d} \rho \log \rho \,dx + \int_{\mathbb{R}^d} \rho \langle x\rangle^p \,dx + c(d),
\end{equation}
which gives
\begin{equation}\label{entropy}
  0 \leq \int_{\mathbb{R}^d} \rho \log \rho \,dx - \int_{\mathbb{R}^d} \rho \left(\log\rho\right)_{-} \,dx \leq \int_{\mathbb{R}^d} \rho \log \rho \,dx + d\int_{\mathbb{R}^d} \rho \langle x\rangle^p \,dx + c(d).
\end{equation}

By testing $\log \rho$ to \eqref{E:Main}, we obtain
\begin{equation}\label{Elog-0}
\frac{d}{dt} \int_{\mathbb{R}^d} \rho \log \rho \,dx +  \frac{4}{m}\int_{\mathbb{R}^d} \abs{ \nabla \rho^{\frac{m}{2}}}^2\,dx =  \int_{\mathbb{R}^d} V \cdot \nabla \rho \,dx =: \mathcal{J}.
\end{equation}

$(i)$ For $1<m < 2$, we write RHS of \eqref{Elog-0} as
\begin{equation*} 
    \mathcal{J} = \int_{\mathbb{R}^d} V \cdot \nabla \rho \,dx \leq  \frac{2}{m}\int_{\mathbb{R}^d} \abs{V} \rho^{1-\frac{m}{2}} \abs{ \nabla \rho^{\frac{m}{2}} } \,dx.
\end{equation*}
By H\"{o}lder inequality with $\frac{1}{q_1} + \frac{1}{2} + \frac{q_1 -2}{2q_1} = 1$ for $q_1 > 2$, we have
\begin{equation}\label{Elog-1}
     \mathcal{J}
  \leq  c \|V\|_{L^{q_1}_{x}} \left\|\nabla \rho^{\frac{m}{2}}\right\|_{L^{2}_{x}} \left\|\rho^{\frac{2-m}{2}} \right\|_{L^{\frac{2q_1}{q_1-2}}_{x}}
  \le c \|V\|_{L^{q_1}_{x}} \left\|\nabla \rho^{\frac{m}{2}}\right\|_{L^{2}_{x}} \left\|\rho \right\|^{\frac{2-m}{2}}_{L^{\frac{(2-m)q_1}{q_1-2}}_{x}}.
\end{equation}
When $m=2$, the above inequality corresponds to
$ \mathcal{J} \leq c\|V\|_{L_{x}^{2}} \left\|\nabla \rho^{\frac{m}{2}}\right\|_{L^{2}_{x}}$.

For convinence, we denote
\begin{equation*}
r_1 := \frac{(2-m)q_1}{q_1 -2}, \quad \text{and}\quad \theta := \frac{1}{r_1}\left( 1 - \frac{(r_1 - 1)(d-2)}{2+d(m-1)}\right).
\end{equation*}
Then by \eqref{q-r1r2} in Lemma~\ref{P:L_r1r2}, we find that
\begin{equation}\label{Elog-1.5}
\begin{cases}
 2<\frac{md}{1+(m-1)(d-1)} \leq q_1 \leq \frac{2}{m-1}, & \text{ if } d>2, \\
  2<  q_1 \leq \frac{2}{m-1}, & \text{ if } d=2.
\end{cases}
\end{equation}
Moreover, the combination of  \eqref{pme-interpolate-20} and \eqref{pme-interpolate-10}  in Lemma~\ref{P:L_r1r2} and \eqref{Elog-1} implies that
\begin{equation}\label{Elog-3}
\begin{aligned}
\mathcal{J} &\leq
c \|V\|_{L^{q_1}_{x}} \left\|\nabla \rho^{\frac{m}{2}}\right\|_{L^{2}_{x}} \left\|\rho \right\|^{\frac{2-m}{2}}_{L^{\frac{(2-m)q_1}{q_1-2}}_{x}}
\le c \|V\|_{L^{q_1}_{x}} \norm{\rho}^{\frac{\theta(2-m)}{2}}_{L^1_x}\norm{\nabla \rho^{\frac{m}{2}}}^{1+\frac{(1-\theta)(2-m)}{m}}_{L^{2}_x}\\
&\le \varepsilon \left\|\nabla \rho^{\frac{m}{2}}\right\|^2_{L^{2}_{x}}
+ c(\varepsilon)\|V\|^{\frac{2m}{m-(1-\theta)(2-m)}}_{L^{q_1}_{x}},
\end{aligned}
\end{equation}
by taking Young's inequality. Since $V$ is in $\mathcal{S}_{m,1}^{(q_1, q_2)}$, it follows that
\begin{equation*}
    \frac{2m}{m-(1-\theta)(2-m)} =\frac{q_1(2+d(m-1))}{q_1(1+d(m-1))-d}= q_2.
\end{equation*}
Moreover, \eqref{Elog-1.5} corresponds to
 \eqref{V-log-energy}.


We combine \eqref{Elog-0} and \eqref{Elog-3}, and then take integration in terms of $t$ with $\varepsilon = \frac{2}{m}$ to have
\begin{equation}\label{Elog-4}
\sup_{0\leq t \leq T} \int_{\mathbb{R}^d} \rho \log \rho \,dx +  \frac{2}{m}\iint_{Q_T} \abs{ \nabla \rho^{\frac{m}{2}}}^2\,dx\,dt
\leq  \int_{\mathbb{R}^d} \rho_0 \log \rho_0 \,dx + c\|V\|^{q_2}_{\mathcal{S}_{m,1}^{(q_1, q_2)}}.
\end{equation}
To \eqref{Elog-4}, we apply \eqref{entropy} and add $p$-th moment estimate \eqref{p-moment} with $\varepsilon=\frac{1}{m}$ to obtain
\begin{equation}\label{Elog-5}
\begin{aligned}
&\sup_{0\leq t \leq T} \int_{\mathbb{R}^d} \left(\rho \abs{\log \rho} + \rho \langle x\rangle^p \right)\,dx +  \frac{1}{m}\iint_{Q_T} \abs{ \nabla \rho^{\frac{m}{2}}}^2\,dx\,dt  \\
&\leq  \int_{\mathbb{R}^d} \left(\rho_0 \log \rho_0 + \rho_0 \langle x\rangle^p\right) \,dx
+ \iint_{Q_T} \rho \langle x\rangle^p \,dx\,dt
+ C (\|V\|_{\mathcal{S}_{m,1}^{(q_1, q_2)}}),
\end{aligned}
\end{equation}
where $(q_1, q_2)$ given in \eqref{V-log-energy} that satisfies \eqref{V-p-moment} too.
To handle the integral of $p$-th moment on RHS of \eqref{Elog-5}, we apply the Gr\"{o}nwall's inequality that leads to
\begin{equation*}
\begin{aligned}
&\sup_{0\leq t \leq T} \int_{\mathbb{R}^d} \left(\rho \abs{\log \rho} + \rho \langle x\rangle^p \right)\,dx +  \frac{1}{m}\iint_{Q_T} \abs{ \nabla \rho^{\frac{m}{2}}}^2\,dx\,dt  \\
&\leq  c\left[\int_{\mathbb{R}^d} \left(\rho_0 \log \rho_0 + \rho_0 \langle x\rangle^p\right) \,dx
+ C (\|V\|_{\mathcal{S}_{m,1}^{(q_1, q_2)}} )\right].
\end{aligned}
\end{equation*}
Therefore, we complete to obtain \eqref{log-energy} under \eqref{V-log-energy}.
\smallskip

$(ii)$ First we take the integration by parts and  apply H\"{o}lder inequality for $\frac{1}{\tilde{q}_1} + \frac{\tilde{q}_1 -1}{\tilde{q}_1} = 1$ to have
\begin{equation}\label{log-V-2}
    \mathcal{J}  =  \int_{\mathbb{R}^d} V \cdot \nabla \rho \,dx =  \int_{\mathbb{R}^d} \rho \nabla \cdot V  \,dx \leq \|\nabla V\|_{L_{x}^{\tilde{q}_1}} \|\rho\|_{L_{x}^{\frac{\tilde{q}_1}{\tilde{q}_1 -1}}}.
\end{equation}
For convenience, we denote
\begin{equation*}
\tilde{r}_1 := \frac{\tilde{q}_1}{\tilde{q}_1 - 1} \quad \text{and} \quad
\theta := \frac{1}{\tilde{r}_1}  \left( 1 - \frac{(\tilde{r}_1 - 1)(d-2)}{2+d(m-1)} \right).
\end{equation*}
Then Lemma~\ref{P:L_r1r2} with $\tilde{r}_1$ in the place for $r_1$ implies that
\begin{equation}\label{Elog-5.5}
 \begin{cases}
  1 < \frac{md}{2+d(m-1)}\leq \tilde{q}_1 \leq \infty, & \text{ if } d>2, \\
  1< \tilde{q}_1 < \infty, & \text{ if } d=2.
  \end{cases}
\end{equation}
Then the combination of  \eqref{pme-interpolate-20} and \eqref{pme-interpolate-10}  in Lemma~\ref{P:L_r1r2} and \eqref{log-V-2} gives
\begin{equation}\label{Elog-6}
\mathcal{J} \leq c\|\nabla V\|_{L_{x}^{\tilde{q}_1}} \left\|\nabla \rho^{\frac{m}{2}} \right\|_{L_{x}^{2}}^{\frac{m}{m-(1-\theta)}}
\leq \varepsilon    \left\|\nabla \rho^{\frac{m}{2}} \right\|_{L_{x}^{2}}^{2} + c(\varepsilon) \|\nabla V\|_{L_{x}^{\tilde{q}_1}}^{\frac{m}{m-(1-\theta)}},
\end{equation}
by taking Young's inequality. Because of $V \in \tilde{\mathcal{S}}_{m,1}^{(\tilde{q}_1, \tilde{q}_2)}$, it gives
\begin{equation*}
\frac{m}{m-(1-\theta)}=  \frac{\tilde{q}_1 (2+d(m-1))}{\tilde{q}_1 (2+d(m-1)) - 2d} = \tilde{q}_2 .
\end{equation*}
Moreover, \eqref{Elog-5.5} corresponds to \eqref{V-tilde-log-energy}.

Now we combine \eqref{Elog-0} and \eqref{Elog-6}, and then take integration in terms of $t$ with $\varepsilon = \frac{1}{m}$ to have
\begin{equation}\label{Elog-8}
\sup_{0\leq t \leq T} \int_{\mathbb{R}^d} \rho \log \rho \,dx +  \frac{3}{m}\iint_{Q_T} \abs{ \nabla \rho^{\frac{m}{2}}}^2\,dx\,dt
\leq  \int_{\mathbb{R}^d} \rho_0 \log \rho_0 \,dx + c\|V\|^{\tilde{q}_2}_{\tilde{\mathcal{S}}_{m,1}^{(\tilde{q}_1, \tilde{q}_2)}}.
\end{equation}
To \eqref{Elog-8}, we apply \eqref{entropy} and add $p$-th moment estimate \eqref{p-moment} with $\varepsilon = \frac{1}{m}$ to obtain
\begin{equation*} 
\begin{aligned}
&\sup_{0\leq t \leq T} \int_{\mathbb{R}^d} \left(\rho \abs{\log \rho} + \rho \langle x\rangle^p \right)\,dx +  \frac{2}{m}\iint_{Q_T} \abs{ \nabla \rho^{\frac{m}{2}}}^2\,dx\,dt  \\
&\leq  \int_{\mathbb{R}^d} \left(\rho_0 \log \rho_0 + \rho_0 \langle x\rangle^p\right) \,dx
+ \iint_{Q_T} \rho \langle x\rangle^p \,dx\,dt
+ C (\|V\|_{\mathcal{S}_{m,1}^{(q_1, q_2)}}, \|V\|_{\tilde{\mathcal{S}}_{m,1}^{(\tilde{q}_1, \tilde{q}_2)}} ),
\end{aligned}
\end{equation*}
under \eqref{V-tilde-log-energy} and $V\in \mathcal{S}_{m,1}^{(q_1,q_2)}$ holding \eqref{V-p-moment} for $q=1$.
By Remark~\ref{R:S-tildeS} $(ii)$, we see that \eqref{V-tilde-log-energy} yields
\begin{equation*} 
\begin{aligned}
&\sup_{0\leq t \leq T} \int_{\mathbb{R}^d} \left(\rho \abs{\log \rho} + \rho \langle x\rangle^p \right)\,dx +  \frac{2}{m}\iint_{Q_T} \abs{ \nabla \rho^{\frac{m}{2}}}^2\,dx\,dt  \\
&\leq  \int_{\mathbb{R}^d} \left(\rho_0 \log \rho_0 + \rho_0 \langle x\rangle^p\right) \,dx
+ \iint_{Q_T} \rho \langle x\rangle^p \,dx\,dt
+ C (\|V\|_{\tilde{\mathcal{S}}_{m,1}^{(\tilde{q}_1, \tilde{q}_2)}}).
\end{aligned}
\end{equation*}
Then we complete the proof by taking Gr\"{o}nwall's inequality to handle the integral of $p$-th moment as in \eqref{Elog-5}.
\smallskip

$(iii)$ Because of \eqref{log-V-2} and $\nabla \cdot V =0$, we observe that RHS of \eqref{Elog-0} vanishes. Therefore, we obtain the following by taking integral to \eqref{Elog-0} with repect to $t$:
\begin{equation}\label{Elog-11}
\sup_{0\leq t \leq T} \int_{\mathbb{R}^d} \rho \log \rho \,dx +  \frac{4}{m}\iint_{Q_T} \abs{ \nabla \rho^{\frac{m}{2}}}^2\,dx\,dt
\leq  \int_{\mathbb{R}^d} \rho_0 \log \rho_0 \,dx.
\end{equation}
For $V$ satisfying \eqref{V-p-moment} which is \eqref{V-log-divfree} when $q=1$, we add \eqref{p-moment} to \eqref{Elog-11}. Then as above, we apply \eqref{entropy} and Gr\"{o}nwall's inequality to complete the proof. We note that \eqref{V-log-divfree} comes directly from \eqref{V-p-moment}.
\end{proof}

\subsubsection{Energy estimates for case: $\rho_0 \in L^{q}(\mathbb{R}^d), \ q >1$.}\label{SS:energy-Lq}

In the following proposition, there are three cases for $m, q > 1$. First case is when $q-m+1 \geq 0$ holds. The second case cares all $m, q >1$ by prescribing conditions on $\nabla V$, and the last case is when $\nabla \cdot V = 0$.


\begin{proposition}\label{P:Lq-energy}
Let $m, q> 1$ and $\rho_{0}\in  \mathcal{P}(\mathbb{R}^d)\cap L^{q}(\mathbb{R}^d) \cap \calC^{\alpha}(\mathbb{R}^d)$. Suppose that $\rho$ is a regular solution of \eqref{E:Main} with sufficiently fast decay at infinity in spatial variables.
\begin{itemize}
\item[(i)] Let $q \geq m-1$. Assume that
\begin{equation}\label{V-Lq-energy}
V \in \mathcal{S}_{m,q}^{(q_1,q_2)}\cap \calC^{\infty} (\bar{Q}_{T}) \quad  \text{for} \quad
\begin{cases}
2 \leq q_2 \leq \frac{q+m-1}{m-1},  & \text{ if } d> 2, \vspace{1 mm}\\
2\leq q_2 < \frac{q+m-1}{m-1},  & \text{ if } d= 2.
\end{cases}
\end{equation}
Then the following estimate holds:
\begin{equation}\label{Lq-energy}
 \sup_{0 \leq t \leq T}  \int_{\mathbb{R}^d} \rho^{q} (\cdot, t)\,dx
 + \frac{2 q m (q-1)}{(q+m-1)^2} \iint_{Q_T} \left|\nabla \rho^{\frac{q+m-1}{2}}\right|^2 \,dx\,dt \le C ,
\end{equation}
where  $C = C (\|V\|_{\mathcal{S}_{m,q}^{(q_1, q_2)}}, \, \|\rho_{0}\|_{L^{q} (\mathbb{R}^d)})$.

\item[(ii)]  Assume that
 \begin{equation}\label{V-tilde-Lq}
V\in \tilde{\mathcal{S}}_{m,q}^{(\tilde{q}_1, \tilde{q}_2)} \cap \calC^{\infty} (\bar{Q}_{T}) \quad  \text{for} \quad
\begin{cases}
1 \leq \tilde{q}_2 \leq \frac{q+m-1}{m-1},   & \text{ if } d> 2, \vspace{1 mm}\\
1\leq \tilde{q}_2 < \frac{q+m-1}{m-1},  & \text{ if } d= 2 .
\end{cases}
\end{equation}
Then \eqref{Lq-energy} holds with
$C = C ( \|V\|_{\tilde{\mathcal{S}}_{m,q}^{(\tilde{q}_1, \tilde{q}_2)}}, \, \|\rho_{0}\|_{L^{q} (\mathbb{R}^d)})$.

\item[(iii)]  Assume that $V\in\calC^{\infty} (\bar{Q}_{T})$ and $\nabla\cdot V=0$. Then the following estimate holds
\begin{equation}\label{Lq-energy-divfree}
  \sup_{0 \leq t \leq T} \int_{\mathbb{R}^d} \rho^{q} (\cdot, t)\,dx + \frac{q m (q-1)}{(q+m-1)^2} \iint_{Q_T} \left|\nabla \rho^{\frac{q+m-1}{2}}\right|^2 \,dx\,dt
  \leq \int_{\mathbb{R}^d} \rho_{0}^{q}\,dx.
\end{equation}
\end{itemize}

\end{proposition}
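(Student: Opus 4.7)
The plan is to multiply \eqref{E:Main} by $\rho^{q-1}$, integrate over $\R^d$, and then treat the drift term in three distinct ways depending on the hypothesis on $V$. After one integration by parts on the diffusion, using $m\rho^{q+m-3}|\nabla\rho|^2=\tfrac{4m}{(q+m-1)^2}|\nabla\rho^{(q+m-1)/2}|^2$, every case begins from the identity
\begin{equation*}
\frac{1}{q}\frac{d}{dt}\int_{\R^d}\rho^q\,dx+\frac{4m(q-1)}{(q+m-1)^2}\int_{\R^d}\bigl|\nabla\rho^{(q+m-1)/2}\bigr|^2\,dx=\frac{q-1}{q}\int_{\R^d}V\cdot\nabla\rho^q\,dx,
\end{equation*}
all boundary terms vanishing by the prescribed decay of $\rho$. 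Case (iii) is immediate: a second integration by parts gives $\int V\cdot\nabla\rho^q\,dx=-\int\rho^q\,\nabla\cdot V\,dx=0$, and integrating in time yields \eqref{Lq-energy-divfree}.

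For (i), since $q\ge m-1$ the factor $\rho^{(q-m+1)/2}$ is legitimate, so I rewrite
\begin{equation*}
\int_{\R^d}V\cdot\nabla\rho^q\,dx=\frac{2q}{q+m-1}\int_{\R^d}\rho^{(q-m+1)/2}\,V\cdot\nabla\rho^{(q+m-1)/2}\,dx,
\end{equation*}
and a first Young inequality absorbs $\varepsilon\int|\nabla\rho^{(q+m-1)/2}|^2\,dx$ into the left-hand side, leaving $C(\varepsilon)\iint_{Q_T}|V|^2\rho^{q-m+1}\,dx\,dt$. H\"older in space-time with the pair $(q_1,q_2)$ of \eqref{V-Lq-energy} bounds this by $C\|V\|_{\mathcal{S}_{m,q}^{(q_1,q_2)}}^{2}\|\rho\|_{L^{r_1,r_2}_{x,t}}^{q-m+1}$, where $r_1=q_1(q-m+1)/(q_1-2)$ and $r_2=q_2(q-m+1)/(q_2-2)$. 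A direct substitution shows that the defining relation of $\mathcal{S}_{m,q}^{(q_1,q_2)}$ is equivalent to $d/r_1+(2+q_{m,d})/r_2=d/q$, while the range $2\le q_2\le(q+m-1)/(m-1)$ in \eqref{V-Lq-energy} corresponds exactly to $q\le r_1\le d(q+m-1)/(d-2)$. Lemma~\ref{P:L_r1r2} then provides a bound of the form $\|\rho\|_{L^{r_1,r_2}_{x,t}}^{q-m+1}\lesssim\bigl(\sup_t\|\rho\|_{L^q_x}^{q}\bigr)^{\sigma}\|\nabla\rho^{(q+m-1)/2}\|_{L^2_{x,t}}^{2(q-m+1)/r_2}$ with $2(q-m+1)/r_2<2$, so a second Young inequality absorbs the gradient term, and Gronwall's lemma closes \eqref{Lq-energy}.

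For (ii) the restriction $q\ge m-1$ is dropped by a different rearrangement: integrate by parts once more to write $\int V\cdot\nabla\rho^q\,dx=-\int\rho^q\,\nabla\cdot V\,dx$, and apply H\"older with $\|\nabla V\|_{L^{\tilde q_1,\tilde q_2}_{x,t}}$ and $\|\rho\|_{L^{\tilde r_1,\tilde r_2}_{x,t}}^{q}$ for $\tilde r_i=q\tilde q_i/(\tilde q_i-1)$. The scaling identity defining $\tilde{\mathcal{S}}_{m,q}^{(\tilde q_1,\tilde q_2)}$ is again precisely equivalent to $d/\tilde r_1+(2+q_{m,d})/\tilde r_2=d/q$, and the prescribed $1\le\tilde q_2\le(q+m-1)/(m-1)$ maps onto the admissible interval for $\tilde r_1$ in Lemma~\ref{P:L_r1r2}. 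The same Young/Gronwall scheme then yields \eqref{Lq-energy}. The main obstacle throughout is the algebraic bookkeeping needed to match these prescribed ranges of $q_2$ and $\tilde q_2$ to the admissible ranges of $r_1$ and $\tilde r_1$ in Lemma~\ref{P:L_r1r2}; once that compatibility is verified the resulting exponent of $\|\nabla\rho^{(q+m-1)/2}\|_{L^2_{x,t}}$ is automatically strictly below $2$, so the second Young absorption succeeds and no further structural difficulty appears.
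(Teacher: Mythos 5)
Your proposal uses the same ingredients as the paper (the energy identity, integration by parts, H\"older, Lemma~\ref{P:L_r1r2} and Young on the scaling-invariant line), and for cases (ii) and (iii) it is essentially identical to the paper's argument. For case (i), however, you take a genuinely different route through the estimates. The paper applies a three-way H\"older inequality \emph{at each fixed time} with $\tfrac{1}{q_1}+\tfrac{1}{2}+\tfrac{q_1-2}{2q_1}=1$, then uses the interpolation \eqref{pme-interpolate-10} of Lemma~\ref{P:L_r1r2} and a single Young inequality to obtain a \emph{closed} differential inequality for $\int\rho^q$, namely $\tfrac{d}{dt}(\int\rho^q\,dx)^{1-\sigma}\le c\|V\|^{q_2}_{L^{q_1}_x}$ with $\sigma<1$, which integrates directly. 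You instead apply a two-way Young first (splitting off $|\nabla\rho^{(q+m-1)/2}|^2$ and leaving $|V|^2\rho^{q-m+1}$), then a space-time H\"older, then Lemma~\ref{P:L_r1r2}, then a second Young to absorb the gradient term from the interpolation. Both are valid; the paper's ordering has the advantage that after the single Young the right-hand side contains only $\int\rho^q$ and no gradient term, so the time integration is immediate, while your ordering requires keeping track of a second absorption and a remaining power of $\sup_t\int\rho^q\,dx$.

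Two small points of precision. First, your final step is not really Gr\"onwall's lemma: what closes the estimate is an absorption by Young using the fact that the exponent of $\sup_t\int\rho^q\,dx$ on the right-hand side is strictly less than $1$. You state that the exponent of $\|\nabla\rho^{(q+m-1)/2}\|_{L^2_{x,t}}$ is automatically below $2$, which is correct since $2(q-m+1)/r_2<2$ is equivalent to $q_2-2<q_2$, but you should also record that the exponent on $\sup_t\int\rho^q\,dx$ is strictly below $1$; this is the content of the paper's claim $\sigma<1$ and is guaranteed precisely because $(q_1,q_2)$ lies on the scaling-invariant line, but it is worth stating. Second, in (ii) the H\"older step uses $|\nabla\cdot V|\le c|\nabla V|$, which you should say explicitly since the norm in $\tilde{\mathcal{S}}$ is taken on the full gradient. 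Neither of these affects the correctness of the argument.
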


\begin{proof}
Let $q > 1$. By testing $\varphi = q \rho^{q-1}$ to \eqref{E:Main}, we have
\begin{equation}\label{apriori2-eqn}
  \frac{d}{dt} \int_{\mathbb{R}^d} \rho^{q}\,dx
+ \frac{4m q (q-1)}{(q+m-1)^2} \int_{\mathbb{R}^d} \abs{\nabla\rho^{\frac{q+m-1}{2}}}^2\,dx
 =   q \int_{\mathbb{R}^d} V \rho \nabla \rho^{q-1}\,dx =: \mathcal{J}.
\end{equation}
\smallskip

$(i)$ In case $q \geq m-1$, we observe that
\begin{equation}\label{energy-V-1}
\mathcal{J} = q \int_{\mathbb{R}^d} V \rho \nabla \rho^{q-1}\,dx
= \frac{2(q-1)}{q+m-1}\int_{\mathbb{R}^d} V \rho^{\frac{q-m+1}{2}} \nabla \rho^{\frac{q+m-1}{2}}\,dx.
\end{equation}

First, let $q > m-1$. Then by H\"{o}lder inequality with $\frac{1}{q_1} + \frac{1}{2} + \frac{q_1 -2 }{2q_1} = 1$ for $q_1 > 2$, we have
\begin{equation*} 
\mathcal{J} \leq c  \|V\|_{L^{q_1}_{x}}
  \left\|\nabla \rho^{\frac{m+q-1}{2}}\right\|_{L^{2}_{x}}
 \|\rho\|_{L_{x}^{\frac{q_1 (q-m+1)}{q_1 -2}}}^{\frac{q-m+1}{2}}.
\end{equation*}
Furthermore, when $q=m-1$, we have
\begin{equation*}
\mathcal{J}= c  \int_{\mathbb{R}^d} V \nabla \rho^{\frac{q+m-1}{2}}\,dx
\leq c \|V\|_{L^{2}_{x}}
  \left\|\nabla \rho^{\frac{m+q-1}{2}}\right\|_{L^{2}_{x}}
  \leq \varepsilon  \left\|\nabla \rho^{\frac{m+q-1}{2}}\right\|_{L^{2}_{x}}^{2}
  + c(\varepsilon ) \|V\|_{L^{2}_{x}}^{2}.
\end{equation*}

For simplicity, let
\begin{equation*}
    r_1 := \frac{q_1 (q-m+1)}{q_1 -2}, \quad \text{and} \quad \theta := \frac{q}{r_1} \left( 1 - \frac{(r_1 - q)(d-2)}{2q + d(m-1)}\right).
\end{equation*}
Then by \eqref{q-r1r2} in Lemma~\ref{P:L_r1r2}, we see that
\begin{equation}\label{energy00.5}
\begin{cases}
     2<\frac{d(q+m-1)}{q+(d-1)(m-1)}\leq q_1 \leq \frac{2q}{m-1}, & \text{ if } d>2, \vspace{1 mm}\\
     2< q_1 \leq \frac{2q}{m-1}, & \text{ if } d = 2.
\end{cases}
\end{equation}
Then by  \eqref{pme-interpolate-20} and \eqref{pme-interpolate-10}  in Lemma~\ref{P:L_r1r2}, we deduce
\begin{equation}\label{energy01}
\mathcal{J}
\leq c     \|V\|_{L^{q_1}_{x}}
  \left\|\nabla \rho^{\frac{m+q-1}{2}}\right\|_{L^{2}_{x}}^{1+\frac{(1-\theta)(q-m+1)}{q+m-1}}
 \|\rho\|_{L_{x}^{q}}^{\frac{\theta(q-m+1)}{2}}
 \leq \varepsilon \left\|\nabla \rho^{\frac{m+q-1}{2}}\right\|_{L^{2}_{x}}^{2}
 +c(\varepsilon) \|V\|_{L^{q_1}_{x}}^{q_2}\|\rho\|_{L_{x}^{q}}^{q_2 \frac{\theta(q-m+1)}{2}},
\end{equation}
by taking Young's inequality. Due to $V\in \mathcal{S}_{m,q}^{(q_1,q_2)}$, it gives
\begin{equation*}
 \frac{2(q+m-1)}{(q+m-1)-(1-\theta)(q-m+1)} = \frac{q_1 (2+q_{m,d})}{q_1 (1+q_{m,d}) - d} =  q_2.
\end{equation*}
The conditions \eqref{energy00.5} corrresponds to \eqref{V-Lq-energy}.


Now the combination of \eqref{apriori2-eqn} and \eqref{energy01} with $\varepsilon = \frac{4mq(q-1)}{(q+m-1)^2}$ yields that
\begin{equation*} 
    \frac{d}{dt} \int_{\mathbb{R}^d} \rho^{q}\,dx \leq c \|V\|_{L^{q_1}_{x}}^{q_2}\|\rho\|_{L_{x}^{q}}^{q_2 \frac{\theta(q-m+1)}{2}} \quad \Longrightarrow \quad \frac{d}{dt} \left(\int_{\mathbb{R}^d} \rho^q \,dx \right)^{1-\sigma}
\leq c \|V\|^{q_2}_{L^{q_1}_{x}},
\end{equation*}
because $\sigma:= \frac{\theta q_2 (q-m+1)}{2 q}=\frac{ \theta q_2 (q_1 - 2)}{2 q q_1} < 1$. 
By taking the integration in terms of $t$, and taking power by $\frac{1}{1-\sigma}$, we obtain the following :
\begin{equation}\label{energy04}
\sup_{0\leq t\leq T} \int_{\mathbb{R}^d} \rho^q (t) \,dx \leq c \int_{\mathbb{R}^d} \rho_0^q \,dx + c \|V\|_{\mathcal{S}_{m,q}^{(q_1,q_2)}}^{c}.
\end{equation}
We combine \eqref{apriori2-eqn} and \eqref{energy01} with $\varepsilon = \frac{2mq(q-1)}{(q+m-q)^2}$ and take the integration in terms of $t$, which yields
\begin{equation}\label{energy05}
\begin{aligned}
\sup_{0\leq t\leq T} \int_{\mathbb{R}^d} \rho^{q}\,dx
&+ \frac{2m q (q-1)}{(q+m-1)^2} \iint_{Q_T} \abs{\nabla\rho^{\frac{q+m-1}{2}}}^2\,dx\,dt \\
&\leq \int_{\mathbb{R}^d} \rho_{0}^{q}\,dx
+ c \|V\|_{\mathcal{S}_{m,q}^{(q_1, q_2)}}\sup_{0\leq t \leq T}\|\rho (\cdot, t)\|_{L_{x}^{q}}^{q_2 \frac{\theta(q-m+1)}{2}}.
\end{aligned}
\end{equation}
Therefore, we obtain \eqref{Lq-energy} by substituting \eqref{energy04} into \eqref{energy05}.
\smallskip

$(ii)$ Here, we take the integration by parts that gives:
\begin{equation}\label{energy-V-2}
\mathcal{J} =  q \int_{\mathbb{R}^d} V \rho \nabla \rho^{q-1}\,dx
=  (q-1) \int_{\mathbb{R}^d} V \nabla \rho^{q}\,dx = -(q-1) \int_{\mathbb{R}^d} \nabla V  \rho^{q}\,dx.
\end{equation}
By taking H\"{o}lder inequality with $\frac{1}{\tilde{q}_1} + \frac{\tilde{q}_1 -1}{\tilde{q}_1} = 1$ for $\tilde{q}_1 > 1$, we have
 \begin{equation*}
 \mathcal{J} \leq c \|\nabla V\|_{L_{x}^{\tilde{q}_1}} \|\rho\|_{L^{\frac{q \tilde{q}_1}{\tilde{q}_1 -1}}_{x}}^{q}.
 \end{equation*}
 For simplicity, let
 \begin{equation*}
 \tilde{r}_1 := \frac{q\tilde{q}_1}{\tilde{q}_1 - 1} \quad \text{ and } \quad \theta := \frac{q}{\tilde{r}_1} \left( 1 - \frac{(\tilde{r}_1 - q)(d-2)}{2q+d(m-1)} \right).
 \end{equation*}
Then by \eqref{q-r1r2} in Lemma~\ref{P:L_r1r2} for $\tilde{r}_1$ instead of $r_1$, it gives
 \begin{equation}\label{energy06}
 \begin{cases}
  1<\frac{d+q_{m,d}}{2+q_{m,d}}\leq \tilde{q}_1 \leq \infty, & \text{ if } d>2, \vspace{1 mm}\\
  1< \tilde{q}_1 \leq \infty, & \text{ if } d= 2.
  \end{cases}
 \end{equation}

 Then by  \eqref{pme-interpolate-20} and \eqref{pme-interpolate-10}  in Lemma~\ref{P:L_r1r2}, we deduce
 \begin{equation}\label{energy07}
 \mathcal{J} \leq c \|\nabla V\|_{L_{x}^{\tilde{q}_1}} \|\rho\|_{L^{q}_{x}}^{q\theta}
 \left\|\nabla \rho^{\frac{q+m-1}{2}}\right\|_{L^{2}_{x}}^{\frac{2q(1-\theta)}{q+m-1}}
 \leq \varepsilon \left\|\nabla \rho^{\frac{q+m-1}{2}}\right\|_{L^{2}_{x}}^{2}
 + c(\varepsilon) \|\nabla V\|_{L_{x}^{\tilde{q}_1}}^{\tilde{q}_2} \|\rho\|_{L^{q}_{x}}^{q\theta \tilde{q}_2},
 \end{equation}
 by taking Young's inequality. Because of $V \in \tilde{\mathcal{S}}_{m,q}^{(\tilde{q}_1, \tilde{q}_2)}$, it gives
 \begin{equation*}
  \frac{q+m-1}{(q+m-1)-(1-\theta)q}  = \frac{\tilde{q}_1 (2+q_{m,d})}{\tilde{q}_1 (2+q_{m,d}) - d} =  \tilde{q}_2.
 \end{equation*}
 Also, we observe that \eqref{energy06} corresponds to \eqref{V-tilde-Lq}.

Now we combine \eqref{apriori2-eqn} and \eqref{energy07} with $\varepsilon = \frac{4mq(q-1)}{(q-m+1)^2}$ which yields
\begin{equation}\label{energy08.5}
    \sup_{0 \leq t \leq T} \int_{\mathbb{R}^d} \rho^q (\cdot, t)\,dx
    \leq c \|\nabla V\|_{L_{x}^{\tilde{q}_1}}^{\tilde{q}_2} \|\rho\|_{L^{q}_{x}}^{q\theta \tilde{q}_2}
    \quad \Longrightarrow \quad \frac{d}{dt} \left(\int_{\mathbb{R}^d} \rho^q \,dx \right)^{1-\sigma}
\leq c \|\nabla V\|_{L_{x}^{\tilde{q}_1}}^{\tilde{q}_2},
\end{equation}
because $\sigma:= \theta \tilde{q}_2 < 1$.
By taking the integration in terms of $t$, and taking power by $\frac{1}{1-\sigma}$, we obtain the following :
\begin{equation}\label{energy09}
\sup_{0\leq t\leq T} \int_{\mathbb{R}^d} \rho^q (t) \,dx \leq c \int_{\mathbb{R}^d} \rho_0^q \,dx + c \|V\|_{L_{x}^{\tilde{\mathcal{S}}_{m,q}}}^{(\tilde{q}_1, \tilde{q}_2)}.
\end{equation}
To obtain \eqref{Lq-energy}, we add \eqref{apriori2-eqn} and \eqref{energy07} with $\varepsilon = \frac{2mq(q-1)}{(q-m+1)^2}$. We complete the proof by taking integration in terms of $t$ and applying \eqref{energy08.5}.

$(iii)$ It is trivial that $\mathcal{J}$ is vanishing when \eqref{energy-V-2} is combined with $\nabla\cdot V=0$. Therefore, \eqref{Lq-energy-divfree} follows directly by taking integration of \eqref{apriori2-eqn} in terms of $t$ and applying \eqref{energy09}.
\end{proof}

Under the same hyphthesis in Proposition~\ref{P:Lq-energy}, we prove the following proposition.
\begin{proposition}\label{P:Lq-energy-r}
Let $m, q> 1$ and $\rho_{0}\in  \mathcal{P} (\mathbb{R}^d)\cap L^{q}(\mathbb{R}^d) \cap \calC^{\alpha}(\mathbb{R}^d)$. Suppose that $\rho$ is a regular solution of \eqref{E:Main} with sufficiently fast decay at infinity in spatial variables.
\begin{itemize}
\item[(i)] Let $q \geq m-1$ and assume that $V$ satisfies \eqref{V-Lq-energy}.
Then, for any $r>1$ and $ m-1 \leq r \leq q$, it holds
\begin{equation}\label{Lq-energy-r}
 \sup_{0 \leq t \leq T}  \int_{\mathbb{R}^d} \rho^{r} (\cdot, t)\,dx
 + \iint_{Q_T} \left|\nabla \rho^{\frac{r+m-1}{2}}\right|^2 \,dx\,dt \le C ,
\end{equation}
where  $C = C (\|V\|_{\mathcal{S}_{m,q}^{(q_1, q_2)}}, \, \|\rho_{0}\|_{L^{q} (\mathbb{R}^d)})$.
\item[(ii)]  If $V$ satisfies \eqref{V-tilde-Lq}, then the estimate \eqref{Lq-energy-r} holds with
$C = C ( \|V\|_{\tilde{\mathcal{S}}_{m,q}^{(\tilde{q}_1, \tilde{q}_2)}},\,  \|\rho_{0}\|_{L^{q} (\mathbb{R}^d)})$ for any $1 < r \leq q$.
\item[(iii)] If $V\in \calC^{\infty}(\bar{Q}_{T})$ and $\nabla\cdot V=0$, then the estimate \eqref{Lq-energy-r} holds with
$C = C (\|\rho_{0}\|_{L^{q} (\mathbb{R}^d)})$ for any $1<r\leq q$.
\end{itemize}
\end{proposition}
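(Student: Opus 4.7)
The plan is to adapt the three-step strategy from the proof of Proposition~\ref{P:Lq-energy} to the $L^r$-energy for the intermediate power $r$, relying on the already-established $L^q$-bound as the interpolation anchor in place of the $L^r$-norm itself. First, invoke Proposition~\ref{P:Lq-energy} under the appropriate hypothesis $(i)$, $(ii)$, or $(iii)$ to secure $\sup_{0\le t\le T}\|\rho(\cdot,t)\|_{L^q_x}\le C$. Then, for $r$ in the stated range (with $r>1$), test \eqref{E:Main} against $r\rho^{r-1}$, producing
\[
\frac{d}{dt}\int_{\bbr^d}\rho^r\,dx+\frac{4mr(r-1)}{(r+m-1)^2}\int_{\bbr^d}\left|\nabla\rho^{(r+m-1)/2}\right|^2\,dx=r\int_{\bbr^d}V\rho\,\nabla\rho^{r-1}\,dx,
\]
in complete parallel with \eqref{apriori2-eqn}.

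In case $(i)$, rewrite the right-hand side as in \eqref{energy-V-1} (legitimate since $r\ge m-1$) and bound it by Hölder's inequality in $x$ with $\frac{1}{q_1}+\frac{1}{2}+\frac{q_1-2}{2q_1}=1$, followed by Lemma~\ref{P:L_r1r2} applied with $q$ as the reference power: the factor $\|\rho\|_{L^{s_1}_x}^{(r-m+1)/2}$ with $s_1=q_1(r-m+1)/(q_1-2)$ is interpolated between the already-controlled $\|\rho\|_{L^q_x}$ and $\|\nabla\rho^{(r+m-1)/2}\|_{L^2_x}$. Young's inequality and integration in time then dominate the drift by $\|V\|_{L^{q_1,q_2'}_{x,t}}^{q_2'}\sup_t\|\rho(\cdot,t)\|_{L^q_x}^{\gamma}$, where $(q_1,q_2')$ satisfies the scaling relation of $\mathcal{S}_{m,r}^{(q_1,q_2')}$. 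Since $r\le q$ forces $r_{m,d}\ge q_{m,d}$, a short monotonicity argument (differentiate $q_2'=q_1(2+r_{m,d})/[q_1(1+r_{m,d})-d]$ with respect to $r_{m,d}$) gives $q_2'\le q_2$, so Hölder in time on $[0,T]$ converts the hypothesis $V\in\mathcal{S}_{m,q}^{(q_1,q_2)}$ into a finite bound $\|V\|_{L^{q_1,q_2'}_{x,t}}\le T^{1/q_2'-1/q_2}\|V\|_{\mathcal{S}_{m,q}^{(q_1,q_2)}}$. Finally, $\int\rho_0^r\,dx$ is controlled by $\|\rho_0\|_{L^1}$ and $\|\rho_0\|_{L^q}$ by interpolation, yielding \eqref{Lq-energy-r}.

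Case $(ii)$ follows the same template: integration by parts turns the drift into $-(r-1)\int\nabla V\,\rho^r\,dx$, and Hölder in $x$ with exponent $\tilde{q}_1$ together with Lemma~\ref{P:L_r1r2} (interpolating $\|\rho\|_{L^{\tilde{r}_1}_x}^r$, $\tilde{r}_1=r\tilde{q}_1/(\tilde{q}_1-1)$, between the $L^q$-bound of $\rho$ and $\|\nabla\rho^{(r+m-1)/2}\|_{L^2_x}$), then Young's inequality, produce a drift bound absorbable into the gradient term after integrating in time. The conversion from $\tilde{\mathcal{S}}_{m,q}^{(\tilde{q}_1,\tilde{q}_2)}$ to the pair $(\tilde{q}_1,\tilde{q}_2')$ required for the $L^r$-scheme is again handled by Hölder in time. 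Case $(iii)$ is immediate: integration by parts and $\nabla\cdot V=0$ annihilate the drift, so $t\mapsto\int\rho^r\,dx$ is nonincreasing and \eqref{Lq-energy-r} reduces to a direct time integration together with $\int\rho_0^r\,dx\le c(\|\rho_0\|_{L^1},\|\rho_0\|_{L^q})$.

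The main technical obstacle is verifying, in cases $(i)$ and $(ii)$, that the scaling-invariant pair $(q_1,q_2')$ (respectively $(\tilde{q}_1,\tilde{q}_2')$) arising in the $L^r$-scheme is extractable from the originally assumed $L^q$-class via Hölder in time, and that the exponent on $\|\nabla\rho^{(r+m-1)/2}\|_{L^2_x}$ produced after interpolation and Young's inequality remains strictly below two so the gradient can be absorbed into the left-hand side. Both verifications reduce to checking the admissibility constraints in Lemma~\ref{P:L_r1r2} under the assumption $m-1\le r\le q$ (or $1<r\le q$ in $(ii)$--$(iii)$); the rest is bookkeeping that directly reuses the calculations already carried out in the proof of Proposition~\ref{P:Lq-energy}.
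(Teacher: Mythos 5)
Your plan departs from the paper's. The authors do not establish the $L^q$-bound first and anchor the $L^r$-scheme to it; they simply re-run Proposition~\ref{P:Lq-energy} with $r$ substituted for $q$ throughout (so the interpolation in \eqref{pme-interpolate-20}--\eqref{energy01} stays anchored to $L^r$, and the constants come out depending on $\|\rho_0\|_{L^r(\bbr^d)}$ and $\|V\|_{\mathcal{S}_{m,r}^{(q_1,q_2^r)}}$), and then dominate these two inputs by the $q$-level data: $\rho_0$ by Lebesgue interpolation between $L^1$ (mass) and $L^q$, and $V$ by H\"older in time via \eqref{q-qL} (the fact $q_2^r\le q_2$ being Remark~\ref{R:S-tildeS}~(iii)). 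That is the entire proof; no second invocation of Proposition~\ref{P:Lq-energy} is made.

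Your anchored variant has two genuine gaps. First, the step you describe is not Lemma~\ref{P:L_r1r2} ``applied with $q$ as the reference power'': that lemma couples the $L^q$-anchor with $\nabla\rho^{(q+m-1)/2}$, not with $\nabla\rho^{(r+m-1)/2}$, so you are invoking a different, mismatched-power interpolation. For it to be a genuine Lebesgue interpolation one needs $s_1=q_1(r-m+1)/(q_1-2)$ to lie between $q$ and $\frac{d(r+m-1)}{d-2}$, but $s_1\downarrow 0$ as $r\downarrow m-1$, so $s_1<q$ occurs inside the stated parameter range (e.g.\ $m=2$, $d=3$, $q=2$, $r=3/2$, $q_1=3$ gives $s_1=3/2<2$); there $\theta'>1$ and the inequality you write is false. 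You would have to re-anchor through $L^1_x$ via mass conservation, a case your outline does not address. Second, the temporal exponent $q_2'$ that your $L^q$-anchored Young step produces does \emph{not} satisfy the $\mathcal{S}_{m,r}^{(q_1,q_2')}$ scaling relation: the explicit formula you write, $q_2'=q_1(2+r_{m,d})/[q_1(1+r_{m,d})-d]$, is the $L^r$-anchored scaling-invariant exponent $q_2^r$, whereas anchoring the interpolation to $L^q$ destroys $r$-scale invariance and yields a strictly smaller exponent (with $m=2$, $d=3$, $q=2$, $r=3/2$, $q_1=9/4$ one computes $q_2'=33/14<q_2^r=12/5$). The smaller exponent still obeys $q_2'\le q_2$, so the H\"older-in-time conversion happens to survive, but the identification you assert is incorrect --- and it is one of the two ``bookkeeping'' claims your closing paragraph leaves for the reader to verify. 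Keeping the anchor at $L^r$, as the paper does, makes the exponent come out as $q_2^r$ for free and sidesteps both issues.
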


\begin{remark}
Assuming further that $\rho_0 \in  \mathcal{P}_p (\mathbb{R}^d)$ for $p>1$ and $V$ satisfies \eqref{V-p-moment}, then the estimate \eqref{Lq-energy-r} holds for $(i)$ $\max\{1, m-1\} \leq r \leq q$, and  $(ii)$, $(iii)$ $1\leq r \leq q$ with a constant $C$ depending also on
$\int_{\bbr^d} \rho_0 \langle x\rangle^p \,dx \cdot \delta_{\{r=1\}}$.
\end{remark}

\begin{proof} We note a couple of remarks.
Firstly, note that  $\|\rho_0\|_{L^{r}(\bbr^d)} \leq C(\|\rho_0\|_{L^1 \cap L^{q}(\bbr^d)})$ for any $r\in (1, q)$ by the interpolation inequality because $\rho_0 \in L^1 (\bbr^d) \cap L^q (\bbr^d)$.  Secondly, $V\in \mathcal{S}_{m,q}^{(q_1, q_2)}$ implies that $V \in \mathcal{S}_{m,r}^{(q_1, q_2^{r})}$ for any $r \in [1, q)$ by \eqref{q-qL} in Remark~\ref{R:S-tildeS} $(iii)$.

Then the estimate \eqref{Lq-energy-r} follows by similar arguments as in Proposition~\ref{P:Lq-energy} for $r$ in the place of $q$. Then $C$ depends on $\|V\|_{\mathcal{S}_{m,r}^{(q_1, q_2^{r})}}$ and $\|\rho_0\|_{L^r(\bbr^d)}$ which are replaced by $\|V\|_{\mathcal{S}_{m,q}^{(q_1, q_2)}}$ and $\|\rho_0\|_{L^q(\bbr^d)}$ because of earlier remarks.

Especially, in case $(i)$, we carry similar analysis of \eqref{energy-V-1} with $r$ instead of $q$ if $r \geq m-1$. In case $(ii)$, note that $V\in \tilde{\mathcal{S}}_{m,q}^{(\tilde{q}_1, \tilde{q}_2)}$ implies the membership in $\tilde{\mathcal{S}}_{m,r}^{(\tilde{q}_1, \tilde{q}_2^{r})}$ for any $r \in [1, q]$ by Remark~\ref{R:S-tildeS}.
In case $(iii)$, the estimate \eqref{Lq-energy-r} holds for any $r\in (1, q]$ with $C = C (\|\rho_{0}\|_{L^{q} (\mathbb{R}^d)})$ using the divergence-free condition.
This completes the proof.
\end{proof}

\subsection{Estimates of speed and $p$-th moment}
\label{SS:Combination}

Here we obtain speed and $p$-th moment estimates by combining the auxiliary and energy estimates in earlier subsections.
First result is when $\int_{\bbr^d} (\rho_0 \log \rho_0 + \rho_0 \langle x\rangle^p ) \,dx < \infty$ for $1< p\leq {\lambda_1}$.
\begin{proposition}\label{P:log-energy-speed}
Let  $m>1$ and $1< p \leq {\lambda_1}:= 1+ \frac{1}{d(m-1) + 1}$. Under the same hypothesis in Proposition~\ref{P:log-energy}, the followings hold true.
\begin{itemize}
\item[(i)] Let $1 < m\le 2$.
Assume that $V$ satisfies \eqref{V-log-energy}.
Then the following estimate holds that
\begin{equation}\label{log-energy-speed}
\sup_{0\leq t \leq T} \int_{\mathbb{R}^d\times \{t\}} \rho \left( \abs{\log \rho} + \langle x \rangle^p \right) \,dx
 + \iint_{Q_T}\left(\abs{\frac{\nabla \rho^m}{\rho}}^p+|V|^{p}\right ) \rho \,dx\,dt   \le C,
\end{equation}
where $C=C (\|V\|_{\mathcal{S}_{m,1}^{(q_1,q_2)}}, \, \int_{\mathbb{R}^d} \left(\rho_0\log \rho_0 + \rho_0 \langle x \rangle^p  \right) \,dx )$.

\item[(ii)]  Assume that $V$ satisfies \eqref{V-tilde-log-energy}.
Then the estimate \eqref{log-energy-speed} holds with
$$C= C (\|V\|_{\tilde{\mathcal{S}}_{m,1}^{(\tilde{q}_1,\tilde{q}_2)}},\, \int_{\mathbb{R}^d} \left(\rho_0\log \rho_0 + \rho_0 \langle x \rangle^p  \right) \,dx ).$$

\item[(iii)] Let $\nabla\cdot V=0$. Assume that $V$ satisfies \eqref{V-log-divfree}.
Then the estimate \eqref{log-energy-speed} holds with
$$C= C (\|V\|_{\mathcal{S}_{m,1}^{(q_1,q_2)}},\, \int_{\mathbb{R}^d} \left(\rho_0\log \rho_0 + \rho_0 \langle x \rangle^p  \right) \,dx).$$
\end{itemize}
\end{proposition}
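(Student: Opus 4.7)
The plan is to piggyback on the energy estimates already obtained in Proposition~\ref{P:log-energy} and to add only the control of the two new dissipation terms $\iint |\nabla \rho^m/\rho|^p\rho$ and $\iint |V|^p\rho$. More precisely, Proposition~\ref{P:log-energy} already provides, under each of the three hypotheses, the bound
\begin{equation*}
\sup_{0\leq t\leq T}\int_{\bbr^d}\rho\bke{\abs{\log\rho}+\langle x\rangle^p}\,dx + \frac{2}{m}\iint_{Q_T}\abs{\nabla\rho^{\frac{m}{2}}}^2\,dx\,dt \le C,
\end{equation*}
so it suffices to absorb the remaining integrals into the gradient dissipation via Lemma~\ref{P:W-p}.

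To do this I would apply Lemma~\ref{P:W-p} with $q=1$ (so that $q+m-1=m$, $\lambda_q=\lambda_1$, and $\nabla\rho^{(q+m-1)/2}=\nabla\rho^{m/2}$), which yields, for any $\varepsilon>0$,
\begin{equation*}
\iint_{Q_T}\bke{\abs{\frac{\nabla\rho^m}{\rho}}^p+|V|^p}\rho\,dx\,dt \le 2\varepsilon\iint_{Q_T}\abs{\nabla\rho^{\frac{m}{2}}}^2\,dx\,dt + cT + c\,T^{\beta_1\frac{\lambda_1-p}{\lambda_1}}\|\rho\|_{L^{1,\infty}_{x,t}}^{\beta\beta_1 p(q_1-\lambda_1)/(q_1\lambda_1)}\|V\|_{\mathcal{S}_{m,1}^{(q_1,q_2)}}^{p\beta_1},
\end{equation*}
invoking the mass conservation $\|\rho(\cdot,t)\|_{L^1_x}\equiv 1$ to reduce the $L^{1,\infty}_{x,t}$-factor to a constant. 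Choosing $\varepsilon$ so that $2\varepsilon\le 1/m$ and adding the resulting inequality to the bound from Proposition~\ref{P:log-energy} absorbs the gradient term on the right-hand side and produces \eqref{log-energy-speed}. For case (iii) the hypothesis \eqref{V-log-divfree} already matches verbatim the admissible range $\lambda_1\le q_2\le \lambda_1 m/(m-1)$ required by Lemma~\ref{P:W-p}. For case (ii) I would first apply the embedding \eqref{norm-V-gradV} in Remark~\ref{R:S-tildeS}\,(ii) to deduce $V\in\mathcal{S}_{m,1}^{(q_1,q_2)}$ with $q_2=\tilde{q}_2$ and $q_1=d\tilde{q}_1/(d-\tilde{q}_1)$, and then invoke Lemma~\ref{P:W-p} since the hypothesis \eqref{V-tilde-log-energy} forces $\tilde{q}_2\in(\lambda_1,m/(m-1)]\subset[\lambda_1,\lambda_1 m/(m-1)]$.

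The main verification, and essentially the only obstacle, is to confirm that the subscaling ranges on $V$ assumed in Proposition~\ref{P:log-energy-speed} are all contained in the admissible window $q_2\in[\lambda_1,\lambda_1 m/(m-1)]$ required by Lemma~\ref{P:W-p}. This is immediate for case (iii) and, as noted, reduces to Sobolev embedding in case (ii). Case (i) is the subtlest: one uses the elementary observations $\lambda_1=1+1/(d(m-1)+1)\le 2$ and $\lambda_1\ge 1$, which respectively upgrade the lower endpoint $q_2\ge 2$ to $q_2\ge\lambda_1$ and relax the upper endpoint $q_2\le m/(m-1)$ to $q_2\le \lambda_1 m/(m-1)$. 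Once compatibility is checked, the combination of the two estimates is routine and the final constant $C$ inherits the stated dependence on either $\|V\|_{\mathcal{S}_{m,1}^{(q_1,q_2)}}$ or $\|V\|_{\tilde{\mathcal{S}}_{m,1}^{(\tilde{q}_1,\tilde{q}_2)}}$, plus the entropy and $p$-th moment of $\rho_0$.
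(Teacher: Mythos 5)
Your proposal is correct and follows the same route as the paper: take the energy estimate \eqref{log-energy} from Proposition~\ref{P:log-energy} and add the auxiliary speed estimates \eqref{W-p} and \eqref{V-p} from Lemma~\ref{P:W-p} with $q=1$ and $\varepsilon$ small. The paper states this in one line (``we add \eqref{W-p} and \eqref{V-p} to \eqref{log-energy} with $2\epsilon=\frac1m$''); the only thing you add is the explicit verification that the $q_2$-ranges in \eqref{V-log-energy}, \eqref{V-tilde-log-energy}, \eqref{V-log-divfree} sit inside the window $[\lambda_1,\lambda_1 m/(m-1)]$ required by Lemma~\ref{P:W-p}, a compatibility check the paper leaves implicit.
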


\begin{proof}
    To obtain \eqref{log-energy-speed}, for $q=1$, we add \eqref{W-p} and \eqref{V-p} to \eqref{log-energy} with $2\epsilon = \frac{1}{m}$.
\end{proof}


Now when $\rho_0 \in L^q (\bbr^d)$, $q>1$, we compute the following estimates assuming additionally $\int_{\mathbb{R}^d} \rho_0 \langle x \rangle^p \,dx < \infty$.
\begin{proposition}\label{P:Energy-speed}
Let $m, q> 1$ and $1< p \leq {\lambda_q}:= \min \left\{ 2, 1 + \frac{d(q-1)+q}{d(m-1)+q} \right\}$.  Suppose that $\rho_0 \in L^{q}(\bbr^d) \cap \mathcal{P}_{p}(\bbr^d)$ and the same hypothesis in Proposition~\ref{P:Lq-energy} hold.

\begin{itemize}
\item[(i)] Let $q\geq m-1$.  Assume that
\begin{equation}\label{V-Lq-energy-speed}
V \in \mathcal{S}_{m,q}^{(q_1,q_2)}\cap \calC^\infty(\bar{Q}_T)\quad \text{for}\quad
    \begin{cases}
        q_1 \leq \frac{2m}{m-1}, \  2 \leq q_2 \leq  \frac{q+m-1}{m-1} \,\delta_{\{1 < q \leq m\}} + \frac{2m-1}{m-1}\,\delta_{ \{q > m \} },  & \text{if } d > 2, \vspace{1 mm}\\
        q_1 \leq \frac{2m}{m-1}, \ 2 \leq q_2 <  \frac{q+m-1}{m-1}\,\delta_{\{1 < q \leq m\}} + \frac{2m-1}{m-1} \,\delta_{ \{q > m \} },  & \text{if } d = 2.
    \end{cases}
\end{equation}
Then the following estimate holds
\begin{equation}\label{Lq-energy-speed}
 \sup_{0 \leq t \leq T} \int_{\mathbb{R}^d} \left(\rho^{q} + \rho\langle x \rangle^p\right) \,dx
 + \iint_{Q_T} \left(\abs{\frac{\nabla \rho^m}{\rho}}^p+|V|^{p}\right ) \rho  \,dx\,dt \le C,
\end{equation}
where $C=C (\|V\|_{\mathcal{S}_{m,q}^{(q_1, q_2)}}, \, \int_{\mathbb{R}^d} \left( \rho_0^q + \rho_0 \langle x \rangle^p\right)\,dx )$.

\item[(ii)] Assume that
 \begin{equation}\label{V-tilde-Lq-energy-speed}
V\in \tilde{\mathcal{S}}_{m,q}^{(\tilde{q}_1, \tilde{q}_2)}\cap \calC^\infty(\bar{Q}_T) \quad \text{for}\quad
\begin{cases}
\frac{2+q_{m,d}}{1+q_{m,d}} < \tilde{q}_2 \leq \frac{q+m-1}{m-1} \,\delta_{\{1 < q \leq m\}} + \frac{2m-1}{m-1}\,\delta_{ \{q >  m \} },  & \text{ if } d> 2, \vspace{1 mm}\\
\frac{2+q_{m,2}}{1+q_{m,2}} < \tilde{q}_2 < \frac{q+m-1}{m-1} \,\delta_{\{1 < q \leq m\}} + \frac{2m-1}{m-1}\,\delta_{ \{q > m \} },  & \text{ if } d= 2.
\end{cases}
\end{equation}
Then \eqref{Lq-energy-speed} holds with
$ C =C  ( \|V\|_{\tilde{\mathcal{S}}_{m,q}^{(\tilde{q}_1, \tilde{q}_2)}}, \, \int_{\mathbb{R}^d} \left( \rho_0^q + \rho_0 \langle x \rangle^p\right)\,dx ).$

\item[(iii)]  Let $\nabla\cdot V=0$. Assume that
\begin{equation}\label{V-Lq-divfree}
V\in \mathcal{S}_{m,q}^{(q_1, q_2)}\cap \calC^\infty(\bar{Q}_T) \quad  \text{for} \quad
\begin{cases}
{\lambda_q} \leq q_2 \leq \frac{{\lambda_q} (q+m-1)}{q+m-2}, & \text{ if }  d>2, \ 1<q\leq m, \vspace{1 mm}\\
q_1 \leq \frac{2m}{m-1},\  2 \leq q_2 \leq \frac{2m-1}{m-1}, & \text{ if }  d>2, \ q> m, \vspace{1 mm}\\
{\lambda_q} \leq q_2 < \frac{{\lambda_q} (q+m-1)}{q+m-2}, & \text{ if }  d=2, \ 1<q\leq m, \vspace{1 mm}\\
q_1 \leq \frac{2m}{m-1},\  2 \leq q_2 < \frac{2m-1}{m-1}, & \text{ if }  d=2, \ q> m.
\end{cases}
\end{equation}
Then \eqref{Lq-energy-speed} holds with
$ C=C (\|V\|_{\mathcal{S}_{m,q}^{(q_1, q_2)}}, \, \int_{\mathbb{R}^d} \left( \rho_0^q + \rho_0 \langle x \rangle^p\right)\,dx ). $

\end{itemize}

\end{proposition}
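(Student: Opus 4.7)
\textbf{Proof proposal for Proposition~\ref{P:Energy-speed}.} The plan is to combine three ingredients already at hand: the $L^q$-energy estimates of Proposition~\ref{P:Lq-energy} (and the supplementary lower-exponent version in Proposition~\ref{P:Lq-energy-r}), the speed estimate \eqref{W-p} of Lemma~\ref{P:W-p}, and the $p$-th moment estimate \eqref{p-moment} of Lemma~\ref{L:p-moment-estimate}. Each of the two auxiliary estimates contains a term $\varepsilon\iint|\nabla\rho^{(q+m-1)/2}|^2\,dx\,dt$, which will be absorbed into the corresponding dissipation term in the energy inequality by choosing $\varepsilon$ sufficiently small (as was done in Proposition~\ref{P:log-energy-speed}). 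The remaining terms on the right-hand side involve $\sup_t\|\rho(\cdot,t)\|_{L^q_x}^{\theta}$ for suitable $\theta<1$, which will be controlled either by Young's inequality absorbed into the LHS supremum of $\int\rho^q$, or handled iteratively.

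For case (i) with $1<q\le m$, the hypothesis \eqref{V-Lq-energy-speed} reduces exactly to \eqref{V-p-moment}, so Lemma~\ref{P:W-p} and Lemma~\ref{L:p-moment-estimate} apply directly at the exponent $q$, together with Proposition~\ref{P:Lq-energy}(i) whose applicability requires $q\ge m-1$. For case (i) with $q>m$, we have $\lambda_q=2=\lambda_m$, and since the auxiliary lemmas are stated only for $1\le q\le m$, we apply them at the parameter $m$: this requires $\sup_t\|\rho(\cdot,t)\|_{L^m_x}$ and $\iint|\nabla\rho^{(2m-1)/2}|^2\,dx\,dt$ to be bounded, which is exactly what Proposition~\ref{P:Lq-energy-r} yields for $r=m$ (since $m-1\le m\le q$). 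The required membership $V\in\mathcal{S}_{m,m}^{(q_1,q_2^{(m)})}$ is obtained from $V\in\mathcal{S}_{m,q}^{(q_1,q_2)}$ via H\"older's inequality in $t$, as in \eqref{q-qL} of Remark~\ref{R:S-tildeS}(iii); the restrictions $q_1\le \frac{2m}{m-1}$ and $q_2\le\frac{2m-1}{m-1}$ in \eqref{V-Lq-energy-speed} are precisely the ones that guarantee the transferred pair $(q_1,q_2^{(m)})$ falls in the admissible range $2\le q_2^{(m)}\le\frac{2m-1}{m-1}$ for the lemmas at $q=m$. Then the final $L^q$-estimate is obtained from Proposition~\ref{P:Lq-energy}(i) applied with $V\in\mathcal{S}_{m,q}^{(q_1,q_2)}$, combined with the moment and speed bounds now available.

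For case (ii), the embedding \eqref{norm-V-gradV} of Remark~\ref{R:S-tildeS}(ii) transfers $V\in\tilde{\mathcal{S}}_{m,q}^{(\tilde{q}_1,\tilde{q}_2)}$ to $V\in\mathcal{S}_{m,q}^{(q_1,q_2)}$ with $\tilde{q}_2=q_2$, and a direct check shows that the range \eqref{V-tilde-Lq-energy-speed} is matched (via this embedding, followed by H\"older in $t$ when $q>m$) to the hypotheses of Lemma~\ref{P:W-p} and Lemma~\ref{L:p-moment-estimate}; the energy estimate itself is then provided by Proposition~\ref{P:Lq-energy}(ii), which does not require $q\ge m-1$. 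For case (iii), Proposition~\ref{P:Lq-energy}(iii) gives the $L^q$-energy estimate for free (the drift contribution vanishes by integration by parts), and \eqref{V-Lq-divfree} again matches the hypotheses of the auxiliary lemmas, where the wider range $\lambda_q\le q_2\le\frac{\lambda_q(q+m-1)}{q+m-2}$ for $1<q\le m$ reflects that no absorption of $V$ against the energy dissipation is needed here. The main technical obstacle will be the bookkeeping around $q>m$: verifying that the $t$-H\"older transfer from $\mathcal{S}_{m,q}$ to $\mathcal{S}_{m,m}$ lands the pair $(q_1,q_2^{(m)})$ within the admissible range for the speed/moment lemmas is delicate and depends on the boundary conditions $q_1\le\frac{2m}{m-1}$ and $q_2\le\frac{2m-1}{m-1}$ being sharp, which is ultimately what forces the restricted range in \eqref{V-Lq-energy-speed}, \eqref{V-tilde-Lq-energy-speed}, and \eqref{V-Lq-divfree} when $q$ exceeds $m$.
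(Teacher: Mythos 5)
Your outline matches the paper's own proof: energy estimate from Proposition~\ref{P:Lq-energy} (or Proposition~\ref{P:Lq-energy-r} at $r=m$ when $q>m$), speed/moment estimates from Lemma~\ref{P:W-p} and Lemma~\ref{L:p-moment-estimate} applied at $\min\{q,m\}$, $\varepsilon$-absorption of the dissipation, H\"older transfer via \eqref{q-qL} when $q>m$, and Gr\"onwall to close. One imprecision: for $1<q\le m$ the hypothesis \eqref{V-Lq-energy-speed} does \emph{not} reduce to \eqref{V-p-moment}; rather (since the constraint $q_1\le\frac{2m}{m-1}$ is automatic when $q\le m$ and $q_2\ge 2$) it coincides with \eqref{V-Lq-energy}, which is \emph{strictly contained} in \eqref{V-p-moment} for $q<m$ — the paper states the correct relation, namely that \eqref{V-Lq-energy-speed} is the intersection of the two; your argument still closes because both inclusions hold, but you should verify \eqref{V-Lq-energy} explicitly rather than asserting the (false) equality with \eqref{V-p-moment}.
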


\begin{proof} Here we carry computations for each cases.

$(i)$ First, let $ m-1 \leq q \leq m$. We add \eqref{Lq-energy} and \eqref{p-moment} with $\varepsilon = \frac{mq(q-1)}{(m+q-1)^2}$:
\begin{equation*} 
\begin{aligned}
&\sup_{0\leq t \leq T} \int_{\mathbb{R}^d} \left(\rho^{q} + \rho \langle x \rangle^p\right)\,dx
 + \frac{m q (q-1)}{(m+q-1)^2} \iint_{Q_T} \abs{\nabla\rho^{\frac{m+q-1}{2}}}^2\,dx\,dt \\
&\leq \int_{\mathbb{R}^d} \left(\rho_{0}^{q} + \rho_{0} \langle x \rangle^p\right)\,dx
 + \iint_{Q_T} \rho \langle x \rangle^p \,dx\,dt
 + C (\|V\|_{S_{m,q}^{(q_1,q_2)}}, \|\rho_0\|_{L^{q}(\mathbb{R}^d)}).
\end{aligned}
\end{equation*}
To handle the $p$-th moment term on RHS, we apply Gr\"{o}nwall's inequality as in the proof of Proposition~\ref{P:log-energy}:
\begin{equation}\label{apriori2-05}
\sup_{0\leq t \leq T} \int_{\mathbb{R}^d} \left(\rho^{q} + \rho \langle x \rangle^p\right)\,dx
 + \frac{m q (q-1)}{(m+q-1)^2} \iint_{Q_T} \abs{\nabla\rho^{\frac{m+q-1}{2}}}^2\,dx\,dt \leq C,
\end{equation}
where $C=C(\|V\|_{S_{m,q}^{(q_1,q_2)}}, \int_{\mathbb{R}^d} \left(\rho_{0}^{q} + \rho_{0} \langle x \rangle^p\right)\,dx)$.
Then we obtain \eqref{Lq-energy-speed} by adding \eqref{W-p} and \eqref{V-p} with $2\varepsilon = \frac{mq(q-1)}{(m+q-1)^2}$ to \eqref{apriori2-05}. We note that \eqref{V-Lq-energy-speed} is the intersected region of \eqref{V-p-moment} and \eqref{V-Lq-energy}.

Now let $q > m$, so ${\lambda_q} =2$. Then $V$ satisfying \eqref{V-Lq-energy-speed} also holds \eqref{V-Lq-energy}. Therefore, we have an estimate as in \eqref{Lq-energy}:
\begin{equation}\label{energy10}
\sup_{0\leq t \leq T} \int_{\mathbb{R}^d} \rho^q \,dx + \iint_{Q_T} \abs{\nabla \rho^{\frac{q+m-1}{2}}}^2 \,dx\,dt \leq C (\|V\|_{\mathcal{S}_{m,q}^{(q_1, q_{2})}}, \|\rho_0\|_{L^{q}(\mathbb{R}^d)}).
\end{equation}
Furthermore, it is true that $\rho_0 \in L^m (\mathbb{R}^d)$ (in fact, $\|\rho_0\|_{L^{m}(\mathbb{R}^d)}\leq c(\|\rho_0\|_{L^{1}(\mathbb{R}^d)}, \|\rho_0\|_{L^{q}(\mathbb{R}^d)})$) because $\rho_0 \in L^q(\bbr^d)\cap \mathcal{P}_p (\bbr^d)$ and assumption $q>m$. Then the corresponding energy estimate under $V$ satisfying \eqref{V-Lq-energy-speed} is
\begin{equation}\label{energy12}
\sup_{0\leq t \leq T} \int_{\mathbb{R}^d} \rho^m \,dx + \iint_{Q_T} \abs{\nabla \rho^{\frac{2m-1}{2}}}^2 \,dx\,dt \leq C (\|V\|_{\mathcal{S}_{m,m}^{(q_1, q_{2})}}, \|\rho_0\|_{L^{m}(\mathbb{R}^d)}),
\end{equation}
in which $q$ in \eqref{energy10} is replaced by $m$ under \eqref{V-Lq-energy-speed}. Then for $q_1$ holding \eqref{V-Lq-energy-speed}, the combination of \eqref{energy10} and \eqref{energy12} provides
\begin{equation*} 
    \sup_{0\leq t \leq T} \int_{\mathbb{R}^d} \rho^q \,dx  + \iint_{Q_T} \abs{\nabla \rho^{\frac{2m-1}{2}}}^2 \,dx\,dt \leq C (\|V\|_{\mathcal{S}_{m,q}^{(q_1, q_{2})}}, \|\rho_0\|_{L^{q}(\mathbb{R}^d)}),
\end{equation*}
because of \eqref{q-qL} in Remark~\ref{R:S-tildeS} (iii).

Now we recall the speed and $p$-th moment estimates \eqref{P:W-p}, \eqref{V-p-moment}, and \eqref{p-moment} at $q=m$ that, for $1 < p \leq 2$,
\[
\iint_{Q_T} \left(\abs{\frac{\nabla \rho^m}{\rho}}^p + |V|^p \right)\rho \,dx\,dt
\leq \varepsilon \iint_{Q_T} \abs{ \nabla \rho^{\frac{2m-1}{2}}}^2 \,dx\,dt
  + C(\|V\|_{\mathcal{S}_{m,m}^{(q_1, \bar{q}_{2})}}, \|\rho_0\|_{L^m (\mathbb{R}^d)}),
\]
and
\[
\begin{aligned}
\sup_{0\leq t \leq T}\int_{\mathbb{R}^d} \, \rho  \langle x \rangle^{p} \,dx
&\leq \int_{\mathbb{R}^d} \, \rho_0 \langle x \rangle^{p} \,dx
 +\iint_{Q_T} \rho\langle x \rangle^{p} \,dx\,dt \\
&\quad+ \varepsilon \iint_{Q_T} \abs{\nabla \rho^{\frac{2m-1}{2}}}^2 \,dx\,dt
 + C(\|V\|_{\mathcal{S}_{m,m}^{(q_1, q_{2})}},\|\rho_0\|_{L^m(\mathbb{R}^d)}),
\end{aligned}
\]
for any $\varepsilon > 0$ where $V$ satisfies \eqref{V-Lq-energy-speed} which also holds \eqref{V-p-moment}. Then the combination with \eqref{energy10} using \eqref{q-qL} in Remark~\ref{R:S-tildeS}$(iii)$implies the follwoing estimate
\begin{equation*}
\begin{aligned}
\sup_{0\leq t \leq T} &\int_{\mathbb{R}^d} \left(\rho^q + \rho  \langle x \rangle^{p}\right)\,dx +  \iint_{Q_T} \left(\abs{\frac{\nabla \rho^m}{\rho}}^p + |V|^p \right)\rho \,dx\,dt \\
& \leq \int_{\mathbb{R}^d} \, \rho_0 \langle x \rangle^{p} \,dx
 +\iint_{Q_T} \rho\langle x \rangle^{p} \,dx\,dt
 + C (\|V\|_{\mathcal{S}_{m,q}^{(q_1, q_{2})}}, \|\rho_0\|_{L^{q}(\mathbb{R}^d)}).
\end{aligned}
\end{equation*}
By the Gr\"{o}nwall's inequality, we complete the proof to obtain \eqref{Lq-energy-speed} under \eqref{V-Lq-energy-speed}.

$(ii)$  By following similar computations as in above case, we can obtain also \eqref{Lq-energy-speed}. The condition on $V$ in \eqref{V-tilde-Lq-energy-speed} follows as the intersected region of \eqref{V-p-moment}. \eqref{V-tilde-Lq}, and \eqref{tilde-q2}. Because ${\lambda_q}$ behaves differently depending on whether $1<q\leq m$ or $q>m$, \eqref{V-p-moment} and \eqref{V-tilde-Lq} determines the upper bound of $\tilde{q}_2$ in \eqref{V-Lq-energy-speed}.
By considering \eqref{tilde-q2} which affects the lower bound of $q_2$ in \eqref{V-Lq-energy-speed}, hence $C$ depends on $\|V\|_{\tilde{\mathcal{S}}_{m,q}^{(\tilde{q}_1, \tilde{q}_2)}}$.

$(iii)$ To yield \eqref{Lq-energy-speed}, we add \eqref{p-moment}, \eqref{W-p}, and \eqref{V-p} to \eqref{Lq-energy-divfree} by following above computations. Because \eqref{Lq-energy-divfree} is independent of $V$, only \eqref{V-p-moment} is required in this case, that is, \eqref{V-Lq-divfree}.
\end{proof}


Now let us switch a gear to obtain speed and moment estimates based on embedding arguments in time direction assuming $\rho_0 \in L^{q}(\bbr^d) \cap \mathcal{P}_{p}(\bbr^d)$. The embedding argument is possible beacuse of the finite time interval.

\begin{proposition}\label{P:Energy-embedding} (Embedding)
Assume the same hypothesis as in Proposition~\ref{P:Energy-speed}.
\begin{itemize}
\item[(i)]  Let $q > m$. Assume that
\begin{equation}\label{V-Lq-embedding}
V \in \mathcal{S}_{m,q}^{(q_1,q_2)}\cap \calC^\infty(\bar{Q}_T) \quad  \text{for} \quad
\begin{cases}
\frac{2m-1}{m-1} < q_2 \leq \frac{q+m-1}{m-1},  & \text{ if } d> 2, \vspace{1 mm}\\
\frac{2m-1}{m-1} \leq q_2 < \frac{q+m-1}{m-1},  & \text{ if } d= 2.
\end{cases}
\end{equation}
For given $(q_1, q_2)$ in \eqref{V-Lq-embedding}, there exists a constant $q_{2}^{\ast} = q_{2}^{\ast}(q_1) \in [2, q_2)$ such that $ V \in \mathcal{S}_{m, q^\ast}^{(q_1, q_2^{\ast})}$ satisfies \eqref{V-Lq-energy-speed} and the following estimate, for $1< p \leq {\lambda_{q^\ast}}:= \min \left\{ 2, 1 + \frac{d(q^\ast -1)+q^\ast}{d(m-1)+q^\ast} \right\}$,
\begin{equation}\label{Lq-energy-speed-embedding}
 \sup_{0 \leq t \leq T} \int_{\mathbb{R}^d} \left(\rho^{q^\ast} + \rho\langle x \rangle^p\right) \,dx
 + \iint_{Q_T} \left(\abs{\frac{\nabla \rho^m}{\rho}}^p+|V|^{p}\right ) \rho  \,dx\,dt \le C,
\end{equation}
where
$C = C (\|V\|_{\mathcal{S}_{m,q^{\ast}}^{(q_1, q^{\ast}_2)}}, \int_{\mathbb{R}^d}  (\rho_0^{q^\ast} + \rho_0 \langle x \rangle^p )\,dx )$.

\item[(ii)] Let $q> m$. Assume that
 \begin{equation}\label{V-tilde-Lq-embedding}
V\in \tilde{\mathcal{S}}_{m,q}^{(\tilde{q}_1, \tilde{q}_2)}\cap \calC^\infty(\bar{Q}_T) \quad  \text{for} \quad
\begin{cases}
\frac{2m-1}{m-1} < \tilde{q}_2 \leq \frac{q+m-1}{m-1},   & \text{ if } d> 2, \vspace{1 mm}\\
\frac{2m-1}{m-1} \leq \tilde{q}_2 < \frac{q+m-1}{m-1},  & \text{ if } d= 2 .
\end{cases}
\end{equation}
For given $(\tilde{q}_1, \tilde{q}_2)$ in \eqref{V-tilde-Lq-embedding}, there exists a constant $\tilde{q}_{2}^{\ast} = \tilde{q}_{2}^{\ast}(\tilde{q}_1) \in (\frac{2+q^{\ast}_{m,d}}{1+q^{\ast}_{m,d}}, \tilde{q}_2)$ such that $ V \in \tilde{\mathcal{S}}_{m, q^\ast}^{(\tilde{q}_1, \tilde{q}_2^{\ast})}$ satisfies \eqref{V-tilde-Lq-energy-speed} and
the estimate \eqref{Lq-energy-speed-embedding} holds for $1< p \leq {\lambda_{q^\ast}}$ with $C = C (\|V\|_{\tilde{\mathcal{S}}_{m,\tilde{q}^{\ast}}^{(\tilde{q}_1, \tilde{q}^{\ast}_2)}}, \int_{\mathbb{R}^d}  (\rho_0^{q^\ast} + \rho_0 \langle x \rangle^p )\,dx )$.

\item[(iii)] Let $\nabla\cdot V=0$. Assume that
\begin{equation}\label{V-Lq-divfree-embedding}
V\in \mathcal{S}_{m,q}^{(q_1, q_2)}\cap \calC^\infty(\bar{Q}_T) \quad \text{for}\quad
\begin{cases}
\frac{2+q_{m,d}}{1+q_{m,d}} < q_2 \leq \frac{{\lambda_1} m}{m-1}, & \text{ if }  d>2, \vspace{1 mm}\\
\frac{2+q_{m,2}}{1+q_{m,2}} < q_2 < \frac{{\lambda_1} m}{m-1}, & \text{ if }   d=2.
\end{cases}
\end{equation}
For given $(q_1, q_2)$ in \eqref{V-Lq-divfree-embedding}, there exists a constant $q_{2}^{\ast} = q_{2}^{\ast}(q_1) \in [2, q_2]$ such that $ V \in \mathcal{S}_{m, q^\ast}^{(q_1, q_2^{\ast})}$ satisfies \eqref{V-Lq-divfree} and the estimate \eqref{Lq-energy-speed-embedding} holds for $1< p \leq {\lambda_{q^\ast}}$ with
$C = C (\|V\|_{\mathcal{S}_{m,q^{\ast}}^{(q_1, q^{\ast}_2)}}, \, \int_{\mathbb{R}^d}  (\rho_0^{q^\ast} + \rho_0 \langle x \rangle^p )\,dx )$.
\end{itemize}

\end{proposition}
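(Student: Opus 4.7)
The strategy is to reduce each of the three cases to the corresponding part of Proposition~\ref{P:Energy-speed} via a time-H\"older embedding. On the finite interval $[0,T]$, H\"older's inequality in $t$ (the content of \eqref{q-qL} in Remark~\ref{R:S-tildeS}(iii)) gives the continuous inclusion
\begin{equation*}
\mathcal{S}_{m,q}^{(q_1,q_2)} \hookrightarrow \mathcal{S}_{m,q^*}^{(q_1, q_2^*)}
\end{equation*}
for any pair $(q^*, q_2^*)$ with $q_2^* \le q_2$ and $(q_1, q_2^*)$ on the scaling line of the smaller class, i.e.\ $\tfrac{d}{q_1}+\tfrac{2+d(m-1)/q^*}{q_2^*} = 1 + d(m-1)/q^*$. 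Keeping $q_1$ fixed and regarding $q_2^*$ as an implicit function of $q^*$, the identity above makes $q_2^*$ continuous and strictly monotone in $q^*$, interpolating between $q_2^* = q_2$ at $q^* = q$ and smaller values as $q^*$ decreases.

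For case (i), the hypothesis \eqref{V-Lq-embedding} places $(1/q_1, 1/q_2)$ in the small region (the triangle $\mathcal{R}(bCB)$ of Figure 4-(i) or 5-(i)) lying below the admissibility region $\mathcal{R}(\mathbf{ABCDEF})$ of \eqref{V-Lq-energy-speed}. The plan is to take $q^* = m$; the scaling identity then yields the explicit value
\begin{equation*}
q_2^* \;=\; \frac{2m + d(m-1)}{m + d(m-1) - dm/q_1},
\end{equation*}
and a direct computation shows $q_2^* \in [2, \tfrac{2m-1}{m-1}]$ precisely under the bounds on $q_1$ imposed by \eqref{V-Lq-embedding}. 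Since $q^* = m < q$, the H\"older embedding \eqref{q-qL} produces $V \in \mathcal{S}_{m,m}^{(q_1, q_2^*)}$ with norm controlled by $T^{1/q_2^* - 1/q_2}\|V\|_{\mathcal{S}_{m,q}^{(q_1,q_2)}}$, and $(q_1, q_2^*)$ then sits inside \eqref{V-Lq-energy-speed} at exponent $q^* = m$. The hypothesis $\rho_0 \in L^{q^*}(\bbr^d) \cap \calP_p(\bbr^d)$ follows from $\rho_0 \in L^1 \cap L^q \cap \calP_p$ by interpolation in $L^p$ spaces. Proposition~\ref{P:Energy-speed}(i) applied with $q^*$ in place of $q$ then yields \eqref{Lq-energy-speed-embedding} with the stated dependence of the constant on $\|V\|_{\mathcal{S}_{m,q^*}^{(q_1,q_2^*)}}$.

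Cases (ii) and (iii) follow the same template. For (ii) the scaling identity for $\tilde{\mathcal{S}}_{m,q}^{(\tilde{q}_1,\tilde{q}_2)}$ reads $\tfrac{d}{\tilde{q}_1} + \tfrac{2+d(m-1)/q^*}{\tilde{q}_2^*} = 2 + d(m-1)/q^*$; the same monotonicity-in-$q^*$ argument, followed by the choice $q^* = m$, produces a $\tilde{q}_2^*$ in the interval $(\tfrac{2+q^*_{m,d}}{1+q^*_{m,d}}, \tfrac{2m-1}{m-1}]$ required by \eqref{V-tilde-Lq-energy-speed}, and Proposition~\ref{P:Energy-speed}(ii) closes the case. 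For (iii), the admissibility region \eqref{V-Lq-divfree} under $\nabla\cdot V = 0$ is considerably wider, so the continuity-in-$q^*$ traversal provides more slack: depending on whether $q > m$ with large $q_1$, one may even settle on a $q^* > m$ and apply the branch $q_2^* \le \tfrac{2m-1}{m-1}$ of \eqref{V-Lq-divfree}, otherwise one slides down to $q^* \le m$ and uses the $\tfrac{\lambda_{q^*}(q^*+m-1)}{q^*+m-2}$ branch. In either subcase, Proposition~\ref{P:Energy-speed}(iii) supplies \eqref{Lq-energy-speed-embedding}.

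The only substantive work, and hence the main obstacle, is the algebraic bookkeeping: verifying that the explicit $q_2^*(q_1, q^*)$ coming from the scaling identity lands inside the target admissibility interval of the corresponding hypothesis \eqref{V-Lq-energy-speed}, \eqref{V-tilde-Lq-energy-speed}, or \eqref{V-Lq-divfree}, and that the resulting $q^*$ is strictly greater than $1$ so that $\lambda_{q^*} > 1$ is a legitimate cap on the admissible $p$. Both checks reduce to elementary inequalities that may be read off from the boundary equations of Figures 4-(i), 5-(i), 7-(i), and 9-(i); no new analytic input is required beyond H\"older's inequality, the $L^q$-interpolation on the initial data, and the already-established Proposition~\ref{P:Energy-speed}.
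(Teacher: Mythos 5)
Your reduction-via-time-H\"older strategy is exactly what the paper uses, and the bookkeeping apparatus (interpolating $\rho_0$ in $L^{q^\ast}$, invoking Proposition~\ref{P:Energy-speed} at the smaller exponent, using \eqref{q-qL} to control the subscaling norm) is correct. However, your concrete choice $q^\ast = m$ in cases (i) and (ii) does not work, and the claim that the resulting $q_2^\ast \in [2, \tfrac{2m-1}{m-1}]$ ``precisely under the bounds on $q_1$ imposed by \eqref{V-Lq-embedding}'' is false.

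To see the failure: with $q^\ast = m$ your explicit formula gives $q_2^\ast \geq 2$ if and only if $q_1 \leq \tfrac{2m}{m-1}$. But the $q_1$-range allowed by \eqref{V-Lq-embedding} extends up to $q_1 < \tfrac{dq(2m-1)}{q + dm(m-1)}$ (the value at $q_2 = \tfrac{2m-1}{m-1}$ on $\mathcal{S}_{m,q}^{(q_1,q_2)}$), and this exceeds $\tfrac{2m}{m-1}$ once $q$ is large. For instance, with $m=2$, $d=3$, $q=10$, $q_1 = 5$: the point $(q_1,q_2)$ is legitimately in the region \eqref{V-Lq-embedding}, but your $q_2^\ast = \tfrac{2m+d(m-1)}{m+d(m-1)-dm/q_1} = \tfrac{7}{3.8} \approx 1.84 < 2$, so the projected pair $(q_1,q_2^\ast)$ does not lie in the admissible region \eqref{V-Lq-energy-speed} for $q^\ast = m$. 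The same issue propagates to case (ii).

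The paper avoids this by choosing $q_2^\ast$ first rather than $q^\ast$: for instance $q_2^\ast = \tfrac{2m-1}{m-1}$, which is $\geq 2$ automatically, and then reading off $q^\ast$ from the requirement that $(q_1, q_2^\ast)$ lie on $\mathcal{S}_{m,q^\ast}^{(q_1,q_2^\ast)}$. One computes $q^\ast = \tfrac{md(m-1)q_1}{d(2m-1)-q_1}$, which is $> m$ (not $= m$) for the relevant $q_1$, and also $< q$, so interpolation of $\rho_0$ in $L^{q^\ast}$ and the cap $q_2^\ast \leq \tfrac{2m-1}{m-1}$ from the $\{q^\ast > m\}$ branch of \eqref{V-Lq-energy-speed} are both satisfied. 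In other words, you need to let $q^\ast$ float above $m$; pinning it at $q^\ast = m$ covers only the $q_1 \leq \tfrac{2m}{m-1}$ subrange. Your treatment of case (iii) already acknowledges this flexibility, so the fix is to apply the same reasoning to (i) and (ii).
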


\begin{remark} To capture the idea of searching $q^\ast_2$ and $q^\ast$ in above proposition, see Figure 4-$(e)$, 5-$(e)$, 7-$(e)$, 8-$(e)$, and 9-$(e)$ in Appendix~\ref{Appendix:fig}. For instance, in case $(i)$, for $\frac{d(2m-1)}{1+d(m-1)}<q_1 < d(2m-1)$, one can choose
\begin{equation*} 
q_2^\ast = \frac{2m-1}{m-1} \quad \Longrightarrow \quad q^\ast = \frac{md(m-1)}{d(2m-1)/q_1 -1},
\end{equation*}
where $(q_1, q_2^\ast)$ lies on $\mathcal{S}_{m,q^\ast}^{(q_1, q_2^\ast)}$.
Moreover, in case $(ii)$, for $\frac{d(2m-1)}{2m+d(m-1)}<\tilde{q}_1 < \frac{d(2m-1)}{2m}$, one may choose
\[
\tilde{q}_2^\ast = \frac{2m-1}{m-1} \quad \Longrightarrow \quad q^\ast = \frac{md(m-1)}{d(2m-1)/\tilde{q}_1  - 2m},
\]
so that $(\tilde{q}_1, \tilde{q}^\ast_2)$ lies on $\tilde{\mathcal{S}}_{m,q^\ast}^{(\tilde{q}_1, \tilde{q}^\ast_2)}$.
In case $(iii)$, we note that, for $q_1 < \frac{md}{1+d(m-1)}$ (corresponding to $q_2 > \frac{{\lambda_1} m}{m-1}$), there is no $p$-th moment and speed estimate although an energy inequality in \eqref{Lq-energy-divfree} is possible to obtain.
\end{remark}

\begin{proof}
$(i)$ For any $(q_1, q_2)$ in \eqref{V-Lq-embedding}, one can find $q_{2}^\ast \in [2, q_2)$ satisfying
\begin{equation}\label{V-Lq-energy-speed*}
 \begin{cases}
        2 \leq q_2^\ast \leq  \frac{q^\ast +m-1}{m-1} \,\delta_{\{1 < q^\ast \leq m\}} + \frac{2m-1}{m-1}\,\delta_{ \{q^\ast >  m \} },  & \text{ if } d > 2, \vspace{1 mm}\\
        2 \leq q_2^\ast <  \frac{q^\ast+m-1}{m-1}\,\delta_{\{1 < q^\ast \leq m\}} + \frac{2m-1}{m-1} \,\delta_{ \{q^\ast >  m \} },  & \text{ if } d = 2,
    \end{cases}
\end{equation}
which also determines $q^\ast \in (1, q)$ such that $V \in \mathcal{S}_{m,q^\ast}^{(q_1,q_2^\ast)}$.
Because $q^\ast \in (1, q)$, the interpolation provides that  $ \|\rho_0\|_{L^{q^\ast}(\mathbb{R}^d)} \leq c( \|\rho_{0}\|_{L^{1}(\mathbb{R}^d)}, \|\rho_{0}\|_{L^{q}(\mathbb{R}^d)})$ which means we have \eqref{Lq-energy} for $q^\ast$ in the place of $q$. Also because \eqref{V-Lq-energy-speed*} is same with  \eqref{V-Lq-energy-speed} for $q^\ast$, therefore, we are able to deduce \eqref{Lq-energy-speed-embedding} in terms of $q^\ast$, which completes the proof.

$(ii)$  Similar as above, for any $(\tilde{q}_1, \tilde{q}_2)$ satisfying \eqref{V-tilde-Lq-embedding}, one can find $\tilde{q}_2^\ast < \tilde{q}_2$ and $q^\ast \in (1, q)$ satisfying \eqref{V-tilde-Lq-embedding} where $q^\ast_2$ and $q^\ast$ are replaced for $q_2$ and $q$. Then we have the estimate in Proposition~\ref{P:Energy-speed} $(ii)$ with $q^\ast$ and ${\lambda_{q^\ast}}$ instead of $q$ and ${\lambda_q}$.

$(iii)$ In this case, we first note that \eqref{V-Lq-divfree-embedding} contains the region by \eqref{V-Lq-divfree}. Therefore, if $(q_1, q_2)$ satisfies \eqref{V-Lq-divfree}, then there is nothing to prove but apply results in Proposition~\ref{P:Energy-speed} $(iii)$ directly.
Now let $(q_1, q_2)$ holds \eqref{V-Lq-divfree-embedding} but \eqref{V-Lq-divfree}, then we apply the similar arguments as above. One may choose $q_2^\ast < q_2$ and $q^\ast \in (1, q)$ satisfying \eqref{V-Lq-divfree} with $q^\ast_2$ and $q^\ast$ in the places of $q_2$ and $q$. Then the estimate as in Proposition~\ref{P:Energy-speed} $(iii)$ with $q^\ast$ and ${\lambda_{q^\ast}}$ instead of $q$ and ${\lambda_q}$ completes the proof.
\end{proof}

\subsection{Estimates of temporal and spatial derivatives.}\label{SS:Temporal}

Two main propositions in this subsection, we estimate temporal and spatial derivatives of $\rho$ using the integrability of $\rho$ from energy estimates and PDE \eqref{E:Main}.
Those estimates will be crucially used to show the strong convergence of nonlinear diffusion of approximated solutions.

Before we introduce the first proposition, we specify the integrability of $\rho$ and $V \rho$  based on energy estimate when $q=1$ and parabolic embedding lemma.
\begin{remark}\label{Remark : AL-1}
From \eqref{log-energy} in Proposition~\ref{P:log-energy}, we observe that
 $\rho \in L^{\frac{md +2}{d}}_{x,t}$ by Lemma~\ref{T:pSobolev} for $q=1$, that is,
    \begin{equation}\label{E:rho:1}
        \iint_{Q_{T}} \rho^{\frac{md+2}{d}}  \,dxdt
       \leq
         \sup_{0 \leq t \leq T}  \int_{\mathbb{R}^d} \rho (\cdot, t)\,dx
 +  \iint_{Q_T} \left|\nabla \rho^{\frac{m}{2}}\right|^2 \,dx\,dt \leq C,
    \end{equation}
    where the constant $C$ depends on $\int_{\bbr^d} (\rho_0 \log \rho_0 + \rho_0 \langle x\rangle^p)\,dx$ and either $\|V\|_{\mathcal{S}_{m,1}^{(q_1,q_2)}}$ or $\|V\|_{\tilde{\mathcal{S}}_{m,1}^{(\tilde{q}_1,\tilde{q}_2)}}$.
Then it holds
 \begin{equation}\label{E:Vrho:1-1}
\iint_{Q_T} \left|V\rho\right|^{\frac{md+2}{md+1}}\,dx\,dt
\leq \|V\|_{\mathcal{S}_{m,1}^{(q_1,q_2)}}^{\frac{md+2}{md+1}} \|\rho\|_{L^{r_1, r_2}_{x,t}}^{\frac{md+2}{md+1}},
 \end{equation}
for
$\frac{1}{r_1} = \frac{md+1}{md+2} - \frac{1}{q_1}$ and $\frac{1}{r_2} = \frac{md+1}{md+2} - \frac{1}{q_2}$.
For the pair $(q_1, q_2)$ satisfying  one of assumptions on $V$ in Proposition~\ref{P:log-energy}, the pair $(r_1, r_2)$ satisfies \eqref{q-r1r2} in Lemma~\ref{P:L_r1r2} for $q=1$ and the norm $\|\rho\|_{L^{r_1, r_2}_{x,t}}$ is bounded by the same $C$ by \eqref{norm-q-r1r2}.
\end{remark}

In the following proposition, we control the temporal and spatial derivatives of $\rho$ for $q=1$.

\begin{proposition}\label{Proposition : AL-1}
Let $1 < m \leq 2$. Assume that $(a)$ $V\in\mathcal{S}_{m,1}^{(q_1, q_2)}$ satisfying \eqref{V-log-energy}, or $(b)$ $V\in \tilde{\mathcal{S}}_{m,1}^{(\tilde{q}_1, \tilde{q}_2)}$ satisfying \eqref{V-tilde-log-energy}, or $(c)$ a divergence-free $V$ satisfying \eqref{V-log-divfree}.
Suppose that $\rho$ is a regular solution of \eqref{E:Main} with sufficiently fast decay at infity in spatial variables with $\rho_0 \in \calP_p (\bbr^d)\cap \mathcal{C}^{\alpha}(\bbr^d)$ for $1< p\leq {\lambda_1}$ holding \eqref{log-initial}.
Then it holds that
\begin{equation}\label{rho_t_1}
\norm{\rho_t}_{W_{x}^{-1, \frac{md+2}{md+1}}L_{t}^{\frac{md+2}{md+1}}}
+\norm{\nabla  \rho}_{L_{x,t}^{\frac{md+2}{d+1}}}
< C\left(\|V\|_{\mathcal{S}_{m,1}^{(q_1,q_2)}}, \int_{\mathbb{R}^d} \rho_0 \left(1+ \log \rho_0 + \langle x \rangle^p \right) \,dx\right),
\end{equation}
where $C$ depends on $\int_{\mathbb{R}^d} \rho_0 \left(1+ \log \rho_0 + \langle x \rangle^p \right)\,dx$ and either $\|V\|_{\mathcal{S}_{m,1}^{(q_1,q_2)}}$ or $\|V\|_{\tilde{\mathcal{S}}_{m,1}^{(\tilde{q}_1,\tilde{q}_2)}}$.
\end{proposition}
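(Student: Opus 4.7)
The plan is to rewrite the equation \eqref{E:Main} as $\rho_t = \nabla\cdot(\nabla\rho^m - V\rho)$ and estimate the ``flux'' $F := \nabla\rho^m - V\rho$ in $L^{q}_{x,t}$ with $q=\frac{md+2}{md+1}$; this immediately yields the bound on $\rho_t$ in $W^{-1,q}_x L^q_t$ by duality. The spatial gradient bound will then be obtained by a separate Hölder argument that redistributes the weight in $\nabla\rho^{m/2}$.

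First I would collect the available integrability: from \eqref{log-energy} in Proposition~\ref{P:log-energy} we have $\nabla\rho^{m/2}\in L^2_{x,t}$ and $\rho\in L^\infty_t L^1_x$, so Lemma~\ref{T:pSobolev} with $p=2$ and $q=1$ gives $\rho\in L^{(md+2)/d}_{x,t}$ as in \eqref{E:rho:1}. To control $V\rho$, in case $(a)$ and $(c)$ I apply Hölder as in \eqref{E:Vrho:1-1}, with exponents $(r_1,r_2)$ dual to $(q_1,q_2)$ on the $L^q_{x,t}$-scale; Lemma~\ref{P:L_r1r2} together with the scaling identity for $V$ in $\mathcal S_{m,1}^{(q_1,q_2)}$ makes the pair $(r_1,r_2)$ admissible, and $\|\rho\|_{L^{r_1,r_2}_{x,t}}$ is bounded by the energy via \eqref{norm-q-r1r2}. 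In case $(b)$, I first invoke the embedding \eqref{norm-V-gradV} to convert $\|\nabla V\|_{\tilde{\mathcal S}_{m,1}^{(\tilde q_1,\tilde q_2)}}$ into $\|V\|_{\mathcal S_{m,1}^{(q_1,q_2)}}$ with $\tilde q_1=\frac{dq_1}{d+q_1}$, $\tilde q_2=q_2$, which reduces to case $(a)$.

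Next, for the diffusive flux I use the identity $\nabla\rho^m = 2\rho^{m/2}\nabla\rho^{m/2}$ and Hölder with $\tfrac{1}{q}=\tfrac{1}{a}+\tfrac{1}{2}$, giving $a=\frac{2(md+2)}{md}$, so that $\|\rho^{m/2}\|_{L^a_{x,t}}^{a}=\|\rho\|_{L^{(md+2)/d}_{x,t}}^{(md+2)/d}$ is already controlled. Thus
\begin{equation*}
\|\nabla\rho^m\|_{L^{q}_{x,t}} \leq 2\,\|\rho\|_{L^{(md+2)/d}_{x,t}}^{m/2}\,\|\nabla\rho^{m/2}\|_{L^2_{x,t}} \leq C,
\end{equation*}
and combining with the $V\rho$ bound produces the first half of \eqref{rho_t_1}. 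For the $\nabla\rho$-bound I write $\nabla\rho=\tfrac{2}{m}\rho^{1-m/2}\nabla\rho^{m/2}$ and apply Hölder with $\tfrac{d+1}{md+2}=\tfrac{1}{b}+\tfrac{1}{2}$, which yields $b=\frac{2(md+2)}{d(2-m)}$ when $m<2$; the miracle is that $\tfrac{b(2-m)}{2}=\tfrac{md+2}{d}$, so $\|\rho^{1-m/2}\|_{L^b_{x,t}}$ is exactly what the embedding \eqref{E:rho:1} provides, giving the second half of \eqref{rho_t_1}.

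The main technical obstacle I foresee is the endpoint $m=2$, where the exponent $b$ formally blows up because $\rho^{1-m/2}\equiv 1$; fortunately this degenerate case is trivial since then $\frac{md+2}{d+1}=2$ and $\nabla\rho=\nabla\rho^{m/2}$ up to a constant, so $\nabla\rho\in L^2_{x,t}$ directly from the energy estimate. A secondary bookkeeping issue is verifying in case $(b)$ that the pair $(q_1,q_2)$ obtained by the embedding from \eqref{V-tilde-log-energy} genuinely lies in the admissible range \eqref{V-log-energy} used for the $V\rho$-estimate; this will follow from \eqref{tilde-q2} combined with the scaling relation for $\tilde{\mathcal S}_{m,1}^{(\tilde q_1,\tilde q_2)}$, so no new analytic input is needed beyond the arithmetic check.
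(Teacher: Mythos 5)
Your proposal is correct and follows essentially the same route as the paper's proof: rewrite $\rho_t = \nabla\cdot\left(2\rho^{m/2}\nabla\rho^{m/2} - V\rho\right)$, bound the flux in $L^{\frac{md+2}{md+1}}_{x,t}$ using the energy estimate \eqref{log-energy} together with the parabolic embedding $\rho\in L^{\frac{md+2}{d}}_{x,t}$ and the $V\rho$ bound \eqref{E:Vrho:1-1} from Remark~\ref{Remark : AL-1} (which already covers all three cases $(a)$--$(c)$, so your case-$(b)$ reduction via \eqref{norm-V-gradV} is a minor but valid variant), and then estimate $\nabla\rho = \tfrac{2}{m}\rho^{(2-m)/2}\nabla\rho^{m/2}$ by the same Hölder arithmetic with the $m=2$ endpoint treated directly. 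The exponent computations $\tfrac{md+1}{md+2}=\tfrac{md}{2(md+2)}+\tfrac12$ and $\tfrac{d+1}{md+2}=\tfrac{d(2-m)}{2(md+2)}+\tfrac12$ are exactly those in the paper.
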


\begin{remark}\label{R:AL-1}
Moreover, the following is straightforward by applying Lemma~\ref{T:Sobolev} to \eqref{rho_t_1} with $p= \left(\frac{md+2}{d+1}\right)^{\ast}$,
\begin{equation*} 
\rho \in L^{p, \frac{md+2}{d+1}}_{x,t} \quad \text{for} \quad   p = \frac{d(md+2)}{d(d-m)+(d-2)}.
\end{equation*}
The estimate  \eqref{rho_t_1} is essential to show the strong convergence of approximated solutions by the compactness arguments via Aubin-Lions Lemma (see Section~\ref{Exist-weak}).
\end{remark}

\begin{proof}
Thanks to \eqref{E:rho:1}, we observe that $\nabla \rho^{\frac{m}{2}} \in L^{2}_{x,t}$ and $\rho^{\frac{m}{2}} \in L^{\frac{2(md+2)}{md}}_{x,t}$.
Hence we have
\begin{equation}\label{Aug13-10}
\rho^{\frac{m}{2}} \nabla \rho^{\frac{m}{2}} \in L^{\frac{md+2}{md+1}}_{x,t} \quad \text{and}\quad V\rho \in L^{\frac{md+2}{md+1}}_{x,t}
\end{equation}
because of $\frac{md+1}{md+2}=\frac{md}{2(md+2)} + \frac{1}{2}$ and \eqref{E:Vrho:1-1}.
 By rewriting PDE \eqref{E:Main} in the form of $
\rho_t = \nabla \cdot \left( 2 \rho^{\frac{m}{2}}\nabla \rho^{\frac{m}{2}} - V \rho\right)$, it follows via \eqref{Aug13-10} that $\rho_t\in {W^{-1, \frac{md+2}{md+1}}L_{t}^{\frac{md+2}{md+1}}}$ and $\rho_t$ satisfies \eqref{rho_t_1}.

 When $m=2$, the it is clear that $\nabla \rho \in L^{2}_{x,t}$.
 Because $\rho^{\frac{2-m}{2}}\in L^{\frac{2(md+2)}{d(2-m)}}_{x,t}$ for $1<m<2$, we observe that
\begin{equation*} 
\nabla \rho = \frac{2}{m}\rho^{\frac{2-m}{2}} \nabla \rho^{\frac{m}{2}} \in L^{\frac{md+2}{d+1}}_{x,t}, \quad \text{for} \quad \frac{d+1}{md+2}=\frac{d(2-m)}{2(md+2)} + \frac{1}{2},
\end{equation*}
and $\nabla \rho$ satisfies  \eqref{rho_t_1} which completes the proof.
\end{proof}

Now with the initial data in $L^{q}_{x}$ for $q >1$, we search further properties of the temporal and spatial derivatives of $\rho^q$ and $\rho^q$ itself. First, we recall a priori estimates in Proposition~\ref{P:Lq-energy} and remark the integrability of $\rho$ and $V\rho^q$.

\begin{remark}\label{Remark : AL-2}
From the estimate \eqref{Lq-energy} in Proposition~\ref{P:Lq-energy}, it holds that $\rho \in L^{(q+m-1) + \frac{2q}{d}}_{x,t}$ by Lemma~\ref{T:pSobolev}, that is,
 \begin{equation}\label{E:rho:q-1}
      \iint_{Q_T} \rho^{(q+m-1)+\frac{2q}{d}} \,dxdt
        \leq  \sup_{0 \leq t \leq T}  \int_{\mathbb{R}^d} \rho^{q} (\cdot, t)\,dx
 + \iint_{Q_T} \left|\nabla \rho^{\frac{q+m-1}{2}}\right|^2 \,dx\,dt \leq C,
    \end{equation}
where the constant $C$ is depending on $\|\rho_0\|_{L^q (\bbr^d)}$ and either $\|V\|_{\mathcal{S}_{m,q}^{(q_1,q_2)}}$ or $\|V\|_{\tilde{\mathcal{S}}_{m,q}^{(\tilde{q}_1,\tilde{q}_2)}}$.
\end{remark}

We carry the following lemma before Proposition~\ref{Proposition : AL-2}.
\begin{lemma}\label{L-compact}
 Let $m>1$ and $q\geq 1$. Suppose that $V$ satisfies \eqref{T4:V-energy}.
Then, it holds
\[
V\rho^q \in L_{x,t}^{r} \quad \text{for} \quad r = \frac{d(m+q-1)+2q}{d(m+q-1)+q}.
\]
Futhermore, the followin estimate holds
\begin{equation}\label{E:rho:q-2}
\left\|V\rho^q\right\|_{L_{x,t}^{r}}
\leq \|V\|_{\mathcal{S}_{m,q}^{(q_1,q_2)}} \|\rho\|_{L^{r_1, r_2}_{x,t}}^{q}
\end{equation}
where the pair $(r_1, r_2)= (\frac{r q q_1}{q_1 - r}, \frac{rq q_2}{q_2 - r})$ satisfies \eqref{q-r1r2} in Lemma~\ref{P:L_r1r2}.
\end{lemma}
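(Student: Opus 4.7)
\textbf{Proof plan for Lemma~\ref{L-compact}.} The plan is to obtain the bound by applying H\"older's inequality twice—once in the spatial variable and once in the temporal variable—and then to verify that the resulting pair of exponents $(r_1, r_2)$ coincides with the pair obtained from Lemma~\ref{P:L_r1r2}. As throughout the paper, it suffices to treat the scaling-invariant boundary case $V \in \mathcal{S}_{m,q}^{(q_1,q_2)}$, since the strictly subcritical case follows from a further H\"older interpolation in time on $[0,T]$.

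First, fix $t \in [0,T]$ and write $\tfrac{1}{r} = \tfrac{1}{q_1} + \tfrac{1}{s_1}$, i.e., $s_1 = \tfrac{r q_1}{q_1-r}$. Apply H\"older's inequality in $x$ to get
\[
\bke{\int_{\bbr^d} |V\rho^{q}|^{r}\,dx}^{1/r}
\leq \|V(\cdot,t)\|_{L^{q_1}_x}\,\|\rho^{q}(\cdot,t)\|_{L^{s_1}_x}
=\|V(\cdot,t)\|_{L^{q_1}_x}\,\|\rho(\cdot,t)\|_{L^{r_1}_x}^{q},
\]
with $r_1 := q s_1 = \tfrac{rqq_1}{q_1-r}$. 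Next set $s_2 = \tfrac{r q_2}{q_2-r}$ so that $\tfrac{1}{r} = \tfrac{1}{q_2} + \tfrac{1}{s_2}$, and apply H\"older's inequality in $t$:
\[
\|V\rho^{q}\|_{L^{r}_{x,t}}
\leq \Bigl\|\,\|V(\cdot,t)\|_{L^{q_1}_x}\Bigr\|_{L^{q_2}_t}
\Bigl\|\,\|\rho(\cdot,t)\|_{L^{r_1}_x}^{q}\Bigr\|_{L^{s_2}_t}
= \|V\|_{\mathcal{S}_{m,q}^{(q_1,q_2)}}\,\|\rho\|_{L^{r_1,r_2}_{x,t}}^{q},
\]
with $r_2 := q s_2 = \tfrac{rqq_2}{q_2-r}$, which gives \eqref{E:rho:q-2}. (The two H\"older steps require $q_1, q_2 > r$; since $r=1+\tfrac{q}{d(q+m-1)+q}$ and $q_2\ge \tfrac{2+q_{m,d}}{1+q_{m,d}} = 1+\tfrac{q}{q+d(m-1)}$, an elementary comparison confirms $q_2\ge r$, while $q_1>r$ is immediate from the range imposed in \eqref{T4:V-energy}.)

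The remaining task is to verify that $(r_1,r_2)$ lies in the admissible range described by \eqref{q-r1r2} and, in particular, that the scaling identity $\tfrac{d}{r_1}+\tfrac{2+q_{m,d}}{r_2}=\tfrac{d}{q}$ holds. A direct substitution yields
\[
\frac{d}{r_1}+\frac{2+q_{m,d}}{r_2}
= \frac{1}{rq}\left[(d+2+q_{m,d}) - r\left(\frac{d}{q_1}+\frac{2+q_{m,d}}{q_2}\right)\right],
\]
and the scaling invariance $\tfrac{d}{q_1}+\tfrac{2+q_{m,d}}{q_2}=1+q_{m,d}$ (Definition~\ref{D:Serrin}) simplifies the bracket to $(d+2+q_{m,d}) - r(1+q_{m,d})$. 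Solving $\tfrac{d+2+q_{m,d}-r(1+q_{m,d})}{rq}=\tfrac{d}{q}$ for $r$ yields exactly $r=\tfrac{d+2+q_{m,d}}{d+1+q_{m,d}}=\tfrac{d(q+m-1)+2q}{d(q+m-1)+q}$, matching the stated value.

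The only delicate point—the step I expect to be the main obstacle—is checking that the computed $(r_1,r_2)$ falls inside the admissible window in \eqref{q-r1r2}, namely $q\le r_1 \le \tfrac{d(q+m-1)}{d-2}$ and $q+m-1\le r_2\le \infty$. The lower bounds are routine since both reduce to $r\ge 1$ after algebraic simplification. The upper bound $r_1\le \tfrac{d(q+m-1)}{d-2}$, together with the finiteness $r_2<\infty$, is precisely what forces the constraints on $q_2$ in \eqref{T4:V-energy}: the upper bound $q_2\le q_2^M$ is exactly the threshold ensuring this. Once the admissibility is confirmed, the $L^{r_1,r_2}_{x,t}$-norm of $\rho$ on the right-hand side of \eqref{E:rho:q-2} is finite and controlled by the energy estimate via \eqref{norm-q-r1r2}, completing the proof.
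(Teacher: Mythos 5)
Your approach is structurally the same as the paper's: H\"older in $x$ then in $t$ with the exponent splits $\tfrac{q}{r_1}=\tfrac1r-\tfrac1{q_1}$, $\tfrac{q}{r_2}=\tfrac1r-\tfrac1{q_2}$, followed by substitution into the scaling identity of Lemma~\ref{P:L_r1r2} to pin down $r$. The derivation of $r=\tfrac{d(q+m-1)+2q}{d(q+m-1)+q}$ and the checks $q_1>r$, $q_2>r$ are all correct.

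However, the final paragraph on admissibility—which you yourself flag as the delicate step—misattributes which constraints bind. The lower bound $r_1\ge q$ does indeed follow trivially from $r>1$, but $r_2\ge q+m-1$ does \emph{not} "reduce to $r\ge 1$": writing it in terms of $q_2$ gives $\tfrac{1}{q_2}\ge \tfrac1r - \tfrac{q}{q+m-1}$, and this is exactly the inequality $\tfrac{1}{q_2}\ge \tfrac{1}{q_2^M}$ that produces the upper bound $q_2\le q_2^M$ in \eqref{T4:V-energy}. (It becomes vacuous—i.e., $q_2^M=\infty$—only when $q\ge q^\ast$, precisely the threshold in \eqref{q_ast}.) Conversely, the bound $r_2\le\infty$ imposes nothing: it translates to $\tfrac{1}{q_2}\le\tfrac1r$, which is strictly weaker than the upper bound $\tfrac{1}{q_2}\le\tfrac{1+q_{m,d}}{2+q_{m,d}}$ already forced by $\tfrac{1}{q_1}\ge 0$ under the scaling relation. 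Under that relation the constraints $r_1\le\tfrac{d(q+m-1)}{d-2}$ and $r_2\ge q+m-1$ are in fact \emph{equivalent} (they correspond to the same endpoint of the one-parameter family), so it is inconsistent to call one of them routine while crediting the other with producing $q_2^M$. The paper spells out both of these translations explicitly and identifies $q_2^M$ as the reciprocal of $\tfrac1r-\tfrac{q}{q+m-1}$; restating that calculation correctly would repair the gap.
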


\begin{remark}
When $q=1$, we easily observe that the estimate \eqref{E:rho:q-2} coincides with the estimate \eqref{E:Vrho:1-1}. The estimate \eqref{E:rho:q-2} is essential to prove compactness arguments in Proposition~\ref{Proposition : AL-2}.
\end{remark}

\begin{proof}
By taking the H\"{o}lder inequalities, we obtain the estimate \eqref{E:rho:q-2} for $\frac{q}{r_1}= \frac{1}{r} - \frac{1}{q_1}$ and $\frac{q}{r_2}= \frac{1}{r} - \frac{1}{q_2}$. For the pair $(r_1, r_2)$ satisfying \eqref{q-r1r2} and the pair $(q_1, q_2)$ satisfying $\mathcal{S}_{m,q}^{(q_1, q_2)}$, it provides that
\[
\frac{d+2+q_{m,d}}{r} = q \left(\frac{d}{r_1} + \frac{2+q_{m,d}}{r_2}\right) + \left(\frac{d}{q_1} + \frac{2+q_{m,d}}{q_2}\right) = d+1+q_{m,d}
\ \Longrightarrow \ r = \frac{d(m+q-1)+2q}{d(m+q-1)+q}.
\]

Let $d>2$. Then the range of $q \leq r_1 \leq \frac{d(m+q-1)}{d-2}$ in \eqref{q-r1r2} yields the following
\[
\frac 1r - 1\leq \frac{1}{q_1}\leq \frac 1r - \frac{q(d-2)}{d(q+m-1)}
\quad \Longrightarrow \quad 0 \leq \frac{1}{q_1} \leq \frac{1}{q_1^m}:= \frac{2+q_{m,d}}{d+q_{m,d}} - \frac{1}{d+2+q_{m,d}} < 1.
\]
Moreover, the range of $q+m-1\leq r_2 \leq \infty$ yields
\[
\frac 1r - \frac{q}{q+m-1} \leq \frac{1}{q_2} \leq \frac 1r
\quad \Longrightarrow \quad \frac{d(m+q-1)+q}{d(m+q-1)+2q} - \frac{q}{q+m-1}\leq \frac{1}{q_2} \leq \frac{1+q_{m,d}}{2+q_{m,d}},
\]
where the lower bound becomes zero if $q=q^\ast$ defined in \eqref{q_ast} and becomes negative if $q > q^\ast$ (this allows $q_2 = \infty$ for any $q \geq q^\ast$). In case $d=2$, the range of $q+m-1 < r_2 \leq \infty$ in \eqref{q-r1r2} corresponds to
\[
\frac 1r - \frac{q}{q+m-1} < \frac{1}{q_2} \leq \frac 1r
\quad \Longrightarrow \quad \frac{d(m+q-1)+q}{d(m+q-1)+2q} - \frac{q}{q+m-1}< \frac{1}{q_2} \leq \frac{1+q_{m,d}}{2+q_{m,d}},
\]
that allows $q_2 = \infty$ if $q > q^\ast$. Hence we derive restrictions on $V$ in \eqref{T4:V-energy} which completes the proof.
\end{proof}

Now, we estimate temporal and spatial derivatives of $\rho^q$ for $q>1$ when $\rho_0 \in L^{q}(\bbr^d)$.
\begin{proposition}\label{Proposition : AL-2}
Let $m, q >1$. Assume that $(a)$ $V$ satisfying \eqref{V-Lq-energy}, or $(b)$ $V$ satisfying \eqref{V-tilde-Lq}, or $(c)$ a divergence-free $V \in \calC^\infty (\bar{Q}_T)$ satisfying \eqref{T4:V-energy}.
Suppose that $\rho$ is a regular solution of \eqref{E:Main} with sufficiently fast decay at infity in spatial variables with $\rho_{0}\in  \mathcal{P}(\mathbb{R}^d)\cap L^{q}(\mathbb{R}^d) \cap \calC^{\alpha}(\mathbb{R}^d)$.
\begin{itemize}
\item[(i)] Let $q\ge m-1$. Then, it holds that
\begin{equation}\label{rho_t_2}
\norm{\partial_t\rho^q}_{W_{x}^{-1, \frac{d(q+m-1)+2q}{d(q+m-1)+q} }L_{t}^{1}}
+\norm{\nabla  \rho^q}_{L_{x,t}^{\frac{d(q+m-1)+2q}{q(d+1)}}}<C,
\end{equation}
where the constant $C$ depends on $\|\rho_0\|_{L^{q}(\bbr^d)}$ and either $\|V\|_{\mathcal{S}_{m,q}^{(q_1,q_2)}}$ or $\|V\|_{\tilde{\mathcal{S}}_{m,q}^{(\tilde{q}_1,\tilde{q}_2)}}$.

\item[(ii)] In cases $(b)$ or $(c)$ holds, let $m>2$ and $ \frac m2 < q < m-1 $. Then it holds that
\begin{equation}\label{rho_t_3}
\norm{\partial_t\rho^q}_{W_{x}^{-1, \frac{d(q+m-1)+2q}{d(q+m-1)+q} }L_{t}^{1}}
+\norm{\nabla  \rho^q}_{L_{x,t}^{\frac{2d(q+m-1)+4q}{d(3q-r)+2q}}}<C,
\end{equation}
for any $r\in (1, q]$ and where $C$ depends on $\|\rho_0\|_{L^{q}(\bbr^d)}$ and either $\|V\|_{\mathcal{S}_{m,q}^{(q_1,q_2)}}$ or $\|V\|_{\tilde{\mathcal{S}}_{m,q}^{(\tilde{q}_1,\tilde{q}_2)}}$.
\end{itemize}
\end{proposition}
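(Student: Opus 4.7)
The plan is to mirror the strategy of Proposition~\ref{Proposition : AL-1} adapted to the higher power $\rho^q$. The basic algebraic identity is obtained by multiplying \eqref{E:Main} by $q\rho^{q-1}$ and applying the product rule, which decomposes
\begin{equation*}
\partial_t \rho^q = \nabla \cdot \Bigl(\tfrac{qm}{q+m-1}\nabla \rho^{q+m-1} - qV\rho^q\Bigr) - \tfrac{4qm(q-1)}{(q+m-1)^2}\bigl|\nabla \rho^{\frac{q+m-1}{2}}\bigr|^2 + (q-1)\,V\cdot\nabla \rho^q.
\end{equation*}
Setting $\sigma := \frac{d(q+m-1)+2q}{d(q+m-1)+q}$, the divergence part lives in $L^\sigma_{x,t}$: the gradient piece follows from writing $\nabla\rho^{q+m-1} = 2\rho^{(q+m-1)/2}\nabla\rho^{(q+m-1)/2}$ and combining $\nabla\rho^{(q+m-1)/2}\in L^2_{x,t}$ (from \eqref{Lq-energy}) with the improved integrability $\rho^{(q+m-1)/2}\in L^{2(d(q+m-1)+2q)/(d(q+m-1))}_{x,t}$ delivered by \eqref{E:rho:q-1}, while $V\rho^q\in L^\sigma_{x,t}$ is the content of Lemma~\ref{L-compact} (or, under (b) or (c), of the analogous three-factor H\"older estimate).

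The two non-divergence residuals are handled as follows. The energy density $|\nabla\rho^{(q+m-1)/2}|^2$ lies in $L^1_{x,t}$ by \eqref{Lq-energy}, and one verifies directly that $\sigma\le d/(d-1)$ for all admissible parameters, so Sobolev's inequality $W^{1,\sigma'}_0\hookrightarrow L^{\sigma'^*}$ yields the embedding $L^1_x\hookrightarrow W^{-1,\sigma}_x$ needed to absorb this term. The residual drift $(q-1)V\cdot\nabla\rho^q$ is the delicate one: under (i) with hypothesis (a) I would rewrite it as $\tfrac{2q}{q+m-1}V\rho^{(q-m+1)/2}\nabla\rho^{(q+m-1)/2}$ (admissible since $q\ge m-1$) and apply a three-factor H\"older inequality in mixed norms, where the required integrability $\rho\in L^{a_1,a_2}_{x,t}$ for $\rho^{(q-m+1)/2}$ is guaranteed by Lemma~\ref{P:L_r1r2} precisely because the scaling identity defining $\mathcal{S}_{m,q}^{(q_1,q_2)}$ is equivalent to the admissibility of $(a_1,a_2)$; under (b) or (c) I would instead rewrite $V\cdot\nabla\rho^q = \nabla\cdot(V\rho^q) - \rho^q\nabla\cdot V$, fold the first summand into the divergence part, and dispose of $\rho^q\nabla\cdot V$ directly using the hypothesis on $\nabla V$ (trivially $0$ in case (c)). Combining these bounds yields $\partial_t\rho^q\in W^{-1,\sigma}_x L^1_t$.

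For the spatial gradient in part (i), I use the identity $\nabla\rho^q=\tfrac{2q}{q+m-1}\rho^{(q-m+1)/2}\nabla\rho^{(q+m-1)/2}$ with H\"older $\tfrac{1}{s}=\tfrac{1}{\alpha}+\tfrac{1}{2}$; choosing $\alpha$ so that $\rho\in L^{\alpha(q-m+1)/2}_{x,t}$ coincides with the parabolic Sobolev exponent $(q+m-1)+2q/d$ from \eqref{E:rho:q-1} yields exactly $s=(d(q+m-1)+2q)/(q(d+1))$. For part (ii), where $q<m-1$ renders $(q-m+1)/2$ negative, I would use instead the split $\nabla\rho^q=\tfrac{2q}{r+m-1}\rho^{(2q-r-m+1)/2}\nabla\rho^{(r+m-1)/2}$ parameterised by $r\in(1,q]$, invoking the auxiliary energy estimate Proposition~\ref{P:Lq-energy-r}(ii)--(iii) to control $\nabla\rho^{(r+m-1)/2}\in L^2_{x,t}$ and $\rho\in L^{\infty}_tL^r_x\cap L^{(r+m-1)+2r/d}_{x,t}$; a mixed-norm H\"older argument then produces the advertised $s'$. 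The main obstacle throughout is the drift residual in part (i) under hypothesis (a): with no bound on $\nabla V$, one must verify that the three-factor H\"older estimate closes precisely on the scaling line defining $\mathcal{S}_{m,q}^{(q_1,q_2)}$, and it is this matching that dictates the exact shape of \eqref{V-Lq-energy}. The exclusion of (a) from part (ii) reflects that this matching fails when $q<m-1$, so one must rely on the stronger hypotheses (b) or (c), which both turn $V\cdot\nabla\rho^q$ into (almost) a pure divergence and thereby circumvent the obstruction.
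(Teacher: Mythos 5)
Your proposal follows the paper's proof closely and correctly. The decomposition of $\partial_t\rho^q$ into a divergence part, an $L^1$ energy density, and a drift residual is precisely the paper's \eqref{prop414-10}; the switch to $-(q-1)\rho^q\nabla\cdot V$ under hypotheses (b), (c) is the paper's \eqref{prop414-20}; the identity $\nabla\rho^q=\frac{2q}{q+m-1}\rho^{(q-m+1)/2}\nabla\rho^{(q+m-1)/2}$ combined with \eqref{E:rho:q-1} is the paper's \eqref{prop414-5}; and $V\rho^q\in L^\sigma_{x,t}$ via Lemma~\ref{L-compact} is exactly how the paper controls the divergence term. You also implicitly correct a typo in \eqref{prop414-10}, where $\rho^{(q+m+1)/2}$ should read $\rho^{(q+m-1)/2}$, and your remark that $(a)$ is excluded from part (ii) because the drift residual $V\cdot\nabla\rho^q$ cannot be recast as a divergence without a bound on $\nabla V$ is the right diagnosis.

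One small imprecision: in part (ii) you propose combining $\nabla\rho^{(r+m-1)/2}\in L^2_{x,t}$ with $\rho\in L^{(r+m-1)+2r/d}_{x,t}$. The paper instead pairs $\nabla\rho^{(r+m-1)/2}\in L^2_{x,t}$ with the \emph{stronger} integrability $\rho\in L^{(q+m-1)+2q/d}_{x,t}$ coming from the full $L^q$-energy estimate. With your $r$-based choice the H\"older exponent comes out as $\frac{d(r+m-1)+2r}{dq+r}$, which is still bigger than $1$ (so compactness would still go through) but is strictly smaller than the exponent $\frac{2d(q+m-1)+4q}{d(3q-r)+2q}$ advertised in \eqref{rho_t_3}; the $q$-based integrability for the low-power factor $\rho^{(2q-r-m+1)/2}$ is what produces the paper's exponent exactly.
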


\begin{remark}\label{R:AL-3}
\begin{itemize}
\item[(i)] When $1<m\leq 2$, the above works for all $q>1$. When $m>2$, it works for only $q \geq m-1$, in general. However, in cases $V$ satisfying either $(b)$ or $(c)$, we are able to weaken the restriction, that is, $q > \frac m2$ with the aid of the entropy estimation. We note $q>\frac m2$ is meaningful only for $m>2$.

\item[(ii)] In case $(b)$ or $(c)$ holds, then we are able to extend the range of $r \in [1, q]$ under further assumptions on $\rho_0 \in \mathcal{P}_p(\mathbb{R}^d)$ for $p>1$ and $V$ satisfying \eqref{V-p-moment}. It is because we need moment estimate when $r=1$ (see Lemma~\ref{L:p-moment-estimate} and Proposition~\ref{P:log-energy}). Then the estimate \eqref{rho_t_3} holds for $\frac m 2 \leq q \leq m-1$.

\item[(iii)] By applying Sobolev embedding in Lemma~\ref{T:Sobolev} to \eqref{rho_t_2} and \eqref{rho_t_3}, respectively, the followings hold
\begin{equation}\label{rho_2_2}
\rho^q \in L^{p, \frac{d(q+m-1)+2q}{q(d+1)}}_{x,t} \quad \text{for} \quad p= \frac{d^2(q+m+1)+2dq}{q(d^2-2) - d(m-1)},
\end{equation}
\begin{equation*} 
\rho^q \in L^{p, \frac{2d(q+m-1)+4q}{d(3q-r)+2q}}_{x,t} \quad \text{for} \quad p= \frac{2d^2(q+m-1)+4dq}{d^2(3q-r)-2d(m-1)-4q}.
\end{equation*}
As we mentioned earlier in Remark~\ref{R:AL-1}, the estimates  \eqref{rho_t_2} and \eqref{rho_t_3} are essential to show the strong convergence of approximated solutions by the compactness arguments via Aubin-Lions Lemma (see Section~\ref{Exist-weak}).
\end{itemize}
\end{remark}

\begin{proof}
$(i)$ Let $q \geq m-1$. Because of $\rho^{\frac{q-m+1}{2}} \in L_{x,t}^{\frac{2(d(q+m-1)+2q)}{d(q-m+1)}}$ and $\nabla \rho^{\frac{q+m-1}{2}} \in L^{2}_{x,t}$ in Remark~\ref{Remark : AL-2}, we obtain
\begin{equation}\label{prop414-5}
\nabla \rho^q = \frac{2q}{q+m-1} \rho^{\frac{q-m+1}{2}}\nabla \rho^{\frac{q+m-1}{2}}\in L_{x,t}^{\frac{d(q+m-1)+2q}{q(d+1)}},
\end{equation}
via $\frac{q(d+1)}{d(q+m-1)+2q} = \frac{d(q-m+1)}{2d(q+m-1)+4q} + \frac{1}{2}$ which satifes \eqref{rho_t_2}.

In case $(a)$ holds, we rewrite the PDE \eqref{E:Main} as the following form:
\begin{equation}\label{prop414-10}
\left(\rho^q\right)_{t} = \nabla \cdot \left( c_0 \rho^{\frac{q+m+1}{2}} \nabla \rho^{\frac{q+m-1}{2}} - qV \rho^q\right) - c_1\abs{\nabla \rho^{\frac{q+m-1}{2}}}^2 + (q-1) V \nabla \rho^q,
\end{equation}
where $c_0 = \frac{2mq}{q+m-1}$ and $c_1=\frac{4mq(q-1)}{(q+m-1)^2}$.
To handle the cases of $(b)$ or $(c)$, we alternatively rewrite the equation as
\begin{equation}\label{prop414-20}
\left(\rho^q\right)_{t} = \nabla \cdot \left( c_0 \rho^{\frac{q+m+1}{2}} \nabla \rho^{\frac{q+m-1}{2}} -  V \rho^q\right) - c_1 \abs{\nabla \rho^{\frac{q+m-1}{2}}}^2 - (q-1)\rho^q \,\nabla \cdot V .
\end{equation}
Because $\nabla \rho^{\frac{q+m-1}{2}}\in L_{x,t}^{2}$ and $\rho \in L_{x,t}^{q+m-1+\frac{2q}{d}}$ from \eqref{E:rho:q-1}, we observe via  $\frac{d(q+m-1)+q}{d(q+m-1)+2q} = \frac{d(q+m-1)}{2d(q+m-1)+4q} + \frac{1}{2}$ that
\begin{equation}\label{prop414-30}
\rho^{\frac{q+m-1}{2}} \nabla \rho^{\frac{q+m-1}{2}} \in L_{x,t}^{\frac{d(q+m-1)+2q}{d(q+m-1)+q}}.
\end{equation}
It is clear that $\abs{\nabla \rho^{\frac{q+m-1}{2}}}^2 \in L^{1}_{x,t}$.
Also, Remark~\ref{Remark : AL-2} and similar computations for $V\in \tilde{\calS}_{m,q}^{(\tilde{q}_1, \tilde{q}_2)}$ yield
\begin{equation}\label{prop414-40}
V\rho^q \in L_{x,t}^{\frac{d(m+q-1)+2q}{d(m+q-1)+q}} \quad \text{and} \quad
\rho^q \, \nabla\cdot V \in L^{1}_{x,t}.
\end{equation}
Furthermore, it holds that $V \nabla \rho^q \in L^{1}_{x,t}$ using $V \in \mathcal{S}_{m,q}^{(q_1, q_2)}$ and \eqref{prop414-5}.
Hence, owing to \eqref{prop414-30} and \eqref{prop414-40}, it follows that
$\partial_t \rho^q\in W_{x}^{-1, \frac{d(q+m-1)+2q}{d(q+m-1)+q} }L_{t}^{1}
$ in \eqref{prop414-10} or \eqref{prop414-20}. Furthermore, $\partial_t \rho^q$ satisfies \eqref{rho_t_2}.

$(ii)$ We follow the above proof to estimate $\partial_t \rho^q$. However, to estimate $\nabla \rho^q$ in \eqref{rho_t_3}, we employ \eqref{Lq-energy-r} for $1<r\leq q$ and observe that
\begin{equation}\label{E1:AL-2-r}
\nabla \rho^q = \frac{2q}{m+r-1} \rho^{q-\frac{m+r-1}{2}} \nabla \rho^{\frac{m+r-1}{2}} \in L_{x,t}^{\frac{2d(q+m-1)+4q}{d(3q-r)+2q}}
\end{equation}
as long as $q \geq \frac{r+m-1}{2} > \frac m2$ for $r\in (1, q]$. Then \eqref{E1:AL-2-r} holds
via $\frac{2qd-d(r+m-1)}{2d(q+m-1)+4q}+ \frac 12 =\frac{d(3q-r)+4q}{2d(q+m-1)+4q} $ because of $\nabla \rho^{\frac{m+r-1}{2}} \in L^{2}_{x,t}$ and $\rho \in L_{x,t}^{q+m-1+\frac{2q}{d}}$ from \eqref{Lq-energy-r}. This completes the proof.
\end{proof}

\section{Existence of regular solutions }\label{splitting method}

We recall porous medium equations with divergence form of drifts
\begin{equation}\label{Fokker-Plank 2}
    \partial_t \rho  = \nabla\cdot (\nabla \rho^m - V \rho)\quad \text{for}\quad \rho(\cdot, 0)=\rho_0.\\
\end{equation}
In this section, we construct a regular solution of \eqref{Fokker-Plank 2} when the initial data $\rho_0$ and the vector field $V$ are smooth enough.
For this, we exploit a splitting method in the Wasserstein space $\mathcal{P}_2(\bbr^d)$ and it turns out that our soultion is in the class of ablsolutely continuous curves in $\mathcal{P}_2(\mathbb{R}^d)$.
For carrying the splitting method, it requires a priori estimates, propagation of compact support, and H\"{o}lder continuity of the following form of homogeneous PME
\begin{equation}\label{H-PME1}
 \partial_t \varrho  = \Delta \varrho^m  \quad \text{ for } \quad \varrho(\cdot, 0)=\varrho_0.
\end{equation}

First, we deliver a priori estimates and compact supports of solutions of \eqref{H-PME1} in the following lemma.
\begin{lemma}\label{P:H-PME:e}
Suppose that $\varrho$ is a nonnegative $L^q$-weak solution of \eqref{H-PME1} in Definition~\ref{D:weak-sol}.
\begin{itemize}
  \item[(i)] Let $\int_{\mathbb{R}^d} \varrho_0 (1+\log \varrho_0 ) \,dx < \infty$. Then there exists a positive constant $c=c(m,d,p)$ such that
      \begin{equation}\label{P:H-PME:e1}
      \int_{\mathbb{R}^d}  \varrho (\cdot, T) \log \varrho(\cdot, T)  \,dx
      + \iint_{Q_T} \abs{\frac{\nabla \varrho^m}{\varrho}}^p \varrho \,dx\,dt
      \leq
      \int_{\mathbb{R}^d}\varrho_0 \log \varrho_0  \,dx +
      c T  \int_{\mathbb{R}^d}\varrho_0   \,dx.
      \end{equation}
      Moreover, there exists $T^\ast = T^\ast (m,d,p)$ such that, for any positive integer $l=1,2,3,\cdots$,
        \begin{equation}\label{C:H-PME:e1}
      \int_{\mathbb{R}^d}  \varrho (\cdot, lT^\ast) \log \varrho(\cdot, l T^\ast)  \,dx
      + \int_{(l-1)T^\ast}^{lT^\ast}\int_{\mathbb{R}^d} \abs{\frac{\nabla \varrho^m}{\varrho}}^p \varrho \,dx\,dt
      \leq
      \int_{\mathbb{R}^d} \left[ \varrho_0 \log \varrho_0  + \varrho_0 \right] \,dx .
      \end{equation}
  \item[(ii)]Let $\varrho_0 \in L^{q}(\bbr^d)$ for $q>1$. Then, we have
  \begin{equation}\label{P:H-PME:e2}
      \int_{\mathbb{R}^d} \varrho^q(\cdot, T) \,dx + \frac{4mq(q-1)}{(m+q-1)^2} \iint_{Q_T} \left| \nabla \varrho^{\frac{m+q-1}{2}} \right|^2  \,dx\,dt = \int_{\mathbb{R}^d} \varrho_0^q \,dx.
          \end{equation}
 \item[(iii)]   If $\varrho_0$ is compactly supported, then $\varrho(t)$ is compactly supported for all $t >0$.
  More precisely, suppose there exists $R_0>0$ such that $\supp(\varrho_0) \subset B_{R_0}(0):=\{x \in \bbr^d : |x| \leq R_0 \}$. Then, for all $t>0$, we have
  \begin{equation}\label{support}
  \supp\left( \varrho(t)\right) \subset B_{R(t)}(0), \quad R(t):=R_0\left(1+\beta t \right), ~~ \beta:=\frac{1}{d(m-1)+2}.
    \end{equation}
\end{itemize}
\end{lemma}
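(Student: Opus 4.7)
The plan is to treat the three parts in turn, with the first two relying on classical dissipation identities for the porous medium equation combined with the $p$-speed bound from Section~\ref{SS:speed}, and the third on a Barenblatt-type comparison.

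For part (i), I would test \eqref{H-PME1} with $\log \varrho$ (justified by a standard approximation of the logarithm away from zero and a truncation at infinity in space) and integrate by parts to obtain the entropy dissipation identity
$$
\int_{\bbr^d} \varrho(\cdot,T) \log \varrho(\cdot,T) \, dx + \frac{4}{m} \iint_{Q_T} |\nabla \varrho^{m/2}|^2 \, dx\, dt = \int_{\bbr^d} \varrho_0 \log \varrho_0 \, dx.
$$
In particular, $t \mapsto \int \varrho \log \varrho\, dx$ is nonincreasing. Invoking Lemma~\ref{P:W-p} with $q=1$ and $V \equiv 0$, together with mass conservation $\int \varrho \, dx = \int \varrho_0 \, dx$, controls the $p$-speed term by $\varepsilon \iint |\nabla \varrho^{m/2}|^2 + c(\varepsilon)\,T \int \varrho_0$, and choosing $\varepsilon$ so that this gradient contribution is absorbed into half of the dissipation in the identity above then yields \eqref{P:H-PME:e1}. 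The iterated bound \eqref{C:H-PME:e1} follows by applying \eqref{P:H-PME:e1} on each slab $[(l-1)T^\ast,\,l T^\ast]$ with initial datum $\varrho(\cdot,(l-1)T^\ast)$: by entropy monotonicity the first right-hand side term is dominated by $\int \varrho_0 \log \varrho_0$, while the second is dominated by $\int \varrho_0$ provided $T^\ast$ is chosen so that $c T^\ast \leq 1$; since $c = c(m,d,p)$, this fixes $T^\ast = T^\ast(m,d,p)$.

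For part (ii), this is the classical $L^q$ energy identity: testing \eqref{H-PME1} against $q \varrho^{q-1}$ (after a similar smoothing) and integrating by parts using
$$
q(q-1)\,\varrho^{q-2}\nabla \varrho \cdot \nabla \varrho^m = q(q-1)m\, \varrho^{m+q-3}|\nabla \varrho|^2 = \frac{4mq(q-1)}{(m+q-1)^2} \bigl|\nabla \varrho^{(m+q-1)/2}\bigr|^2
$$
produces the pointwise-in-time identity which, integrated on $[0,T]$, gives \eqref{P:H-PME:e2} as an equality.

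For part (iii), I would invoke finite speed of propagation by comparison with a Barenblatt-Zel'dovich-Kompaneets source-type solution $U(x,t)$ of \eqref{H-PME1}, whose support is the ball of radius $\kappa(t+\tau)^\beta$ with $\beta = 1/(d(m-1)+2)$. The parameters $\kappa$ and $\tau$ are to be tuned so that $U(\cdot,0) \geq \varrho_0$ pointwise on $B_{R_0}$ while $U(\cdot,0)$ vanishes outside $B_{R_0}$. The comparison principle for nonnegative $L^\infty$ solutions of PME (extended to $L^1$ data via $L^\infty$-truncation and the $L^1$-contraction property) then gives $\supp(\varrho(\cdot,t)) \subset B_{R_0(1+t/\tau)^\beta}$, and the concavity inequality $(1+s)^\beta \leq 1+\beta s$ (valid for $s \geq 0$ and $0 < \beta < 1$), together with the natural normalization $\tau = 1$, produces the linear bound \eqref{support}. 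The main obstacle here is a bookkeeping one: tracking the constants $\kappa$ and $\tau$ so that both the initial ordering $U(\cdot,0) \geq \varrho_0$ and the precise coefficient $\beta$ appearing in \eqref{support} come out right.
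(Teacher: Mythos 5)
Your approach to all three parts is essentially the paper's. In part (i) you test with $\log\varrho$, obtain the entropy dissipation identity, and then absorb the $p$-speed term via Lemma~\ref{P:W-p} with $V\equiv 0$; the slab argument for \eqref{C:H-PME:e1} with $cT^\ast\le 1$ is word-for-word what the paper does. In part (ii) you write out the $L^q$ dissipation identity directly; the paper simply cites \cite[Proposition~5.12]{Vaz07}, but the derivation you give is exactly the content of that reference and is fine.

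Part (iii) is the one place where your plan, as written, falls short. Your comparison with a Barenblatt profile $U$ matched to $\varrho_0$ pointwise forces the delay parameter $\tau$ to satisfy roughly $\tau\lesssim b R_0^2\|\varrho_0\|_{L^\infty}^{-(m-1)}$ so that $U(\cdot,0)\ge\varrho_0$ inside $B_{R_0}$ while $\supp U(\cdot,0)=B_{R_0}$. The concavity step then gives $R(t)\le R_0(1+\beta t/\tau)$, so ``the natural normalization $\tau=1$'' is not available in general: the linear rate you obtain degrades with $\|\varrho_0\|_{L^\infty}$, whereas \eqref{support} has the clean constant $\beta$ with no such dependence. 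The paper avoids this by invoking \cite[Proposition~9.18]{Vaz07}, which already produces the support bound $R(t)=C\bigl((R_0/C)^{1/\beta}+t\bigr)^\beta$ with $C=C(m,d)$ (mass normalized to one), independent of $\sup\varrho_0$. Concavity of $x\mapsto x^\beta$ then yields $R(t)\le R_0+\beta C^{1/\beta}R_0^{1-1/\beta}\,t$, and the harmless normalization $R_0>C$ (which really is WLOG, since $C$ does not depend on $\varrho_0$) gives $R(t)\le R_0(1+\beta t)$. So while your structural idea (Barenblatt plus concavity) is the right one, the constant-tracking is not just bookkeeping: you need the mass-based support estimate of Vazquez, not an $L^\infty$-domination of the initial data, to get the $\varrho_0$-independent rate that the lemma asserts.
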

\begin{proof}
We refer \cite[Proposition~5.12]{Vaz07} which gives \eqref{P:H-PME:e2} directly.
Now we prove \eqref{P:H-PME:e1} and \eqref{C:H-PME:e1}.
First, we observe the following by testing $\log \rho$ to \eqref{H-PME} that
\begin{equation}\label{H-est-01}
  \frac{d}{dt}\int_{\mathbb{R}^d} \varrho \log \varrho \,dx + \frac{4}{m} \int_{\mathbb{R}^d} \abs{\nabla \varrho^{m/2} }^2 \,dx = 0.
  \end{equation}
To above estimate, we take the integration in $t \in [0, T]$ and the combination of $p$-th moment estimate \eqref{W-p} with $2\varepsilon = 4/m$ yields \eqref{P:H-PME:e1}.

To obtain \eqref{C:H-PME:e1}, we begin from \eqref{P:H-PME:e1} for $T^\ast$ instead of $T$. Then we fix $T^\ast= T^\ast (m,d, p)$ small enough such that $cT^\ast \leq 1$.
For any positive integer $l$, let us set the interval $[(l-1)T^\ast,  l T^\ast]$. In each interval, we apply \eqref{P:H-PME:e1} to obtain
  \begin{equation*}
  \begin{aligned}
      \int_{\mathbb{R}^d}  \varrho (\cdot, lT^\ast) \log \varrho(\cdot, l T^\ast)  \,dx
      &+ \int_{(l-1)T^\ast}^{lT^\ast}\int_{\mathbb{R}^d} \abs{\frac{\nabla \varrho^m}{\varrho}}^p \varrho \,dx\,dt \\
      &\leq
      \int_{\mathbb{R}^d} \left[ \varrho (\cdot, (l-1)T^\ast \log \varrho(\cdot, (l-1)T^\ast)  + \varrho(\cdot, (l-1)T^\ast) \right] \,dx.
  \end{aligned}
  \end{equation*}
  Note that $\frac{d}{dt}\int_{\mathbb{R}^d} \varrho \log \varrho \,dx \leq 0$ that is trivially deduced from \eqref{H-est-01}.
  Then \eqref{C:H-PME:e1} is obtained because of the decreasing property of $\int_{\mathbb{R}^d} \varrho (\cdot, t) \log \varrho(\cdot, t) \,dx$  and the mass conservation property.

To obtain \eqref{support}, we exploit Proposition 9.18 of \cite{Vaz07} and have
 \begin{equation*}
  \supp\left( \varrho(t)\right) \subset B_{R(t)}(0), \quad R(t):=C \left( \left( \frac{R_0}{C}\right)^{\frac{1}{\beta}} +t\right)^\beta, ~~ \beta:=\frac{1}{d(m-1)+2},
    \end{equation*}
for some constant $C>0$ which is independent of $\varrho_0$. Since $0<\beta \leq \frac{1}{2}$, we note that the function $f(x):=x^\beta$ is concave and hence we have $f(x+t) \leq f(x) + f'(x) t$ for any $x,\,t>0$. That is,
\begin{equation}\label{eq 1 : support}
\begin{aligned}
 R(t)&=C \left( \left( \frac{R_0}{C}\right)^{\frac{1}{\beta}} +t\right)^\beta \leq C \left( \frac{R_0}{C} + \beta \left( \frac{R_0}{C}\right)^{\frac{\beta-1}{\beta}} t\right).
 \end{aligned}
    \end{equation}
    Without loss of generality, we may choose $R_0$ big so that $R_0 >C$. Then, from \eqref{eq 1 : support}, we have
    \begin{equation*}
 R(t) \leq  \left(R_0 + \beta R_0 t\right),
    \end{equation*}
    which completes the proof.
\end{proof}

Second, we deliver H\"{o}lder continuity of a bounded nonnegative solution of \eqref{H-PME1} when the initial data is also H\"{o}lder continuous (say that $\varrho_0 \in \calC^{\alpha_0}$ for some $\alpha_0 \in (0,1)$). It is well known that bounded $L^q$-weak solutions of \eqref{H-PME1} are H\"{o}lder continuous up to the initial boundary with quantitatively determined H\"{o}lder exponents depending on given data (in particular with \eqref{H-PME1}, depending only upon $m$, $d$, and $\alpha_0$). For example, we refer  \cite{Aron70a, Aron70b, CF78, CF79, DF85, DF85A, DB83, DB93, DGV12, Vaz06}.
 Here, we introduce the following theorem that the H\"{o}lder exponent is more clearly specified as the minimum of intial H\"{o}lder exponent $\alpha_0$ and the interior H\"{o}lder exponent $\alpha^\ast = \alpha^\ast (m,d)$ (see Theorem~\ref{T:interior_Holder} for interior H\"{o}lder continuity). This result is probably well-known to experts but we could not find it in the literature. See Appendix~\ref{Appendix:Holder} for more details of proof.
   By the aid of Theorem~\ref{T:Boundary_Holder}, we are able to carry the splitting method because any weak solution of \eqref{H-PME1} keeps the same level of regularity at any time.
\begin{theorem}\label{T:Boundary_Holder}
Let $\varrho$ be a bounded nonnegative weak solution of \eqref{H-PME1} where the nonnegative initial data $\varrho_0$ is bounded and H\"{o}lder continuous with the exponent $\alpha_0 \in (0,1)$. Then there exists $\alpha^\ast = \alpha^\ast (m,d)\in (0,1)$ such that $\varrho$ is uniformly H\"{o}lder continuous on $Q_T$ with the exponent $\tilde{\alpha} = \min\{\alpha_0, \alpha^\ast\}$; that is, $\|\varrho\|_{\mathcal{C}^{\tilde{\alpha}}(Q_T)} \leq \gamma$ where $\gamma = \gamma ( m, d, \|\varrho_0\|_{L^{\infty}(\mathbb{R}^d)}) > 1$.
\end{theorem}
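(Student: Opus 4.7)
\medskip

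\noindent\textbf{Proof plan for Theorem~\ref{T:Boundary_Holder}.}

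The plan is to upgrade the interior H\"older regularity of Theorem~\ref{T:interior_Holder} to an up-to-initial-time estimate by combining it with a boundary oscillation argument driven by the initial H\"older modulus. The first step is a preliminary uniform $L^\infty$ bound: since $\varrho$ is a bounded nonnegative weak solution of the homogeneous PME and the comparison principle is available for \eqref{H-PME1}, one has $0\le \varrho(x,t)\le \|\varrho_0\|_{L^\infty(\mathbb{R}^d)}$ on $Q_T$; throughout we denote this upper bound by $M$. The interior theorem then yields $\|\varrho\|_{\calC^{\alpha^\ast}(\mathbb{R}^d\times [t_0,T])}\le \gamma_1(M,m,d,t_0)$ for any $t_0>0$, so the remaining task is to produce a H\"older estimate on the ``boundary strip'' $\mathbb{R}^d\times[0,t_0]$ with exponent $\tilde\alpha=\min\{\alpha_0,\alpha^\ast\}$, and then glue the two estimates.

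To control oscillation near $t=0$, I would fix $x_0\in\mathbb{R}^d$, set $k:=\varrho_0(x_0)$ and $\omega(r):=\|\varrho_0\|_{\calC^{\alpha_0}(\mathbb{R}^d)}r^{\alpha_0}$, and prove the two-sided inequality
\begin{equation*}
|\varrho(x,t)-k|\le C\bigl(r^{\alpha_0}+r^{\alpha^\ast}\bigr)\qquad\text{for all } (x,t)\in B_r(x_0)\times[0,r^2],
\end{equation*}
with $C=C(M,m,d,\|\varrho_0\|_{\calC^{\alpha_0}})$. The spatial part is immediate from the initial regularity; the temporal part is the substantive step. For it, I would construct two radial barriers on $B_{2r}(x_0)$ whose initial values bracket $\varrho_0$ pointwise and whose values on the lateral boundary $\partial B_{2r}(x_0)\times[0,r^2]$ remain outside the range of $\varrho$ there. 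Concretely, the supersolution is taken of the form $\overline u(x,t)=[(k+C_1\omega(r))^{m}+C_2 t]^{1/m}$ cut off outside $B_{2r}(x_0)$ by a standard Barenblatt-type profile, which is an explicit supersolution of the pressure equation $\partial_t p=(m-1)p\Delta p+|\nabla p|^2$ with $p=\frac{m}{m-1}\varrho^{m-1}$; the subsolution $\underline u$ is built analogously. The comparison principle on $B_{2r}(x_0)\times(0,r^2)$ (after a standard cutoff of the initial data to localize, so lateral boundary data are well-controlled) then sandwiches $\varrho$ between $\underline u$ and $\overline u$ on $B_r(x_0)\times[0,r^2]$, producing the required oscillation bound with exponent $\alpha_0$.

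Once that boundary oscillation estimate is in hand, I would run the oscillation-reduction/iteration scheme that underlies the proof of the interior Theorem~\ref{T:interior_Holder} (either the De\,Giorgi scheme of DiBenedetto or the expansion-of-positivity scheme of DGV), but centered at points $(x_0,0)$ instead of in the interior; at each dyadic radius the lateral contribution to the oscillation is dominated by $\omega(r)$, while the interior reduction supplies a factor bounded by a constant times $r^{\alpha^\ast}$, so the geometric iteration yields
\begin{equation*}
\osc_{B_r(x_0)\times[0,r^2]}\varrho \le \gamma_2\,r^{\min\{\alpha_0,\alpha^\ast\}},
\end{equation*}
uniformly in $x_0$. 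Combining with the interior estimate through a standard triangle inequality and the intrinsic scaling of the PME gives the claimed uniform $\tilde\alpha$-H\"older bound on all of $Q_T$, with constant $\gamma=\gamma(m,d,M)$.

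The main obstacle is the degeneracy of the PME where $\varrho$ vanishes, which is exactly where the ``natural'' parabolic scaling fails and intrinsic scaling must be used. In particular, the barrier construction above is delicate when $k=\varrho_0(x_0)$ is close to zero, because the pressure equation degenerates and the explicit Barenblatt-type supersolutions need to be rescaled so that both their height and their support radius are tuned to $\omega(r)$ rather than to $k$; this is the step where I expect to spend most of the work and where the dependence of $\gamma$ on $M$ and on $\|\varrho_0\|_{\calC^{\alpha_0}}$ is actually determined. All other steps are routine modifications of the by-now-classical interior H\"older regularity machinery for degenerate parabolic equations.
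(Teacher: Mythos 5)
Your plan diverges substantially from the paper's. The paper never uses barriers or the comparison principle near $t=0$; instead it proves a boundary Caccioppoli inequality (Proposition~\ref{P:energy}) in which the initial-time contribution vanishes outright once the truncation level $k$ is chosen so that $(u-\mu_\pm\pm k)_\pm$ is identically zero on $K_r\times\{0\}$ (the condition \eqref{H:k}). The proof then proceeds by a De~Giorgi / log-estimate iteration on nested \emph{intrinsic} cylinders $K_{R_n}\times[0,(\omega_n/4)^{1-m}R_n^2]$ attached to the initial plane, with a dichotomy at every scale: either the oscillation is already controlled by $2\,\essosc_{K_{R_n}}\varrho_0$ (giving the $\alpha_0$ contribution), or one of the inequalities in \eqref{mu_mu0} fails and the energy method supplies a fixed reduction factor $\eta_1<1$ (giving the $\alpha^\ast$ contribution). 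Lemma~\ref{L:main} then converts the resulting oscillation decay $\omega_{n+1}\le\max\{\eta_1\omega_n,\,4R_n^{\epsilon_0}\}$ into the uniform $\calC^{\tilde\alpha}$ bound with $\tilde\alpha=\min\{\alpha_0,\alpha^\ast\}$.

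Your barrier strategy, while not unreasonable in spirit (it is the classical Caffarelli--Friedman route), has concrete gaps as stated. First, the supersolution you describe, a spatially constant $\bigl[(k+C_1\omega(r))^m+C_2t\bigr]^{1/m}$ cut off by a ``Barenblatt-type profile'', cannot dominate $\varrho$ on the lateral boundary $\partial B_{2r}(x_0)\times[0,r^2]$: there $\varrho$ may be as large as $\|\varrho_0\|_{L^\infty}$, whereas a Barenblatt-type cutoff \emph{decreases} away from the center; you would need a barrier that \emph{increases} toward $\partial B_{2r}$, and no explicit profile for this is given. Second, you acknowledge that the construction degenerates when $k=\varrho_0(x_0)\approx 0$, which is exactly where the intrinsic scaling (time interval of length $\sim \omega^{1-m}r^2$, not $r^2$) becomes essential and where the paper's energy-method dichotomy carries the argument; your plan does not resolve this. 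Third, the final ``oscillation-reduction iteration centered at $(x_0,0)$'' is logically redundant with the barrier step: if the barrier already produced $|\varrho(x,t)-k|\le C\bigl(r^{\alpha_0}+r^{\alpha^\ast}\bigr)$ on $B_r\times[0,r^2]$, that \emph{is} the boundary H\"older estimate, and the iteration adds nothing. Conversely, if the barrier cannot be built (as the degeneracy suggests), then the iteration itself must carry the load at every scale, and one needs a per-scale mechanism for erasing the initial-time contribution in the energy estimates, which is precisely what the paper supplies via \eqref{H:k} and you do not.
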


Our main result in this section reads as follow.
\begin{proposition}\label{proposition : regular existence}
 Let  $\tilde{\alpha} \in (0, 1)$ be the constant in Theorem \ref{T:Boundary_Holder}.
 Assume $\alpha \in (0, \tilde{\alpha}]$ and
$$
 {\rho}_0\in\mathcal{P}_2(\mathbb{R}^d)\cap \mathcal{C}^\alpha(\bbr^d) \quad \mbox{and} \quad
V \in L^1(0,T; W^{2,\infty}(\bbr^d))\cap
\calC^\infty (Q_T).
$$
Then there exists an absolutely continuous curve ${\rho}\in
AC(0,T;\mathcal{P}_2(\mathbb{R}^d))$ which is a solution of \eqref{Fokker-Plank 2}
in the sense of distributions and satisfies the followings:
\begin{itemize}
\item[(i)] For all $0\leq s<t\leq T$, we have
\begin{equation}\label{eq2 : Theorem : Toy Fokker-Plank}
W_2(\rho(s),\rho(t))\leq C\sqrt{t-s } +\int_s^t \|V(\tau)\|_{L^\infty_x}
\,d\tau 
\end{equation}
where the constant $C=C\left (\|\rho_0\|_{L^m (\bbr^d)}, \,
\|V\|_{L^1(0,T; W^{1,\infty}(\bbr^d))} \right )$.

\item[(ii)] For all $x,y \in \bbr^d$ and $t\in [0, T]$, we have
\begin{equation*}
\left | \rho(x,t)-\rho(y,t)\right | \leq C |x-y|^\alpha, 
\end{equation*}
where the constant $C=C\left (\|\rho_0\|_{L^\infty (\bbr^d)}, \, \|V\|_{L^1(0,T; C^{1,\alpha}(\bbr^d))} \right )$.

\item[(iii)] $\bullet$ For $q >1$
\begin{equation}\label{KK-Feb19-100}
\int_0^T \|\nabla \rho^{\frac{m+q-1}{2}}\|_{L^2_x}^2 dt \leq  C,
\end{equation}
where the constant $C=C\left (\|\rho_0\|_{L^{m+q-1}(\bbr^d)},~~
\|V\|_{L^1(0,T;W^{2,\infty}(\bbr^d))} \right )$.

$\bullet$ For $q=1$
\begin{equation}\label{KK-June21-10}
\int_0^T \|\nabla \rho^{\frac{m}{2}}\|_{L^2_x}^2 dt \leq  C,
\end{equation}
where the constant $C=C\left (\int_{\bbr^d} \rho_0 \langle x \rangle^2  \,dx, \, \|\rho_0\|_{L^m(\bbr^d)}, \, \|V\|_{L^1(0,T;W^{2,\infty}(\bbr^d))} \right )$.

\item[(iv)] For all $t\in [0, T]$ and $q\in[1, \infty]$, we have
\begin{equation}\label{KK-Nov24-100}
\|\rho(t)\|_{L^q_x} \leq \|\rho_0\|_{L^q(\mathbb{R}^d)}e^{\frac{q-1}{q}\int_0^T
\|\nabla \cdot V \|_{L^\infty_x} \,d\tau} , 
\end{equation}
where $\frac{q-1}{q}=1$ if $q=\infty$.

\item[(v)] If $\rho_0$ is compactly supported, then $\rho$ is also compactly supported in space and time.
More precisely, suppose there exists $R_0>0$ such that $\supp(\varrho_0) \subset B_{R_0}(0):=\{x \in \bbr^d : \abs{x} \leq R_0 \}$. Then, for all $ t \in [0,T]$, we have
  \begin{equation}\label{support-uniform}
 \supp \left( \rho(t)\right) \subset B_{R_T}(0), \quad R_T:=e^{\beta T} \left( R_0 + \int_0^T \|v(t) \|_{L_x^\infty} \,dt \right ), \quad  \beta:=\frac{1}{d(m-1)+2}.
    \end{equation}
\end{itemize}
\end{proposition}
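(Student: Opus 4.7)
The plan is to build $\rho$ through a Lie--Trotter splitting scheme in $\mathcal{P}_2(\bbr^d)$, alternating the homogeneous PME $\partial_t\varrho=\Delta\varrho^m$ with the transport generated by the flow $\psi$ of $V$. I fix $n\in\N$, put $h=T/n$, $t_k=kh$, $t_{k+\frac12}=t_k+h/2$, and initialize $\rho_h(0)=\rho_0$. On $[t_k,t_{k+\frac12}]$ I run the homogeneous PME at doubled speed starting from $\rho_h(t_k)$, and on $[t_{k+\frac12},t_{k+1}]$ I push forward by the doubled-speed flow of $V$, so that the grid values agree with the iterated composition $(T_h\circ S_h)^k\rho_0$ of the two semigroups. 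Each substep is well defined and regularity preserving by Lemma~\ref{P:H-PME:e}, Theorem~\ref{T:Boundary_Holder}, Lemma~\ref{Lemma : density relation on the flow}, and Lemma~\ref{Lemma : Holder regularity on the flow}.

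I would first collect uniform estimates on $\rho_h$ independent of $n$. Composing Lemma~\ref{P:H-PME:e}(ii) (non-increase of $L^q$ along PME) with Lemma~\ref{Lemma : density relation on the flow} delivers \eqref{KK-Nov24-100} for every $q\in[1,\infty]$, and in the limit (iv) via $L^q$ lower semicontinuity. Alternating Theorem~\ref{T:Boundary_Holder} with Lemma~\ref{Lemma : Holder regularity on the flow}(i) produces a uniform Hölder bound in $x$ with exponent $\alpha$, since the PME step preserves regularity up to $\tilde\alpha$ and the transport step preserves it with controlled constant. The compact-support bound \eqref{support-uniform} follows from composing Lemma~\ref{P:H-PME:e}(iii) with Lemma~\ref{Lemma : estimation 1: ODE}, yielding (v). For the $W_2$-modulus, Lemma~\ref{representation of AC curves} applied on PME substeps via the $p=2$ case of \eqref{C:H-PME:e1} gives $W_2(\rho_h(t),\rho_h(s))\le C\sqrt{t-s}$ there (the initial entropy being finite since $\rho_0\in L^\infty\cap\mathcal{P}_2(\bbr^d)$), while Lemma~\ref{Lemma : estimation 1: ODE} gives $W_2(\rho_h(t),\rho_h(s))\le\int_s^t\|V(\tau)\|_{L^\infty_x}\,d\tau$ on transport substeps; summing across substeps and passing $n\to\infty$ yields \eqref{eq2 : Theorem : Toy Fokker-Plank}. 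The energy inequalities \eqref{KK-Feb19-100} and \eqref{KK-June21-10} in (iii) follow from summing the PME dissipation in \eqref{P:H-PME:e2} (for $q>1$) and the entropy-dissipation \eqref{P:H-PME:e1} (for $q=1$) telescopically across PME substeps, with the transport steps contributing only multiplicative constants by Lemma~\ref{Lemma : Holder regularity on the flow}(ii), using the second moment propagated by the $W_2$-equicontinuity.

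Uniform compact support plus unit mass gives tightness, so Lemma~\ref{Lemma : Arzela-Ascoli} extracts a subsequence, not relabelled, converging narrowly at every $t\in[0,T]$ to a limit $\rho\in AC(0,T;\mathcal{P}_2(\bbr^d))$ inheriting the $W_2$-modulus. Because each $\rho_h(\cdot,t)$ is uniformly Hölder in $x$ with common compact support, narrow convergence upgrades to uniform convergence on $\bbr^d$ for every $t$ by Arzela-Ascoli in $x$ coupled with diagonal extraction in $t$; the limit inherits Hölder regularity, proving (ii), and $\rho_h^m\to\rho^m$ strongly in $L^1_{x,t}$. Writing the weak formulation on each substep, with the PME piece contributing $2\iint_{t_k}^{t_{k+\frac12}}\rho_h^m\Delta\varphi\,dx\,dt$ and the transport piece contributing $2\iint_{t_{k+\frac12}}^{t_{k+1}}\rho_h\, V\cdot\nabla\varphi\,dx\,dt$, summing telescopically, and letting $n\to\infty$ identifies $\rho$ as a distributional solution satisfying \eqref{KK-May7-40}.

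The hard part will be controlling the consistency defect of the splitting: the nonlinearity of $\rho^m$ means the commutator of the PME semigroup with the transport flow produces an $O(h)$ defect per subinterval that must accumulate to a vanishing total. The key enablers are the uniform Hölder bound in $x$ from Theorem~\ref{T:Boundary_Holder} and the uniform compact support \eqref{support-uniform}, which rule out concentration or oscillation in $\rho_h^m$ and force strong convergence of the nonlinear diffusion term. The residual splitting defect is bounded by $h\,\|V\|_{L^\infty_{x,t}}\|\nabla^2\varphi\|_{L^\infty_x}\|\rho_h\|_{L^\infty_{x,t}}$ times the uniform support volume $|B_{R_T}|$, hence vanishes as $n\to\infty$; combined with the strong $L^1$ convergence of $\rho_h^m$, this closes the identification of $\rho$ as a regular solution obeying (i)--(v).
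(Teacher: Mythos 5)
Your splitting scheme is essentially the same Lie--Trotter iteration that the paper uses; the difference between running each semigroup at doubled speed on half-intervals and the paper's choice (solving the homogeneous PME for the full subinterval while simultaneously pushing forward by the flow $\Psi(t;iT/n,\cdot)$) is cosmetic, since the grid values agree. The supporting lemmas you invoke --- Lemma~\ref{P:H-PME:e}, Theorem~\ref{T:Boundary_Holder}, Lemma~\ref{Lemma : density relation on the flow}, Lemma~\ref{Lemma : Holder regularity on the flow}, Lemma~\ref{Lemma : Arzela-Ascoli} --- are exactly those driving the paper's Lemmas~\ref{Lemma:AC-curve}--\ref{Lemma : equi-continuity} and the final compactness/identification argument.

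There is, however, a genuine gap in your derivation of the $W_2$-modulus (i). You propose to control the metric speed of the PME substeps via ``the $p=2$ case of \eqref{C:H-PME:e1}.'' But \eqref{C:H-PME:e1} is deduced from \eqref{W-p} in Lemma~\ref{P:W-p}, which is only valid for $1<p\le\lambda_q$, and for $q=1$ one has $\lambda_1=1+\tfrac{1}{d(m-1)+1}<2$ whenever $m>1$ and $d\ge 1$. So the entropy estimate never reaches $p=2$ and cannot furnish the $L^2(\varrho)$-bound on the velocity $-\nabla\varrho^m/\varrho$ that Lemma~\ref{representation of AC curves} requires. The correct route, and the one the paper takes in Lemma~\ref{Lemma:AC-curve}(ii), is the $L^m$-energy identity \eqref{P:H-PME:e2} with $q=m$: since $\tfrac{|\nabla\varrho^m|^2}{\varrho}=\tfrac{4m^2}{(2m-1)^2}\bigl|\nabla\varrho^{\frac{2m-1}{2}}\bigr|^2$, \eqref{P:H-PME:e2} gives $\iint\tfrac{|\nabla\varrho^m|^2}{\varrho}\,dx\,dt\le \tfrac{1}{m-1}\|\rho_0\|^m_{L^m}$ directly; combined with the Jacobian bound \eqref{L^p relation} across transport steps, this is precisely what makes the constant in \eqref{eq2 : Theorem : Toy Fokker-Plank} depend on $\|\rho_0\|_{L^m(\bbr^d)}$ as stated, whereas the entropy route would drag in the second moment and still fail to reach $p=2$. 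A secondary remark: your bound on the splitting defect by $h\|V\|_{L^\infty_{x,t}}\|\nabla^2\varphi\|_{L^\infty_x}\|\rho_h\|_{L^\infty_{x,t}}|B_{R_T}|$ is too coarse a description of what must be proved; the paper's Lemma~\ref{Lemma:solving ODE} shows the cumulative error $E_n$ is controlled by $\tfrac{T}{n}\|\varphi\|_{\mathcal{C}^2}e^{C\int\|\nabla V\|_{L^\infty_x}}\int\|V\|_{W^{1,\infty}_x}$, which requires tracking how the per-subinterval discrepancy between $\rho_n$ and $\varrho_n$ (Lemma~\ref{Lemma:AC-curve}(iii)) enters the weak formulation, not a crude Taylor estimate.
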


\subsection{Splitting method}
 In this subsection, we introduce the splitting method and construct two sequence of curves in
$\mathcal{P}_2(\mathbb{R}^d)$ which are approximate solutions of \eqref{Fokker-Plank 2} in $Q_{T}$,
\begin{equation*}
\begin{cases}
\partial_t \rho =   \nabla \cdot( \nabla \rho^m -V\rho),\\
\rho(\cdot,0)=\rho_0 \in \mathcal{P}_2(\mathbb{R}^d).
\end{cases}
\end{equation*}
For each $n\in \mathbb{N},$ we define approximated solutions
$\varrho_n,~ \rho_n:[0,T]\mapsto \mathcal{P}_2(\mathbb{R}^d)$ as follows;
\begin{itemize}

\item
 For $t\in [0,\frac{T}{n}]$, we define $\varrho_n:[0,\frac{T}{n}]\mapsto \mathcal{P}_2(\mathbb{R}^d)$ as the solution of
 \begin{equation*} 
\begin{cases}
   \partial_t \varrho_n =   \Delta (\varrho_n)^m , \\
   \varrho_n(\cdot,0)=\rho_0.
    \end{cases}
\end{equation*} 
We also define  $\rho_n:[0,\frac{T}{n}]\mapsto \mathcal{P}_2(\mathbb{R}^d)$ as follows
\begin{equation*}
\rho_n(t)= \Psi(t;0, \varrho_n(t)), \quad \forall ~ t \in [0, \frac T n].
\end{equation*}
 Here, we recall \eqref{Flow on Wasserstein} for the definition of $\Psi$. Note that we assume $ V \in L^1(0,T; W^{2,\infty}(\bbr^d))\cap
\calC^\infty (Q_T) $ in Proposition \ref{proposition : regular existence}, and hence this flow map $\Psi$ used for the proof of Proposition
\ref{proposition : regular existence} is well defined.

\item For $t\in (\frac{T}{n} ,\frac{2T}{n}]$,  we define $\varrho_n:(\frac{T}{n} ,\frac{2T}{n}]\mapsto \mathcal{P}_2(\mathbb{R}^d)$ as the solution of
 \begin{equation*} 
\begin{cases}
   \partial_t \varrho_n =   \Delta (\varrho_n)^m , \\
    \varrho_n(\cdot, \frac Tn)=\rho_n( \frac T n).
\end{cases}
\end{equation*} 
We also define  $\rho_n:(\frac{T}{n} ,\frac{2T}{n}]\mapsto \mathcal{P}_2(\mathbb{R}^d)$ as follows
\begin{equation*}
\rho_n(t)= \Psi(t; T/n, \varrho_n(t)), \quad \forall ~ t \in (\frac{T}{n} ,\frac{2T}{n}].
\end{equation*}

\item In general, for each $i=1, \ldots, n-1$ and $t\in (\frac{iT}{n} ,\frac{(i+1)T}{n}]$, we define $\varrho_n:(\frac{iT}{n} ,\frac{(i+1)T}{n}]\mapsto \mathcal{P}_2(\mathbb{R}^d)$ as the solution of
 \begin{equation*} 
\begin{cases}
    \partial_t \varrho_n =   \Delta (\varrho_n)^m,  \\
    \varrho_n(\cdot, \frac{iT}{n})=\rho_n( \frac{iT}{n}).
\end{cases}
\end{equation*} 
We also define  $\rho_n:(\frac{iT}{n} ,\frac{(i+1)T}{n}]\mapsto \mathcal{P}_2(\mathbb{R}^d)$ as follows
\begin{equation*}
\rho_n(t)= \Psi(t; iT/n, \varrho_n(t)), \quad \forall ~ t \in (\frac{iT}{n} ,\frac{(i+1)T}{n}].
\end{equation*}
\end{itemize}

 Now, we investigate some properties useful for the proof of Proposition \ref{proposition : regular existence}.
\begin{lemma}\label{Lemma:AC-curve}
Let  $V \in L^1(0,T; W^{1,\infty}(\bbr^d))$ and $\rho_0 \in \mathcal{P}_2(\bbr^d) \cap  \mathcal{C}^\alpha(\bbr^d)$ for some $\alpha \in (0,1)$.
Suppose that $\rho_n,\varrho_n:[0,T]\mapsto \mathcal{P}_2(\mathbb{R}^d)$ are curves defined as above with the initial data $\varrho_n(0),\rho_n(0):=\rho_0 $.
Then, these curves satisfy the following properties;
\begin{itemize}
\item[(i)] For all $t\in [0,T],$ we have
\begin{equation}\label{eq29:Lemma:AC-curve}
\|\varrho_{n}(t)\|_{L^q_x}, \  \|\rho_{n}(t)\|_{L^q_x} \leq \|\rho_0\|_{L^q(\bbr^d)}
e^{\frac{q-1}{q}\int_0^T \|\nabla \cdot V\|_{L^\infty_x} \,d\tau}, \qquad \forall ~  q \geq 1.
\end{equation}

\item[(ii)] For all $s,t \in [0,T],$ we have
\begin{equation}\label{eq20:Lemma:AC-curve}
\begin{aligned}
W_2(\rho_{n}(s), \rho_{n}(t)) &\leq C \sqrt{t-s}+ \int_s^t \|V(\tau)\|_{L^\infty_x} \,d\tau  ,
\end{aligned}
\end{equation}
where $C= C(\int_0^T \|\nabla V\|_{L^\infty_x}  \,d\tau , \|\rho_0\|_{L^m (\bbr^d)})$

\item[(iii)] For all $t \in [0,T],$ we have
\begin{equation}\label{eq21:Lemma:AC-curve}
\begin{aligned}
W_2(\rho_{n}(t), {\varrho}_{n}(t)) \leq  \max_{\{i=0,1,\dots,n\}}\int_{\frac{iT}{n}}^{\frac{(i+1)T}{n}} \|V(\tau)\|_{L^\infty_x} \,d\tau.
\end{aligned}
\end{equation}

\item[(iv)] Suppose there exists $R_0>0$ such that $\supp(\varrho_0) \subset B_{R_0}(0)$. Then, for all $ t \in [0,T]$, we have
  \begin{equation}\label{support-n}
 \supp \left( \rho_n(t)\right) \subset B_{R_{n,T}}(0), \quad R_{n,T}:= R_0\left(1+\beta \frac{T}{n} \right )^n + \left( 1 + \beta \frac{T}{n}\right )^{n-1} \int_0^T \|V(t) \|_{L_x^\infty} \,dt,
    \end{equation}
    where $\beta:=\frac{1}{d(m-1)+2}$.
\end{itemize}
\end{lemma}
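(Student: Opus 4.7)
Our plan is to treat the four assertions in turn, propagating bounds across the splitting intervals $[iT/n,(i+1)T/n]$ by alternating between homogeneous PME estimates (Lemma~\ref{P:H-PME:e}) and flow estimates (Lemmas~\ref{Lemma : estimation 1: ODE}--\ref{Lemma : density relation on the flow}, \ref{Corollary-4:Lipschitz}). Set $I_i := [iT/n,(i+1)T/n]$ and $t_i := iT/n$.

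For (i): on each $I_i$ the curve $\varrho_n$ solves homogeneous PME, so \eqref{P:H-PME:e2} gives $\|\varrho_n(t)\|_{L^q_x} \le \|\varrho_n(t_i)\|_{L^q_x} = \|\rho_n(t_i)\|_{L^q_x}$ for $t\in I_i$. The push-forward $\rho_n(t) = \Psi(t;t_i,\varrho_n(t))$ then combines with \eqref{L^p relation} to yield $\|\rho_n(t)\|_{L^q_x} \le \|\rho_n(t_i)\|_{L^q_x}\,\exp\bigl(\tfrac{q-1}{q}\int_{t_i}^{t}\|\nabla\cdot V\|_{L^\infty_x}\,d\tau\bigr)$. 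Iterating over $i=0,\dots,n-1$ collapses the exponentials into a single factor $\exp(\tfrac{q-1}{q}\int_0^T\|\nabla\cdot V\|_{L^\infty_x}\,d\tau)$, yielding \eqref{eq29:Lemma:AC-curve}.

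For (ii), which I expect to be the main technical step: fix $s<t$ and first treat the case $s,t\in I_i$. Write $\rho_n(s)=\Psi(s;t_i,\varrho_n(s))$ and $\rho_n(t)=\Psi(t;t_i,\varrho_n(t))$ and insert $\Psi(t;t_i,\varrho_n(s))$ as an intermediate to split
\begin{equation*}
W_2(\rho_n(s),\rho_n(t)) \le W_2\bigl(\Psi(s;t_i,\varrho_n(s)),\Psi(t;t_i,\varrho_n(s))\bigr) + W_2\bigl(\Psi(t;t_i,\varrho_n(s)),\Psi(t;t_i,\varrho_n(t))\bigr).
\end{equation*}
The first term is bounded by $\bigl(\int |\psi(t;t_i,x)-\psi(s;t_i,x)|^2 d\varrho_n(s)\bigr)^{1/2} \le \int_s^t \|V(\tau)\|_{L^\infty_x}d\tau$ via \eqref{ODE}. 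The second term is controlled by $e^{\int_{t_i}^t\mathrm{Lip}(V(\tau))d\tau}W_2(\varrho_n(s),\varrho_n(t))$ via Lemma~\ref{Corollary-4:Lipschitz}. To estimate $W_2(\varrho_n(s),\varrho_n(t))$ I use that the homogeneous PME is the Wasserstein gradient flow of $\int \rho^m/(m-1)\,dx$, so the metric derivative satisfies $|\varrho_n'|^2(\tau) \le \int |\nabla\varrho_n^m/\varrho_n|^2\varrho_n\,dx = \tfrac{4m^2}{(2m-1)^2}\int |\nabla \varrho_n^{(2m-1)/2}|^2\,dx$. Invoking \eqref{P:H-PME:e2} with $q=m$ on the interval $I_i$ and part (i) yields $\int_{t_i}^{t_{i+1}}|\varrho_n'|^2 d\tau \le C\|\rho_n(t_i)\|_{L^m_x}^m \le C\|\rho_0\|_{L^m_x}^m e^{(m-1)\int_0^T\|\nabla\cdot V\|_{L^\infty_x}d\tau}$. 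Cauchy--Schwarz then gives $W_2(\varrho_n(s),\varrho_n(t))\le \sqrt{t-s}\,\bigl(\int_s^t|\varrho_n'|^2\bigr)^{1/2} \le C\sqrt{t-s}$. Combining these two bounds establishes \eqref{eq20:Lemma:AC-curve} when $s,t$ lie in the same interval; a triangle inequality through the grid points $t_i$ extends it to arbitrary $s<t$ in $[0,T]$, since both the $\sqrt{\cdot}$ term and the integral of $\|V\|_{L^\infty_x}$ are subadditive in the appropriate sense.

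For (iii): using $\varrho_n(t)=\Psi(t_i;t_i,\varrho_n(t))$ we get
\begin{equation*}
W_2(\rho_n(t),\varrho_n(t)) = W_2\bigl(\Psi(t;t_i,\varrho_n(t)),\Psi(t_i;t_i,\varrho_n(t))\bigr) \le \int_{t_i}^{t}\|V(\tau)\|_{L^\infty_x}d\tau,
\end{equation*}
by the same pointwise flow estimate as above, giving \eqref{eq21:Lemma:AC-curve}. For (iv): on $I_0$, \eqref{support} applied to homogeneous PME gives $\mathrm{supp}(\varrho_n(t))\subset B_{R_0(1+\beta T/n)}$, and then the flow $\psi(t;0,\cdot)$ displaces points by at most $\int_0^{T/n}\|V\|_{L^\infty_x}d\tau$, so $\mathrm{supp}(\rho_n(T/n))\subset B_{R_1}$ with $R_1 = R_0(1+\beta T/n) + \int_0^{T/n}\|V\|_{L^\infty_x}$. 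A simple recursion $R_{i+1} = R_i(1+\beta T/n) + \int_{t_i}^{t_{i+1}}\|V\|_{L^\infty_x}$ over $n$ steps, combined with the geometric-series bound $\sum_{i=0}^{n-1}(1+\beta T/n)^{n-1-i}\int_{t_i}^{t_{i+1}}\|V\|_{L^\infty_x}\le (1+\beta T/n)^{n-1}\int_0^T\|V\|_{L^\infty_x}$, produces \eqref{support-n}. The main obstacle I anticipate is cleanly bookkeeping the PME speed estimate in part (ii), in particular verifying that $|\varrho_n'|^2$ is controlled by $\int|\nabla\varrho_n^{(2m-1)/2}|^2$ (this is where the $L^m$-energy must be invoked rather than the $L^1$ one), and this is precisely what forces the constant in \eqref{eq20:Lemma:AC-curve} to depend on $\|\rho_0\|_{L^m_x}$.
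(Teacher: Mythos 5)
Your treatments of parts (i), (iii), and (iv) coincide with the paper's proof, and for part (ii) your decomposition via a flow intermediate (you insert $\Psi(t;t_i,\varrho_n(s))$, the paper inserts $\Psi(s;t_i,\varrho_n(t))$) is a symmetric variant of the same idea, as is the identification of the PME speed $|\varrho_n'|^2 \le \int |\nabla\varrho_n^m/\varrho_n|^2\varrho_n$ and the reason $\|\rho_0\|_{L^m}$ enters the constant.

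There is, however, a genuine gap at the end of your argument for (ii). You apply Cauchy--Schwarz on each sub-interval to obtain a piecewise bound $W_2(\rho_n(t_j),\rho_n(t_{j+1})) \le C\sqrt{\Delta t_j} + \int_{t_j}^{t_{j+1}}\|V\|_{L^\infty_x}$, and then invoke the triangle inequality across grid points, claiming that ``the $\sqrt{\cdot}$ term is subadditive in the appropriate sense.'' It is not: concavity of the square root gives $\sum_j\sqrt{\Delta t_j}\ge \sqrt{\sum_j \Delta t_j}=\sqrt{t-s}$, with the inequality in the wrong direction. For $n$ equal sub-intervals of $[0,T]$ the sum is $\sqrt{nT}$, which diverges as $n\to\infty$, so your chained bound is not uniform in $n$ and cannot be passed to the limit. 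The paper avoids this by reversing the order of operations: first it telescopes the triangle inequality over grid points while keeping each $W_2(\varrho_n\text{-piece})$ in the integral form $\int (\int|\nabla\varrho_n^m|^2/\varrho_n\,dx)^{1/2}\,d\tau$ supplied by Lemma~\ref{representation of AC curves}, arriving at the single inequality
\begin{equation*}
W_2(\rho_n(s),\rho_n(t)) \le e^{\int_0^T \mathrm{Lip}(V)\,d\tau}\int_s^t\Bigl(\int_{\bbr^d}\frac{|\nabla\varrho_n^m|^2}{\varrho_n}\,dx\Bigr)^{1/2} d\tau + \int_s^t\|V(\tau)\|_{L^\infty_x}\,d\tau,
\end{equation*}
and only then applies Cauchy--Schwarz once over $[s,t]$ to extract $\sqrt{t-s}$, after which the remaining time-integral $\iint_s^t|\nabla\varrho_n^m|^2/\varrho_n$ is bounded uniformly via the $L^m$ energy identity \eqref{P:H-PME:e2} and the push-forward growth factor \eqref{L^p relation} in a telescoping sum over sub-intervals (this bookkeeping is nontrivial since the $L^m$ norm jumps by a factor $e^{(m-1)\int\|\nabla\cdot V\|}$ at each grid point). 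You should restructure your argument to apply Cauchy--Schwarz globally rather than locally and carry out that telescoping explicitly; the per-interval estimate alone cannot produce a constant independent of $n$.
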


\begin{proof}
\emph{(i)} When $q=1$, \eqref{eq29:Lemma:AC-curve} is true because of the mass conservation property of $\rho$ and $\varrho$. So we only prove for case $q>1$.
Let $t\in \left [0,\frac{T}{n} \right]$ be given. From Lemma \ref{P:H-PME:e} $(ii)$, we have
\begin{equation}\label{eq30:Lemma:AC-curve}
\|\varrho_n(t)\|_{L^q_x} \leq \|\rho_0\|_{L^q(\bbr^d)},   \qquad \forall ~ q > 1  .
\end{equation}
We combine \eqref{L^p relation}  and \eqref{eq30:Lemma:AC-curve} to get
\begin{equation}\label{eq31:Lemma:AC-curve}
\|\rho_n(t)\|_{L^q_x} \leq  \|\varrho_n(t)\|_{L^q_x} ~e^{\frac{q-1}{q}\int_0^t\|\nabla\cdot V\|_{L^\infty_x} \,d\tau}
\leq \|\rho_0\|_{L^q(\mathbb{R}^d)} ~e^{\frac{q-1}{q}\int_0^t\|\nabla\cdot V\|_{L^\infty_x} \,d\tau}.
\end{equation}
Similarly, if $t\in (\frac{iT}{n},\frac{(i+1)T}{n}]$ for $1 \leq i \leq n-1$, then we apply \eqref{eq30:Lemma:AC-curve} and
\eqref{eq31:Lemma:AC-curve} inductively to get
\begin{equation*}
\|\varrho_n(t)\|_{L^q_x} \leq \|\rho_0\|_{L^q(\mathbb{R}^d)} ~ e^{\frac{q-1}{q}\int_0^{\frac{iT}{n}}\|\nabla\cdot V\|_{L^\infty_x} \,d\tau},
\end{equation*}
and
\begin{equation*}
\|\rho_n(t)\|_{L^q_x} \leq \|\rho_0\|_{L^q(\mathbb{R}^d)} ~ e^{\frac{q-1}{q}\int_0^{\frac{(i+1)T}{n}}\|\nabla\cdot V\|_{L^\infty_x} \,d\tau},
\end{equation*}
which prove \eqref{eq29:Lemma:AC-curve}.

\emph{(ii)} Now, we prove $AC$-curve property of $\rho_n$. Let $0\leq s\leq t \leq T$ be given.

$\bullet$ First, we assume $s,t\in[\frac{iT}{n},\frac{(i+1)T}{n}]$ for some $0\leq i\leq n-1$. From the triangle inequality, we have
\begin{equation}\label{eq1:Lemma:AC-curve}
\begin{aligned}
W_2(\rho_n(s), \rho_n(t))&\leq W_2(\rho_n(s), \Psi(s; iT/n, \varrho_n(t))) +  W_2(\Psi(s; iT/n, \varrho_n(t)), \rho_n(t))\\
&= W_2(\Psi(s; iT/n, \varrho_n(s)), \Psi(s; iT/n, \varrho_n(t)))
+  W_2(\Psi(s; iT/n, \varrho_n(t)),\Psi(t; iT/n, \varrho_n(t)))\\
&\leq e^{\int_{iT/n}^s \mbox{Lip}(v_\tau) \,d\tau}W_2(\varrho_n(s),\varrho_n(t))
 +  W_2(\Psi(s; iT/n, \varrho_n(t)),\Psi(t; iT/n, \varrho_n(t))),
\end{aligned}
\end{equation}
where we use \eqref{eq1:Corollary-4:Lipschitz} to get the last inequality. We note that Lemma \ref{representation of AC curves} gives us
\begin{equation}\label{eq2:Lemma:AC-curve}
\begin{aligned}
W_2({\varrho}_n(s), {\varrho}_n(t))
&\leq \int_s^t \Big(\int_{\mathbb{R}^d}\frac{|\nabla \big(\varrho_{n}\big)^m|^2}{\varrho_{n}}~ \,dx\Big)^{\frac{1}{2}} ~\,d\tau.
\end{aligned}
\end{equation}
Next, we estimate the second term as follows
\begin{equation}\label{eq10:Lemma:AC-curve}
\begin{aligned}
  W_2^2(\Psi(s; iT/n, \varrho_n(t)),\Psi(t; iT/n, \varrho_n(t)))
  &= W_2^2(\psi(s; iT/n,\cdot)_\#{\varrho}_n(t), \psi(t; iT/n,\cdot)_\#{\varrho}_n(t)) \\
  &\leq \int_{\mathbb{R}^d} |\psi(s;iT/n,x)-\psi(t;iT/n,x)|^2 \varrho_n(x,t) \,dx\\
  &\leq \int_{\mathbb{R}^d} \left(\int_{s}^t \|V(\tau)\|_{L^\infty_x} \,d\tau\right)^2 \varrho_n(x,t)\,dx\\
  &= \Big (\int_{s}^t \|V(\tau)\|_{L^\infty_x} \,d\tau \Big)^2.
\end{aligned}
\end{equation}
Finally, we combine (\ref{eq1:Lemma:AC-curve}), (\ref{eq2:Lemma:AC-curve})  and \eqref{eq10:Lemma:AC-curve} to get
\begin{equation}\label{eq3:Lemma:AC-curve}
\begin{aligned}
W_2(\rho_n(s), \rho_n(t)) &\leq  e^{\int_{iT/n}^s \mbox{Lip}(V(\tau)) \,d\tau}
\int_s^t \Big(\int_{\mathbb{R}^d}\frac{|\nabla \big(\varrho_n \big)^m|^2}{\varrho_n }~ \,dx\Big)^{\frac{1}{2}} ~\,d\tau
 + \int_s^t\|V(\tau)\|_{L^\infty_x} \,d\tau.
\end{aligned}
\end{equation}

$\bullet$ Now, we consider the general case  $s \in [\frac{iT}{n},\frac{(i+1)T}{n}]$ and $t \in [\frac{(i+k)T}{n},\frac{(i+k+1)T}{n}]$.
Triangle inequality gives us
\begin{equation}\label{eq4:Lemma:AC-curve}
\begin{aligned}
W_2(\rho_n(s), \rho_n(t))&\leq W_2 \big(\rho_n(s), \rho_n({(i+1)T/n}) \big)
+ \sum_{j=i+1}^{i+k-1}W_2 \big(\rho_n({jT/n}) , \rho_n({(j+1)T/n}) \big)\\
&\quad + W_2 \big(\rho_n({(i+k)T/n}) , \rho_n(t) \big ).
\end{aligned}
\end{equation}
We apply (\ref{eq3:Lemma:AC-curve}) to each term in (\ref{eq4:Lemma:AC-curve}), and get
\begin{equation}\label{eq19:Lemma:AC-curve}
\begin{aligned}
W_2(\rho_n(s), \rho_n(t)) 
&\leq   e^{\int_0^T\mbox{Lip}(V (\tau)) \,d\tau}
\int_{s}^t \Big(\int_{\mathbb{R}^d}\frac{|\nabla \big(\varrho_n\big)^m|^2}{\varrho_n}~ \,dx\Big)^{\frac{1}{2}} ~\,d\tau+ \int_s^t \|V (\tau) \|_{L^\infty_x} \,d\tau  .
\end{aligned}
\end{equation}
We note that
\begin{equation}\label{eq22:Lemma:AC-curve}
\int_{s}^t \Big(\int_{\mathbb{R}^d}\frac{|\nabla \big(\varrho_n\big)^m|^2}{\varrho_n}~ \,dx\Big)^{\frac{1}{2}} ~\,d\tau
\leq \sqrt{t-s} \Big(\int_s^t \int_{\mathbb{R}^d}\frac{|\nabla \big(\varrho_n\big)^m|^2}{\varrho_n }~ \,dx ~\,d\tau \Big)^{\frac{1}{2}}.
\end{equation}

Now, we estimate the right hand side of \eqref{eq22:Lemma:AC-curve} as follows
\begin{equation*} 
\begin{aligned}
\int_{s}^t \int_{\mathbb{R}^d}\frac{|\nabla \big(\varrho_n\big)^m|^2}{\varrho_n}\,dx \,d\tau
&=\int_s^{\frac{(i+1)T}{n}} \int_{\mathbb{R}^d}\frac{|\nabla \big(\varrho_n\big)^m|^2}{\varrho_n} \,dx \,d\tau
 + \sum_{j=i+1}^{i+k-1}\int_{\frac{jT}{n}}^{\frac{(j+1)T}{n}}
 \int_{\mathbb{R}^d}\frac{|\nabla \big(\varrho_n\big)^m|^2}{\varrho_n } \,dx \,d\tau\\
 &\quad + \int_{\frac{(i+k)T}{n}}^t \int_{\mathbb{R}^d}\frac{|\nabla \big(\varrho_n \big)^m|^2}{\varrho_n }\,dx \,d\tau\\
&\leq \frac{1}{m-1}\Big(\int_{\mathbb{R}^d} \big[\varrho_n(s)\big]^m \,dx - \int_{\mathbb{R}^d} \big [\varrho_n((i+1)T/n)\big]^m \,dx   \Big)\\
&\quad + \frac{1}{m-1}\sum_{j=i+1}^{i+k-1}\Big( \int_{\mathbb{R}^d} \big [\rho_n(jT/n) \big]^m  \,dx-\int_{\mathbb{R}^d} \big [\varrho_n((j+1)T/n)\big]^m \,dx  \Big)\\
&\quad + \frac{1}{m-1}\Big(  \int_{\mathbb{R}^d} \big[\rho_n((i+k)T/n)\big]^m \,dx-\int_{\mathbb{R}^d} \big[ \varrho_n(t)\big]^m \,dx  \Big),
\end{aligned}
\end{equation*}
where we exploit \eqref{P:H-PME:e2} for the inequality.
Due to \eqref{L^p relation}, we have
\begin{equation}\label{eq24:Lemma:AC-curve}
\begin{aligned}
&(m-1)\int_{s}^t \int_{\mathbb{R}^d}\frac{|\nabla \big(\varrho_n\big)^m|^2}{\varrho_n}~ \,dx \,d\tau\\
&\leq \Big(\int_{\mathbb{R}^d} \big [\varrho_n(s)\big]^m \,dx- \int_{\mathbb{R}^d} \big [\varrho_n((i+1)T/n)\big ]^m \,dx   \Big)\\
&\quad + \sum_{j=i+1}^{i+k-1}\Big( e^{(m-1)\int_{\frac{jT}{n}}^{\frac{(j+1)T}{n}} \|\nabla \cdot V \|_{L^\infty_x} \,d\tau}
\int_{\mathbb{R}^d} \big[\varrho_n(jT/n) \big]^m  \,dx-\int_{\mathbb{R}^d} \big[\varrho_n((j+1)T/n)\big]^m \,dx  \Big)\\
&\quad + \Big( e^{(m-1)\int_{(i+k)T/n}^{t} \|\nabla \cdot V \|_{L^\infty_x} \,d\tau}
\int_{\mathbb{R}^d} \big[\varrho_n((i+k)T/n)\big]^m \,dx-\int_{\mathbb{R}^d} \big [\varrho_n(t)\big]^m \,dx \Big)\\
&= \Big(\int_{\mathbb{R}^d}  \big [\varrho_n(s)\big]^m  \,dx -\int_{\mathbb{R}^d}  \big [\varrho_n(t)\big]^m \,dx \Big)
 + \sum_{j=i+1}^{i+k-1}\Big(e^{(m-1)\int_{\frac{jT}{n}}^{\frac{(j+1)T}{n}} \|\nabla \cdot V \|_{L^\infty_x} \,d\tau}-1 \Big)
\int_{\mathbb{R}^d} \big[\varrho_n(jT/n)\big]^m  \,dx\\
&\leq \Big(\int_{\mathbb{R}^d}  \big [\varrho_n(s)\big]^m  \,dx -\int_{\mathbb{R}^d}  \big [\varrho_n(t)\big]^m \,dx \Big)\\
&\quad  + e^{(m-1)\int_{0}^{T} \|\nabla \cdot V \|_{L^\infty_x} \,d\tau}\sum_{j=i+1}^{i+k-1}
\Big((m-1)\int_{\frac{jT}{n}}^{\frac{(j+1)T}{n}} \|\nabla \cdot V \|_{L^\infty_x} \,d\tau \Big)\int_{\mathbb{R}^d} \big[\varrho_n(jT/n)\big]^m  \,dx\\
&\leq \left [(m-1) e^{(m-1)\int_{0}^{T} \|\nabla \cdot V \|_{L^\infty_x} \,d\tau} \int_{0}^{T} \|\nabla \cdot V \|_{L^\infty_x} \,d\tau +1\right ]
\sup_{0\leq \tau \leq T} \int_{\mathbb{R}^d} \big[\varrho_n(\tau)\big ]^m  \,dx\\
&\leq \left [(m-1) e^{(m-1)\int_{0}^{T} \|\nabla \cdot V \|_{L^\infty_x} \,d\tau} \int_{0}^{T} \|\nabla \cdot V \|_{L^\infty_x} \,d\tau +1\right ]
e^{(m-1)\int_{0}^{T} \|\nabla \cdot V \|_{L^\infty_x} \,d\tau} \int_{\mathbb{R}^d} {\varrho}_0^m \,dx.
\end{aligned}
\end{equation}
Combining \eqref{eq19:Lemma:AC-curve}-\eqref{eq24:Lemma:AC-curve}, we have
\begin{equation*} 
\begin{aligned}
W_2(\rho_n(s), \rho_n(t)) &\leq C \sqrt{t-s}+ \int_s^t \|V(\tau)\|_{L^\infty_x} \,d\tau,
\end{aligned}
\end{equation*}
where the constant $C= C (\int_0^T \|\nabla V \|_{L^\infty_x} \,d\tau, \|{\rho}_0\|_{L^m(\bbr^d)} )$.  
 This concludes the proof of (\ref{eq20:Lemma:AC-curve}).


\emph{(iii)} Next, we estimate the distance between $\varrho_n$ and $\rho_n$. For any $t\in[0,T]$, there exists $i$ such that
$t\in[\frac{iT}{n},\frac{(i+1)T}{n}]$. Then
\begin{equation*}
\begin{aligned}
 W_2^2({\varrho}_n(t), \rho_n(t))&= W_2^2({\varrho}_n(t), \Psi(t; iT/n,\cdot)_\#{\varrho}_n(t)) \\
 &\leq \int_{\bbr^d} |x-\psi(t; iT/n,x)|^2 \varrho_n(x,t)~\,dx\\
  &\leq \int_{\bbr^d} \Big (\int_{\frac{iT}{n}}^t \|V(\tau)\|_{L^\infty_x} \,d\tau \Big )^2 \varrho_n(x,t)~\,dx\\
  &= \Big (\int_{\frac{iT}{n}}^t \|V(\tau)\|_{L^\infty_x} \,d\tau \Big )^2,
\end{aligned}
\end{equation*}
which gives us (\ref{eq21:Lemma:AC-curve}).

 \emph{(iv)}
Let $t\in \left [0,\frac{T}{n} \right]$ be given. From \eqref{support}, we have
\begin{equation*} 
\supp(\varrho_n(t)) \subset B_{R(t)}(0), \quad R(t):=R_0(1+\beta t).
\end{equation*}
Next, it is easy to check
\begin{equation*} 
\begin{aligned}
\supp(\rho_n(t)) &\subset \supp(\varrho_n(t)) + B_{D(t)}(0), && D(t):=\int_0^t \|V(\tau)\|_{L_x^\infty} \,d\tau \\
&\subset  B_{R(t)}(0),  && R(t):=R_0(1+\beta t) + \int_0^t \|V(\tau)\|_{L_x^\infty} \,d\tau\\
& \subset B_{R_1}(0),  && R_1:=R_0 \left(1+\beta \frac{T}{n}\right) + \int_0^{\frac{T}{n}} \|V(t)\|_{L_x^\infty} \,dt.
\end{aligned}
\end{equation*}
Similarly, for $t\in \left [\frac{T}{n},\frac{2T}{n} \right]$, we have
\begin{equation*} 
\supp(\varrho_n(t)) \subset B_{R(t)}(0), \qquad R(t):=R_1 \left(1+\beta \big(t-\frac{T}{n}\big)\right),
\end{equation*}
and
\begin{equation*} 
\begin{aligned}
\supp(\rho_n(t)) 
& \subset B_{R_2}(0), \qquad R_2:=R_1 \left(1+\beta \frac{T}{n}\right) + \int_{\frac{T}{n}}^{\frac{2T}{n}} \|V(t)\|_{L_x^\infty} \,dt.
\end{aligned}
\end{equation*}
Note that
\begin{equation*}
\begin{aligned}
R_2 & = R_0 \left(1+\beta \frac{T}{n}\right)^2 + \left(1+\beta \frac{T}{n}\right)\int_{0}^{\frac{T}{n}} \|V(t)\|_{L_x^\infty} \,dt + \int_{\frac{T}{n}}^{\frac{2T}{n}} \|V(t)\|_{L_x^\infty} \,dt \\
& \leq R_0 \left(1+\beta \frac{T}{n}\right)^2 + \left(1+\beta \frac{T}{n}\right)\int_{0}^{\frac{2T}{n}} \|V(t)\|_{L_x^\infty} \,dt .
\end{aligned}
\end{equation*}
By continuing the process, for $t\in \left [\frac{(i-1)T}{n},\frac{iT}{n} \right]$ and $i=1, 2, \dots, n$, we have
\begin{equation*} 
\supp(\rho_n(t))  \subset B_{R_i}(0), \quad R_i:=R_0 \left(1+\beta \frac{T}{n}\right)^i + \left(1+\beta \frac{T}{n}\right)^{i-1}\int_{0}^{\frac{iT}{n}} \|V(t)\|_{L_x^\infty} \,dt.
\end{equation*}
Since $R_i$ is increasing w.r.t $i$, for any $t\in [0, T],$ we have $\supp(\rho_n(t)) \subset B_{R_n}(0)$ which gives \eqref{support-n}. This completes the proof.
\end{proof}

\begin{lemma}\label{Lemma:H-curve}
Let $V \in L^1(0,T : W^{2,\infty}(\bbr^d)) \cap C^\infty(\bar{Q}_T)$ and $\rho_0 \in \mathcal{P}_2(\mathbb{R}^d)\cap \mathcal{C}^\alpha(\bbr^d)$ for some $\alpha \in (0,1)$.
Suppose that $\rho_n,\varrho_n:[0,T]\mapsto
\mathcal{P}_2(\mathbb{R}^d)$ are curves defined as in Lemma
\ref{Lemma:AC-curve} with the initial data
$\varrho_n(0),\rho_n(0):=\rho_0$. Then, we have
\begin{equation}\label{main : Lemma:H-curve}
\int_0^T \|\nabla (\rho_{n})^{\frac{m+q-1}{2}}\|_{L^2_x}^2 dt\leq C
\left(\|\rho_0\|_{L^{m+q-1}(\bbr^d)}, ~\| V\|_{ L^1(0,T : W^{2,\infty}(\bbr^d))}\right), \quad \forall \ q > 1.
\end{equation}
\end{lemma}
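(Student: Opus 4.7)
The plan is to exploit the fact that, on each interval $[\frac{iT}{n},\frac{(i+1)T}{n}]$, the curve $\varrho_n$ is an exact solution of the homogeneous PME, while $\rho_n$ differs from $\varrho_n$ only through the flow map $\Psi$ of $V$. Thus $\|\nabla\rho_n^{(m+q-1)/2}\|_{L^2_x}$ and $\|\nabla\varrho_n^{(m+q-1)/2}\|_{L^2_x}$ are directly comparable via Lemma~\ref{Lemma : Holder regularity on the flow}\,(ii), and the sought estimate reduces to a telescoping sum of the energy identities for the homogeneous PME plus a lower-order remainder controlled by $\|\nabla^2V\|_{L^\infty}$.

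The main steps are as follows. First, set $a=\frac{m+q-1}{2}$ and apply Lemma~\ref{Lemma : Holder regularity on the flow}\,(ii) (with $L^2$ on both sides, using $\|\varrho_n^a\|_{L^2_x}^2=\|\varrho_n\|_{L^{m+q-1}_x}^{m+q-1}$) on each interval $[\frac{iT}{n},\frac{(i+1)T}{n}]$. Squaring and using $(a+b)^2\le 2a^2+2b^2$, this yields, for $t$ in the $i$-th interval,
\begin{equation*}
 \|\nabla\rho_n^{a}(t)\|_{L^2_x}^2 \le C_V\Bigl\{\|\nabla\varrho_n^{a}(t)\|_{L^2_x}^2 + \|\varrho_n(t)\|_{L^{m+q-1}_x}^{m+q-1}\Bigl(\int_{iT/n}^{t}\|\nabla^2 V\|_{L^\infty_x}\,d\tau\Bigr)^2\Bigr\},
\end{equation*}
where $C_V$ depends only on $a$ and $\|\nabla V\|_{L^1_t L^\infty_x}$. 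By Lemma~\ref{Lemma:AC-curve}\,(i), $\sup_t\|\varrho_n(t)\|_{L^{m+q-1}_x}\le C(\|\rho_0\|_{L^{m+q-1}},\|V\|_{L^1_tW^{1,\infty}_x})$, so integrating over the interval and summing in $i$ gives
\begin{equation*}
 \int_0^T \|\nabla \rho_n^{a}\|_{L^2_x}^2\,dt \le C_V\int_0^T\|\nabla\varrho_n^{a}\|_{L^2_x}^2\,dt + C_V'\,\frac{T}{n}\Bigl(\int_0^T\|\nabla^2 V\|_{L^\infty_x}\,d\tau\Bigr)^2,
\end{equation*}
where the second term is bounded uniformly in $n$ (indeed it vanishes as $n\to\infty$) by Cauchy--Schwarz applied to $\sum_i(\int_{iT/n}^{(i+1)T/n}\|\nabla^2V\|)^2\le(\int_0^T\|\nabla^2V\|)^2$.

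Next, for the remaining term, apply the PME energy identity \eqref{P:H-PME:e2} on each interval with exponent $q$, obtaining
\begin{equation*}
 \tfrac{4mq(q-1)}{(m+q-1)^2}\int_{iT/n}^{(i+1)T/n}\!\!\|\nabla\varrho_n^{a}\|_{L^2_x}^2\,dt = \|\rho_n(\tfrac{iT}{n})\|_{L^q_x}^q - \|\varrho_n(\tfrac{(i+1)T}{n})\|_{L^q_x}^q,
\end{equation*}
since $\varrho_n(\tfrac{iT}{n})=\rho_n(\tfrac{iT}{n})$ by construction. Writing $A_i:=\|\rho_n(\tfrac{iT}{n})\|_{L^q_x}^q$ and using \eqref{L^p relation} to compare $\|\varrho_n(\tfrac{(i+1)T}{n})\|_{L^q_x}^q$ with $A_{i+1}$ through $A_{i+1}\le \|\varrho_n(\tfrac{(i+1)T}{n})\|_{L^q_x}^q e^{(q-1)\int_{iT/n}^{(i+1)T/n}\|\nabla\cdot V\|_{L^\infty_x}}$, one estimates $A_i-\|\varrho_n(\tfrac{(i+1)T}{n})\|^q_{L^q_x}\le (A_i-A_{i+1}) + A_{i+1}\bigl(1-e^{-(q-1)\int_{iT/n}^{(i+1)T/n}\|\nabla\cdot V\|}\bigr)$. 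Summing in $i$ makes the first piece telescope to $A_0=\|\rho_0\|_{L^q_x}^q$, while the second is bounded by $(q-1)\sup_iA_i\cdot\int_0^T\|\nabla\cdot V\|_{L^\infty_x}\,d\tau$, which in turn is controlled via Lemma~\ref{Lemma:AC-curve}\,(i). Finally, interpolating $\|\rho_0\|_{L^q_x}\le \|\rho_0\|_{L^1_x}^{\theta}\|\rho_0\|_{L^{m+q-1}_x}^{1-\theta}$ (admissible since $1\le q\le m+q-1$, using mass conservation $\|\rho_0\|_{L^1_x}=1$), all constants are recast in terms of $\|\rho_0\|_{L^{m+q-1}}$ and $\|V\|_{L^1_tW^{2,\infty}_x}$, yielding \eqref{main : Lemma:H-curve}.

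The chief obstacle I anticipate is the careful handling of the telescoping step across the splitting intervals: one must accumulate the multiplicative errors from the $L^q$ push-forward bound \eqref{L^p relation} in such a way that the resulting constant is bounded \emph{uniformly in $n$} rather than growing like a product of $n$ factors. The estimate $1-e^{-\varepsilon_i}\le\varepsilon_i$ with $\varepsilon_i=(q-1)\int_{iT/n}^{(i+1)T/n}\|\nabla\cdot V\|_{L^\infty_x}$ is the key cancellation, since $\sum_i\varepsilon_i$ is fixed. The dependence $q>1$ enters unavoidably through the coefficient $(q-1)^{-1}$ from the PME energy identity, which is why the case $q=1$ must be treated separately (as noted in Proposition~\ref{proposition : regular existence}\,(iii)).
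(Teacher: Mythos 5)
Your proposal is correct and follows essentially the same route as the paper's proof: estimate $\|\nabla\rho_n^{(m+q-1)/2}\|_{L^2}$ in terms of $\|\nabla\varrho_n^{(m+q-1)/2}\|_{L^2}$ via Lemma~\ref{Lemma : Holder regularity on the flow}\,(ii), telescope the homogeneous PME energy identities \eqref{P:H-PME:e2} across the splitting intervals, and absorb the cumulative push-forward errors from \eqref{L^p relation} using that $\sum_i\int_{iT/n}^{(i+1)T/n}\|\nabla\cdot V\|_{L^\infty_x}=\int_0^T\|\nabla\cdot V\|_{L^\infty_x}$ is fixed. The only deviations are cosmetic bookkeeping: you bound $1-e^{-\varepsilon_i}\le\varepsilon_i$ while the paper uses $e^{\varepsilon_i}-1\le\varepsilon_i e^{\varepsilon_i}$ in \eqref{eq11 : Lemma : H-curve}, and your accounting of the $\|\nabla^2 V\|$ remainder gives the sharper $\frac{T}{n}\bigl(\int_0^T\|\nabla^2V\|\bigr)^2$ (vanishing as $n\to\infty$) versus the paper's $CT$, both of which are uniformly bounded.
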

\begin{proof}
For notational convenience, we use $\rho$ and $\varrho$ instead of $\rho_n$ and $\varrho_n$, respectively.
 First, we note from \eqref{eq29:Lemma:AC-curve}
\begin{equation}\label{eq6 : Lemma : H-curve}
\begin{aligned}
 \|\varrho(t)\|_{L^q_x}
 \leq \|\rho_0\|_{L^q(\bbr^d)}~e^{ \int_0^T\|\nabla \cdot V\|_{L^\infty_x} \,d\tau}, \quad \forall ~ t \in [0, T].
 \end{aligned}
\end{equation}
Next, \eqref{P:H-PME:e2} gives us
\begin{equation*}
\|\varrho(T/n)\|_{L^q_x}^q + \frac{4q}{m+q-1}\int_0^{\frac{T}{n}} \| \nabla \varrho^{\frac{m+q-1}{2}}\|_{L^2_x}^2 \,dt =\|\rho_0\|_{L^q (\mathbb{R}^d)}^q,
\end{equation*}
and we have from \eqref{eq5 : Sobolev}
\begin{equation}\label{eq14 : Lemma : H-curve}
\begin{aligned}
 \|\nabla \rho^{\frac{m+q-1}{2}}\|_{L^2_x}^2 &\leq e^{(m+q+3) \int_0^{\frac{T}{n}} \|\nabla  V\|_{L^\infty_x}\, d\tau} \\
 & \quad \times \left \{ \|\nabla \varrho^{\frac{m+q-1}{2}}\|_{L^2_x}^2
 + \| \varrho^{\frac{m+q-1}{2}}\|_{L^2_x}^2  \times \left ( \frac{m+q-1}{2}\int_0^{\frac{T}{n}} \| \nabla^2 V\|_{L^\infty_x} \,d\tau\right )^2 \right \}.
\end{aligned}
\end{equation}
Combining the fact $\|\rho_0\|_{L^1 (\bbr^d)}=1$ and \eqref{eq6 : Lemma :
H-curve} with \eqref{eq14 : Lemma : H-curve}, we have
\begin{equation}\label{eq9 : Lemma : H-curve}
\begin{aligned}
 \|\nabla \rho^{\frac{m+q-1}{2}}\|_{L^2_x}^2
 & \leq  e^{(m+q+3) \int_0^{\frac{T}{n}} \|\nabla V\|_{L^\infty_x}\, d\tau}\left \{ \|\nabla \varrho^{\frac{m+q-1}{2}}\|_{L^2_x}^2  + C \right \},
\end{aligned}
\end{equation}
where the constant $C=C\left (\|\rho_0\|_{L^{m+q-1}(\bbr^d)}, ~ \|V\|_{L^1(0,T ; W^{2,\infty}(\bbr^d))} \right )$. This gives us
\begin{equation}\label{eq10 : Lemma : H-curve}
\begin{aligned}
\int_0^{\frac{T}{n}} \|\nabla \rho^{\frac{m+q-1}{2}}\|_{L^2_x}^2  & \leq
e^{(m+q+3) \int_0^{\frac{T}{n}} \|\nabla V\|_{L^\infty_x}\,
d\tau} \left \{ \int_0^{\frac{T}{n}}\|\nabla
\varrho^{\frac{m+q-1}{2}}\|_{L^2_x}^2  + C\frac{T}{n} \right \}.
\end{aligned}
\end{equation}
Similarly, for all $i=1, 2, \dots, n-1$, we have
\begin{equation}\label{eq7 : Lemma : H-curve}
\begin{aligned}
 \left\|\varrho(\frac{(i+1)}{n}T) \right\|_{L^q_x}^q & + \frac{4q}{m+q-1}\int_{\frac{iT}{n}}^{\frac{(i+1)T}{n}} \| \nabla \varrho^{\frac{m+q-1}{2}}\|_{L^2_x}^2 \,d\tau
= \left\|\rho(\frac{iT}{n}) \right\|_{L^q_x}^q \\
&\leq \left\|\varrho(\frac{(i-1)}{n}T) \right\|_{L^q_x}^q
~e^{(q-1) \int_{\frac{(i-1)T}{n}}^{\frac{iT}{n}}\|\nabla V\|_{L^\infty_x}
\,d\tau},
\end{aligned}
\end{equation}
where we exploit \eqref{L^p relation} in the inequality, and
\begin{equation}\label{eq8 : Lemma : H-curve}
\begin{aligned}
\int_{\frac{iT}{n}}^{\frac{(i+1)}{n}T} \|\nabla
\rho^{\frac{m+q-1}{2}}\|_{L^2_x}^2  & \leq  e^{(m+q+3)
\int_{\frac{iT}{n}}^{\frac{(i+1)T}{n}} \|\nabla V\|_{L^\infty_x}\,
d\tau} \left \{ \int_{\frac{iT}{n}}^{\frac{(i+1)T}{n}}\|\nabla
\varrho^{\frac{m+q-1}{2}}\|_{L^2_x}^2  + C\frac{T}{n} \right \}.
\end{aligned}
\end{equation}
From \eqref{eq9 : Lemma : H-curve} and \eqref{eq7 : Lemma :
H-curve}, we have
\begin{equation}\label{eq11 : Lemma : H-curve}
\begin{aligned}
&\quad \|\varrho(t)\|_{L^q_x}^q + \frac{4q}{m+q-1}\int_{0}^{T} \| \nabla \varrho^{\frac{m+q-1}{2}} \|_{L^2_x}^2 \,d\tau\\
& \leq \|\rho(0)\|_{L^q_x}^q + \sum_{i=1}^{n-1}\|\varrho(iT/n) \|_{L^q_x}^q ~\left (e^{(q-1)\int_{\frac{(i-1)T}{n}}^{\frac{iT}{n}}\|\nabla V\|_{L^\infty_x} \,d\tau} -1 \right )\\
&\leq \|\rho(0)\|_{L^q_x}^q + \sum_{i=1}^{n-1}\|\varrho(iT/n)
\|_{L^q_x}^q ~\left ((q-1) \left[\int_{\frac{(i-1)T}{n}}^{\frac{iT}{n} }\|\nabla V\|_{L^\infty_x} \,d\tau\right]
~e^{(q-1)\int_{\frac{(i-1)T}{n}}^{\frac{iT}{n}}\|\nabla V\|_{L^\infty_x}
\,d\tau} \right ).
\end{aligned}
\end{equation}
From \eqref{eq10 : Lemma : H-curve} and \eqref{eq8 : Lemma :
H-curve}, we have
\begin{equation}\label{eq12 : Lemma : H-curve}
\begin{aligned}
\int_{0}^{T} \|\nabla \rho^{\frac{m+q-1}{2}}\|_{L^2_x}^2  & \leq
e^{(m+q+3) \int_{0}^{T} \|\nabla V\|_{L^\infty_x}\, d\tau}
\left ( \int_{0}^{T}\|\nabla \varrho^{\frac{m+q-1}{2}}\|_{L^2_x}^2  + CT
\right ).
\end{aligned}
\end{equation}
Finally we combine \eqref{eq11 : Lemma : H-curve} and \eqref{eq12 :
Lemma : H-curve} and get
\begin{equation*} 
\int_{0}^{T} \|\nabla \rho^{\frac{m+q-1}{2}}\|_{L^2_x}^2
\leq  e^{(m+2q+2) \int_{0}^{T} \|\nabla V\|_{L^\infty_x}\,d\tau}
 \times \left [\frac{(m+q-1)}{4q}
\left (\|\rho_0\|_{L^q (\mathbb{R}^d)}^q + \max_{i} \|\varrho(\frac{iT}{n}) \|_{L^q_x}^q\int_{0}^{T}\|\nabla V\|_{L^\infty_x} d\tau \right ) + CT \right ],
\end{equation*}
which concludes \eqref{main : Lemma:H-curve}.
\end{proof}

\begin{lemma}\label{Lemma:H-curve-1}
Let $V \in L^1(0,T : W^{2,\infty}(\bbr^d)) \cap \mathcal{C}^\infty(\bar{Q}_T)$ and
$\rho_0 \in \mathcal{P}_2(\bbr^d) \cap \mathcal{C}^\alpha(\bbr^d)$ for some $\alpha \in (0,1)$. 
Suppose that $\rho_n,\varrho_n:[0,T]\mapsto
\mathcal{P}_2(\mathbb{R}^d)$ are curves defined as in Lemma
\ref{Lemma:AC-curve} with the initial data
$\varrho_n(0),\rho_n(0):=\rho_0 $. Then, we have
\begin{equation}\label{main : Lemma:H-curve-1}
\int_0^T \|\nabla (\rho_{n})^{\frac{m}{2}}\|_{L^2_x}^2 dt\leq C,
\end{equation}
where $C= C(\int_{\bbr^d} \rho_0 \langle x\rangle^2  \,dx,\, \|\rho_0\|_{L^m(\bbr^d)},\, \|V\|_{L^1(0,T; W^{2,\infty}(\bbr^d))} ).$

\end{lemma}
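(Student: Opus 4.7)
My plan is to mirror the proof of Lemma \ref{Lemma:H-curve}, with the crucial modification that the $L^q$--energy identity \eqref{P:H-PME:e2} degenerates into mere mass conservation when $q=1$, so it must be replaced by the entropy dissipation identity obtained by testing the homogeneous PME step against $\log \varrho_n$ (as in \eqref{H-est-01}): on each subinterval $[iT/n,(i+1)T/n]$,
\begin{equation*}
\int_{\bbr^d} \varrho_n\bigl(\tfrac{(i+1)T}{n}\bigr)\log \varrho_n\bigl(\tfrac{(i+1)T}{n}\bigr)\,dx + \tfrac{4}{m}\!\int_{iT/n}^{(i+1)T/n}\!\!\int_{\bbr^d}\bigl|\nabla \varrho_n^{m/2}\bigr|^2\,dx\,d\tau = \int_{\bbr^d} \rho_n\bigl(\tfrac{iT}{n}\bigr)\log \rho_n\bigl(\tfrac{iT}{n}\bigr)\,dx.
\end{equation*}
The transport step is then handled by \eqref{Entropy relation}, which adds a Jacobian correction $\Delta_i := \int \varrho_n((i{+}1)T/n)\log J_{iT/n,(i+1)T/n}\,dx$ to the entropy; by \eqref{Jacobian - formular} and mass conservation, $|\Delta_i| \leq \int_{iT/n}^{(i+1)T/n}\|\nabla\cdot V\|_{L^\infty_x}\,d\tau$, whence $\sum_i |\Delta_i| \leq \|\nabla\cdot V\|_{L^1_tL^\infty_x}$.

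Telescoping the combined identity across the $n$ steps produces
\begin{equation*}
\tfrac{4}{m}\!\int_0^T\!\!\int_{\bbr^d}\bigl|\nabla \varrho_n^{m/2}\bigr|^2\,dx\,dt \;=\; \int \rho_0 \log \rho_0\,dx \;-\; \int \rho_n(T)\log \rho_n(T)\,dx \;-\; \sum_{i=0}^{n-1}\Delta_i.
\end{equation*}
The first term on the right is bounded by $C(1+\|\rho_0\|_{L^m_x}^m)$ via $\rho\log\rho \lesssim \rho^m/(m-1)$ for $\rho \geq 1$, and the Jacobian sum is already controlled as above, so the only remaining ingredient is a uniform-in-$n$ upper bound on the negative entropy $-\!\int\rho_n(T)\log \rho_n(T)\,dx$.

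This negative entropy control is the hard part. I will invoke the $p$--entropy inequality \eqref{entropy-log} with $p=2$, which reduces the task to bounding the second moment $\int |x|^2\,d\rho_n(T)$. That second moment is in turn controlled by Lemma \ref{Lemma:AC-curve}\,(ii): from \eqref{eq20:Lemma:AC-curve} one has $W_2(\rho_0,\rho_n(T)) \leq C\sqrt{T}+\int_0^T\|V(\tau)\|_{L^\infty_x}\,d\tau$, with $C = C(\|\nabla V\|_{L^1_tL^\infty_x},\|\rho_0\|_{L^m_x})$, and coupling this with the triangle-type inequality
\begin{equation*}
\int_{\bbr^d}|x|^2\,d\rho_n(T) \leq 2\!\int_{\bbr^d}|x|^2\,d\rho_0 + 2\,W_2^2(\rho_0,\rho_n(T))
\end{equation*}
yields a bound depending only on $\int\rho_0\langle x\rangle^2\,dx$, $\|\rho_0\|_{L^m_x}$, and $\|V\|_{L^1(0,T;W^{1,\infty})}$. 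Combining all three estimates produces $\int_0^T\!\!\int|\nabla \varrho_n^{m/2}|^2\,dx\,dt \leq C$ uniformly in $n$.

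Finally, I transfer the bound from $\varrho_n$ to $\rho_n$ exactly as in Lemma \ref{Lemma:H-curve}: \eqref{eq5 : Sobolev} with $a=m/2$ and $q=2$ applied on each subinterval, together with $\|\varrho_n^{m/2}(t)\|_{L^2_x}^2 = \|\varrho_n(t)\|_{L^m_x}^m \leq \|\rho_0\|_{L^m_x}^m$ from \eqref{P:H-PME:e2}, yields
\begin{equation*}
\int_{iT/n}^{(i+1)T/n}\!\bigl\|\nabla \rho_n^{m/2}\bigr\|_{L^2_x}^2\,dt \;\leq\; e^{C}\!\left[\int_{iT/n}^{(i+1)T/n}\!\bigl\|\nabla \varrho_n^{m/2}\bigr\|_{L^2_x}^2\,dt + C\,\tfrac{T}{n}\right],
\end{equation*}
with $C=C(\|V\|_{L^1(0,T;W^{2,\infty})},\|\rho_0\|_{L^m_x})$. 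Summing in $i$ and inserting the uniform bound on $\int_0^T\!\!\int|\nabla \varrho_n^{m/2}|^2$ just established delivers \eqref{main : Lemma:H-curve-1}.
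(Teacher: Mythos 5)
Your proposal follows essentially the same route as the paper's proof: test each homogeneous PME substep against $\log\varrho_n$, use the entropy--Jacobian relation \eqref{Entropy relation} across transport steps, telescope, invoke a $p$-entropy inequality to reduce the negative-entropy term to a second-moment bound which is controlled via the AC-curve Wasserstein estimate, and finally transfer the gradient bound from $\varrho_n$ to $\rho_n$ through \eqref{eq5 : Sobolev}. One minor imprecision: $\|\varrho_n(t)\|_{L^m_x}^m \leq \|\rho_0\|_{L^m_x}^m$ does not follow from \eqref{P:H-PME:e2} alone once $t$ passes the first node (the transport steps can increase the $L^m$ norm); the correct uniform bound is \eqref{eq29:Lemma:AC-curve}, which carries an extra factor $e^{\frac{m-1}{m}\int_0^T\|\nabla\cdot V\|_{L^\infty_x}\,d\tau}$ but produces the same final constant.
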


\begin{proof}
For notational convenience, we use $\rho$ and $\varrho$ instead of $\rho_n$ and $\varrho_n$, respectively.
 First, we note that \eqref{H-est-01} gives us
\begin{equation}\label{eq16 : Lemma : H-curve-1}
\int_{\bbr^d} \left (\varrho \log \varrho\right) ( \frac T n ) \,dx  + \frac{4}{m}\int_0^{\frac{T}{n}} \| \nabla \varrho^{\frac{m}{2}}\|_{L^2_x}^2 \,dt
 = \int_{\bbr^d} \rho_0 \log \rho_0 \,dx,
\end{equation}
and we have from \eqref{eq5 : Sobolev}
\begin{equation}\label{eq14 : Lemma : H-curve-1}
\begin{aligned}
 \|\nabla \rho^{\frac{m}{2}}\|_{L^2_x}^2 &\leq e^{(m+4) \int_0^{\frac{T}{n}} \|\nabla V\|_{L^\infty_x}\, d\tau}\left \{ \|\nabla \varrho^{\frac{m}{2}}\|_{L^2_x}^2
 + \| \varrho^{\frac{m}{2}}\|_{L^2_x}^2  \times \left ( \frac{m}{2}\int_0^{\frac{T}{n}} \| \nabla^2 V\|_{L^\infty_x} \,d\tau\right )^2 \right \}.
\end{aligned}
\end{equation}
Combining the fact $\|\rho_0\|_{L^1 (\bbr^d)}=1$ and \eqref{eq6 : Lemma : H-curve} with \eqref{eq14 : Lemma : H-curve-1}, we have
\begin{equation*} 
\begin{aligned}
 \|\nabla \rho^{\frac{m}{2}}\|_{L^2_x}^2  & \leq  e^{(m+4) \int_0^{\frac{T}{n}} \|\nabla V\|_{L^\infty_x}\, d\tau}\left \{ \|\nabla \varrho^{\frac{m}{2}}\|_{L^2_x}^2  + C \right \},
\end{aligned}
\end{equation*}
where the constant $C=C\left (\|\rho_0\|_{L^m (\bbr^d)}, ~ \|V\|_{L^1(0,T ; W^{2,\infty}(\bbr^d))} \right )$. This gives us
\begin{equation*} 
\begin{aligned}
\int_0^{\frac{T}{n}} \|\nabla \rho^{\frac{m}{2}}\|_{L^2_x}^2  & \leq
e^{(m+4) \int_0^{\frac{T}{n}} \|\nabla V\|_{L^\infty_x}\,
d\tau} \left \{ \int_0^{\frac{T}{n}}\|\nabla
\varrho^{\frac{m}{2}}\|_{L^2_x}^2  + C\frac{T}{n} \right \}.
\end{aligned}
\end{equation*}
Similarly, for all $i=1, 2, \dots, n-1$, we have
\begin{equation}\label{eq7 : Lemma : H-curve-1}
\begin{aligned}
&\quad \int_{\bbr^d} \left (\varrho \log \varrho\right)(\frac{(i+1)T}{n}) \,dx + \frac{4}{m}\int_{\frac{iT}{n}}^{\frac{(i+1)T}{n}} \| \nabla \varrho^{\frac{m}{2}}\|_{L^2_x}^2 \,dt \\
&=\int_{\bbr^d} \left (\rho \log \rho\right)(\frac{iT}{n}) \,dx
\leq \int_{\bbr^d} \left (\varrho \log \varrho\right)(\frac{iT}{n}) \,dx + \int_{\frac{(i-1)T}{n}}^{\frac{iT}{n}} \|\nabla V\|_{L^\infty_x} dt,
\end{aligned}
\end{equation}
where we exploit \eqref{eq7 : Lipschitz of Jacobian} and \eqref{Entropy relation} in the inequality, and
\begin{equation}\label{eq8 : Lemma : H-curve-1}
\begin{aligned}
\int_{\frac{iT}{n}}^{\frac{(i+1)}{n}T} \|\nabla
\rho^{\frac{m}{2}}\|_{L^2_x}^2  & \leq  e^{(m+4)
\int_{\frac{iT}{n}}^{\frac{(i+1)T}{n}} \|\nabla V\|_{L^\infty_x}\,
d\tau} \left \{ \int_{\frac{iT}{n}}^{\frac{(i+1)T}{n}}\|\nabla
\varrho^{\frac{m}{2}}\|_{L^2_x}^2  + C\frac{T}{n}\right \}.
\end{aligned}
\end{equation}
From \eqref{eq16 : Lemma : H-curve-1} and \eqref{eq7 : Lemma : H-curve-1}, we have
\begin{equation*} 
\begin{aligned}
\int_{\bbr^d} \left (\varrho \log \varrho\right)(T) \,dx + \frac{4}{m}\int_{0}^{T} \| \nabla \varrho^{\frac{m}{2}}\|_{L^2_x}^2 \,dt
\leq \int_{\bbr^d} \rho_0 \log \rho_0  \,dx + \int_{0}^{T} \|\nabla V\|_{L^\infty_x} dt  ,
\end{aligned}
\end{equation*}
and hence
\begin{equation}\label{eq17 : Lemma : H-curve-1}
\begin{aligned}
 \frac{4}{m}\int_{0}^{T} \| \nabla \varrho^{\frac{m}{2}}\|_{L^2_x}^2 \,dt
\leq \int_{\bbr^d} \rho_0 \log \rho_0  \,dx + \int_{0}^{T} \|\nabla V\|_{L^\infty_x} dt -\int_{\varrho(T) <1} \left (\varrho \log \varrho\right)(T) \,dx.
\end{aligned}
\end{equation}
We recall that there exists a constant $\alpha>0$ (independent of $\varrho$) such that
\begin{equation}\label{eq18 : Lemma : H-curve-1}
-\int_{\varrho(x) <1} \varrho \log \varrho \,dx \leq \int_{\bbr^d} |x|^2 \varrho(x) \,dx + \alpha.
\end{equation}
We plug \eqref{eq18 : Lemma : H-curve-1} into \eqref{eq17 : Lemma : H-curve-1}, and combine with \eqref{eq8 : Lemma : H-curve-1} to get
\begin{equation}\label{eq12 : Lemma : H-curve-1}
\int_{0}^{T} \|\nabla \rho^{\frac{m}{2}}\|_{L^2_x}^2  \leq
e^{(m+4) \int_{0}^{T} \|\nabla V\|_{L^\infty_x}\, d\tau}
\left ( \int_{0}^{T}\|\nabla \varrho^{\frac{m}{2}}\|_{L^2_x}^2  + CT
\right )
\leq C,
\end{equation}
where $C=C\left(\int_{\bbr^d} \rho_0 \log \rho_0 \,dx, \|\rho_0\|_{L^m(\bbr^d)}, \|V\|_{L^1(0,T; W^{2,\infty}(\bbr^d))}, \int_{\bbr^d} |x|^2 \varrho(x,T) \,dx \right).$
We note
\begin{equation}\label{eq13 : Lemma : H-curve-1}
\begin{aligned}
\int_{\bbr^d} |x|^2 \varrho(x,T) \,dx \leq 2W_2^2( \rho_0, \varrho(T)) +2 \int_{\bbr^d} \rho_0 | x |^2  \,dx,
\end{aligned}
\end{equation}
and, from \eqref{eq20:Lemma:AC-curve} and \eqref{eq21:Lemma:AC-curve}, we have
\begin{equation}\label{eq19 : Lemma : H-curve-1}
W_2(\rho_0, \varrho(T)) \leq C \left(\|\rho_0\|_{L^m(\bbr^d)},\, \|V\|_{L^1(0,T;W^{1, \infty}(\bbr^d))} \right).
\end{equation}
Finally, we combine \eqref{eq12 : Lemma : H-curve-1}, \eqref{eq13 : Lemma : H-curve-1} and \eqref{eq19 : Lemma : H-curve-1} to get
\begin{equation*}
\int_{0}^{T} \|\nabla \rho^{\frac{m}{2}}\|_{L^2_x}^2  \leq C(\int_{\bbr^d} \rho_0 |x|^2   \,dx, \, \|\rho_0\|_{L^m(\bbr^d)}, \, \|V\|_{L^1(0,T; W^{2,\infty}(\bbr^d))}),
\end{equation*}
which concludes \eqref{main : Lemma:H-curve-1}.
\end{proof}

Next lemma is similar to Lemma 4.2 \cite{KK-SIMA}. So, we briefly sketch the proof of it (see Lemma 4.2 \cite{KK-SIMA}).
\begin{lemma}\label{Lemma:solving ODE}
Let $\varphi \in \calC_c^\infty({\mathbb{R}^d} \times [0,T])$ and $\rho_n,\varrho_n:[0,T]\mapsto \mathcal{P}_2(\mathbb{R}^d)$ be the curves defined as in Lemma \ref{Lemma:AC-curve}.
For any $s,~t \in [0,T]$, we define
\begin{equation*}
\begin{aligned}
E_n &:= \int_{\mathbb{R}^d}\varphi(x,t) ~\rho_n(x,t)\,dx - \int_{\mathbb{R}^d}\varphi(x,s) ~\rho_n(x,s)\,dx\\
  &\quad  - \int_{s}^{t} \int_{\mathbb{R}^d}  ( \nabla   \varphi \cdot V) ~\rho_n \,dx \,d\tau
  - \int_{s}^{t}
\int_{\mathbb{R}^d}  \{ \partial_\tau \varphi ~ \varrho_n + \Delta \varphi ~(\varrho_n)^m \}\,dx \,d\tau.
\end{aligned}
\end{equation*}
Then,
\begin{equation*}
\begin{aligned}
|E_n| \leq  C\frac{T}{n}\|\varphi\|_{\calC^2} e^{C\int_{s}^{t}\|\nabla V\|_{L^\infty_x} \,d\tau}
 \int^{t}_{0} \|V (\tau)\|_{W^{1,\infty}_x} \,d\tau  \longrightarrow 0, \quad as \quad n\rightarrow \infty,
\end{aligned}
\end{equation*}
where $C$ is the same constant in Lemma \ref{Lemma:AC-curve}.
\end{lemma}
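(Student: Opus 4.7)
\textbf{Proof plan for Lemma~\ref{Lemma:solving ODE}.}
The plan is to decompose $[s,t]$ into the splitting subintervals $[t_i,t_{i+1}]$ with $t_i=iT/n$, isolate the error contribution $E_n^{(i)}$ from each one, and sum. (To keep notation clean I would first treat the case $s=t_j$, $t=t_k$ with integer indices; the general case is handled by applying the same argument on the truncated initial/final subintervals, which again contribute terms of order $(T/n)^2$.) On each subinterval, three facts drive the analysis: (a) the matching condition $\rho_n(t_i)=\varrho_n(t_i)$; (b) the pushforward representation $\rho_n(\tau)=\psi(\tau;t_i,\cdot)_\#\varrho_n(\tau)$ for $\tau\in[t_i,t_{i+1}]$; and (c) the fact that $\varrho_n$ satisfies the PME $\partial_\tau\varrho_n=\Delta\varrho_n^m$ on that subinterval.

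Using (a)--(b) and inserting $\pm\int\varphi(\cdot,t_{i+1})\varrho_n(t_{i+1})\,dx$, I would write
\begin{align*}
\int\varphi(\cdot,t_{i+1})\rho_n(t_{i+1})\,dx-\int\varphi(\cdot,t_i)\rho_n(t_i)\,dx
&=\underbrace{\int_{t_i}^{t_{i+1}}\!\!\int\bigl(\partial_\tau\varphi\,\varrho_n+\Delta\varphi\,\varrho_n^m\bigr)dx\,d\tau}_{\text{matches PME part of }E_n}\\
&\quad+\underbrace{\int\bigl[\varphi(\psi(t_{i+1};t_i,x),t_{i+1})-\varphi(x,t_{i+1})\bigr]\varrho_n(x,t_{i+1})\,dx}_{=:A_i},
\end{align*}
where (c) is used by testing the PME against $\varphi$. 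On the drift side, pushforward via $\psi(\tau;t_i,\cdot)$ converts the target contribution
$B_i:=\int_{t_i}^{t_{i+1}}\!\int\nabla\varphi\cdot V\,\rho_n\,dy\,d\tau$
into an integral against $\varrho_n$, and the flow identity $\partial_\tau\psi=V(\psi,\tau)$ rewrites $A_i$ as
$\int_{t_i}^{t_{i+1}}\!\int\nabla\varphi(\psi,t_{i+1})\cdot V(\psi,\tau)\varrho_n(x,t_{i+1})\,dx\,d\tau$.
Thus $E_n^{(i)}=A_i-B_i$ splits naturally into two pieces: one from swapping $t_{i+1}\leftrightarrow\tau$ inside $\nabla\varphi$, the other from swapping $\varrho_n(t_{i+1})\leftrightarrow\varrho_n(\tau)$.

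The first piece is estimated trivially by $\|\partial_\tau\nabla\varphi\|_{L^\infty}|t_{i+1}-\tau|\le\|\varphi\|_{\mathcal C^2}(T/n)$, combined with mass conservation and $\|V(\tau)\|_{L^\infty}$, yielding a bound of $\|\varphi\|_{\mathcal C^2}(T/n)\int_{t_i}^{t_{i+1}}\|V\|_{L^\infty}d\tau$. The second piece is the main obstacle and is where we exploit (c) a second time: for any smooth bounded $g$,
\[
\int g(x)\bigl[\varrho_n(x,t_{i+1})-\varrho_n(x,\tau)\bigr]dx=\int_\tau^{t_{i+1}}\!\!\int\Delta g\,\varrho_n^m\,dx\,d\sigma,
\]
applied with $g(x)=\nabla\varphi(\psi(\tau;t_i,x),\tau)\cdot V(\psi(\tau;t_i,x),\tau)$ (treating $\tau$ as a parameter). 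By the chain rule together with Lemma~\ref{Lemma : estimation 1: ODE} for $|\nabla_x\psi|\le e^{\int\|\nabla V\|}$, I can control $\|\Delta_x g\|_{L^\infty}\le C\|\varphi\|_{\mathcal C^2}\|V(\tau)\|_{W^{1,\infty}}e^{2\int_{t_i}^\tau\|\nabla V\|}$. Using $\int_\tau^{t_{i+1}}\!\int\varrho_n^m\,dx\,d\sigma\le(T/n)\sup_\tau\|\varrho_n(\tau)\|_{L^m}^m$, which is bounded uniformly via \eqref{eq29:Lemma:AC-curve} by a constant depending only on $\|\rho_0\|_{L^m}$ and $\int_0^T\|\nabla V\|_{L^\infty}$, the second piece is likewise bounded by $C\|\varphi\|_{\mathcal C^2}(T/n)\int_{t_i}^{t_{i+1}}\|V\|_{W^{1,\infty}}d\tau\cdot e^{C\int_{t_i}^{t_{i+1}}\|\nabla V\|d\sigma}$.

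Summing $|E_n^{(i)}|$ over $i$ between $s$ and $t$, the geometric-sum structure of the exponentials telescopes to $e^{C\int_s^t\|\nabla V\|_{L^\infty}d\tau}$, the $(T/n)$ factor is preserved (each of the $O(n)$ subintervals contributes an $O((T/n)^2)$ error), and the remaining integrals combine to $\int_s^t\|V\|_{W^{1,\infty}}d\tau\le\int_0^t\|V\|_{W^{1,\infty}}d\tau$. This gives the claimed estimate; convergence $E_n\to 0$ then follows since $\|V\|_{L^1(0,T;W^{1,\infty})}$ and the other data are finite. The main technical care is in verifying that the chain-rule bound for $\Delta_x g$ truly requires only one spatial derivative of $V$ (not two), which hinges on the fact that all higher $x$-derivatives of $\psi(\tau;t_i,\cdot)$ beyond the first are not needed when $\|\varphi\|_{\mathcal C^2}$ is on hand, since the term with $\nabla^2_x\psi$ is absorbed by $\|V\|_{W^{1,\infty}}e^{\int\|\nabla V\|}$ via standard Gronwall on the variational equation for $\nabla_x\psi$.
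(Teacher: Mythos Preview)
Your overall architecture---decomposing $[s,t]$ into the splitting subintervals, writing the increment as PME-part plus the push-forward correction $A_i$, and then comparing $A_i$ with the drift integral $B_i$---matches the paper's strategy (which in turn follows \cite[Lemma~4.2]{KK-SIMA}). The first piece $\nabla\varphi(\cdot,t_{i+1})\leftrightarrow\nabla\varphi(\cdot,\tau)$ is handled exactly as you say.

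The gap is in your treatment of the second piece $\varrho_n(t_{i+1})\leftrightarrow\varrho_n(\tau)$. You propose to test the PME against
$g(x)=\nabla\varphi(\psi(\tau;t_i,x),\tau)\cdot V(\psi(\tau;t_i,x),\tau)$
and bound $\|\Delta_x g\|_{L^\infty}$ by $C\|\varphi\|_{\calC^2}\|V(\tau)\|_{W^{1,\infty}}$. This is not correct: writing $g(x)=h(\psi(\tau;t_i,x))$ with $h=\nabla\varphi\cdot V$, the Laplacian $\Delta_x g$ contains both $(\nabla_x\psi)^{\!T}\nabla^2_y h\,\nabla_x\psi$ and $\nabla_y h\cdot\Delta_x\psi$. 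The first term forces $\nabla^2_y h$, which involves $\nabla^3\varphi$ and $\nabla^2 V$; the second requires $\nabla^2_x\psi$, whose variational equation again contains $\nabla^2 V$. Your closing remark that ``the term with $\nabla^2_x\psi$ is absorbed by $\|V\|_{W^{1,\infty}}$ via Gronwall on the variational equation for $\nabla_x\psi$'' conflates the first- and second-order variational equations: Gronwall on $\partial_t\nabla_x\psi=(\nabla V)\nabla_x\psi$ controls only $\nabla_x\psi$, not $\nabla^2_x\psi$. So as written, your argument yields a bound with $\|\varphi\|_{\calC^3}$ and $\|V\|_{W^{2,\infty}}$ rather than the stated $\|\varphi\|_{\calC^2}$ and $\|V\|_{W^{1,\infty}}$.

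The paper sidesteps this by estimating the second piece through the Wasserstein distance rather than the PME: for any Lipschitz $g$,
\[
\Big|\int g\,d\varrho_n(t_{i+1})-\int g\,d\varrho_n(\tau)\Big|\le \mathrm{Lip}(g)\,W_2(\varrho_n(t_{i+1}),\varrho_n(\tau))\le C\,\mathrm{Lip}(g)\,\sqrt{t_{i+1}-\tau},
\]
using \eqref{eq20:Lemma:AC-curve}. This only needs $\nabla_x g$, hence only $\|\varphi\|_{\calC^2}$ and $\|V\|_{W^{1,\infty}}$ (together with the bound $|\nabla_x\psi|\le e^{\int\|\nabla V\|}$), which is exactly what appears in \eqref{eq11:solving lemma}. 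Replacing your PME estimate by this Wasserstein estimate fixes the gap and recovers the claimed norms; the rest of your argument then goes through.
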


\begin{proof}
First, we suppose $[s,t]\subset \big[\frac{kT}{n},\frac{(k+1)T}{n}\big ]$ for some $0\leq k \leq n-1 $. Then, we have
\begin{equation}\label{eq4:solving lemma}
\mathcal{I}:= \int_{\bbr^d}\varphi(x,t) \rho_n(x,t) \,dx - \int_{\bbr^d}\varphi(x,s) \rho_n(x,s)\,dx\\
= I + II + III,
\end{equation}
where
\begin{equation*}
\begin{gathered}
I = \int_{\bbr^d}\varphi(x,t) \rho_n(x,t)\,dx - \int_{\bbr^d}\varphi(x,t) \varrho_n(x,t) \,dx, \\
II = \int_{\bbr^d}\varphi(x,t) \varrho_n(x,t) \,dx - \int_{\bbr^d}\varphi(x,s)\varrho_n(x,s) \,dx,\\
III= \int_{\bbr^d}\varphi(x,s) \varrho_n(x,s) \,dx - \int_{\bbr^d}\varphi(x,s)\rho_n(x,s) \,dx.
\end{gathered}
\end{equation*}
Note that
\begin{equation}\label{eq4:II}
II = \int_{s}^{t} \int_{\bbr^d}   \big[\partial_\tau \varphi ~\varrho_n + \Delta \varphi \big(\varrho_n \big)^m \big] \,dx \,d\tau .
\end{equation}
Following the same estimation in the proof of Lemma 4.2 \cite{KK-SIMA}, we have
\begin{equation}\label{eq10:solving lemma}
 I = \int_{kT/n}^{t} \int_{\bbr^d}   ( \nabla   \varphi \cdot V ) \rho_n \,dx \,d\tau
  + E_{kT/n,t},
\end{equation}
where
\begin{equation}\label{eq11:solving lemma}
\begin{aligned}
 |E_{kT/n,t} | \leq C\|\varphi\|_{\calC^2} \Big[\Big (t-\frac{kT}{n}\Big ) + \sqrt{t-\frac{kT}{n}}\, \Big]
 \int^{t}_{kT/n} \|V\|_{W^{1,\infty}_x} \,d\tau.
 \end{aligned}
\end{equation}
Similarly, we have
\begin{equation}\label{eq12:solving lemma}
\begin{aligned}
- III=\int_{kT/n}^{s} \int_{\bbr^d}   ( \nabla   \varphi\cdot V)   \rho_n \,dx \,d\tau  + E_{kT/n,s},
\end{aligned}
\end{equation}
with
\begin{equation}\label{eq13:solving lemma}
\begin{aligned}
 |E_{kT/n,s}| \leq C\|\varphi\|_{\calC^2} \Big[\Big (s-\frac{kT}{n}\Big ) + \sqrt{s-\frac{kT}{n}} \,\Big]
 \int^{s}_{kT/n} \|V\|_{W^{1,\infty}_x} \,d\tau.
 \end{aligned}
\end{equation}
We 
plug (\ref{eq10:solving lemma}), \eqref{eq4:II} and (\ref{eq12:solving lemma})  into \eqref{eq4:solving lemma} to get
\begin{equation}\label{eq6:solving lemma}
\begin{aligned}
\mathcal{I} &= \int_{s}^{t} \int_{\bbr^d}   (\nabla   \varphi \cdot V)   \rho_n  \,dx  \,d\tau
  + \int_{s}^{t} \int_{\bbr^d}   \big[\partial_\tau \varphi ~\varrho_n + \Delta \varphi \big(\varrho_n \big)^m \big] \,dx \,d\tau
  +E_{kT/n,t} + E_{kT/n,s}.
\end{aligned}
\end{equation}

Now, we consider the general case : $s\in \big[\frac{iT}{n},\frac{(i+1)T}{n}\big]$ and $t\in \big[\frac{jT}{n},\frac{(j+1)T}{n}\big]$.
For this case, we have
\begin{equation}\label{eq7:solving lemma}
\begin{aligned}
& \int_{\bbr^d}\varphi(x,t) \rho_n(x,t)\,dx - \int_{\bbr^d}\varphi(x,s) \rho_n(x,s)\,dx\\
&= \int_{\bbr^d}\varphi(x,t)  \rho_n(x,t)\,dx - \int_{\bbr^d}\varphi(x,jT/n) \rho_n(x,jT/n)\,dx \\
&\quad + \int_{\bbr^d}\varphi(x,(i+1)T/n) \rho_n(x,(i+1)T/n) \,dx - \int_{\bbr^d}\varphi(x,s) \rho_n(x,s)\,dx\\
&\quad + \sum_{k=i+1}^{j-1}\left(\int_{\bbr^d}\varphi(x,(k+1)T/n) \rho_n(x,(k+1)T/n) \,dx - \int_{\bbr^d}\varphi(x,kT/n) \rho_n(x,kT/n) \,dx\right).
\end{aligned}
\end{equation}
We plug (\ref{eq6:solving lemma}) in each term of (\ref{eq7:solving lemma}), and get
\begin{equation*}
\begin{aligned}
& \int_{\bbr^d}\varphi(x,t) \rho_n(x,t)\,dx- \int_{\bbr^d}\varphi(x,s) \rho_n(x,s)\,dx\\
 &= \int_{s}^{t} \int_{\bbr^d}  ( \nabla   \varphi \cdot V)  \rho_n \,dx  \,d\tau + \int_{s}^{t} \int_{\bbr^d}
 [\partial_\tau \varphi \varrho_n  + \Delta \varphi \big(\varrho_n\big)^m] \,dx \,d\tau\\
 &\quad + E_{jT/n,t} + E_{iT/n,s} + E_{iT/n,(i+1)T/n}+ \sum_{k=i+1}^{j-1} E_{kT/n,(k+1)T/n}.
\end{aligned}
\end{equation*}
By using (\ref{eq11:solving lemma}) and (\ref{eq13:solving lemma}), we estimate the error term as follows,
\begin{equation*}
\begin{aligned}
E_n &:= E_{jT/n,t} + E_{iT/n,s} + E_{iT/n,(i+1)T/n}+ \sum_{k=i+1}^{j-1} E_{kT/n,(k+1)T/n}\\
&\leq C\frac{T}{n}\|\varphi \|_{\calC^2} e^{C\int_{s}^{t}\|\nabla V\|_{L^\infty_x} \,d\tau}
 \int^{t}_{iT/n} \|V\|_{W^{1,\infty}_x} \,d\tau \longrightarrow 0 ,
\end{aligned}
\end{equation*}
as $ n\rightarrow \infty$.
\end{proof}

\begin{lemma}\label{Lemma : equi-continuity}
Let $\tilde{\alpha}\in (0,1)$ be the constant in Theorem \ref{T:Boundary_Holder}. Suppose that $ \alpha \in (0, \tilde{\alpha}]$ and
\begin{equation*}
\rho_0 \in \mathcal{P}_2(\bbr^d) \cap \calC^\alpha (\bbr^d)  \quad \mbox{and} \quad V \in  L^1(0,T ; \calC^{1, \alpha}(\bbr^d)).
\end{equation*}
Let $\rho_n,\varrho_n:[0,T]\mapsto \mathcal{P}_2(\mathbb{R}^d)$ be curves defined as in Lemma \ref{Lemma:AC-curve}.
Then,we have
 \begin{equation}\label{eq 2 : Holder of splitting}
|\varrho_n(x,t)-\varrho_n(y,t)| + |\rho_n(x,t)-\rho_n(y,t)|\leq C|x-y|^\alpha,  \qquad \forall ~ x, y \in \bbr^d , ~ t \in [0, T],
 \end{equation}
  where $C(\|\rho_0\|_{L^\infty(\mathbb{R}^d)},  \int_0^T \|\nabla V\|_{\calC^\alpha (\bbr^d)}  \,d\tau)$.
\end{lemma}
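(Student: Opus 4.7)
The plan is to propagate $\alpha$-Hölder regularity through each sub-step of the splitting scheme and check that the resulting constants are uniform in the discretization parameter $n$. One unit of splitting consists of a homogeneous PME evolution on an interval $I_i := [iT/n, (i+1)T/n]$ followed by a push-forward along the flow of $V$; I will bound the Hölder norms on each $I_i$ and verify that no amplification accumulates across $i = 0, 1, \dots, n-1$.

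First, I would record a uniform $L^\infty$ bound. Taking $q = \infty$ in \eqref{eq29:Lemma:AC-curve} yields
\begin{equation*}
\max\bigl\{ \|\varrho_n(\cdot, t)\|_{L^\infty_x}, \|\rho_n(\cdot, t)\|_{L^\infty_x}\bigr\} \leq M := \|\rho_0\|_{L^\infty(\bbr^d)} \exp\Bigl(\int_0^T \|\nabla \cdot V\|_{L^\infty_x} d\tau\Bigr),
\end{equation*}
uniformly in $n$ and $t \in [0,T]$. This $M$ will serve as the universal pointwise control in all subsequent estimates.

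Next, on each $I_i$ I would apply the two sub-step estimates. For the PME half-step, since $\rho_n(\cdot, iT/n)$ is $\alpha$-Hölder (inductively, with the base case $i=0$ given by hypothesis) and $\alpha \leq \tilde\alpha$ so that $\min\{\alpha, \alpha^\ast\} = \alpha$, Theorem \ref{T:Boundary_Holder} applied with initial datum $\rho_n(\cdot, iT/n)$ delivers
\begin{equation*}
\|\varrho_n\|_{\calC^{\alpha}(\bbr^d \times I_i)} \leq \gamma(m, d, M),
\end{equation*}
a constant depending only on $m$, $d$, $M$, not on $i$, $n$, or the Hölder seminorm of $\rho_n(\cdot, iT/n)$. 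For the transport half-step, Lemma \ref{Lemma : Holder regularity on the flow}$(i)$ applied with $s = iT/n$ gives
\begin{equation*}
\|\rho_n(\cdot, t)\|_{\calC^\alpha(\bbr^d)} \leq C\Bigl(\|\varrho_n(\cdot, t)\|_{\calC^\alpha(\bbr^d)},\, \int_{0}^{T} \|\nabla V\|_{\calC^\alpha(\bbr^d)} d\tau\Bigr),
\end{equation*}
and inserting the PME bound from the previous display yields a Hölder estimate on $\rho_n(\cdot, t)$ depending only on $M$ and $\int_0^T \|\nabla V\|_{\calC^\alpha} d\tau$, once again uniform in $n$, $i$, and $t \in I_i$.

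The main obstacle, and the feature that makes the iteration robust as $n \to \infty$, is that the PME Hölder estimate of Theorem \ref{T:Boundary_Holder} is controlled solely by the $L^\infty$-norm of its initial datum, not by its Hölder seminorm. Otherwise the natural recurrence $A_{i+1} \leq a_i A_i + b_i$ for $A_i := \|\rho_n(\cdot, iT/n)\|_{\calC^\alpha}$ could amplify the constant geometrically across the $n$ iterations, producing a bound that degenerates as $n \to \infty$. With the stronger statement of Theorem \ref{T:Boundary_Holder} at hand, the Hölder bound on $\rho_n(\cdot, (i+1)T/n)$ obtained at the end of step $i$ is independent of $A_i$, so the inductive step closes at once, and \eqref{eq 2 : Holder of splitting} follows with $C = C(\|\rho_0\|_{L^\infty(\bbr^d)}, \int_0^T \|\nabla V\|_{\calC^\alpha(\bbr^d)} d\tau)$.
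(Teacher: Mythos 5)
Your proposal is correct and follows essentially the same route as the paper: a uniform $L^\infty$ bound from \eqref{eq29:Lemma:AC-curve} with $q=\infty$, Theorem~\ref{T:Boundary_Holder} applied on each interval $[iT/n,(i+1)T/n]$ for the PME half-step, and Lemma~\ref{Lemma : Holder regularity on the flow}$(i)$ for the push-forward half-step, with the observation that the constants are independent of $i$ and $n$. The paragraph in which you explain that the whole scheme hinges on the PME H\"older constant depending only on $\|\cdot\|_{L^\infty}$ of the initial datum — so that the recurrence for the H\"older seminorms does not feed back on itself across the $n$ sub-steps — is exactly the point the paper's proof implicitly relies on when it appeals to \eqref{eq29:Lemma:AC-curve} with $q=\infty$ at the end of the iteration, and you have usefully made it explicit.
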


\begin{proof} 
 First, we suppose $t \in \left[0, \frac{T}{n}\right]$. 
Due to Theorem \ref{T:Boundary_Holder}, it holds
\begin{equation}\label{eq 1 : Lemma equi-cont}
|\varrho_n(x,t)-\varrho_n(y,t) |\leq C |x-y|^\alpha,  \quad \forall ~ x, ~y \in \bbr^d,
\end{equation}
where $C= C(\|\rho_0 \|_{L^\infty(\bbr^d)})$.
%
Next, due to Lemma \ref{Lemma : Holder regularity on the flow}, we have
\begin{equation}\label{eq 2 : Lemma equi-cont}
|\rho_n(x,t)-\rho_n(y,t) | \leq C |x-y|^{\alpha}, \quad   \forall ~x, ~ y \in \bbr^d,
\end{equation}
where $C= C \left(\text{sup}_{t \in \left[0, \frac{T}{n}\right ]} \|\varrho_n(t)\|_{\calC^\alpha (\bbr^d)}, \,\int_0^{T/n} \|\nabla V \|_{\calC^\alpha (\bbr^d)} \,d\tau \right)$.

We combine \eqref{eq 1 : Lemma equi-cont} and \eqref{eq 2 : Lemma equi-cont}, and exploit \eqref{eq29:Lemma:AC-curve} for $q=\infty$ to conclude
 \begin{equation}\label{eq 3 : Lemma equi-cont}
|\varrho_n(x,t)-\varrho_n(y,t)|+ |\rho_n(x,t)-\rho_n(y,t)|\leq C|x-y|^\alpha,  \qquad \forall ~x, ~ y \in \bbr^d ,
 \end{equation}
where $C(\|\rho_0\|_{L^\infty(\mathbb{R}^d)}, ~ \int_0^{T/n} \|\nabla V \|_{\calC^\alpha (\bbr^d)} \,dt)$.

For the general case $t \in \left [\frac{iT}{n}, \frac{(i+1)T}{n}\right ], ~ i=1, 2, \dots, n-1$, we repeat what we did in the first step and get
 \begin{equation}\label{eq 4 : Lemma equi-cont}
|\varrho_n(x,t)-\varrho_n(y,t)| +|\rho_n(x,t)-\rho_n(y,t)|\leq C|x-y|^\alpha, \qquad \forall ~ x, ~ y \in \bbr^d ,
 \end{equation}
where $C(\|\rho(iT/n)\|_{L^\infty_x},  \int_{iT/n}^{(i+1)T/n} \|\nabla V \|_{\calC^\alpha (\bbr^d)} \,d\tau)$. We combine
\eqref{eq 3 : Lemma equi-cont} and \eqref{eq 4 : Lemma equi-cont} with \eqref{eq29:Lemma:AC-curve} for $q=\infty$, and conclude \eqref{eq 2 : Holder of splitting}
which completes the proof.
\end{proof}

\subsection{Proof of Proposition \ref{proposition : regular existence}}

Here, we prove Proposition \ref{proposition : regular existence}.
For each $n\in \mathbb{N},$ from Lemma \ref{Lemma:solving ODE}, there exist curves $\rho_n,\varrho_n:[0,T]\mapsto \mathcal{P}_2(\mathbb{R}^d)$
with $\rho_n(0)=\varrho_n(0)=\rho_0$ satisfying,  for any $\varphi \in \calC_c^\infty(\bbr^d \times [0,T))$ and $s,~t \in [0,T]$,
\begin{equation}\label{eq3 : Theorem : Toy Fokker-Plank}
\begin{aligned}
& \int_{\bbr^d}\varphi(x,t) \rho_n(x,t)\,dx - \int_{\bbr^d}\varphi(x,s) \rho_n(x,s)\,dx\\
&= \int_{s}^{t} \int_{\bbr^d}   (\nabla   \varphi \cdot V) \rho_n \,dx  \,d\tau + \int_{s}^{t} \int_{\bbr^d}
\left\{ \partial_\tau\varphi  \varrho_n+ \Delta \varphi (\varrho_n)^m\right\}\,dx \,d\tau
 + E_n,
\end{aligned}
\end{equation}
with
\begin{equation}\label{eq4 : Theorem : Toy Fokker-Plank}
\begin{aligned}
|E_n| \leq  C\frac{T}{n}\|\varphi\|_{\calC^2} e^{C\int_{0}^{T}\|\nabla V\|_{L^\infty_x} \,d\tau}
 \int^{T}_{0} \|V\|_{W^{1,\infty}_x} \,d\tau \longrightarrow 0, \quad\mbox{    as   }\quad n \rightarrow \infty.
\end{aligned}
\end{equation}
 Recalling Lemma \ref{Lemma:AC-curve}, we know%
\begin{equation}\label{eq5 : Theorem : Toy Fokker-Plank}
W_2(\rho_n(s),\rho_n(t))\leq C\sqrt{t-s} + \int_s^t \|V(\tau)\|_{L^\infty_x}  \,d\tau
\end{equation}
and
\begin{equation}\label{eq6 : Theorem : Toy Fokker-Plank}
\begin{aligned}
W_2(\rho_n(t), {\varrho}_n(t)) \leq  \max_{\{i=0,1,\dots,n-1\}}\int_{\frac{iT}{n}}^{\frac{(i+1)T}{n}} \|V(\tau)\|_{L^\infty_x} \,d\tau,
\end{aligned}
\end{equation}
where the constant  $C=C\left(  \| \rho_0\|_{L^m(\bbr^d)},  \,  \,\|V\|_{L^1(0,T; W^{1,\infty}(\bbr^d))} \right) $.

Due to the fact $\rho_n(0)=\rho_0, \, \forall ~n \in \mathbb{N}$ and \eqref{eq5 : Theorem : Toy Fokker-Plank}, there exists $M>0$ such that
$$\rho_n (t) \in B_M, \quad \forall ~ t \in [0,T], ~ n \in \mathbb{N},$$
where $B_M:=\{\mu \in \mathcal{P}_2(\bbr^d ) | W_2(\mu, \delta_0) \leq M\}$ is a narrowly compact subset of $\mathcal{P}_2(\bbr^d )$.
Furthermore, the estimate in \eqref{eq5 : Theorem : Toy Fokker-Plank} says that the sequence of curves $\rho_n:[0,T]\mapsto \mathcal{P}_2(\mathbb{R}^d)$ are
equi-continuous. Hence, from Lemma \ref{Lemma : Arzela-Ascoli}, there exist a subsequence (by abusing notation)
$\rho_n$ and a limit curve  $\rho :[0,T]\mapsto \mathcal{P}_2(\mathbb{R}^d)$ such that
\begin{equation*}
\rho_{n}(t) ~~ \mbox{ narrowly converges to} ~~\rho(t), \qquad \text{for all} ~~ t\in[0,T].
\end{equation*}
 Due to \eqref{eq6 : Theorem : Toy Fokker-Plank}, we note that $\varrho_n(t)$ also narrowly converges to $\rho(t), \, \forall ~ t \in [0,T]$.
 This implies
\begin{equation*}
\begin{aligned}
\int_{\bbr^d}\varphi(x,t) \rho_n(x,t)\,dx - \int_{\bbr^d}\varphi(x,s) \rho_n(x,s)\,dx \longrightarrow \int_{\bbr^d}\varphi(x,t) \rho(x,t)\,dx - \int_{\bbr^d}\varphi(x,s) \rho(x,s)\,dx,
\end{aligned}
\end{equation*}
\begin{equation*}
\begin{aligned}
\int_{s}^{t} \int_{\bbr^d}  \left[ (\nabla   \varphi \cdot V) \rho_n + \partial_\tau\varphi  \varrho_n \right] \,dx  \,d\tau
\longrightarrow \int_{s}^{t} \int_{\bbr^d}  \left[ (\nabla   \varphi\cdot V) + \partial_\tau\varphi  \right]\rho \,dx  \,d\tau,
\end{aligned}
\end{equation*}
as $n \rightarrow \infty$. Furthermore,  from Lemma \ref{Lemma : equi-continuity}, $\rho_n$ and $\varrho_n$ are equi-continuous with respect to the space variable.
We combine this equi-continuity with  the uniform bound \eqref{eq29:Lemma:AC-curve} to claim that $\rho_n$ and  $\varrho_n$ actually pointwise converge to $\rho$.
Hence, again from Lemma \ref{Lemma : equi-continuity}, we have
\begin{equation*}
\left |\rho (x,t)-\rho(y,t)\right | \leq C |x-y|^\alpha, \quad \forall ~x,~y \in \bbr^d, \, t \in [0,T].
\end{equation*}
Next, we note
\begin{equation}\label{eq7 : Theorem : Toy Fokker-Plank}
\|\varrho_n(t)\|_{L^q_x},~\|\rho_n(t)\|_{L^q_x} \leq \|\rho_0\|_{L^q(\mathbb{R}^d)}e^{\frac{q-1}{q}\int_0^T \|\nabla \cdot V \|_{L^\infty_x} \,d\tau}, \quad \forall ~t\in[0,T],
\end{equation}

coming from \eqref{eq29:Lemma:AC-curve} and the pointwise convergence of $(\varrho_n)^m$. Exploiting the Lebesgue dominated convergence theorem, we have
\begin{equation*}
\int_{s}^{t} \int_{\bbr^d}  (\varrho_n )^m \,\Delta \varphi  \,dx \,d\tau \longrightarrow \int_{s}^{t} \int_{\bbr^d}  \rho^m\, \Delta \varphi \,dx \,d\tau.
\end{equation*}
Finally, we exploit the estimation \eqref{eq3 : Theorem : Toy Fokker-Plank} and (\ref{eq4 : Theorem : Toy Fokker-Plank}) and get
\begin{equation*}
 \int_{\bbr^d}\varphi(x,t) \rho(x,t) \,dx - \int_{\bbr^d}\varphi(x,s) \rho(x,s)\,dx
=\int_{s}^{t} \int_{\bbr^d}  \left [ \left(\partial_\tau\varphi  + V \cdot \nabla \varphi \right){\rho}
+ \rho^m \Delta \varphi   \right ] \,dx\,d\tau.
\end{equation*}
From the lower semi-continuity of Wasserstein distance w.r.t the narrow convergence (refer \cite[Lemma~7.1.4]{ags:book}) and \eqref{eq5 : Theorem : Toy Fokker-Plank}, we have \eqref{eq2 : Theorem : Toy Fokker-Plank}.
We also see that the estimation \eqref{eq7 : Theorem : Toy Fokker-Plank} implies \eqref{KK-Nov24-100}. Note that \eqref{main : Lemma:H-curve} gives us
\begin{equation}\label{eq8 : Theorem : Toy Fokker-Plank}
\int_0^T \|\nabla \rho_n^{\frac{m+q-1}{2}}\|_{L^2_x}^2 \,dt \leq C,
\end{equation}
where the constant  $C=C\left(\|\rho_0 \|_{L^{m+q-1}(\bbr^d)},\,\|V\|_{L^1(0,T;W^{2,\infty}(\bbr^d)} \right)$. This implies that $\nabla \rho_n^{\frac{m+q-1}{2}}$ (up to a subsequence)
has a weak limit $\eta \in L^2(\bbr^d\times[0,T])$ and \eqref{eq8 : Theorem : Toy Fokker-Plank} gives us
\begin{equation}\label{eq9 : Theorem : Toy Fokker-Plank}
\iint_{Q_T}|\eta(x,t) |^2\,dx \,dt \leq C.
\end{equation}
Since  $\rho_n$ pointwise converges to $\rho$, we conclude $\eta = \nabla \rho^{\frac{m+q-1}{2}}$ and \eqref{KK-Feb19-100} follows from \eqref{eq9 : Theorem : Toy Fokker-Plank}. Actually, we note that the whole sequence $\nabla \rho_n^{\frac{m+q-1}{2}}$ weakly converges to $\nabla \rho^{\frac{m+q-1}{2}}$ in $L^2(\bbr^d\times[0,T])$.
Similarly, \eqref{main : Lemma:H-curve-1} gives us
\begin{equation}\label{eq10 : Theorem : Toy Fokker-Plank}
\int_0^T \|\nabla \rho_n^{\frac{m}{2}}\|_{L^2_x}^2 \,dt \leq C,
\end{equation}
where $C=C\left(\int_{\bbr^d} \rho_0 \langle x\rangle^2  \,dx, \|\rho_0 \|_{L^{m}(\bbr^d)}, \|V\|_{L^1(0,T;W^{2,\infty}(\bbr^d)} \right)$, and
we can get \eqref{KK-June21-10} from \eqref{eq10 : Theorem : Toy Fokker-Plank}.

To obtain \eqref{support-uniform}, we exploit
\begin{equation*}
\begin{aligned}
\supp(\rho(t)) &\subset \liminf_{n \rightarrow \infty} (\supp(\rho_n(t))), && \forall ~ t\in[0,T] \\
& \subset \lim_{n \rightarrow \infty} B_{R_{n,T}}(0), && R_{n,T}:= R_0\left(1+\beta \frac{T}{n} \right )^n + \left( 1 + \beta \frac{T}{n}\right )^{n-1} \int_0^T \|V(t) \|_{L_x^\infty} \,dt\\
& \subset  B_{R_T}(0),  && R_{T}:= R_0e^{\beta T} +e^{\beta T} \int_0^T \|V(t) \|_{L_x^\infty} \,dt,
\end{aligned}
\end{equation*}
where we use \eqref{support-n} in the second line. This completes the proof.
\qed

\section{Existence of weak solutions}\label{Exist-weak}
In this section, we show the existence of $L^q$-weak solutions in Definition~\ref{D:weak-sol}.
\subsection{Existence for case: $\int_{\bbr^d} \rho_0 \log \rho_0 \,dx<\infty$}
Here we provide proofs of theorems in Section~\ref{SS:m less 2}: Theorem~\ref{Theorem-1}, \ref{T:log-div-free}, and \ref{T:log-tilde}.
\subsubsection{Proof of Theorem \ref{Theorem-1}}
{\it Proof of (i).} Let $1< p \leq {\lambda_1} $. Suppose $\rho_0\in  \mathcal{P}_p(\bbr^d) $ with $\int_{\bbr^d}\rho_0 \log \rho_0 \,dx < \infty$ and
 $V \in  \mathcal{S}_{m,1}^{(q_1,q_2)} $ satisfies \eqref{T1:V-log}.
By exploiting mollification, we may choose a sequence of vector fields $V_n\in \calC^\infty(Q_T) $
such that
\begin{equation}\label{eq7 : Theorem-1}
\lim_{n \rightarrow \infty}\|V_n - V \|_{\mathcal{S}_{m,1}^{(q_1,q_2)} } =0.
\end{equation}
Using truncation, mollification and normalization, we have a sequence of functions
$\rho_{0,n}\in \calC_c^\infty(\bbr^d)\cap \mathcal{P}(\bbr^d)$ satisfying
\begin{equation}\label{eq9 : Theorem-1}
\lim_{n \rightarrow \infty} W_p(\rho_{0,n} , \rho_0) =0,
\quad \text{and} \quad  \lim_{n \rightarrow \infty} \int_{\bbr^d} \rho_{0,n} \log \rho_{0,n} \,dx = \int_{\bbr^d} \rho_0\log \rho_{0} \,dx.
\end{equation}
We note that $\rho_{0,n}\in \calC_c^\infty(\bbr^d)\cap \mathcal{P}(\bbr^d)$ implies $\rho_{0,n}\in \calC_c^\infty(\bbr^d)\cap \mathcal{P}_2(\bbr^d)$.
Hence, exploiting Proposition \ref{proposition : regular existence}, we have $\rho_n\in AC(0,T; \mathcal{P}_2(\bbr^d))$ which is a regular solution of
\begin{equation}\label{eq5 : Theorem-1}
\begin{cases}
   \partial_t \rho_n =   \nabla \cdot( \nabla (\rho_n)^m  - V_n\rho_n),\\
   \rho_n(0)=\rho_{0,n}.
 \end{cases}
\end{equation}
Thus, $\rho_n$ satisfies
\begin{equation}\label{eq21 : Theorem-1}
\begin{aligned}
\iint_{Q_T}  \left [\partial_t\varphi + (V_n \cdot \nabla \varphi )\right ]\rho_n
 + (\rho_n)^m \Delta \varphi  \,dx\,dt
 =  - \int_{\bbr^d}\varphi(0) \rho_{0,n}\,dx,
\end{aligned}
\end{equation}
for each $\varphi\in \calC_c^\infty( \bbr^d \times
[0,T))$, and $\rho_n$ enjoys all properties in
Proposition \ref{proposition : regular existence}. Especially, we note
\begin{equation}\label{eq - regular}
\sup_{0 \leq t \leq T}\int_{\bbr^d \times \{ t\}}\rho_n \,dx + \iint_{Q_T} \left|\nabla \rho_{n}^{\frac{m}{2}} \right|^2 \,dx\,dt < \infty.
\end{equation}

Now, we prove $\rho_n \in AC(0,T;\mathcal{P}_p(\bbr^d))$. For this, we first note that $\rho_{0,n}\in \calC_c^\infty(\bbr^d)\cap \mathcal{P}(\bbr^d)$ gives us
 $\rho_{0,n}\in \mathcal{P}_p(\bbr^d)$. Next, due to the assumption \eqref{T1:V-log},
  we can exploit Proposition \ref{P:log-energy} $(i)$ and Proposition \ref{P:log-energy-speed} $(i)$ to have
\begin{equation}\label{eq17 : Theorem-1}
\sup_{0 \leq t \leq T}\int_{\bbr^d \times \{ t\}}\rho_n(|\log \rho_n| + \langle x\rangle^p ) \,dx
+ \iint_{Q_T} \{ \left|\nabla \rho_{n}^{\frac{m}{2}} \right|^2 + \left (\left | \frac{\nabla \rho_n^m}{\rho_n}\right |^p +|V_n|^p \right )\rho_n \}\,dx\,dt <C,
\end{equation}
 where the constant $C=C (\|V\|_{\mathcal{S}_{m, 1}^{(q_1,q_2)}}, \int_{\mathbb{R}^d} \rho_{0,n} \left(\log \rho_{0,n} + \langle x \rangle^p )
\,dx\right)$.
Hence, we have $\rho_n(t)\in \mathcal{P}_p (\bbr^d)$ for all $t\in [0, T]$. Actually, due to \eqref{eq7 : Theorem-1} and \eqref{eq9 : Theorem-1}, we may choose the constant
$C=C(\|V\|_{\mathcal{S}_{m, 1}^{(q_1,q_2)}}, \int_{\mathbb{R}^d} \rho_{0} \left(\log \rho_{0} + \langle x \rangle^p )
\,dx\right)$. Now, we rewrite \eqref{eq5 : Theorem-1} as follows
$$\partial_t \rho_n +  \nabla \cdot (w_n \rho_n )=0, \qquad \mbox{where}\quad
w_n:= -\frac{\nabla (\rho_n)^m}{\rho_n}+ V_n.$$
Then, \eqref{eq17 : Theorem-1} says
\begin{equation*} 
\sup_{0 \leq t \leq T}\int_{\bbr^d \times \{ t\}}\rho_n(|\log \rho_n| + \langle x\rangle^p ) \,dx +\int_0^T \|w_{n}(t)\|_{L^p(\rho_n(t))}^p \,dt <C.
\end{equation*}
We exploit Lemma \ref{representation of AC curves} and \eqref{eq17 : Theorem-1} to have $\rho_n \in AC(0,T;\mathcal{P}_p(\bbr^d))$ and
\begin{equation}\label{eq30 : Theorem-1}
\begin{aligned}
W_p(\rho_n(s),\rho_n(t)) &\leq \int_s^t  \|w_n(\tau)\|_{L^p(\rho_n(\tau))} \,d\tau \leq  C (t-s)^{\frac{p-1}{p}}, \qquad \forall ~ 0\leq s\leq t\leq T.
\end{aligned}
\end{equation}

Next, from  Lemma \ref{Lemma : Arzela-Ascoli} with  \eqref{eq30 : Theorem-1}, we note that there
exists a curve $\rho:[0,T] \mapsto \mathcal{P}_p(\bbr^d)$ such that
\begin{equation}\label{eq33 : Theorem-1}
\rho_n(t) \,\mbox{(up to a subsequence) \,narrowly converges to}\, \rho(t) \,\mbox{as} \, n \rightarrow \infty, \quad \forall \ 0\leq t \leq T.
\end{equation}

Due to the lower semicontinuity of $p$-Wasserstein distance $W_p(\cdot, \cdot)$ with respect to the narrow convergence, we have
\begin{equation}\label{eq34 : Theorem-1}
 W_p(\rho(s),\rho(t)) \leq   C (t-s)^{\frac{p-1}{p}},
\qquad \forall ~ 0\leq s\leq t\leq T,
\end{equation}
which concludes \eqref{KK-May7-60}.

Let us prove that the curve $\rho :[0,T] \mapsto \mathcal{P}_p(\bbr^d)$ is a solution of \eqref{E:Main}. First, we note%
\begin{equation}\label{eq32 : Theorem-1}
\begin{aligned}
\iint_{Q_T}  (V_n \cdot \nabla \varphi  ) \rho_n \,dx\,dt
\longrightarrow  \iint_{Q_T}  (V \cdot \nabla \varphi ) \rho \,dx\,dt, \,~\mbox{as} \ n\rightarrow \infty,
\end{aligned}
\end{equation}
for any $\varphi \in \calC_c^\infty(\bbr^d \times [0,T)) $. Indeed, we have
\begin{equation*}
\begin{aligned}
&\iint_{Q_T}  \big[(V_n \cdot \nabla \varphi ) \rho_n
- ( V \cdot \nabla \varphi ) \rho\big ] \,dx\,dt \\
&\quad = \iint_{Q_T}  \nabla \varphi \cdot \big (V_n- V\big )\rho_n  \,dx\,dt
+ \iint_{Q_T}  \nabla \varphi \cdot  V \bke{\rho_n- \rho}  \,dx\,dt
 = I + II.
\end{aligned}
\end{equation*}
 Let us first estimate $I$.  Due to \eqref{eq17 : Theorem-1}, we apply Lemma \ref{P:L_r1r2} and  Remark \ref{Remark : AL-1} to obtain
\begin{equation}\label{eq28 : Theorem-1}
\|(V_n-V)\rho_n \|_{L_{x,t}^{\frac{md+2}{md+1}}}
\leq \|V_n-V \|_{{\mathcal{S}}_{m,1}^{({q}_1, {q}_2)}} \| \rho_n\|_{L_{x,t}^{r_1, r_2}}
 \leq C 
\|V_n-V \|_{\mathcal{S}_{m,1}^{(q_1,q_2)}} .
\end{equation}
Hence, we have
\begin{equation}\label{eq13 : Theorem-1}
| I |\leq \|\nabla \varphi \|_{L_{x,t}^{md+2}} \|(V_n - V)\rho_n \|_{L_{x,t}^{\frac{md+2}{md+1}}}
\leq C \|V_n - V \|_{\mathcal{S}_{m,1}^{(q_1,q_2)}} \longrightarrow 0, \quad \mbox{as} \quad n\rightarrow \infty.
\end{equation}
 Also, narrow convergence in \eqref{eq33 : Theorem-1} implies
$II$ converges to $0$ as $n \rightarrow \infty$.  Hence, this with \eqref{eq13 : Theorem-1} implies \eqref{eq32 : Theorem-1}.

 Next, we study the convergence of following nonlinear term
\begin{equation}\label{eq14 : Theorem-1}
\begin{aligned}
\iint_{Q_T}   (\rho_n)^m \Delta \varphi \,dx\,dt  \longrightarrow  \iint_{Q_T}  \rho^m \, \Delta \varphi   \,dx\,dt.
\end{aligned}
\end{equation}
Due to \eqref{E:rho:1} in Remark \ref{Remark : AL-1}, we have
\begin{equation}\label{eq27 : Theorem-1}
\begin{aligned}
\|\rho_n^m \|_{L_{x,t}^{\frac{md+2}{md}}} 
& \leq C.
\end{aligned}
\end{equation}
This implies that $\rho_n^m$(up to a subsequence) has a weak limit $\eta \in L^{\frac{md+2}{md}}\left( \bar{Q}_T \right)$. That is
\begin{equation}\label{eq23 : Theorem-1}
\begin{aligned}
\iint_{Q_T}   (\rho_n)^m \Delta \varphi  ~\,dx\,dt
 \longrightarrow  \iint_{Q_T}   \eta \Delta \varphi  ~\,dx\,dt.
\end{aligned}
\end{equation}
 Furthermore, we combine Proposition \ref{Proposition : AL-1} with \eqref{eq27 : Theorem-1} to have
\begin{equation*} 
\| \partial_t\rho_n\|_{W_x^{-1,\frac{md+2}{md+1}}L_t^{\frac{md+2}{md+1}}} + \|\rho_n \|_{W_x^{1,\frac{md+2}{d+1}}L_t^{\frac{md+2}{d+1}}}
 \leq C .
\end{equation*}
By applying Lemma \ref{AL}, we have $\rho_n$ (up to a subsequence) which converges in $L^{\frac{md+2}{d+1}}(0, T; L_{\loc}^{r}(\bbr^d))$ for
$r < \frac{d(md+2)}{d(d-m)+(d-2)}$ (see Remark~\ref{R:AL-1}).
That implies $\rho_n$ (up to a subsequence) converges almost everywhere in $\bar{Q}_T$.
Now, we recall that $\rho_n(t)$ narrowly converges to $\rho(t)$ for any $t \in [0,T]$.
We put these two facts together and conclude $\rho_n$  converges to $\rho$ almost everywhere in  $\bar{Q}_T$. Hence,  $\rho_n^m$ also pointwise converges to $\rho^m$.
Combining this with \eqref{eq23 : Theorem-1}, we conclude \eqref{eq14 : Theorem-1}.
We plug \eqref{eq33 : Theorem-1}, \eqref{eq32 : Theorem-1}, \eqref{eq14 : Theorem-1} into \eqref{eq21 : Theorem-1}, and have \eqref{KK-May7-40}.

 Next, we prove \eqref{T1:apriori-log}.  From \eqref{eq17 : Theorem-1}, we have
\begin{equation}\label{eq24 : Theorem-1}
\| \nabla \rho_n^{\frac{m}{2}}\|_{L_{x,t}^2} \leq C.
\end{equation}
As before, with the almost everywhere convergence of $\rho_n$ to $\rho$, this gives us that $\nabla \rho_n^{\frac{m}{2}}$ weakly converges to $\nabla \rho^{\frac{m}{2}}$
 in $L^2(\bar{Q}_{T})$. Hence, we have
\begin{equation}\label{eq18 : Theorem-1}
\begin{aligned}
\|\nabla \rho^{\frac{m}{2}} \|_{L_{x,t}^{2}} 
& \leq C.
\end{aligned}
\end{equation}
Moreover, \eqref{W-p} and \eqref{eq18 : Theorem-1} give us
\begin{equation}\label{eq19 : Theorem-1}
\iint_{Q_T}\left |\frac{\nabla \rho^m}{\rho} \right|^p \rho \,dx \,dt  \leq C.
\end{equation}
Next, we note
\begin{equation}\label{eq22 : Theorem-1}
\begin{aligned}
\iint_{Q_T} |V_n|^p \rho_n \,dx \,dt \longrightarrow  \iint_{Q_T} |V|^p \rho \,dx \,dt, \quad \mbox{as}\quad n \rightarrow \infty.
\end{aligned}
\end{equation}
Indeed, we have
\begin{equation*}
\iint_{Q_T}  \left(|V_n|^p \rho_n - |V|^p \rho \right) \,dx\,dt
 = \iint_{Q_T}   \left(|V_n|^p  - |V|^p \right)\rho_n  \,dx\,dt
+ \iint_{Q_T}  |V|^p \left(\rho_n - \rho \right)  \,dx\,dt
 = I + II.
\end{equation*}
 Let us first estimate $I$.  We exploit \eqref{speed00.5} in Lemma \ref{P:W-p} to have
\begin{equation*}
\iint_{Q_T} \left| V_n-V\right |^{{\lambda_1}} \rho_n \,dx \,dt
\leq  \int_0^T \|V_n-V \|_{L_x^{q_1}}^{{\lambda_1}} \|\nabla \rho_n^{\frac{m}{2}} \|_{L_x^2}^{\frac{2(1-\theta)}{m}},  
\end{equation*}
where $\theta$ and $q_2$ are defined in \eqref{speed00} and \eqref{speed01} for $q=1$, respectively. Notice that $\frac{{\lambda_1}}{q_2}+\frac{1-\theta}{m}=1 $, and hence we have
\begin{equation}\label{eq35-1 : Theorem-1}
\iint_{Q_T} \left| V_n-V\right |^{{\lambda_1}} \rho_n \,dx \,dt \leq  \|V_n-V \|_{\mathcal{S}_{m,1}^{(q_1,q_2)}}^{{\lambda_1}} \|\nabla \rho_n^{\frac{m}{2}} \|_{L_{x,t}^2}^{\frac{2(1-\theta)}{m}}
\leq C \|V_n-V \|_{\mathcal{S}_{m,1}^{(q_1,q_2)}}^{{\lambda_1}},  
\end{equation}
where we exploit \eqref{eq24 : Theorem-1} in the second inequality. For any $1< p< {\lambda_1}$, from \eqref{speed03}, we have
\begin{equation}\label{eq35 : Theorem-1}
| I | \leq  C \|  |V_n-V|^p\rho_n \|_{L_{x,t}^1}
\leq  C \|  |V_n-V|^{{\lambda_1}}\rho_n \|_{L_{x,t}^1}^{\frac{p}{{\lambda_1}}}
\leq C \|V_n - V \|_{\mathcal{S}_{m,1}^{(q_1,q_2)}}^p \longrightarrow 0, \quad \mbox{as} \quad n\rightarrow \infty.
\end{equation}
 Also, narrow convergence in \eqref{eq33 : Theorem-1} implies $II$ converges to $0$ as $n \rightarrow \infty$. This with \eqref{eq35 : Theorem-1} gives us \eqref{eq22 : Theorem-1}
 which implies
 \begin{equation}\label{eq36 : Theorem-1}
\iint_{Q_T} |V|^p \rho \,dx \,dt  \leq C.
 \end{equation}
 We remind that the entropy and $p$-th moment are lower semicontinuous with respect to the narrow convergence  (refer \cite[Remark 9.3.8 and (5.1.15)]{ags:book}). Hence, from \eqref{eq17 : Theorem-1}, we have for any $t\in [0, T]$
\begin{equation}\label{eq15 : Theorem-1}
\begin{aligned}
\int_{\mathbb{R}^d\times \{t\}} \rho \left( \log \rho + \langle x \rangle^p \right) \,dx
  &\leq \liminf_{n \rightarrow \infty} \int_{\mathbb{R}^d\times \{t\}} \rho_n \left( \log \rho_n + \langle x \rangle^p \right) \,dx
  \leq C.
\end{aligned}
\end{equation}
We also remind that there exists a constant $c>0$ (independent of $\rho$) such that
\begin{equation}\label{eq16 : Theorem-1}
\begin{aligned}
\int_{\mathbb{R}^d} | \min \{\rho \log \rho,~0 \} | \,dx \leq c \int_{\mathbb{R}^d}  \rho  \langle x \rangle^p  \, dx.
\end{aligned}
\end{equation}
Combining \eqref{eq15 : Theorem-1} and \eqref{eq16 : Theorem-1}, we have
\begin{equation}\label{eq20 : Theorem-1}
\begin{aligned}
\sup_{0\leq t \leq T} &\int_{\mathbb{R}^d\times \{t\}} \rho \left( \abs{\log \rho} + \langle x \rangle^p \right) \,dx \leq  C.
\end{aligned}
\end{equation}
We combine \eqref{eq18 : Theorem-1}, \eqref{eq19 : Theorem-1}, \eqref{eq36 : Theorem-1} and \eqref{eq20 : Theorem-1} to get \eqref{T1:apriori-log}.
This completes the proof.

{\it Proof of (ii)} :  Since $\mathcal{P}_a(\bbr^d) \subset \mathcal{P}_b(\bbr^d)$ if $a \geq b$, we note  $\rho_0 \in \mathcal{P}_{{\lambda_1}}(\bbr^d)$. Then, we exploit the result of $(ii)$ with $p={\lambda_1}$ to
have  a solution $\rho $ of satisfying \eqref{KK-May7-40}, \eqref{T1:apriori-log-1} and \eqref{KK-May7-60-1} with a constant
$C=C(\|V\|_{\mathcal{S}_{m, 1}^{(q_1,q_2)}}, \int_{\mathbb{R}^d} \rho_{0} \left(\log \rho_{0} + \langle x \rangle^{{\lambda_1}} ) \,dx\right)$.
Since $\int_{\mathbb{R}^d} \rho_{0} \langle x \rangle^{{\lambda_1}} \,dx \leq \int_{\mathbb{R}^d} \rho_{0} \langle x \rangle^{p} \,dx$, we may choose the constant
$C=C(\|V\|_{\mathcal{S}_{m, 1}^{(q_1,q_2)}}, \int_{\mathbb{R}^d} \rho_{0} (\log \rho_{0} + \langle x \rangle^{p} ) \,dx )$. This concludes $(ii)$. \qed

\subsubsection{Proof of Theorem \ref{T:log-div-free}}
Similar to the reason in the proof of Theorem \ref{Theorem-1} $(ii)$, we only focus on the case $1< p \leq {\lambda_1}$.
As in the proof of Theorem \ref{Theorem-1}, we have $\rho_n\in AC(0,T; \mathcal{P}_2(\bbr^d))$ which is a regular  solution of \eqref{eq5 : Theorem-1} satisfying \eqref{eq21 : Theorem-1} and \eqref{eq - regular}.

Since $V$ holds $\nabla \cdot V=0$
and \eqref{T1:V-log-divfree}, $V_n$ is also divergence-free and satisfies \eqref{T1:V-log-divfree} for all $n \in \mathbb{N}$.
Then, from Proposition \ref{P:log-energy} $(iii)$ and Proposition \ref{P:log-energy-speed} $(iii)$, we have \eqref{eq17 : Theorem-1}.

As in the proof of Theorem \ref{Theorem-1}, we exploit the estimate \eqref{eq17 : Theorem-1} and note that there
exists a curve $\rho:[0,T] \mapsto \mathcal{P}_p(\bbr^d)$ satisfying \eqref{eq33 : Theorem-1} and \eqref{eq34 : Theorem-1}.

We note that Remark \ref{Remark : AL-1}, \ref{R:AL-1}, and Proposition \ref{Proposition : AL-1} follow from Proposition \ref{P:log-energy}, and also note that Lemma \ref{P:W-p} holds from
\eqref{eq - regular} and the assumption \eqref{T1:V-log-divfree} on $V$. 
 Once  Remark \ref{Remark : AL-1}, \ref{R:AL-1}, Proposition \ref{Proposition : AL-1} and Lemma \ref{P:W-p} are available, we follow exactly same steps in the proof of Theorem \ref{Theorem-1}
 to conclude \eqref{KK-May7-40} and \eqref{T1:apriori-log} from \eqref{eq21 : Theorem-1} and \eqref{eq17 : Theorem-1}, respectively.

 Let us be more precise. In the proof of Theorem \ref{Theorem-1}, \eqref{E:Vrho:1-1} in Remark \ref{Remark : AL-1} gives \eqref{eq28 : Theorem-1}
which plays a key role to have \eqref{eq32 : Theorem-1}.
Furthermore, for $1< m \leq 2$, Proposition
\ref{Proposition : AL-1} and Remark~\ref{R:AL-1} with Aubin-Lions lemma give \eqref{eq14 : Theorem-1},
which together with \eqref{eq32 : Theorem-1} provides crucial ingredients to have \eqref{KK-May7-40}.
On the other hand, we exploit \eqref{speed00.5} in Lemma \ref{P:W-p} to have \eqref{eq35-1 : Theorem-1}
and this gives \eqref{eq36 : Theorem-1}. Having \eqref{eq36 : Theorem-1}, it follows  \eqref{T1:apriori-log} and this completes the proof.
\qed

\subsubsection{Proof of Theorem \ref{T:log-tilde}}
The proof is completed in the exactly same as in Theorem
\ref{Theorem-1} or Theorem \ref{T:log-div-free}. The only difference is that,
instead of  \eqref{eq7 : Theorem-1}, we choose a sequence of vector
fields $V_n\in \calC^\infty(\bar{Q}_T) $ such that
\begin{equation*} 
\lim_{n \rightarrow \infty}\|V_n - V \|_{\tilde{\mathcal{S}}_{m,1}^{(\tilde{q}_1,\tilde{q}_2)} } =0.
\end{equation*}
And also, in stead of \eqref{eq28 : Theorem-1}, we have
\begin{equation}\label{eq3 : T:log-tilde}
\|  (V_n-V)\rho_n \|_{L_{x,t}^{\frac{md+2}{md+1}}}
\leq C \|V_n-V \|_{\mathcal{S}_{m,1}^{(q_1,q_2)}}
 \leq C \|V_n-V \|_{\tilde{\mathcal{S}}_{m,1}^{(\tilde{q}_1,\tilde{q}_2)}}  \longrightarrow 0, \quad \mbox{as} \quad n\rightarrow \infty,
\end{equation}
where $C=C (\|V\|_{\tilde{\mathcal{S}}_{m,1}^{(\tilde{q}_1,\tilde{q}_2)}}, \int_{\bbr^d}(\rho_{0} \log \rho_{0} +\rho_{0}\langle x \rangle^p )\,dx )$. Here, $q_1:=\frac{d \tilde{q}_1}{d-\tilde{q}_1}, ~ q_2:=\tilde{q}_2$.
We exploit \eqref{norm-V-gradV} in the second inequality of \eqref{eq3 : T:log-tilde} because the assumption \eqref{T:V-tilde-log-energy} guarantees $1<\tilde{q}_1<d$ (see Remark~\ref{R:S-tildeS} $(ii)$).

\subsection{Existence for case: $\rho_0 \in L^{q}(\mathbb{R}^d), \ q>1$}
Here we prove theorems in Section~\ref{SS:m greater 2}: Theorem~\ref{Theorem-2-a} and \ref{Theorem-2-b}.

\subsubsection{Proof of Theorem~\ref{Theorem-2-a}}

Suppose $\rho_0\in  \mathcal{P}(\bbr^d) \cap L^{q}(\bbr^d)$ and $V \in  \mathcal{S}_{m,q}^{(q_1,q_2)} $ satisfies \eqref{T2a:V}.
By exploiting mollification, we may choose a sequence of vector fields $V_n\in \calC^\infty(\bar{Q}_T) $ such that
\begin{equation}\label{eq7 : Theorem-2-a}
\lim_{n \rightarrow \infty}\|V_n - V \|_{\mathcal{S}_{m,q}^{(q_1,q_2)} } =0.
\end{equation}
Using truncation, mollification and normalization, we have a sequence of functions
$\rho_{0,n}\in \calC_c^\infty(\bbr^d)\cap \mathcal{P}(\bbr^d)$ satisfying
\begin{equation}\label{eq9 : Theorem-2-a}
\lim_{n \rightarrow \infty}\|\rho_{0,n} - \rho_0 \|_{L^q(\bbr^d)} =0.
\end{equation}
As in the proof of Theorem \ref{Theorem-1}, we have $\rho_n\in AC(0,T; \mathcal{P}_2(\bbr^d))$ a regular  solution of \eqref{eq5 : Theorem-1} satisfying \eqref{eq21 : Theorem-1} 
and
\begin{equation*} 
\sup_{0 \leq t \leq T}\int_{\bbr^d \times \{ t\}} (\rho_n)^q \,dx + \iint_{Q_T} \left|\nabla \rho_{n}^{\frac{q+m-1}{2}} \right|^2 \,dx\,dt < \infty.
\end{equation*}

From Proposition \ref{P:Lq-energy} $(i)$, we have
\begin{equation}\label{eq12 : Theorem-2-a}
\begin{aligned}
\sup_{0\leq t \leq T} &\int_{\mathbb{R}^d\times \{t\}}  (\rho_n)^q   \,dx
 + \frac{2qm(q-1)}{(q+m-1)^2} \iint_{Q_T} \left |\nabla \rho_n^{\frac{q+m-1}{2}}\right |^2  \,dx\,dt \leq C,
\end{aligned}\end{equation}
where $C= C( \|V_n\|_{\mathcal{S}_{m, q}^{(q_1,q_2)}}, \|\rho_{0,n}\|_{L^q(\mathbb{R}^d)} )$.
Due to \eqref{eq7 : Theorem-2-a} and \eqref{eq9 : Theorem-2-a}, we may choose
$C= C( \|V\|_{\mathcal{S}_{m, q}^{(q_1,q_2)}}, \|\rho_{0}\|_{L^q(\mathbb{R}^d)} )$.

From \eqref{eq12 : Theorem-2-a}, we note that $\rho_n$ is bounded in $L^{r_1, r_2}_{x,t}(\bar{Q}_T)$ where $(r_1, r_2)$ are satisfying \eqref{q-r1r2}.
Hence, there exists a subsequence (by abusing notation) $\rho_n$ weakly converges to
$\rho \in L^{r_1, r_2}_{x,t}(\bar{Q}_T) $ (in particular, it is weak* convergence if $r_1=\infty$ or $r_2 = \infty$).

We claim that $\rho$ is a solution of \eqref{E:Main}. First, we have
\begin{equation}\label{eq32 : Theorem-2-a}
\begin{aligned}
\iint_{Q_T}  (V_n \cdot \nabla \varphi  ) \rho_n
\,dx\,dt \longrightarrow  \iint_{Q_T}  (V \cdot \nabla
\varphi) \rho \,dx\,dt, ~~~\mbox{as}~~ n\rightarrow \infty,
\end{aligned}
\end{equation}
for any $\varphi \in \calC_c^\infty(\bbr^d \times [0,T)) $. Indeed, we let
\begin{equation*}
\begin{aligned}
I: = \iint_{Q_T}  \nabla \varphi \cdot \big (V_n-
V\big )\rho_n  \,dx\,dt \quad \text{and} \quad  II:= \iint_{Q_T}  \nabla \varphi \cdot  \bke{V\rho_n- V\rho} ~ \,dx\,dt.
\end{aligned}
\end{equation*}
From Lemma~\ref{L-compact}, we have
\begin{equation}\label{eq13 : Theorem-2-a}
\begin{aligned}
I &\leq \left(\iint_{Q_T} \left|(V_n-V) \rho_n^q \nabla \varphi \right| \,dxdt \right)^{\frac 1q} \left(\iint_{Q_T} \abs{ (V_n-V)  \nabla \varphi} \,dxdt \right)^{1-\frac 1q} \\
&\leq \left(\iint_{Q_T} \abs{ (V_n-V) \rho_n^q  \nabla \varphi }^{r} \,dxdt \right)^{\frac{1}{qr}} \left(\iint_{Q_T} \abs{\nabla \varphi} \,dxdt \right)^{\frac 1q(1-\frac 1r)} \left(\iint_{Q_T} \abs{ (V_n-V) \nabla \varphi} \,dxdt \right)^{1-\frac 1q} \\
& \leq C(\varphi) \|V_n-V\|_{\mathcal{S}_{m,q}^{(q_1,q_2)}}^{\frac 1q} \|\rho_n\|_{L^{r_1, r_2}_{x,t}} \|V_n-V\|_{\mathcal{S}_{m,q}^{(q_1,q_2)}}^{1-\frac 1q}\\
& \leq C(\varphi) \|V_n-V\|_{\mathcal{S}_{m,q}^{(q_1,q_2)}} \|\rho_n\|_{L^{r_1, r_2}_{x,t}} \longrightarrow 0, \quad \mbox{as}\quad n\rightarrow \infty.
\end{aligned}
\end{equation}
Also, weak convergence of $\rho_n$ implies that $II$ converges
to $0$ as $n \rightarrow \infty$. This together with \eqref{eq13 : Theorem-2-a} gives us \eqref{eq32 : Theorem-2-a}.

Next, we study the convergence of following nonlinear term
\begin{equation}\label{eq14 : Theorem-2-a}
\begin{aligned}
\iint_{Q_T}  (\rho_n)^m  \Delta \varphi \,dx\,dt
\longrightarrow  \iint_{Q_T}   \rho^m \Delta \varphi
\,dx\,dt.
\end{aligned}
\end{equation}
Due to \eqref{E:rho:q-1} in Remark \ref{Remark : AL-2}, we have
\begin{equation}\label{eq17 : Theorem-2-a}
\begin{aligned}
\| \rho_n^{m}\|_{L_{x,t}^{\frac{d(q+m-1)+2q}{md}}} &\leq C, 
\end{aligned}
\end{equation}
where $C= C( \|V\|_{\mathcal{S}_{m, q}^{(q_1,q_2)}}, \|\rho_{0}\|_{L^q(\mathbb{R}^d)} )$.
This implies $(\rho_n)^m$(up to a subsequence) has a weak limit $\eta \in L^{\frac{d(q+m-1)+2q}{md}}( Q_T )$.
In other words, we have
\begin{equation}\label{eq16 : Theorem-2-a}
\begin{aligned}
\iint_{Q_T}   (\rho_n)^m \Delta \varphi  \,dx\,dt
 \longrightarrow  \iint_{Q_T}   \eta \Delta \varphi  \,dx\,dt.
\end{aligned}
\end{equation}
 Furthermore, combining Proposition \ref{Proposition : AL-2}$(i)$ and \eqref{eq17 : Theorem-2-a}, we have
 \begin{equation}\label{eq15 : Theorem-2-a}
\|\partial_t (\rho_n)^q \|_{W_x^{-1,\frac{d(q+m-1)+2q}{d(q+m-1)+q}}L_t^1} + \|\nabla (\rho_n)^q \|_{L_{x,t}^{\frac{d(q+m-1)+2q}{q(d+1)}}}
\leq C.
\end{equation}

Combining \eqref{eq15 : Theorem-2-a} and Lemma \ref{AL}, we have $(\rho_n)^q$ (up to a subsequence) which converges in $L^{\frac{d(q+m-1)+2q}{q(d+1)}}(0, T; L_{\loc}^{r}(\bbr^d))$ for
$r < p=\frac{d^2(q+m+1)+2dq}{q(d^2-2) - d(m-1)}$ where $p$ is defined in \eqref{rho_2_2}, Remark~\ref{R:AL-3}.
Hence, $\rho_n$ and  $(\rho_n)^q$ also converge almost everywhere in $\bar{Q}_T$.
Recalling $\rho_n$ weakly converges to $\rho$, we can replace $\eta$ by $\rho^m$ in \eqref{eq16 : Theorem-2-a} which gives us \eqref{eq14 : Theorem-2-a}.
%
We plug \eqref{eq32 : Theorem-2-a} and
\eqref{eq14 : Theorem-2-a} into \eqref{eq21 : Theorem-1}, and get \eqref{KK-May7-40}.

Now, we investigate \eqref{T:Energy-Lq}. Using \eqref{eq12 : Theorem-2-a}, weak convergence, and following similarly the
proof of Theorem \ref{Theorem-1}, we have
\begin{equation}\label{eq24 : Theorem-2-a}
\begin{aligned}
\sup_{0\leq t\leq T}\int_{\bbr^d \times \{t\}} \rho^q \,dx + \iint_{Q_T} \abs{\nabla \rho^{\frac{q+m-1}{2}}}^2 \,dxdt  \leq C.
\end{aligned}
\end{equation}
Finally, we combine \eqref{KK-May7-40} and \eqref{eq24 : Theorem-2-a} to complete the proof.
\qed

\subsubsection{Proof of Theorem~\ref{Theorem-2-b}}

 {\it For the case$(i)$} :  Similar to the reason in the proof of Theorem \ref{Theorem-1}, if $p>{\lambda_q}$ then $\rho_0 \in \mathcal{P}_{{\lambda_q}}(\bbr^d)$. Therefore, the case (b) follows from (a).
 Hence, we only consider the case (a) i.e. for $1< p\leq {\lambda_q}$.
 Assume that $V \in  \mathcal{S}_{m,q}^{(q_1,q_2)} $ satisfies \eqref{T2:V}.
As in the proof of Theorem \ref{Theorem-1}, we have a sequence of vector fields $V_n\in \calC^\infty(Q_T) $ such that
\begin{equation}\label{eq7 : Theorem-2-b}
\lim_{n \rightarrow \infty}\|V_n - V \|_{\mathcal{S}_{m,q}^{(q_1,q_2)} } =0,
\end{equation}
and a sequence of functions $\rho_{0,n}\in \calC_c^\infty(\bbr^d)\cap \mathcal{P} (\bbr^d)$ satisfying
\begin{equation*} 
\lim_{n \rightarrow \infty} W_p(\rho_{0,n} , \rho_0) =0, \qquad
\lim_{n \rightarrow \infty} \|\rho_{0,n} - \rho_0 \|_{L^q(\bbr^d)} =0.
\end{equation*}
Similar to the proof of Theorem \ref{Theorem-1}, we have $\rho_n\in AC(0,T;
\mathcal{P}_2(\bbr^d))$, a regular  solution of \eqref{eq5 : Theorem-1} satisfying \eqref{eq21 : Theorem-1}.

Note that $\frac{2m-1}{m-1} < \frac{q+m-1}{m-1}$ for $q>m$. Hence, under the assumption \eqref{T2:V} on $V$,
we note that Proposition \ref{P:Lq-energy} $(i)$ and  Proposition \ref{P:Energy-speed} $(i)$ are available, and we have
\begin{equation}\label{eq11 : Theorem-2-b}
\sup_{0\leq t \leq T} \int_{\mathbb{R}^d\times \{t\}}  \{ (\rho_n)^q + \rho_n\langle x \rangle^p \} \,dx
 + \iint_{Q_T} \{\left |\nabla \rho_n^{\frac{q+m-1}{2}}\right |^2  + \left(\abs{\frac{\nabla \rho_n^m}{\rho_n}}^p+|V_n|^{p}\right ) \rho_n  \}\,dx\,dt \leq C,
\end{equation}
where the constant 
$C= C (  \|V\|_{\mathcal{S}_{m, q}^{(q_1,q_2)}}, \int_{\mathbb{R}^d}  \left( \rho_{0}^q + \rho_{0}\langle x \rangle^p \right) \,dx )$.

As in \eqref{eq30 : Theorem-1}, we exploit Lemma \ref{representation of AC curves} with \eqref{eq11 : Theorem-2-b} and have
\begin{equation*} 
W_p(\rho_n(s),\rho_n(t))  \leq  C (t-s)^{\frac{p-1}{p}}, \qquad
\forall ~ 0\leq s\leq t\leq T.
\end{equation*}
Similar to \eqref{eq33 : Theorem-1} and \eqref{eq34 : Theorem-1}, there exists a curve
$\rho:[0,T] \mapsto \mathcal{P}_p(\bbr^d)$ such that, as $n
\rightarrow \infty$ (up to subsequence)
\begin{equation}\label{eq33 : Theorem-2-b}
\rho_n(t) ~~\mbox{narrowly converges to}~~ \rho(t) \quad \mbox{and}
\quad W_p(\rho(s),\rho(t)) \leq   C (t-s)^{\frac{p-1}{p}}, \qquad
\forall ~ 0\leq s\leq t\leq T.
\end{equation}

Furthermore, the curve $\rho :[0,T] \mapsto \mathcal{P}_p(\bbr^d)$ is a solution of \eqref{E:Main}. Indeed, in the proof of Theorem \ref{Theorem-2-a}, we exploited
Remark \ref{Remark : AL-2} and Proposition \ref{Proposition : AL-2} (i), and showed following convergence \eqref{eq32 : Theorem-2-a} and \eqref{eq14 : Theorem-2-a}.
By plugging \eqref{eq33 : Theorem-2-b}, \eqref{eq32 : Theorem-2-a} and
\eqref{eq14 : Theorem-2-a} into \eqref{eq21 : Theorem-1}, we get \eqref{KK-May7-40}.

Next, we investigate \eqref{T:Energy-Speed-Lq}. We first note that \eqref{eq11 : Theorem-2-b}, \eqref{eq33 : Theorem-2-b} and the lower semicontinuity of $L^q$-norm w.r.t
the narrow convergence give us \eqref{eq24 : Theorem-2-a}.
Moreover, \eqref{W-p} and \eqref{eq24 : Theorem-2-a}  give us
\begin{equation}\label{eq25 : Theorem-2-b}
\iint_{Q_T}\left |\frac{\nabla \rho^m}{\rho} \right|^p \rho \,dx \,dt
\leq \| \nabla \rho^{\frac{q+m-1}{2}}\|_{L_{x,t}^2}^2
+ C\left( \sup_{0\leq t\leq T}\int_{\bbr^d \times \{t\}} \rho^q \,dx\right)
\leq C.
\end{equation}

Next, we investigate
\begin{equation}\label{eq27 : Theorem-2-b}
\begin{aligned}
\iint_{Q_T} |V|^p \rho \,dx \,dt \leq C.
\end{aligned}
\end{equation}
First, for the case $1<q\leq m$, we exploit \eqref{speed00.5} to have
\begin{equation}\label{eq29 : Theorem-2-b}
\begin{aligned}
\iint_{Q_T} \left| V_n-V\right |^{{\lambda_q}} \rho_n \,dx \,dt &\leq  c(\|\rho_n \|_{L_{x,t}^{q,\infty}})\int_0^T \|V_n-V \|_{L_x^{q_1}}^{{\lambda_q}} \|\nabla \rho_n^{\frac{q+m-1}{2}}
\|_{L_x^2}^{\frac{2(1-\theta)}{q+m-1}} \,dt,  
\end{aligned}
\end{equation}
where $\theta$ and $q_2$ are defined in \eqref{speed00} and \eqref{speed01}, respectively. Notice that $\frac{{\lambda_q}}{q_2}+\frac{1-\theta}{q+m-1}=1 $, and hence we have
\begin{equation}\label{eq34 : Theorem-2-b}
\iint_{Q_T} \left| V_n-V\right |^{{\lambda_q}} \rho_n \,dx \,dt \leq  \|V_n-V \|_{L_{x,t}^{q_1, q_2}}^{{\lambda_q}} \|\nabla \rho_n^{\frac{q+m-1}{2}} \|_{L_{x,t}^2}^{\frac{2(1-\theta)}{q+m-1}}
\leq C \|V_n-V \|_{L_{x,t}^{q_1,q_2}}^{{\lambda_q}},
\end{equation}
where we exploit \eqref{eq11 : Theorem-2-b} in the second inequality.\\
Second, for the case $q > m$, we note ${\lambda_q}=2$. Combining \eqref{eq29 : Theorem-2-b} and \eqref{eq34 : Theorem-2-b}, we have
\begin{equation*} 
\begin{aligned}
\iint_{Q_T} \left| V_n-V\right |^{{\lambda_q}} \rho_n \,dx \,dt 
&\leq C \|V_n-V \|_{L_{x,t}^{q_1, q_2}}^{{\lambda_q}},  
\end{aligned}
\end{equation*}
where 
$C=C (\|\rho_0\|_{L^q(\mathbb{R}^d)}, ~ \|V\|_{\mathcal{S}_{m,q}^{(q_1,q_2)}} )$.
Either  $1<q\leq m$ or $q>m$, we have
\begin{equation}\label{eq36 : Theorem-2-b}
\begin{aligned}
\iint_{Q_T} \left| V_n-V\right |^{{\lambda_q}} \rho_n \,dx \,dt 
&\leq C \|V_n-V \|_{\mathcal{S}_{m,q}^{(q_1,q_2)}}^{{\lambda_q}}\longrightarrow  0 \quad \mbox{as}\quad n \rightarrow \infty.  
\end{aligned}
\end{equation}
Now, as in the proof of Theorem \ref{Theorem-1}, we exploit \eqref{eq36 : Theorem-2-b} and conclude
\begin{equation*}
\begin{aligned}
\iint_{Q_T} |V_n|^p \rho_n \,dx \,dt \longrightarrow  \iint_{Q_T} |V|^p \rho \,dx \,dt \quad \mbox{as}\quad n \rightarrow \infty,
\end{aligned}
\end{equation*}
which implies \eqref{eq27 : Theorem-2-b}.

Since $p$-th moment is lower semicontinuous w.r.t the narrow convergence, \eqref{eq11 : Theorem-2-b} gives us
\begin{equation*}\label{eq28 : Theorem-2-b}
\sup_{0\leq t\leq T}\int_{\bbr^d \times \{t\}} \rho \langle x\rangle^p \,dx \leq C.
\end{equation*}
Finally, we combine \eqref{eq24 : Theorem-2-a}, \eqref{eq25 : Theorem-2-b}, \eqref{eq27 : Theorem-2-b} and \eqref{eq28 :
Theorem-2-b} to conclude \eqref{T:Energy-Speed-Lq}.

\noindent {\it For the case$(ii)$} : Let $q>m$ and assume $V$ satisfies \eqref{T2:V-embedding}.  Then, from Proposition \ref{P:Energy-embedding} $(i)$, there exist
$q^{\ast} \in (1, q)$ and $q_{2}^{\ast} = q_{2}^{\ast}(q_1) \in [2, q_2)$ such that $ V \in \mathcal{S}_{m, q^\ast}^{(q_1, q_2^{\ast})}$ satisfying \eqref{T2:V}.
Hence, the same conclusions hold as in Theorem~\ref{Theorem-2-b} $(i)$. This completes the
proof.
\qed

\subsection{Existence for case: $\nabla\cdot V=0$ and $\rho_0 \in L^{q}(\mathbb{R}^d), \ q>1$}
Here we prove theorems in Section~\ref{SS:divergence-free}: Theorem~\ref{Theorem-4a} and \ref{Theorem-4}.
\subsubsection{Proof of Theorem \ref{Theorem-4a}}
The proof can be completed almost the same way as in the proof of Theorem \ref{Theorem-2-a}. Let us just point out the differences between the proofs of two theorems.

The first difference  is that Proposition \ref{P:Lq-energy} gives us, due to the divergence-free assumption $\nabla \cdot V=0$
\begin{equation*} 
\begin{aligned}
\sup_{0\leq t \leq T} &\int_{\mathbb{R}^d\times \{t\}}  (\rho_n)^q   \,dx
 + \frac{qm(q-1)}{(q+m-1)^2} \iint_{Q_T} \left |\nabla \rho_n^{\frac{q+m-1}{2}}\right |^2  \,dx\,dt \leq \int_{\mathbb{R}^d}  \rho_{n,0}^q   \,dx,
\end{aligned}\end{equation*}
instead of \eqref{eq12 : Theorem-2-a}. This induces \eqref{T:Energy-divfree}.

The second difference lies in obtaining the estimates of temporal and spatial derivatives when $V$ satisfies \eqref{T4:V-energy}. For the case $q > \max\{1, \frac m2\}$, we exploit Proposition \ref{Proposition : AL-2} and have for $r\in (1, q]$,
\begin{equation}\label{eq 2 : Theorem-4a}
\|\partial_t (\rho_n)^q \|_{W_x^{-1,\frac{d(q+m-1)+2q}{d(q+m-1)+q}}L_t^1} +\|\nabla (\rho_n)^q \|_{L_{x,t}^{\frac{d(q+m-1)+2q}{q(d+1)}}} \cdot \delta_{\{q \geq m-1\}}+ \|\nabla (\rho_n)^q \|_{L_{x,t}^{\frac{2d(q+m-1)+4q}{d(3q-r)+2q}}} \cdot \delta_{\{\frac m 2 < q < m-1\}}
\leq C.
\end{equation}
Hence,
$(\rho_n)^q$ (up to a subsequence) converges in (see Remark~\ref{R:AL-3})
\[
\begin{cases}
    L^{\frac{d(q+m-1)+2q}{q(d+1)}}(0, T; L^r_{\loc}(\bbr^d))\  \text{ for } \ r< \frac{d^2(q+m+1)+2dq}{q(d^2-2) - d(m-1)}, & \text{ if } q \geq m-1 \vspace{2 mm} \\
    L^{\frac{2d(q+m-1)+4q}{d(3q-r)+2q}}(0, T; L^r_{\loc}(\bbr^d)) \ \text{ for } \ r< \frac{2d^2(q+m-1)+4dq}{d^2(3q-r)-2d(m-1)-4q}, & \text{ if } \frac m2 < q < m-1.
\end{cases}
\]
This completes the proof. \qed

\subsubsection{Proof of Theorem \ref{Theorem-4}}
{\it For the case $(i)$} :  The proof can be completed almost the same way as in Theorem \ref{Theorem-2-b} $(i)$.
 As before, let us just point out the differences between the proofs of two theorems.
Firstly, to have \eqref{eq11 : Theorem-2-b}, we exploit Proposition \ref{P:Lq-energy} $(iii)$  and  Proposition \ref{P:Energy-speed} $(iii)$ instead of
Proposition \ref{P:Lq-energy} $(i)$ and  Proposition \ref{P:Energy-speed} $(i)$, respectively.

Secondly, for the strong convergence of $(\rho_n)^q$, we follow the exactly same steps as in the proof of Theorem \ref{Theorem-4a}. More precisely,
for $q\geq m-1$, we exploit Proposition \ref{Proposition : AL-2} $(i)$  and have \eqref{eq15 : Theorem-2-a}.
On the other hand, for the case $\frac{m}{2} \leq q < m-1$, we exploit Proposition \ref{Proposition : AL-2} $(ii)$ and Remark~\ref{R:AL-3} $(ii)$ to have \eqref{eq 2 : Theorem-4a}.

\noindent {\it For the case $(ii)$} : Refer Proposition \ref{P:Energy-embedding} $(iii)$ to have constants
$q^{\ast} \in (1, q)$ and $q_{2}^{\ast} = q_{2}^{\ast}(q_1) \in [2, q_2)$ such that $ V \in \mathcal{S}_{m, q^\ast}^{(q_1, q_2^{\ast})}$ satisfying \eqref{T4:V-divfree}.
Hence, the same conclusions hold as in Theorem~\ref{Theorem-4} $(i)$.
\qed

\subsection{Existence for case: $\nabla V$ in sub-scaling classes }
We prove theorems in Section~\ref{SS:tilde Serrin}: Theorem~\ref{Theorem-5a} and \ref{Theorem-5}.
\subsubsection{Proof of Theorem \ref{Theorem-5a}}
The proof is almost the same as the proof of Theorem \ref{Theorem-2-a}.  Again as before, let us just point
out the differences between the proofs of two theorems. Firstly, instead of \eqref{eq7 : Theorem-2-a},
we choose a sequence of vector fields $V_n\in \calC^\infty(Q_T) $ such that
\begin{equation}\label{eq7 : T:log-tilde}
\lim_{n \rightarrow \infty}\|V_n - V \|_{\tilde{\mathcal{S}}_{m,q}^{(\tilde{q}_1,\tilde{q}_2)} } =0.
\end{equation}
Secondly, we obtain the same conclusion as in \eqref{eq13 : Theorem-2-a} by supressing $\|V_n-V \|_{\mathcal{S}_{m,q}^{(q_1,q_2)}} \leq c\|V_n - V \|_{\tilde{\mathcal{S}}_{m,q}^{(\tilde{q}_1,\tilde{q}_2)}}$ by Remark~\ref{R:S-tildeS}.
Thirdly, we exploit Proposition \ref{P:Lq-energy} $(ii)$ instead of Proposition \ref{P:Lq-energy} $(i)$.
Lastly, for the strong convergence of $(\rho_n)^q$, we follow the exactly same steps as in the proof of Theorem \ref{Theorem-4a}. This completes the proof.
\qed

\subsubsection{Proof of Theorem \ref{Theorem-5}}
{\it For the case (i)} :  Similar to the reason in the proof of Theorem \ref{Theorem-2-b} $(i)$,
we only consider the case (a) i.e. for $1< p\leq {\lambda_q}$. As expected, the proof can be done similarly to that of Theorem \ref{Theorem-2-b} $(i)$.
We just point out differences.

Instead of \eqref{eq7 : Theorem-2-b},
we choose a sequence of vector fields $V_n\in \calC^\infty(\bar{Q}_T) $ satisfying \eqref{eq7 : T:log-tilde}.
Then we have
\begin{equation*} 
\iint_{Q_T} \left| V_n-V\right |^{{\lambda_q}} \rho_n \,dx \,dt 
\leq C \|V_n-V \|_{\mathcal{S}_{m,q}^{(q_1,q_2)}}^{{\lambda_q}} \\
\leq C \|V_n-V \|_{\tilde{\mathcal{S}}_{m,q}^{(\tilde{q}_1,\tilde{q}_2)}}^{{\lambda_q}}\longrightarrow  0 \quad \mbox{as}\quad n \rightarrow \infty,   
\end{equation*}
instead of \eqref{eq36 : Theorem-2-b} for  $q_1=\frac{d \tilde{q}_1}{d-\tilde{q}_1}$ and $q_2=\tilde{q}_2$ (see Remark~\ref{R:S-tildeS}). 
Let us also point out that we exploit Proposition~\ref{P:Lq-energy} $(ii)$ and \ref{P:Energy-speed} $(ii)$
instead of Proposition~\ref{P:Lq-energy} $(i)$ and \ref{P:Energy-speed} $(i)$, respectively.  Lastly, for the strong convergence of $(\rho_n)^q$,
we consider again the two cases separately. That is, for $q\geq m-1$, we exploit Proposition \ref{Proposition : AL-2} $(i)$ to have \eqref{eq15 : Theorem-2-a}. Also, for the case $\frac{m}{2} \leq q < m-1$,
we exploit Proposition \ref{Proposition : AL-2} $(ii)$ and Remark~\ref{R:AL-3} $(ii)$ to have \eqref{eq 2 : Theorem-4a}.

\noindent {\it For the case (ii)} : Let $q>m$ and assume $V$ satisfies \eqref{T5:V-tilde-embedding}.  Then, from Proposition \ref{P:Energy-embedding} $(ii)$, there exist
$q^{\ast} \in (1, q)$ and $q_{2}^{\ast} = q_{2}^{\ast}(q_1) \in (\frac{2+q^{\ast}_{m,d}}{1+q^{\ast}_{m,d}},  \tilde{q}_2]$
such that $ V \in \tilde{\mathcal{S}}_{m, q^\ast}^{(\tilde{q}_1, \tilde{q}_2^{\ast})}$ satisfying \eqref{T5:V-tilde}.
 Hence, the same conclusions hold as in Theorem~\ref{Theorem-5} $(i)$. \qed


\section{Proofs of uniqueness and an application }\label{PME-KS-eq}

In this section, we present proofs of Theorem~\ref{Corollary : Uniqueness} and Theorem \ref{thm-jengchik50}.
First, we recall a technical lemma, which is useful for the uniqueness result.

\begin{lemma}\label{Lemma : Uniqueness} \cite[Lemma~2.4.1]{Kim-thesis}
For $i=1,2$, let $\rho_i \in AC(0, T ; \mathcal{P}_2(\bbr^d))$ and $w_i \in L^2(0, T  ; L^2(\rho_i))$ be the velocity for $\rho_i$, i.e
\begin{equation*}
\partial_t \rho_i +\nabla \cdot (w_i \rho_i)=0.
\end{equation*}
We set $f(t):=\frac{1}{2} W_2^2(\rho_1(t),\rho_2(t))$. Then, $f\in W^{1,2}(0,T)$ and
\begin{equation*}
f '(t) \leq \int_{\bbr^d \times \bbr^d} \langle w_1(x,t)-w_2(y,t),x-y\rangle \,d \gamma_t(x,y), \quad \gamma_t \in \Gamma_0(\rho_1, \rho_2), 
\end{equation*}
for a.e $t \in (0,T)$.
\end{lemma}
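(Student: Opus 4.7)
The plan is to prove the two assertions in sequence: first absolute continuity and $L^2$-integrability of $f'$, and then the pointwise upper bound on $f'(t)$.

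First, I would establish that $f \in W^{1,2}(0,T)$. By the triangle inequality for $W_2$, for any $0 \le s \le t \le T$,
\begin{equation*}
\bigl|W_2(\rho_1(t),\rho_2(t)) - W_2(\rho_1(s),\rho_2(s))\bigr| \leq W_2(\rho_1(s),\rho_1(t)) + W_2(\rho_2(s),\rho_2(t)).
\end{equation*}
Invoking Lemma \ref{representation of AC curves}, the AC hypothesis on $\rho_i$ together with $w_i \in L^2(0,T;L^2(\rho_i))$ gives the bound $W_2(\rho_i(s),\rho_i(t)) \le \int_s^t \|w_i(\tau)\|_{L^2(\rho_i(\tau))}\,d\tau$. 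Thus the map $t \mapsto W_2(\rho_1(t),\rho_2(t))$ is absolutely continuous, and consequently so is $f = \tfrac12 W_2^2$. A direct estimate of the difference quotient of $f$ yields
\begin{equation*}
|f'(t)| \leq W_2(\rho_1(t),\rho_2(t)) \bigl(\|w_1(t)\|_{L^2(\rho_1(t))} + \|w_2(t)\|_{L^2(\rho_2(t))}\bigr) \in L^2(0,T),
\end{equation*}
which places $f$ in $W^{1,2}(0,T)$.

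Next, for the key pointwise inequality, I would fix a Lebesgue point $t \in (0,T)$ of the maps $\tau \mapsto w_i(\cdot,\tau) \in L^2(\rho_i(\tau))$ and pick $\gamma_t \in \Gamma_o(\rho_1(t),\rho_2(t))$. The idea is to construct an admissible (generally non-optimal) transport plan between $\rho_1(t+h)$ and $\rho_2(t+h)$ by displacing $\gamma_t$ along the flows generated by $w_1$ and $w_2$. Since $w_i$ is only in $L^2(\rho_i)$ and classical flow maps need not exist, I would handle this via the superposition principle (Ambrosio–Gigli–Savar\'e), which represents each $\rho_i$ as $\rho_i(\tau) = (e_\tau)_\# \eta_i$ for a probability measure $\eta_i$ on $C([0,T];\mathbb{R}^d)$ concentrated on AC paths solving $\dot\sigma(\tau) = w_i(\sigma(\tau),\tau)$ a.e. Choosing a coupling $\Lambda$ on path space whose time-$t$ marginal is $\gamma_t$, the plan $\pi_{t+h} := (e_{t+h}^1, e_{t+h}^2)_\# \Lambda$ is admissible between $\rho_1(t+h)$ and $\rho_2(t+h)$, hence
\begin{equation*}
W_2^2(\rho_1(t+h),\rho_2(t+h)) \leq \int_{\mathbb{R}^d\times\mathbb{R}^d} \bigl|\sigma_1(t+h)-\sigma_2(t+h)\bigr|^2 d\Lambda(\sigma_1,\sigma_2).
\end{equation*}

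Finally, I would expand $\sigma_i(t+h) = \sigma_i(t) + \int_t^{t+h} w_i(\sigma_i(\tau),\tau)\,d\tau$, form the difference quotient $(W_2^2(\rho_1(t+h),\rho_2(t+h)) - W_2^2(\rho_1(t),\rho_2(t)))/(2h)$, and pass to the limit $h \downarrow 0$. Using that $(e_t^1, e_t^2)_\# \Lambda = \gamma_t$ and the Lebesgue point property, the cross term produces the desired expression
\begin{equation*}
\int_{\mathbb{R}^d\times\mathbb{R}^d} \langle w_1(x,t)-w_2(y,t), x-y\rangle\,d\gamma_t(x,y),
\end{equation*}
while the quadratic remainder $\int_t^{t+h}\!\!\int |w_i|^2 \to 0$ as $h\downarrow 0$. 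A symmetric argument for $h \uparrow 0$ (using $\Gamma_o(\rho_1(t-h),\rho_2(t-h))$) gives the matching lower bound for the left derivative, so the upper inequality for $f'(t)$ follows at every differentiability point, hence a.e.

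The principal technical obstacle is the rigorous construction of the coupled path-space measure $\Lambda$: the velocity fields $w_i$ need not admit classical Cauchy–Lipschitz flows, and one must either invoke Lisini/AGS superposition to represent each $\rho_i$ as a measure on absolutely continuous curves and then couple these representations via $\gamma_t$ at the single time $t$, or, alternatively, mollify $w_i \to w_i^\varepsilon$, push forward $\gamma_t$ via the smooth flows of $w_i^\varepsilon$, and pass $\varepsilon \to 0$ while controlling the deviation $\|w_i^\varepsilon - w_i\|_{L^2(\rho_i^\varepsilon)}$. Either route requires delicate identification of the time-$t$ marginal of the resulting coupling with the chosen optimal plan $\gamma_t$.
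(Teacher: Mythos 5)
The paper does not prove this lemma at all: it is imported verbatim as \cite[Lemma~2.4.1]{Kim-thesis}, so there is no internal proof to compare against. Your reconstruction is the standard argument (essentially \cite[Theorem~8.4.7]{ags:book}, which is also what Kim's thesis follows), and it is correct in outline: absolute continuity of $t\mapsto W_2(\rho_1(t),\rho_2(t))$ from the triangle inequality plus Lemma \ref{representation of AC curves}, the bound $|f'(t)|\le W_2(\rho_1(t),\rho_2(t))\,(\|w_1(t)\|_{L^2(\rho_1(t))}+\|w_2(t)\|_{L^2(\rho_2(t))})\in L^2(0,T)$, and then the one-sided estimate at Lebesgue points via a competitor plan built from the superposition principle, with the coupling $\Lambda=\int \eta_1^x\otimes\eta_2^y\,d\gamma_t(x,y)$ obtained by disintegrating each $\eta_i$ over the time-$t$ evaluation map. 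Two small points deserve more care than your sketch gives them. First, in passing to the limit in the cross term you must replace $w_i(\sigma_i(\tau),\tau)$ by $w_i(\sigma_i(t),t)$; the error is controlled only after a genuine Lebesgue-point argument for the Bochner map $\tau\mapsto w_i(\cdot,\tau)$ together with $|\sigma_i(\tau)-\sigma_i(t)|\le\int_t^\tau|\dot\sigma_i|$, and this is where the $L^2$ (rather than merely $L^1$) integrability of the speeds is actually used to make the quadratic remainder $o(h)$. Second, the final sentence about a ``symmetric argument for $h\uparrow 0$'' is unnecessary and slightly misleading: since you have already shown $f$ is absolutely continuous, $f'(t)$ exists a.e.\ and coincides with the right difference-quotient limit, so the one-sided $\limsup_{h\downarrow 0}$ bound alone yields the asserted inequality at a.e.\ $t$. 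With those caveats the proposal is a faithful and correct proof of the cited lemma.
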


By exploiting above lemma, the next proposition provides a uniqueness result in the class of absolutely continuous curves in $\mathcal{P}_2 (\bbr^d)$.
\begin{proposition} \label{thm-uniqueness}
Let $\rho_0 \in L^q(\bbr^d) \cap \mathcal{P}_2(\bbr^d)$ for $q\geq 1$. Suppose
that $\rho_i \in AC(0,T: \mathcal{P}_2(\bbr^d))$ for $i=1,2$ are $L^q$-weak solutions of \eqref{E:Main}
satisfying
\begin{equation}\label{eq 11 : uniqueness}
 \iint_{Q_T} \left ( \left |\frac{\nabla \rho_i^m}{\rho_i} \right |^2 +|V|^2\right )\rho_i \,dx\,dt < \infty, \quad \text{for} \quad i = 1,2.
\end{equation}
Assume further that $\nabla V \in L_{x,t}^{\infty, 1} (Q_T)$. Then they are identically the same, i.e. $\rho_1 =\rho_2$.
\end{proposition}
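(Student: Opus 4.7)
The plan is to exploit the Wasserstein metric together with the displacement convexity of the internal energy functional $\mathcal{F}(\rho) := \frac{1}{m-1}\int_{\mathbb{R}^d} \rho^m\,dx$ associated with the porous medium part of the equation. First, I would rewrite \eqref{E:Main} in continuity-equation form as $\partial_t \rho_i + \nabla\cdot(w_i \rho_i)=0$ with velocity field $w_i := -\frac{\nabla \rho_i^m}{\rho_i} + V$, and observe that hypothesis \eqref{eq 11 : uniqueness} gives $w_i \in L^2(0,T;L^2(\rho_i))$, so Lemma \ref{representation of AC curves} applies consistently with $\rho_i \in AC(0,T;\mathcal{P}_2(\mathbb{R}^d))$.

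Next, setting $f(t) := \tfrac12 W_2^2(\rho_1(t),\rho_2(t))$ and invoking Lemma \ref{Lemma : Uniqueness}, for a.e.\ $t$ and any $\gamma_t \in \Gamma_o(\rho_1(t),\rho_2(t))$ I obtain
\begin{equation*}
f'(t) \leq \underbrace{\int_{\mathbb{R}^d\times\mathbb{R}^d}\Bigl\langle -\tfrac{\nabla\rho_1^m}{\rho_1}(x)+\tfrac{\nabla\rho_2^m}{\rho_2}(y),\,x-y\Bigr\rangle d\gamma_t}_{=:\,\mathcal{I}_{\mathrm{PME}}(t)} + \underbrace{\int_{\mathbb{R}^d\times\mathbb{R}^d}\bigl\langle V(x,t)-V(y,t),\,x-y\bigr\rangle d\gamma_t}_{=:\,\mathcal{I}_V(t)}.
\end{equation*}
The drift term $\mathcal{I}_V(t)$ is handled immediately by the mean-value theorem applied to $V(\cdot,t)$: since $\nabla V(\cdot,t) \in L^\infty(\mathbb{R}^d)$ for a.e.\ $t$,
\begin{equation*}
\mathcal{I}_V(t) \leq \|\nabla V(\cdot,t)\|_{L^\infty_x}\int |x-y|^2\,d\gamma_t = 2\,\|\nabla V(\cdot,t)\|_{L^\infty_x}\, f(t).
\end{equation*}

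The main obstacle is showing $\mathcal{I}_{\mathrm{PME}}(t)\leq 0$, which is where displacement convexity enters. McCann's theorem ensures that for $m>1$ the functional $\mathcal{F}$ is geodesically convex on $\mathcal{P}_2(\mathbb{R}^d)$, and one has the characterization (see \cite[Ch.~10]{ags:book}) that for absolutely continuous measures with $\frac{\nabla\rho_i^m}{\rho_i}\in L^2(\rho_i)$, this vector field belongs to the Wasserstein subdifferential $\partial_-\mathcal{F}(\rho_i)$. Applying Definition~\ref{def:Fdifferentiable} to the geodesically convex lower semicontinuous $\mathcal{F}$ twice, once at $\rho_1$ with test measure $\rho_2$ and once at $\rho_2$ with test measure $\rho_1$, and summing the two inequalities, yields the monotonicity
\begin{equation*}
\int_{\mathbb{R}^d\times\mathbb{R}^d}\Bigl\langle \tfrac{\nabla\rho_1^m}{\rho_1}(x)-\tfrac{\nabla\rho_2^m}{\rho_2}(y),\,x-y\Bigr\rangle d\gamma_t \geq 0,
\end{equation*}
i.e.\ $\mathcal{I}_{\mathrm{PME}}(t)\leq 0$. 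A minor technical point I need to verify carefully is that $\rho_i(t)\in D(\mathcal{F})$ for a.e.\ $t$ (which follows since $\rho_i\in L^m(Q_T)$ by the definition of weak solution) and that the subdifferential characterization indeed applies under the hypothesis \eqref{eq 11 : uniqueness}; this is the delicate ingredient and the place where the assumption on $\frac{\nabla \rho_i^m}{\rho_i}$ is used in an essential way.

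Combining the two bounds gives the differential inequality $f'(t)\leq 2\|\nabla V(\cdot,t)\|_{L^\infty_x}\,f(t)$ with $\|\nabla V(\cdot,t)\|_{L^\infty_x}\in L^1(0,T)$ by hypothesis, and $f(0)=0$ since $\rho_1(0)=\rho_2(0)=\rho_0$. Gronwall's inequality then forces $f\equiv 0$ on $[0,T]$, hence $\rho_1\equiv\rho_2$, completing the proof.
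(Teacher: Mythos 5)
Your proposal is correct and follows essentially the same route as the paper's proof: rewrite as a continuity equation with velocity $w_i = -\nabla\rho_i^m/\rho_i + V$, apply Lemma~\ref{Lemma : Uniqueness} to $f(t)=\tfrac12 W_2^2$, use geodesic convexity of the internal energy together with the characterization $\tfrac{\nabla\rho_i^m}{\rho_i}\in\partial_-\mathcal{F}(\rho_i)$ to kill the diffusion term, bound the drift term by $\|\nabla V(\cdot,t)\|_{L^\infty_x}W_2^2$, and conclude by Gronwall. The only minor difference is the normalization of $\mathcal{F}$ (the paper uses $\tfrac1m\int\rho^m$, you use $\tfrac1{m-1}\int\rho^m$, which is the cleaner choice making $\nabla\rho^m/\rho$ exactly the subdifferential element rather than a positive multiple of it), and the ``delicate ingredient'' you flag is exactly what the paper addresses by first verifying $\rho_i^m\in L^1(0,T;W^{1,1}(\mathbb{R}^d))$ from \eqref{eq 11 : uniqueness} via H\"older and mass conservation.
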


\begin{proof}
For $i=1, 2$, let $\rho_i \in AC(0,T: \mathcal{P}_2(\bbr^d))$ be a $L^q$-weak solution of
\begin{equation}\label{eq 10 : uniqueness}
\partial_t \rho_i + \nabla \cdot (-\nabla \rho_i^m +V \rho_i)=0, \quad \rho_i(\cdot, 0)=\rho_0
\end{equation}
satisfying \eqref{eq 11 : uniqueness}.
%
%
Let $w_i:=-\frac{\nabla \rho_i^m}{\rho_i} +V$.
From \eqref{eq 11 : uniqueness} and \eqref{eq 10 : uniqueness}, we note that $w_i$ is the velocity for $\rho_i$ and satisfies $w_i \in L^2(0,T;L^2(\rho_i))$.
We define $f : [0, T] \rightarrow (-\infty, \infty)$ by $f(t)=\frac{1}{2} W_2^2(\rho_1(t), \rho_2(t))$.  Then, from Lemma \ref{Lemma : Uniqueness}, we have $f\in W^{1,2}(0,T)$ and
\begin{equation}\label{eq 9 : uniqueness}
\begin{aligned}
f '(t) 
& \leq \int_{\bbr^d \times \bbr^d} 
\langle w_1(x,t)
- w_2(y,t), x-y\rangle \,d \gamma_t(x,y) \\
&= \int_{\bbr^d \times \bbr^d} \langle -\frac{\nabla \rho_1^m}{\rho_1}(x,t) + \frac{\nabla \rho_2^m}{\rho_2}(y,t), x-y\rangle \,d \gamma_t(x,y)
 + \int_{\bbr^d \times \bbr^d} \langle V(x,t) -V(y,t), x-y\rangle \,d \gamma_t(x,y) \\
&= I + II, \qquad \text{ for a.e. } t\in (0,T).
\end{aligned}
\end{equation}

To estimate $I$, we first note $\rho_i^m \in L^1(0,T;  W^{1,1}(\bbr^d) )$ for $i=1,2$. Indeed,
\begin{equation*} 
\begin{aligned}
\iint_{Q_T} \left |\nabla \rho_i^m(x,t) \right | \,dx\,dt &= \iint_{Q_T} \frac{ |\nabla \rho_i^m |}{\rho_i^{\frac{1}{2}}}\rho_i^{\frac{1}{2}} \,dx\,dt
\leq \left ( \iint_{Q_T} \frac{|\nabla \rho_i^m |^2}{\rho_i} \,dx\,dt\right )^\frac{1}{2} \left ( \iint_{Q_T} \rho_i \,dx\,dt\right )^\frac{1}{2}\\
&= T^{\frac{1}{2}} \left ( \iint_{Q_T} \left |\frac{\nabla \rho_i^m }{\rho_i}\right|^2\rho_i \,dx\,dt\right )^\frac{1}{2} <\infty.
\end{aligned}
\end{equation*}
Especially, this means $\rho_i^m(\cdot, t) \in W^{1,1}(\bbr^d )$ for a.e $t \in [0,T]$ and $i=1,2$.

Next, we define $\mathcal{F} : \mathcal{P}_2(\bbr^d) \rightarrow (-\infty, \infty ]$ by
\begin{equation*} 
\mathcal{F}(\mu)=
\begin{cases}
    \frac{1}{m} \int_{\bbr^d} \rho^m \,dx, & \text{ if }  \mu =\rho \mathcal{L}^d, \vspace{2 mm}\\
    +\infty,  & \text{ otherwise},
    \end{cases}
    \end{equation*}
    where $\mathcal{L}^d$ is the Lebesgue measure in $\bbr^d$.
Then, $\mathcal{F}$ is geodesically convex in $\mathcal{P}_2(\bbr^d)$(refer  Chapter 9 in \cite{ags:book}). Furthermore, if $\rho^m \in W^{1,1}_{loc} (\bbr^d) $ then
$\frac{\nabla \rho^m}{\rho} \in \partial_- \mathcal{F}(\rho)$ (refer Section 10.4 in \cite{ags:book}). Hence, for a.e $t\in [0,T]$ and $i=1,2$,
we have $\frac{\nabla (\rho_i(t))^m}{\rho_i(t)} \in \partial_- \mathcal{F}(\rho_i(t))$ which means
\begin{equation}\label{eq 1 : uniqueness}
\begin{aligned}
\mathcal{F}(\rho_2(t)) \geq \mathcal{F}(\rho_1(t)) + \int_{\bbr^d \times \bbr^d} \langle \frac{\nabla \rho_1^m}{\rho_1}(x,t) , y-x \rangle \, d \gamma_t(x,y),
\quad ~ \gamma_t \in \Gamma_0(\rho_1(t), \rho_2(t))
\end{aligned}
\end{equation}
and
\begin{equation}\label{eq 2 : uniqueness}
\begin{aligned}
\mathcal{F}(\rho_1(t)) \geq \mathcal{F}(\rho_2(t)) + \int_{\bbr^d \times \bbr^d} \langle \frac{\nabla \rho_2^m}{\rho_2}(x,t) , y-x \rangle \, d \tilde{\gamma}_t(x,y),
\quad ~ \tilde{\gamma}_t \in \Gamma_0(\rho_2(t), \rho_1(t)),
\end{aligned}
\end{equation}
where we note that $\Gamma_0(\rho_1(t), \rho_2(t))$ and $\Gamma_0(\rho_2(t), \rho_1(t))$ consist of single element due to the fact that
$\rho_1(t)$ and $\rho_2(t)$ are absolutely continuous with respect to the Lebesgue measure.
By adding \eqref{eq 1 : uniqueness} and \eqref{eq 2 : uniqueness}, we have
\begin{equation}\label{eq 3 : uniqueness}
\begin{aligned}
0 \geq  \int_{\bbr^d \times \bbr^d} \langle \frac{\nabla \rho_1^m}{\rho_1}(x,t) - \frac{\nabla \rho_2^m}{\rho_2}(y,t) , y-x \rangle \, d \gamma_t(x,y) = I.
\end{aligned}
\end{equation}

For the estimation of $II$, we exploit $\nabla V \in L_{x,t}^{\infty, 1} (\bar{Q}_T)$ to have
\begin{equation}\label{eq 8 : uniqueness}
II \leq  \|\nabla V(\cdot, t) \|_{L_x^\infty} \int_{\bbr^d \times \bbr^d} | x-y|^2 \,d \gamma_t(x,y)
  = \|\nabla V(\cdot, t) \|_{L_x^\infty} W_2^2(\rho_1(t),\rho_2(t)). 
\end{equation}

We plug \eqref{eq 3 : uniqueness} and \eqref{eq 8 : uniqueness} into \eqref{eq 9 : uniqueness} to have
$$f '(t) \leq \|\nabla V(\cdot, t) \|_{L_x^\infty} f(t) \quad for \quad  a.e ~~ t \in [0,T], $$
which gives us
\begin{equation*} 
W_2^2(\rho_1(t), \rho_2(t)) \leq \| \nabla V\|_{L_{x,t}^{\infty,1}} W_2^2(\rho_1(0), \rho_2(0))  \quad \forall~ t \in [0,T].
\end{equation*}
Since $\rho_1(0)=\rho_2(0)=\rho_0$, we have $\rho_1(t)=\rho_2(t)$ for all $t \in [0,T]$. This completes the proof.
\end{proof}

%
%
With the aid of Proposition \ref{thm-uniqueness}, we are ready to prove Theorem \ref{Corollary : Uniqueness}.

\begin{pfthm2.20}:
For $i=1,~2$, let  $\rho_i$  be a $L^m$-weak solution of \eqref{eq 10 : uniqueness} such that $\rho_i : [0, T] \mapsto \mathcal{P}(\bbr^d)$ is narrowly continuous.
Hence, we have
\begin{equation}\label{eq 13 : uniqueness}
\|\rho_i \|_{L^\infty(0,T;L^m(\bbr^d))} + \|\nabla
\rho_i^{\frac{2m-1}{2}} \|_{L^2([0,T] \times \bbr^d)} < \infty,
\quad \mbox{for} ~~ i=1,~2.
\end{equation}
We recall that $V$ satisfies \eqref{V-p-moment : uniqueness} which is \eqref{V-p-moment} for $q=m$ and ${\lambda_q}=2$. Once we have \eqref{eq 13 : uniqueness} and \eqref{V-p-moment}, we exploit Lemma \ref{P:W-p} and Remark \ref{Remark : moment estimate} to have
the speed estimate and the moment estimate. More precisely, we have \eqref{W-p}, \eqref{V-p} and \eqref{p-moment : q-weak} for $p=2$.
Hence, exploiting Gr\"{o}nwall's inequality as in the proof of Proposition \ref{P:Energy-speed}, we get
\begin{equation*}
\sup_{0\leq t \leq T} \int_{\mathbb{R}^d\times \{t\}}   \rho_i
\langle x \rangle^2 \,dx + \iint_{Q_T} \left ( \left |\frac{\nabla
\rho_i^m}{\rho_i} \right |^2 + |V|^2\right )\rho_i \,dx\,dt <
\infty,
\end{equation*}
which implies $\rho_i \in AC(0,T ; \mathcal{P}_2(\bbr^d))$ for
$i=1,2$.
Due to the assumption $\nabla V \in L_{x,t}^{\infty,1}(Q_T)$, we exploit Proposition \ref{thm-uniqueness} to conclude the uniqueness which completes the proof.
\end{pfthm2.20}

{\bf Proof of Corollary \ref{Corollary : Uniqueness-2}}:
Since $\rho_0 \in L^q(\bbr^d) \cap \mathcal{P}_p(\bbr^d)$ for $p\geq 2$ and $q \geq m$, we have $\rho_0 \in L^m(\bbr^d) \cap \mathcal{P}_2(\bbr^d)$.
Due to $q \geq m$, we first note ${\lambda_q}=2$. We also note the following relations in sub-scaling classes $ \mathfrak{S}_{m,q}^{(q_1, q_2)} \subset  \mathfrak{S}_{m,m}^{(q_1, q_2)} $ and
$ \tilde{\mathfrak{S}}_{m, q}^{(\tilde{q}_1, \tilde{q}_2)}\subset  \tilde{\mathfrak{S}}_{m, m}^{(\tilde{q}_1, \tilde{q}_2)} $.
Exploiting these inclusions, as in the proof of Proposition \ref{P:Energy-speed}, we have that if $V$ satisfy the hypothesis
in one of Theorem~\ref{Theorem-2-b}, \ref{Theorem-4} and \ref{Theorem-5} then $V$ satisfies \eqref{V-p-moment : uniqueness} which is \eqref{V-p-moment} for $q=m$.
Hence, we exploit Theorem \ref{Corollary : Uniqueness} and complete the proof.
\qed

Next, we provide the proof of Theorem \ref{thm-jengchik50}.

\begin{pfthm2.21}: As mentioned before, we need to prove only the case $\max\{\frac{2d-3}{d}, \,\frac{2d}{d+2}\}<m\le 1+\frac{(d-1)(d-2)}{d^2}$ for $d>2$ due to previous result in \cite{Freitag}.
We first compute the $p$-th moment estimate, where  $p>1$ is determined later. Direct computations using H\"{o}lder inequality show that
\begin{equation}\label{KS-moment10}
\begin{aligned}
\frac{d}{dt}\int_{\R^d} \rho \langle x\rangle^p \, dx & =\int_{\R^d} \rho^m \Delta \langle x\rangle^p \, dx-\int_{\R^d} \rho \nabla c\nabla \langle x\rangle^p \, dx \\
&\le \int_{\R^d} \rho^m \Delta \langle x\rangle^p \, dx+\epsilon \int_{\R^d}\rho\abs{\nabla c}^p \, dx+ C_{\epsilon}\int_{\R^d} \rho \langle x\rangle^p \, dx.
\end{aligned}
\end{equation}
For the first term in \eqref{KS-moment10}, we have (see also \eqref{J-1})
\[
\int_{\R^d} \rho^m \Delta \langle x\rangle^p \, dx\le C\norm{\rho}_{L^m_x}^m
\le C\norm{\rho}^{m\mu}_{L^1_x}\norm{\rho^{\frac{m}{2}}}^{2(1-\mu)}_{L^{\frac{2d}{d-2}}_{x}}\le C\norm{\nabla \rho^{\frac{m}{2}}}_{L^2_x}^{2(1-\mu)},
\]
where $\mu=\frac{2}{d(m-1)+2}$.
On the other hand, the second term,  $\int_{\R^d}\rho\abs{\nabla c}^p \, dx$, can be controlled as follows:
\[
\int_{\R^d}\rho\abs{\nabla c}^p \, dx\le \norm{\rho}_{L^a_x}\norm{\abs{\nabla c}^p}_{L^b_x}
=\norm{\rho}_{L^a_x}\norm{\nabla c}^p_{L^{bp}_{x}}\le C\norm{\rho}_{L^a_x}\norm{\nabla^2 c}^p_{L^a_x},
\]
where $a=\frac{(p+1)d}{p+d}$ and $b=\frac{(p+1)d}{p(d-1)}$.
Therefore, we have for $\alpha=p+1$ and $\beta=\frac{p+1}{p}$
\[
\int_0^T\int_{\R^d}\rho\abs{\nabla c}^p \, dxdt\le C\int_0^T\norm{\rho}_{L^a_x}\norm{\nabla^2 c}^p_{L^a_x}\,dt
\le C\norm{\rho}_{L^{a, \alpha}_{x,t}}\norm{\nabla^2 c}^p_{L^{a, p\beta}_{x, t}}\le C\norm{\rho}^{p+1}_{L^{a, p+1}_{x,t}},
\]
where we used the maximal regularity of $c$, i.e. $\norm{\nabla^2 c}_{L^{a, p+1}_{x,t}}\lesssim \norm{\rho}_{L^{a, p+1}_{x,t}}$ (here we assume that the initial data $c_0(x)=0$ for simplicity, since it is enough to add $\norm{c_0}_{W^{2,a}_x}$ in the righthand side, otherwise).
Noting that
\[
\norm{\rho}_{L^a_x}\le \norm{\rho}^{\theta}_{L^1_x}\norm{\rho^{\frac{m}{2}}}_{L^{\frac{2d}{d-2}}_x}^{\frac{2(1-\theta)}{m}}, \quad \text{for } \  1-\theta=\frac{pm(d-1)}{(p+1)(md-d+2)},
\]
it follows that
\[
\int_0^T \norm{\rho(t)}^{p+1}_{L^a_x} \,dt \le C\int_0^T \norm{\nabla \rho^{\frac{m}{2}}}_{L^{2}_x}^{\frac{2(p+1)(1-\theta)}{m}}\,dt.
\]
To set $\frac{(p+1)(1-\theta)}{m}\le 1$, here we choose  $p\in \left(1, \frac{md-d+2}{d-1}\right]$  since $m > \frac{2d-3}{d}$. We can see that
any $p\in \left(1, \frac{md-d+2}{d-1}\right]$ is less than ${\lambda_1}=1+\frac{1}{md-d+1}$, since $m<1+\frac{(d-1)(d-2)}{d^2}$ (in fact, it is valid as long as $m<\frac{2(d-1)}{d}$).
Now integrating \eqref{KS-moment10} in time and combining estimates above, we obtain
\begin{equation}\label{ks-moment20}
\begin{aligned}
\int_{\R^d} \rho(\cdot,t) \langle x\rangle^p \, dx
&\le \int_{\R^d} \rho_0 \langle x\rangle^p \, dx + C\int_0^t\norm{\nabla \rho^{\frac{m}{2}}}_{L^2_x}^{2(1-\mu)}\,d\tau \\
&\quad +C\epsilon\int_0^t \norm{\nabla \rho^{\frac{m}{2}}}_{L^{2}_x}^{\frac{2(p+1)(1-\theta)}{m}}\,d\tau +C_{\epsilon}\int_0^t\int_{\R^d} \rho \langle x\rangle^p \, dxd\tau,
\end{aligned}
\end{equation}
for any $t\in (0, T]$.
Combining estimates \eqref{KS-moment5} and \eqref{ks-moment20}, and using Gr\"{o}nwall's inequality, we can control $\int_{\R^d} \rho|\log \rho| \,dx$ via \eqref{entropy-log}, which
implies immediately that
$\rho\in L^{1, \infty}_{x,t}\cap L^{\frac{md}{d-2}, m}_{x, t}$,
and therefore
\[
\rho\in L^{a,b}_{x,t}, \quad \text{where}\quad \frac{d}{a}+\frac{2+d(m-1)}{b}=d, \quad 1\le a\le \frac{md}{d-2}.
\]
In particular, in case $b=m$, it follows that
$\rho\in L^{a, m}_{x, t}$ with $a=\frac{md}{d-2}$.
Due to maximal regularity, one can see that
$c_t, \Delta c\in L^{a, m}_{x,t}$.
We claim that
\[
\nabla c\in L^{q^{(1)}_1}_x L^{\frac{m}{m-1}}_t \quad \text{for} \quad  q^{(1)}_1=\frac{md}{d+2-3m}.
\]
Indeed,
recalling that $\rho\in L_{x}^{\frac{md}{d-2}}L^m_t$, it follows that for $q^{(1)}_1\ge \frac{md}{d-2}$
\[
\norm{\nabla c}_{L^{q^{(1)}_1}_x}\le \norm{\int_0^t (\nabla \Gamma * \rho) (\tau) d\tau}_{L^{q^{(1)}_1}_x}
\leq C \int_0^t (t-\tau)^{-\frac{d}{2}(\frac{d-2}{md}-\frac{1}{q^{(1)}_1})-\frac{1}{2}}\norm{\rho(\tau)}_{L_{x}^{\frac{md}{d-2}}} \,d\tau.
\]
Again using the Young's inequality, we have
\[
\norm{\nabla c}_{L^{q^{(1)}_1}_xL^{\frac{m}{m-1}}_t} \leq C \norm{\rho}_{L^{\frac{md}{d-2}, m}_{x,t}},\quad \text{for} \quad \frac{md}{d-2}\le q^{(1)}_1\le \frac{md}{d+2-3m}.
\]

We note that $\nabla c\in S^{(q^{(1)}_1, \frac{m}{m-1})}_{m, q^{(1)}}$ with $q^{(1)}=\frac{d(m-1)}{d-2m}$. More precisely,
\[
\frac{d}{q^{(1)}_1}+\frac{(m-1)(2+q^{(1)}_{m,d})}{m}=1+q^{(1)}_{m,d},\quad \text{where} \quad q_{m, d}^{(1)}=\frac{d(m-1)}{q^{(1)}}.
\]
In fact, we can see that if $m>\frac{2d}{d+2}$, then $q^{(1)}>1$.
We denote
\[
q^{(1)}=\frac{d(m-1)}{d-2m}, \quad q^{(1)}_1=\frac{md}{d+2-3m},\quad \text{and} \quad  q^{(1)}_2=\frac{m}{m-1}.
\]
Now we derive an iterative formula.
Suppose that $q^{(k)}>1$ is given and $\nabla c\in S^{(q^{(k)}_1, q^{(k)}_2)}_{m, q^{(k)}}$.
Then,
$\rho$ is upgraded as follows, thanks to Proposition~\ref{P:Lq-energy}:
\[
\rho\in L^{q^{(k)}, \infty}_{x,t}\cap L^{\frac{d(m+q^{(k)}-1)}{d-2}, m+q^{(k)}-1}_{x,t}.
\]
Next we show that
\[
\nabla c\in L^{q^{(k+1)}_1}_x L^{\frac{m+q^{(k)}-1}{m-1}}_t, \quad \text{for}\quad q^{(k+1)}_1=\frac{d(m+q^{(k)}-1)}{d-3m+3-q^{(k)}}.
\]
Indeed, reminding that $\rho\in L^{\frac{d(m+q^{(k)}-1)}{d-2}, m+q^{(k)}-1}_{x,t}$, it follows that for $q^{(k+1)}_1\ge \frac{d(m+q^{(k)}-1)}{d-2}$
\[
\norm{\nabla c}_{L^{q^{(k+1)}_1}_x}\le \norm{\int_0^t (\nabla \Gamma * \rho) (\tau) d\tau}_{L^{q^{(k+1)}_1}_x}
\leq  C \int_0^t (t-\tau)^{-\frac{d}{2}(\frac{d-2}{d(m+q^{(k)}-1)}-\frac{1}{q^{(k+1)}_1})-\frac{1}{2}}\norm{\rho(\tau)}_{L_{x}^{\frac{d(m+q^{(k)}-1)}{d-2}}}\,d\tau.
\]
Again via the Young's inequality, we have
\[
\norm{\nabla c}_{L^{q^{(k+1)}_1}_xL^{\frac{m+q^{(k)}-1}{m-1}}_t} \leq C \norm{\rho}_{L^{\frac{d(m+q^{(k)}-1)}{d-2}}_xL^{m+q^{(k)}-1}_t}, \quad \text{for}\quad \frac{d(m+q^{(k)}-1)}{d-2}\le q^{(k+1)}_1\le \frac{d(m+q^{(k)}-1)}{d-3m+3-q^{(k)}}.
\]
Now we get
\[
\nabla c\in S^{(q^{(k+1)}_1, \frac{m+q^{(k)}-1}{m-1})}_{m, q^{(k+1)}} \quad \text{for}\quad  q^{(k+1)}=\frac{d(m-1)q^{(k)}}{d-2m+2-2q^{(k)}}.
\]
Then it is direct that $q^{(k+1)}>q^{(k)}$, because this is equivalent to
\[
\frac{d(m-1)}{d-2m+2-2q^{(k)}}>1 \quad \Longleftrightarrow \quad q^{(k)}>d+1-\frac{m(d+2)}{2}.
\]
We note that $q^{(1)}> 1$ as long as $m>\frac{2d}{d+2}$.
We stop this process until $d-3m+3-q^{(k)}\le 0$ (it can be achieved in a finite step, since
$q^{(k)}$ is strictly increasing and it cannot converge).
Since $q^{(k)} \geq d-3m+3$, we note that
\[
\frac{m+q^{(k)}-1}{m-1}\ge  \frac{d-2m+2}{m-1}\ge 2,
\]
for $m \leq \frac{d+2}{4}$. This is true for $m\le \frac{2(d-1)}{d}$ because $\frac{2(d-1)}{d}< \frac{d+4}{4}$, which is equivalent to $d^2-4d+8>0$.
This completes the proof of Theorem \ref{thm-jengchik50}.
\end{pfthm2.21}



\begin{appendices}

\section{H\"{o}lder continuity of homogeneous PME}\label{Appendix:Holder}

Here we concern the homogeneous form of \eqref{E:Main}
\begin{equation}\label{H-PME}
 \partial_t u =   \Delta u^m \  \text{ in } \ Q_{T}   \quad \text{ for }\ m > 1,
\end{equation}
for $Q_{T}:= \mathbb{R}^d \times (0, T]$, $d\geq 2$, $0<T<\infty$ with a nonnegative initial data $u(x,0)=u_0 (x)$, $x\in \mathbb{R}^d$.
In \cites{CF78, CF79}, Caffarelli and Friedman proved the modulus of continuity of $u$ in the entire half space, if $u_0 (x)$ is H\"{o}lder continuous. In \cite{DF85, DF85A}, DiBenedetto and Friedman determined H\"{o}lder exponent and constants quantitatively in the interior only depending upon the data. For $d=1$, the H\"{o}lder exponent is known explicitly as $\min\{1, \frac{1}{m-1}\}$ by Aronson \cites{Aron70a, Aron70b}.

The H\"{o}lder continuity up to initial boundary is known, for exmaple, see \cite[Section~6]{DB83}, as the H\"{o}lder exponent is depending upon the data and the modulus of continuity of the initial data. More specifically, for $u_0 (x) \in C^{\alpha_0}(\mathbb{R}^d)$, a bounded nonnegative weak solution of \eqref{H-PME} is uniformly H\"{o}lder continuous up to the initial boundary with the exponent $\alpha = \alpha (m, d,\alpha_0)\in (0,1)$ and the coefficient constant $c = c (m,d,\|u\|_{L^{\infty}(\bar{Q}_{T})})$. For more systemetic arguments, we refer \cite{DB93, DGV12} for parabolic $p$-Laplace equation and for both $p$-Laplace and porous medium equations.

Although the uniform H\"{o}lder continuity of a bounded nonnegative weak solution of \eqref{H-PME} is well-known, the dependency of the H\"{o}lder exponent with respect to the initial H\"{o}lder exponent seems not specified from previous works mentioned above. Therefore, our aim is to more clearly demonstrate the relation of interior and initial H\"{o}lder regularity. More precisely, in Theorem~\ref{T:Boundary_Holder}, we clarify the dependencies of uniform H\"{o}lder exponent as the minimum of the intertior H\"{o}lder exponent and given H\"{o}lder exponent of the initial data, that is, $\alpha = \min\{\alpha^\ast, \alpha_0\}$ where $\alpha^\ast = \alpha^\ast (m,d)$ is determined from the interior H\"{o}lder result in Theorem~\ref{T:interior_Holder} and $\alpha_0 \in (0,1)$ is given from $u_0 \in \mathcal{C}^{\alpha_0}(\bbr^d)$.
To prove Theorem~\ref{T:Boundary_Holder}, we mainly modify arguments, the boundary H\"{o}lder continuity of parabolic $p$-Laplace equations up to the initial time, by DiBenedetto in \cite[Theorem~III]{DB93}. Also typical arguments, for example, in \cite{DB83, DF85, CHKK17, HZ21}, are used to prove local H\"{o}lder continuity of \eqref{H-PME} quantitatively.

Let $K_{r} (x_0)$ denote the $d-$dimensional cube centered at $x_0 \in \mathbb{R}^d$ and wedge $2r$ for $r>0$, that is,
\[
K_{r}(x_0):= \{ x\in \mathbb{R}^d : \max_{1\leq i \leq d}|x^{i}-x_0^{i}| < r \}, \quad \text{for} \quad x=(x^1, \cdots, x^d).
\]
Besides let us denote two types of parabolic cyliners in $Q_{T}$, for $r>0$ and $0 < t_0 < t_0 + r^2 \leq T$,
\[
Q_r := K_r (x_0) \times (t_0, t_0 +r^2 ] \quad \text{and} \quad
Q_{r}^{0} := K_r (x_0) \times [0, r^2].
\]

First, we introduce the interior H\"{o}lder continuity results in \cite{DF85, DF85A, DB83, DGV12}. When the divergence form of drift term is combined, we refer results in \cite{CHKK17, HZ21}.
\begin{theorem}\label{T:interior_Holder}
Let $u$ be a weak solution of \eqref{H-PME} in $Q_{1} \subset Q_{T}$. Then there exist $\alpha^\ast = \alpha^\ast (m,d) \in (0,1)$ and $c = c (m, d, \|u\|_{L^{\infty}(Q_{1})})$, such that, $u$
 is uniformly H\"{o}lder continuous in $Q_{1/2}$ with the exponent $\alpha^\ast$ and the coefficient $c$; that is, $ \| u\|_{\calC^{\alpha^\ast} (Q_{1/2})} \leq c$.
\end{theorem}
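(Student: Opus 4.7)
The plan is to prove this by the classical DiBenedetto--Friedman method of intrinsic scaling combined with a De Giorgi-type oscillation-reduction argument, which is the standard path for degenerate parabolic equations of porous-medium type. Since $u\ge 0$ and the effective diffusion coefficient $m u^{m-1}$ vanishes at $u=0$, the heat-equation time scale $t\sim r^{2}$ is not the natural one when $u$ is small; instead one works on intrinsic cylinders
\[
Q(r,\theta):=K_r(x_0)\times (t_0-\theta r^2,\,t_0],\qquad \theta\sim \omega^{\,1-m},
\]
where $\omega$ denotes the current oscillation of $u$. After the associated rescaling $v(y,\tau)=u(x_0+ry,t_0+\theta r^2\tau)/\omega$, the equation $u_t=\Delta u^m$ behaves like a uniformly parabolic equation of $p$-Laplace/De Giorgi type with bounded coefficients, and that is what makes an iterative argument go through.

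Concretely, I would fix $(x_0,t_0)\in Q_{1/2}$ and, for each radius $R\le 1/2$, define $\mu^\pm$ as $\sup$/$\inf$ of $u$ over $Q(R,\theta)$, set $\omega=\mu^+-\mu^-$, and choose $\theta\sim(\omega/2)^{1-m}$ with a harmless rescaling so that $Q(R,\theta)\subset Q_1$. The first block of work is to prove the standard local Caccioppoli estimates for $(u-k)_\pm$ and DiBenedetto's logarithmic energy inequality for $\log^+(H\pm(u-k))$; both estimates are of purely local character and their constants depend only on $m$, $d$ and $\|u\|_{L^\infty(Q_1)}$. With these in hand, one splits into the two standard measure-theoretic alternatives according to the size of $|\{u>\mu^+-\omega/2\}\cap Q(R,\theta)|$ as a fraction of $|Q(R,\theta)|$.

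In the first alternative, if that set has small relative measure, a De Giorgi iteration run on truncations of $u^m$ (using Caccioppoli, Sobolev embedding, and the standard iteration lemma on geometrically decreasing levels) yields $u\le\mu^+-\omega/4$ on a smaller intrinsic subcylinder. In the second alternative, one first uses the logarithmic energy estimate to propagate the information $u<\mu^-+\omega/2$ to almost every time slice of a subcylinder, and then runs the De Giorgi iteration on the lower side to obtain $u\ge \mu^-+\omega/4$ there; either way one arrives at
\[
\underset{Q(\lambda R,\theta')}{\operatorname{osc}}\,u \;\le\; \eta\,\underset{Q(R,\theta)}{\operatorname{osc}}\,u
\]
for universal $\eta=\eta(m,d)\in(0,1)$ and $\lambda=\lambda(m,d)\in(0,1)$, with $\theta'$ again of intrinsic form.

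Iterating this one-step decay across a geometrically shrinking sequence of intrinsic cylinders and then comparing them to standard parabolic cylinders $Q_{\rho_n}$ with $\rho_n\sim\lambda_\ast^{\,n}$ (where $\lambda_\ast=\lambda_\ast(m,d,\|u\|_{L^\infty(Q_1)})\in(0,1)$ absorbs the intrinsic-time correction), one obtains $\operatorname{osc}_{Q_{\rho_n}}u\le \eta^{\,n}\omega_0$, hence the desired H\"older bound with exponent $\alpha^\ast:=\log(1/\eta)/\log(1/\lambda_\ast)\in(0,1)$ depending only on $m,d$, and with constant $c$ depending on $m,d,\|u\|_{L^\infty(Q_1)}$. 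The main obstacle, and where nearly all the technical care is needed, is showing that the two factors $\eta$ and $\lambda$ produced by the alternatives can be chosen \emph{independently of $\omega$}: each step involves cylinders whose time length is itself $\omega$-dependent, so one must verify that the rescaled problem is genuinely uniformly parabolic at every scale and that constants from Caccioppoli, Sobolev, and the iteration lemma do not accumulate. This is precisely the point at which the logarithmic energy estimate and expansion-of-positivity are indispensable, and where the passage $Q_1\supset Q_{1/2}$ provides the geometric room needed to absorb the enlarged intrinsic time intervals.
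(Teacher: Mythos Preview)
Your outline is essentially correct and follows the classical DiBenedetto--Friedman intrinsic-scaling scheme, which is exactly how this result is proved in the references the paper cites. Note, however, that the paper does not give its own proof of this theorem: it is stated as a known result and attributed to \cite{DF85, DF85A, DB83, DGV12}, with the drift case referred to \cite{CHKK17, HZ21}. So there is no ``paper's proof'' to compare against beyond those citations, and your sketch is precisely the content of those works.
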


The main theorem is the following: the H\"{o}lder continuity up to the initial time.
\begin{theorem}\label{T:Boundary_Holder_local}
Let $u$ be a weak solution of \eqref{H-PME} in $Q_{1}^{0}\subset Q_{T}$ with
$u_0 \in \calC^{\alpha_0} (K_{1})$. Then there exists $\tilde{\alpha} = \min\{\alpha_0, \alpha^\ast\}$ where $\alpha^\ast = \alpha^\ast (m,d) \in (0,1)$ is given in Theorem~\ref{T:interior_Holder}, and $c  = c (m, d, \|u\|_{L^{\infty}(Q_{1}^{0})})$, such that, $u$ is uniformly H\"{o}lder continuous in $Q_{1/2}^{0}$ with the exponent $\alpha$ and the coefficient $c$; that is, $ \| u\|_{\calC^{\tilde{\alpha}} (Q_{1/2}^0)} \leq c$.
\end{theorem}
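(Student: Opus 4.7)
The strategy is a dyadic oscillation-decay iteration at any point $(x_0,0)$ with $x_0\in K_{1/2}$, arranged so that at every scale the decay factor is controlled from below by the initial-data H\"{o}lder modulus $r^{\alpha_0}$. It suffices to prove that for every $x_0\in K_{1/2}$ and every $r\in(0,1/4]$,
\begin{equation*}
\essosc_{Q_r^0(x_0)} u \;\le\; C\, r^{\tilde{\alpha}},\qquad \tilde{\alpha}=\min\{\alpha_0,\alpha^\ast\},
\end{equation*}
with $C=C(m,d,\|u\|_{L^\infty(Q_1^0)},\|u_0\|_{\mathcal{C}^{\alpha_0}(K_1)})$. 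Combined with Theorem \ref{T:interior_Holder} applied at points with $t>0$ and an elementary parabolic-distance gluing, this yields the full statement on $Q_{1/2}^0$.

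Set $r_n=\lambda^n r_0$ for $\lambda\in(0,1/2)$ and $r_0$ to be fixed, $Q_n=K_{r_n}(x_0)\times[0,r_n^2]$, $\omega_n=\essosc_{Q_n} u$, $m_n=\essinf_{Q_n} u$. The plan is to establish the dichotomy
\begin{equation*}
\omega_{n+1}\;\le\;\max\bigl\{\eta\,\omega_n,\;K r_n^{\alpha_0}\bigr\},\qquad \eta=\eta(m,d)\in(0,1),\ K=K(\|u_0\|_{\mathcal{C}^{\alpha_0}}),
\end{equation*}
and then iterate it: choosing $\lambda$ so that $\log\eta/\log\lambda=\alpha^\ast$, a standard induction (Campanato-type) produces $\omega_n\le C\, r_n^{\tilde{\alpha}}$, which is the sought bound.

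For the dichotomy I rescale by
\begin{equation*}
v(y,s):=\frac{u(x_0+r_n y,\,r_n^2 s)-m_n}{\omega_n},\qquad (y,s)\in K_1\times[0,1],
\end{equation*}
which again solves $\partial_s v=\Delta v^m$ by scaling invariance of PME, takes values in $[0,1]$, and has initial data of oscillation at most $\|u_0\|_{\mathcal{C}^{\alpha_0}}\,r_n^{\alpha_0}/\omega_n$ on $K_1$. Assuming we are in the nontrivial regime $\omega_n\ge K r_n^{\alpha_0}$ (otherwise the second branch of the dichotomy holds trivially), this rescaled initial oscillation is $\le \|u_0\|_{\mathcal{C}^{\alpha_0}}/K$, which can be made arbitrarily small by choosing $K$ large. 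Applying Theorem \ref{T:interior_Holder} to $v$ on the forward sub-cylinder $K_1\times[1/2,1]$ controls $\operatorname{osc} v$ on $K_\lambda\times[1-\lambda^2,1]$ by $C\lambda^{\alpha^\ast}$, which takes care of the \emph{top} part of $Q_{n+1}$ after undoing the rescaling.

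The main obstacle, and the only step that really requires new work beyond Theorem \ref{T:interior_Holder}, is to propagate the oscillation bound through the \emph{initial layer} $K_{1/2}\times[0,1/2]$, where the interior estimate is unavailable. I would handle this by a De Giorgi truncation argument adapted to PME: for each level $k$ close to $\max_{K_1} v_0$ (or $\min_{K_1} v_0$), testing the weak formulation against $(v-k)_\pm$ times a space-time cut-off yields an energy inequality of the form
\begin{equation*}
\sup_{0\le s\le 1}\int_{K_{3/4}}(v-k)_\pm^2\,dy+m\iint_{K_{3/4}\times[0,1]}(v\vee k)^{m-1}|\nabla(v-k)_\pm|^2\,dy\,ds\le C\!\int_{K_1}(v_0-k)_\pm^2\,dy+\mathrm{(cut\text{-}off\ error)},
\end{equation*}
where the initial-data term is controlled by $\|u_0\|_{\mathcal{C}^{\alpha_0}}/K$ and is therefore small. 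A standard De Giorgi iteration on increasing levels then produces an $L^\infty$ reduction of the type $\sup_{K_{1/2}\times[0,1/2]}v\le\max_{K_1}v_0+\eta'$ (and an analogous lower bound), with $\eta'$ made small by choosing $K$ large; combined with the interior bound of the previous paragraph, this yields $\omega_{n+1}\le\eta\,\omega_n$ for a suitable $\eta<1$ and closes the dichotomy. The delicate point in this final step is that PME's degeneracy at $\{v=0\}$ forces an intrinsic (DiBenedetto-type) scaling in the De Giorgi iteration: the time length must be balanced against a factor $\omega_n^{m-1}$, which is precisely what forces the restriction $\alpha^\ast=\alpha^\ast(m,d)$ to enter through the choice of $\lambda$.
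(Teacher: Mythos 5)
Your overall strategy (dyadic oscillation decay at points on the initial hyperplane, a dichotomy between the interior reduction factor and the initial modulus, a De Giorgi iteration powered by a truncated energy estimate) is the same in spirit as the paper's, but there are two concrete errors that would prevent the argument as written from closing.

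First, the rescaling
\begin{equation*}
v(y,s):=\frac{u(x_0+r_n y,\,r_n^2 s)-m_n}{\omega_n}
\end{equation*}
does \emph{not} solve $\partial_s v=\Delta v^m$. The PME is scaling-invariant under $u\mapsto \kappa^a u(\kappa x,\kappa^b t)$ for suitable $a,b$, but it is \emph{not} translation-invariant in $u$: with $u=\omega_n v+m_n$ one computes
\begin{equation*}
\partial_s v=\omega_n^{m-1}\,\Delta_y\bigl(v+m_n/\omega_n\bigr)^m,
\end{equation*}
which is a different equation whose diffusivity depends on the local infimum $m_n$ and which is not a constant multiple of $\Delta v^m$. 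This subtraction is precisely the step that is forbidden by the degeneracy of PME, and it undermines both the "top-part" application of Theorem~\ref{T:interior_Holder} and the claimed energy inequality in the initial layer. The paper avoids this by never normalizing the solution to the unit interval; instead it works with the truncations $(u-\mu_\pm\pm k)_\pm$ directly, with $k$ chosen via condition~\eqref{H:k} so that the truncated function \emph{vanishes identically at $t=0$}, which kills the initial-data boundary term in Proposition~\ref{P:energy} exactly (not merely makes it "small") and allows the expansion-of-positivity machinery to run without any rescaling of the dependent variable.

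Second, you set up the iteration on the non-intrinsic cylinders $Q_n=K_{r_n}(x_0)\times[0,r_n^2]$, and only at the end remark that the degeneracy forces an intrinsic DiBenedetto scaling $[0,\omega_n^{1-m}r_n^2]$. That remark is correct, but it contradicts the setup: once the time length is intrinsic, the nested family is no longer geometric in $r$ alone, and the Campanato-type induction "$\omega_{n+1}\le\max\{\eta\omega_n,Kr_n^{\alpha_0}\}\Rightarrow\omega_n\lesssim r_n^{\tilde\alpha}$" no longer follows directly because the time variable is being rescaled by the very quantity one is trying to bound. This is exactly the algebra that Lemma~\ref{L:main} in the paper handles by splitting into the cases \eqref{rho:space} (spatial radius dominates) and \eqref{rho:time} (intrinsic time dominates) and tracking the resulting exponents $\alpha_1,\alpha_2,\epsilon_0/2$ separately; without that case split, the passage from the oscillation recursion to a genuine H\"older modulus in the standard parabolic metric is not justified.
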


Theorem~\ref{T:Boundary_Holder} is a direct consequence of Theorem~\ref{T:interior_Holder} and Theorem~\ref{T:Boundary_Holder_local}.

\textbf{The proof of Theorem~\ref{T:Boundary_Holder}}: The global H\"{o}lder estimate up to $t=0$, is given by the standard covering arguments based on both interior and boundary estimates in Theorem~\ref{T:interior_Holder} and Theorem~\ref{T:Boundary_Holder_local}. \qed

It remains to prove Theorem~\ref{T:Boundary_Holder_local}.
In the first subsection, we provide energy and logarithmic type estimates up to the initial time which are fundamental to carry further arguments in the following subsection to prove Theorem~\ref{T:Boundary_Holder_local}.

\subsection*{Energy estimates}

In this section, we apply the idea in \cite[Section~II.4.iii]{DB93} and local energy estimates in \cite[Proposition~4.1 \& 4.2]{CHKK17} to obtain energy estimates of \eqref{H-PME1} up to the initial time.


We consider a weak solution of \eqref{H-PME} that takes the initial data $u_0$ such that
   \begin{equation}\label{u0_h}
   \left( u_{h} (\cdot, 0) - u_0{}\right)_{\pm} \to 0 , \quad \text{as } h\to 0 \quad \text{in $L^2_{\loc}$ sense,}
   \end{equation}
   where $u_{h}$ indicates the Stekelov average that
   \[
   u_{h} (x, t) = \frac{1}{h} \int_{t}^{t+h} u(x, \tau) \,d\tau, \quad \text{for all}\quad  0\leq t \leq T-h,
   \]
   which converges to $u$ as $h\to 0$.

For some $k>0$, let us denote $(u - k)_{+} = \max\{ u-k, 0\}$ and $ (u - k)_{-} = \max\{k-u, 0\}$.
\begin{proposition}\label{P:energy}
Let $u$ be a nonnegative bounded weak solution of \eqref{H-PME}. For $r>0$, $0<t<T$, suppose that $\zeta$ is a smooth cut-off function in $K_{r}(x_0)\times [0, t]$, which is $0 \leq \zeta \leq 1$, independent of $t$, and vanishing on the lateral boundary $\partial K_{r}(x_0)$.
Let $\mu_{\pm}$ be positive constants such that
\[
\mu_{+} \geq \esssup_{K_{r}(x_0)\times [0, t]} u \quad \text{and} \quad \mu_{-} \leq \essinf_{K_{r}(x_0)\times [0, t]} u.
\]
Moreover, for every level $k > 0$, suppose that
   \begin{equation}\label{H:k}
   \begin{cases}
   \mu_{+} - k > \esssup_{K_{r} (x_0)} u_0, & \quad \text{for}\quad  (u - \mu_{+} - k)_{+}, \\
   \mu_{-} + k < \essinf_{K_{r} (x_0)} u_0, & \quad \text{for}\quad  (u - \mu_{-} + k)_{-}.
   \end{cases}
   \end{equation}
   Then the following estimate holds
    \begin{equation}\label{H:local_energy}
        \begin{aligned}
            \sup_{0 \leq \tau \leq t}  \int_{K_r (x_0)} \left( u (\cdot, \tau) - \mu_{\pm} \pm k\right)_{\pm}^{2} \zeta^2 \,dx
            & + m \int_{0}^{t} \int_{K_{r}(x_0)} u^{m-1} \left| \nabla \left( u - \mu_{\pm} \pm k\right)_{\pm}\right|^2 \zeta^2 \,dx\,d\tau \\
            &\leq 16m \int_{0}^{t} \int_{K_{r}(x_0)} u^{m-1} \left( u - \mu_{\pm} \pm k\right)_{\pm}^{2} |\nabla \zeta|^2 \,dx\,d\tau.
        \end{aligned}
    \end{equation}
 Moreover, it holds that, for any $\tau \in (0, t]$,
    \begin{equation}\label{H:log_energy}
        \begin{aligned}
            \int_{K_r (x_0)} \Psi_{\pm}^{2}(u (\cdot, \tau)) \zeta^2 \,dx
            &+ 2m \int_{0}^{t} \int_{K_{r}(x_0)} u^{m-1} |\nabla u|^2 \Psi_{\pm} \left|\Psi'_{\pm}(u)\right|^2 \zeta^2 \,dx\,d\tau \\
            &\leq 16 m \int_{0}^{t} \int_{K_{r}(x_0)} u^{m-1} \Psi_{\pm}^{2}(u) |\nabla \zeta|^2 \,dx\,d\tau,
        \end{aligned}
    \end{equation}
    where, for $\delta \in (0, 1)$,
    \[
    \Psi_{\pm} (u) = \ln^{+} \left[\frac{k}{(1+\delta)k - \left( u - \mu_{\pm} \pm k\right)_{\pm}}\right].
    \]

\end{proposition}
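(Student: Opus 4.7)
The plan is to derive both estimates by testing a regularized (Steklov-averaged) form of \eqref{H-PME} with appropriate truncated test functions, in the spirit of the De Giorgi/DiBenedetto framework (cf. \cite[Section~II.4.iii]{DB93} and \cite[Proposition~4.1 \& 4.2]{CHKK17}). The novelty is only in handling the initial boundary $t=0$, where hypothesis \eqref{H:k} combined with the convergence \eqref{u0_h} will be used to eliminate the initial trace.

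For the local energy estimate \eqref{H:local_energy}, I would first write the Steklov-averaged equation $(u_h)_t=\Delta (u^m)_h$ on $K_r(x_0)\times (0,t-h)$, multiply by the truncated function $\varphi_h^{\pm}:=(u_h-\mu_\pm\pm k)_\pm \zeta^2$, and integrate over $K_r(x_0)\times(0,\tau)$ for any $\tau\in(0,t-h]$. The time term produces $\tfrac12\int_{K_r(x_0)}(\varphi_h^\pm/\zeta^2)^2\zeta^2\,dx$ evaluated at $\tau$ minus the same expression at $0$; the diffusion term, after integration by parts, yields the full quadratic form $m\int\!\!\int u^{m-1}|\nabla (u-\mu_\pm\pm k)_\pm|^2\zeta^2$ together with a mixed cross term of the form $2m\int\!\!\int u^{m-1}(u-\mu_\pm\pm k)_\pm \nabla(u-\mu_\pm\pm k)_\pm\cdot \zeta \nabla\zeta$, which is absorbed via Young's inequality $ab\le \tfrac12 a^2+\tfrac12 b^2$ to leave a factor giving the constant $16m$ on the right-hand side. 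Passing $h\to 0$ uses the strong $L^2_{\mathrm{loc}}$-convergence of the Steklov average and, crucially, \eqref{u0_h} together with \eqref{H:k}: under \eqref{H:k} the truncation $(u_0-\mu_\pm\pm k)_\pm$ vanishes identically on $K_r(x_0)$, so the initial trace of $\varphi_h^\pm$ tends to zero, and \eqref{H:local_energy} follows by taking the essential supremum in $\tau$.

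For the logarithmic estimate \eqref{H:log_energy}, I would repeat the same procedure with test function $\Psi_\pm(u_h)\,\Psi_\pm'(u_h)\,\zeta^2$ (defined as zero where the argument of $\ln^+$ is $\le 1$). The time term yields $\tfrac{d}{dt}\tfrac12\Psi_\pm^2(u_h)$, because $\Psi_\pm\Psi_\pm'$ is the derivative of $\tfrac12\Psi_\pm^2$, and integration in time gives the first term on the left of \eqref{H:log_energy} upon sending $h\to 0$. The spatial term produces, after integration by parts, a coercive contribution $m\int\!\!\int u^{m-1}|\nabla u|^2\bigl(|\Psi_\pm'|^2+\Psi_\pm\Psi_\pm''\bigr)\zeta^2$; since $\Psi_\pm''=|\Psi_\pm'|^2$ (because $\Psi_\pm$ is, up to a constant shift, a logarithm of a linear function of $(u-\mu_\pm\pm k)_\pm$), this reduces to the displayed $2m\int\!\!\int u^{m-1}|\nabla u|^2\Psi_\pm |\Psi_\pm'|^2\zeta^2$, plus a cross term $4m\int\!\!\int u^{m-1}\Psi_\pm\Psi_\pm'|\nabla u|\zeta|\nabla\zeta|$ bounded via Young's inequality to give the stated right-hand side. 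Once more, the initial trace $\Psi_\pm^2(u_0)$ vanishes a.e.\ on $K_r(x_0)$ by \eqref{H:k} (the argument of $\ln^+$ is $\le 1$ there), so \eqref{u0_h} lets us drop the initial contribution in the limit.

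The main technical obstacle is the degeneracy of the diffusion $u^{m-1}$ where $u$ is small, especially for the minus-truncation $(u-\mu_-+k)_-$ when $\mu_-$ may be close to $0$: one cannot simply differentiate $u^m$ and must keep the coefficient $u^{m-1}$ throughout the computations, which is why $m\int u^{m-1}|\nabla(u-\mu_\pm\pm k)_\pm|^2$ is the natural coercive quantity rather than $|\nabla(u^m)|^2$. A secondary delicate point is the justification of the integration by parts after Steklov averaging, which requires truncating $\Psi_\pm$ by a bounded smooth approximation and a standard diagonal limit; the hypothesis $u\in L^\infty$ together with $\zeta$ compactly supported in $K_r(x_0)$ makes all integrals finite, and the argument concludes by lower semicontinuity in $\tau$ for the supremum term.
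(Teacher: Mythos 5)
Your proposal follows essentially the same route as the paper: write the Steklov-averaged weak formulation, test with $\pm 2(u_h-\mu_\pm\pm k)_\pm\zeta^2$ (respectively the logarithmic test function), observe that hypothesis \eqref{H:k} forces the initial truncation and $\Psi_\pm(u_0)$ to vanish identically on $K_r(x_0)$ so that the initial-time term disappears after $h\to 0$ via \eqref{u0_h}, and then absorb the cross term from the spatial integration by parts via Young's inequality. The paper in fact defers the diffusion-term computation to \cite[Propositions 4.1--4.2]{CHKK17}, whereas you carry it out directly, including the useful identity $\Psi_\pm''=(\Psi_\pm')^2$, so your write-up is slightly more self-contained but not substantively different.
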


\begin{proof}
A weak solution formulation of \eqref{H-PME} rephrased in terms of Stekelov averages is given
\[
\int_{0}^{t} \int_{K_{r}(x_0)} \left[ \partial_t u_h \varphi + \nabla u_h^{m} \nabla \varphi \right]\,dx\,dt = 0,
\]
for all $0<t<T-h$ and for all $\varphi \in W_{o}^{1,p} (\Omega) \cap L^{\infty}_{x}(\Omega)$, $\varphi \geq 0$ (see \cite[Section~II.2.i]{DB93}).

To obtain \eqref{H:local_energy}, we test $\varphi = \pm 2\left( u_{h} - \mu_{\pm} \pm k\right)_{\pm} \zeta^2$. Then
\begin{equation*}
    \begin{aligned}
\int_{0}^{t} \int_{K_{r}(x_0)} \partial_t u_h \varphi\,dx\,dt
&= \int_{0}^{t} \int_{K_{r}(x_0)} \partial_t \left[\left( u_{h} - \mu_{\pm} \pm k\right)_{\pm} \zeta \right]^2 \,dx\,dt \\
&= \int_{K_{r}(x_0)\times\{t\}} \left( u_{h} - \mu_{\pm} \pm k\right)_{\pm}^2 \zeta^2 \,dx\,dt - \int_{K_{r}(x_0)} \left( u_{h} (\cdot, 0) - \mu_{\pm} \pm k\right)_{\pm}^2 \zeta^2 \,dx\,dt,
\end{aligned}
\end{equation*}
in which the second term vanishes as $h \to 0$ because of \eqref{u0_h} and \eqref{H:k}. 
We skip the computation of $\iint \nabla u_h^{m} \nabla \varphi \,dx\,dt$ because it is similar to \cite[Proposition~4.1]{CHKK17}. The combination of above estimates yields \eqref{H:local_energy}.

Now to obtain \eqref{H:log_energy}, we note that, from the definition of $\Psi_{\pm} (u)$, it follows that
\[
\Psi_{\pm}(u) = 0 \quad \text{whenever $ (u- \mu_{\pm}\pm k)_{\pm} = 0$. }
\]
Then \eqref{H:k} provides that
\[
\int_{K_{r} (x_0)} \Psi^{2} (u_h) (x, 0) \zeta^2 \,dx \to 0, \quad \text{as $h\to 0$}.
\]
Therefore, \eqref{H:log_energy} follows by carrying almost the same computations as in \cite[Proposition~4.2]{CHKK17}.
\end{proof}

\subsection*{Proof of Theorem~\ref{T:Boundary_Holder_local}.} 

Here, we prove Theorem~\ref{T:Boundary_Holder_local} by modifying methodologies in \cite[Section~III]{DB93}.
Because of translation invariant property, we assume $x_0 = 0$ without loss of generality. First, we construct a parabolic cyliner
\begin{equation}\label{Q0}
Q_{2R, R^{2-\epsilon}} = K_{2R}(0)\times [0, R^{2-\epsilon}], \quad \text{for}\quad R>0,
\end{equation}
where $\epsilon = \epsilon(m)$ is a positive number determined later.  Moreover, let us set
\begin{equation}\label{mu-omega}
\mu_{+} := \esssup_{Q_{2R, R^{2-\epsilon}}} u , \qquad \mu_{-} := \essinf_{Q_{2R, R^{2-\epsilon}}} u,
\quad \text{and} \quad \omega := \essosc_{Q_{2R, R^{2-\epsilon}}} u = \mu_{+} - \mu_{-}.
\end{equation}
Now we construct the intrinsic parabolic cylinder such that
\[
Q_{R, a_0 R^{2}} := K_{R}(0) \times [0, a_0 R^2], \quad \text{where} \quad \frac{1}{a_0}=\left(\frac{\omega}{4}\right)^{m-1}.
\]
These parabolic boxes are lying on the bottom of $\mathbb{R}^d\times \mathbb{R}_{+}$.

We establish the decay of oscillation in the next proposition,  which is similar to \cite[Proposition~III.11.1]{DB93}.
\begin{proposition}\label{P:main}
There exist constants $\epsilon_0 = \epsilon_{0} (m,d,\epsilon) \in (0,1)$ for $\epsilon$ given in \eqref{Q0}, $ \eta_1 =  \eta_1 (m,d)\in (0,1)$, and $ \eta_2 =  \eta_2 (m,d) \in (0,1)$ which satisfy the following.
Let us construct the sequences with $R_0 = R$ and $\omega_0 = \omega$:
\begin{equation}\label{omega-n}
R_n =  {\eta_2}^{n} R, \quad \omega_{n+1} = \max\{ \eta_1 \omega_n, \, 4 R_n^{\epsilon_0}\}, \quad n=1,2,\ldots .
\end{equation}
Also, construct the family of intrinsic parabolic cylinders:
\[
Q_n := K_{R_n} \times [0, \left(\frac{\omega_{n}}{4}\right)^{1-m} R_n^2], \quad n=1,2,\ldots.
\]
that satisfies $Q_{n+1} \subset Q_{n}$.
Then the following holds: for all $n=0,1,2,\ldots$,
\begin{equation}\label{main-oscillation}
\essosc_{Q_n} u \leq \max\{\omega_n,  \, 2 \essosc_{K_{R_n}} u_0\}.
\end{equation}
\end{proposition}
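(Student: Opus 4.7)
\textbf{Proof plan for Proposition~\ref{P:main}.} The argument will proceed by induction on $n$, following the DiBenedetto intrinsic scaling scheme and adapting the near-initial-boundary treatment of \cite[Chap.~III]{DB93} to the porous medium setting. The base case $n=0$ is immediate from the definitions \eqref{mu-omega}, once one checks that $Q_0 = K_R \times [0, (\omega/4)^{1-m}R^2]$ is contained in $Q_{2R,R^{2-\epsilon}}$; this is the step that dictates the admissible range of $\epsilon_0$. Indeed, whenever the floor $\omega_n \geq 4 R_n^{\epsilon_0}$ is active, the time length of $Q_n$ is at most $R_n^{2-\epsilon_0(m-1)}$, so choosing $\epsilon_0 \leq \epsilon/(m-1)$ keeps all intrinsic cylinders inside the reference box $Q_{2R,R^{2-\epsilon}}$ and legitimizes the use of Proposition~\ref{P:energy} at every step.

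For the inductive step, assume \eqref{main-oscillation} at stage $n$ and abbreviate $\omega^{\ast}_n := \essosc_{K_{R_n}} u_0$. First, if $\omega^{\ast}_n \geq \tfrac{1}{4}\omega_n$, then $\essosc_{K_{R_{n+1}}}u_0 \leq \omega^{\ast}_n$ together with the inductive hypothesis forces $\essosc_{Q_{n+1}} u \leq 2\omega^{\ast}_n \leq 2\essosc_{K_{R_{n+1}}}u_0 + (\omega^{\ast}_n - \omega^{\ast}_{n+1})$, which, after choosing $\eta_2$ close enough to $1$ so that the initial oscillation only decays slowly, yields \eqref{main-oscillation} at stage $n+1$ trivially. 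The substantive case is $\omega^{\ast}_n < \tfrac{1}{4}\omega_n$, in which one of the two inequalities
\[
\mu_+ - \esssup_{K_{R_n}} u_0 \geq \tfrac{1}{4}\omega_n, \qquad \essinf_{K_{R_n}} u_0 - \mu_- \geq \tfrac{1}{4}\omega_n,
\]
must hold; otherwise adding them would give $\omega_n \leq \omega^{\ast}_n + \tfrac{1}{2}\omega_n$, contradicting $\omega^{\ast}_n<\tfrac14 \omega_n$. Either inequality legitimizes the compatibility hypothesis \eqref{H:k} for levels $k\in[\omega_n/8,\omega_n/4]$, so the energy estimate \eqref{H:local_energy} and the logarithmic estimate \eqref{H:log_energy} from Proposition~\ref{P:energy} are both available on $Q_n$, with the weight $u^{m-1}$ intrinsically balanced by $(\omega_n/4)^{m-1}$ thanks to the scaling of $Q_n$.

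From here the standard two-alternative De~Giorgi--DiBenedetto argument is executed inside $Q_n$. In the first alternative, where the set $\{u>\mu_+-\omega_n/2\}\cap Q_n$ occupies at most a fraction $\nu_0=\nu_0(m,d)$ of $|Q_n|$, the energy inequality feeds a De~Giorgi iteration yielding $u\leq \mu_+-\omega_n/4$ on a smaller intrinsic cylinder $\tilde Q\subset Q_n$. In the opposite alternative, the logarithmic estimate \eqref{H:log_energy} is iterated in the integer exponent $s$ to propagate the information along the time slices: one shows that the measure of the set where $u$ is close to $\mu_+$ is small on every time slice of $K_{R_n/2}$, and then a further De~Giorgi iteration with the energy inequality upgrades this to a pointwise bound $u \leq \mu_+ - \omega_n/2^s$ on a cylinder comparable to $Q_{n+1}$. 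Either alternative gives a reduction of essential oscillation $\essosc_{Q_{n+1}} u \leq \eta_1\omega_n$ with $\eta_1 = 1 - 2^{-s-2}$ depending only on $m,d$, provided $\eta_2$ is chosen small enough to absorb the shrinkage factor between the cylinder on which we gained control and $Q_{n+1}$. Combining this with the floor term $4R_n^{\epsilon_0}$ (which merely ensures the intrinsic cylinder $Q_{n+1}$ is still admissible) produces \eqref{main-oscillation} at stage $n+1$.

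The main obstacle is the interplay between intrinsic scaling and the presence of the initial boundary: the De~Giorgi iteration normally uses cylinders $(t_0-\theta r^2,t_0)\times K_r$ stepping backward in time, but here $t_0=0$ is the initial hyperplane and no such backward room exists, so the energy inequality \eqref{H:local_energy} must be applied on $[0,\tau]\times K_r$ with the boundary term absorbed via the compatibility condition \eqref{H:k}. It is exactly to accommodate the failure of \eqref{H:k} that the floor $2\essosc_{K_{R_n}}u_0$ appears on the right-hand side of \eqref{main-oscillation}, and verifying that the choice of levels in the De~Giorgi iteration is always compatible with \eqref{H:k} in the nontrivial case $\omega^{\ast}_n<\omega_n/4$ (while retaining dependence of $\nu_0$ and $s$ only on $m,d$) is the technically delicate part of the argument.
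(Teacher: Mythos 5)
Your plan follows the right DiBenedetto-style scheme (two alternatives, intrinsic scaling, compatibility condition for the energy estimates up to $t=0$, $\eta_2$ small for nesting), but the dichotomy you choose to organise the induction differs from the paper's and has a gap that the paper's does not.

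The paper does not split on whether $\omega_n^* < \tfrac14\omega_n$. It splits directly on whether both inequalities in \eqref{mu_mu0}, namely $\mu^+ - \tfrac{\omega}{4}\leq \esssup_{K_R}u_0$ and $\mu^- + \tfrac{\omega}{4}\geq\essinf_{K_R}u_0$, hold. When both hold, subtracting the second from the first gives $\essosc_{Q_R^\omega}u\leq 2\essosc_{K_R}u_0$ immediately, with no induction bookkeeping at all; when at least one fails, $k=\omega/4$ makes \eqref{H:k} valid for the corresponding truncation $(u-\mu^\pm\mp k)_\pm$ and the energy and logarithmic estimates trigger the De~Giorgi reduction $\essosc_{Q_{R/8}^{\omega/4}}u\leq\eta_1\omega$. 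Your reformulation (split on $\omega_n^*$ vs.\ $\tfrac14\omega_n$) is meant to recover this, but it introduces two problems. First, your ``contradiction'' in the substantive case silently replaces $\essosc_{Q_n}u$ by $\omega_n$: if both inequalities fail, adding them gives $\essosc_{Q_n}u - \omega_n^* < \tfrac12\omega_n$, which does \emph{not} contradict $\omega_n^* < \tfrac14\omega_n$ unless you also know $\essosc_{Q_n}u = \omega_n$ — the inductive hypothesis only gives $\essosc_{Q_n}u \leq \max\{\omega_n,2\omega_n^*\}$, and the inequality can be strict. (One can of course salvage this by observing that in that situation the oscillation is already below $\tfrac34\omega_n$, but that observation is exactly the step missing from your write-up.) Second, your ``trivial case'' $\omega_n^*\geq \tfrac14\omega_n$ is not actually trivial under your own inductive hypothesis: when $\tfrac14\omega_n\leq\omega_n^*<\tfrac12\omega_n$, the inductive bound gives $\essosc_{Q_n}u\leq\omega_n$, not $\leq 2\omega_n^*$, so the claimed chain $\essosc_{Q_{n+1}}u\leq 2\omega_n^*\leq 2\essosc_{K_{R_{n+1}}}u_0+\cdots$ does not follow. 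Moreover, the remark that $\eta_2$ should be ``close enough to $1$'' in this case contradicts the requirement you correctly impose later (and which the paper fixes as $\eta_2\leq\tfrac18\eta_1^{(m-1)/2}$ in \eqref{lambda}) that $\eta_2$ be small enough to make the next intrinsic cylinder $Q_{\eta_2 R}^{\eta_1\omega/4}$ sit inside $Q_{R/8}^{\omega/4}$.

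In short: the paper's split on \eqref{mu_mu0} is what makes the ``initial data alternative'' a one-line subtraction with no induction, while your quantitative threshold $\omega_n^*$ vs.\ $\tfrac14\omega_n$ does not by itself isolate a trivial case and needs the extra observation above to close the logic. The rest — $\epsilon_0\leq\epsilon/(m-1)$ from comparing $Q_R^\omega$ with $Q_{2R,R^{2-\epsilon}}$, use of \eqref{H:k}, the $\delta_1,\delta_2$ two-alternative reduction giving $\eta_1=\eta_1(m,d)$, and the nesting choice of $\eta_2$ — matches the paper.
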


\begin{proof}
For any $R>0$, let us start from $Q_0 := Q_{2R, R^{2-\epsilon}}$ in \eqref{Q0} where $\epsilon \in (0,1)$ to be determined later.
For $\mu_{\pm}$ and $\omega$ given in \eqref{mu-omega}, we construct intrinsically scaled parabolic cylinders
\[
Q_{R}^{\omega} := K_{R}(0) \times [0, \left(\frac{\omega}{4}\right)^{1-m} R^2].
\]

Then there are two cases: either
\begin{equation}\label{Q_Q0}
Q_{R}^{\omega} \subset Q_0 \quad \text{which gives} \quad \omega > 4 R^{\frac{\epsilon}{m-1}},
\end{equation}
or
\begin{equation}\label{epsilon_0}
Q_0 \subset Q_{R}^{\omega} \quad \text{which gives} \quad \omega \leq 4 R^{\frac{\epsilon}{m-1}}.
\end{equation}
If \eqref{epsilon_0} holds, then \eqref{omega-n} follows directly with $\epsilon_0 = \frac{\epsilon}{m-1}$.

Now let us assume \eqref{Q_Q0} and  set
\[
\mu^{+}_{0} = \esssup_{K_R} u_0 , \quad \mu^{-}_{0}= \essinf_{K_R} u_0, \quad \omega_0 = \essosc_{K_R} u_0 = \mu^{+}_{0} - \mu^{-}_{0}.
\]
Then there are two inequalities to cosider:
\begin{equation}\label{mu_mu0}
\mu^{+} - \frac{\omega}{4} \leq \mu^{+}_{0} \quad \text{and} \quad \mu^{-} + \frac{\omega}{4} \geq \mu^{-}_{0}.
\end{equation}

If both inequalities in \eqref{mu_mu0} hold, then the subtraction of the second from the first gives
\begin{equation*} 
\essosc_{Q_{R}^{\omega}} u \leq 2 \essosc_{K_R} u_0,
\end{equation*}
which holds \eqref{main-oscillation} and there is nothing to prove.

If the second inequality in \eqref{mu_mu0} is violated, then we choose $k =\frac{\omega}{4}$ and it gives
\[
\mu^{-} + k < \mu_{0}^{-},
\]
which satisfies \eqref{H:k}. Therefore, we have energy estimates \eqref{H:local_energy} and \eqref{H:log_energy} for $ (u - \mu^{-} - k)_{-}$. With these estimates, we are able to carry the quantitative method such as the expansion of positivity and DeGiorgi iteration as in \cite{CHKK17}. We obtain that there exists $\delta_1 = \delta_1 (m,d) \in (0,1)$ such that
\[
\essinf_{Q_{R/8}^{\omega/4}}u \geq \mu_{-} + \delta_1 \omega, \quad \text{for} \quad Q_{R/8}^{\omega/4} := K_{R/8}\times [0, \left(\frac{\omega}{4}\right)^{1-m}\left(\frac{R}{8}\right)^2].
\]

If the first inequality in \eqref{mu_mu0} fails, then we have \eqref{H:local_energy} and \eqref{H:log_energy} for $(u - \mu^{+}+k)_{+}$ because \eqref{H:k} holds $k=\frac{\omega}{4}$. Then again the same arguments in \cite{CHKK17} deduce that there exists $\delta_2 = \delta_2 (m,d) \in (0,1)$ such that
\[
\esssup_{Q_{R/8}^{\omega/4}}u \leq \mu_{+} - \delta_2 \omega.
\]

From above, when \eqref{mu_mu0} fails, we choose $ \eta_1 = \max\{ 1 - \delta_1, \ 1-\delta_2\}$ to obtain
\[
\essosc_{Q_{R/8}^{\omega/4}} u \leq \eta_1 \omega,
\]
which parallels to Lemma~III.11.1 in \cite{DB93}.

Finally, we construct nested and shrinking family of parabolic cylinders. First, let us set
\begin{equation}\label{lambda}
    {\eta_2} \leq \frac{1}{8} \, {\eta_1}^{\frac{m-1}{2}}, \quad \text{for} \quad {\eta_1} = \max\{ 1 - \delta_1, \ 1-\delta_2\} \in (0,1).
\end{equation} 
This provides
\[
Q_1:=Q_{{\eta_2} R}^{{\eta_1} \omega / 4} \subset Q_{R/8}^{\omega/4} \subset Q_{R}^{\omega/4} \subset Q_{R}^{\omega/4}=: Q_0.
\]
Then by repeating the same iteration, we complete the proof.
\end{proof}

From Proposition~\ref{P:main}, we derive the H\"{o}lder continuity in the next lemma.
\begin{lemma}\label{L:main}
Let $u$ be a nonnegative weak solution of \eqref{H-PME} with $u_0 \in \calC^{\alpha_0}(\mathbb{R}^d)$. Then there exist $\epsilon_0 = \epsilon_0 (m) \in (0,1)$, $\alpha_i = \alpha_i (m,d) \in (0,1)$ for $i=1,2$, and $c= c(m,d) >1$, such that, for any $R>0$ and $0< r \leq R$, it holds
    \begin{equation*} 
    \essosc_{Q_r^0} u \leq c \left( \omega + R^{\epsilon_0} + R^{\alpha_0}\right) \left(\frac{r + (\omega/4)^{\frac{m-1}{2}} r}{R}\right)^{\tilde{\alpha}},
    \end{equation*}
    for $\tilde{\alpha} = \min\{\alpha_0, \alpha_1, \alpha_2, \epsilon_0/2 \}$.
\end{lemma}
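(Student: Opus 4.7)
\medskip
\noindent\textbf{Proof proposal for Lemma~\ref{L:main}.}

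The plan is to iterate the decay of oscillation in Proposition~\ref{P:main} and then, for each prescribed $r\in(0,R]$, pick the index $n^\ast$ in the nested family $\{Q_n\}$ so that $Q^0_r\subset Q_{n^\ast}$ while $R_{n^\ast}$ is comparable to $r+(\omega/4)^{(m-1)/2}r$. The prefactor $(r+(\omega/4)^{(m-1)/2}r)/R$ that appears in the claim is exactly what arises from translating the intrinsic radius $R_{n^\ast}$ back to a standard parabolic radius $r$.

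First I would prove the geometric decay
\[
 \omega_n \;\le\; C\,\Bigl(\tfrac{R_n}{R}\Bigr)^{\alpha_1}\bigl(\omega + R^{\epsilon_0}\bigr),\qquad n=0,1,2,\ldots,
\]
with $\alpha_1=\alpha_1(m,d)\in(0,1)$ chosen so that $\max\{\eta_1,\eta_2^{\epsilon_0}\}=\eta_2^{\alpha_1}$. This is a routine induction on $n$ from the recursion $\omega_{n+1}=\max\{\eta_1\omega_n,4R_n^{\epsilon_0}\}$ in \eqref{omega-n}, using $R_n/R=\eta_2^n$. Next, given $0<r\le R$, I would reduce to the nontrivial regime $\bigl(r+(\omega/4)^{(m-1)/2}r\bigr)/R\le 1$; outside this range the right-hand side of the lemma already dominates $\omega$, so the trivial bound $\essosc_{Q^0_r}u\le\omega$ closes the argument. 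In the nontrivial regime, let $n^\ast\ge 0$ be maximal with $R_{n^\ast}\ge r+(\omega/4)^{(m-1)/2}r$. Then $R_{n^\ast+1}=\eta_2 R_{n^\ast}<r+(\omega/4)^{(m-1)/2}r$, so $R_{n^\ast}\le\eta_2^{-1}(r+(\omega/4)^{(m-1)/2}r)$, and since $\omega_{n^\ast}\le\omega$ the two containment conditions $r\le R_{n^\ast}$ and $(\omega_{n^\ast}/4)^{(m-1)/2}r\le R_{n^\ast}$ both hold, giving $Q^0_r\subset Q_{n^\ast}$.

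Applying Proposition~\ref{P:main} at this level and using $\essosc_{K_{R_{n^\ast}}}u_0\le [u_0]_{\mathcal{C}^{\alpha_0}}R_{n^\ast}^{\alpha_0}$,
\[
 \essosc_{Q^0_r}u \;\le\; \omega_{n^\ast}+2[u_0]_{\mathcal{C}^{\alpha_0}}R_{n^\ast}^{\alpha_0}.
\]
Inserting the decay estimate for $\omega_{n^\ast}$ together with $R_{n^\ast}\le\eta_2^{-1}(r+(\omega/4)^{(m-1)/2}r)$ yields
\[
 \essosc_{Q^0_r}u \;\le\; C\Bigl[(\omega+R^{\epsilon_0})\Bigl(\tfrac{r+(\omega/4)^{(m-1)/2}r}{R}\Bigr)^{\alpha_1}+R^{\alpha_0}\Bigl(\tfrac{r+(\omega/4)^{(m-1)/2}r}{R}\Bigr)^{\alpha_0}\Bigr].
\]
Since the ratio in parentheses is at most $1$, the exponents $\alpha_0,\alpha_1$ may be replaced by any smaller one, so collapsing them to $\tilde\alpha=\min\{\alpha_0,\alpha_1,\alpha_2,\epsilon_0/2\}$ produces the stated bound. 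The extra candidate $\epsilon_0/2$ (and the auxiliary $\alpha_2$) appears when the contribution $R^{\epsilon_0}$ from the decay of $\omega_n$ is re-expressed against the intrinsic time factor $(\omega_n/4)^{1-m}R_n^2$ that defines $Q_n$; balancing this factor against the standard parabolic time $r^2$ in $Q^0_r$ contributes an additional power of $(r+(\omega/4)^{(m-1)/2}r)/R$ with exponent no worse than $\epsilon_0/2$.

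The main obstacle is precisely this matching of two incompatible parabolic geometries at the index $n^\ast$: the cylinders $Q_n$ are intrinsically scaled in time by $(\omega_n/4)^{1-m}$, whereas $Q^0_r$ is non-intrinsic. This forces the term $(\omega/4)^{(m-1)/2}r$ to appear, and the exponent $\epsilon_0/2$ in $\tilde\alpha$ is what keeps the estimate coherent in the degenerate case where $\omega$ is small compared with $R^{\epsilon_0}$ (the alternative \eqref{epsilon_0} in the proof of Proposition~\ref{P:main}), where the decay of $\omega_n$ is saturated by the $R_n^{\epsilon_0}$ term rather than by the geometric factor $\eta_1^n$.
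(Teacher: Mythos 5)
Your proposal follows a genuinely different route from the paper's. The paper extracts two indices from a given $r$ — a spatial one $k$ with $R_{k+1}<r\le R_k$ (condition \eqref{rho:space}) and a temporal one $l$ with $(\omega_{l+1}/4)^{1-m}R_{l+1}^2<r^2\le(\omega_l/4)^{1-m}R_l^2$ (condition \eqref{rho:time}) — and observes that by nestedness $Q_r\subset Q_{\min(k,l)}$, so the oscillation decay at level $\min(k,l)$ can be read off from the iterated recursion $\omega_n\le I_n+II_n+III_n$; the two cases then give exponents $\alpha_1$ and $\alpha_2$ respectively, which is why both appear in $\tilde\alpha$. You instead pick a single index $n^\ast$ calibrated to the combined length $r+(\omega/4)^{(m-1)/2}r$ so that $Q^0_r\subset Q_{n^\ast}$ should hold in one shot. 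This is conceptually tidier, and the surrounding machinery — the geometric decay of $\omega_n$ from the recursion, the conversion of $R_{n^\ast}/R$ into a power of $\bigl(r+(\omega/4)^{(m-1)/2}r\bigr)/R$, and the collapse of exponents down to the stated $\tilde\alpha$ — is sound.

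The gap is in the containment step. You justify $(\omega_{n^\ast}/4)^{(m-1)/2}r\le R_{n^\ast}$ (hence $Q^0_r\subset Q_{n^\ast}$) by asserting $\omega_{n^\ast}\le\omega$. But you are explicitly using the recursion $\omega_{n+1}=\max\{\eta_1\omega_n,\,4R_n^{\epsilon_0}\}$ from \eqref{omega-n}, and that sequence is \emph{not} bounded by $\omega$: already $\omega_1=\max\{\eta_1\omega,\,4R^{\epsilon_0}\}$ exceeds $\omega$ whenever $\omega<4R^{\epsilon_0}$, which is precisely the degenerate alternative \eqref{epsilon_0} in the proof of Proposition~\ref{P:main}. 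In that regime the time extent $(\omega_{n^\ast}/4)^{1-m}R_{n^\ast}^2$ of $Q_{n^\ast}$ shrinks, and the inclusion $Q^0_r\subset Q_{n^\ast}$ is no longer guaranteed by your choice of $n^\ast$. You acknowledge this case in your last paragraph but do not close it. One fix is to run the argument with $\omega_n$ taken to be (an upper bound for) $\essosc_{Q_n}u$, which is automatically $\le\omega$ by nestedness and still satisfies the recursion inequality $\omega_{n+1}\le\eta_1\omega_n+4R_n^{\epsilon_0}+2R_n^{\alpha_0}$ from Proposition~\ref{P:main} and the H\"older continuity of $u_0$; alternatively, split into the alternatives \eqref{Q_Q0}/\eqref{epsilon_0} as the paper does before defining the sequence. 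Either fix must be made explicit; as written the key inclusion is unproved exactly in the degenerate case you flag as the main obstacle.
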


\begin{proof}
For ${\eta_1}, {\eta_2} \in (0,1)$ given in Proposition~\ref{P:main}, let us set $R_n = {\eta_2}^n R$ and $\omega_n = {\eta_1}^n \omega$. For any $0 < \rho \leq R$, there exist nonnegative integers $k$ and $l$ such that
\begin{equation}\label{rho:space}
    R_{k+1} < r \leq R_{k}
\end{equation}
and
\begin{equation}\label{rho:time}
    \left(\frac{\omega_{l+1}}{4}\right)^{1-m}R_{l+1}^{2} < r^2 \leq \left(\frac{\omega_{l}}{4}\right)^{1-m}R_{l}^{2},
\end{equation}
which means $Q_{r}$ belongs to either $Q_k$ or $Q_l$. Therefore, by Proposition~\ref{P:main}, it implies
\[
\essosc_{Q_r} u \leq \max\{ \omega_{k}, \ \omega_{l}\},
\]
and, moreover, it gives (because $\essosc_{K_{R_n}} u_0 \leq R_n^{\alpha_0}$ by the H\"{o}lder continuity of $u_0$)
\[
\omega_{n+1} \leq {\eta_1} \omega_n + 4R_n^{\epsilon_0} + 2 R_n^{\alpha_0}.
\]

First, we deduce from the iteration that
\begin{equation*}
\omega_n \leq {\eta_1}^n \omega + 4 \left(\sum_{i=1}^{n}{\eta_1}^{n-i}{\eta_2}^{\epsilon_0 (i-1)}\right)R^{\epsilon_0} + 2\left(\sum_{i=1}^{n}{\eta_1}^{n-i}{\eta_2}^{\alpha_0 (i-1)}\right)R^{\alpha_0}
= I_{n} + II_{n} + III_{n}.
\end{equation*}
Then we consider two cases separately.

If \eqref{rho:space} holds, then we deduce the following, from the LHS of \eqref{rho:space},
    \[
    {\eta_2}^{k+1}  < \frac{r}{R} \quad \Longrightarrow \quad k+1 < \log_{{\eta_2}}r/R = \log_{{\eta_1}} (r/R)^{\alpha_1}, \quad \text{for} \quad \alpha_1 = \left|\log_{{\eta_2}} {\eta_1} \right|.
    \]
    Therefore, we easily obtain
    \begin{equation}\label{I_k}
    {\eta_1}^{k} < {\eta_1}^{-1}\left(\frac{r}{R}\right)^{\alpha_1} \quad \Longrightarrow \quad I_{k} \leq {\eta_1}^{-1}\omega\left(\frac{r}{R}\right)^{\alpha_1}.
    \end{equation}
 Recalling \eqref{lambda}, without loss of generality, let us fix
    \begin{equation}\label{lambda_fixed}
        {\eta_2} = {\eta_1}^{m-1} < \frac{1}{8}{\eta_1}^{\frac{m-1}{2}} \quad \Longrightarrow \quad {\eta_1} = {\eta_2}^{\frac{1}{m-1}}.
    \end{equation}
    Then now we observe that
    \begin{equation*}
    II_{k} = 4 \left(\sum_{i=1}^{k}{\eta_1}^{k-i}{\eta_2}^{\epsilon_0 (i-1)}\right)R^{\epsilon_0}
    = 4 \left(\sum_{i=1}^{k}{\eta_2}^{\frac{1}{m-1}(k-i)}{\eta_2}^{\epsilon_0 (i-1)}\right)R^{\epsilon_0}
    \leq 4 \left(\sum_{i=1}^{k}{\eta_2}^{\epsilon_0 (k-1)}\right)R^{\epsilon_0},
    \end{equation*}
    by choosing $\epsilon_0 = \min \{ 1/2, \ 1/(m-1) \}$.
    Therefore, it yields
    \begin{equation}\label{II_k}
        II_{k} \leq 4 {\eta_2}^{-\epsilon_0} k ({\eta_2}^{k} R )^{\epsilon_0} \leq 4 {\eta_2}^{-\epsilon_0} \left(\frac{R}{r}\right)^{\epsilon_0 /2} \left(\frac{r}{{\eta_2}}\right)^{\epsilon_0}= 4{\eta_2}^{-2\epsilon_0} R^{\epsilon_0} \left(\frac{r}{R}\right)^{\epsilon_0 / 2},
    \end{equation}
    by \eqref{rho:space} and by choosing $k \in \mathbb{N}$ such that
    \begin{equation}\label{k}
    k < {\eta_2}^{-k \epsilon_0 /2} \leq \left(\frac{R}{r}\right)^{\epsilon_0 /2}.
    \end{equation}
   The second inequality of \eqref{k} comes form the RHS of \eqref{rho:space}, and the first inequality of \eqref{k} is true in general by choosing $\frac{1}{{\eta_2}}$ large enough.
  Let us fix
    \begin{equation}\label{alpha0-i}
    \tilde{\alpha} = \min \{ \alpha_0, \alpha_1, \epsilon_0 / 2\},
    \end{equation}
    and then it follows
    \begin{equation}\label{III_k}
        \begin{aligned}
        III_{k} &= 2\left(\sum_{i=1}^{k}{\eta_1}^{k-i}{\eta_2}^{\alpha_0 (i-1)}\right)R^{\alpha_0}
        \leq 2\left(\sum_{i=1}^{k}{\eta_2}^{\epsilon_0 (k-i)}{\eta_2}^{\alpha_0 (i-1)}\right)R^{\alpha_0} \\
        &\leq 2 {\eta_2}^{-\epsilon_0} \left(\sum_{i=1}^{k}{\eta_2}^{(\epsilon_0 -\tilde{\alpha}) (k-i +1)}\right) ({\eta_2}^{k} R)^{\tilde{\alpha}}
        < \frac{2 {\eta_2}^{-2\epsilon_0}}{{\eta_2}^{-\epsilon_0 /2} - 1} R^{\alpha_0} \left(\frac{r}{R}\right)^{\tilde{\alpha}},
        \end{aligned}
    \end{equation}
    because, by the LHS of \eqref{rho:space},
    \[
    \sum_{i=1}^{k}{\eta_2}^{(\epsilon_0 -\tilde{\alpha}) (k-i +1)} < \frac{1}{{\eta_2}^{\alpha_0 - \epsilon_0} - 1} \leq \frac{1}{{\eta_2}^{- \epsilon_0 /2} - 1},
    \quad \text{and}\quad {\eta_2}^{-\tilde{\alpha}} < {\eta_2}^{-\epsilon_0}.
    \]
 Finally, the summation of \eqref{I_k}, \eqref{II_k}, \eqref{III_k}, and the choice of $\alpha_0$ in \eqref{alpha0-i} imply that
    \begin{equation*}
    \omega_{l} \leq {\eta_1}^{-1}\omega\left(\frac{r}{R}\right)^{\alpha_1} + 4{\eta_2}^{-2\epsilon_0} R^{\epsilon_0} \left(\frac{r}{R}\right)^{\epsilon_0 / 2}
    +  \frac{2 {\eta_2}^{-2\epsilon_0}}{{\eta_2}^{-\epsilon_0 /2} - 1} R^{\alpha_0} \left(\frac{r}{R}\right)^{\tilde{\alpha}}
    \leq c \left( \omega + R^{\epsilon_0} + R^{\alpha_0}\right) \left(\frac{r}{R}\right)^{\tilde{\alpha}}.
    \end{equation*}

  If \eqref{rho:time} hold, then let us rewrite \eqref{rho:time} in the following:
    \begin{equation}\label{rho:time2}
        \left({\eta_1}^{1-m} {\eta_2}^2 \right)^{l+1} R^2 < \left(\frac{\omega}{4}\right)^{m-1} r^2 \ (=:\tilde{r}^2) \leq \left({\eta_1}^{1-m} {\eta_2}^2 \right)^{l} R^2.
    \end{equation}
Let us set
    \[
    \tilde{{\eta_2}} = {\eta_1}^{\frac{1-m}{2}}{\eta_2}, \quad \text{and} \quad \tilde{r}^2 =  \left(\frac{\omega}{4}\right)^{m-1} r^2.
    \]
    Then \eqref{rho:time2} is now simplified as
    \begin{equation*}
        \tilde{{\eta_2}}^{l+1} R < \tilde{r} \leq \tilde{{\eta_2}}^l R.
    \end{equation*}
Without loss of generality, assume that $\tilde{r} \leq R$.
    Moreover, the choice of ${\eta_2}$ as in \eqref{lambda_fixed} gives
    \[
    {\eta_1}^{1-m} {\eta_2}^{2} = {\eta_1}^{m-1} = {\eta_2}  \quad \text{and} \quad \tilde{{\eta_2}} = {\eta_2}^{3/2}.
    \]
   Therefore, the first inequality in \eqref{rho:time2} yields
        \begin{equation*} 
    l+1 < \log_{{\eta_1}} \left(\tilde{r}/R\right)^{\alpha_2} \quad \Longrightarrow \quad I_l \leq {\eta_1}^{-1} \omega \left(\frac{\tilde{r}}{R}\right)^{\alpha_2}, \quad \text{for}\quad \alpha_2 = \abs{\log_{\tilde{{\eta_2}}} {\eta_1}}.
    \end{equation*}
Then we carry similar analysis for $II_l$ and $III_l$ as handling $II_k$ in \eqref{II_k} and $III_k$ as in \eqref{III_k} with $\tilde{\alpha} = \min\{\alpha_0, \alpha_2, \frac{\epsilon_0}{2}\}$ to have
\begin{equation*}
    \omega_{l} \leq {\eta_1}^{-1} \omega \left(\frac{\tilde{r}}{R}\right)^{\alpha_2}+ 4 \tilde{{\eta_2}}^{-2\epsilon_0} R^{\epsilon_0}\left(\frac{\tilde{r}}{R}\right)^{\epsilon_0 /2}+ \frac{2 {\eta_2}^{-2\epsilon_0}}{{\eta_2}^{-\epsilon_0 /2} - 1}   R^{\alpha_0}\left(\frac{\tilde{r}}{R} \right)^{\tilde{\alpha}}
    \leq c \left( \omega + R^{\epsilon_0} + R^{\alpha_0}\right) \left(\frac{\tilde{r}}{R} \right)^{\tilde{\alpha}}.
    \end{equation*}

Therefore, we complete the proof by choosing $\tilde{\alpha} = \min\{\alpha_0, \alpha_1, \alpha_2, \frac{\epsilon_0}{2} \}$ and by combining all estimates above.
\end{proof}

Once we have Lemma~\ref{L:main}, Theorem~\ref{T:Boundary_Holder_local} follows from standard computations (refer \cites{DB93, CHKK17, HZ21}).


\section{Figure supplements}\label{Appendix:fig}

\begin{itemize}
\item Figures 11 and 12 (\textit{Theorem~\ref{Theorem-2-b} for case $1<m\leq 2$, $q>1$}): Refer Remark~\ref{R:Theorem2-b} $(iii)$ and Figure 4-$(i)$. 
As $d$ increases, the point $\textbf{b}$ approaches closer to the origin and $\textbf{A}$ may locate on the right hand side of $\textbf{B}$ and $\textbf{b}$.

\begin{center}
\begin{tikzpicture}[domain=0:16]


\draw (0, -1) node[right] { \scriptsize \textit{Figure 4-(ii)}. $\max\{2,\frac{2m}{(2m-1)(m-1)}\} < d \leq \frac{2m}{m-1}$.};

\fill[fill= lgray]
(0.7, 0.78) -- (0.7, 0.25) -- (0.8,0) -- (1.92, 0.78);

\fill[fill= gray]
(0.7, 2)--(0.7, 0.78) -- (1.97,0.78) -- (2.35, 1.1) -- (1.45, 2)--(0.7, 2);

\draw[->] (0,0) node[left] {\scriptsize $0$}
-- (5,0) node[right] {\scriptsize $\frac{1}{q_1}$};
\draw[->] (0,0) -- (0,4.5) node[left] { \scriptsize $\frac{1}{q_2}$};

\draw (0,4) node{\scriptsize $+$} node[left]{\scriptsize $1$} ;
\draw (4,0) node{\scriptsize $+$} ;

\draw[very thin]
(0, 2) -- (1.45, 2) ;
\draw[thick](1.45, 2) circle(0.05) node[right] {\scriptsize \textbf{E}};

\draw (0, 2) -- (1.45, 2);

\draw (0.8, 0) -- (2.35, 1.1);

\draw[thick] (2.35, 1.1) circle(0.05) node[right] {\scriptsize \textbf{D}};
\draw[very thin]
 (0.48, 0.78)
-- (1.92, 0.78) node{\scriptsize $\bullet$} node[right]{\scriptsize \textbf{C}} ;

\draw[thick](0.48, 0.78) circle(0.05) node[left] {\scriptsize \textbf{B}};

\draw (0.7, 0.78) node{\scriptsize $\bullet$} node[above]{\scriptsize \textbf{G}}  ;

\draw[very thin]
(0.7, 0.3)
 --(0.7, 2) node{\scriptsize $\bullet$} node[above] {\scriptsize \textbf{F}};

\draw[thick](0.7, 0.25) circle(0.05) node[left] {\scriptsize \textbf{A}};

\draw[thick, dotted] (0,2) node{\scriptsize $\times$}  node[left] {\scriptsize $\textbf{a}$}
 -- (0.8, 0)  node[below] {\scriptsize $\textbf{b}$};
\draw[thick] (0.8, 0) circle(0.05);

\draw[thick, dotted]
(0,3.4) node {\scriptsize $\times$} node[left] {\scriptsize $\frac{1}{{\lambda_1}}$}
-- (3.5, 0) node {\scriptsize $\times$} node[below] {\scriptsize $\frac{1+d(m-1)}{d}$};
\draw (3.9, 0.3) node{\scriptsize $\mathcal{S}_{m,1}^{(q_1, q_2)}$};

\draw (0,2.7) node {\scriptsize $\times$} node[left] {\scriptsize $\frac{m+d(m-1)}{2m+d(m-1)}$}
    -- (2.7, 0) node {\scriptsize $\times$} node[below] {\scriptsize \textbf{c}};
\draw (2.7, 0.3) node{\scriptsize $\mathcal{S}_{m,m}^{(q_1, q_2)}$};



\draw[thin, dotted] (1.85, 0) node{\scriptsize $*$}  -- (1.85, 0.63);

\draw[thin, dotted] (2.35, 0) node{\scriptsize $*$}  -- (2.35, 1.1);
\draw[thin, dotted] (0, 1.1) node{\scriptsize $*$}  -- (2.35, 1.1);

\draw[thin, dotted] (1.45, 0) node{\scriptsize $*$}  -- (1.45, 2);


\draw(7, 4) node[right]{\scriptsize $\overline{\textbf{ab}}, \textbf{A}-\textbf{F}, \textbf{c}$: same as in Fig. 4-$(i)$};
\draw (7, 3.5) node[right] {\scriptsize $\mathcal{R} (\textbf{GCDEF})$: scaling invariant class of \eqref{T2:V} };
\draw (7, 3) node[right] {\scriptsize $\mathcal{R} (\textbf{GAbC})$: scaling invariant class of \eqref{T2:V-embedding} };
\draw (7, 2.5) node[right] {\scriptsize $\textbf{G} = (\frac{m-1}{2m}, \frac{m-1}{2m-1})$ };

\end{tikzpicture}
\end{center}

\begin{center}
\begin{tikzpicture}[domain=0:16]


\draw (0, -1) node[right] { \scriptsize \textit{Figure 4-(iii)}. $ d > \frac{2m}{m-1}$.};

\fill[fill= lgray]
(0.75, 1) -- (0.75, 0.2) -- (1.75,1);

\fill[fill= gray]
(0.75, 2)--(0.75, 1) -- (1.75,1) -- (2.15, 1.3) -- (1.45, 2)--(0.75, 2);

\draw[->] (0,0) node[left] {\scriptsize $0$}
-- (5,0) node[right] {\scriptsize $\frac{1}{q_1}$};
\draw[->] (0,0) -- (0,4.5) node[left] { \scriptsize $\frac{1}{q_2}$};

\draw (0,4) node{\scriptsize $+$} node[left]{\scriptsize $1$} ;
\draw (4,0) node{\scriptsize $+$} ;

\draw[very thin]
(0, 2) -- (1.45, 2) ;
\draw[thick](1.45, 2) circle(0.05) node[right] {\scriptsize \textbf{E}};

\draw (0, 2) -- (1.45, 2);

\draw (0.5, 0) -- (2.15, 1.3);

\draw[thick] (2.15, 1.3) circle(0.05) node[right] {\scriptsize \textbf{D}};
\draw[very thin]
 (0.25, 1)
-- (1.75, 1) node{\scriptsize $\bullet$} node[right]{\scriptsize \textbf{C}} ;

\draw[thick](0.25, 1) circle(0.05) node[below] {\scriptsize \textbf{B}};

\draw (0.75, 1) node{\scriptsize $\bullet$} node[above]{\scriptsize \textbf{G}}  ;
\draw (0.75, 0.2) node{\scriptsize $\bullet$} node[above]{\scriptsize \textbf{H}}  ;

\draw[very thin]
(0.75, 0)
 --(0.75, 2) node{\scriptsize $\bullet$} node[above] {\scriptsize \textbf{F}};

\draw[thick](0.75, 0) circle(0.05) node[below] {\scriptsize \textbf{A}};

\draw[thick, dotted] (0,2) node{\scriptsize $\times$} node[left] {\scriptsize $\textbf{a}$}
 -- (0.5, 0) node{\scriptsize $\times$}  node[below] {\scriptsize $\textbf{b}$};

\draw[thick, dotted]
(0,3.4) node {\scriptsize $\times$} node[left] {\scriptsize $\frac{1}{{\lambda_1}}$}
-- (3.5, 0) node {\scriptsize $\times$} node[below] {\scriptsize $\frac{1+d(m-1)}{d}$};
\draw (3.9, 0.3) node{\scriptsize $\mathcal{S}_{m,1}^{(q_1, q_2)}$};

\draw (0,2.8) node {\scriptsize $\times$} node[left] {\scriptsize $\frac{m+d(m-1)}{2m+d(m-1)}$}
    -- (2.7, 0) node {\scriptsize $\times$} node[below] {\scriptsize \textbf{c}};
\draw (2.7, 0.3) node{\scriptsize $\mathcal{S}_{m,m}^{(q_1, q_2)}$};




\draw[thin, dotted] (10.35, 0) node{\scriptsize $*$}  -- (10.35, 1.1);

\draw[thin, dotted] (9.45, 0) node{\scriptsize $*$}  -- (9.45, 2);


\draw(7, 4) node[right]{\scriptsize $\overline{\textbf{ab}}, \textbf{A}-\textbf{F}, \textbf{c}$: same as in Fig. 4-$(i)$};
\draw (7, 3.5) node[right] {\scriptsize $\mathcal{R} (\textbf{GCDEF})$: scaling invariant class of \eqref{T2:V} };
\draw (7, 3) node[right] {\scriptsize $\mathcal{R} (\textbf{GHC})$: scaling invariant class of \eqref{T2:V-embedding}};

\draw (7, 2.5) node[right] {\scriptsize $\textbf{G} = (\frac{m-1}{2m}, \frac{m-1}{2m-1})$};
\draw (7, 2) node[right] {\scriptsize $\textbf{H} = (\frac{m-1}{2m}, \frac{(m-1)d-2m}{2m(d-2)})$ };

\end{tikzpicture}
\end{center}


\item \emph{Theorem~\ref{Theorem-2-b} for case $m>2$, $q\geq m-1$}: Refer Remark~\ref{R:Theorem2-b} $(iv)$ and Figure 5-$(i)$.

\begin{center}
\begin{tikzpicture}[domain=0:16]

\draw (0, -1) node[right] { \scriptsize \textit{Figure 5-(ii)}. $m>2$, $q\geq m-1$, $d >\frac{2m}{m-1}$.};

\fill[fill= lgray]
(0.85, 1.27)-- (0.85, 0.38) -- (1.5, 1.27);

\fill[fill= gray]
(0.85, 2) -- (0.85,1.27) -- (1.5, 1.27) --(2,2);

\draw[->] (0,0) node[left] {\scriptsize $0$} -- (5,0) node[right] {\scriptsize $\frac{1}{q_1}$};
\draw[->] (0,0) -- (0,4.5) node[left] { \scriptsize $\frac{1}{q_2}$};

\draw (0,4) node{\scriptsize $+$} node[left] {\scriptsize $1$} ;
\draw (4,0) node{\scriptsize $+$};

\draw
(0, 2) node[left] {\scriptsize \textbf{a}} -- (2, 2) ;

\draw[thick, dotted] (0,2) node{\scriptsize $\times$} node[left] {\scriptsize $\textbf{a}$}
 -- (0.6, 0)node{\scriptsize $\times$} node[below] {\scriptsize $\textbf{b}$};

\draw(2, 2) node{\scriptsize $\bullet$} node[above] {\scriptsize \textbf{D}};

\draw(0.85,2) node{\scriptsize $\bullet$} node[above]{\scriptsize \textbf{E}}
--(0.85, 0) node[below]{\scriptsize \textbf{A}} ;
\draw[thick] (0.85, 0) circle(0.05);

\draw (0.23, 1.27) node[below]{\scriptsize \textbf{B}}
--(1.5, 1.27)node{\scriptsize $\bullet$}node[right]{\scriptsize \textbf{C}};
\draw[thick] (0.23, 1.27) circle(0.05);

\draw(0.85, 1.27) node{\scriptsize $\bullet$} node[above]{\scriptsize \textbf{G}};

\draw(0.85, 0.38) node{\scriptsize $\bullet$}node[right]{\scriptsize \textbf{H}};

\draw (0.6, 0)  -- (2, 2);


\draw (0,3.5) node {\scriptsize $\times$} node[left] {\scriptsize $\frac{1+d}{2+d}$}
-- (4.6, 0) node {\scriptsize $\times$} node[below] {\scriptsize $\frac{1+d}{d}$} ;
\draw (4.5, 1) node {\scriptsize $\mathcal{S}_{m,m-1}^{(q_1, q_2)}$} ;

\draw (0,2.9) node {\scriptsize $\times$} node[left] {\scriptsize $\frac{m+d(m-1)}{md}$}
 -- (2.7, 0) node {\scriptsize $\times$} node[below] {\scriptsize $\frac{m+d(m-1)}{2m+d(m-1)}$};
\draw (3.2, 0.5) node{\scriptsize $\mathcal{S}_{m,m}^{(q_1, q_2)}$};

\draw (7, 4) node[right] {\scriptsize $\overline{\textbf{ab}}, \textbf{B}-\textbf{E}$: same as in Fig. 5-$(i)$};
\draw (7, 3.5) node[right] {\scriptsize $\mathcal{R}(\textbf{GCDE})$: scaling invariant class of \eqref{T2:V} };
\draw (7, 3) node[right] {\scriptsize $\mathcal{R}(\textbf{GHC})$: scaling invariant class of \eqref{T2:V-embedding} };
\draw (7, 2.5) node[right] {\scriptsize $\textbf{A}= (\frac{m-1}{2m},0)$  };
\draw (7, 2) node[right] {\scriptsize $\textbf{G}, \textbf{H}$: same as in Fig. 4-$(iii)$ };

\end{tikzpicture}
\end{center}

\item \emph{Theorem~\ref{Theorem-4a} for case $m>2$, $q > \frac m2$}: Refer Remark~\ref{R:Theorem-4a} $(iii)$ and Figure 6-$(i)$.

When $d=2$, the line $\overline{\textbf{CH}}$ is excluded from the region $\mathcal{R}(\textbf{ABCHI})$.

\begin{center}
\begin{tikzpicture}[domain=0:16]


\draw (0, -1) node[right] { \scriptsize \textit{Figure 6-(ii)}. $m>2 $, $q> \frac m2$, $d>2$.};

\fill[fill= lgray]
(0, 2) -- (1.1,0) -- (2.5, 0) -- (3.25, 0.72) -- (0,3) -- (0, 2);

\draw[->] (0,0) node[left] {\scriptsize $0$}
-- (6.5,0) node[right] {\scriptsize $\frac{1}{q_1}$};
\draw[->] (0,0) -- (0,4.5) node[left] { \scriptsize $\frac{1}{q_2}$};

\draw (0,4) node{\scriptsize $+$} node[left]{\scriptsize $1$} ;

\draw[thick] (0, 2) circle(0.05) node[left] {\scriptsize \textbf{A}};
\draw (0.9, 0.5) node{\scriptsize $\mathcal{S}_{m,\infty}^{(q_1, q_2)}$};

\draw[thick, dotted] (0,2) -- (1.1, 0) node[below] {\scriptsize $\textbf{B}$};
\draw[thick] (1.1, 0) circle(0.05) ;

\draw [dotted]
(0,3.6)  node[left] {\scriptsize \textbf{E}}
-- (5.7, 0) node{\scriptsize $\times$}  node[below]{\scriptsize $\frac{1+d(m-1)}{d}$} ;
\draw[thick] (0, 3.6) circle(0.05) ;
\draw (1.6, 3) node{\scriptsize $\mathcal{S}_{m,1}^{(q_1, q_2)}$};

\draw[thick, dotted] (0,3)
    -- (4.3, 0)   node {\scriptsize $\times$} node[below]{\scriptsize $\frac{m+2d(m-1)}{md}$};
\draw (2, 2) node{\scriptsize $\mathcal{S}_{m,\frac m2}^{(q_1, q_2)}$};
\draw[thick] (0, 3) circle(0.05) node[left]{\scriptsize \textbf{I}};
\draw[thick] (3.23, 0.75) circle(0.05);
\draw (3.23, 0.75) node[right] {\scriptsize \textbf{H}};

\draw (0,2.6) node {\scriptsize $\bullet$}  node[left]{\scriptsize \textbf{f}}
    -- (2.5, 0) node {\scriptsize $\bullet$}  node[below] {\scriptsize \textbf{C}};
\draw (2, 1) node{\scriptsize $\mathcal{S}_{m,q^\ast}^{(q_1, q_2)}$};

\draw[thick] (3.7, 1.25) circle(0.05) node[right]{\scriptsize \textbf{D}} ;
\draw (2.5, 0) -- (3.7,1.25);

\draw (7, 4) node[right] {\scriptsize $\overline{\textbf{AB}} = \mathcal{S}_{m, \infty}^{(q_1, q_2)}$, $\textbf{A} = (0, \frac 12)$, $\textbf{B} = (\frac 1d, 0)$ };

\draw (7, 3.5) node[right] {\scriptsize $\mathcal{R} (\textbf{ABCHI})$: scaling invariant class of \eqref{T4:V-energy}};
\draw (7, 3) node[right] {\scriptsize \textbf{C,D,E,f}: same as in Fig. 6-$(i)$ };

\draw (7,2.5) node[right] {\scriptsize $\overline{\textbf{IH}}$ is on $\mathcal{S}_{m,m/2}^{(q_1, q_2)}$};
\draw (7, 2) node[right] {\scriptsize  $\textbf{H} = \textbf{g}_{\{q=m/2\}}$ where $\textbf{g}$ is same in Fig. 6-$(i)$};
\draw (7, 1.5) node[right] {\scriptsize $\textbf{I} = (0, \frac{m+2d(m-1)}{2m+2d(m-1)})$};

\end{tikzpicture}
\end{center}


\item \emph{Theorem~\ref{Theorem-4} for case $1<m\leq 2$, $q>1$}: Refer Remark~\ref{R:Theorem-4}  $(ii)$ and Figure 7-$(i)$.

\begin{center}
\begin{tikzpicture}[domain=0:16]


\draw (0, -1) node[right] { \scriptsize \textit{Figure 7-(ii)}. $\max\{ 2, \frac{2m}{(2m-1)(m-1)}\} < d \leq \frac{2m}{m-1}$.};


\fill[fill= lgray]
(0, 2) -- (1.1,0) -- (2.3, 0) -- (2.3, 1.35) --(0,3.6)--(0, 2);

\fill[fill= gray]
(0.75, 1.05) -- (1.9,1.05) -- (2.3, 1.35) -- (0, 3.6)--(0.75,2);


\draw[->] (0,0) node[left] {\scriptsize $0$}
-- (5,0) node[right] {\scriptsize $\frac{1}{q_1}$};
\draw[->] (0,0) -- (0,4.5) node[left] { \scriptsize $\frac{1}{q_2}$};

\draw (0,4) node{\scriptsize $+$} node[left]{\scriptsize $1$} ;

\draw[thick, dotted] (0,2) -- (1.1, 0)  node[below] {\scriptsize $\textbf{b}$};
\draw[thick] (1.1, 0) circle(0.05) ;

\draw [thick, dotted]
(0,3.6)  node[left] {\scriptsize $\frac{1}{{\lambda_1}}$} node[right] {\scriptsize \textbf{E}}
-- (3.7, 0)  node[below]{\scriptsize \textbf{g}};
\draw[thick] (0, 3.6) circle(0.05) ;
\draw[thick] (3.7, 0) circle(0.05) ;
\draw (4, 0.5) node{\scriptsize $\mathcal{S}_{m,1}^{(q_1, q_2)}$};

\draw (0,2.6) node {\scriptsize $\times$} node[left] {\scriptsize $\frac{m+d(m-1)}{2m+d(m-1)}$}
    -- (3.2, 0) node {\scriptsize $\times$} node[below] {\scriptsize \textbf{i}};
\draw (2.3, 0.5) node{\scriptsize $\mathcal{S}_{m,m}^{(q_1, q_2)}$};

\draw (0.75, 0.6) node[left]{\scriptsize \textbf{A}} -- (0.75, 2);
\draw[thick] (0.75, 0.6) circle(0.05);

\draw (0.75, 2) node{\scriptsize $\bullet$} node[right] {\scriptsize \textbf{F}};
\draw[dotted](0, 2)  node[left] {\scriptsize \textbf{a}}--(0.75, 2);
\draw[thick] (0, 2) circle(0.05) ;
\draw (0.75, 2) -- (0, 3.6) ;

\draw (0.5,1.05) node[left]{ \scriptsize $\textbf{B}$}
--(1.9, 1.05)node{\scriptsize $\bullet$}node[right]{ \scriptsize $\textbf{C}$};
\draw[thick] (0.5, 1.05) circle(0.05);

\draw (1.9, 1.05) -- (2.3, 1.35) node[right] {\scriptsize \textbf{D}};
\draw[thick] (2.3, 1.35) circle(0.05) ;

\draw (0.75,1.05) node{\scriptsize $\bullet$} node[right]{\scriptsize $\textbf{G}$};




\draw[thin, dotted] (2.3, 0)  node[below]{\scriptsize \textbf{h}}  -- (2.3, 1.35) ;
\draw[thick] (2.3, 0) circle(0.05) ;


\draw (7, 4) node[right] {\scriptsize $\overline{\textbf{ab}} = \mathcal{S}_{m, \infty}^{(q_1, q_2)}$, $\textbf{a} = (0, \frac 12)$, $\textbf{b} = (\frac 1d, 0)$ };

\draw (7, 3.5) node[right] {\scriptsize $\mathcal{R} (\textbf{GCDEF})$: scaling invariant class of \eqref{T4:V-divfree} };
\draw (7, 3) node[right] {\scriptsize $\mathcal{R} (\textbf{abhDE})$: scaling invariant class of \eqref{T4:V-divfree-embedding} };

\draw (7, 2.5) node[right] {\scriptsize $\textbf{A-F, g-i}$: same as in Fig. 7-$(i)$};
\draw (7, 2) node[right] {\scriptsize $\textbf{G} = (\frac{m-1}{2m}, \frac{m-1}{2m-1})$};

\end{tikzpicture}
\end{center}

\begin{center}
\begin{tikzpicture}[domain=0:16]

\draw (0, -1) node[right] { \scriptsize \textit{Figure 7-(iii)}. $d > \frac{2m}{m-1}$.};


\fill[fill= lgray]
(0, 2) -- (0.5,0) -- (2.3, 0) -- (2.3, 1.35) --(0,3.6)--(0, 2);

\fill[fill= gray]
(0.75, 1.05) -- (1.9,1.05) -- (2.3, 1.35) -- (0, 3.6)--(0.75,2);


\draw[->] (0,0) node[left] {\scriptsize $0$}
-- (5,0) node[right] {\scriptsize $\frac{1}{q_1}$};
\draw[->] (0,0) -- (0,4.5) node[left] { \scriptsize $\frac{1}{q_2}$};

\draw (0,4) node{\scriptsize $+$} node[left]{\scriptsize $1$} ;

\draw[thick, dotted] (0,2) -- (0.5, 0)  node[below] {\scriptsize $\textbf{b}$};
\draw[thick] (0.5, 0) circle(0.05) ;

\draw [thick, dotted]
(0,3.6)  node[left] {\scriptsize $\frac{1}{{\lambda_1}}$} node[right] {\scriptsize \textbf{E}}
-- (3.7, 0)  node[below]{\scriptsize \textbf{g}};
\draw[thick] (0, 3.6) circle(0.05) ;
\draw[thick] (3.7, 0) circle(0.05) ;
\draw (4, 0.5) node{\scriptsize $\mathcal{S}_{m,1}^{(q_1, q_2)}$};

\draw (0,2.6) node {\scriptsize $\times$}
    -- (3.2, 0) node {\scriptsize $\times$} node[below] {\scriptsize \textbf{i}};
\draw (2.3, 0.5) node{\scriptsize $\mathcal{S}_{m,m}^{(q_1, q_2)}$};

\draw (0.75, 0) node{\scriptsize $\bullet$} node[below]{\scriptsize \textbf{A}} -- (0.75, 2);

\draw (0.75, 2) node{\scriptsize $\bullet$} node[right] {\scriptsize \textbf{F}};
\draw[dotted](0, 2)  node[left] {\scriptsize \textbf{a}}--(0.75, 2);
\draw[thick] (0, 2) circle(0.05) ;
\draw (0.75, 2) -- (0, 3.6) ;

\draw (0.25,1.05)  node[left]{ \scriptsize $\textbf{B}$}
--(1.9, 1.05)node{\scriptsize $\bullet$}node[right]{ \scriptsize $\textbf{C}$};
\draw[thick] (0.25, 1.05) circle(0.05);

\draw (1.9, 1.05) -- (2.3, 1.35) node[right] {\scriptsize \textbf{D}};
\draw[thick] (2.3, 1.35) circle(0.05) ;

\draw (0.75,1.05) node{\scriptsize $\bullet$} node[right]{\scriptsize $\textbf{G}$};




\draw[thin, dotted] (2.3, 0)  node[below]{\scriptsize \textbf{h}}  -- (2.3, 1.35) ;
\draw[thick] (2.3, 0) circle(0.05) ;


\draw (7, 4) node[right] {\scriptsize $\overline{\textbf{ab}} = \mathcal{S}_{m, \infty}^{(q_1, q_2)}$, $\textbf{a} = (0, \frac 12)$, $\textbf{b} = (\frac 1d, 0)$ };

\draw (7, 3.5) node[right] {\scriptsize $\mathcal{R} (\textbf{GCDEF})$: scaling invariant class of \eqref{T4:V-divfree}};
\draw (7, 3) node[right] {\scriptsize $\mathcal{R} (\textbf{abhDE})$: scaling invariant class of \eqref{T4:V-divfree-embedding}};

\draw (7, 2.5) node[right] {\scriptsize $\textbf{A-F, g-i}$: same as in Fig. 7-$(i)$};
\draw (7, 2) node[right] {\scriptsize $\textbf{G} = (\frac{m-1}{2m}, \frac{m-1}{2m-1})$};

\end{tikzpicture}
\end{center}

\item \emph{Theorem~\ref{Theorem-4} for case $m>2$, $q\geq \frac m2$}: Refer Remark~\ref{R:Theorem-4} $(ii)$ and Figure 7-$(i)$. We note that $\frac{2m}{(2m-1)(m-1)} < 2$ for $m>2$, thus it is enough to consider two cases.

\begin{center}
\begin{tikzpicture}[domain=0:16]


\draw (-0.5, -1) node[right] { \scriptsize \textit{Figure 8-(i)}.  $m>2 $, $q\geq \frac m2$, $2< d \leq \frac{2m}{m-1}$. };

\fill[fill= lgray]
(0.42, 2.7) -- (0.42, 1.25) -- (1.1, 0) -- (2.42, 0) -- (2.42, 1.23);

\fill[fill= gray]
(0.42, 2.7) -- (0.75, 2) -- (0.75,0.9) -- (2.1, 0.9) -- (2.42, 1.23);

\draw[->] (0,0) node[left] {\scriptsize $0$}
-- (5.5,0) node[right] {\scriptsize $\frac{1}{q_1}$};
\draw[->] (0,0) -- (0,4.5) node[left] { \scriptsize $\frac{1}{q_2}$};

\draw (0,4) node{\scriptsize $+$} node[left]{\scriptsize $1$} ;

\draw[thick, dotted] (0,2) node{\scriptsize $\times$} node[left] {\scriptsize \textbf{a}} -- (1.1, 0) node[below] {\scriptsize $\textbf{b}$};
\draw[thick] (1.1, 0) circle(0.05) ;

\draw [dotted]
(0,3.6) node{\scriptsize $\times$}  node[left] {\scriptsize $\frac{1}{{\lambda_1}}$} node[right] {\scriptsize $\textbf{E}$}
-- (4.5, 0) node{\scriptsize $\times$} node[below]{\scriptsize \textbf{g}} ;
\draw (4, 1) node{\scriptsize $\mathcal{S}_{m,1}^{(q_1, q_2)}$};

\draw (0,3) node {\scriptsize $\times$} node[left] {\scriptsize $\frac{m+2d(m-1)}{2m+2d(m-1)}$ }
    -- (4.1, 0) node {\scriptsize $\times$};
\draw (1.9, 2) node{\scriptsize $\mathcal{S}_{m,\frac m2}^{(q_1, q_2)}$};
\draw (0.42, 2.7) node{\scriptsize $\bullet$} node[right] {\scriptsize $\textbf{I}$} ;
\draw[thin, dotted] (0.42, 2.7) --(0.42, 1.25) node[left] {\scriptsize $\textbf{j}$} ;
\draw[thick] (0.42, 1.25) circle(0.05) ;
\draw (2.42, 1.23) node{\scriptsize $\bullet$} node[right] {\scriptsize $\textbf{H}$};

\draw (0,2.6) node {\scriptsize $\times$}
    -- (3.2, 0) node {\scriptsize $\times$} node[below] {\scriptsize \textbf{i}};
\draw (2.4, 0.5) node{\scriptsize $\mathcal{S}_{m,m}^{(q_1, q_2)}$};

\draw (0.75, 0.6) node[left]{\scriptsize \textbf{A}} -- (0.75, 2);
\draw[thick] (0.75, 0.6) circle(0.05) ;

\draw (0.75, 2) node{\scriptsize $\bullet$} node[right] {\scriptsize \textbf{F}};
\draw[dotted](0, 2)  node[left] {\scriptsize \textbf{a}}--(0.75, 2);
\draw (0.75, 2) -- (0, 3.6) ;

\draw (0.58,0.9) node[left]{ \scriptsize $\textbf{B}$}
--(2.1, 0.9)node{\scriptsize $\bullet$}node[right]{ \scriptsize $\textbf{C}$};
\draw[thick] (0.58, 0.9) circle(0.05) ;

\draw (2.1, 0.9) -- (2.65, 1.5) node[right] {\scriptsize \textbf{D}};
\draw[thick] (2.65, 1.5) circle(0.05) ;

\draw (0.75,0.9) node{\scriptsize $\bullet$} node[right]{\scriptsize $\textbf{G}$};




\draw[thin, dotted] (2.42, 0) node{\scriptsize $\bullet$} node[below]{\scriptsize \textbf{h}}  -- (2.42, 1.23) ;

\draw (5, 4) node[right] {\scriptsize $\overline{\textbf{ab}} = \mathcal{S}_{m, \infty}^{(q_1, q_2)}$, $\textbf{a} = (0, \frac 12)$, $\textbf{b} = (\frac 1d, 0)$ };
\draw (5, 3.5) node[right] {\scriptsize $\textbf{I,H}$ on $\mathcal{S}_{m, \frac m2}^{(q_1, q_2)}$ where $\textbf{I} = (\frac{1}{q_1^{I}}, \frac{1}{q_2^I})$, $\textbf{H} = (\frac{1}{q_1^{H}}, \frac{1}{q_2^H})$};
\draw (5.5, 3) node[right] {\scriptsize $q_2^I = p_{m/2}$, $q_2^H =\frac{p_{m/2}(3m-2)}{3m-4}$ for $p_{m/2} = 1+\frac{d(m-2)+m}{2d(m-1)+m}$ };

\draw (5, 2.5) node[right] {\scriptsize $\overline{\textbf{FC}} = \mathcal{S}_{m, m}^{(q_1, q_2)}$, $\textbf{F, C, i}$: same as in Figure 7-$(i)$};

\draw (5, 2) node[right] {\scriptsize $\mathcal{R} (\textbf{GCHIF})$: scaling invariant class of \eqref{T4:V-divfree}};
\draw (5, 1.5) node[right] {\scriptsize $\mathcal{R} (\textbf{jbhHI})$: scaling invariant class of \eqref{T4:V-divfree-embedding} };
\draw (5, 1) node[right] {\scriptsize $\textbf{j, h}$: projections of $\textbf{I, H}$ onto $\mathcal{S}_{m, \infty}^{(q_1, q_2)}$ and $1/q_1$-axis, respectively };
\draw (5, 0.5) node[right] {\scriptsize $\textbf{A,B,D,E,G,g}$: same as in Fig. 7-$(ii)$};

\end{tikzpicture}
\end{center}

\begin{center}
\begin{tikzpicture}[domain=0:16]


\draw (-0.5, -1) node[right] { \scriptsize \textit{Figure 8-(ii)}.  $ m > 2$, $q \geq \frac{m}{2}$, $d > \frac{2m}{m-1}$. };

\fill[fill= lgray]
(0.42, 2.7) -- (0.42, 0.5) -- (0.55, 0) -- (2.42, 0) -- (2.42, 1.23);

\fill[fill= gray]
(0.42, 2.7) -- (0.75, 2) -- (0.75,0.9) -- (2.1, 0.9) -- (2.42, 1.23);

\draw[->] (0,0) node[left] {\scriptsize $0$}
-- (5.5,0) node[right] {\scriptsize $\frac{1}{q_1}$};
\draw[->] (0,0) -- (0,4.5) node[left] { \scriptsize $\frac{1}{q_2}$};

\draw (0,4) node{\scriptsize $+$} node[left]{\scriptsize $1$} ;

\draw(0, 2) node{\scriptsize $\times$} node[left] {\scriptsize \textbf{a}};

\draw[thick, dotted] (0,2) -- (0.55, 0)  node[below] {\scriptsize $\textbf{b}$};
\draw[thick] (0.55, 0) circle(0.05) ;

\draw [dotted]
(0,3.6) node{\scriptsize $\times$} node[left] {\scriptsize $\frac{1}{{\lambda_1}}$} node[right] {\scriptsize $\textbf{E}$}
-- (4.5, 0) node{\scriptsize $\times$}  node[below]{\scriptsize \textbf{g}} ;
\draw (4, 1) node{\scriptsize $\mathcal{S}_{m,1}^{(q_1, q_2)}$};

\draw (0,3) node {\scriptsize $\times$} node[left] {\scriptsize $\frac{m+2d(m-1)}{2m+2d(m-1)}$ }
    -- (4.1, 0) node {\scriptsize $\times$};
\draw (1.9, 2) node{\scriptsize $\mathcal{S}_{m,\frac m2}^{(q_1, q_2)}$};
\draw (0.42, 2.7) node{\scriptsize $\bullet$} node[right] {\scriptsize $\textbf{I}$} ;
\draw[thin, dotted] (0.42, 2.7) --(0.42, 0.5)  node[left] {\scriptsize $\textbf{j}$} ;
\draw[thick] (0.42, 0.5) circle(0.05) ;
\draw (2.42, 1.23) node{\scriptsize $\bullet$} node[right] {\scriptsize $\textbf{H}$};

\draw (0,2.6) node {\scriptsize $\times$}
    -- (3.2, 0) node {\scriptsize $\times$} node[below] {\scriptsize \textbf{i}};
\draw (2.4, 0.5) node{\scriptsize $\mathcal{S}_{m,m}^{(q_1, q_2)}$};

\draw (0.75, 0) node{\scriptsize $\bullet$} node[below]{\scriptsize \textbf{A}} -- (0.75, 2);

\draw (0.75, 2) node{\scriptsize $\bullet$} node[right] {\scriptsize \textbf{F}};
\draw[dotted](0, 2)  node[left] {\scriptsize \textbf{a}}--(0.75, 2);
\draw (0.75, 2) -- (0, 3.6) ;

\draw (0.28,0.9)  node[left]{ \scriptsize $\textbf{B}$}
--(2.1, 0.9)node{\scriptsize $\bullet$}node[right]{ \scriptsize $\textbf{C}$};
\draw[thick] (0.28, 0.9) circle(0.05) ;

\draw (2.1, 0.9) -- (2.65, 1.5) node[right] {\scriptsize \textbf{D}};
\draw[thick] (2.65, 1.5) circle(0.05) ;

\draw (0.75,0.9) node{\scriptsize $\bullet$} node[right]{\scriptsize $\textbf{G}$};




\draw[thin, dotted] (2.42, 0) node{\scriptsize $\bullet$} node[below]{\scriptsize \textbf{h}}  -- (2.42, 1.23) ;

\draw (7, 4) node[right] {\scriptsize $\overline{\textbf{ab}} = \mathcal{S}_{m, \infty}^{(q_1, q_2)}$, $\textbf{a,b, I, H, F, C, i}$: same as in Fig. 8-$(i)$ };

\draw (7, 3.5) node[right] {\scriptsize $\mathcal{R} (\textbf{GCHIF})$: scaling invariant class of \eqref{T4:V-divfree}};
\draw (7, 3) node[right] {\scriptsize $\mathcal{R} (\textbf{jbhHI})$: scaling invariant class of \eqref{T4:V-divfree-embedding} };
\draw (7, 2.5) node[right] {\scriptsize $\textbf{j, h}$: same as in Fig. 8-$(i)$ };
\draw (7, 2) node[right] {\scriptsize $\textbf{A,B,D,E,G,g}$: same as in Fig. 7-$(ii)$};

\end{tikzpicture}
\end{center}

\item \emph{Theorem~\ref{Theorem-5a} and \ref{Theorem-5} for case $m>2$, $q> \frac m2$}: Refer Remark~\ref{R:Theoerm-5} and Figure 9-$(i)$.

\begin{center}
\begin{tikzpicture}[domain=0:16]

\draw (0, -1) node[right] {\scriptsize \textit{Figure 9-(ii)}.  $m> 2$, $q \geq \frac m2$.};

\fill[fill= lgray]
(1.27, 0.8) -- (1.6,0) -- (2.4, 0.8) ;

\fill[fill= gray]
(0.8, 2) -- (1.27, 0.8)-- (2.4,0.8) -- (2.87,1.33) --(0.8, 3.25);

\draw[->] (0,0) -- (6,0) node[right] {\scriptsize $\frac{1}{\tilde{q}_1}$};
\draw[->] (0, 0) -- (0, 5) node[left] {\scriptsize $\frac{1}{\tilde{q}_2}$};

\draw[dotted] (0, 4) -- ( 5.5 , 0) node{\scriptsize $\times$} node[below] {\scriptsize $\frac{2+d(m-1)}{d}$};
\draw[thick] (0, 4) circle(0.05) node[left]{\scriptsize $\textbf{a}$};

\draw (5, 1) node {\scriptsize $\tilde{\mathcal{S}}_{m,1}^{(\tilde{q}_1, \tilde{q}_2)}$};

\draw (0, 4) -- (3, 0) node{\scriptsize $\times$};
\draw (3, 0.5) node {\scriptsize $\tilde{\mathcal{S}}_{m,m}^{(\tilde{q}_1, \tilde{q}_2)}$};
\draw (2.5, 2.2) node {\scriptsize $\tilde{\mathcal{S}}_{m,\frac m2}^{(\tilde{q}_1, \tilde{q}_2)}$};

\draw[thick, dotted] (0,4)
-- (1.6, 0)  node[below] {\scriptsize $\textbf{b}$} ;
\draw[thick] (1.6, 0) circle(0.05);

\draw[very thin]
(0.8, 0) node{\scriptsize$*$} node[below] {\scriptsize $\frac 1d$}
-- (0.8, 4) node[above]{\scriptsize $\tilde{q}_1 = d$} ;
\draw[thick] (0.8, 2) circle(0.05) node[left]{\scriptsize \textbf{A}};
\draw[thick] (0.8, 2.94) circle(0.05) node[left]{\scriptsize \textbf{f}} ;
\draw[thick] (0.8, 3.43) circle(0.05) node[right]{\scriptsize \textbf{E}} ;

\draw (1.6, 0) -- (3.2,1.68) ;
\draw[thick] (3.2, 1.68) circle(0.05)  node[right]{\scriptsize \textbf{D}};

\draw (1.27, 0.8) node[left] {\scriptsize \textbf{B}} -- (2.4,0.8) node{\scriptsize $\bullet$} node[right]{\scriptsize \textbf{C}};
\draw[thick] (1.27, 0.8) circle(0.05);

\draw[thin, dotted] (0, 2) node{\scriptsize $+$} node[left] {\scriptsize $\frac 12$}
-- (0.8, 2);
\draw[thin, dotted] (0, 3.25) node{\scriptsize$*$} -- (0.8, 3.25);
\draw[thin, dotted] (0, 3.25) node{\scriptsize$*$} node[left] {\scriptsize $\frac{m+2d(m-1)}{2(m+d(m-1))}$}  -- (0.8, 3.25);

\draw[very thin] (4, 0) node{\scriptsize$+$}
-- (4, 2) node[above] {\scriptsize{$\tilde{q}_1 = 1$}};
\draw (4, 1.1) circle(0.05) node[right] {\scriptsize \textbf{g}} ;

\draw[thin, dotted] (0, 1.33) node{\scriptsize$*$}  -- (2.87,1.33);
\draw[thin, dotted] (2.87,0) node{\scriptsize$*$}  -- (2.87,1.33);


\draw[thick, dotted] (0,4) -- (4.3, 0) node{\scriptsize $\times$} node[below] {\scriptsize $\textbf{h}$} ;
\draw[thick] (2.87,1.33) circle(0.05) node[right] {\scriptsize $\textbf{H}$};
\draw[thick] (0.8, 3.25) circle(0.05) node[left] {\scriptsize $\textbf{I}$};

\draw (7, 4) node[right] {\scriptsize $\textbf{a,b, A-E, f}$: same as in Fig. 9-$(i)$};
\draw (7, 3.5) node[right] {\scriptsize $\mathcal{R} (\textbf{AbHI})$: scaling invariant class of \eqref{T5a:V-tilde}. };
\draw (7, 3) node[right] {\scriptsize $\mathcal{R} (\textbf{ABCHI})$: scaling invariant class of \eqref{T5:V-tilde}. };
\draw(7,2.5) node[right] {\scriptsize $\mathcal{R} (\textbf{bCB})$: $(\frac{1}{\tilde{q}_1}, \frac{1}{\tilde{q}_2})$: scaling invariant class of \eqref{T5:V-tilde-embedding}. };

\draw (7, 2) node[right] {\scriptsize $\overline{\textbf{ah}}= \tilde{\mathcal{S}}_{m, m/2}^{(\tilde{q}_1, \tilde{q}_2)}$, $h=(\frac{2m+2d(m-1)}{md},0)$  };
\draw (7, 1.5) node[right] {\scriptsize $\textbf{H} = (\frac{2(m+4)(m+d(m-1))}{m(3m+2)}, \frac{2(m-1)}{3m+2})$};

\end{tikzpicture}
\end{center}

\end{itemize}

\subsection*{Embedding}

The following figures illustrate strategies of applying embedding arguments in the temporal varaiable in the second part os Theorem~\ref{Theorem-2-b}, \ref{Theorem-4}, and \ref{Theorem-5}.
Unfortunately, embedding arguments in spatial variables do not work because $\bbr^d$ is unbounded. If spatial embedding arguments are applicable, then, for example in Figure 4-(e), it gives a way to include the region $\mathcal{R}(\textbf{aAF})$ by searching $(q_1^\ast, q_2)$ in $\mathcal{R}(\textbf{ABCDEF})$ by decreasing $q_1^\ast < q_1$ for some $q^\ast \in (1, q)$.
We are preparing a parallel paper constructing the existence results of \eqref{E:Main} in $\Omega \times (0, T]$ for $\Omega$ bounded in $\bbr^{d}$.

\begin{itemize}
\item  \emph{Theorem~\ref{Theorem-2-b} $(ii)$ for case $1<m\leq 2$, $q>1$}:
Starting a pair $(q_1, q_2)$ in $\mathcal{R}(\textbf{bCB})$, one can  decrease $q_2$ to $q_{2}^\ast$ until $(q_1, q_{2}^{\ast})$ hits the line $\overline{\textbf{BC}}$ or any pair belonging $\mathcal{R}(\textbf{ABCDEF})$. Then there exists  $q^\ast \in (1, q)$ in which $(q_1, q_{2}^{\ast})$ lies on $\mathcal{S}_{m,q^\ast}^{(q_1, q_2^\ast)}$. Then we apply the existence results in Theorem~\ref{Theorem-2-b} $(i)$ for $1 < p \leq {\lambda_{q^\ast}}$.  When $ \max\{2,\frac{2m}{(2m-1)(m-1)}\} < d \leq \frac{2m}{m-1}$ or $d > \frac{2m}{m-1}$, we repeat the same strategy with Figure 4-$(ii)$, $(iii)$, respectively.

\begin{center}
\begin{tikzpicture}[domain=0:16]


\draw (0, -1) node[right] { \scriptsize \textit{Figure 4-(e)}. $1<m\leq 2$, $q>1$, $2<d \leq \max\{2, \frac{2m}{(2m-1)(m-1)}\}$.};

\fill[fill= lgray]
(0.8, 0.63) -- (1.2,0) -- (1.85, 0.63);

\fill[fill= gray]
(0.4, 1.3)--(0.8, 0.63) -- (1.85,0.63) -- (2.35, 1.1) -- (1.45, 2)--(0.4, 2);

\draw[->] (0,0) 
-- (5,0) node[right] {\scriptsize $\frac{1}{q_1}$};
\draw[->] (0,0) -- (0,4.5) node[left] { \scriptsize $\frac{1}{q_2}$};

\draw (0,4) node{\scriptsize $+$} node[left]{\scriptsize $1$} ;
\draw (4,0) node{\scriptsize $+$} ;

\draw[very thin]
(0, 2) -- (1.45, 2) ;
\draw[thick](1.45, 2) circle(0.05) node[right] {\scriptsize \textbf{E}};

\draw[thin] (0, 2) -- (1.45, 2);

\draw[thin] (1.2, 0) -- (2.35, 1.1);

\draw[thick] (2.35, 1.1) circle(0.05) node[right] {\scriptsize \textbf{D}};
\draw[very thin]
 (0.8, 0.63) node[left]{\scriptsize \textbf{B}}
-- (1.85, 0.63) node{\scriptsize $\bullet$} node[right]{\scriptsize \textbf{C}} ;
\draw[thick] (0.8,0.63) circle(0.05);

\draw[very thin]
(0.4, 1.3)  node[below] {\scriptsize \textbf{A}}
 --(0.4, 2) node{\scriptsize $\bullet$} node[above] {\scriptsize \textbf{F}};
 \draw[thick] (0.4,1.3) circle(0.05);

\draw[thick, dotted] (0,2)  node[left] {\scriptsize $\textbf{a}$}
 -- (1.2, 0)  node[below] {\scriptsize $\textbf{b}$};
\draw[thick] (0,2) circle(0.05);
\draw[thick] (1.2,0) circle(0.05);

\draw[thick, dotted]
(0,3.4) node {\scriptsize $\times$} node[left] {\scriptsize $\frac{1}{{\lambda_1}}$}
-- (3.5, 0) node {\scriptsize $\times$} node[below] {\scriptsize $\frac{1+d(m-1)}{d}$};
\draw (3.9, 0.3) node{\scriptsize $\mathcal{S}_{m,1}^{(q_1, q_2)}$};

\draw[thick, ->] (1.35,0.3)  -- (1.35, 0.5) ;
\draw[thick] (1.35,0.16) node{\scriptsize $\bullet$} node[right]{\scriptsize \textbf{g}};
\draw[dotted] (0, 0.15) node{\scriptsize $*$} node[left]{\scriptsize $\frac{1}{q_2}$} -- (1.35, 0.15);
\draw[thick, dashed] (0, 2.15) -- (1.45, 0);

\draw[thick] (1.35,0.63) node{$\star$} node[above] {\scriptsize \textbf{h}} ;
\draw[dotted] (0, 0.63) node{\scriptsize $*$} node[left]{\scriptsize $\frac{1}{q^{\ast}_2}$} -- (1.35, 0.63);
\draw[thick] (0,2.3) -- (1.85, 0) ;

\draw (7, 4) node[right] {\scriptsize $\overline{\textbf{ab}}, \textbf{A}-\textbf{F}$: same as in Fig. 4-$(i)$ };
\draw (7, 3.5) node[right] {\scriptsize $\mathcal{R} (\textbf{ABCDEF})$: scaling invariant class of  \eqref{T2:V}};
\draw (7, 3) node[right] {\scriptsize $\mathcal{R} (\textbf{bCB})$: scaling invariant class of \eqref{T2:V-embedding} };
\draw (7,2.5) node[right] {$\bullet$ \scriptsize $ g = (\frac{1}{q_1}, \frac{1}{q_2})$ on $\mathcal{S}_{m, q}^{(q_1, q_2)}$ (dashed line)} ;
\draw (7,2) node[right] {$\star$ \scriptsize  $ h = (\frac{1}{q_1}, \frac{1}{q^{\ast}_2})$ on $\mathcal{S}_{m, q^\ast}^{(q_1, q_2^\ast)}$ (thick line)} ;
\draw (9,1.5) node[right] { \scriptsize  for $q^{\ast}_2 < q_2$ and $q^\ast < q$} ;

\end{tikzpicture}
\end{center}

\item \emph{Theorem~\ref{Theorem-2-b} $(ii)$ for case $m > 2$, $q\geq m-1$}: Refer Figure 5-$(i)$. The same strategy works to Figure 5-$(ii)$.

\begin{center}
\begin{tikzpicture}[domain=0:16]


\draw (0, -1) node[right] { \scriptsize \textit{Figure 5-(e)}. $m>2$, $q\geq m-1$, $ 2 < d \leq \frac{2m}{m-1}$.};

\fill[fill= lgray]
(0.7, 1.15)-- (0.7, 0.6) -- (1, 0) -- (1.58,1.15);

\fill[fill= gray]
(0.7, 2) -- (0.7,1.15) -- (1.58, 1.15) --(2,2);

\draw[->] (0,0) node[left] {\scriptsize $0$} -- (5,0) node[right] {\scriptsize $\frac{1}{q_1}$};
\draw[->] (0,0) -- (0,4.5) node[left] { \scriptsize $\frac{1}{q_2}$};

\draw (0,4) node{\scriptsize $+$} node[left] {\scriptsize $1$} ;
\draw (4,0) node{\scriptsize $+$};

\draw
(0, 2) node[left] {\scriptsize \textbf{a}} -- (2, 2) ;

\draw[thick, dotted] (0,2)  node[left] {\scriptsize $\textbf{a}$}
 -- (1, 0)   node[below] {\scriptsize $\textbf{b}$};
\draw[thick] (0,2) circle(0.05);
\draw[thick] (1,0) circle(0.05);

\draw(2, 2) node{\scriptsize $\bullet$} node[above] {\scriptsize \textbf{D}};

\draw(0.7,2) node{\scriptsize $\bullet$} node[above]{\scriptsize \textbf{E}}
--(0.7, 0.6) node[left]{\scriptsize \textbf{A}} ;
\draw[thick] (0.7,0.6) circle(0.05);

\draw (0.43, 1.15) node[left]{\scriptsize \textbf{B}}
--(1.58, 1.15)node{\scriptsize $\bullet$}node[right]{\scriptsize \textbf{C}};
\draw[thick] (0.43, 1.15) circle(0.05);

\draw(0.7, 1.15) node{\scriptsize $\bullet$} node[above]{\scriptsize \textbf{G}};

\draw (1, 0)  -- (2, 2);

\draw (1.15,0.65) node{\scriptsize $\bullet$} node[left]{\scriptsize $\textbf{g}$};
\draw[dotted] (0, 0.65) node{\scriptsize $*$} node[left] {\scriptsize $\frac{1}{q_2}$}
-- (1.15, 0.65);
\draw[thick, ->] (1.15,0.75)-- (1.15, 1.05)  ;
\draw[thick, dashed] (0, 2.3)--(1.6, 0);

\draw (1.15, 1.15) node{ $\star$} node[above]{\scriptsize $\textbf{h}$ };
\draw[dotted] (0, 1.15) node{\scriptsize $*$} node[left] {\scriptsize $\frac{1}{q^{\ast}_2}$} -- (1.15, 1.15);
\draw[thick] (0, 2.6) -- (2.1, 0) ;
\draw (2.1, 0.65) node{\scriptsize { $\mathcal{S}_{m, q^\ast}^{(q_1,q_2)}$}} ;

\draw (0,3.5) node {\scriptsize $\times$} node[left] {\scriptsize $\frac{1+d}{2+d}$}
-- (4.6, 0) node {\scriptsize $\times$} node[below] {\scriptsize $\frac{1+d}{d}$} ;
\draw (4.5, 0.65) node {\scriptsize $\mathcal{S}_{m,m-1}^{(q_1, q_2)}$} ;


\draw (7, 4) node[right] {\scriptsize $\textbf{a, b, A-E}$: same as in Fig. 5-$(i)$ };
\draw (7, 3.5) node[right] {\scriptsize $\mathcal{R}(\textbf{GCDE})$: scaling invariant class of \eqref{T2:V} };
\draw (7, 3) node[right] {\scriptsize $\mathcal{R}(\textbf{GAbC}))$: scaling invariant class of \eqref{T2:V-embedding} };
\draw (7,2.5) node[right] {$\bullet$ \scriptsize $ g = (\frac{1}{q_1}, \frac{1}{q_2})$ on $\mathcal{S}_{m, q}^{(q_1, q_2)}$ (dashed line)} ;
\draw (7,2) node[right] {$\star$ \scriptsize  $ h = (\frac{1}{q_1}, \frac{1}{q^{\ast}_2})$ on $\mathcal{S}_{m, q^\ast}^{(q_1, q_2^\ast)}$ (thick line)} ;
\draw (9,1.5) node[right] {\scriptsize for $q^{\ast}_2 < q_2$, $q^\ast < q$} ;

\end{tikzpicture}
\end{center}


\item \emph{Theorem~\ref{Theorem-4} $(ii)$ for case $1<m\leq 2$, $q>1$}: With Figure 7-(i), we explain strategy of searching $q_2^\ast$ and $q^\ast$. There are two regions $\mathcal{R} ( \textbf{aAFE})$ if $ \frac{2m}{m-1}< q_1 < \infty$ and $\mathcal{R} ( \textbf{BbhDC})$ if $\frac{md}{1+d(m-1)} < q_1 < d(2m-1)$, in which we find $q_2^\ast$ such that $(q_1, q_2^\ast)$ belongs to $\mathcal{R} (\textbf{ABCDEF})$. Then we are able to search matching $q^\ast \in (1, q]$ and able to apply Theorem~\ref{Theorem-4} $(i)$.
    Also, we can apply the same strategy with Figure 7-$(ii)$, $(iii)$.

\begin{center}
\begin{tikzpicture}[domain=0:16]


\draw (0, -1) node[right] { \scriptsize \textit{Figure 7-(e)}.  $1<m\leq 2$, $q>1$, $2 < d \leq \max\{2,\frac{2m}{(2m-1)(m-1)}\}$.};

\fill[fill= lgray]
(0, 2) -- (1.1,0) -- (2.6, 0) -- (2.6, 1.1) --(0,3.6)--(0, 2);

\fill[fill= gray]
(0.4, 1.25) -- (0.6,0.85) -- (1.42, 0.85) -- (2.6, 1.1) -- (0, 3.6)--(0.4,2);


\draw[->] (0,0) node[left] {\scriptsize $0$}
-- (5,0) node[right] {\scriptsize $\frac{1}{q_1}$};
\draw[->] (0,0) -- (0,4.5) node[left] { \scriptsize $\frac{1}{q_2}$};

\draw (0,4) node{\scriptsize $+$} node[left]{\scriptsize $1$} ;
\draw (4,0) node{\scriptsize $+$} ;

\draw[dotted]
(0, 2)  node[left] {\scriptsize \textbf{a}}--(0.4, 2);


\draw[thin]
(1.42, 0.85) -- (2.6, 1.1) node[right] {\scriptsize \textbf{D}};
\draw[thick] (2.6, 1.1) circle(0.05) ;
\draw[thick] (2.6, 0) circle(0.05) node[below]{\scriptsize \textbf{h}};


\draw[dotted] (2.6, 0) -- (2.6, 1.9) node[above]{\scriptsize $q_1 = \frac{md}{1+d(m-1)}$};

\draw[thin]
(0.4, 2) -- (0, 3.6) ;


\draw[thin] (0.6,0.85) node[left]{ \scriptsize $\textbf{B}$}
--(1.42, 0.85)node{\scriptsize $\bullet$}node[right]{ \scriptsize $\textbf{C}$};
\draw[thick] (0.6, 0.85) circle(0.05);

\draw[thick, dotted] (0,2) -- (1.1, 0) node[below] {\scriptsize $\textbf{b}$};
\draw[thick] (0, 2) circle(0.05) ;
\draw[thick] (1.1, 0) circle(0.05);

\draw [thick, dotted]
(0,3.6)  node[left] {\scriptsize $\frac{1}{{\lambda_1}}$} node[right] {\scriptsize \textbf{E}}
-- (3.7, 0) node[below]{\scriptsize \textbf{g}} ;
\draw[thick] (0, 3.6) circle(0.05) ;
\draw[thick] (3.7, 0) circle(0.05) ;
\draw (4, 0.5) node{\scriptsize $\mathcal{S}_{m,1}^{(q_1, q_2)}$};


\draw[thin] (0.4, 1.25)  node[left]{\scriptsize \textbf{A}} -- (0.4, 2);
\draw[thick] (0.4, 1.25) circle(0.05);

\draw[thin] (0.4, 2) node{\scriptsize $\bullet$} node[right] {\scriptsize \textbf{F}};

\draw (0.15,2.4) node{\scriptsize $\bullet$} node[below]{\scriptsize $\textbf{i}$};
\draw[thick, ->] (0.15,2.55)-- (0.15, 2.8)  ;
\draw[dotted] (0,2.4) node{\scriptsize $*$}node[left]{\scriptsize $\frac{1}{q_2}$} -- (0.15,2.4);
\draw[thick, dashed] (0, 2.6)--(2.4, 0);
\draw[thick] (0, 3.1)--(3, 0);

\draw (0.15, 2.95) node{$\star$} node[right]{\scriptsize $\textbf{j}$};
\draw[dotted] (0,2.95) node{\scriptsize $*$}node[left]{\scriptsize $\frac{1}{q_2^\ast}$} -- (0.15,2.95);

\draw (2.05,0.38) node{\scriptsize $\bullet$} node[right]{\scriptsize $\textbf{k}$};
\draw[thick, ->] (2.05,0.5)-- (2.05, 0.85)  ;
\draw[dotted] (0,0.38) node{\scriptsize $*$}node[left]{\scriptsize $\frac{1}{q_2}$} -- (2.05,0.38);

\draw (2.05, 0.97) node{$\star$} node[above]{ \scriptsize $\textbf{l}$};
\draw[dotted] (0,0.97) node{\scriptsize $*$}node[left]{\scriptsize $\frac{1}{q_2^\ast}$} -- (2.05,0.97);

\draw (7, 4) node[right] {\scriptsize $\overline{\textbf{ab}} = \mathcal{S}_{m, \infty}^{(q_1, q_2)}$, $\textbf{a} = (0, \frac 12)$, $\textbf{b} = (\frac 1d, 0)$ };
\draw (7, 3.5) node[right] {\scriptsize $\mathcal{R} (\textbf{ABCDEF})$: scaling invariant class of \eqref{T4:V-divfree} };

\draw (7, 3) node[right] {\scriptsize $\textbf{A}-\textbf{g}$: same as in Figure 7-$(i)$.  };
\draw (7, 2.5) node[right] {\scriptsize $\textbf{h}= (\frac{1+d(m-1)}{md}, 0)$ on $\mathcal{S}_{m, \frac{md}{d-1}}^{(q_1,q_2)}$};


\draw (7,2) node[right] {$\bullet$ \scriptsize $ i, k = (\frac{1}{q_1}, \frac{1}{q_2})$ on $\mathcal{S}_{m, q}^{(q_1, q_2)}$ (dashed line) } ;

\draw(7,1.5) node[right] {$\star$ \scriptsize $ j, l = (\frac{1}{q_1}, \frac{1}{q^{\ast}_2})$ on $\mathcal{S}_{m, q^\ast}^{(q_1, q_2^\ast)}$ (thick line)} ;
\draw(9,1) node[right] {\scriptsize  for  $q^{\ast}_2   < q_2$ and $q^\ast < q$} ;

\end{tikzpicture}
\end{center}


\item \emph{Theorem~\ref{Theorem-4} $(ii)$ for case $m > 2$, $q\geq \frac m2$}: With Figure 8-(i), we explain strategy of searching $q_2^\ast$ and $q^\ast$. There are two regions $\mathcal{R} ( \textbf{jAFI})$ and $\mathcal{R} ( \textbf{GAbhHCG})$, in which we find $q_2^\ast$ such that $(q_1, q_2^\ast)$ belongs to $\mathcal{R} (\textbf{FGCHI})$ to apply Theorem~\ref{Theorem-4} $(i)$.
Also, we can apply the same strategy with Figure 8-$(ii)$.

\begin{center}
\begin{tikzpicture}[domain=0:16]


\draw (0, -1) node[right] { \scriptsize \textit{Figure 8-(e)}.  $m>2 $, $q\geq \frac m2$, $2< d \leq \frac{2m}{m-1}$. };

\fill[fill= lgray]
(0.42, 2.7) -- (0.42, 1.25) -- (1.1, 0) -- (2.42, 0) -- (2.42, 1.23);

\fill[fill= gray]
(0.42, 2.7) -- (0.75, 2) -- (0.75,0.9) -- (2.1, 0.9) -- (2.42, 1.23);

\draw[->] (0,0) node[left] {\scriptsize $0$}
-- (5.5,0) node[right] {\scriptsize $\frac{1}{q_1}$};
\draw[->] (0,0) -- (0,4.5) node[left] { \scriptsize $\frac{1}{q_2}$};

\draw (0,4) node{\scriptsize $+$} node[left]{\scriptsize $1$} ;

\draw[thick, dotted] (0,2) node{\scriptsize $\times$} node[left]{\scriptsize $\textbf{a}$}  -- (1.1, 0) node[below] {\scriptsize $\textbf{b}$};
\draw[thick] (1.1, 0) circle(0.05) ;

\draw [dotted]
(0,3.6) node{\scriptsize $\times$} node[left] {\scriptsize $\frac{1}{{\lambda_1}}$} node[right] {\scriptsize $\textbf{E}$}
-- (4.5, 0) node{\scriptsize $\times$} node[below]{\scriptsize \textbf{g}} ;
\draw (4, 1) node{\scriptsize $\mathcal{S}_{m,1}^{(q_1, q_2)}$};

\draw (0,3) node {\scriptsize $\times$}
    -- (4.1, 0) node[below]{\scriptsize \textbf{i}};
\draw[thick] (4.1, 0) circle(0.05) ;
\draw (1.9, 2) node{\scriptsize $\mathcal{S}_{m,\frac m2}^{(q_1, q_2)}$};
\draw (0.42, 2.7) node{\scriptsize $\bullet$} node[right] {\scriptsize $\textbf{I}$} ;
\draw[thin, dotted] (0.42, 2.7) --(0.42, 1.25) node[left] {\scriptsize $\textbf{j}$} ;
\draw[thick] (0.42, 1.25) circle(0.05);
\draw (2.42, 1.23) node{\scriptsize $\bullet$} node[right] {\scriptsize $\textbf{H}$};


\draw (0.75, 0.6) node[left]{\scriptsize \textbf{A}} -- (0.75, 2);
\draw[thick] (0.75, 0.6) circle(0.05);

\draw (0.75, 2) node{\scriptsize $\bullet$} node[right] {\scriptsize \textbf{F}};
\draw (0.75, 2) -- (0, 3.6) ;

\draw (0.58,0.9) node[left]{ \scriptsize $\textbf{B}$}
--(2.1, 0.9)node{\scriptsize $\bullet$}node[right]{ \scriptsize $\textbf{C}$};
\draw[thick] (0.58, 0.9) circle(0.05);

\draw (2.1, 0.9) -- (2.65, 1.5) node[right] {\scriptsize \textbf{D}};
\draw[thick] (2.65, 1.5) circle(0.05) ;

\draw (0.75,0.9) node{\scriptsize $\bullet$} node[right]{\scriptsize $\textbf{G}$};




\draw[thin, dotted] (2.42, 0) node{\scriptsize $\bullet$} node[below]{\scriptsize \textbf{h}}  -- (2.42, 1.23) ;

\draw[thick, dashed] (0, 2.3)--(2.2, 0);
\draw[thick] (0, 2.6)--(2.8, 0);


\draw (1.85,0.38) node{\scriptsize $\bullet$} node[right]{\scriptsize $\textbf{k}$};
\draw[thick, ->] (1.85,0.5)-- (1.85, 0.8)  ;
\draw[dotted] (0,0.38) node{\scriptsize $*$}node[left]{\scriptsize $\frac{1}{q_2}$} -- (1.85,0.38);

\draw (1.85, 0.9) node{$\star$} node[above]{ \scriptsize $\textbf{l}$};
\draw[dotted] (0,0.9) node{\scriptsize $*$}node[left]{\scriptsize $\frac{1}{q_2^\ast}$} -- (1.85,0.9);

\draw (7, 4) node[right] {\scriptsize $\textbf{a,b,A-I,h-j}$ :same as in Fig. 8-$(i)$ };

\draw (7, 3.5) node[right] {\scriptsize $\mathcal{R} (\textbf{jbhHI})$: scaling invariant class of \eqref{T4:V-divfree} };

\draw (7,3) node[right] {$\bullet$ \scriptsize $ k = (\frac{1}{q_1}, \frac{1}{q_2})$ on $\mathcal{S}_{m, q}^{(q_1, q_2)}$ (dashed line) } ;

\draw(7,2.5) node[right] {$\star$ \scriptsize $ l = (\frac{1}{q_1}, \frac{1}{q^{\ast}_2})$ on $\mathcal{S}_{m, q^\ast}^{(q_1, q_2^\ast)}$ (thick line)} ;
\draw(7,2) node[right] {\scriptsize  for  $q^{\ast}_2   < q_2$ and $q^\ast < q$} ;

\end{tikzpicture}
\end{center}


\item \emph{Theorem~\ref{Theorem-5} $(ii)$}:
The following figure visualizes the idea of searching $q^\ast$ and $\tilde{q}^\ast_2$ with Figure 9-$(i)$. Also we can repeat the same strategy with Figure 9-$(ii)$.

\begin{center}
\begin{tikzpicture}[domain=0:16]

\draw (0, -1) node[right] {\scriptsize \textit{Firgure 9-(e)}. $1<m\leq 2$, $q>1$.};

\fill[fill= lgray]
(1.27, 0.8) -- (1.6,0) -- (2.4, 0.8) ;

\fill[fill= gray]
(0.8, 2) -- (1.27, 0.8)-- (2.4,0.8) -- (3.2, 1.68) --(0.8, 3.43);

\draw[->] (0,0) -- (6,0) node[right] {\scriptsize $\frac{1}{\tilde{q}_1}$};
\draw[->] (0, 0) -- (0, 5) node[left] {\scriptsize $\frac{1}{\tilde{q}_2}$};

\draw[thick, dotted] (0, 4) node{\scriptsize $\times$} node[left]{\scriptsize $\textbf{a}$} -- ( 5.5 , 0) node{\scriptsize $\times$} node[below] {\scriptsize $\frac{2+d(m-1)}{d}$};

\draw (5, 1) node {\scriptsize $\tilde{\mathcal{S}}_{m,1}^{(\tilde{q}_1, \tilde{q}_2)}$};


\draw[thick, dotted] (0,4)
-- (1.6, 0)  node[below] {\scriptsize $\textbf{b}$} ;
\draw[thick] (1.6, 0) circle(0.05);

\draw[very thin]
(0.8, 0) node{\scriptsize$*$} node[below] {\scriptsize $\frac 1d$}
-- (0.8, 4) node[above]{\scriptsize $\tilde{q}_1 = d$} ;
\draw[thick] (0.8, 2) circle(0.05) node[left]{\scriptsize \textbf{A}};
\draw[thick] (0.8, 3.43) circle(0.05) node[right]{\scriptsize \textbf{E}} ;

\draw[thin] (1.6, 0) -- (3.2,1.68) ;
\draw[thick] (3.2, 1.68) circle(0.05)  node[right]{\scriptsize \textbf{D}};

\draw[thin] (1.27, 0.8)  node[left] {\scriptsize \textbf{B}} -- (2.4,0.8) node{\scriptsize $\bullet$} node[right]{\scriptsize \textbf{C}};
\draw[thick] (1.27, 0.8) circle(0.05);



\draw[very thin] (4, 0) node{\scriptsize$+$} node[below] {\scriptsize $1$}
-- (4, 2) node[above] {\scriptsize{$\tilde{q}_1 = 1$}};
\draw (4, 1.1) node{\scriptsize $*$} node[right] {\scriptsize \textbf{g}} ;

\draw (1.83,0.25) node{\scriptsize $\bullet$} node[left]{\scriptsize \textbf{h}};
\draw[dotted] (0, 0.25) node{\scriptsize $*$} node[left] {\scriptsize $\frac{1}{\tilde{q}_2}$} -- (1.75, 0.25);
\draw[thick, ->] (1.83,0.4)-- (1.83, 0.7)  ;
\draw[thick, dashed](0,4) -- (1.95,0);

\draw (1.83, 0.8) node{$\star$} node[above]{\scriptsize \textbf{i}};
\draw[dotted] (0, 0.8) node{\scriptsize $*$}  -- (1.7, 0.8);
\draw (0,1) node[left] {\scriptsize $\frac{1}{\tilde{q}^{\ast}_2}$};
\draw[thick] (0, 4) -- (2.3, 0) ;






\draw (7, 4) node[right] {\scriptsize $\overline{\textbf{ab}} = \tilde{\mathcal{S}}_{m, \infty}^{(\tilde{q}_1, \tilde{q}_2)}$, $\textbf{a} = (0, 1)$, $\textbf{b} = (\frac 2d, 0)$};

\draw (7, 3.5) node[right] {\scriptsize $\mathcal{R} (\textbf{ABCDE})$: scaling invariant class of \eqref{T5:V-tilde} };
\draw (7, 3) node[right] {\scriptsize $\textbf{A}-\textbf{E}$: same as in Figure 9-$(i)$.  };
\draw(7, 2.5) node[right] {\scriptsize $\mathcal{R} (\textbf{bCB})$: scaling invariant class of \eqref{T5:V-tilde-embedding} };

\draw (7,2) node[right] {$\bullet$ \scriptsize $ h = (\frac{1}{\tilde{q}_1}, \frac{1}{\tilde{q}_2})$ on $\tilde{\mathcal{S}}_{m, q}^{(\tilde{q}_1, \tilde{q}_2)}$ (dashed line)}  ;

\draw (7,1.5) node[right] { $\star$ \scriptsize $ i = (\frac{1}{\tilde{q}_1}, \frac{1}{\tilde{q}^{\ast}_2})$ on  $\tilde{\mathcal{S}}_{m, q^\ast}^{(\tilde{q}_1, \tilde{q}_2^\ast)}$ (thick line)} ;
\draw (9,1) node[right] {\scriptsize for $\tilde{q}^{\ast}_2   < \tilde{q}_2$ and $q^\ast < q$} ;
\end{tikzpicture}
\end{center}
\end{itemize}

\end{appendices}


\subsection*{Acknowledgements}
We thank the anonymous referee for careful reading and helpful suggestions, which helped improve the clarity of the paper. S. Hwang's work is partially supported by funding for the academic research program of Chungbuk National University in 2023 and NRF-2022R1F1A1073199. K. Kang's work is partially supported by NRF-2019R1A2C1084685. H. Kim's work is partially supported by NRF-2021R1F1A1048231 and NRF-2018R1D1A1B07049357.


\begin{bibdiv}
   \begin{biblist}

\bib{AG}{article}{
   author={Ambrosion, L.},
    author={Gangbo, W.},
   title={Hamiltonian {ODE}s in the
  {W}asserstein space of probability measures},
   journal={Comm. Pure Appl. Math.},
   volume={61},
    number={1},
   date={2008},
   pages={18--53},
}

\bib{ags:book}{book}{
   author={Ambrosio, L.},
   author={Gigli, N.},
   author={Savar\'{e}, G.},
   title={Gradient flows in metric spaces and in the space of probability
   measures},
   series={Lectures in Mathematics ETH Z\"{u}rich},
   edition={2},
   publisher={Birkh\"{a}user Verlag, Basel},
   date={2008},
}

\bib{Aron70a}{article}{
   author={Aronson, D. G.},
   title={Regularity properties of flows through porous media: A
   counterexample},
   journal={SIAM J. Appl. Math.},
   volume={19},
   date={1970},
   pages={299--307},
}

\bib{Aron70b}{article}{
   author={Aronson, D. G.},
   title={Regularity properties of flows through porous media: The
   interface},
   journal={Arch. Rational Mech. Anal.},
   volume={37},
   date={1970},
   pages={1--10},
}

\bib{MR3621820}{article}{
    AUTHOR = {Bedrossian, J.},
    AUTHOR = {Coti Zelati, M.},
 TITLE = {Enhanced dissipation, hypoellipticity, and anomalous small
 noise inviscid limits in shear flows},
   JOURNAL = {Arch. Ration. Mech. Anal.},
    VOLUME = {224},
 date = {2017},
    NUMBER = {3},
 PAGES = {1161--1204},
}

\bib{MR4242626}{article}{
    AUTHOR = {Bedrossian, J.},
    AUTHOR = {Blumenthal, A.},
    AUTHOR = {Punshon-Smith, S.},
 TITLE = {Almost-sure enhanced dissipation and uniform-in-diffusivity
 exponential mixing for advection-diffusion by stochastic
 {N}avier-{S}tokes},
   JOURNAL = {Probab. Theory Related Fields},
    VOLUME = {179},
 date = {2021},
    NUMBER = {3-4},
 PAGES = {777--834},
}

\bib{MR4152270}{article}{
    AUTHOR = {Bedrossian, J.},
    AUTHOR = {He, S.},
 TITLE = {Inviscid damping and enhanced dissipation of the boundary
 layer for 2{D} {N}avier-{S}tokes linearized around {C}ouette
 flow in a channel},
   JOURNAL = {Comm. Math. Phys.},
    VOLUME = {379},
 Date = {2020},
    NUMBER = {1},
 PAGES = {177--226},
}

\bib{MR3730537}{article}{
 AUTHOR = {Bedrossian, J.},
    AUTHOR = {He, S.},
 TITLE = {Suppression of blow-up in {P}atlak-{K}eller-{S}egel via shear
 flows},
   JOURNAL = {SIAM J. Math. Anal.},
    VOLUME = {49},
 date = {2017},
    NUMBER = {6},
 PAGES = {4722--4766},
}


\bib{BiaLiu}{article}{
   author={Bian, S.},
   author={Liu, J.},
   title={Dynamic and steady states for multi-dimensional Keller-Segel model
   with diffusion exponent $m>0$},
   journal={Comm. Math. Phys.},
   volume={323},
   date={2013},
   number={3},
   pages={1017--1070},
}

\bib{BlaCarLau}{article}{
   author={Blanchet, A.},
   author={Carrillo, J.},
   author={Lauren\c{c}ot, P.},
   title={Critical mass for a Patlak-Keller-Segel model with degenerate
   diffusion in higher dimensions},
   journal={Calc. Var. Partial Differential Equations},
   volume={35},
   date={2009},
   number={2},
   pages={133--168},
}

\bib{CF78}{article}{
   author={Caffarelli, L. A.},
   author={Friedman, A.},
   title={The one-phase Stefan problem and the porous medium diffusion
   equation: continuity of the solution in $n$ space dimensions},
   journal={Proc. Nat. Acad. Sci. U.S.A.},
   volume={75},
   date={1978},
   number={5},
   pages={2084},
}

\bib{CF79}{article}{
   author={Caffarelli, L. A.},
   author={Friedman, A.},
   title={Continuity of the density of a gas flow in a porous medium},
   journal={Trans. Amer. Math. Soc.},
   volume={252},
   date={1979},
   pages={99--113},
}

\bib{CarCraYao}{article}{
   author={Carrillo, J.},
   author={Craig, K.},
   author={Yao, Y.},
   title={Aggregation-diffusion equations: dynamics, asymptotics, and
   singular limits},
   conference={
      title={Active particles. Vol. 2. Advances in theory, models, and
      applications},
   },
   book={
      series={Model. Simul. Sci. Eng. Technol.},
      publisher={Birkh\"{a}user/Springer, Cham},
   },
   date={2019},
   pages={65--108},
}

\bib{CalCar}{article}{
   author={Calvez, V.},
   author={Carrillo, J. A.},
   title={Volume effects in the Keller-Segel model: energy estimates
   preventing blow-up},
   language={English, with English and French summaries},
   journal={J. Math. Pures Appl. (9)},
   volume={86},
   date={2006},
   number={2},
   pages={155--175},
}


\bib{CarHitVolYao}{article}{
   author={Carrillo, J. A.},
   author={Hittmeir, S.},
   author={Volzone, B.},
   author={Yao, Y.},
   title={Nonlinear aggregation-diffusion equations: radial symmetry and
   long time asymptotics},
   journal={Invent. Math.},
   volume={218},
   date={2019},
   number={3},
   pages={889--977},
}

\bib{Carr01}{article}{
   author={Carrillo, J.},
   author={Jüngel, A.},
   author={Markowich, P.},
   author={Toscani, G.},
   author={Unterreiter, A.},
   title={Entropy Dissipation Methods for Degenerate Parabolic Problems and Generalized Sobolev Inequalities},
   journal={Mh. Math.},
   volume={133},
   date={2001},
   pages={1--82},
}

\bib{CKL14}{article}{
   author={Chae, M.},
   author={Kang, K.},
   author={Lee, J.},
   title={Global existence and temporal decay in Keller-Segel models coupled
   to fluid equations},
   journal={Comm. Partial Differential Equations},
   volume={39},
   date={2014},
   number={7},
   pages={1205--1235},
}

\bib{CHKK17}{article}{
   author={Chung, Y.},
   author={Hwang, S.},
   author={Kang, K.},
   author={Kim, J.},
   title={H\"{o}lder continuity of Keller-Segel equations of porous medium type
   coupled to fluid equations},
   journal={J. Differential Equations},
   volume={263},
   date={2017},
   number={4},
   pages={2157--2212},
}

\bib{MR2434887}{article}{
    AUTHOR = {Constantin, P},
    AUTHOR = {Kiselev, A},
    AUTHOR = {Ryzhik, L.},
    AUTHOR = {Zlato\v{s}, A.},
 TITLE = {Diffusion and mixing in fluid flow},
   JOURNAL = {Ann. of Math. (2)},
    VOLUME = {168},
 date = {2008},
    NUMBER = {2},
 PAGES = {643--674},
}

\bib{ConIgn}{article}{
   author={Constantin, P.},
   author={Ignatova, M.},
   title={On the Nernst-Planck-Navier-Stokes system},
   journal={Arch. Ration. Mech. Anal.},
   volume={232},
   date={2019},
   number={3},
   pages={1379--1428},
}

\bib{MR4156602}{article}{
    AUTHOR = {Coti Zelati, M.},
    AUTHOR = {Delgadino, M. G},
    AUTHOR = {Elgindi, T. M.},
 TITLE = {On the relation between enhanced dissipation timescales and
 mixing rates},
   JOURNAL = {Comm. Pure Appl. Math.},
    VOLUME = {73},
 date = {2020},
    NUMBER = {6},
 PAGES = {1205--1244},
}

\bib{DK07}{book}{
   author={Daskalopoulos, P.},
   author={Kenig, C. E.},
   title={Degenerate diffusions},
   series={EMS Tracts in Mathematics},
   volume={1},
   note={Initial value problems and local regularity theory},
   publisher={European Mathematical Society (EMS), Z\"{u}rich},
   date={2007},
   pages={x+198},
   isbn={978-3-03719-033-3},
   review={\MR{2338118}},
   doi={10.4171/033},
}

\bib{DB83}{article}{
   author={DiBenedetto, E.},
   title={Continuity of weak solutions to a general porous medium equation},
   journal={Indiana Univ. Math. J.},
   volume={32},
   date={1983},
   number={1},
   pages={83--118},
}

\bib{DB93}{book}{
   author={DiBenedetto, E.},
   title={Degenerate parabolic equations},
   series={Universitext},
   publisher={Springer-Verlag, New York},
   date={1993},
   pages={xvi+387},
}

\bib{DF85}{article}{
   author={DiBenedetto, E.},
   author={Friedman, A.},
   title={H\"{o}lder estimates for nonlinear degenerate parabolic systems},
   journal={J. Reine Angew. Math.},
   volume={357},
   date={1985},
   pages={1--22},
}

\bib{DF85A}{article}{
   author={DiBenedetto, E.},
   author={Friedman, A.},
   title={Addendum to: ``H\"{o}lder estimates for nonlinear degenerate parabolic
   systems''},
   journal={J. Reine Angew. Math.},
   volume={363},
   date={1985},
   pages={217--220},
}

\bib{DGV12}{book}{
   author={DiBenedetto, E.},
   author={Gianazza, U.},
   author={Vespri, V.},
   title={Harnack's inequality for degenerate and singular parabolic
   equations},
   series={Springer Monographs in Mathematics},
   publisher={Springer, New York},
   date={2012},
}

\bib{DL89}{article}{
   author={DiPerna, R. J.},
   author={Lions, P.-L.},
   title={On the Cauchy problem for Boltzmann equations: global existence
   and weak stability},
   journal={Ann. of Math. (2)},
   volume={130},
   date={1989},
   number={2},
   pages={321--366},
}


 \bib{Freitag}{article}{
   author={Freitag, M.},
   title={Global existence and boundedness in a chemorepulsion system with
   superlinear diffusion},
   journal={Discrete Contin. Dyn. Syst.},
   volume={38},
   date={2018},
   number={11},
   pages={5943--5961},
}

\bib{GT}{book}{
   author={Gilbarg, D.},
   author={Trudinger, N. S.},
   title={Elliptic partial differential equations of second order},
   series={Classics in Mathematics},
   note={Reprint of the 1998 edition},
   publisher={Springer-Verlag, Berlin},
   date={2001},
}


\bib{HashiraSachikoYokota}{article}{
   author={Hashira, T.},
   author={Ishida, S.},
   author={Yokota, T.},
   title={Finite-time blow-up for quasilinear degenerate Keller-Segel
   systems of parabolic-parabolic type},
   journal={J. Differential Equations},
   volume={264},
   date={2018},
   number={10},
   pages={6459--6485},
}

\bib{Herrero-Velazquez}{article}{
   author={Herrero, M.},
   author={Vel\'{a}zquez, J. L.},
   title={A blow-up mechanism for a chemotaxis model},
   journal={Ann. Scuola Norm. Sup. Pisa Cl. Sci. (4)},
   volume={24},
   date={1997},
   number={4},
   pages={633--683},
}

\bib{HorWin}{article}{
   author={Horstmann, D.},
   author={Winkler, M.},
   title={Boundedness vs. blow-up in a chemotaxis system},
   journal={J. Differential Equations},
   volume={215},
   date={2005},
   number={1},
   pages={52--107},
}

\bib{HZ21}{article}{
   author={Hwang, S.},
   author={Zhang, Y. P.},
   title={Continuity results for degenerate diffusion equations with $L_t^p
   L_x^q$ drifts},
   journal={Nonlinear Anal.},
   volume={211},
   date={2021},
   pages={112413},
}

\bib{JL92}{article}{
   author={J\"{a}ger, W.},
   author={Luckhaus, S.},
   title={On explosions of solutions to a system of partial differential
 equations modelling chemotaxis},
   journal={Trans. Amer. Math. Soc.},
   volume={329},
   date={1992},
   pages={819--824},
}

\bib{KK-SIMA}{article}{
   author={Kang, K.},
   author={Kim, H. K.},
   title={Existence of weak solutions in Wasserstein space for a chemotaxis
   model coupled to fluid equations},
   journal={SIAM J. Math. Anal.},
   volume={49},
   date={2017},
   number={4},
   pages={2965--3004},
}


\bib{KK-arxiv}{article}{
   author={Kang, Kyungkeun},
   author={Kim, Hwa Kil},
   title={Local well-posedness in the Wasserstein space for a chemotaxis
   model coupled to incompressible fluid flows},
   journal={Z. Angew. Math. Phys.},
   volume={73},
   date={2022},
   number={4},
   pages={Paper No. 138, 21},
}

\bib{KellerSegel}{article}{
   author={Keller, E. F.},
   author={Segel, L. A.},
   title={Initiation of slime mold aggregation viewed as an instaility},
   journal={J. Theor. Biol.},
   volume={26},
   date={1970},
   pages={399--415},
}

\bib{Kim-thesis}{article}{
   author={Kim, H. K.},
     title={ Hamiltonian systems and the calculus of differential forms on the Wasserstein space},
   journal={Ph.D thesis, Georgia Institute of Technology 2009.},
}

\bib{KL10}{article}{
   author={Kim, I.},
   author={Lei, H.},
   title={Degenerate diffusion with a drift potential: A viscosity solutions approach},
   journal={Discrete Contin. Dyn. Syst.},
   volume={27},
   date={2010},
   number={2},
   pages={767--786},
}

\bib{KZ18}{article}{
   author={Kim, I.},
   author={Zhang, Y. P.},
   title={Regularity properties of degenerate diffusion equations with
   drifts},
   journal={SIAM J. Math. Anal.},
   volume={50},
   date={2018},
   number={4},
   pages={4371--4406},
}

\bib{KZ20-arXiv}{article}{
   author={Kim, I.},
   author={Zhang, Y. P.},
   title={Porous Medium Equation with a Drift: Free Boundary Regularity},
   journal={Arch. Rational. Mech. Anal.},
   volume={242},
   number={2},
   date={2021},
   pages={1177--1228},
}

\bib{MR3544323}{article}{
    AUTHOR = {Kiselev, A.},
    AUTHOR = {Xu, X.},
 TITLE = {Suppression of chemotactic explosion by mixing},
   JOURNAL = {Arch. Ration. Mech. Anal.},
    VOLUME = {222},
 date = {2016},
    NUMBER = {2},
 PAGES = {1077--1112},
}

\bib{Kowal}{article}{
   author={Kowalczyk, R.},
   title={Preventing blow-up in a chemotaxis model},
   journal={J. Math. Anal. Appl.},
   volume={305},
   date={2005},
   number={2},
   pages={566--588},
}

\bib{MR0236541}{article}{
    AUTHOR = {Lady\v{z}enskaja, O. A.},
 TITLE = {Uniqueness and smoothness of generalized solutions of
 {N}avier-{S}tokes equations},
   JOURNAL = {Zap. Nau\v{c}n. Sem. Leningrad. Otdel. Mat. Inst. Steklov. (LOMI)},
  FJOURNAL = {Zapiski Nau\v{c}nyh Seminarov Leningradskogo Otdelenija
 Matemati\v{c}eskogo Instituta im. V. A. Steklova Akademii Nauk
 SSSR (LOMI)},
    VOLUME = {5},
 YEAR = {1967},
 PAGES = {169--185},
}

\bib{LSU}{book}{
   author={Lady\v{z}enskaja, O. A.},
   author={Solonnikov, V. A.},
   author={Ural\cprime ceva, N. N.},
   title={Linear and quasilinear equations of parabolic type},
   language={Russian},
   series={Translations of Mathematical Monographs, Vol. 23},
   note={Translated from the Russian by S. Smith},
   publisher={American Mathematical Society, Providence, R.I.},
   date={1968},
   pages={xi+648},
   review={\MR{0241822}},
}


\bib{Lie96}{book}{
   author={Lieberman, G. M.},
   title={Second order parabolic differential equations},
   publisher={World Scientific Publishing Co., Inc., River Edge, NJ},
   date={1996},
}

\bib{LiuWan}{article}{
   author={Liu, J.},
   author={Wang, J.},
   title={Global existence for Nernst-Planck-Navier-Stokes system in $\Bbb
   R^N$},
   journal={Commun. Math. Sci.},
   volume={18},
   date={2020},
   number={6},
   pages={1743--1754},
}

\bib{Muskat}{book}{
   author={Muskat, M.},
   title={The Flow of Homogeneous Fluids Through Porous Media},
   publisher={McGraw-Hill, New York},
   date={1937},
}

\bib{Osada}{article}{
   author={Osada, H.},
   title={Diffusion processes with generators of generalized divergence
   form},
   journal={J. Math. Kyoto Univ.},
   volume={27},
   date={1987},
   number={4},
   pages={597--619},
}

\bib{MR126088}{article}{
    AUTHOR = {Prodi, G.},
 TITLE = {Un teorema di unicit\`a per le equazioni di {N}avier-{S}tokes},
   JOURNAL = {Ann. Mat. Pura Appl. (4)},
  FJOURNAL = {Annali di Matematica Pura ed Applicata. Serie Quarta},
    VOLUME = {48},
 YEAR = {1959},
 PAGES = {173--182},
}

\bib{SSSZ}{article}{
   author={Seregin, G.},
   author={Silvestre, L.},
   author={\v{S}ver\'{a}k, V.},
   author={Zlato\v{s}, A.},
   title={On divergence-free drifts},
   journal={J. Differential Equations},
   volume={252},
   date={2012},
   number={1},
   pages={505--540},
}

\bib{MR0150444}{book}{
    AUTHOR = {Serrin, J.},
 TITLE = {The initial value problem for the {N}avier-{S}tokes equations},
 BOOKTITLE = {Nonlinear {P}roblems ({P}roc. {S}ympos., {M}adison, {W}is.,
 1962)},
 PAGES = {69--98},
 PUBLISHER = {Univ. of Wisconsin Press, Madison, Wis.},
 YEAR = {1963},
}

\bib{Show97}{book}{
   author={Showalter, R. E.},
   title={Monotone operators in Banach space and nonlinear partial
   differential equations},
   series={Mathematical Surveys and Monographs},
   volume={49},
   publisher={American Mathematical Society, Providence, RI},
   date={1997},
}

 \bib{SVZ13}{article}{
   author={Silvestre, L.},
   author={Vicol, V.},
   author={Zlato\v{s}, A.},
   title={On the loss of continuity for super-critical drift-diffusion
   equations},
   journal={Arch. Ration. Mech. Anal.},
   volume={207},
   date={2013},
   number={3},
   pages={845--877},
}

\bib{Sim87}{article}{
   author={Simon, J.},
   title={Compact sets in the space $L^p(0,T;B)$},
   journal={Ann. Mat. Pura Appl. (4)},
   volume={146},
   date={1987},
   pages={65--96},
}

\bib{Sug}{article}{
   author={Sugiyama, Y.},
   title={Global existence in sub-critical cases and finite time blow-up in
   super-critical cases to degenerate Keller-Segel systems},
   journal={Differential Integral Equations},
   volume={19},
   date={2006},
   number={8},
   pages={841--876},
}

\bib{Vaz07}{book}{
   author={V\'{a}zquez, J. L.},
   title={The porous medium equation},
   series={Oxford Mathematical Monographs},
   note={Mathematical theory},
   publisher={The Clarendon Press, Oxford University Press, Oxford},
   date={2007},
}

\bib{Vaz06}{book}{
   author={V\'{a}zquez, J. L.},
   title={Smoothing and decay estimates for nonlinear diffusion equations},
   series={Oxford Lecture Series in Mathematics and its Applications},
   volume={33},
   note={Equations of porous medium type},
   publisher={Oxford University Press, Oxford},
   date={2006},
}

\bib{V}{book}{
   author={Villani, C.},
   title={Optimal transport},
   series={Grundlehren der Mathematischen Wissenschaften [Fundamental
   Principles of Mathematical Sciences]},
   volume={338},
   note={Old and new},
   publisher={Springer-Verlag, Berlin},
   date={2009},
}

\bib{Win}{article}{
   author={Winkler, M.},
   title={Aggregation vs. global diffusive behavior in the
   higher-dimensional Keller-Segel model},
   journal={J. Differential Equations},
   volume={248},
   date={2010},
   number={12},
   pages={2889--2905},
}

\bib{Win12}{article}{
   author={Winkler, M.},
   title={Global large-data solutions in a
 chemotaxis-({N}avier-){S}tokes system modeling cellular
 swimming in fluid drops},
   journal={Comm. Partial Differential Equations},
   volume={37},
   date={2012},
   number={2},
   pages={319--351},
}

\bib{Win13}{article}{
   author={Winkler, M.},
   title={Finite-time blow-up in the higher-dimensional
 parabolic-parabolic {K}eller-{S}egel system},
   journal={J. Math. Pures Appl. (9)},
   volume={100},
   date={2013},
   number={5},
   pages={748--767},
}


\bib{Zhang20}{article}{
   author={Zhang, Y. P.},
   title={On a class of diffusion-aggregation equations},
   journal={Discrete Contin. Dyn. Syst.},
   volume={40},
   date={2020},
   number={2},
   pages={907--932},
}

\bib{Z11}{article}{
   author={Zlato\v{s}, A.},
   title={Reaction-diffusion front speed enhancement by flows},
   journal={Ann. Inst. H. Poincar\'{e} Anal. Non Lin\'{e}aire},
   volume={28},
   date={2011},
   number={5},
   pages={711--726},
}

   \end{biblist}
\end{bibdiv}


\end{document}